\def\vt{\vartheta}
\def\Cov{{\rm Cov\,}}
\newcommand{\dlines}{\displaylines}
\newcommand{\field}[1]{\mathbb{#1}}
\newcommand{\R}{\field{R}}
\renewcommand{\th}{\theta}
\newcommand{\X}{\mathscr{X}}
\newcommand{\T}{\field{T}}
\newcommand{\N}{\field{N}}
\newcommand{\Lc}{\mathcal{L}}
\newcommand{\Sc}{\mathcal S}
\newcommand{\Nc}{\mathcal{N}}
\newcommand{\Mc}{\mathcal{M}}
\newcommand{\Tb}{\field{T}}
\newcommand{\Z}{\field{Z}}
\newcommand{\bS}{\field{S}}
\newcommand{\Q}{\field{Q}}
\newcommand{\C}{\field{C}}
\newcommand{\Var}{{\rm Var}}
\newcommand{\e}{{\rm e}}
\newcommand{\F}{{\mathscr{F}}}
\newcommand{\B}{{\mathscr B}}
\newcommand{\var}{{\text{Var}}}
\newcommand{\cS}{{\mathbb S}}
\newcommand{\goto}{{\longrightarrow}}
\newcommand{\eps}{\varepsilon}
\newcommand{\for}{\; \mbox{for}\;}
\def\tin#1{\par\noindent\hskip3em\llap{#1\enspace}\ignorespaces}
\def\E{{\mathbb{ E}}}
\def\P{{\mathbb{P}}}
\def\F{{\mathscr{F}}}
\def\mE{\mathcal E}
\def\tr{\text{tr}}
\def\paref#1{(\ref{#1})}
\def\tfrac#1#2{{\textstyle\frac {#1}{#2}}}
\newtheorem{theorem}{Theorem}[section]
\newtheorem{rema}[theorem]{Remark}
\newtheorem{conjecture}[theorem]{Conjecture}
\newtheorem{cor}[theorem]{Corollary}
\newtheorem{definition}[theorem]{Definition}
\newtheorem{defn}[theorem]{Definition}
\newtheorem{example}[theorem]{Example}
\newtheorem{corollary}[theorem]{Corollary}
\newtheorem{lemma}[theorem]{Lemma}
\newtheorem{prop}[theorem]{Proposition}
\newtheorem{proposition}[theorem]{Proposition}
\newtheorem{remark}[theorem]{Remark}
\def\zsmile{\mathop{\smash{\zeta}\vphantom{g}}\limits^{\>\>\scriptscriptstyle\smile}}
\def\fismile{\mathop{\smash{\phi}\vphantom{g}}\limits^{\>\>\scriptscriptstyle\smile}}
\begin{document}
\titlepage
\thispagestyle{empty} \pagenumbering{Roman}

\begin{center}
\text{\sc \LARGE Universit\`a di  Roma Tor Vergata}
\begin{figure} [!htb]
\begin{center}
\includegraphics[width=4cm]{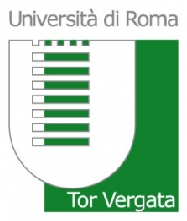}
\end{center}
\label{fig:dessin}
\end{figure}
\bigskip\bigskip\bigskip
\text{\sc Dipartimento di Matematica}

\vspace{1 cm}
\text{\sc \LARGE Tesi di Dottorato}

\vspace{10pt}


\vspace{0.5 cm}

\

\

\LARGE{\bf The geometry of spherical random fields}
\end{center}

\begin{center}
\bigskip
\textbf{\large }
\end{center}
\begin{center}
$\ $

\text{\Large Maurizia Rossi}
\bigskip\bigskip\bigskip\bigskip\bigskip

\bigskip
{\sc \Large Advisor}

{\Large Prof. Paolo Baldi}

\vspace{15pt}
{\sc \Large Co-Advisor}


{\Large Prof. Domenico Marinucci}

\vspace{35pt}
{\Large a.a. 2014/2015}
\end{center}
%
%
%
%



\thispagestyle{empty}
\begin{flushright}
\emph{\LARGE Alla mia famiglia}

\vspace{50pt}

\emph{\Large Milena, Lorenzo, Fabiana, Doriano}


\vspace{50pt}

\emph{\Large e Giada}

\vspace{200pt}

\end{flushright}

\pagenumbering{Roman} \linespread{1.5}
\pagestyle{fancy}
\textwidth 5.5in \textheight 7.5in \topmargin
0.25in \setcounter{tocdepth}{2}
\linespread{1.2}
\renewcommand{\chaptermark}[1]{\markboth{Chap.\ \textbf{\thechapter}\ -\ \textit{#1}}{}}
\renewcommand{\sectionmark}[1]{\markright{Sec.\ \textbf{\thesection}\ -\ \textit{#1}}}
\fancyhf{} \fancyfoot[CE,CO]{\thepage} \fancyhead[CO]{\rightmark}
\fancyhead[CE]{\leftmark}
\renewcommand{\headrulewidth}{0.5pt}
\renewcommand{\footrulewidth}{0.0pt}
\addtolength{\headheight}{0.5pt}
\fancypagestyle{plain}{\fancyhead{}\renewcommand{\headrulewidth}{0pt}}
\fancyhf{} \fancyfoot[CE,CO]{\thepage}
\fancyhead[CO]{\textit{Contents}} \fancyhead[CE]{\textit{Contents}}
\renewcommand{\headrulewidth}{0.5pt}
\renewcommand{\footrulewidth}{0.0pt}
\addtolength{\headheight}{0.5pt}
\fancypagestyle{plain}{\fancyhead{}\renewcommand{\headrulewidth}{0pt}}
\linespread{1.2}
\tableofcontents \clearpage
\fancyhf{} \fancyfoot[CE,CO]{\thepage}
\fancyhead[CO]{\textit{Introduction}}
\fancyhead[CE]{\textit{Introduction}}
\renewcommand{\headrulewidth}{0.5pt}
\renewcommand{\footrulewidth}{0.0pt}
\addtolength{\headheight}{0.5pt}
\fancypagestyle{plain}{\fancyhead{}\renewcommand{\headrulewidth}{0pt}}
\chapter*{Introduction}
\addcontentsline{toc}{chapter}{Introduction}

\begin{quote}
\emph{Siamo liberi di sceglierci ogni volta \\invece che lasciare troppe cose gi\`a decise \\ a scegliere per noi. (Tiromancino)}
\end{quote}


\

\noindent


\noindent This Ph.D.  thesis \emph{The geometry of spherical random fields}
collects research results obtained in these last three years.
The main purpose is  the study
of random fields indexed by the two-dimensional unit sphere $\mathbb S^2\subset \R^{3}$.

Let us first fix some probability space $(\Omega, \F, \P)$.
\begin{definition}\label{defIntro}
A random field $T$ on $\mathbb S^2$ \cite{dogiocam} is a  (possibly complex-valued) measurable map
$$
T: (\Omega \times \mathbb S^2, \F\otimes \B(\mathbb S^2)) \goto (\C, \B(\mathbb C))\ ; \qquad (\omega, x)\mapsto T_x(\omega)\ ,
$$
where $\B(\mathbb S^2)$ (resp. $\B(\C)$) denotes, as usual, the Borel $\sigma$-field on the sphere (resp. the field of complex numbers).
\end{definition}
Often in this work we will write $T(\cdot, x)$ instead of $T_x(\cdot)$.
Loosely speaking, $T$ is a collection
of r.v.'s $(T_x)_{x\in \mathbb S^2}$   indexed by the points of the sphere or, equivalently, it can be seen as  a r.v. $x\mapsto T_x$ taking values in some space of functions on $\mathbb S^2$.

In particular, we are interested in   rotationally  invariant or \emph{isotropic} random fields (e.g. see \cite{dogiocam, balditrapani, mauspin, malyarenkobook}): briefly we mean
that the random field $T$ and the ``rotated'' $T^g:=(T_{gx})_{x\in \mathbb S^2}$, have the same law for every $g\in SO(3)$ (for  details see Definition \ref{invarian}).
$SO(3)$, as usual,  denotes the group of all rotations  of $\R^3$ about the origin, under the operation of composition.

Spherical random fields naturally arise
in a wide class of instances in geophysics, atmospheric sciences, medical imaging
and cosmology.
The application domain we are interested in concerns the latter,  mainly in connection with the
 analysis of Cosmic Microwave Background (CMB) radiation.

We can image that physical experiments for CMB measure, for each point $x\in \mathbb S^2$,
an ellipse on $T_x \mathbb S^2$ - the tangent plane to the sphere at $x$  (\cite{dogiocam, malyarenko}).
The ``width'' of this ellipse  is related to the \emph{temperature} of this radiation whereas the other features (elongation and orientation) are collected in complex \emph{polarization} data.

Indeed, the modern random model for the absolute temperature of CMB is  an isotropic random field on the sphere, according to Definition \ref{defIntro} (see also Part 1).  Instead, to model the polarization of this radiation we need a more complex structure, namely  an invariant random field on the sphere taking values in some space of algebraic curves (the so-called spin random fields - see Part 3).

To test some features of the CMB -- such as whether it is a realization of a Gaussian field, is a question that has attracted a lot of attention in last years: asymptotic theory must hence be developed in the high-frequency sense (see Part 2).

Although our attention has been mostly attracted  by the spherical case,  in this work we decided to treat more general
situations whenever it is possible to extend our results from the sphere to other structures. Actually the interplay between the probabilistic aspects and the geometric ones produces sometimes fascinating insights.
We shall deal for instance with homogeneous spaces of a compact group (Part 1)
as well as vector bundles (Part 3).

This thesis can be split into three strongly correlated parts: namely Part 1: Gaussian fields, Part 2:
High-energy Gaussian eigenfunctions and Part 3: Spin random fields. It is nice  to note that
this work will turn out to have a ``circulant'' structure, in a sense to make clear below
(see Theorem \ref{intro1} and Theorem \ref{introFin}).

\subsection*{Related works}
\addcontentsline{toc}{subsection}{Related works}

Throughout the whole thesis, we refer to the following:
\begin{itemize}
\item  P. Baldi, M. Rossi.  \emph{On L\'evy's Brownian motion indexed by elements of compact groups}, Colloq. Math. 2013 (\cite{mauSO(3)});

\item P. Baldi, M. Rossi. \emph{Representation of Gaussian isotropic spin random fields}, Stoch. Processes Appl. 2014  (\cite{mauspin});

\item  D. Marinucci, M. Rossi. \emph{Stein-Malliavin approximations for nonlinear functionals of random eigenfunctions
on $\mathbb S^d$}, J. Funct. Anal. 2015  (\cite{maudom});

\item  D. Marinucci, G. Peccati, M. Rossi, I. Wigman. (2015+) \emph{Non-Universality of nodal length distribution for
 arithmetic random waves}, Preprint arXiv:1508.00353 (\cite{misto}).

\end{itemize}
However some of the results presented here are works  still in progress,
and should appear  in  forthcoming papers:
\begin{itemize}
\item  M. Rossi. (2015+) \emph{The Defect of random hyperspherical harmonics}, in preparation (\cite{mau});

\item  M. Rossi (2015) \emph{Level curves of spherical Gaussian eigenfunctions}, Preprint.
\end{itemize}
Moreover, we decided not to include some other works: for brevity \cite{simonmaudom} written with S. Campese and D. Marinucci, and to avoid heterogeneity
 \cite{ld, sld}, both
 joint works with P. Baldi and L. Caramellino.


\section*{Part 1: Gaussian fields}
\addcontentsline{toc}{section}{Part 1: Gaussian fields}

\subsection*{Chapters 1 \& 2}

Our investigation starts from a ``typical'' example of random field on the sphere,
i.e. P.~L\'evy's spherical Brownian motion. We mean a centered
Gaussian field $W=(W_x)_{x\in \mathbb S^2}$ whose covariance kernel $K$ is given by
\begin{equation}\label{covMBintro}
K(x,y) := \frac12 \left ( d(x,o) + d(y,o) - d(x,y) \right )\ ,\qquad x,y\in \mathbb S^2\ ,
\end{equation}
where $o$ is some fixed point on the sphere -- say the ``north pole'', and $d$ denotes the usual geodesic distance.
Note that \paref{covMBintro} implies $W_o=0$ (a.s.).

In particular, we recall  P.~L\'evy's idea \cite{levy} for constructing $W$ (see Example \ref{MB}).
 Consider a Gaussian white noise $S$ on the sphere, i.e. an isometry between
the space of square integrable functions on $\mathbb S^2$ and
finite-variance r.v.'s, defined on  $(\Omega, \F, \P)$.
P.~L\'evy defines a spherical Gaussian field $T$ as
\begin{equation}\label{LevyT}
T_x := \sqrt{\pi} S(1_{H_x})\ ,\quad x\in \mathbb S^2\ ,
\end{equation}
where $1_{H_x}$ denotes the indicator function of the half-sphere centered at $x$.
It turns out that $T$ is isotropic and
$$
\E[|T_x - T_y|^2] = d(x,y)\ ,\quad x,y\in \mathbb S^2\ .
$$
From now on, $\E$ denotes the expectation under the probability measure $\P$.

P.~L\'evy's spherical Brownian motion is hence the Gaussian field $W$ defined as
\begin{equation}\label{MB}
W_x := T_x - T_o\ , \quad x\in \mathbb S^2\ .
\end{equation}
 It is worth remarking
that the Brownian motion on the $m$-dimensional  unit sphere $\mathbb S^m\subset \R^{m+1}$ ($m>2$) is analogously defined and P.~L\'evy itself extended the previous construction to the higher dimensional situation.

Our first question is the following.

$\bullet$ Can we extend this technique to construct isotropic Gaussian fields $T$ on $\mathbb S^2$?

We answered this question in the first part of \cite{mauspin}.
We note that \paref{LevyT} can be rewritten as
$$
T_x := \sqrt{\pi} S(L_{g} 1_{H_{o}})\ ,\quad x\in \mathbb S^2\ ,
$$
where $g=g_x$ is any rotation matrix $\in SO(3)$ mapping the north pole $o$ to the point $x$ and $L_{g} 1_{H_{o}}$ is the function
defined as $L_{g} 1_{H_o}(y):= 1_{H_o}(g^{-1} y)$, $y\in \mathbb S^2$.
Actually,
$$\displaylines{
L_{g} 1_{H_o}(y)= 1_{H_o}(g^{-1} y) = 1_{g H_{o}}( y)= 1_{H_{g o}}( y)= 1_{H_{x}}( y)\ ,\quad y\in \mathbb S^2\ .
}$$
 $L$ coincides with the left regular representation (\ref{rappsin}) of $SO(3)$.

Consider now some homogeneous space $\X$ (see Definition \ref{hom}) of a compact group $G$
  (e.g. $\X= \mathbb S^2$ and $G=SO(3)$).
As for the spherical case, we have the following.
\begin{definition}
A random field $T$ on $\X$ is a  (possibly complex-valued) measurable map
$$
T: (\Omega \times \X, \F\otimes \B(\X)) \goto (\C, \B(\mathbb C))\ ; \qquad (\omega, x)\mapsto T_x(\omega)\ ,
$$
where $\B(\X)$ denotes, as usual, the Borel $\sigma$-field on $\X$.
\end{definition}
We develop P.L\'evy's construction to  obtain isotropic Gaussian fields on $\X$.
First we consider a Gaussian white noise $S$ on $\X$,  extended  to the space $L^2(\X)$ of square integrable \emph{complex} functions $f$.   $S$ respects the real character of $f$, i.e. $f$ is real if and only if $S(f)$ is real.
Let us fix once forever some point $x_0\in \X$ and denote $K$ the isotropy group of $x_0$, i.e. the closed subgroup of elements $g\in G$ fixing $x_0$.
Recall that $\X$ is isomorphic to the quotient space $G/K$.

To each $f\in L^2(\X)$ which is moreover left invariant w.r.t. the action of $K$, we
 associate
an isotropic complex-valued Gaussian field $T^f$ on $\X$ as
\begin{equation}\label{defTintro}
T^f_x := S(L_g f)\ ,\quad x\in \X\ ,
\end{equation}
where the function $L_g f$ is defined as
$L_g f(y) := f(g^{-1} y)$, $y\in \X$
 and $g\in G$ is any element that maps the  point $x_0$  to the point $x=g x_0$. $L$ coincides with  the left regular representation of $G$ (see \paref{rappsin}).

The  law of the field $T^f$ is completely characterized
by the associated positive definite function $\phi^f$ which is defined, for $g\in G$, as
\begin{equation}\label{phii}
\phi^f(g) := \Cov \left (T^f_x, T^f_{x_0} \right )=\langle L_g f, f\rangle
\ ,
\end{equation}
where $x$ is such that $gx_0 = x$. As usual, $\langle \cdot, \cdot \rangle$ denotes the inner product in $L^2(\X)$. Moreover we need
the ``relation'' function of $T^f$
\begin{equation}
\zeta^f(g) := \Cov \left (T^f_x, \overline{T^f_{x_0}} \right )=\langle L_g f, \overline{f}\rangle
\ ,
\end{equation}
where $\overline{T^f_{x_0}}$ (and  $\overline{f}$) denotes complex conjugation.

$\bullet$ Now we ask whether every  isotropic, complex-valued Gaussian
random field on $\X$ can be obtained with this construction.

The answer is \emph{no} in general (see Remark \ref{rem-sfera} and \paref{cic} for some counterexample). It is
however positive if we consider  isotropic \emph{real} Gaussian  fields $T$ on $\X$. Our first result is the following (Theorem \ref{real-general}).
\begin{theorem}\label{intro1}
Let $T$ be a real isotropic Gaussian field on $\X$. Then there exists a real   left-$K$-invariant function $f\in L^2(\X)$ such that $T$ and $T^f$ have the same law
$$
T \ \mathop{=}^{\mathcal L} \ T^f\ ,
$$
where $T^f$ is defined as \paref{defTintro}.
\end{theorem}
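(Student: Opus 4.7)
\emph{Proof plan.} Since the law of a real centered Gaussian field is determined by its covariance and, by \paref{phii}, the covariance of $T^f$ equals $\phi^f(g)=\langle L_g f,f\rangle$, the theorem reduces to the following: given the real-valued positive-definite function $\phi(g):=\mathrm{Cov}(T_{gx_0},T_{x_0})$ on $G$ (which is bi-$K$-invariant thanks to isotropy together with $kx_0=x_0$ for $k\in K$), produce a real left-$K$-invariant $f\in L^2(\X)$ with $\langle L_g f,f\rangle=\phi(g)$ for all $g\in G$.

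I would build $f$ as the off-diagonal trace of the square root of the covariance operator. Concretely, the operator
\begin{equation*}
\mathcal K\varphi(x):=\int_\X K(x,y)\,\varphi(y)\,dy,\qquad K(x,y):=\E[T_xT_y],
\end{equation*}
is self-adjoint, positive, and trace-class on $L^2(\X)$; isotropy translates into $L_g\mathcal K=\mathcal K L_g$ for every $g\in G$, while reality of $T$ means that $\mathcal K$ preserves real-valued functions. By spectral calculus, $\mathcal K^{1/2}$ inherits both properties and therefore admits a real symmetric integral kernel $k(x,y)$ satisfying $k(gx,gy)=k(x,y)$. Setting $f(y):=k(x_0,y)$, three routine checks should finish the argument: (i) $\|f\|_{L^2(\X)}^2=K(x_0,x_0)<\infty$ by Mercer, so $f\in L^2(\X)$; (ii) $f$ is real by construction, and left-$K$-invariance follows at once from the $G$-invariance of $k$ combined with $k^{-1}x_0=x_0$; and (iii) for $g\in G$ with $gx_0=x$, using once more the $G$-invariance of $k$ together with the operator identity $(\mathcal K^{1/2})^2=\mathcal K$,
\begin{equation*}
\langle L_g f,f\rangle=\int_\X k(x_0,g^{-1}y)\,k(x_0,y)\,dy=\int_\X k(x,y)\,k(x_0,y)\,dy=K(x,x_0)=\phi(g).
\end{equation*}
Since $T$ and $T^f$ are then two real centered Gaussian fields with the same covariance, they have the same law.

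The only genuine difficulty is the pointwise trace $k(x_0,\cdot)$: the kernel of a generic Hilbert--Schmidt operator is defined only almost everywhere. I would bypass this by working throughout in Peter--Weyl coordinates: decompose $L^2(\X)=\bigoplus_\pi\mathcal H_\pi$ into $G$-isotypic components, note by Schur's lemma that $\mathcal K|_{\mathcal H_\pi}=I_{V_\pi}\otimes M_\pi$ for a positive self-adjoint matrix $M_\pi$ on the finite-dimensional multiplicity space, extract the matrix square root $M_\pi^{1/2}$ there, and define $f$ through its Peter--Weyl coefficients. This viewpoint also clarifies why the statement is restricted to real fields: the matrix $M_\pi^{1/2}$ is real precisely because $\mathcal K$ is, whereas in the complex case one would additionally need to match the relation function $\zeta^f$, and, as Remark \ref{rem-sfera} indicates, no such $f$ exists in general.
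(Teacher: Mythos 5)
Your overall strategy is the same as the paper's: observe that for real centered Gaussian fields equality of covariances gives equality in law, that isotropy makes $\phi(g)=\Cov(T_{gx_0},T_{x_0})$ a continuous (by mean-square continuity, Proposition \ref{Mean square continuity of invariant}), positive definite, bi-$K$-invariant function, and then produce a ``square root'' of $\phi$ by decomposing along Peter--Weyl and taking matrix square roots of the Hermitian positive operators $\widehat\phi(\sigma)$ on each isotypic block -- this is exactly Theorem \ref{square-root}, with square integrability of $f$ coming from Gangolli's bound $\sum_\sigma\sqrt{\dim\sigma}\,\tr\widehat\phi(\sigma)<\infty$ (Theorem \ref{gangolli-true}) rather than from ``Mercer'', and with left-$K$-invariance encoded by \paref{fourier-bicappa}, which matches your $K$-fixed-point remark. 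The covariance verification via \paref{convolution for phi} is also as in the paper. So far, so good.

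The genuine gap is the reality of $f$, which you dispose of with the sentence ``the matrix $M_\pi^{1/2}$ is real precisely because $\mathcal K$ is''. This is not correct as stated, and it is precisely where the paper does its real work (Proposition \ref{real-sq}). Complex conjugation on $L^2(\X)$ does \emph{not} preserve the isotypic component of $\pi$ unless $\pi$ is self-conjugate: for a complex-type representation it maps the $\pi$-component onto the $\pi^*$-component, so ``realness'' of the single block $M_\pi$ is not even a meaningful condition there, and one must instead pair $\sigma$ with $\sigma^*$ and check $\widehat\phi(\sigma^*)=\overline{\widehat\phi(\sigma)}$ together with a compatible choice of square roots; for a quaternionic-type representation there is \emph{no} basis in which the $G$-action (hence the block) is real, and showing that the component of $f$ is nonetheless a real-valued function requires the block-structure argument with the quaternionic conjugation $J$, i.e.\ part c) of Proposition \ref{real-sq}. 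Your operator-level instinct (uniqueness of the positive square root plus the fact that $\mathcal K$ commutes with conjugation forces $\mathcal K^{1/2}$ to be a ``real'' operator) could in principle bypass this trichotomy, but to convert it into reality of the \emph{function} $f$ you need exactly the pointwise/invariant-kernel statement $f(y)=k(x_0,y)$ that you concede you cannot justify; once you retreat to Peter--Weyl coefficients, the conjugation relations across $\sigma$ and $\sigma^*$ (and the quaternionic case) must be checked, and your proposal does not do so. Everything else is fine, up to the slip $k^{-1}x_0=x_0$ (it should be $kx_0=x_0$ for $k\in K$, and you are using $k$ both for the kernel and for the group element).
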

Actually, we prove that the associated positive definite function $\phi$ on the group $G$ of $T$ is of the form \paref{phii}. Precisely, if $\phi$ is defined  as before as $\phi(g) := \Cov \left (T^f_x, T^f_{x_0} \right )$, where $x= g x_0$, then we show (see Proposition \ref{real-sq}) that there exists a real function $f\in L^2(\X)$ such that
$$
\phi(g) = \langle L_g f, f \rangle\ ,\quad g\in G\ ,
$$
i.e. $T$ and $T^f$  have the same distribution.

\subsection*{Chapter 3}

Assume now that $\X$ is in addition endowed with some metric
$d$. Analogously for the spherical case, P.L\'evy's Brownian motion on the metric space $(\X, d)$ is defined as a real centered Gaussian  field on $\X$ which vanishes at some point $x_0\in\X$ and such that $\E[|X_x-X_y|^2]=d(x,y)$. By polarization, its covariance function  is
\begin{equation}\label{kernel MBintro}
K(x,y)=\frac{1}{2} \,( d(x,x_0) + d(y,x_0) - d(x,y) )\ .
\end{equation}
Note that it is not obvious that the Brownian motion exists on $(\X, d)$,
equivalently that the kernel \paref{kernel MBintro} is positive definite on $\X$.


Positive
definiteness of $K$ for $\X=\R^{m+1}$ and $d$ the Euclidean metric had been proved by Schoenberg  \cite{schoenberg} in 1938 and, as recalled above, P.L\'evy itself
constructed the Brownian motion on $\X=\bS^{m}$, here
 $d$ being the spherical distance.
Later Gangolli \cite{GAN:60} gave an analytical proof of the positive definiteness of the kernel \paref{kernel MBintro} for the same metric space $(\bS^{m},d)$, in a paper that dealt with this question for a large class of homogeneous spaces.

Finally Takenaka in \cite{TKU} proved the positive definiteness of the kernel \paref{kernel MBintro} for the Riemannian metric spaces
of constant sectional curvature equal to $-1,0$ or $1$, therefore adding the hyperbolic disk to the list. To be precise in the case of the hyperbolic space
$\mathcal{H}_m = \lbrace (x_0, x_1, \dots, x_m)\in \R^{m+1} :
x_1^2 + \dots x_m^2 - x_0^2 = 1 \rbrace $, the distance under consideration  is the unique, up to  multiplicative
constants, Riemannian distance that is invariant with respect to the action of $G=L_m$, the Lorentz group.

 $\bullet$ Now we ask the question of the existence of P.L\'evy's Brownian motion on $\X=SO(3)$, endowed with the Riemannian metric induced by the embedding $SO(3)\hookrightarrow \R^9$.

There are deep motivations for this choice, connected to the spin theory, which will be clearer in Part 3.

We answer this question in \cite{mauSO(3)} (Proposition \ref{kernel MB su SO(3)}).
\begin{prop}\label{propIntro}
The kernel $K$ in \paref{kernel MBintro} is not positive definite on $SO(3)$, endowed with the  Riemannian metric induced by the embedding $SO(3)\hookrightarrow \R^9$.
\end{prop}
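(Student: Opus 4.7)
The plan is to reduce the positive-definiteness of $K$ to a sign condition on the Peter--Weyl coefficients of the class function $\phi(g):=d(g,e)$, and then to show that this condition fails already at the spin-$2$ representation.

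First I would observe that the Frobenius inner product $\langle A,B\rangle=\tr(A^\top B)$ on $M_3(\R)$ is invariant under left and right multiplication by orthogonal matrices, so the induced Riemannian metric on $SO(3)$ is bi-invariant, and so is the geodesic distance $d$. Consequently $\phi$ is a class function, and parametrising $g$ by its rotation angle $\vt(g)\in[0,\pi]$, a direct length computation along any one-parameter subgroup of rotations gives
\[
\phi(g)=\sqrt{2}\,\vt(g).
\]

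Next I would use the standard equivalence: with $\phi(e)=0$ and $d$ bi-invariant, the kernel $K(g,h)=\tfrac12(\phi(g)+\phi(h)-\phi(gh^{-1}))$ is positive semi-definite on $SO(3)$ if and only if $\phi$ is conditionally negative definite, i.e.\ $\sum_{i,j}c_ic_j\phi(g_ig_j^{-1})\le 0$ whenever $\sum_i c_i=0$. Since $\phi$ is central, expand $\phi=\sum_{l\ge 0}a_l\chi_l$ in the irreducible characters $\chi_l(R_\vt)=\sin((l+\tfrac12)\vt)/\sin(\vt/2)$: the positive definiteness of each $\chi_l$ (Schur orthogonality) reduces the conditional negative definiteness of $\phi$ to the requirement $a_l\le 0$ for every $l\ge 1$.

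The crux of the argument is then the computation of these $a_l$ via the Weyl integration formula $\int_{SO(3)}f\,dg=\tfrac{2}{\pi}\int_0^\pi f(R_\vt)\sin^2(\vt/2)\,d\vt$. Using the identity $2\sin((l+\tfrac12)\vt)\sin(\vt/2)=\cos(l\vt)-\cos((l+1)\vt)$ and one integration by parts, one finds
\[
a_l=\begin{cases}-\dfrac{2\sqrt{2}}{\pi\,l^{2}} & l\ \text{odd},\\[6pt] +\dfrac{2\sqrt{2}}{\pi\,(l+1)^{2}} & l\ge 2\ \text{even}.\end{cases}
\]
In particular $a_2=2\sqrt{2}/(9\pi)>0$, so $\phi$ is not conditionally negative definite, and hence $K$ is not positive definite.

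To package this as an explicit witness, I would discretise the signed density $\chi_2(g)\,dg$ (which has total mass $0$ by orthogonality with the trivial representation) into a point mass $\mu_n=\sum_i c_i^{(n)}\delta_{g_i^{(n)}}$ with $\sum_i c_i^{(n)}=0$: Schur orthogonality concentrates the Peter--Weyl content of $\mu_n$ on the spin-$2$ representation for $n$ large, so that, using $\sum_i c_i^{(n)}=0$,
\[
\sum_{i,j}c_i^{(n)}c_j^{(n)}K(g_i^{(n)},g_j^{(n)})=-\tfrac12\sum_{i,j}c_i^{(n)}c_j^{(n)}\phi(g_i^{(n)}(g_j^{(n)})^{-1})<0
\]
as soon as $n$ is large enough. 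The main obstacle I expect is the explicit integration by parts pinning down the sign of $a_l$ across all even $l\ge 2$; the remaining Peter--Weyl and approximation steps are standard.
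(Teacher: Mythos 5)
Your proposal is correct and takes essentially the same route as the paper: reduce positive definiteness of $K$ to restricted negative definiteness of the bi-invariant distance (Lemma \ref{fondamentale} and Remark \ref{coeff neg}), expand the class function $d(\cdot,e)$, which is a constant multiple of the rotation angle, in the characters of $SO(3)$ against the angular weight $\tfrac{2}{\pi}\sin^2(\vartheta/2)=\tfrac1\pi(1-\cos\vartheta)$, and exhibit a strictly positive coefficient at $\ell=2$. Your only deviations are cosmetic: you invoke the Weyl integration formula where the paper derives the rotation-angle density through the $SU(2)$ double cover, and your product-to-sum identity gives a closed form for $a_\ell$ (making positivity for all even $\ell\ge 2$ immediate, which the paper leaves as an easy check), while the different normalization of the metric only rescales the coefficients by $\sqrt2$ and does not affect their signs.
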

This is somehow surprising
as, in particular, $SO(3)$ is locally isometric to $SU(2)$, where
positive definiteness of  $K$ is immediate since  isomorphic to
the unit hypersphere $\mathbb S^3$.

Proposition \ref{propIntro} moreover allows to prove the non existence of P. L\'evy's Brownian motion on the group $SO(n)$ of all rotations of $\R^{n+1}$ for $n>3$.  Actually, $SO(n)$ contains a closed subgroup  that is isomorphic
to $SO(3)$.
Indeed the same argument  holds also on e.g.
 the group $SU(n)$ of all $n\times n$ unitary matrices with determinant one, for  $n\ge 3$ (see Corollary \ref{kernel MB su SO(n)}).
Our method could be applied to investigate positive definitess of the Brownian kernel on  other compact Lie groups.

\section*{Part 2: High-energy Gaussian eigenfunctions}
\addcontentsline{toc}{section}{Part 2: High-energy Gaussian eigenfunctions}

\subsection*{Chapters 4, 5 \& 6}

As already briefly stated,  the investigation of spherical random fields has been strongly motivated by cosmological applications (e.g. concerning  CMB): the asymptotic analysis in this setting must be hence developed in the high-energy sense, as follows.

First recall that the eigenvalues of the Laplace-Beltrami operator $\Delta_{\mathbb S^2}$ on $\mathbb S^2$ are integers of the form $-\ell(\ell +1)$, $\ell\in \N$.

  Under Gaussianity, an isotropic random field $T$ on $\mathbb S^2$ can be decomposed in terms of  its random Fourier components $T_\ell$, $\ell\in \N$. The latter  are independent and isotropic centered Gaussian fields, whose covariance kernel is
\begin{equation}\label{ker2intro}
\E[T_\ell(x) T_\ell(y)]=P_\ell(\cos d(x,y))\ ,\quad x,y\in \mathbb S^2\ ,
\end{equation}
where $P_\ell$ is the $\ell$-th Legendre polynomial \cite{szego, dogiocam}
and $d(x,y)$ denotes the spherical distance between $x$ and $y$.

The following spectral representation holds \cite[Propositions 5.13]{dogiocam}
$$
T_x = \sum_{\ell\in \N} c_\ell T_\ell(x)\ ,
$$
where the series converges in $L^2(\Omega\times \mathbb S^2)$ and the nonnegative sequence $(c_\ell)_\ell$ is the power spectrum of the field \cite{dogiocam}.

 $T_\ell$ is known as the $\ell$-th \emph{Gaussian spherical  eigenfunction} or random spherical harmonic (see \paref{Telle} for a precise definition), indeed ``pathwise'' satisfies
$$
\Delta_{\mathbb S^2}T_\ell + \ell(\ell+1) T_\ell = 0\ .
$$
In this second part, we investigate  the high-energy behavior (i.e., as $\ell\to +\infty$) of $T_\ell$.   We are interested in the geometry of the $z$-\emph{excursion set} (\cite{adlertaylor} e.g.), which is defined  for $z\in \R$ as
\begin{equation}\label{excset}
A_\ell(z) := \lbrace x\in \mathbb S^2 : T_\ell(x) > z\rbrace\ .
\end{equation}
For instance, one can investigate the area  of $A_\ell(z)$, the length of the boundary $\partial A_\ell(z)$ -- that is the length of level curves $T^{-1}_\ell(z)$, and the Euler-Poincar\'e characteristic of these domains. For completeness, we recall that these three quantities correspond to the so-called Lipschitz-Killing curvatures on the sphere \cite{adlertaylor}.

Many authors have studied  properties of excursion sets of random fields on the sphere or other manifolds:
for instance, one can investigate the behavior of the excursion probability \cite{yimin},
i.e. as $z\to +\infty$
$$
\P\left ( \sup_{x\in \mathbb S^2} T_x > z\right )\ ,
$$
where $T$ is some random field on the sphere;
 (see also e.g.
\cite{adlertaylor, yimin-cheng, cheng, cheng2,  chenschwar, yimin-cheng,
fluct}).

It is worth remarking that  random spherical harmonics  have attracted great interest  also in other disciplines, such as  Mathematical Physics.
Indeed Berry's Random Wave Model (\cite{berry}) allows to compare - at least for ``generic'' chaotic Riemannian \emph{surfaces} $\mathcal M$ -  a \emph{deterministic} Laplace eigenfunction $f$ on $\mathcal M$ of some large eigenvalue $E$ to a ``typical''
instance of an isotropic, monochromatic \emph{random} wave with wavenumber
$\sqrt{E}$ (see also \cite{wigsurvey}). In view of this conjecture, much effort has been devoted  to $2$-dimensional manifolds such as the  torus $\mathbb T$ (see e.g. \cite{AmP}) and the sphere $\mathbb S^2$ (see e.g.  \cite{vale2}, \cite{vale3},  \cite{Nonlin},
 \cite{Wig}), as stated just above.
%
In this setting, the \emph{nodal} case corresponding to $z=0$ has received  the greatest attention.
Indeed nodal domains (the complement of the set where eigenfunctions are equal to zero) appear
in many problems in engineering, physics and the natural sciences: they
describe the sets that remain \emph{stationary} during vibrations, hence their importance
in such areas as musical instruments industry,  earthquake study and  astrophysics
(for further motivating details see \cite{wigsurvey}).

$\bullet$ In this thesis we want to investigate the geometry of excursion sets \paref{excset} of high-energy Gaussian eigenfunctions $T_\ell$ on the sphere.
The geometric features we are interested in can be written as nonlinear functionals of the random field itself (and its spatial derivatives).

\underline{Excursion area}

The area $S_\ell(z)$ of $z$-excursion sets \paref{excset}  can be written  as
\begin{equation*}
S_\ell(z) = \int_{\mathbb S^2} 1_{(z, +\infty)}(T_\ell(x))\,dx\ ,
\end{equation*}
where $1_{(z, +\infty)}$ is the indicator function of the interval $(z, +\infty)$. The expected value is simply computed to be
$\E[S_\ell(z)] = 4\pi (1 - \Phi(z))$, where $\Phi$ denotes the cumulative distribution function of a standard Gaussian r.v. The variance has been studied in \cite{wigexc, Nonlin, def}:
we have,  as $\ell \to +\infty$,
\begin{equation}\label{varintro}
\Var(S_\ell(z)) = z^2 \phi(z)^2 \cdot \frac{1}{\ell} + O\left (\frac{\log \ell}{\ell^2} \right )\ ,
\end{equation}
where $\phi$ is the standard Gaussian probability density function.
In particular, for $z\ne 0$, \paref{varintro} gives the exact asymptotic form of the
variance.

The nodal case corresponds to the Defect $D_\ell$, which is defined as
$$
D_\ell :=  \int_{\mathbb S^2} 1_{(0, +\infty)}(T_\ell(x))\,dx -  \int_{\mathbb S^2} 1_{(-\infty, 0)}(T_\ell(x))\,dx\ ,
$$
i.e. the difference between the measure of the positive and negative regions.
Note that $D_\ell = 2S_\ell(0) - 4\pi$. We have $\E[D_\ell]=0$ and from \cite{def}
\begin{equation}\label{varD}
\Var(D_\ell) = \frac{C}{\ell^2}( 1 +o(1))\ ,\quad \ell\to +\infty\ ,
\end{equation}
for some $C> \frac{32}{\sqrt{27}}$.

It is worth remarking that the Defect variance is of smaller order than  the non-nodal case.
This situation is similar to the \emph{cancellation phenomenon} observed by Berry in a different setting (\cite{berry}).

In \cite{Nonlin}  Central Limit Theorems  are proved for the excursion area:
\begin{align*}
\frac{S_\ell(z) - \E[S_\ell(z)]}{\sqrt{\Var(S_\ell(z))}}&\mathop{\goto}^{\mathcal L} Z\ , \qquad z\ne 0\ ,\\
\frac{D_\ell}{\sqrt{\Var(D_\ell)}}&\mathop{\goto}^{\mathcal L} Z\ ,
\end{align*}
$Z\sim \mathcal N(0,1)$ being a standard Gaussian r.v. and $\mathop{\goto}^{\mathcal L}$ denoting the convergence in distribution from now on. Often we will write $\mathop{\goto}^{d}$ instead of $\mathop{\goto}^{\mathcal L}$.

A CLT result  is ``only'' an asymptotic result with no information on the \emph{speed of convergence} to the limiting
distribution. More refined results indeed aim at the investigation of the asymptotic behaviour for
various probability metrics, such as Wasserstein, Kolmogorov and total variation  distances, see \paref{prob distance}. In this respect, a major development in the last few years has been provided by the
so-called \emph{fourth-moment} literature, which is summarized in the recent monograph \cite{noupebook}. In short, a
rapidly growing family of results is showing how to establish  bounds on probability
distances between multiple stochastic integrals and the  Gaussian distribution analyzing the fourth-moments/fourth cumulants alone (\cite{taqqu, simon, nou-pe, nou-pe2, simonS} e.g.).

$\bullet$ We establish a quantitative CLT for the excursion area of random spherical harmonics.

In \cite{maudom} we consider a more general situation, i.e.  nonlinear functionals of Gaussian eigenfunctions $(T_{\ell})_{\ell\in \N}$ on the $m$-dimensional unit sphere $\mathbb S^m$, $m\ge 2$.
The eigenvalues of the Laplace-Beltrami operator $\Delta_{\mathbb S^m}$
on $\mathbb S^m$ are integers of the form $-\ell(\ell+m-1)$, $\ell\in \N$.
The $\ell$-th Gaussian eigenfunction $T_\ell$ on $\mathbb S^m$  \paref{Telle}
$$
\Delta_{\mathbb S^m} T_\ell + \ell(\ell +m-1) T_\ell = 0\ , \quad a.s.
$$
is a centered isotropic Gaussian field with covariance function
\begin{equation}\label{kermintro}
\E[T_\ell(x) T_\ell(y)] = G_{\ell;m} (\cos d(x,y))\ ,
\end{equation}
where $G_{\ell;m}$ denotes the normalized Gegenbauer polynomial \cite{szego} and $d$ the usual distance on the $m$-sphere.

 Precisely, we consider
sequences of r.v.'s of the form
$$
S_\ell(M) := \int_{\mathbb S^m} M(T_\ell(x))\,dx\ ,
$$
where $M:\R\to \R$ is some measurable function such that $\E[M(Z)^2]<+\infty$,
$Z\sim \mathcal N(0,1)$. Note that if
we choose $M=1_{(z,+\infty)}$, then $S_\ell(M) = S_\ell(z)$ the excursion volume of random hyperspherical harmonics, i.e. the empirical measure of the set where eigenfunctions lie upon the level $z$.

The main idea for our proof is first to develop $S_\ell(M)$ into Wiener chaoses, i.e. as a series
in $L^2(\P)$ of the type \paref{chaos exp}
$$
S_\ell(M) = \sum_{q=0}^{+\infty} \frac{J_q(M)}{q!} \underbrace{\int_{\mathbb S^m} H_q(T_\ell(x))\,dx}_{:= h_{\ell,q;m}}\ ,
$$
where $H_q$ denotes the $q$-th Hermite polynomial \paref{hermite} (see also \cite{szego, noupebook}) and $J_q(M):= \E[M(Z)H_q(Z)]$. Then, we study the asymptotic behavior of each summand $h_{\ell,q;m}$ of the previous series by means of a careful investigation of asymptotic variances (see Proposition \ref{varianza}) and the Fourth Moment Theorem \paref{th}: we are hence able to prove a quantitative CLT for $h_{\ell,q;m}$ (Proposition \ref{teo1}) in Wasserstein distance \paref{prob distance}. To be more precise, we can restrict ourselves to even integers $\ell$  (see the related discussion in Chapter $5$).

It turns out that, if the projection of $M(Z)$ onto the second order Wiener chaos is not zero ($J_2(M)\ne 0$), then this component dominates the whole series, i.e., as $\ell\to +\infty$
$$
\frac{S_\ell(M) - \E[S_\ell(M)]}{\sqrt{\Var(S_\ell(M))}} = \frac{\frac{J_2(M)}{2}h_{\ell,2;m}}{\sqrt{\Var(S_\ell(M))}} + o_\P(1)\ .
$$
We can therefore  prove the following (Theorem \ref{general}).
\begin{theorem}
If $J_2(M)\ne 0$, then
$$
d_W\left( \frac{S_\ell(M) - \E[S_\ell(M)]}{\sqrt{\Var(S_\ell(M))}}, Z      \right) = O(\ell^{-1/2})\ ,
$$
that gives the rate of convergence, as $\ell\to +\infty$, to the standard Gaussian distribution in Wasserstein distance $d_W$.
\end{theorem}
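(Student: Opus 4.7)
The plan, already sketched in the paragraph preceding the statement, is to reduce the claim to a quantitative CLT for the second Wiener chaos component $h_{\ell,2;m} := \int_{\mathbb S^m} H_2(T_\ell(x))\,dx$ and then transfer that bound to the normalized $S_\ell(M)$ through the chaos expansion. First I would write
$$
S_\ell(M)-\E[S_\ell(M)] = \sum_{q\ge 1}\frac{J_q(M)}{q!}\,h_{\ell,q;m},
$$
observe that the $q=1$ term vanishes almost surely because $\int_{\mathbb S^m} T_\ell(x)\,dx = 0$ (the constant is orthogonal to every nonzero eigenspace of $\Delta_{\mathbb S^m}$), and use Proposition \ref{varianza} to evaluate $\Var(h_{\ell,q;m})$ as a double integral of $G_{\ell;m}(\cos d(x,y))^q$ over $\mathbb S^m\times \mathbb S^m$. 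Standard $L^p$ bounds on Gegenbauer polynomials (Hilb-type asymptotics) show that for $q\ge 3$ the variance $\Var(h_{\ell,q;m})$ is of strictly smaller order than $\Var(h_{\ell,2;m})$, by at least a factor $\ell^{-1}$ up to logarithms. Combined with orthogonality of distinct chaoses, this gives
$$
\frac{S_\ell(M)-\E[S_\ell(M)]}{\sqrt{\Var(S_\ell(M))}} = \frac{h_{\ell,2;m}}{\sqrt{\Var(h_{\ell,2;m})}} + R_\ell, \qquad \|R_\ell\|_{L^2(\P)} = O(\ell^{-1/2}).
$$

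Next I would apply Proposition \ref{teo1}, the quantitative Fourth Moment Theorem in Wasserstein distance, to the second-chaos random variable $h_{\ell,2;m}$. This reduces matters to the normalized fourth cumulant
$$
d_W\!\left(\frac{h_{\ell,2;m}}{\sqrt{\Var(h_{\ell,2;m})}},\, Z\right) \le C\,\sqrt{\frac{\bigl|\E[h_{\ell,2;m}^4] - 3\,\Var(h_{\ell,2;m})^2\bigr|}{\Var(h_{\ell,2;m})^2}}.
$$
Expanding $\E[h_{\ell,2;m}^4]$ via Isserlis' formula yields a fourfold integral over $(\mathbb S^m)^4$ of products of four copies of the covariance kernel $G_{\ell;m}(\cos d(\cdot,\cdot))$; the three Gaussian pairings contribute exactly $3\Var(h_{\ell,2;m})^2$, leaving a remainder controlled by an integral which, by the same Gegenbauer $L^p$ estimates, is of order $\Var(h_{\ell,2;m})^2 \cdot \ell^{-1}$. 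Consequently the right-hand side above is $O(\ell^{-1/2})$.

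Finally, combining the two bounds via the Wasserstein triangle inequality $d_W(X+Y,Z)\le d_W(X,Z)+\E[|Y|]\le d_W(X,Z)+\|Y\|_{L^2(\P)}$ yields the announced rate. The main obstacle is the fourth-cumulant estimate: one must extract, from a sum of fourfold integrals of products of Gegenbauer polynomials, the exact cancellation of the three Gaussian pairings and bound the residual integral at the correct power of $\ell$. Both this step and the variance comparison for $q\ge 3$ hinge on sharp uniform and $L^p$ bounds for $G_{\ell;m}$ on $[-1,1]$, which is where most of the technical work concentrates.
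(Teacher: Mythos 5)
Your overall strategy coincides with the paper's: expand $S_\ell(M)$ into Wiener chaoses, isolate the second-chaos term $h_{\ell,2;m}$, prove a quantitative CLT for it (fourth-cumulant/Berry--Esseen, rate $\ell^{-(m-1)/2}$), and transfer the bound by the triangle inequality $d_W(X+Y,Z)\le d_W(X,Z)+\E|Y|$, the scaling mismatch between $\sqrt{\Var(S_\ell(M))}$ and $\sqrt{\Var(h_{\ell,2;m})}$ being of lower order. For $m\ge 3$ this works exactly as you describe, since every chaos of order $q\ge 3$ has variance $O(\ell^{-m})$ against a total variance of order $\ell^{-(m-1)}$ (the only point you gloss over is uniformity in $q$ when summing the tail, which follows from $|G_{\ell;m}|\le 1$, monotonicity of $\int |G_{\ell;m}|^q$ in $q$ and $\sum_q J_q(M)^2/q!<\infty$).

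The genuine gap is in the case $m=2$, which is the one needed for the excursion-area corollary. There the fourth chaos does \emph{not} gain a full factor $\ell^{-1}$: by \paref{q=4d=2} one has $\Var(h_{\ell,4;2})\sim {\rm const}\cdot\log\ell/\ell^{2}$, while $\Var(S_\ell(M))\asymp \ell^{-1}$, so dumping all chaoses $q\ge 3$ into the $L^2$ remainder gives only $\|R_\ell\|_{L^2(\P)}=O\big(\sqrt{\log\ell/\ell}\big)$ whenever $J_4(M)\ne 0$, and your argument then yields the weaker rate $O(\ell^{-1/2}\sqrt{\log\ell})$, not the claimed $O(\ell^{-1/2})$. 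The paper circumvents this precisely at this point: for $m=2$ it keeps the chaoses $q=2,3,4$ inside the main part $S_\ell(M;1)$ and invokes the quantitative CLT for general finite-order polynomials (Proposition \ref{corollario1}), whose proof exploits the orthogonality of eigenfunctions — the cross-contraction terms between chaoses of consecutive orders vanish because $\int_{\mathbb S^m}G_{\ell;m}(\cos d(x,y))\,dx=0$, and the remaining ones are controlled by the second-chaos contraction $\mathcal K_\ell(2;1)$ — so that the polynomial inherits the clean rate $O(\ell^{-1/2})$; only the tail $q\ge5$ is treated by your crude $L^2$ bound. To repair your proof you would either need this polynomial-level estimate, or some separate argument removing the $\sqrt{\log\ell}$ loss coming from $h_{\ell,4;2}$.
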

Moreover  if $M=1_{(z,+\infty)}$, then it easy to compute that $J_2(M)\ne 0 \iff z\ne 0$. The following corollary is therefore immediate (Theorem \ref{mainteo}).
\begin{cor}
If $z\ne 0$, then
$$
d_W\left( \frac{S_\ell(z) - \E[S_\ell(z)]}{\sqrt{\Var(S_\ell(z))}}, Z      \right) = O(\ell^{-1/2})\ ,\quad \ell\to \infty\ .
$$
\end{cor}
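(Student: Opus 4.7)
The plan is to obtain the corollary as an immediate specialization of Theorem \ref{general}, taking $m=2$ and the function $M := 1_{(z,+\infty)}$. With this choice one has $S_\ell(M) = S_\ell(z)$ by the definition of the excursion area, and $\E[M(Z)^2] = 1 - \Phi(z) < +\infty$, so $M$ satisfies the integrability assumption needed to write the Hermite expansion recalled before the theorem. Hence the statement would follow at once provided we verify the single hypothesis that remains to be checked, namely $J_2(M) \neq 0$ whenever $z \neq 0$.

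First I would compute the second Hermite coefficient $J_2(M) = \E[M(Z) H_2(Z)]$ explicitly. Since $H_2(u) = u^2 - 1$, we obtain
\begin{equation*}
J_2(M) = \int_{z}^{+\infty} (u^2 - 1)\,\phi(u)\,du,
\end{equation*}
and the antiderivative identity $\frac{d}{du}\bigl[-u\,\phi(u)\bigr] = (u^2-1)\phi(u)$ immediately yields $J_2(M) = z\,\phi(z)$. Since $\phi(z) > 0$ for every $z \in \R$, we conclude $J_2(M) \neq 0$ if and only if $z \neq 0$. As a sanity check, this is consistent with the leading term $z^2\phi(z)^2\cdot\ell^{-1}$ in the variance asymptotics \paref{varintro}, which one expects to be governed exactly by the second-chaos component $\frac{J_2(M)}{2}\,h_{\ell,2;2}$.

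Once $J_2(M) \neq 0$ is in hand, there is nothing left to prove: Theorem \ref{general} applies verbatim to this particular $M$ and delivers
\begin{equation*}
d_W\!\left( \frac{S_\ell(z) - \E[S_\ell(z)]}{\sqrt{\Var(S_\ell(z))}},\, Z \right) = O(\ell^{-1/2}), \qquad \ell \to \infty.
\end{equation*}
The substantive analytic work — the chaos decomposition of $S_\ell(M)$, the precise asymptotics of the variances of the projections $h_{\ell,q;m}$ via Proposition \ref{varianza}, the dominance of the second-chaos component, and the Fourth Moment Theorem controlling its Wasserstein distance from the Gaussian law — is entirely absorbed into the statement of Theorem \ref{general}. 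Thus the corollary is a one-line verification once the explicit formula $J_2(1_{(z,+\infty)}) = z\phi(z)$ is established, and no genuine obstacle remains at this stage; the only mildly delicate point is the chaos-expansion step, which is valid because $1_{(z,+\infty)}(Z) \in L^2(\P)$ even though $M$ is merely an indicator function.
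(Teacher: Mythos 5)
Your proof is exactly the paper's argument: the paper also obtains the corollary by applying the general theorem to $M_z = 1_{(z,+\infty)}$ and noting $J_2(M_z) = z\phi(z) \neq 0$ for $z \neq 0$. The only superfluous detail is your restriction to $m=2$ — the same one-line verification works verbatim for every $m \geq 2$, which is how the paper states it.
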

We have just obtained a quantitative CLT for the excursione volume of random hyperspherical eigenfunctions in the non-nodal case.

The nodal case which correspond to the Defect $D_\ell$ requires harder work, since it is no longer true that the second chaotic component dominates.
In \cite{mau} we show first the exact rate for the Defect variance (Theorem  \ref{thdefvar}).
\begin{theorem}
 For $m>2$, as $\ell\to +\infty$
$$
\Var(D_\ell) = \frac{C_m}{\ell^m}(1 +o(1))\ ,
$$
where $C_m>0$ is some constant depending on the dimension $m$.
\end{theorem}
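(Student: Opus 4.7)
The plan is to use the Wiener chaos decomposition of $D_\ell$ to reduce the variance asymptotics to those of integrated powers of normalised Gegenbauer polynomials. \emph{Step 1 (chaos expansion).} Writing $D_\ell=2S_\ell(0)-|\mathbb S^m|$ and expanding $1_{(0,+\infty)}$ in Hermite polynomials yields
$$D_\ell=2\sum_{q\ \mathrm{odd}}\frac{J_q(1_{(0,+\infty)})}{q!}\,h_{\ell,q;m}\ ,$$
since $J_q(1_{(0,+\infty)})=0$ for even $q\ge 2$. The first odd chaos $q=1$ drops out: indeed $h_{\ell,1;m}=\int_{\mathbb S^m}T_\ell\,dx$ is the $L^2$-inner product of $T_\ell$ with the constant function, hence vanishes by orthogonality of distinct eigenspaces of $\Delta_{\mathbb S^m}$ for $\ell\ge 1$. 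One may moreover restrict to even $\ell$, since for odd $\ell$ the identity $T_\ell(-x)=-T_\ell(x)$ forces $D_\ell\equiv 0$.

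\emph{Step 2 (variance formula).} By orthogonality of chaoses and the Hermite product formula for bivariate Gaussians, together with isotropy,
$$\Var(h_{\ell,q;m})=q!\,|\mathbb S^m|\,|\mathbb S^{m-1}|\int_0^\pi G_{\ell;m}(\cos\theta)^q(\sin\theta)^{m-1}\,d\theta\ ,$$
so the problem is entirely reduced to the $\ell\to +\infty$ asymptotics of these one-dimensional moment integrals.

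\emph{Step 3 (integral asymptotics and identification of $C_m$).} I would split the domain $[0,\pi]$ into the boundary strips $[0,A/\ell]\cup[\pi-A/\ell,\pi]$ and the complementary bulk. On the boundary strips, the rescaling $\theta=u/\ell$ combined with the Bessel-type limit of $G_{\ell;m}(\cos(u/\ell))$ and $(\sin\theta)^{m-1}\sim (u/\ell)^{m-1}$ produces a contribution of exact order $\ell^{-m}$ with a positive limiting constant $c_{q,m}>0$; on the bulk, Hilb's asymptotic
$$G_{\ell;m}(\cos\theta)=\frac{a_m}{\ell^{(m-1)/2}(\sin\theta)^{(m-1)/2}}\cos\Bigl((\ell+\tfrac{m-1}{2})\theta-\tfrac{(m-1)\pi}{4}\Bigr)+O(\ell^{-(m+1)/2})$$
gives a purely oscillatory integrand once $\cos^q(\cdot)$ is expanded into its odd Fourier harmonics, and integration by parts in $\theta$ yields a bulk contribution $o(\ell^{-m})$. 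Summing the orthogonal chaotic components one obtains
$$\Var(D_\ell)=\frac{C_m}{\ell^m}(1+o(1))\ ,\qquad C_m=4|\mathbb S^m|\,|\mathbb S^{m-1}|\sum_{q\ \mathrm{odd},\,q\ge 3}\frac{J_q(1_{(0,+\infty)})^2}{q!}\,c_{q,m}\ ,$$
where positivity of $C_m$ follows already from the single $q=3$ summand, while convergence of the series is ensured by Parseval, $\sum_q J_q(1_{(0,+\infty)})^2/q!=\|1_{(0,+\infty)}\|_{L^2(\gamma)}^2<\infty$, together with a uniform upper bound on the $c_{q,m}$.

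\emph{Main obstacle.} The delicate point is the \emph{uniformity in $q$} of the bulk estimate of Step 3: the remainders in Hilb's expansion must be controlled with constants not growing too fast with $q$ so that the chaos summation can be exchanged with the limit $\ell\to +\infty$; equivalently, one needs a $q$-independent envelope of the form $|G_{\ell;m}(\cos\theta)|\le \varphi_m(\theta,\ell)$ that is both sharp enough to yield the correct order and integrable against $(\sin\theta)^{m-1}$. A secondary technicality is the coherent combination of the contributions from $\theta\approx 0$ and $\theta\approx \pi$, which relies on the parity identity $G_{\ell;m}(-\cos\theta)=(-1)^\ell G_{\ell;m}(\cos\theta)$ and the restriction to even $\ell$.
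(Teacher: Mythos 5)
Your overall strategy is the same as the paper's, up to cosmetics: the paper obtains exactly your series not through the chaos decomposition but through the sign-covariance identity $\E[\mathcal H(T_\ell(x))\mathcal H(T_\ell(y))]=\tfrac{2}{\pi}\arcsin\bigl(G_{\ell;m}(\cos d(x,y))\bigr)$ followed by the Taylor expansion of $\arcsin(t)-t=\sum_{k\ge1}a_kt^{2k+1}$ with $a_k\sim c\,k^{-3/2}$ (these $a_k$ coincide, up to the overall constant, with your $J_{2k+1}^2/(2k+1)!$), and then it simply invokes the already established asymptotics $\int_0^{\pi/2}G_{\ell;m}(\cos\theta)^q(\sin\theta)^{m-1}d\theta=c_{q;m}\ell^{-m}(1+o(1))$ for $q\ge3$; your Step 3 is a re-derivation of that proposition. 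Two points in your sketch, however, need repair. First, you assert that every boundary constant $c_{q,m}$ is strictly positive; for odd $q$ this is not known (the paper explicitly leaves it open), because $c_{q,m}$ is an oscillatory Bessel integral $\propto\int_0^\infty J_{m/2-1}(\psi)^q\psi^{-q(m/2-1)+m-1}d\psi$ whose sign cannot be read off the boundary-strip rescaling. What is actually needed, and what the paper proves, is weaker: all $c_{q,m}\ge0$ (immediate, since for even $\ell$ the half-range integrals are proportional to $\Var(h_{\ell,q;m})\ge0$) together with $c_{3,m}>0$, and the latter requires the explicit closed-form evaluation of the cube-of-Bessel integral (a Weber--Schafheitlin-type formula, cited from Andrews--Askey--Roy). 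Without that computation your claim $C_m>0$ is not established.

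Second, the "main obstacle" you flag (uniform-in-$q$ control of the Hilb remainders) is not actually needed and dissolves with a cruder argument: since $|G_{\ell;m}(\cos\theta)|\le1$, for $k\ge2$ one has $|G_{\ell;m}|^{2k+1}\le|G_{\ell;m}|^{5}$, so the tail of the series beyond $k=m_0$ is bounded by $\bigl(\sum_{k>m_0}a_k\bigr)\int_0^{\pi/2}|G_{\ell;m}(\cos\theta)|^{5}(\sin\theta)^{m-1}d\theta=O\bigl(m_0^{-1/2}\ell^{-m}\bigr)$ uniformly in $\ell$, using only one fixed moment bound (Cauchy--Schwarz against the fourth and sixth moments gives $\int|G_{\ell;m}|^5(\sin\theta)^{m-1}d\theta=O(\ell^{-m})$) and $a_k\asymp k^{-3/2}$; one then takes $\ell\to\infty$ for each fixed $m_0$ and lets $m_0\to\infty$ afterwards. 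This is exactly how the paper exchanges limit and summation, and it removes the need for any $q$-uniform envelope sharper than the trivial one. A last minor caveat for your fixed-$q$ analysis: when $m=3$ and $q=3$ the Bessel integral defining $c_{3,3}$ is only conditionally convergent, so the boundary-strip limit there requires a separate truncation argument (the paper uses $J_{1/2}(z)=\sqrt{2/(\pi z)}\sin z$ and a cutoff $K=\sqrt{L}$) rather than dominated convergence.
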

Remark that the case $m=2$ has been solved in \cite{def}.
Moreover we prove CLT results for the Defect, in the high-energy limit (Theorem \ref{thDef}).
\begin{theorem}
  For $m\ne 3,4,5$ we have, as $\ell\to +\infty$,
$$
\frac{D_\ell}{\sqrt{\Var(D_\ell)}}\mathop{\to}^{\mathcal L} Z\ ,
$$
where as before $Z\sim \mathcal N(0,1)$.
\end{theorem}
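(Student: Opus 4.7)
The plan is to mimic the chaos-expansion strategy used for the non-nodal case, but to handle the fact that both the first- and second-order chaotic components of $D_\ell$ vanish. Writing $D_\ell=\int_{\mathbb S^m}M(T_\ell(x))\,dx$ with $M(t)=\text{sign}(t)$, the Hermite expansion reads
\begin{equation*}
D_\ell=\sum_{q\ge 1}\frac{J_q(M)}{q!}\,h_{\ell,q;m},\qquad h_{\ell,q;m}:=\int_{\mathbb S^m}H_q(T_\ell(x))\,dx,
\end{equation*}
with $J_q(M)=\E[M(Z)H_q(Z)]$. Since $M$ is odd, $J_q(M)=0$ for every even $q$; moreover the first-chaos term $h_{\ell,1;m}=\int_{\mathbb S^m}T_\ell(x)\,dx$ vanishes because random hyperspherical harmonics of degree $\ell\ge 1$ are orthogonal to constants. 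Hence the leading surviving term is the third chaos $h_{\ell,3;m}$.

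The second step is a careful variance analysis. By isotropy,
\begin{equation*}
\Var(h_{\ell,q;m})=q!\,|\mathbb S^m|\int_0^{\pi}G_{\ell;m}(\cos\theta)^q\,\sin^{m-1}\theta\,d\theta,
\end{equation*}
to which I would apply Hilb-type asymptotics for the normalized Gegenbauer kernel together with the linearization of $\cos^q(\ell\theta+\varphi)$ into a sum of harmonics $\cos((q-2k)\ell\theta+\dots)$; the non-stationary contributions cancel by standard oscillatory-integral arguments. For $m\notin\{3,4,5\}$ I expect to find $\Var(h_{\ell,3;m})\asymp\ell^{-m}$ while $\Var(h_{\ell,q;m})=o(\ell^{-m})$ for every odd $q\ge 5$. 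Combined with the sharp rate $\Var(D_\ell)\sim C_m/\ell^m$ from Theorem~\ref{thdefvar}, this gives the $L^2$-reduction
\begin{equation*}
\frac{D_\ell}{\sqrt{\Var(D_\ell)}}=\frac{(J_3(M)/6)\,h_{\ell,3;m}}{\sqrt{\Var(D_\ell)}}+o_{\P}(1).
\end{equation*}

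The final step is a CLT for the third-chaos projection $h_{\ell,3;m}$. Being a fixed-order Wiener-chaos element, the Nourdin--Peccati Fourth Moment Theorem applies: it suffices to show that the fourth cumulant of $h_{\ell,3;m}$ is of strictly smaller order than its squared variance. The fourth cumulant is a finite sum of pairing integrals of the form
\begin{equation*}
\int_{(\mathbb S^m)^4}G_{\ell;m}(\cos d(x_1,x_2))^a\,G_{\ell;m}(\cos d(x_2,x_3))^b\,G_{\ell;m}(\cos d(x_3,x_4))^c\,G_{\ell;m}(\cos d(x_4,x_1))^d\,dx_1\cdots dx_4
\end{equation*}
with exponents $(a,b,c,d)$ summing to $6$, and the same Hilb/oscillation machinery yields the required bound, closing the proof.

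The main obstacle I anticipate lies in the sharp asymptotic evaluation of $\int_0^{\pi}G_{\ell;m}(\cos\theta)^q\sin^{m-1}\theta\,d\theta$ for arbitrary $m$ and odd $q\ge 3$: the exponent $q(m-1)/2$ in the Hilb expansion interacts nontrivially with the weight $\sin^{m-1}\theta$, producing integrable, borderline, or logarithmically divergent regimes. The dimensions $m\in\{3,4,5\}$ are precisely the critical ones where this borderline behavior prevents a single chaos from strictly dominating the series, which is why they are excluded from the statement; any attempt to cover them would require either a nonstandard normalization or the identification of a non-Gaussian limit law.
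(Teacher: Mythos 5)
Your reduction to the third chaos is where the argument breaks down. By Proposition \ref{varianza}, for every $q\ge 3$ and every dimension $d\ge 3$ one has $\int_0^{\pi/2}G_{\ell;d}(\cos\vartheta)^q(\sin\vartheta)^{d-1}\,d\vartheta = c_{q;d}\,\ell^{-d}(1+o(1))$, so that $\Var(h_{\ell,q;d})$ is of order $\ell^{-d}$ for \emph{every} odd $q\ge 3$ (the constants $c_{q;d}$ are nonnegative, $c_{3;d}>0$ explicitly, and there is no reason for the higher ones to vanish); correspondingly, the proof of Theorem \ref{thdefvar} identifies the limiting constant as $C_d=\tfrac{4}{\pi}\mu_d\mu_{d-1}\sum_{k\ge 1}a_k c_{2k+1;d}$, a sum over \emph{all} odd chaoses. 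Hence your expectation $\Var(h_{\ell,q;m})=o(\ell^{-m})$ for odd $q\ge 5$ is false, and the displayed reduction $D_\ell/\sqrt{\Var(D_\ell)}=(J_3(M)/6)\,h_{\ell,3;m}/\sqrt{\Var(D_\ell)}+o_{\P}(1)$ does not hold: no single chaotic projection dominates the Defect. The route that works (and is the one the paper follows) is a truncation argument: for a truncation level $N$, the partial sum $D_{\ell,N}=\sum_{k=1}^{N-1}\sqrt{2/\pi}\,\tfrac{(-1)^k}{(2k+1)!(2k)!!}\,h_{\ell,2k+1;d}$ satisfies a CLT for each fixed $N$ (via the quantitative CLTs for the single components, Proposition \ref{teo1} and Corollary \ref{cor1}, combined with the general-polynomial result, Proposition \ref{corollario1}, in the case $\beta_2=0$), while $\E\bigl[\bigl(D_\ell/\sqrt{\Var(D_\ell)}-D_{\ell,N}/\sqrt{\Var(D_{\ell,N})}\bigr)^2\bigr]=O(N^{-1/2})$ uniformly in $\ell$, using $a_k\asymp k^{-3/2}$ together with Proposition \ref{varianza} and Theorem \ref{thdefvar}; letting $N\to\infty$ after $\ell\to\infty$ yields the CLT for $D_\ell$.

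Your explanation for excluding $m=3,4,5$ is also misdiagnosed. The obstruction is not a borderline failure of domination (again, no chaos dominates in any dimension); it is that the fourth-moment bound for the \emph{third} chaos is not strong enough there: Proposition \ref{cumd} gives $\mathcal{K}_\ell(3;r)=O\bigl(\ell^{-2d-\frac{d-5}{2}}\bigr)$, so the bound \paref{th} yields $d_{\mathcal D}\bigl(h_{\ell,3;d}/\sqrt{\Var(h_{\ell,3;d})},Z\bigr)=O(\ell^{-(d-5)/4})$, which vanishes only for $d>5$. For $d=3,4,5$ the CLT for $h_{\ell,3;d}$ (and hence for the truncated sums) is not covered by the available cumulant estimates, and this is exactly why those dimensions are excluded and deferred to \cite{mau}. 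Your third step would therefore need the stationary-phase evaluation of the fourth-cumulant integrals to be sharp enough to detect this threshold; as written, the claim that ``the same machinery yields the required bound'' glosses over the very point at which the restriction on $m$ enters.
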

The remaining cases ($m=3,4,5$) require a precise investigation of fourth-order cumulant of r.v.'s $h_{\ell,3;m}$ and are still work in progress in \cite{mau}, where moreover the \emph{quantitative} CLT for the Defect in the Wasserstein distance will be proved:
$$
d_W \left ( \frac{D_\ell}{\sqrt{\Var(D_\ell)}}, Z\right ) =O\left( \ell^{-1/4} \right) \ .
$$

\subsection*{Chapters 7 \& 8}

\underline{Length of level curves}

The length $\mathcal L_\ell(z)$ of level curves $T_\ell^{-1}(z)$ can be formally written as
$$
\mathcal L_\ell(z) = \int_{\mathbb S^2} \delta_z(T_\ell(x))\| \nabla T_\ell(x)\|\, dx\ ,
$$
where $\delta_z$ denotes the Dirac mass in $z$, $\| \cdot \|$ the norm in $\R^2$ and $\nabla$ the gradient.
The expected value is \cite{Wig, wigsurvey}
$$
\E[\mathcal L_\ell(z)] = 4\pi\cdot \frac{\e^{-z^2/2}}{2\sqrt{2}} \sqrt{\ell(\ell+1)}
$$
and for the variance we have \cite{wigsurvey} if $z\ne 0$
$$
\Var\left (\mathcal L_\ell(z)\right ) \sim C\e^{-z^2}z^4\cdot \ell\ ,\quad \ell\to +\infty\ ,
$$
for some $C>0$.  I.Wigman computed (private calculations) moreover the exact constant
$$
C=\frac{\pi^2}{2}\ .$$
For the nodal length $\mathcal L_\ell:= \mathcal L_\ell(0)$ we have   \cite{Wig}
\begin{equation}\label{varN}
\Var(\mathcal L_\ell) \sim \frac{1}{32} \cdot \log \ell\ ,\quad \ell\to +\infty\ .
\end{equation}
Here also we observe the different behavior for asymptotic variances: in the nodal case it is of smaller order (logarithmic)  rather  than  what should be the  ``natural'' scaling $\approx \ell$. This is due to some analytic cancellation in the asymptotic expansion of the length variance which occurs only for $z=0$.
This phenomenon has been called ``obscure'' Berry's cancellation (see \cite{Berry2002, Wig}).

$\bullet$ We investigate the asymptotic distribution of the length of level curves.

We try to answer this question in \cite{mistosfera}: here also we first compute the chaotic expansion of $\mathcal L_\ell(z)$ (Proposition \ref{teoexpS}). Let us denote  $(\partial_1 \widetilde T_\ell, \partial_2 \widetilde T_\ell)$  the normalized gradient, i.e. $ \sqrt{\frac{2}{\ell(\ell+1)}}\nabla T_\ell$.
 \begin{prop}
The chaotic expansion of $\mathcal L_\ell(z)$ is
\begin{eqnarray}\label{chaosexpintro}
\mathcal L_\ell(z) &=& \E [\mathcal L_\ell(z)] + \sqrt{\frac{\ell(\ell+1)}{2}}\sum_{q=2}^{+\infty}\sum_{u=0}^{q}\sum_{k=0}^{u}
\frac{\alpha _{k,u-k}\beta _{q-u}(z)
}{(k)!(u-k)!(q-u)!} \!\!\times\\
&&\hspace{4.5cm} \times \int_{\mathbb S^2}\!\! H_{q-u}(T_\ell (x))
H_{k}(\partial_1 \widetilde T_\ell (x))H_{u-k}(\partial_2
\widetilde T_\ell (x))\,dx,\notag
\end{eqnarray}
where the series converges in $L^2(\P)$ and $(\beta_l(z))_l$, $(\alpha_{n,m})_{n,m}$ are respectively the  chaotic coefficients of the Dirac mass
in $z$ and the norm in $\R^2$.
\end{prop}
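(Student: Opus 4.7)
The plan is to derive the chaotic expansion \paref{chaosexpintro} by a Kac-Rice type regularization of the Dirac mass followed by termwise expansion in the Hermite basis and passage to the limit. For $\varepsilon>0$, approximate $\delta_z$ by the rescaled indicator $\delta^\varepsilon_z := \frac{1}{2\varepsilon}1_{[z-\varepsilon,z+\varepsilon]}$ and set
$$
\mathcal L^\varepsilon_\ell(z) := \int_{\mathbb S^2}\delta^\varepsilon_z(T_\ell(x))\,\|\nabla T_\ell(x)\|\,dx.
$$
By the coarea formula $\mathcal L^\varepsilon_\ell(z)\to \mathcal L_\ell(z)$ almost surely, and standard Gaussian Kac-Rice second-moment estimates (based on the smoothness and non-degeneracy of the covariance kernel \paref{ker2intro}) will upgrade this to convergence in $L^2(\P)$.

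Next I would exploit that, at every fixed $x\in \mathbb S^2$, the three random variables $T_\ell(x),\ \partial_1\widetilde T_\ell(x),\ \partial_2\widetilde T_\ell(x)$ are \emph{independent} standard Gaussians: independence between field and gradient at the same point comes from the isotropy-induced identity $\E[T_\ell(x)\,\partial_j T_\ell(x)]=0$, while the two normalized partial derivatives decorrelate by choosing an orthonormal local frame and invoking the rotational invariance of \paref{ker2intro}. Consequently the tensorized Hermite polynomials form a complete orthonormal system in the corresponding Gaussian $L^2$ space, and each factor of the integrand admits separate expansions
$$
\delta^\varepsilon_z(T_\ell(x)) = \sum_{l\ge 0}\frac{\beta^\varepsilon_l(z)}{l!}H_l(T_\ell(x)), \qquad \|\nabla T_\ell(x)\| = \sqrt{\tfrac{\ell(\ell+1)}{2}}\sum_{n,m\ge 0}\frac{\alpha_{n,m}}{n!\,m!}H_n(\partial_1\widetilde T_\ell(x))H_m(\partial_2\widetilde T_\ell(x)),
$$
where $\beta^\varepsilon_l(z)\to \beta_l(z) = H_l(z)\phi(z)$ and $\alpha_{n,m}$ are the Hermite coefficients of the Euclidean norm on $\R^2$ viewed as an element of the two-dimensional Gaussian $L^2$. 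Multiplying the two expansions, integrating over $\mathbb S^2$, and reindexing by total chaotic order $q=(q-u)+k+(u-k)$ produces the right-hand side of \paref{chaosexpintro} with $\beta^\varepsilon_l$ in place of $\beta_l$.

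The sum starts at $q=2$ because the $q=0$ contribution is precisely the mean $\E[\mathcal L_\ell(z)]$, and the $q=1$ contribution vanishes identically: the first-order gradient coefficients $\alpha_{1,0}, \alpha_{0,1}$ are zero since $\|\cdot\|$ is even in each coordinate, while the remaining $q=1$ summand, proportional to $\beta_1(z)\int_{\mathbb S^2} H_1(T_\ell(x))\,dx$, vanishes because $T_\ell$ is orthogonal to the constants in $L^2(\mathbb S^2)$ for $\ell\ge 1$. To pass to the limit $\varepsilon\to 0$ one relies on the orthogonality of distinct Wiener chaoses: each fixed-$q$ summand converges in $L^2(\P)$ by dominated convergence (the coefficients $\beta^\varepsilon_l(z)$ are uniformly bounded on compact neighbourhoods of $z$), and the uniform-in-$\varepsilon$ bound on the total variance transferred from $\mathcal L^\varepsilon_\ell(z)$ allows the swap of limit and infinite sum.

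The main obstacle is precisely the $L^2(\P)$ control of the series, because $\delta_z$ is not literally in the Gaussian $L^2$: the pointwise Hermite coefficients $\beta_l(z)=H_l(z)\phi(z)$ are well-defined but $\sum_l \beta_l(z)^2/l!$ diverges, so the convergence of the expansion cannot come from the Dirac factor alone. It must be produced by the gradient factor $\|\nabla T_\ell(x)\|$ and by the smoothing effect of integration against $dx$ on $\mathbb S^2$, both of which require delicate bounds on multi-correlation integrals of Legendre polynomials of the form $\int_{\mathbb S^2\times \mathbb S^2} P_\ell(\cos d(x,y))^{q-u}\,dx\,dy$ and their gradient analogues, in the spirit of the computations used in the asymptotic analysis of the nodal length \paref{varN}.
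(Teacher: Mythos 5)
Your construction follows essentially the same route as the paper's own proof of Proposition \ref{teoexpS}: regularize $\delta_z$ by $\frac{1}{2\eps}1_{[z-\eps,z+\eps]}$, use the pointwise independence of $T_\ell(x),\partial_1\widetilde T_\ell(x),\partial_2\widetilde T_\ell(x)$ to expand the indicator factor and the norm factor separately into Hermite series with coefficients $\beta^\eps_l(z)\to\beta_l(z)=\phi(z)H_l(z)$ and $\alpha_{n,m}$ (vanishing unless both indices are even), multiply, integrate over $\mathbb S^2$, regroup by total chaotic order, and let $\eps\to 0$ chaos by chaos. Your remark on why $q=1$ drops out is correct and consistent with the statement.

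The problem is your closing paragraph, where you declare as the ``main obstacle'' the $L^2(\P)$ convergence of the series and assert that it requires delicate bounds on multi-correlation integrals of Legendre polynomials, which you then do not supply. No such bounds are needed, and leaving the proof to hinge on them is the one genuine gap in your write-up. Since $\mathcal L_\ell(z)\in L^2(\P)$ (a fact you already invoke, established in \cite{wigsurvey, Wig}), the orthogonal Wiener--It\^o decomposition $\mathcal L_\ell(z)=\sum_q \text{proj}(\mathcal L_\ell(z)|C_q)$ converges in $L^2(\P)$ automatically; the divergence of $\sum_l\beta_l(z)^2/l!$ is irrelevant because one never expands $\delta_z$ itself in $L^2$, only the regularized functionals. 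The only thing that actually has to be proved is the \emph{identification} of each fixed-order projection, and your own $\eps$-limit argument already delivers it once you observe that $\text{proj}(\cdot|C_q)$ is an $L^2$-contraction: $\mathcal L^\eps_\ell(z)\to\mathcal L_\ell(z)$ in $L^2(\P)$ forces $\text{proj}(\mathcal L^\eps_\ell(z)|C_q)\to\text{proj}(\mathcal L_\ell(z)|C_q)$, while for each fixed $q$ the finitely many coefficients $\beta^\eps_{q-u}(z)\to\beta_{q-u}(z)$ give convergence of the explicit formula; equating the two limits yields \paref{chaosexpintro} with no uniform-in-$\eps$ variance bookkeeping and no swap of limit and infinite sum. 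Relatedly, your appeal to ``standard Kac--Rice second-moment estimates'' for the $L^2$ convergence $\mathcal L^\eps_\ell(z)\to\mathcal L_\ell(z)$ is vaguer than necessary: the paper's Lemma \ref{approxS} gets it from a.s. convergence plus convergence of second moments, via the co-area formula, Jensen's inequality and the continuity of $u\mapsto\E[\mathcal L_\ell(u)^2]$, and you should either reproduce that argument or make your cited estimates precise.
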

By some computations, it is possible to give an exact formula for the second chaotic component (Proposition \ref{2sfera})
$$
\text{proj}(\mathcal L_\ell(z)|C_2) = \sqrt{\frac{\ell(\ell+1)}{2}}
 \frac{1}{4}  \e^{-z^2/2} z^2 \int_{\mathbb S^2} H_2(T_\ell(x))\,dx=
\sqrt{\frac{\ell(\ell+1)}{2}}
 \frac{1}{4}  \e^{-z^2/2} z^2 h_{\ell,2;2}\ .
$$
Note that, as for the excursion area, the second component vanishes if and only if $z=0$.

Computing the exact variance of $\text{proj}(\mathcal L_\ell(z)|C_2) $, it turns out that, for $z\ne 0$
$$
\lim_{\ell\to +\infty} \frac{\Var(\mathcal L_\ell(z))}{\Var(\text{proj}(\mathcal L_\ell(z)|C_2) )}=1\ ,
$$
so that, as $\ell\to \infty$,
$$
\frac{\mathcal L_\ell(z) - \E[\mathcal L_\ell(z)]}{\sqrt{\Var(\mathcal L_\ell(z))}}= \frac{\text{proj}(\mathcal L_\ell(z)|C_2)}{\sqrt{\Var(\mathcal L_\ell(z))}}+o_\P(1)\ .
$$
This implies that the total length has the same asymptotic distribution of the second chaotic projection (Theorem \ref{main th S}).
\begin{theorem}
As $\ell\to +\infty$, if $z\ne 0$, we have
$$
\frac{\mathcal L_\ell(z) - \E[\mathcal L_\ell(z)]}{\sqrt{\Var(\mathcal L_\ell(z))}}\mathop{\to}^{\mathcal L} Z\ ,
$$
where $Z\sim \mathcal N(0,1)$.
\end{theorem}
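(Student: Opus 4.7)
The statement follows by combining the asymptotic reduction to the second Wiener chaos (already displayed immediately above the theorem) with the quantitative CLT for the sequence $h_{\ell,2;2}=\int_{\mathbb S^2} H_2(T_\ell(x))\,dx$ obtained via the Fourth Moment Theorem (Proposition \ref{teo1}, specialized to $q=2$, $m=2$).

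First, the identity
$$
\frac{\mathcal L_\ell(z) - \E[\mathcal L_\ell(z)]}{\sqrt{\Var(\mathcal L_\ell(z))}}= \frac{\text{proj}(\mathcal L_\ell(z)|C_2)}{\sqrt{\Var(\mathcal L_\ell(z))}}+o_\P(1)
$$
reduces the claim, by Slutsky's lemma, to proving convergence in distribution of the normalized second chaotic projection. Now the explicit formula
$$
\text{proj}(\mathcal L_\ell(z)|C_2) = \sqrt{\frac{\ell(\ell+1)}{2}}\cdot\frac{1}{4}\e^{-z^2/2}z^2\cdot h_{\ell,2;2}
$$
shows that, when $z\ne 0$, the deterministic prefactor is nonzero; dividing numerator and denominator by it gives
$$
\frac{\text{proj}(\mathcal L_\ell(z)|C_2)}{\sqrt{\Var(\text{proj}(\mathcal L_\ell(z)|C_2))}}= \frac{h_{\ell,2;2}}{\sqrt{\Var(h_{\ell,2;2})}}.
$$

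Next, one invokes Proposition \ref{teo1} to conclude that
$$
\frac{h_{\ell,2;2}}{\sqrt{\Var(h_{\ell,2;2})}}\mathop{\to}^{\mathcal L} Z\ ,\qquad \ell\to+\infty,
$$
with $Z\sim\mathcal N(0,1)$. Since $\Var(\mathcal L_\ell(z))/\Var(\text{proj}(\mathcal L_\ell(z)|C_2))\to 1$, the ratio $\sqrt{\Var(\text{proj}(\mathcal L_\ell(z)|C_2))/\Var(\mathcal L_\ell(z))}$ converges to $1$; applying Slutsky's lemma once more transfers the CLT to the full length functional, yielding the claim.

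\textbf{Main obstacle.} The genuinely delicate part of the argument is not the CLT itself but the variance analysis carried out in Proposition \ref{2sfera} and the limit $\Var(\mathcal L_\ell(z))/\Var(\text{proj}(\mathcal L_\ell(z)|C_2))\to 1$, which amounts to showing that, for $z\neq 0$, the projections on chaoses of order $q\ge 3$ in \paref{chaosexpintro} contribute only $o(\ell)$ to the variance and are thus negligible against the $\Theta(\ell)$ growth of the second-chaos term; this requires carefully bounding, uniformly in $q$, integrals over $\mathbb S^2\times\mathbb S^2$ of products of Legendre polynomials $P_\ell(\cos d(x,y))^{a}$ and derivatives thereof, using the Hilb-type asymptotics for $P_\ell$. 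Once this dominance is granted (as in the lines preceding the theorem), the CLT transfer is immediate. The cancellation in the chaotic coefficient $\beta_0(z)\alpha_{1,1}+\cdots$ that forces the second-chaos projection to vanish precisely at $z=0$ is also what distinguishes this non-nodal regime from the nodal case, where a higher-order chaos becomes dominant and a separate treatment is needed.
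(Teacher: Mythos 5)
Your proof is correct and follows essentially the same route as the paper: reduction to the second chaotic projection via the chaotic expansion, a CLT for that projection, and a transfer by Slutsky's lemma using $\Var(\mathcal L_\ell(z))/\Var(\mathrm{proj}(\mathcal L_\ell(z)\,|\,C_2))\to 1$. The only cosmetic differences are that the paper proves the second-chaos CLT directly by the classical CLT applied to the i.i.d.\ sum $\sum_m\bigl(a_{\ell,m}^2-\tfrac{4\pi}{2\ell+1}\bigr)$ (note the $q=2$ case of Proposition \ref{teo1} is itself a Berry--Esseen argument for independent summands, not the fourth-moment bound), and that the variance dominance you flag as the main obstacle is obtained in the paper by comparing the explicitly computed second-chaos variance with Wigman's asymptotics $\Var(\mathcal L_\ell(z))\sim \tfrac{\pi^2}{2}z^4\e^{-z^2}\,\ell$, rather than by bounding the chaoses of order $q\ge 3$ uniformly in $q$.
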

The nodal case requires harder work, indeed it is not a simple task to derive an explicit expression for the fourth-order chaos and is still a work in progress. We can anticipate that the fourth-order chaotic projection dominates the whole nodal length and limit theorems will come hopefully soon (\cite{mistosfera}).

Furthermore we decided to investigate the nodal case on the standard $2$-torus $\mathbb T$: in \cite{misto} we prove a Non-Central Limit Theorem for
nodal lengths of arithmetic random waves (Theorem \ref{thm:lim dist sep}). The situation is analogous to the sphere: indeed the second chaotic component disappers and the fourth-order chaos dominates. The limit distribution is unexpectly non-Gaussian, indeed  it is a linear combination of
$H_2(Z_1)$ and $H_2(Z_2)$, where $H_2$ is the second Hermite polynomial and $Z_1, Z_2$ are i.i.d. standard Gaussian r.v.'s.

\underline{Euler-Poincar\'e characteristic}

The Euler-Poincar\'e characteristic of $z$-excursion set $\chi(A_\ell(z))$  for random spherical harmonics has been investigated in \cite{fluct}. In the quoted paper, a precise expression for the
asymptotic variance is proven, moreover the Gaussian Kinematic
Formula \cite{adlertaylor} gives immediately the expected value:
$$
\E[\chi(A_\ell(z))]= \sqrt{\frac{2}{\pi}}\e^{-z^2/2} z \frac{\ell(\ell+1)}{2}
+ 2(1 - \Phi(z))\ ,
$$
and
$$
\lim_{\ell\to +\infty} \ell^{-3}\Var(\chi(A_\ell(z)))= \frac{1}{4}(z^3 - z)^2 \phi(z)^2\ .
$$
The same phenomenon happens here: i.e. the nodal variance is of smaller order than the case $z\ne 0$.

For a unified formula for asymptotic variances of excursion area, length of level curves and Euler-Poincar\'e characteristic see (1.11) in \cite{fluct}.

Quantitative CLTs for $\chi(A_\ell(z)$ will be treated in forthcoming papers by the same authors V.Cammarota, D.Marinucci and I.Wigman.

\begin{remark}\rm
A careful investigation of previous results on asymptotic variances, suggests that there is a strict connection between Berry's cancellation phenomenon for Lipschitz-Killing curvatures and chaotic expansions. Indeed the unique  case which shows a different scaling for the variance as well as a zero second order chaotic component is the nodal one. Moreover, in situations analyzed above, there is always a single dominating chaotic projection: the second one at non-zero level and the fourth one in the nodal case.

We conjecture that this qualitative behaviour should be universal somehow: we mean that it should hold for all Lipschitz-Killing curvatures on every ``nice'' compact manifold.
\end{remark}

\section*{Part 3: Spin random fields}
\addcontentsline{toc}{section}{Part 3: Spin random fields}

\subsection*{Chapter 9}

As briefly stated above, in cosmology and astrophysics  spherical random fields are used
to model CMB data \cite{dogiocam}. More precisely, the \emph{temperature} of this radiation is seen as a single
realization of an isotropic random field on $\mathbb S^2$, whereas to model its \emph{polarization}
we need to introduce random fields which do not take ordinary scalar values but have a more complex
geometrical structure, the so-called spin random fields \cite{dogiocam, malyarenko, mauspin, claudiospin}. Roughly speaking, they can be seen
as random structures that at each point of the sphere take as a value a random ``curve''.

This family of random models is indexed by
integers $s\in \Z$: for instance, a spin-$0$ random field is simply a  spherical random field,
whereas a spin-$1$ random field takes as a value at the point $x\in \mathbb S^2$
a random tangent vector to the sphere at $x$.  The polarization of CMB is seen as a realization of a spin-$2$ random field.

From a mathematical point of view, spin random fields are random sections of particular
complex-line bundles on the sphere $\mathbb S^2$, whose construction can be interpreted
in terms of group representation concepts. These
 are special cases of so-called homogeneous vector bundle, which we handle in
 the second part of \cite{mauspin}. Briefly, given a compact topological group $G$ and an irreducible
 representation $\tau$ of its
 closed subgroup $K$, we can construct the $\tau$-homogeneous vector bundle as follows.
 Let $H$ be the (finite-dimensional) Hilbert space of the representation $\tau$.
 Consider the action of $K$ on $G\times K$ defined as
 $$\displaylines{
k(g,h) := (gk, \tau(k^{-1}) h)\ ,
 }$$
and denote by $G\times_\tau H$ the quotient space. The $\tau$-homogeneous vector
bundle is the triple $\xi_\tau:=(G\times_\tau H, \pi_\tau, G/K)$ where  the bundle projection is
$$
\pi_\tau :G\times_\tau H\to G/K;\qquad \theta(g,h)\mapsto gK\ ,
$$
 $\theta(g,h)$ denoting the orbit of the pair $(g,h)$ and $gK$  the lateral class of $g$.
\begin{definition}
A random field $T$ on the $\tau$-homogeneous vector bundle is a random section of $\xi_\tau$, i.e. a measurable map
$$
T: \Omega\times G/K \to G\times_\tau H; \quad (\omega, x)\mapsto T_x(\omega)\ ,
$$
where for each $\omega\in \Omega$, the sample path is a section of $\xi_\tau$, that is
 $\pi_\tau \circ T (\omega) = \text{id}_{G/K}$.
\end{definition}
 Of course this means that for each $x=gK\in G/K$, $T_x$ takes as a value a random element in $\pi^{-1}_\tau(gK)$.

 In the quoted paper, we first introduce a new approach
 to study random fields in $\tau$-homogeneous vector bundles: the ``pullback''
random field. The latter is a (complex-valued) random field $X$ on $G$ whose paths satisfy the following invariance  property
\begin{equation}\label{invS}
X_{gk} = \tau(k^{-1}) X_g\ ,\qquad g\in G, k\in K\ .
\end{equation}
There is one to one correspondence between (random) sections of $\xi_\tau$ and (random) functions on $G$ satysfying \paref{invS} (Proposition \ref{pullback-s-deterministic}) and we prove that $T$ is \emph{equivalent} to its pullback $X$ (see Proposition \ref{prop-pull1}).

Now our attention is devoted to the spherical case. Here $G=SO(3)$, $K\cong SO(2)$: the latter being abelian, its irreducible representations are all one-dimensional and coincide with its linear characters $\chi_s$, $s\in \Z$. Each $k\in SO(2)$ is a counterclockwise rotation of the complex plane $\C$ by an angle $\theta(k)$. The action of $k$ through $\chi_s$ can be seen as a clockwise rotation by the angle $s\cdot \theta(k)$.

The $\chi_s$-homogeneous vector bundle $\xi_s:= \xi_{\chi_s}$ on the sphere is called spin $s$ line bundle and a random field in $\xi_s$ is known as spin $s$ random field.

Our aim is to extend the representation formulas for isotropic Gaussian fields on homogeneous spaces of a compact group in Part 1 to the spin case. The pullback approach allows to deal with isotropic Gaussian  fields $X$ on $SO(3)$ whose sample paths satysfy the invariance property \paref{invS}, that is
$$
X_{gk} = \chi_s(k^{-1}) X_g\ ,\quad g\in SO(3), k\in SO(2)\ .
$$
Indee we prove, with an analogous construction to the one developed in Chapter 2, that for each function $f\in L^2(SO(3)$ bi-$s$-associated, i.e. such that
$$
f(k_1 g k_2) = \chi_s(k_1) f(g) \chi_s(k_2)\ ,\qquad g\in SO(3), k_1, k_2\in SO(2)\ ,
$$
an isotropic complex Gaussian spin $s$ random field $X^f$ is associated (Proposition \ref{propspin=}). Moreover also the converse is true (Theorem \ref{teospin=}).
\begin{theorem}\label{introFin}
Let $T$ be an isotropic complex-Gaussian section of the spin $s$ line bundle and $X$ its pullback random field on $SO(3)$. Then there exists a bi-$s$-associated function $f\in L^2(SO(3))$ such that $X$ and $X^f$ have the same law.
\end{theorem}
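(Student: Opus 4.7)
The plan is to read off a candidate $f$ from the covariance structure of $X$ by harmonic analysis on $SO(3)$, and then check that the field $X^f$ produced by the construction of Proposition \ref{propspin=} has the same law as $X$. Since $X$ and $X^f$ are both centered complex Gaussian, matching distributions reduces to matching the covariance function $\phi^X(g):=\Cov(X_g,X_e)$ and, a priori, the pseudo-covariance $\zeta^X(g):=\E[X_gX_e]$.

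First I would combine the spin condition $X_{gk}=\chi_s(k^{-1})X_g$ with the isotropy of $X$ to derive the bi-equivariance
$$
\phi^X(k_1 g k_2)=\chi_{-s}(k_1)\chi_{-s}(k_2)\,\phi^X(g),\qquad k_1,k_2\in SO(2).
$$
Using the transformation law $D^\ell_{mn}(k_1 g k_2)=\chi_m(k_1)\chi_n(k_2)D^\ell_{mn}(g)$ of the Wigner matrix coefficients, the Peter-Weyl expansion of $\phi^X$ collapses to a single entry per level:
$$
\phi^X(g)=\sum_{\ell\ge|s|}\mu_\ell\,\overline{D^\ell_{ss}(g)}.
$$
Positive definiteness of $\phi^X$ as a function on the group (Bochner on $SO(3)$) forces $\mu_\ell\ge 0$, while the summability $\sum_\ell\mu_\ell=\phi^X(e)=\Var(X_e)<\infty$ justifies setting
$$
f:=\sum_{\ell\ge|s|}\sqrt{(2\ell+1)\mu_\ell}\,D^\ell_{ss}\in L^2(SO(3)).
$$
The same transformation law makes $f$ bi-$s$-associated, and a direct Schur-orthogonality computation yields $\langle L_g f,f\rangle=\phi^X(g)$, so the covariance of $X^f$ matches that of $X$.

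The main obstacle is the pseudo-covariance. An analogous Schur-orthogonality computation shows that, for any $s\ne 0$ and any bi-$s$-associated $f$, the pseudo-covariance $\langle L_g f,\overline f\rangle$ of $X^f$ vanishes identically, the relevant matrix-coefficient integral requiring $s=-s$. Hence, in the spin-$s$ case with $s\ne 0$, the theorem tacitly forces $\zeta^X\equiv 0$, and I would establish this by expanding $\zeta^X$ via Peter-Weyl under its own bi-equivariance of weight $(s,-s)$ (derived in the same way from the spin condition and isotropy) and then invoking the positive semi-definiteness of the joint covariance of $(\mathrm{Re}\,X,\mathrm{Im}\,X)$ to force the Fourier coefficients of $\zeta^X$ to vanish --- a circular-symmetry phenomenon already foreshadowed by Remark \ref{rem-sfera} in the scalar setting. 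Once both covariance and pseudo-covariance match, the Gaussianity of $X$ and $X^f$ yields $X\mathop{=}\limits^{\Loi}X^f$, completing the proof.
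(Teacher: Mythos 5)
Your construction of $f$ and the covariance matching coincide with the paper's own proof of Theorem \ref{teospin=}: the associated function $\phi^X$ is continuous, positive definite and bi-$(-s)$-associated (Remark \ref{associate-bi}), its Peter--Weyl series collapses to the entries $D^\ell_{-s,-s}=\overline{D^\ell_{s,s}}$ with nonnegative coefficients (Proposition \ref{structure of positive definite}, Theorem \ref{gangolli-true}), and a square root of these coefficients yields a bi-$s$-associated $f\in L^2(SO(3))$ with $\langle L_g f, f\rangle=\phi^X(g)$ by Schur orthogonality. Likewise, your observation that $\langle L_g f,\overline f\rangle\equiv 0$ for $s\neq 0$ is exactly the argument of Proposition \ref{propspin=} (functions of type $s$ and of type $-s$ are orthogonal), so $X^f$ is complex Gaussian.

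The gap is in your treatment of the pseudo-covariance of $X$ itself. You propose to \emph{prove} $\zeta^X\equiv 0$ from its $(s,-s)$ bi-equivariance together with positive semi-definiteness of the joint covariance of $(\mathrm{Re}\,X,\mathrm{Im}\,X)$; this step would fail, because the vanishing is not a consequence of isotropy and positivity. Every representation $D^\ell$ of $SO(3)$ is self-conjugate, so $H_\ell$ carries a nonzero invariant bilinear form (unique up to a scalar), and the relation matrix of the level-$\ell$ coefficients of an isotropic type-$s$ Gaussian field may be any sufficiently small multiple of it without violating positivity: there exist isotropic, complex-valued Gaussian fields of type $s$ with $\zeta^X\not\equiv 0$, the spin analogue of the real fields of Remark \ref{rem-sfera}. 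Note also that Remark \ref{rem-sfera} and \paref{cic} point in the opposite direction from the ``circular-symmetry phenomenon'' you invoke: in the scalar case the relation function of $T^f$ is never zero (unless $f\equiv0$), which is precisely why no complex Gaussian field arises there. In the theorem the vanishing of $\zeta^X$ is part of the hypothesis, not something to be derived: $T$ is assumed \emph{complex} Gaussian (circular), hence $\zeta^X\equiv 0$ follows in one line from Proposition \ref{zeta-prop} ($\E[Z^2]=0$ for a centered complex Gaussian $Z$). Replacing your positivity argument by this observation repairs the proof, which then coincides with the paper's.
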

Finally,  we prove that our approach is equivalent to the existing ones:
by Malyarenko \cite{malyarenko} (Proposition \ref{noimal}) and Marinucci \& Geller \cite{marinuccigeller} (\S 9.4, especially Lemma  \ref{lemma angolo} ).

\

\noindent The anticipated ``circulant'' structure can be found in  Theorem \ref{intro1} and Theorem \ref{introFin}: this connection is the starting point in the analysis of spin random fields. Indeed open questions concern how to extend results presented in Chapters 1--8 to the spin case. We leave it as a possible topic for future research.

\pagenumbering{arabic}
\renewcommand{\chaptermark}[1]{\markboth{Chap..\ \textbf{\thechapter}\ -\ \textit{#1}}{}}
\renewcommand{\sectionmark}[1]{\markright{Sec.\ \textbf{\thesection}\ -\ \textit{#1}}}
\fancyhf{} \fancyfoot[CE,CO]{\thepage} \fancyhead[CO]{\rightmark}
\fancyhead[CE]{\leftmark}
\renewcommand{\headrulewidth}{0.5pt}
\renewcommand{\footrulewidth}{0.0pt}
\addtolength{\headheight}{0.5pt}
\fancypagestyle{plain}{\fancyhead{}\renewcommand{\headrulewidth}{0pt}}
\linespread{1.2}

\chapter*{Part 1\\ Gaussian fields}
\addcontentsline{toc}{chapter}{Part 1: Gaussian fields}

\chapter{Background:  isotropic random fields }

In this first chapter we recall  basic results concerning both,
 Fourier analysis for a topological compact group $G$,
 and  the structure of isotropic random fields indexed by elements of $G$-homogeneous spaces.

The plan is as follows. In \S 1.1. we give main definitions and fix some notations, whereas in \S 1.2 we investigate Fourier developments for square integrable functions on $G$-homogeneous spaces \cite{faraut}. In \S 1.3 we recollect some useful properties of isotropic random fields from several works (\cite{mauspin, balditrapani, dogiocam} e.g.). Finally, the last section is devoted to the connection between isotropy and positive definite functions on compact groups - highlighting the main features we will deeply need in the sequel.

A great  attention is devoted to the case of the $2$-dimensional unit sphere
$\mathbb S^2$, to whom we particularize the results recalled in each  section of this chapter.

\section{Preliminaries}

Throughout this work $G$ denotes a topological \emph{compact} group (e.g. \cite{faraut}). Let us recall the notion of homogeneous space.
\begin{definition}\label{hom}
A topological space $\X$ is said to be a $G$-homogeneous space if $G$ acts on $\X$
with a continuous and transitive action which we shall denote
$$
G\times \X \goto \X\ ;\qquad (g,x)\mapsto gx\ .
$$
\end{definition}
Remark that $G$ itself is a $G$-homogeneous space, indeed the left multiplication
$(g,h)\mapsto g^{-1}h$
is a continuous and transitive action.

 $\B(\X)$ and
$\B(G)$ stand for the Borel $\sigma$-fields of $\X$ and $G$
respectively
and $dg$  for the Haar measure (see \cite{faraut} e.g.) of $G$.
The latter induces
on $\X$ a $G$-invariant measure which we denote $dx$ abusing notation, given by
$$dx:=\int_G \delta_{gx}\,dg\ ,$$
 where $\delta_{gx}$ as usual stands for the Dirac mass at the singleton $\lbrace gx \rbrace$.
For $G$-invariant we mean that for each integrable function $f$ on $\X$  we have for any $g\in G$
$$\int_\X f(gx)\,dx=\int_\X f(x)\,dx\ .$$
We assume that both these
measures have total mass equal to 1, unless explicitly stated.
For instance, in the case $\X=\mathbb S^d$ the unit $d$-dimensional sphere and $G=SO(d+1)$ the special orthogonal group of order $d+1$,
we have $%
\int_{{\mathbb{S}}^{d}}\,dx=\mu _{d}$ where
\begin{equation}\label{ms}
\mu _{d}:=\frac{2\pi ^{\frac{d+1%
}{2}}}{\Gamma \left( \frac{d+1}{2}\right) }\ .
\end{equation}
We set $L^2(G):=L^2(G,dg)$ and similarly $L^2(\X):=L^2(\X,dx)$;
  the $L^2$-spaces are spaces of \emph{complex-valued} square integrable functions, unless otherwise
stated.

Let us fix a point $x_0\in\X$ once forever and denote by $K$ the
isotropy group of $x_0$
$$K=\lbrace g\in G : gx_0=x_0 \rbrace\ ,$$
 i.e. the (closed) subgroup of the elements $g\in G$
fixing $x_0$. Then it is immediate to check that $\X\cong G/K$ i.e., there exists a $G$-invariant
 isomorphism $\phi: \X \goto G/K$. Actually
the morphism $\widetilde \phi: G\goto \X$ defined as  $\widetilde \phi(g):=gx_0$ is
surjective and $K=\widetilde \phi^{-1}(x_0)$.
 For instance, in the case
$G=SO(3)$ the group of all rotations about the origin of three-dimensional Euclidean space $\R^3$ (under the operation of composition), and $\X=\cS^2$ the two-dimensional unit sphere,
 $x_0$ will be  the north pole  and  the subgroup $K$ of matrices that leave $x_0$ fixed will be isomorphic to $SO(2)$, the special orthogonal group of order two.

The $G$-invariant isomorphism $\X\cong G/K$ suggests that it is possible to identify
functions defined on $\X$ with particular functions on the group $G$.
\begin{definition}
The pullback on $G$ of a function $f:\X\to\C$ is the function $\widetilde f$ defined as
\begin{equation}\label{pul}
\widetilde f(g) := f(gx_0)\ ,\quad g\in G\ .
\end{equation}
\end{definition}
Note that $\widetilde f$  is a right-$K$-invariant function on $G$, i.e.
constant on left closets of $K$. Actually for $g\in G, k\in K$ it is immediate that
$
\tilde f(gk)=f(gkx_0)=f(gx_0)=\tilde f(g)
$.
We have
\begin{equation}\label{int-rule}
\int_\X f(x)\, dx=\int_G \widetilde f(g)\, dg\ ,
\end{equation}
by the
integration rule of image measures,
whenever one of the above integrals has sense.
\begin{remark}\label{id}
In particular, from \paref{pul} and \paref{int-rule}, the map
$f\mapsto \widetilde f$ is an isometry between  $L^2(\X)$ and the (closed) subspace of
right-$K$-invariant
functions in $L^2(G)$.
\end{remark}

\section{Fourier expansions}

In this section we briefly recall Fourier expansions on compact groups
  (for further details see \cite{faraut}).

The \emph{left regular representation} $L$  of $G$ is given, for  $g\in G$ and $f\in
L^2(G)$, by
\begin{equation}\label{rappsin}
L_g f(h)= f(g^{-1}h)\ ,\qquad h\in G\ .
\end{equation}
Let $\widehat G$ be the \emph{dual} of $G$, i.e., the set of
the equivalence classes of irreducible unitary representations of
$G$. The compactness of $G$ implies that $\widehat G$ is at most
countable.

In what follows, we use the same approach as in \cite{balditrapani, mauspin, faraut}).
Let us choose, for every $\sigma\in \widehat G$,  a representative $(D^\sigma, H_\sigma)$
where $D^\sigma$ is a unitary operator  acting
irreducibly on $H_\sigma$ (a complex finite dimensional Hilbert space).

As usual, $\langle \cdot, \cdot \rangle$ denotes the inner
product of $H_\sigma$ and $\dim\sigma:=\dim H_\sigma$ its dimension.
Recall that the \emph{character} of $\sigma$ is the (continuous) function on $G$ defined as
$$
\chi_\sigma(g) := \tr D^\sigma(g)\ ,\qquad g\in G\ ,
$$
where $\tr D^\sigma(g)$ denotes the trace of $D^\sigma(g)$.
Given $f\in
L^2(G)$, for every $\sigma\in \widehat G$ we define the Fourier operator coefficient
\begin{equation}\label{Fourier coefficient}
\widehat f(\sigma) :=\sqrt{\dim \sigma}\int_G f(g)
D^\sigma(g^{-1})\,dg
\end{equation}
which is a linear endomorphism of $H_\sigma$.
Let us denote for $g\in G$
\begin{equation}\label{component}
f^\sigma(g):=\sqrt{\dim \sigma}\, \tr(\widehat f(\sigma) D^\sigma(g))\ ;
\end{equation}
by standard arguments in  Representation Theory \cite{sugiura}, $f^\sigma$ is a continuous function on $G$.
Let us denote $*$ the convolution operator on $G$,
defined  for $f_1, f_2\in L^2(G)$ as
$$
f_1*f_2 (g) := \int_G f_1(h) f_2(h^{-1}g)\,dh\ ,
$$
so that
\begin{equation}\label{conv1}
\widehat{f_1\ast f_2}(\sigma)=\frac 1{\sqrt{\dim \sigma}}\,\widehat
f_2(\sigma) \widehat f_1(\sigma)\ .
\end{equation}
Actually by a Fubini argument and the $G$-invariance property of Haar measure
$$\displaylines{
\widehat{f_1\ast f_2}(\sigma) =  \sqrt{\dim \sigma}\int_G f_1\ast f_2(g)
D^\sigma(g^{-1})\,dg = \cr
= \sqrt{\dim \sigma}\int_G \left( \int_G f_1(h) f_2(h^{-1}g)\,dh \right)
D^\sigma(g^{-1})\,dg = \cr
= \sqrt{\dim \sigma}\int_G f_1(h)\left( \int_G  f_2(g)D^\sigma(g^{-1})\,dg \right) D^\sigma(h^{-1})
\,dh= \frac 1{\sqrt{\dim \sigma}}\,\widehat
f_2(\sigma) \widehat f_1(\sigma)\ .
}$$
Moreover note that
$$
f^\sigma = \text{dim}\,\sigma\cdot  f\ast \chi_\sigma \ .
$$
The
 Peter-Weyl Theorem and Schur's orthogonality relations
 (see \cite{dogiocam} or \cite{sugiura} e.g.) imply the following, known as Plancherel's Theorem.
\begin{theorem}\label{PW}
Let $f\in L^2(G)$. Then
\begin{equation}\label{PW for compact groups}
f(g) = \sum_{\sigma\in \widehat G} f^\sigma(g)\ ,
\end{equation}
the convergence of the series taking place in $L^2(G)$;
$$
\| f\|_{L^2(G)}^2 = \sum_{\sigma \in \widehat G} \| \widehat f(\sigma) \|_{\text{H.S.}}^2\ ,
$$
where $\| \cdot \|_{\text{H.S.}}$ denotes the Hilbert-Schmidt norm \cite{faraut}. If $f_1, f_2\in L^2(G)$, then
$$
\langle f_1, f_2 \rangle_{L^2(G)} = \sum_{\sigma\in \widehat G} \tr \widehat f_1(\sigma) \widehat
f_2(\sigma)^* \ .
$$
\end{theorem}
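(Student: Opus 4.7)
The plan is to deduce Theorem \ref{PW} directly from the Peter--Weyl Theorem together with the Schur orthogonality relations, as advertised in the paper; these furnish an explicit orthonormal basis of $L^2(G)$ which one may then reorganize into the operator-valued statement.

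First I would recall the Peter--Weyl Theorem in its classical coordinate form: the family
$$
\bigl\{ \sqrt{\dim\sigma}\, D^\sigma_{ij} : \sigma \in \widehat G,\ 1 \le i,j \le \dim\sigma\bigr\}
$$
is a complete orthonormal system in $L^2(G)$, where $D^\sigma_{ij}(g)$ denotes the $(i,j)$-entry of the unitary matrix $D^\sigma(g)$ in some fixed basis of $H_\sigma$. This step, which is the real analytic heart of the result, I would take as a black box: it rests on spectral theory for compact self-adjoint convolution operators applied to $f \mapsto f \ast f^*$, together with Schur's lemma to decompose finite-dimensional invariant subspaces.

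Next I would expand any $f \in L^2(G)$ as
$$
f(g) = \sum_{\sigma \in \widehat G} \sum_{i,j} c^\sigma_{ij}\, \sqrt{\dim\sigma}\, D^\sigma_{ij}(g),
\qquad c^\sigma_{ij} = \sqrt{\dim\sigma}\int_G f(h)\, \overline{D^\sigma_{ij}(h)}\,dh,
$$
the series converging in $L^2(G)$. Since $D^\sigma$ is unitary, $\overline{D^\sigma_{ij}(h)} = D^\sigma(h^{-1})_{ji}$, hence by \paref{Fourier coefficient} the coefficient $c^\sigma_{ij}$ coincides with $\widehat f(\sigma)_{ji}$. Substituting and comparing with \paref{component},
$$
\sum_{i,j} c^\sigma_{ij}\, \sqrt{\dim\sigma}\, D^\sigma_{ij}(g) = \sqrt{\dim\sigma}\sum_{i,j} \widehat f(\sigma)_{ji}\, D^\sigma(g)_{ij} = \sqrt{\dim\sigma}\,\tr\bigl(\widehat f(\sigma) D^\sigma(g)\bigr) = f^\sigma(g),
$$
which establishes \paref{PW for compact groups}. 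The Parseval identity then follows by orthonormality:
$$
\|f\|^2_{L^2(G)} = \sum_{\sigma,i,j} |c^\sigma_{ij}|^2 = \sum_{\sigma}\sum_{i,j} |\widehat f(\sigma)_{ji}|^2 = \sum_\sigma \| \widehat f(\sigma)\|^2_{\mathrm{H.S.}}\, ,
$$
since $\sum_{i,j}|A_{ji}|^2 = \|A\|^2_{\mathrm{H.S.}}$.

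Finally, for the bilinear formula applied to $f_1, f_2 \in L^2(G)$ with coefficients $c_1^{\sigma,ij}, c_2^{\sigma,ij}$, orthonormality gives
$$
\langle f_1, f_2\rangle_{L^2(G)} = \sum_{\sigma,i,j} c_1^{\sigma,ij}\,\overline{c_2^{\sigma,ij}} = \sum_\sigma \sum_{i,j} \widehat f_1(\sigma)_{ji}\, \overline{\widehat f_2(\sigma)_{ji}} = \sum_\sigma \tr\bigl(\widehat f_1(\sigma)\,\widehat f_2(\sigma)^*\bigr),
$$
the last equality being the definition of the Hilbert--Schmidt inner product. Alternatively one obtains this from Parseval by the polarization identity. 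The main obstacle in the whole argument is thus not the algebraic reshuffling above, but the Peter--Weyl Theorem itself; once that completeness statement is granted, the proof is a bookkeeping exercise in identifying matrix entries of $\widehat f(\sigma)$ with the scalar Fourier coefficients in the Peter--Weyl basis.
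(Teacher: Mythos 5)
Your proof is correct and follows essentially the same route the paper takes: the paper states Theorem \ref{PW} as a direct consequence of the Peter--Weyl Theorem and Schur's orthogonality relations (cited from the literature), and its surrounding discussion of \paref{PW1} makes exactly your identification of $\widehat f(\sigma)_{j,i}$ with the Fourier coefficient of $f$ along the basis element $\sqrt{\dim\sigma}\,D^\sigma_{i,j}$. Your explicit bookkeeping of the trace and Hilbert--Schmidt identities is accurate and fills in precisely the computation the paper leaves implicit.
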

Recall that a function $f$ is said to be a  \emph{central} (or class) function if for every $g\in G$
$$
f(hgh^{-1}) = f(g), \quad h\in G\ .
$$
\begin{prop}\label{classExp}
The set of characters $\lbrace \chi_\sigma : \sigma\in \widehat G \rbrace$
is an orthonormal basis of the space of square integrable
central functions on $G$.
\end{prop}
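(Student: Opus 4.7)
The plan is to verify the three defining properties of an orthonormal basis in turn: each $\chi_\sigma$ is central, the family $\{\chi_\sigma\}_{\sigma\in\widehat G}$ is orthonormal in $L^2(G)$, and its linear span is dense in the subspace of central square integrable functions.

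First, centrality of each $\chi_\sigma$ is immediate from the cyclicity of the trace: for every $h,g\in G$,
$$
\chi_\sigma(hgh^{-1}) = \tr\bigl(D^\sigma(h)D^\sigma(g)D^\sigma(h)^{-1}\bigr) = \tr D^\sigma(g) = \chi_\sigma(g).
$$
Orthonormality is then a direct consequence of Schur's orthogonality relations (already invoked in the statement of Theorem \ref{PW}): fixing orthonormal bases of $H_\sigma$ and $H_\tau$ and writing $\chi_\sigma = \sum_i D^\sigma_{ii}$ one computes
$$
\langle \chi_\sigma, \chi_\tau\rangle_{L^2(G)} = \sum_{i,j}\int_G D^\sigma_{ii}(g)\,\overline{D^\tau_{jj}(g)}\,dg = \delta_{\sigma\tau}\sum_{i=1}^{\dim\sigma}\frac{1}{\dim\sigma} = \delta_{\sigma\tau}.
$$

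The substantive step is density. Given a central $f\in L^2(G)$, I would start from the Peter--Weyl expansion \paref{PW for compact groups} and show that \emph{each} chaotic-like component $f^\sigma$ is already a scalar multiple of $\chi_\sigma$. To this end, I compute $D^\sigma(h)\widehat f(\sigma)D^\sigma(h)^{-1}$ using \paref{Fourier coefficient}, and perform the change of variable $g \mapsto hgh^{-1}$; Haar measure on the compact group $G$ is invariant under conjugation, and the centrality of $f$ gives $f(h^{-1}gh)=f(g)$, leading to
$$
D^\sigma(h)\widehat f(\sigma)D^\sigma(h)^{-1} = \widehat f(\sigma), \qquad h\in G.
$$
Schur's lemma, applied to the irreducible representation $D^\sigma$, then forces $\widehat f(\sigma)=\lambda_\sigma\,\mathrm{Id}_{H_\sigma}$ for some scalar $\lambda_\sigma\in\C$. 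Plugging this into \paref{component} gives
$$
f^\sigma(g) = \sqrt{\dim\sigma}\,\lambda_\sigma\,\tr D^\sigma(g) = \sqrt{\dim\sigma}\,\lambda_\sigma\,\chi_\sigma(g),
$$
and hence the Peter--Weyl series collapses to $f = \sum_{\sigma\in\widehat G} c_\sigma\,\chi_\sigma$ in $L^2(G)$ with $c_\sigma=\sqrt{\dim\sigma}\,\lambda_\sigma$. This proves the density part and concludes the proof.

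The main obstacle, and the only nonroutine step, is establishing the intertwining identity $D^\sigma(h)\widehat f(\sigma)=\widehat f(\sigma)D^\sigma(h)$; once this is in place, Schur's lemma does all the work. The subtlety lies in combining correctly the invariance of Haar measure under the inner automorphism $g\mapsto hgh^{-1}$ (which uses compactness of $G$) with the hypothesis that $f$ is a class function.
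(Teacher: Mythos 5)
Your proof is correct, and all three steps (centrality via cyclicity of the trace, orthonormality via Schur's orthogonality relations, and completeness via the conjugation-invariance of the Haar measure plus Schur's lemma forcing $\widehat f(\sigma)=\lambda_\sigma\,\mathrm{Id}_{H_\sigma}$ so that $f^\sigma$ is proportional to $\chi_\sigma$) are carried out properly. The paper itself states Proposition \ref{classExp} without proof, as a classical consequence of the Peter--Weyl theorem quoted from the standard references; your argument is precisely the standard one given there, so there is nothing to reconcile.
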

Now fix any orthonormal basis $v_1, v_2, \dots, v_{\dim\sigma }$ of
$H_\sigma$ and for $i,j=1,\dots , \text{dim $\sigma$}$ denote
$D^\sigma_{ij}(g):= \langle D^\sigma(g) v_j, v_i\rangle$
the $(i,j)$-th coefficient of
the matrix representation for $D^\sigma(g)$ with respect to this
basis. The matrix representation for $\widehat f(\sigma)$ has entries
$$
\dlines{\widehat f(\sigma)_{i,j}=\langle \widehat f(\sigma) v_j,
v_i\rangle = \sqrt{\dim \sigma} \int_G f(g) \langle D^\sigma(g^{-1})
v_j, v_i\rangle \,dg = \cr = \sqrt{\dim \sigma} \int_G f(g)
D^\sigma_{i,j}(g^{-1})\,dg = \sqrt{\dim \sigma} \int_G f(g)
\overline{D^\sigma_{j,i}(g)}\,dg\ ,}
$$
and Theorem \ref{PW} becomes
\begin{equation}\label{PW1}
f(g)=\sum_{\sigma\in \widehat G} \sqrt{\dim \sigma} \sum_{i,j=1}^{\dim\sigma
} \widehat f(\sigma)_{j,i} D^\sigma_{i,j}(g)\ ,
\end{equation}
the above series still converging in $L^2(G)$. The Peter-Weyl Theorem also states that  the set of functions
$\lbrace \sqrt{\text{dim}\,\sigma}D^\sigma_{i,j}\ , \sigma \in \hat G, i,j=1,\dots, \text{dim}\,\sigma \rbrace$ is a complete  orthonormal basis for
$L^2(G)$. Therefore \paref{PW1} is just the corresponding Fourier development and  $\widehat f(\sigma)_{j,i}$  is
the coefficient corresponding to the element $\sqrt{\text{dim}\,\sigma} D^\sigma_{i,j}(g)$ of this basis.

Let $L^2_\sigma(G)\subset L^2(G)$ be the $\sigma$-isotypical subspace,
 i.e. the subspace generated by the functions $D^\sigma_{i,j}, i,j=1,\dots , \text{dim}\,\sigma$; it is a $G$-module that can
 be decomposed into the
orthogonal direct sum of $\dim\sigma $ irreducible and
equivalent $G$-modules
$(L^2_{\sigma,j}(G))_{j=1,\dots,\dim\sigma }$ where each
$L^2_{\sigma,j}(G)$ is spanned by the functions
$D^\sigma_{i,j}$ for $i=1,\dots, \dim\sigma $, loosely
speaking by the $j$-th column of the matrix $D^\sigma$.
Note that $f^\sigma$ is the component (i.e. the orthogonal projection) of $f$ in $L^2_\sigma(G)$.

Equivalently the Peter-Weyl Theorem can be stated as
\begin{equation}\label{PW2}
L^2(G)= \bigoplus_{\sigma \in \widehat G} \,\, \mathop{\oplus}\limits_{j=1}^{\dim\sigma }
L^2_{\sigma,j}(G)\ ,
\end{equation}
the direct sums being orthogonal.

Let us now deduce the Fourier expansion of functions $f\in L^2(\X)$. It can be easily obtained
 from Theorem \ref{PW} and Remark \ref{id}, indeed their pullbacks $\widetilde f$
belong to $L^2(G)$ and form a $G$-invariant closed subspace of $L^2(G)$.
We can therefore associate to $f\in L^2(\X)$ the family of operators $\bigl(
\widehat{\widetilde f}(\sigma) \bigr)_{\sigma\in \widehat G}$.
Let $H_{\sigma,0}$ denote the subspace of $H_\sigma$ (possibly
reduced to $\{0\}$) formed by the vectors that remain fixed under the action of
$K$, i.e. for every $k\in K, v\in
H_{\sigma,0}$, $D^\sigma(k)v=v$.
Right-$K$-invariance implies that the image of $\widehat
{\widetilde f}(\sigma)$ is contained in $H_{\sigma,0}$:
\begin{equation}\label{proiezione triviale}
\begin{array}{c}
\displaystyle\widehat {\widetilde f}(\sigma) =\sqrt{\dim \sigma}
\int_G \widetilde f(g) D^\sigma(g^{-1})\,dg=\\
=\sqrt{\dim \sigma}
\int_G \widetilde f(gk) D^\sigma(g^{-1})\,dg
\displaystyle= \sqrt{\dim \sigma} \int_G\widetilde f(h)
D^\sigma(kh^{-1} )\,dh =\\
= D^\sigma(k)\sqrt{\dim \sigma}
\int_G\widetilde f(h) D^\sigma(h^{-1} )\,dh=D^\sigma(k)\widehat
{\widetilde f}(\sigma)\ .
\end{array}
\end{equation}
Let us denote by $P_{\sigma,0}$ the projection of $H_\sigma$ onto
$H_{\sigma,0}$, so that $\widehat {\widetilde
f}(\sigma)=P_{\sigma,0}\widehat {\widetilde f}(\sigma)$,
and $\widehat G_0$ the set of irreducible unitary
representations of $G$ whose restriction to $K$ contains the trivial
representation.
If $\sigma\in \widehat G_0$ let us consider a basis of $H_\sigma$
such that the elements $\lbrace v_{p+1}, \dots , v_{\dim\sigma} \rbrace$,
for some  integer $p=p(\sigma)\ge 0$, span $H_{\sigma,0}$. Then the first $p$ rows
of the representative matrix of $\widehat {\widetilde f}(\sigma)$ in this
basis contain only zeros. Actually, by
(\ref{proiezione triviale}) and $P_{\sigma,0}$ being self-adjoint,
for $i\le p$
$$
\widehat {\widetilde f}_{i,j}(\sigma) = \langle \widehat {\widetilde
f}(\sigma)  v_j, v_i \rangle = \langle P_{\sigma,0} \widehat
{\widetilde f}(\sigma)   v_j, v_i \rangle=\langle \widehat
{\widetilde f}(\sigma)   v_j, P_{\sigma,0}v_i \rangle= 0\ .
$$
Identifying $L^2(\X)$ as the closed subspace of right-$K$-invariant functions in $L^2(G)$, the Peter-Weyl Theorem entails that
$$
L^2(\X) = \bigoplus_{\sigma \in \widehat G_0} \oplus_{j=p+1}^{\text{dim}\,\sigma} L^2_{\sigma, j}(G)\ ,
$$
the direct sums being orthogonal.

Now we consider an important  class of functions we shall need in the sequel.
\begin{definition}
A function $f:G \goto \C$ is said to be \emph{bi-$K$-invariant} if for every $g\in G, k_1, k_2\in K$
\begin{equation}\label{bicappa}
f(k_1gk_2)=f(g)\ .
\end{equation}
\end{definition}
If moreover $f\in L^2(G)$, the equality in \paref{bicappa} entails that, for every
$k_1,k_2\in K$, $\sigma\in\widehat G$,
$$
\widehat f(\sigma)=D^\sigma(k_1)\widehat f(\sigma)D^\sigma(k_2)
$$
and therefore  a function $f\in L^2(G)$ is {bi-$K$-invariant} if and only if
for every $\sigma\in \hat G$
\begin{equation}\label{fourier-bicappa}
\widehat f(\sigma) = P_{\sigma,0} \widehat f(\sigma) P_{\sigma,0}\ .
\end{equation}
Note that we can identify of course bi-$K$-invariant functions in $L^2(G)$ with
left-$K$-invariant functions in $L^2(\X)$.

\subsection{Spherical harmonics}

Now we focus on the case of $\X=\mathbb S^2$ under the action of $G=SO(3)$, first
specializing previous results and then
recalling basic facts we will need in the rest of this work (see \cite{faraut}, \cite{dogiocam} e.g.
for further details).
The isotropy
group $K\cong SO(2)$ of the north pole  is abelian, therefore its unitary irreducible
representations are unitarily equivalent to its linear characters which we shall denote
 $\chi_s, s\in \Z$, throughout the whole work.

A complete set of unitary irreducible matrix representations of $SO(3)$  is given by
the so-called Wigner's $D$ matrices
$\lbrace D^\ell, \ell \ge 0 \rbrace$, where each $D^\ell(g)$ has dimension
$(2\ell+1)\times (2\ell+1)$ and acts on a representative space that we shall denote $H_\ell$.
The restriction to $K$ of each $D^\ell$ being unitarily equivalent to the direct sum of the
representations $\chi_m$, $m=-\ell, \dots, \ell$, we can suppose
$v_{-\ell}, v_{-\ell +1}, \dots, v_\ell$ to be an orthonormal basis for $H_\ell$ such that
for every $m : |m| \le \ell$
\begin{equation}\label{restrizione}
D^\ell(k)v_m = \chi_m(k)v_m\ , \qquad k\in K\ .
\end{equation}
Let $D^\ell_{m,n}=\langle D^\ell v_n, v_m \rangle$ be the $(m,n)$-th entry of $D^\ell$
with respect to the basis fixed above.
It follows from (\ref{restrizione})  that for every $g\in SO(3), k_1, k_2\in K$,
\begin{equation}\label{prop fnz di Wigner}
D^\ell_{m,n}(k_1gk_2) = \chi_m(k_1)D^\ell_{m,n}(g)\chi_n(k_2)\ .
\end{equation}
The functions  $D^\ell_{m,n}:SO(3) \goto \C$, $ \ell\ge 0, m,n=-\ell,\dots,\ell$
are usually called Wigner's $D$ functions.

Given $f\in L^2(SO(3))$,  its $\ell$-th Fourier coefficient  (\ref{Fourier coefficient}) is
\begin{equation}\label{coefficiente ellesimo}
\widehat f(\ell) := \sqrt{2\ell+1}\,\int_{SO(3)} f(g)
D^\ell(g^{-1})\,dg
\end{equation}
and its Fourier development (\ref{PW1})  becomes
\begin{equation}\label{PW SO(3)}
f(g)=\sum_{\ell \ge 0} \sqrt{2\ell + 1} \sum_{m.n=-\ell}^{\ell
} \widehat f(\ell)_{n,m} D^\ell_{m,n}(g)\ .
\end{equation}
If $\widetilde f$ is the pullback  of  $f\in L^2(\mathbb S^2)$, (\ref{restrizione}) entails that
for every $\ell\ge 0$
$$
\widehat{\widetilde f}(\ell)_{n,m} \ne 0 \iff n= 0\ .
$$
Moreover if $f$ is left-$K$-invariant, then
$$
\widehat{\widetilde f}(\ell)_{n,m} \ne 0 \iff n,m = 0\ .
$$
In words, an orthogonal basis for the space of the square integrable right-$K$-invariant
functions on $SO(3)$ is given by the central columns  of the matrices $D^\ell$, $\ell \ge 0$. Furthermore
the subspace of the bi-$K$-invariant functions is spanned by the  central functions
$D^\ell_{0,0}(\cdot),\, \ell \ge 0$, which are \emph{real-valued}.

The important role of the other columns of Wigner's $D$ matrices will appear further in this work.
\begin{definition}
For every $\ell \ge 0, m=-\ell \dots, \ell$,  let us define the spherical harmonic $Y_{\ell,m}$ as
\begin{equation}\label{armoniche sferiche1}
Y_{\ell,m}(x) := \sqrt{\frac{2\ell+1}{4\pi}}\, \overline{D^\ell_{m,0}(g_x)}\ , \qquad x\in \cS^2\ ,
\end{equation}
where $g_x$ is any rotation mapping the north pole of the sphere to $x$.
\end{definition}
Remark that this is a good definition thanks to the
invariance of each $D^\ell_{m,0}(\cdot)$ under the right action of $K$.
The functions in (\ref{armoniche sferiche1}) form an orthonormal basis of the space $L^2(\mathbb S^2)$
considering the sphere with total mass equal to $4\pi$.

Often, e.g. in the second part ``High-energy eigenfunctions'',
we  work with \emph{real} spherical harmonics, i.e. the orthonormal set
of functions given by
\begin{equation}\label{realSH}
\frac{1}{\sqrt 2}\left ( Y_{\ell,m} + \overline{Y_{\ell,m}}    \right )\ , \qquad \frac{1}{i\sqrt 2}\left ( Y_{\ell,m} - \overline{Y_{\ell,m}}    \right )\
\end{equation}
which abusing notation we will again denote by $Y_{\ell,m}$ for $\ell\ge 0$, $m=1, \dots, 2\ell +1$.

Every $f\in L^2(\mathbb S^2)$ admits the Fourier development of the form
\begin{equation}
f(x) = \sum_{\ell=0}^{+\infty} \sum_{m=-\ell}^{\ell} a_{\ell,m} Y_{\ell,m}(x)\ ,
\end{equation}
where the above series converges in $L^2(\mathbb S^2)$ and
$$
a_{\ell,m} = \int_{\mathbb S^2} f(x) \overline{Y_{\ell,m}(x)}\,dx\ .
$$
Moreover the Fourier expansion of a left-$K$-invariant function $f\in L^2(\cS^2)$ is
\begin{equation}\label{PW invariant sphere}
f=\sum_{\ell=0}^{+\infty} \beta_\ell Y_{\ell,0}\ ,
\end{equation}
where $\beta_\ell := \int_{\mathbb S^2} f(x) Y_{\ell,0}(x)\,dx$.
The functions $Y_{\ell,0},\,\ell\ge 0$ are called  \emph{central spherical harmonics}.

\

We stress that there exists an alternative characterization of spherical harmonics,
as \emph{eigenfunctions} of the spherical Laplacian $\Delta_{\mathbb S^2}$ (see \cite{dogiocam}
 e.g.). We shall deeply use this formulation in Part 2: High-energy Gaussian eigenfunctions.

Recall that the spherical Laplacian is the Laplace-Beltrami operator on $\mathbb S^2$ with its
canonical metric of constant sectional curvature $1$, moreover
its (totally discrete) spectrum
is given by the set of eigenvalues $\lbrace -\ell(\ell+1)=: - E_\ell$, $\ell\in \mathbb N \rbrace$.

It can be proved that for $\ell \ge 0, m=-\ell, \dots, \ell $
$$
\Delta_{\mathbb S^2} Y_{\ell,m} + E_{\ell} Y_{\ell,m} = 0\ ,
$$
and the subset of spherical harmonics
$\lbrace Y_{\ell,m}, m=-\ell, \dots, \ell \rbrace$ is an orthonormal basis for the eigenspace $\mathcal H_\ell$  corresponding to the $\ell$-th
eigenvalue.
The Spectral Theorem for self-adjoint compact operators then entails that  $\mathcal H_\ell$ and $\mathcal H_{\ell'}$ are orthogonal whenever $\ell \ne \ell'$ and  moreover
$$
L^2(\mathbb S^2) = \bigoplus_{\ell\ge 0} \mathcal H_\ell\ ,
$$
which coincides with the Peter-Weyl decomposition for the sphere.

\section{Isotropic random fields}

Let us recall main definitions and facts about isotropic random fields on homogeneous spaces
(see \cite{dogiocam, mauspin, balditrapani} e.g.). First fix some probability space $(\Omega, \F, \P)$  and
denote $L^2(\P):=L^2(\Omega, \P)$ the space of finite-variance random variables.
\begin{definition}\label{campo aleatorio su uno spazio omogeneo}
A (complex-valued) random field $T=(T_x)_{x \in \X}$ on the $G$-ho\-mo\-ge\-neous space $\X$ is
a collection of (complex-valued) random variables indexed by elements of $\X$
such that the map
\begin{align}
\nonumber
T:\Omega \times \X \goto \C\ ;\qquad
(\omega, x) \mapsto T_x(\omega)
\end{align}
is $\F \otimes \B(\X)$-measurable.
\end{definition}
Note that often we shall write $T(x)$ instead of $T_x$.
\begin{definition}
We say that the random field $T$ on the $G$-homogeneous space $\X$
is second order if $T_x\in L^2(\P)$ for every $x\in \X$.
\end{definition}
\begin{definition}\label{contRF}
We say that the random field $T$ on the $G$-homogeneous space $\X$
is a.s. continuous if the functions
$\X\ni x\mapsto T_x$ are a.s. continuous.
\end{definition}
In this work the minimal regularity assumption for the paths of a random field $T$ is the a.s. square integrability. From now on, $\E$ shall stand for the expectation under the probability measure $\P$.
\begin{definition}
We say that the random field $T$ on the $G$-homogeneous space $\X$  is
\begin{enumerate}
\item a.s. square integrable if
\begin{equation}
\int_\X | T_x |^2\,dx < +\infty\;\; a.s.\label{integrabile q.c.}
\end{equation}
\item mean square integrable if
\begin{equation}
\E \left[\int_\X | T_x |^2\,dx \right] < +\infty\ .\label{integrabile in media quadratica}
\end{equation}
\end{enumerate}
\end{definition}
\noindent Note that the mean square integrability implies the $a.s.$ square integrability.
\begin{remark}\rm\label{variabile aleatoria che prende valori in L2X}
If $T$ is a.s. square integrable,
it can be regarded to as a random variable taking a.s. its values in $L^2(\X)$
i.e. $T(\cdot)=(x\goto T_x(\cdot))$.
\end{remark}
We define now the notion of isotropy.
Let $T$ be a.s. square integrable. For every $f\in L^2(\X)$, we can consider the integral
$$
T(f):=\int_\X T_x \overline{f(x)}\,dx
$$
which defines a r.v. on $(\Omega, \F, \P)$.
For every $g\in G$, let  $T^g$ be the \emph{rotated field} defined as
$$
T^g_x:=T_{gx},\qquad x\in \X\ .
$$
Losely speaking, $T$ is isotropic if its law and the law of the rotated field $T^g$
coincide for every $g\in G$.
We give the following formal definition of isotropy (see \cite{maupec, mauspin,  balditrapani})
\begin{definition}\label{invarian}
An a.s. square integrable random field $T$ on the homogeneous space
$\X$ is said to be (strict sense) $G$-invariant or
isotropic if
the joint laws of
\begin{equation}\label{l2-invar}
(T(f_1),\dots,T(f_m))\quad\mbox{and}\quad (T(L_gf_1),\dots,T(L_gf_m))=
(T^g(f_1),\dots,T^g(f_m))
\end{equation}
coincide for every $g\in G$ and $f_1,f_2,\dots,f_m\in L^2(\X)$.
\end{definition}
This definition is somehow different from the one usually considered in the literature, where
the requirement is the equality of the finite dimensional distributions, i.e. that the random vectors
\begin{equation}\label{invar-continui1}
(T_{x_1},\dots,T_{x_m})\qquad\mbox{and}\qquad(T_{gx_1},\dots,T_{gx_m})
\end{equation}
have the same law for every choice of $g\in G$ and
$x_1,\dots,x_m\in \X$. Remark that (\ref{l2-invar})
implies (\ref{invar-continui1}) (see \cite{maupec}) and
that, conversely, by standard approximation arguments
(\ref{invar-continui1}) implies (\ref{l2-invar}) if $T$ is
continuous.

We see now how the Peter-Weyl decomposition  naturally applies to
random fields.
It is worth remarking that every a.s. square integrable random field $T$ on $\X$
uniquely defines an a.s. square integrable random field on $G$ (whose paths
are the pullback functions of the paths $x\mapsto T_x$).
Therefore w.l.o. we can  investigate the case $\X=G$.

To every a.s. square integrable random field $T$ on  $G$ we
can associate the set of operator-valued r.v.'s $(\widehat
{T}(\sigma))_{\sigma\in \widehat G}$ defined ``pathwise'' as
\begin{equation}
\widehat { T}(\sigma) = \sqrt{\dim\sigma}\int_G T_g D^\sigma(g^{-1})\,dg\ .
\end{equation}
From  (\ref{PW for compact groups}) therefore
\begin{equation}\label{l2as-dev}
T_g = \sum_{\sigma\in \widehat G} T^\sigma_g
\end{equation}
where the convergence takes place in $L^2(G)$ a.s.
Remark that
\begin{equation}
T^\sigma_g:=\sqrt{\dim\sigma} \tr(\widehat {
T}(\sigma) D^\sigma(g))= \sqrt{\dim \sigma} \sum_{i,j=1}^{\dim\sigma
} \widehat T(\sigma)_{j,i} D^\sigma_{i,j}(g)\ ,
\end{equation}
is the projection of $T$ on $H_\sigma$ and $T^\sigma$ is continuous.

For the proof of the following see \cite{balditrapani}.
\begin{prop}
Let T be an a.s. square integrable random field on G. Then T is isotropic if and only
if, for every $g\in G$, the two families of r.v's
$$
(\widehat T(\sigma))_{\sigma\in \widehat G} \quad \text{and} \quad
(\widehat T(\sigma)D^\sigma(g))_{\sigma\in \widehat G}
$$
are equi-distributed.
\end{prop}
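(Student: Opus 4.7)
The main idea is to establish the commutation identity
\[
\widehat{T^g}(\sigma) = \widehat{T}(\sigma)\, D^\sigma(g),
\]
valid for every $g\in G$ and $\sigma\in\widehat G$, and then to read off both implications from the fact that the Fourier coefficients and the $L^2$-functionals $T(f)$ determine each other via Plancherel. The identity itself comes from a left-translation change of variables in the defining integral: substituting $h'=gh$ (the Haar measure being $G$-invariant) and using the homomorphism property $D^\sigma(h'^{-1}g)=D^\sigma(h'^{-1})D^\sigma(g)$ gives
\[
\widehat{T^g}(\sigma) = \sqrt{\dim\sigma}\int_G T_{gh}\,D^\sigma(h^{-1})\,dh = \sqrt{\dim\sigma}\int_G T_{h'}\,D^\sigma(h'^{-1})\,dh'\; D^\sigma(g) = \widehat T(\sigma)\,D^\sigma(g).
\]

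For the forward implication I would observe that each matrix entry satisfies $\widehat T(\sigma)_{i,j}=\sqrt{\dim\sigma}\,T(D^\sigma_{j,i})$, because $D^\sigma(h^{-1})_{i,j}=\overline{D^\sigma_{j,i}(h)}$ by unitarity. Hence any finite sub-collection of entries of the family $(\widehat T(\sigma))_{\sigma\in\widehat G}$ takes the form $(T(f_1),\dots,T(f_m))$ for suitable $f_k\in L^2(G)$. By Definition \ref{invarian}, isotropy gives $(T(f_1),\dots,T(f_m))\stackrel{d}{=}(T(L_g f_1),\dots,T(L_g f_m))$; and a direct change of variable shows $T(L_g f_k)=T^g(f_k)$. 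Thus the corresponding finite sub-collections of $(\widehat T(\sigma))_\sigma$ and of $(\widehat{T^g}(\sigma))_\sigma=(\widehat T(\sigma)D^\sigma(g))_\sigma$ coincide in law. Since this is true for every finite sub-collection, the two families are equi-distributed.

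For the converse, I would invoke Plancherel's theorem (Theorem \ref{PW}) applied pathwise: for every $f\in L^2(G)$,
\[
T(f) = \langle T, f\rangle_{L^2(G)} = \sum_{\sigma\in\widehat G}\tr\bigl(\widehat T(\sigma)\widehat f(\sigma)^*\bigr)\qquad\text{a.s.,}
\]
so $(T(f_1),\dots,T(f_m))$ is a measurable function of the countable family $(\widehat T(\sigma))_\sigma$ through the \emph{same} universal formula. Applying this formula to $(\widehat T(\sigma))_\sigma$ and to $(\widehat T(\sigma)D^\sigma(g))_\sigma=(\widehat{T^g}(\sigma))_\sigma$, and invoking their equidistribution, yields $(T(f_1),\dots,T(f_m))\stackrel{d}{=}(T^g(f_1),\dots,T^g(f_m))=(T(L_g f_1),\dots,T(L_g f_m))$, which is precisely the isotropy condition of Definition \ref{invarian}.

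The only technical point that requires care is the interpretation of equidistribution for the countable family of operator-valued random variables: this should be read, as usual, as equidistribution of every finite sub-collection of matrix entries in the product space $\prod_\sigma \mathrm{End}(H_\sigma)$. The a.s.\ square-integrability of the paths guarantees both the measurability of the Fourier entries as functionals of $T$ and the $L^2(\P)$-convergence of the Parseval series, which is what makes the two passages between $T$ and $(\widehat T(\sigma))_\sigma$ compatible at the level of laws.
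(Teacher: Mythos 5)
Your proof is correct. Note that the paper does not actually prove this proposition but defers it to \cite{balditrapani}; your argument is the natural self-contained route: the translation identity $\widehat{T^g}(\sigma)=\widehat T(\sigma)D^\sigma(g)$ obtained from invariance of the Haar measure, the identification $\widehat T(\sigma)_{i,j}=\sqrt{\dim\sigma}\,T(D^\sigma_{j,i})$ for the direct implication, and pathwise Plancherel reconstruction of $T(f)$ from the coefficient family for the converse. Two minor remarks. First, with the paper's convention for the action of $G$ on itself the identity may come out as $\widehat{T^g}(\sigma)=\widehat T(\sigma)D^\sigma(g^{-1})$; this is immaterial because the statement quantifies over all $g\in G$. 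Second, under mere a.s. square integrability the Parseval series $\sum_{\sigma}\tr\bigl(\widehat T(\sigma)\widehat f(\sigma)^*\bigr)$ converges pathwise (indeed absolutely, by Cauchy--Schwarz for Hilbert--Schmidt norms), not in $L^2(\P)$ as you write at the end -- $L^2(\P)$ convergence would need second-order hypotheses, compare Theorem \ref{Stochastic-PW} -- but a.s. convergence is all your argument requires, since it exhibits $(T(f_1),\dots,T(f_m))$ as one fixed, a.s. defined measurable function of the coefficient family, applied identically to $T$ and to $T^g$, so equidistribution transfers.
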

If the random field $T$ is second order and isotropic (so that
 \paref{integrabile in media quadratica} holds by a standard Fubini argument),
it is possible to say more about the convergence of the series
in (\ref{l2as-dev}).

Indeed we have the following result, that we reproduce here from  \cite{dogiocam}.
\begin{theorem}\label{Stochastic-PW}
Let $T$ be a second order and
isotropic random field on the compact group $G$.
Then
\begin{equation}
T=\sum_{\sigma \in \hat G} T^\sigma\ .
\end{equation}
The convergence of the infinite series is both in the sense of
$L^2(\Omega\times G, \P \otimes dg)$ and $L^2(\P)$ for every fixed $g$, that is, for any enumeration
$\lbrace \sigma_k: k\ge 1 \rbrace$ of $\hat G$, we have both
\begin{align}
&\lim_{N\to +\infty} \E \left [\int_G \left | T_g - \sum_{k=1}^{N} T^{\sigma_k}_g \right |^2\,dg \right ]=0\ ,\label{align1}\\
&\lim_{N\to +\infty} \E  \left [\left | T_g - \sum_{k=1}^{N} T^{\sigma_k}_g \right |^2\right ] =0\ .\label{2}
\end{align}
\end{theorem}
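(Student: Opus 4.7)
The plan is to deduce the result from the deterministic Peter--Weyl Theorem \ref{PW} applied pathwise, and to upgrade the convergence using isotropy.

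First, I would show that $T$ is mean square integrable. Since $T$ is second order and isotropic, the function $g\mapsto \E[|T_g|^2]$ is constant (as $T_g$ and $T_{hg}=T^h_g$ are equi-distributed for any $h\in G$), and therefore, by Fubini and the fact that the Haar measure has total mass $1$,
\begin{equation*}
\E\!\left[\int_G |T_g|^2\,dg\right]=\int_G \E[|T_g|^2]\,dg=\E[|T_e|^2]<+\infty.
\end{equation*}
In particular $T$ is a.s. square integrable, so (see Remark \ref{variabile aleatoria che prende valori in L2X}) it defines a random variable valued in $L^2(G)$, and Theorem \ref{PW} applied pathwise yields $T_g=\sum_{\sigma\in\widehat G} T^\sigma_g$ with convergence in $L^2(G)$ almost surely, together with the Parseval identity
\begin{equation*}
\|T\|_{L^2(G)}^2=\sum_{\sigma\in\widehat G}\|T^\sigma\|_{L^2(G)}^2\quad a.s.
\end{equation*}

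To obtain (\ref{align1}), I would take expectation on both sides of Parseval's identity: since all terms are nonnegative, monotone convergence gives $\E[\|T\|_{L^2(G)}^2]=\sum_{\sigma}\E[\|T^\sigma\|_{L^2(G)}^2]$, and this quantity is finite by the previous step. Denoting $S_N:=\sum_{k=1}^{N} T^{\sigma_k}$ for some enumeration $(\sigma_k)_k$ of $\widehat G$, pathwise orthogonality of the isotypical components gives $\|T-S_N\|_{L^2(G)}^2=\sum_{k>N}\|T^{\sigma_k}\|_{L^2(G)}^2$, and taking expectation we get $\E[\|T-S_N\|_{L^2(G)}^2]=\sum_{k>N}\E[\|T^{\sigma_k}\|_{L^2(G)}^2]\to 0$, which is precisely (\ref{align1}).

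The more delicate part is (\ref{2}), the convergence in $L^2(\P)$ for every fixed $g$, since ordinarily $L^2(\Omega\times G)$-convergence does not imply this. The key observation is that the remainder $T-S_N$ is again isotropic: indeed, the computation preceding the statement shows that the Fourier operators transform under left translations by $h\in G$ as $\widehat{T^h}(\sigma)=\widehat T(\sigma)D^\sigma(h)$, so removing finitely many $\sigma$'s from the expansion preserves the equivariance property characterizing isotropy in the Proposition preceding the theorem. Consequently $g\mapsto \E[|T_g-S_{N,g}|^2]$ is a constant $a_N$, and by Fubini combined with (\ref{align1}),
\begin{equation*}
a_N=\int_G \E\bigl[|T_g-S_{N,g}|^2\bigr]\,dg=\E\!\left[\int_G |T_g-S_{N,g}|^2\,dg\right]\xrightarrow[N\to\infty]{}0,
\end{equation*}
which gives (\ref{2}). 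The main obstacle I anticipate is precisely this last step: without isotropy, the pointwise convergence could fail, so one must be careful to verify that the isotropy property survives truncation of the Peter--Weyl expansion, and this is the point where the equivariance $\widehat{T^h}(\sigma)=\widehat T(\sigma)D^\sigma(h)$ is essential.
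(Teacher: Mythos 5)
Your proof is correct; note that the paper itself gives no argument for this statement, since Theorem \ref{Stochastic-PW} is reproduced from \cite{dogiocam}, so there is no in-paper proof to compare against. Your route is the standard one underlying that reference: isotropy plus Fubini gives mean-square integrability, the pathwise Peter--Weyl theorem and Parseval give \paref{align1}, and for \paref{2} you observe that the truncated remainder $R_N=T-\sum_{k\le N}T^{\sigma_k}$ is again isotropic --- its Fourier coefficients are obtained from those of $T$ by zeroing out finitely many of them, so the equi-distribution of $(\widehat T(\sigma))_\sigma$ and $(\widehat T(\sigma)D^\sigma(h))_\sigma$ passes to the truncation and the characterization in the proposition preceding the theorem applies --- whence $g\mapsto \E\bigl[|T_g-S_{N,g}|^2\bigr]$ is constant, equal to its Haar average, which tends to zero by \paref{align1}. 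The one dependency worth stating explicitly is that the step ``$R_N$ isotropic $\Rightarrow$ the one-point second moment is constant in $g$'' uses the paper's remark that the $L^2$-sense invariance of Definition \ref{invarian} implies invariance of the finite-dimensional distributions (the fact quoted from \cite{maupec}); granting that remark, as the paper does, your argument is complete.
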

\noindent The previous theorem has the following interesting consequence (for a proof see \cite{maupec}).
\begin{prop}\label{Mean square continuity of invariant}
Every second order and  isotropic random field $T$ on the homogeneous space  $\X$ of a compact group is
 mean square continuous, i.e.
 \begin{equation}
\lim_{y\to x} \E[|T_y - T_x|^2 ]=0\ .
 \end{equation}
\end{prop}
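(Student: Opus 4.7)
The plan is to combine the stochastic Peter--Weyl decomposition of Theorem \ref{Stochastic-PW} with the continuity of each matrix coefficient $D^\sigma_{i,j}$, using isotropy to get \emph{uniform} control of the tail of the decomposition. By passing to the pullback it is enough to prove the statement for $\X = G$ (if $T$ on $\X$ is isotropic, so is its pullback $\widetilde T$ on $G$, and continuity of $g \mapsto g x_0$ from $G$ onto $\X$ reduces mean square continuity on $\X$ to mean square continuity on $G$). So fix $h \in G$ and let $g \to h$; we want $\E[|T_g - T_h|^2] \to 0$.

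First I would fix an enumeration $\{\sigma_k\}_{k \geq 1}$ of $\widehat G$ and apply Theorem \ref{Stochastic-PW} at the point $e \in G$: this gives
\[
\varepsilon_N := \E\Bigl[ \Bigl| T_e - \sum_{k=1}^{N} T^{\sigma_k}_e \Bigr|^2 \Bigr] \xrightarrow[N\to\infty]{} 0 .
\]
The key observation is that by isotropy this quantity controls the tail \emph{at every point}. Indeed, if $g \in G$ and $\ell_g \colon h \mapsto g h$, the Peter--Weyl projection $T \mapsto T^\sigma$ commutes with $L_g$ in the sense that $(T \circ \ell_g)^\sigma = T^\sigma \circ \ell_g$, since this is just the statement that the $G$-modules $L^2_\sigma(G)$ are left $L_g$-invariant. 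Combined with $T \circ \ell_g \stackrel{\mathcal L}{=} T$ (isotropy), it follows that
\[
\E\Bigl[ \Bigl| T_g - \sum_{k=1}^{N} T^{\sigma_k}_g \Bigr|^2 \Bigr] = \varepsilon_N \qquad \text{for every } g \in G .
\]

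Next, for each fixed $\sigma$ the function $g \mapsto T^\sigma_g$ is continuous from $G$ into $L^2(\P)$. This is immediate from the explicit formula
\[
T^\sigma_g = \sqrt{\dim \sigma} \sum_{i,j=1}^{\dim \sigma} \widehat T(\sigma)_{j,i}\, D^\sigma_{i,j}(g) ,
\]
the finite sum of random coefficients in $L^2(\P)$ against continuous functions $D^\sigma_{i,j}$ on $G$. Hence for any finite $N$ the partial sum $S_N(g) := \sum_{k=1}^N T^{\sigma_k}_g$ satisfies $\E[|S_N(g) - S_N(h)|^2] \to 0$ as $g \to h$.

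Finally I would assemble the argument by the triangle inequality in $L^2(\P)$: for every $N \geq 1$,
\[
\|T_g - T_h\|_{L^2(\P)} \;\leq\; \|T_g - S_N(g)\|_{L^2(\P)} + \|S_N(g) - S_N(h)\|_{L^2(\P)} + \|S_N(h) - T_h\|_{L^2(\P)} .
\]
The first and third terms equal $\sqrt{\varepsilon_N}$ by the uniform bound above, so given $\eta > 0$ we first choose $N$ with $2\sqrt{\varepsilon_N} < \eta/2$, and then let $g \to h$ to make the middle term smaller than $\eta/2$ by continuity of $S_N$. This yields $\E[|T_g - T_h|^2] \to 0$, as required. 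The only subtle point, which is the main obstacle, is verifying the isotropy-based invariance of the tail norm: it relies on $L_g$-invariance of the isotypical subspaces $L^2_\sigma(G)$, i.e.\ on the fact that the Peter--Weyl projection is intertwined with the left regular representation; once this is in hand, the rest is a standard ``$\varepsilon/2$'' argument.
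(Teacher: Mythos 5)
First, a point of reference: the thesis itself does not prove this proposition — it is quoted with the indication ``for a proof see \cite{maupec}'' — so your argument has to be judged on its own rather than against an in-paper proof. Its skeleton (stochastic Peter--Weyl expansion from Theorem \ref{Stochastic-PW}, constancy in $g$ of the $L^2(\P)$ tail via isotropy, $L^2(\P)$-continuity of the finite partial sums, and an $\varepsilon/3$ assembly) is the natural one, and the intertwining identity you single out as the ``main obstacle'' is in fact the easy part: pathwise $\widehat{T^g}(\sigma)=\widehat T(\sigma)D^\sigma(g)$, hence $(T^g)^\sigma_h=T^\sigma_{gh}$, which is the same computation underlying the equi-distribution statement quoted from \cite{balditrapani}. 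The reduction to $\X=G$ is also fine, but not quite for the reason you give: $y\mapsto g_y$ with $g_yx_0=y$ cannot in general be chosen continuously, so you should either argue by compactness (pick arbitrary $g_{y_n}$, extract a convergent subsequence, whose limit projects to $x$) or pass through the covariance kernel and the quotient map $G\times G\to\X\times\X$.

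The genuine gap is in the step ``$T\circ\ell_g\stackrel{\mathcal L}{=}T$, hence $\E[|T_g-S_N(g)|^2]=\varepsilon_N$ for all $g$''. The functional whose law you transfer mixes the \emph{pointwise} evaluation $T_g$ with the path integrals $\widehat T(\sigma_k)$. Isotropy in the sense of Definition \ref{invarian}, which is the hypothesis in force here, only asserts equality of laws of vectors of $L^2$-functionals $T(f_i)$; a pointwise evaluation is not of this form, and it cannot be approximated in $L^2(\P)$ by such functionals without already knowing mean-square continuity — which is what you are trying to prove. What your step really needs is invariance of the finite-dimensional distributions \paref{invar-continui1} (together with a routine joint-measurability/Fubini argument so that the Fourier coefficients can be adjoined to finitely many point evaluations), and in this thesis the implication \paref{l2-invar} $\Rightarrow$ \paref{invar-continui1} is itself only quoted from \cite{maupec}, the very reference containing the proof of the present proposition; so, as written, your argument is either incomplete or circular at its key point. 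The same issue already sits underneath your implicit use of the constancy of $g\mapsto\E[|T_g|^2]$, which is needed even to obtain mean-square integrability and hence to invoke Theorem \ref{Stochastic-PW} (the thesis glosses this with a ``standard Fubini argument''). If you instead take strict-sense (fdd) isotropy as the standing hypothesis and supply the measurability lemma for the integral functionals, your proof goes through as an honest argument.
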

It is worth remarking some features of Fourier coefficients of a second order and  isotropic random field $T$ (see  \cite{balditrapani}).
\begin{theorem}
If $\sigma\in \widehat G$ is not the trivial representation, then
$$
\E[\widehat T(\sigma)]=0\ ,
$$
moreover for $\sigma_1, \sigma_2\in \widehat G$, we have
\begin{enumerate}\label{th coeff}
\item if $\sigma_1, \sigma_2$ are not equivalent, the r.v.'s $\widehat T(\sigma_1)_{i,j}$ and $\widehat T(\sigma_2)_{k,l}$ are orthogonal for
$i,j=1, \dots, \text{dim}\,\sigma_1$ and $k,l=1, \dots , \text{dim}\,\sigma_2$;
\item if $\sigma_1=\sigma_2=\sigma$, and $\Gamma(\sigma)=\E[\widehat T(\sigma) \widehat T(\sigma)^*]$, then $\Cov(\widehat T(\sigma)_{i,j}, \widehat T(\sigma)_{k,l})=\delta_j^l \Gamma(\sigma)_{i,k}$. In particular
coefficients belonging to different columns are orthogonal and the covariance between entries in
different rows of a same column does not depend on the column.
\end{enumerate}
\end{theorem}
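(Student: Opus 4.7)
The plan is to extract the first- and second-moment content of the proposition immediately preceding the statement, which asserts that for every $g\in G$ the two operator-valued families $(\widehat T(\sigma))_{\sigma\in\widehat G}$ and $(\widehat T(\sigma)D^\sigma(g))_{\sigma\in\widehat G}$ are jointly equi-distributed. Combined with Schur's orthogonality relations, this single ingredient should deliver both (i) and (ii).

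For (i), taking expectations in the equi-distribution relation gives $\E[\widehat T(\sigma)] = \E[\widehat T(\sigma)]\,D^\sigma(g)$ for every $g\in G$. Integrating in $g$ against the normalized Haar measure (legitimate by Fubini, since $T$ being second order implies $\E[\|\widehat T(\sigma)\|^2]<\infty$) turns the right-hand side into $\E[\widehat T(\sigma)]\cdot\int_G D^\sigma(g)\,dg$. The operator $\int_G D^\sigma(g)\,dg$ is the orthogonal projector of $H_\sigma$ onto the subspace of $G$-invariant vectors, which is reduced to $\{0\}$ whenever $\sigma$ is non-trivial by irreducibility, and the conclusion $\E[\widehat T(\sigma)]=0$ follows.

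For (ii), I would first use (i) to identify covariances with second moments $\E[\widehat T(\sigma_1)_{i,j}\overline{\widehat T(\sigma_2)_{k,l}}]$ (the trivial representation contributes only a deterministic constant and can be handled separately). The same equi-distribution, read componentwise across two representations, yields
\begin{equation*}
\E[\widehat T(\sigma_1)_{i,j}\overline{\widehat T(\sigma_2)_{k,l}}] = \sum_{a,b}\E[\widehat T(\sigma_1)_{i,a}\overline{\widehat T(\sigma_2)_{k,b}}]\,D^{\sigma_1}_{a,j}(g)\,\overline{D^{\sigma_2}_{b,l}(g)}
\end{equation*}
for every $g\in G$. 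Integrating in $g$ and invoking Schur's orthogonality relations, the integral $\int_G D^{\sigma_1}_{a,j}(g)\overline{D^{\sigma_2}_{b,l}(g)}\,dg$ is proportional to $\delta_{a,b}\,\delta_{j,l}$ when $\sigma_1$ and $\sigma_2$ are equivalent and vanishes otherwise: this immediately gives orthogonality across inequivalent representations, and for $\sigma_1=\sigma_2=\sigma$ it collapses the double sum to a multiple of $\delta_{j,l}\sum_{a}\E[\widehat T(\sigma)_{i,a}\overline{\widehat T(\sigma)_{k,a}}]$, which is precisely $\delta_j^l$ times the $(i,k)$-entry of $\Gamma(\sigma)=\E[\widehat T(\sigma)\widehat T(\sigma)^*]$, yielding the claimed factorization.

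The main obstacle is essentially bookkeeping --- keeping index positions consistent between $\widehat T(\sigma)_{i,j}$, the transposition implicit in $\widehat T(\sigma)^*$, and the Wigner entries $D^\sigma_{a,j}(g)$, and tracking the $\sqrt{\dim\sigma}$ normalization so that the $(\dim\sigma)^{-1}$ factor coming from Schur absorbs correctly into $\Gamma(\sigma)$. Beyond this, the analytic content needed is minimal: the equi-distribution from the preceding proposition, Schur's orthogonality, and a Fubini interchange justified by the second-order assumption on $T$.
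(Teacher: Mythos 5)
Your argument is correct and is essentially the canonical one: the thesis itself gives no proof of this theorem (it defers to the cited Baldi--Trapani paper), and the route you take --- joint equi-distribution of $(\widehat T(\sigma))_{\sigma}$ and $(\widehat T(\sigma)D^\sigma(g))_{\sigma}$ from the preceding proposition, passage to first and second moments, integration in $g$, and Schur's orthogonality relations --- is exactly the argument used there, with the Fubini step justified by mean square integrability.

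One point should be fixed rather than waved at: with the definition $\Gamma(\sigma)=\E[\widehat T(\sigma)\widehat T(\sigma)^*]$ as stated, your collapsed sum gives
$\E\bigl[\widehat T(\sigma)_{i,j}\overline{\widehat T(\sigma)_{k,l}}\bigr]=\delta_j^l\,\Gamma(\sigma)_{i,k}/\dim\sigma$,
and the $1/\dim\sigma$ coming from Schur does \emph{not} ``absorb into $\Gamma(\sigma)$'': taking $i=k$, $j=l$ shows the identity without that factor would force $\dim\sigma=1$ or zero variance. So either $\Gamma(\sigma)$ must be read as the column covariance matrix $\Gamma(\sigma)_{i,k}=\E[\widehat T(\sigma)_{i,j}\overline{\widehat T(\sigma)_{k,j}}]$ (which your computation shows is independent of $j$), or the constant $1/\dim\sigma$ must be kept; the qualitative conclusions --- orthogonality across inequivalent representations, orthogonality of different columns, column-independence of the covariances --- are untouched. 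A second, harmless slip: for the trivial representation $\widehat T(\sigma_0)$ is a genuine random variable, not a deterministic constant, but your separate treatment is fine since $\E[\widehat T(\sigma)]=0$ is only needed, and only used, for nontrivial $\sigma$.
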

Theorem \ref{th coeff} states that the entries of $\widehat T(\sigma)$ might not be pairwise orthogonal and this happens when
the matrix $\Gamma$ is not diagonal. This phenomenon is actually already been remarked by other authors
(see \cite{malyarenkobook} e.g.).
Of course there are situations in
which orthogonality is still guaranteed: when the dimension of $H_{\sigma,0}$ is one at most (i.e. in every
irreducible $G$-module the dimension of the space $H_{\sigma,0}$ of the $K$-invariant vectors in one at most) as
is the case for $G = SO(m+1)$, the special orthogonal group of dimension $m+1$, $K = SO(m)$ and  $G/K \cong \mathbb S^m$ the $m$-dimensional unit sphere.  In this case actually the matrix $\widehat T(\sigma)$ has
just one row that does not vanish and $\Gamma(\sigma)$ is all zeros, but one entry in the diagonal.

\

Let us now focus on Gaussian  fields, which will receive the greatest attention in this work.
First it is useful to  recall the following.
\begin{definition}\label{Gaussian}
Let $Z=Z_1+iZ_2$ be a complex random variable (we mean that $Z_1,Z_2$ are real random variables).
We say that
\begin{itemize}
\item $Z$ is a \emph{complex-valued} Gaussian random variable if $(Z_1,Z_2)$ are jointly Gaussian;
\item $Z$ is a \emph{complex} Gaussian random variable if $Z_1,Z_2$ are independent Gaussian random variables
with the same variance.
\end{itemize}
Furthermore we say that the random vector
$(Y_1,Y_2,\dots,Y_m)$ is a complex (resp. com\-plex-valued)  Gaussian vector if
$$\sum_i a_i Y_i$$
is a complex (resp. complex-valued) Gaussian random variable for every choice of $a_1,a_2,\dots,a_m\in \mathbb C$.
\end{definition}
\noindent From this definition it follows  that if $T$ is complex-valued Gaussian,
meaning that the r.v.
$T(f)$
is complex-valued Gaussian for every $f\in L^2(\X)$,  then its Fourier coefficients
are complex-valued Gaussian r.v.'s. Furthermore, if each representation of $G$ occurs  at most once
in the Peter-Weyl decomposition of $L^2(\X)$ and $T$ is Gaussian and isotropic, we have that these Fourier
coefficients are pairwise independent from Theorem \ref{th coeff}. This is the case for instance for $G=SO(m+1)$ and $\X=\mathbb S^m$.

In \cite{BMV} a characterization
of isotropic real Gaussian fields on homogeneous spaces of compact groups is given: under some mild
additional assumption also the converse is true, namely that if a random field is isotropic
and its Fourier coefficients are independent, then it is necessarily Gaussian.
For more discussions on this topic see also \cite{balditrapani}.

\subsection*{Isotropic spherical random fields}

Let us consider a random field $T=(T_x)_{x\in \cS^2}$ on $\cS^2$
according to Definition (\ref{campo aleatorio su uno spazio omogeneo}).
We assume that $T$ is $a.s.$ square integrable.
From previous sections,
$T$ admits the following stochastic Fourier expansion
\begin{equation}\label{espansione stocastica sulla sfera}
T_x = \sum_{\ell \ge 0} \sum_{m=-\ell}^{\ell} a_{\ell,m} Y_{\ell,m}(x)
\end{equation}
where  $a_{\ell,m}=\int_{\cS^2} T_x \overline{Y_{\ell,m}(x)}\,dx$ are the Fourier coefficients w.r.t. the basis of spherical harmonics and the convergence is
in the sense of $L^2( \cS^2)$ $a.s.$  \\
If the random field $T$ is in addition second order and isotropic,  Theorem (\ref{Stochastic-PW}) states that the convergence
of the series in (\ref{espansione stocastica sulla sfera}) holds both
in the sense of $L^2(\Omega\times \cS^2, \P\otimes dx)$ and $L^2(\P)$ for every fixed $x$, and
furthermore, Corollary \ref{Mean square continuity of invariant} states that $T$ is mean square continuous.

\noindent Moreover from  Theorem \ref{th coeff}
we obtain easily
\begin{equation}\label{media zero}
\E(a_{\ell,m}) = 0\; \mbox{for every}\;  m=-\ell,\dots , \ell\; \mbox{and}\; \ell > 0
\end{equation}
so that $\E (T_x)=\E (a_{0,0})/ \sqrt{4\pi}$, as $Y_{0,0}= 1/\sqrt{4\pi}$, according to the
fact that the mean of an isotropic random field is constant.
If $c$ is any additive constant, the random field $T^c:=T+c$ has the same
Fourier expansion as $T$, except for the term $a_{0,0}^c Y_{0,0}= c + a_{0,0}Y_{0,0}$ because for every $\ell > 1$ the spherical harmonics $Y_{\ell,m}$ are orthogonal to the constants.
In what follows we often consider \emph{centered} isotropic random fields,
this is generally done by ensuring that also the trivial coefficient $a_{0,0}$ is a centered
random variable. However we will often require that $a_{0,0}=0$, i.e.,
the average of the random field vanishes on $\cS^2$:
\begin{equation}
\int_{\cS^2} T_x\,dx = 0\ .
\end{equation}
As in the Peter-Weyl decomposition of $L^2(\cS^2)$ two irreducible representations with $\ell\not=\ell'$ are not equivalent,
the random coefficients $a_{\ell,m}, m=-\ell, \dots, \ell$ are pairwise orthogonal and moreover
the variance of $a_{\ell,m}$ does not depend on $m$. We denote
$$c_\ell:=\E[|a_{\ell,m}|^2]$$
the variance of $a_{\ell,m}$.  The (nonnegative) sequence
$(c_\ell)_\ell$ is known as the \emph{angular power spectrum} of the field.

It turns out that $T$ is Gaussian and isotropic if and only $a_{\ell,m}$ are Gaussian independent random variables.

In this case, from \paref{espansione stocastica sulla sfera}, setting
$$
T_\ell(x) := \sum_{m=-\ell}^{\ell} \frac{a_{\ell,m}}{\sqrt{c_\ell}} Y_{\ell,m}(x)
$$
we can write
$$
T_x = \sum_\ell \sqrt{c_\ell}\, T_\ell(x) \ ,
$$
where $T_\ell$ is known as the $\ell$-th Gaussian eigenfunctions on $\mathbb S^2$ or random spherical harmonics (see \paref{Telle} for further details).
\section{Positive definite functions}

To every second order random field $T$ one can associate
the \emph{covariance kernel} $R:\X\times \X \goto \C$ defined as
$$
R(x,y)=\Cov(T_x,T_y)\ .
$$
This kernel is positive definite, as, for every choice of $x_1,\dots,x_m\in\X$ and of $\xi\in\C^m$ we have
$$
\displaylines{
\sum_{i,j=1}^mR(x_i,x_j)\xi_i\overline{\xi_j}=
\sum_{i,j=1}^m\Cov(T_{x_i},T_{x_j})\xi_i\overline{\xi_j}
=\Var\left(\sum_{i}T_{x_i}\xi_i\right)\ge0\ .
}
$$
If in addition $T$ is isotropic we have, for every $g\in G$,
$$
R(gx,gy)=R(x,y)
$$
and, in this case, $R$ turns out to be continuous, thanks to proposition (\ref{Mean square continuity of invariant}).
Moreover to this kernel one can associate the function on $G$
\begin{equation}\label{funzione fi}
\phi(g):=R(gx_0,x_0)\ .
\end{equation}
This function $\phi$ is
\begin{itemize}
\item  continuous, as a consequence of the continuity of $R$.
\item \emph{bi-$K $-invariant} i.e. for every $k_1,k_2\in K $ and $g\in G$ we have
$$\phi(k_1gk_2)=R(k_1gk_2x_0,x_0)\underbrace{=R(k_1gx_0,x_0)}_{k_2x_0=x_0} \underbrace{=R(gx_0,k_1^{-1}x_0)}_{R\,is\,G-invariant}=R(gx_0,x_0)=\phi(g)$$
\item  \emph{positive definite}, actually as $R$ is a positive definite kernel, for every $g_1,\dots,g_m\in G$ and
$\xi_1,\dots,\xi_m\in \C$ we have
\begin{align}\label{ab}
\sum_{i,j} \phi(g_i^{-1}g_j)\overline{\xi_i} \xi_j = \sum_{i,j} R(g_i^{-1}g_jx_0,x_0)\overline{\xi_i} \xi_j=\sum_{i,j} R(g_jx_0,g_ix_0)\overline{\xi_i} \xi_j\ge 0\ .
\end{align}
\noindent By standard approximation arguments \paref{ab} imply that for every continuous function $f$ we have
\begin{equation}\label{dis1}
\int_G \int_G \phi(h^{-1}g) f(h) \overline{f(g)}\,dg\,dh \ge 0\ .
\end{equation}
\end{itemize}
Moreover $\phi$ determines the covariance kernel $R$ by observing that if $g_x x_0=x, g_yx_0=y$, then
$$R(x,y)=R(g_xx_0,g_yx_0)=R(g_y^{-1}g_xx_0,x_0)=\phi(g_y^{-1}g_x)\ .$$
\noindent Now it is useful to introduce the following functions and their properties.
\begin{definition}\label{f chech}
Let $\zeta$ be a function defined on $G$. We denote by $\zsmile$
the function
$$\zsmile(g):=\overline{\zeta(g^{-1})}$$
\end{definition}
\begin{remark}\rm\label{algebra gruppo}
 We have just defined a map
\begin{align}
\zeta \goto \zsmile
\end{align}
that is an \emph{involution} of the convolution algebra $L^2(G)$ that becomes an $H^*$-algebra. $L^2(G)$ is known as the \emph{group algebra} of $G$. \qed
\end{remark}
\begin{remark}\rm\label{aggiunto}
If $\zeta\in L^2(G)$, then for every $\sigma \in \hat G$ we have
$$\hat \zsmile (\sigma)= \hat \zeta(\sigma)^*\ .$$
Actually,
$$
\hat \zsmile (\sigma) = \int_G \zsmile(g) D^\sigma(g^{-1})\,dg=
\int_G \overline{\zeta(g^{-1})} D^\sigma(g^{-1})\,dg\ .
$$
Thus, for every $v\in H_\sigma$,
$$\dlines{
\langle \hat \zsmile (\sigma) v,v \rangle = \int_G \overline{\zeta(g^{-1})} \langle D^\sigma(g^{-1})v,v \rangle \,dg=\cr
=  \int_G \overline{\zeta(g^{-1})} \langle v,D^\sigma(g)v \rangle \,dg= \int_G \overline{\zeta(g^{-1})} \overline{\langle D^\sigma(g)v,v \rangle} \,dg=\cr
=\overline{ \langle \hat \zeta(\sigma)v, v \rangle}=\langle v, \hat \zeta(\sigma)v \rangle\ .
}$$
\end{remark}
Remark that every positive definite function $\phi$ on $G$ (see \cite{sugiura} p.123) satisfies
$$
\fismile = \phi\ .
$$
The following proposition states some (not really unexpected) properties of continuous positive definite functions that we shall need later.
\begin{prop}\label{structure of positive definite}
Let $\phi$ a continuous positive function and $\sigma\in \hat G$.

a) Let, $\phi(\sigma):H_\sigma\to H_\sigma$ the operator coefficient $\phi(\sigma)=\int_G \phi(g)D^\sigma(g^{-1})\, dg$. Then $\phi(\sigma)$ Hermitian positive definite.

b) Let $ \phi^\sigma:G\to \C$ the component of $\phi$ corresponding to $\sigma$. Then $\phi^\sigma$ is also a positive definite function.
\end{prop}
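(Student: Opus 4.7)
The plan is to combine two standard ideas: first, the positive definiteness of $\phi$ as a function on $G$ translates, via the defining inequality \paref{dis1}, into positive semi-definiteness of the operator $\phi(\sigma)$; second, Hermitian positive semi-definite operators on $H_\sigma$ generate positive definite functions on $G$ through their matrix coefficients.

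For part (a), Hermitianness is essentially free from Remark \ref{aggiunto}. Every continuous positive definite function satisfies $\fismile=\phi$, so
$$\widehat{\phi}(\sigma)^{*}=\widehat{\fismile}(\sigma)=\widehat{\phi}(\sigma),$$
and since $\widehat{\phi}(\sigma)=\sqrt{\dim\sigma}\,\phi(\sigma)$, the operator $\phi(\sigma)$ is Hermitian. For positive semi-definiteness, I would fix $v\in H_\sigma$, apply \paref{dis1} with the continuous test function $F(g):=\langle D^\sigma(g)v,v\rangle$, and substitute $g=hs$ to reduce matters to a Schur-type integral in $h$. By orthogonality of matrix coefficients one gets
$$\int_G \langle D^\sigma(h)v,v\rangle\,\overline{\langle D^\sigma(h)D^\sigma(s)v,v\rangle}\,dh=\frac{\|v\|^2}{\dim\sigma}\,\overline{\langle D^\sigma(s)v,v\rangle},$$
after which unitarity $\langle D^\sigma(s^{-1})v,v\rangle=\overline{\langle D^\sigma(s)v,v\rangle}$ collapses the double integral to $\tfrac{\|v\|^2}{\dim\sigma}\,\langle \phi(\sigma)v,v\rangle$, which must therefore be non-negative.

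For part (b), the spectral theorem applied to $\phi(\sigma)$ (legitimate by (a)) gives a decomposition $\phi(\sigma)=\sum_k \lambda_k\,e_k e_k^{*}$ with $\lambda_k\ge 0$ and an orthonormal basis $\{e_k\}$ of $H_\sigma$. Combining this with the identity $\phi^\sigma=\dim\sigma\cdot \phi\ast\chi_\sigma$ already derived in the text (equivalently $\phi^\sigma(g)=\dim\sigma\cdot\tr(\phi(\sigma)D^\sigma(g))$), one obtains
$$\phi^\sigma(g)=\dim\sigma\sum_k \lambda_k\,\langle D^\sigma(g)e_k,e_k\rangle.$$
Each matrix coefficient $g\mapsto \langle D^\sigma(g)e_k,e_k\rangle$ is itself positive definite: for any $g_1,\dots,g_m\in G$ and $\xi_1,\dots,\xi_m\in\C$, unitarity of $D^\sigma$ yields
$$\sum_{i,j}\overline{\xi_i}\,\xi_j\,\langle D^\sigma(g_i^{-1}g_j)e_k,e_k\rangle=\Bigl\|\sum_j \xi_j\,D^\sigma(g_j)e_k\Bigr\|^2\ge 0.$$
Since positive definite functions form a convex cone closed under non-negative linear combinations, $\phi^\sigma$ is positive definite.

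The main obstacle is carrying out the Schur orthogonality reduction in part (a) cleanly; once $\phi(\sigma)$ is known to be Hermitian and positive, part (b) is a short algebraic manipulation built on the positive definiteness of matrix coefficients of unitary representations.
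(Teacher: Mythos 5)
Your proof is correct, but it follows a genuinely different route from the paper's in both halves. For part a) the paper also starts from \paref{dis1}, but instead of testing against a single matrix coefficient it rewrites $\langle \widehat\phi(\sigma)v,v\rangle$ as a double integral by invariance of the Haar measure, expands $\langle D^\sigma(h)v,D^\sigma(g)v\rangle=\sum_k (D^\sigma(h)v)_k\overline{(D^\sigma(g)v)_k}$ in coordinates and applies \paref{dis1} separately to each coordinate function $f_k(g)=(D^\sigma(g)v)_k$; your variant replaces this coordinate expansion by one application of \paref{dis1} to $F(g)=\langle D^\sigma(g)v,v\rangle$ followed by Schur orthogonality, and the identity you invoke is the correct orthogonality relation for normalized Haar measure (the paper leaves Hermitianness implicit — it also follows from $\langle\widehat\phi(\sigma)v,v\rangle\in\R$ on a complex space — whereas you get it from Remark \ref{aggiunto} together with $\fismile=\phi$; either is fine). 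The real divergence is in part b): the paper stays at the level of the integral inequality, inserting the Peter--Weyl expansion of $\phi$ into \paref{dis1}, using that $f\ast\phi^{\sigma'}$ vanishes unless $\sigma'=\sigma$ when $f\in L^2_\sigma(G)$, and then reducing a general $f$ to its component $f^\sigma$; you instead combine part a) with the spectral theorem to write $\phi^\sigma(g)=\dim\sigma\sum_k\lambda_k\langle D^\sigma(g)e_k,e_k\rangle$ with $\lambda_k\ge 0$ and verify the pointwise definition of positive definiteness directly from $\sum_{i,j}\overline{\xi_i}\xi_j\langle D^\sigma(g_i^{-1}g_j)e_k,e_k\rangle=\|\sum_j\xi_j D^\sigma(g_j)e_k\|^2\ge 0$. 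Your argument is shorter, self-contained, and yields the stronger (and standard) statement that $g\mapsto\tr(AD^\sigma(g))$ is positive definite for any Hermitian positive semi-definite $A$; the paper's argument, on the other hand, works directly with the $L^2$ formulation \paref{dis1} and brings out the convolution-orthogonality of the isotypical subspaces, a structural fact it exploits again later (e.g. in the square-root construction of Theorem \ref{square-root}). Both proofs are valid.
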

\begin{proof}
a) Let us fix a basis $v_1,\dots,v_{d_\sigma}$ of $H_\sigma$, we have
\begin{equation}\label{dis3}
\langle \hat \phi(\sigma) v,v \rangle = \int_G \phi(g)
\langle D^\sigma(g^{-1})v,v\rangle\,dg\ .
\end{equation}
By the invariance of the Haar measure
$$
\dlines{ \int_G \phi(g)
\langle D^\sigma(g^{-1})v,v\rangle\,dg = \int_G \int_G \phi(h^{-1}g) \langle D^\sigma(g^{-1}h)v,v\rangle\,dg\,dh=\cr =
\int_G \int_G \phi(h^{-1}g) \langle D^\sigma(h)v,D^\sigma(g)v\rangle\,dg\,dh=\cr
=\int_G \int_G \phi(h^{-1}g) \sum_k ({D^\sigma(h)}{v})_k\overline{({D^\sigma(g)} {v})}_k\,dg\,dh =\cr
\sum_k \int_G \int_G \phi(h^{-1}g) f_k(h)\overline{f_k(g)}\,dg\,dh \ge 0\ ,
}
$$
where we have set, for every $k$, $f_k(g)=({D^\sigma(g)}{v})_k$ and  (\ref{dis1}) allows to conclude.
Let $\phi^\sigma$ be the projection of $\phi$ onto the $\sigma$-isotypical subspace $L^2_\sigma(G)\subset L^2(G)$.

b) The Peter-Weyl theorem states that
\begin{equation}
\phi= \sum_{\sigma\in \hat G} \phi^\sigma\ ,
\end{equation}
the convergence of the series taking place in $L^2(G)$.

%
\noindent Let $f\in L^2_\sigma(G)$ in \ref{dis1} and replace $\phi$ with its Fourier series.
Recall that $f$ is a continuous function.
We have
$$
\dlines{
0\le \int_G \int_G \sum_{\sigma'} \phi^{\sigma'}(h^{-1}g) f(h) \overline{f(g)}\,dg\,dh=
\int_G \sum_{\sigma'} \underbrace{\int_G  \phi^{\sigma'}(h^{-1}g) f(h)\,dh}_{= f\ast \phi^{\sigma'}} \overline{f(g)}\,dg\ .}
$$
Now recalling that the subspaces $L^2_{\sigma'}(G)$ are pairwise orthogonal
under the product of convolution, we obtain
$$
f\ast \phi^{\sigma'} \ne 0 \iff \sigma'=\sigma\ .
$$
Therefore for every $\sigma\in \hat G$
\begin{equation}
\int_G \int_G \phi^{\sigma}(h^{-1}g) f(h) \overline{f(g)}\,dg\,dh=\int_G \int_G \phi(h^{-1}g) f(h) \overline{f(g)}\,dg\,dh \ge 0
\end{equation}
for every $f\in L^2_\sigma(G)$. Let now $f\in L^2(G)$ and let $f=\sum_{\sigma'} f^{\sigma'}$ its Fourier series. The same argument as above gives
$$
\int_G \int_G \phi^{\sigma}(h^{-1}g) f(h) \overline{f(g)}\,dg\,dh=\int_G \int_G \phi^{\sigma}(h^{-1}g) f^\sigma(h) \overline{f^\sigma(g)}\,dg\,dh\ge 0\ ,
$$
so that $\phi^\sigma$ is a positive definite function.
\end{proof}

\noindent Another important property enjoyed by positive definite and continuous functions on $G$ is shown
in the following classical theorem (see  \cite{GAN:60}, Theorem 3.20, p.151).
\begin{theorem}\label{gangolli-true}
Let $\zeta$ be a continuous positive definite function on $G$. Let $\zeta^\sigma$ be
the component of $\zeta$ on the $\sigma$-isotypical subspace $L^2_\sigma(G)$, then
$$
\zeta=\sum_{\sigma\in \hat G} \sqrt{\text{dim}\,\sigma} \tr\, \widehat \zeta(\sigma) < +\infty\ ,$$
and the Fourier series
$$
\zeta = \sum_{\sigma\in \hat G} \zeta^\sigma
$$
converges uniformly on $G$.
\end{theorem}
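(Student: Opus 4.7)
My plan is to combine three ingredients: (i) the positivity of each Fourier component $\zeta^\sigma$, (ii) the orthogonality of isotypical subspaces under convolution (already used in the proof of Proposition \ref{structure of positive definite}), and (iii) the elementary bound $|\phi(g)|\le\phi(e)$ valid for any continuous positive definite function $\phi$ on $G$. By Proposition \ref{structure of positive definite}(b) every $\zeta^\sigma$ is continuous and positive definite; since $\widehat\zeta(\sigma)$ is Hermitian positive semi-definite by part (a),
$$
\zeta^\sigma(e)=\sqrt{\dim\sigma}\,\tr\widehat\zeta(\sigma)\ge 0.
$$
Fix an enumeration $\widehat G=\{\sigma_k:k\ge 1\}$ and put $S_N:=\sum_{k\le N}\zeta^{\sigma_k}$.

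The key step is to show that $\zeta-S_N$ is positive definite. For a continuous $f\in L^2(G)$, decompose $f=\sum_\sigma f^\sigma$. The orthogonality of distinct isotypical subspaces under the product of convolution (recalled inside the proof of Proposition \ref{structure of positive definite} and a direct consequence of \paref{conv1}) implies that, for every $\sigma\in\widehat G$,
$$
\int_G\!\int_G \zeta^\sigma(h^{-1}g)f(h)\overline{f(g)}\,dg\,dh
=\int_G\!\int_G \zeta^\sigma(h^{-1}g)f^\sigma(h)\overline{f^\sigma(g)}\,dg\,dh\ge 0.
$$
The bilinear form $\phi\mapsto\langle f\ast\phi,f\rangle_{L^2(G)}$ is continuous on $L^2(G)$ by Young's inequality $\|f\ast\phi\|_2\le\|f\|_2\|\phi\|_2$ on the probability group, so passing the $L^2$-convergence $S_N\to\zeta$ (Theorem \ref{PW}) through it yields
$$
\int_G\!\int_G (\zeta-S_N)(h^{-1}g)f(h)\overline{f(g)}\,dg\,dh
=\sum_{k>N}\int_G\!\int_G \zeta^{\sigma_k}(h^{-1}g)f^{\sigma_k}(h)\overline{f^{\sigma_k}(g)}\,dg\,dh\ge 0,
$$
and the approximation argument underlying \paref{dis1} then shows that $\zeta-S_N$ is indeed positive definite.

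For any continuous positive definite $\phi$, the $2\times 2$ positivity test at the pair $(e,g)$ yields $|\phi(g)|\le\phi(e)$. Applied to $\zeta-S_N$ and, for $M>N$, to the finite sum of positive definite functions $S_M-S_N=\sum_{N<k\le M}\zeta^{\sigma_k}$, this gives
$$
\sup_{g\in G}|\zeta(g)-S_N(g)|\le\zeta(e)-S_N(e),\qquad \sup_{g\in G}|S_M(g)-S_N(g)|\le S_M(e)-S_N(e).
$$
Since $\zeta^{\sigma_k}(e)\ge 0$, the sequence $(S_N(e))$ is nondecreasing and, because $\zeta-S_N\ge 0$ at $e$, is bounded above by $\zeta(e)$; hence it converges, which already yields $\sum_{\sigma\in\widehat G}\sqrt{\dim\sigma}\,\tr\widehat\zeta(\sigma)<+\infty$. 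The Cauchy bound then makes $(S_N)$ uniformly Cauchy on $G$, and its uniform limit coincides a.e., hence everywhere by continuity, with $\zeta$ (by Theorem \ref{PW}). Evaluating at $e$ gives the stated equality $\zeta(e)=\sum_{\sigma}\sqrt{\dim\sigma}\,\tr\widehat\zeta(\sigma)$. The delicate point is Step 2: the $L^2$-Fourier decomposition of $\zeta$ must be moved inside the quadratic positivity form, which is handled by combining Young's inequality with the convolutional orthogonality of isotypical subspaces; once this is in place, Step 3 is mechanical.
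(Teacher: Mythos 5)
Your proof is correct. Note that the paper itself does not prove Theorem \ref{gangolli-true}: it is quoted from Gangolli (\cite{GAN:60}, Theorem 3.20), so there is no internal argument to compare with; what you have written is essentially the classical proof, made self-contained by leaning on Proposition \ref{structure of positive definite}. The structure is sound: positivity of each $\zeta^\sigma$ gives $\zeta^\sigma(e)=\sqrt{\dim\sigma}\,\tr\widehat\zeta(\sigma)\ge 0$; the convolution-orthogonality identity $\langle f\ast\zeta^\sigma,f\rangle=\langle f^\sigma\ast\zeta^\sigma,f^\sigma\rangle\ge 0$ is exactly the computation already carried out in the proof of Proposition \ref{structure of positive definite} b); continuity of $\phi\mapsto\langle f\ast\phi,f\rangle$ in the $L^2$-norm together with the Peter--Weyl convergence of Theorem \ref{PW} legitimately transfers positivity to the tails $\zeta-S_N$; monotonicity and boundedness of $S_N(e)$ give the finiteness of $\sum_\sigma\sqrt{\dim\sigma}\,\tr\widehat\zeta(\sigma)$ (which is how the displayed identity in the statement, where ``$\zeta=$'' should clearly read ``$\zeta(e)=$'', must be interpreted, as you do); and the bound $\sup_g|S_M(g)-S_N(g)|\le S_M(e)-S_N(e)$ yields uniform Cauchyness, with the limit identified with $\zeta$ through $L^2$-convergence and continuity. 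Two steps are stated rather than proved, but both are standard and you name them correctly: the converse direction of \paref{dis1} (integrated positivity against all continuous $f$ plus continuity of the kernel implies pointwise positive definiteness, via approximate identities concentrated at the points $g_i$), and the $2\times 2$ inequality $|\phi(g)|\le\phi(e)$ for continuous positive definite $\phi$. If you wanted to streamline, you could avoid invoking pointwise positive definiteness of the infinite tail $\zeta-S_N$ altogether: the bound $S_N(e)\le\zeta(e)$ already follows by testing the integrated positivity of $\zeta-S_N$ against an approximate identity at $e$, and uniform convergence then needs only the finite sums $S_M-S_N$, for which positive definiteness is immediate from Proposition \ref{structure of positive definite} b).
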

\begin{remark}\rm
This theorem is an extension of a classical result for trigonometric series:
\emph{every continuous function on the unit circle with all nonnegative Fourier coefficients has
its Fourier series converging uniformly on the unit circle}. \qed
\end{remark}

\chapter{Representation of isotropic Gaussian fields}

In this chapter we recollect the first part of \cite{mauspin}: as stated in the Introduction, starting from P. L\'evy's construction of his spherical Brownian motion, we prove a representation formula for isotropic Gaussian fields on homogeneous spaces $\X$ of a compact group $G$ (\S 2.1 and \S 2.2).

In particular, we show that to every square integrable bi-$K$-invariant
function $f$ on $G$ a Gaussian isotropic random field on $\X$ can be associated
 and also that every \emph{real} Gaussian
isotropic  random field on $\X$ can be obtained in this way.

This kind of result is extended to the case of  random fields in the
spin-line bundles of the sphere in the second part of \cite{mauspin} and will be presented in the last chapter of this thesis.

\section{Construction of isotropic Gaussian fields}\label{sec4}

In this section we point out the method for Gaussian isotropic random
fields on the homogeneous space $\X$ of a compact group $G$. We start with the construction
of a white noise on $\X$.

Let $(X_n)_n$ be a sequence of i.i.d. standard Gaussian r.v.'s on some probability
space $(\Omega, \F, \P)$ and  denote by $\mathscr{H}\subset L^2(\P)$ the real Hilbert
space generated by $(X_n)_n$.
Let $(e_n)_n$ be an orthonormal basis of $L^2_{\mathbb{R}}(\mathscr{X})$, the space of real square integrable
functions on $\X$.
We define an isometry $S:L^2_{\mathbb{R}}(\mathscr{X})\to\mathscr{H}$ by
$$
L^2_{\mathbb{R}} (\mathscr{X})\ni \sum_k \alpha_k
e_k\enspace\leftrightarrow\enspace \sum_k \alpha_k X_k \in
\mathscr{H}\ .
$$
It is easy to extend $S$ to an isometry on  $L^2(\mathscr{X})$,
indeed if $f\in L^2(\mathscr{X})$, then $f=f_1+if_2$, with $f_1, f_2
\in L^2_{\mathbb{R}}(\mathscr{X})$, hence just set
$S(f)=S(f_1)+iS(f_2)$. Such an isometry respects the real character
of the function $f\in L^2(\X)$ (i.e. if $f$ is real then $S(f)$ is a
real r.v.).

Let $f$ be a left $K $-invariant function in $L^2(\mathscr{X})$.
We then define a random field $(T^f_x)_{x\in \mathscr{X}}$
associated to $f$ as follows: set $T^f_{x_0}=S(f)$ and, for every $x\in \X$,
\begin{equation}\label{def campo}
T^f_x =S(L_gf)\ ,
\end{equation}
where $g\in G$ is such that $gx_0=x$ ($L$ still denotes the left
regular action of $G$).
This is a good definition: in fact if also $\widetilde{g}\in G$ is such that
$\widetilde{g}x_0=x$, then $\widetilde{g}=gk$ for some $k\in K$ and therefore $L_{\widetilde g}f(x)=f(k^{-1}g^{-1}x)=
f(g^{-1}x)=L_gf(x)$ so that
$$
S(L_{\widetilde{g}}f)=S(L_gf)\ .
$$
The random field $T^f$ is mean square integrable, indeed
$$
\dlines{ \E \Bigl[\int_\X |T^f_x|^2\,dx \Bigr]
< +\infty\ .}
$$
Actually,
if $g_x$ is any element of $G$ such that $g_xx_0=x$ (chosen in some
measurable way), then, as $\E[|T^f_x|^2]=\E [|S(L_{g_x} f)|]^2=\|
L_{g_x}f \|^2_{L^2(\X)}=\| f \|^2_{L^2(\X)}$, we have $\E \int_\X |T^f_x|^2\,dx= \| f \|^2_{L^2(\X)}$.
$T^f$ is a centered and \emph{complex-valued Gaussian} random field.
Let us now check that $T^f$ is isotropic. Recall that
the law of a complex-valued Gaussian random vector $Z=(Z_1, Z_2)$ is
completely characterized by its mean value $\E[Z]$, its covariance
matrix $\E[ \left(Z - \E[Z]\right) \left( Z- \E[Z] \right)^*]$ and
the \emph{pseudocovariance} or \emph{relation matrix} $\E[ \left(Z -
\E[Z]\right) \left( Z- \E[Z] \right)']$. We have

(i) as $S$ is an isometry
$$
\displaylines{
\mathbb{E}[T^f_{gx}\overline{T^f_{gy}}]=\mathbb{E}[S(L_{gg_x}f)\overline{S(L_{gg_y}f)}]=
\langle L_{gg_x }f, L_{gg_y}f\rangle_{L^2(\X)}
=\cr
=\langle L_{g_x }f, L_{g_y}f \rangle_{L^2(\X)}=
\mathbb{E}[T^f_x\overline{T^f_y}]\ .}
$$

(ii) Moreover, as complex conjugation commutes both with $S$ and the
left regular representation of $G$,
$$
\displaylines{
\mathbb{E}[T^f_{gx}T^f_{gy}]=\mathbb{E}[S(L_{gg_x}f)\overline{S(L_{gg_y}\overline{f})}]=
\langle L_{gg_x }f, L_{gg_y}\overline{f}\rangle_{L^2(\X)}=\cr
=
\langle L_{g_x }f, L_{g_y}\overline{f}\rangle_{L^2(\X)}=\mathbb{E}[T^f_xT^f_y]\ .}
$$
Therefore $T^f$ is isotropic because it has the same covariance and
relation kernels as the rotated field $(T^f)^g$ for every $g\in G$.

If $R^f(x,y)=\E[T^f_x \overline{T^f_y}]$ denotes its covariance
kernel, then the associated positive definite function
$\phi^f(g):=R(gx_0,x_0)$ satisfies
\begin{equation}\label{convolution for phi}
\begin{array}{c}
\displaystyle\phi^f(g)=\E[S(L_g f)\overline{S(f)}]=
\langle L_gf, f\rangle
=\\
\noalign{\vskip3pt}
\displaystyle= \int_G \widetilde f(g^{-1}h) \overline{\widetilde f(h)}\,dh= \int_G \widetilde f(g^{-1}h) \breve {\widetilde f} (h^{-1})\,dh=\widetilde f \ast \breve {\widetilde f} (g^{-1})\ , \\
\end{array}
\end{equation}
where $\widetilde f$ is the pullback on $G$ of $f$ and the convolution $\ast$ is in $G$. Moreover the relation function of $T^f$, that is
$
\zeta^f(g) := \E[T^f_{gx_0} T^f_{x_0}]
$
satisfies
\begin{equation}\label{convolution for zeta}
\zeta^f(g)=\E[S(L_gf)S(f)]=\langle L_gf, \overline{f}\rangle\ .
\end{equation}
One may ask whether every a.s. square integrable, isotropic, complex-valued Gaussian centered random field on $\X$ can be obtained with this construction: the answer
is \emph{no} in general. It is however positive if we consider
{\it real} isotropic Gaussian random fields (see Theorem \ref{real-general} below). Before considering the case of a general homogeneous space $\X$, let us look first at the case of the sphere, where things are particularly simple.

\begin{remark}\label{rem-sfera} \rm (Representation of
real Gaussian isotropic random fields on $\cS^2$) If $\X=\cS^2$
under the action of $SO(3)$, every isotropic, \emph{real} Gaussian
and centered random field is of the form \paref{def campo} for some
left-$K$-invariant function $f:\cS^2\to \R$. Indeed let us consider on $L^2(\cS^2)$ the Fourier
basis $Y_{\ell,m}$, $\ell=1,2,\dots$, $m=-\ell,\dots,\ell$,
given by the spherical harmonics (\ref{armoniche sferiche1}).
Every continuous positive definite left-$K$-invariant function $\phi$ on $\cS^2$ has a Fourier expansion of the form (\ref{PW invariant sphere})
\begin{equation}\label{fi per sfera}
\phi = \sum_{\ell \ge 0} \alpha_\ell Y_{\ell,0}\ ,
\end{equation}
where (Proposition
\ref{structure of positive definite}) $\alpha_\ell \ge 0$ and
$$
\sum_{\ell \ge 0} \sqrt{2\ell+1}\,\alpha_\ell<+\infty
$$
(Theorem \ref{gangolli-true}).
The $Y_{\ell,0}$'s being real, the function $\phi$ in (\ref{fi per sfera})
is\emph{ real}, so that, 
$\phi(g)=\phi(g^{-1})$
(in this remark and in the next example we identify functions on $\cS^2$ with
their pullbacks on $SO(3)$ for simplicity of notations).

If $\phi$ is the positive definite left-$K$-invariant function associated to
$T$, then, keeping in mind that $Y_{\ell,0}*Y_{\ell',0}=(2\ell+1)^{-1/2}
Y_{\ell,0}\,\delta_{\ell,\ell'}$, a  ``square root'' $f$ of $\phi$ is given by
\begin{equation}
f = \sum_{\ell \ge 0} \beta_\ell \, Y_{\ell,0}\ ,
\end{equation}
where $\beta_\ell$ is a complex number such that
$$
\frac{|\beta_\ell |^2}{\sqrt{2\ell+1}}= \sqrt{\alpha_\ell}\ .
$$
Therefore there exist infinitely many real
functions $f$ such that $\phi(g)=\phi(g^{-1})=
f \ast \breve{f}(g)$, corresponding to the choices $\beta_\ell=\pm
( (2\ell+1)\alpha_\ell )^{1/4}$. For each of these, the random field $T^f$ has
the same distribution as $T$, being real and having the same associated positive
definite function. \qed
\end{remark}
As stated in the Introduction, this method generalizes P. L\'evy's construction of his spherical Brownian motion. In the following example, we show
the connection between this construction and our method. Moreover, it is easy to extend the following to the case of the hyperspherical Brownian motion.
\begin{example}\rm{(P.L\'evy's spherical Brownian field)}\label{MB}.
Let us choose as a particular instance of the previous construction
$f=c1_{H}$, where $H$ is the half-sphere centered at the north pole
$x_0$ of $\cS^2$ and $c$ is some constant to be chosen later.

Still denoting by $S$ a white noise on $\cS^2$, from (\ref{def campo})
we have
\begin{equation}
T^f_x = c S(1_{H_x})\ ,
\end{equation}
where $1_{H_x}$ is the half-sphere centered at $x\in \cS^2$. Now,
let $x, y\in \bS^2$ and denote by $d(x,y) = \theta$ their distance, then,
$S$ being an isometry,
\begin{equation}
\Var(T^f_x - T^f_y) = c^2 \| 1_{H_x\vartriangle H_y}\|^2\ .
\end{equation}
The symmetric difference $H_x\vartriangle H_y$ is formed by the union of
two wedges whose total surface is equal to $\frac{\theta}{\pi}$
(recall that we
consider the surface of $\cS^2$ normalized with total mass $=1$).
Therefore, choosing $c= \sqrt{\pi}$, we have
\begin{equation}
\Var(T^f_x - T^f_y) = d(x,y)
\end{equation}
and furthermore $\Var(T^f_x) = \tfrac\pi2$.
Thus
\begin{equation}\label{aa}
\Cov(T^f_x, T^f_y) = \tfrac12 \bigl( \Var(T^f_x) + \Var(T^f_y) -
\Var(T^f_x - T^f_y)\bigr) = \tfrac\pi2 - \tfrac12 d(x,y)\ .
\end{equation}
Note that the positive definiteness of (\ref{aa}) implies that the distance $d$
is a Schoenberg restricted negative definite kernel on $\cS^2$ (see \paref{neg def}). The random
field $W$
\begin{equation}
W_x := T^f_x - T^f_{o}\ ,
\end{equation}
where $o$ denotes the north pole of the sphere
is \emph{P.L\'evy's spherical Brownian field}, as $W_o=0$ and its
covariance kernel is
\begin{equation}\label{kernel del mb}
\Cov(W_x, W_y) = \tfrac12 \left( d(x,o) + d(y,o) - d(x,y) \right)\ .
\end{equation}
In particular the kernel at the r.h.s. of (\ref{kernel del mb})
is positive definite (see also \cite{GAN:60}).
Let us compute the expansion into spherical harmonics of the
positive definite function $\phi$ associated to the random field
$T^f$ and to $f$.  We have $\phi(x)=\frac\pi2-\frac
12\,d(x,o)$, i.e.  $\phi(x)=\frac\pi2-\frac12\, \vt$ in spherical coordinates, $\vt$ being the colatitude of $x$, whereas
$Y_{\ell,0}(x)=\sqrt{2\ell+1}\,P_\ell(\cos\vt)$ where $P_\ell$ is the $\ell$-th Legendre polynomial. This formula for the central spherical harmonics
differs slightly from the usual one, as we consider the total
measure of $\cS^2$ to be $=1$. Then, recalling the normalized
measure of the sphere is $\frac 1{4\pi}\,\sin\vt\, d\vt\, d\phi$ and
that $Y_{\ell,0}$ is orthogonal to the constants
$$
\dlines{ \int_{\cS^2}\phi(x)Y_{\ell,0}(x)\, dx=-\frac
14\,\sqrt{2\ell+1}\int_0^\pi\vt P_\ell(\cos\th)\sin \vt\, d\vt=\cr
=-\frac 14\,\sqrt{2\ell+1}\int_{-1}^{1}\arccos t\, P_\ell(t)\, dt
=\frac 14\,\sqrt{2\ell+1}\int_{-1}^{1}\arcsin t\, P_\ell(t)\,
dt=\frac14\,\sqrt{2\ell+1}\,c_\ell\ , \cr }
$$
where
$$
c_\ell=\pi\Bigl\{\frac{3\cdot 5\cdots(\ell-2)}{2\cdot 4\cdots
(\ell+1))}\Bigr\}^2\,\quad \ell=1,3,\dots
$$
and $c_\ell=0$ for $\ell$ even (see \cite{WW}, p.~325). As for
the function $f=\sqrt{\pi}\,1_{H}$, we have
$$
\int_{\cS^2}f(x)Y_{\ell,0}(x)\, dx=\frac {\sqrt{\pi}}2
\,\sqrt{2\ell+1}\int_0^{\pi/2}P_\ell(\cos\vt)\sin\vt\, d\vt= \frac
{\sqrt{\pi}}2 \,\sqrt{2\ell+1}\int_0^1P_\ell(t)\, dt\ .
$$
The r.h.s. can be computed using Rodriguez formula for the
Legendre polynomials (see again \cite{WW}, p.~297) giving
that it vanishes for $\ell$ even and equal to
\begin{equation}\label{2m}
(-1)^{m+1}\,\frac {\sqrt{\pi}}2 \,\sqrt{2\ell+1}\,\frac
{(2m)!{2m+1\choose m}}{2^{2m+1}(2m+1)!}
\end{equation}
for $\ell=2m+1$. Details of this computation are given in Remark
\ref{rod}. Simplifying the factorials the previous
expression becomes
$$
\dlines{ (-1)^m\,\frac {\sqrt{\pi}}2 \,\sqrt{2\ell+1}\,\frac
{(2m)!}{2^{2m+1}m!(m+1)!}=(-1)^m\,\frac {\sqrt{\pi}}2
\,\sqrt{2\ell+1}\,\frac{3\cdots (2m-1)}{2\cdots (2m+2)}=\cr
=(-1)^m\,\frac 12 \,\sqrt{2\ell+1}\,\sqrt{c_\ell}\ .\cr }
$$
Therefore the choice $f=\sqrt{\pi}\, 1_H$ corresponds to taking
alternating signs when taking the square roots. Note that the
choice $f'=\sum_\ell \beta_\ell Y_{\ell,0}$ with $\beta_\ell=\frac 12
\,\sqrt{2\ell+1}\,\sqrt{c_\ell}$ would have given a function
diverging at the north pole $o$. Actually it is elementary to
check that the series $\sum_\ell ( {2\ell+1})\,\sqrt{c_\ell}$
diverges so that $f'$ cannot be continuous by Theorem \ref{gangolli-true}.\qed
\end{example}
\begin{remark}\label{rod}\rm Rodriguez formula for the Legendre polynomials states that
$$
P_\ell(x)=\frac 1{2^\ell\ell!}\,\frac
{d^\ell\hfil}{dx^\ell}\,(x^2-1)^\ell\ .
$$
Therefore
\begin{equation}\label{integral}
\int_0^1P_\ell(x)\, dx=\frac 1{2^\ell\ell!}\,\frac
{d^{\ell-1}\hfil}{dx^{\ell-1}}\,(x^2-1)^\ell\Big|^1_0\ .
\end{equation}
The primitive vanishes at $1$, as the polynomial $(x^2-1)^\ell$ has
a zero of order $\ell$ at $x=1$ and all its derivatives up to the
order $\ell-1$ vanish at $x=1$. In order to compute the primitive at
$0$ we make the binomial expansion of $(x^2-1)^\ell$ and take the
result of the $(\ell-1)$-th derivative of the term of order $\ell-1$
of the expansion. This is actually the term of order $0$ of the
primitive. If $\ell$ is even then $\ell-1$ is odd so that this term
of order $\ell-1$ does not exist (in the expansion only even
powers of $x$ can appear). If $\ell=2m+1$, then the term of order
$\ell-1=2m$ in the expansion is
$$
(-1)^m{2m+1\choose m} z^{2m}
$$
and the result of the integral in \paref{integral} is actually, as
given in \paref{2m},
$$
(-1)^{m+1}\,\frac {(2m)!}{2^{2m+1}(2m+1)!}\,{2m+1\choose m}\ \cdotp
$$\qed
\end{remark}

\section{Representation formula}

The result of Remark \ref{rem-sfera} concerning $\mathbb S^2$ can be extended to the case of
a general homogeneous space $\X$.
We shall need the following ``square root'' theorem in the proof of the representation formula
of Gaussian isotropic random fields on $\X$.
\begin{theorem}\label{square-root} Let $\phi$ be a bi-$K$-invariant positive
definite continuous function on $G$. Then there exists a  bi-$K$-invariant function $f\in L^2(G)$
such that $\phi=f*\breve
f$. Moreover, if $\phi$ is real valued then $f$ also can be chosen
to be real valued.
\end{theorem}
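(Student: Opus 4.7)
The plan is to work in Fourier space and use Plancherel. By the convolution rule \paref{conv1} together with Remark \ref{aggiunto}, one has
\begin{equation*}
\widehat{f\ast\breve f}(\sigma) \;=\; \frac{1}{\sqrt{\dim\sigma}}\,\widehat f(\sigma)^{*}\widehat f(\sigma)\ ,
\end{equation*}
so by uniqueness of Fourier coefficients the theorem reduces to producing operators $\widehat f(\sigma)$ on $H_{\sigma}$ satisfying three conditions:
\begin{equation*}
\widehat f(\sigma)^{*}\widehat f(\sigma)=\sqrt{\dim\sigma}\,\widehat \phi(\sigma),\qquad \widehat f(\sigma)=P_{\sigma,0}\widehat f(\sigma)P_{\sigma,0},\qquad \sum_{\sigma\in\widehat G}\|\widehat f(\sigma)\|_{\mathrm{H.S.}}^{2}<+\infty.
\end{equation*}
The second ensures that the resulting $f$ is bi-$K$-invariant via \paref{fourier-bicappa}, and the third makes $f\in L^{2}(G)$ by Theorem \ref{PW}.

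By Proposition \ref{structure of positive definite}(a) each $\widehat \phi(\sigma)$ is a Hermitian positive semi-definite operator on $H_{\sigma}$; moreover by bi-$K$-invariance of $\phi$ and \paref{fourier-bicappa} it vanishes on $H_{\sigma,0}^{\perp}$ and maps $H_{\sigma,0}$ into itself. The natural candidate is
\begin{equation*}
\widehat f(\sigma):=(\dim\sigma)^{1/4}\,\widehat \phi(\sigma)^{1/2}\ ,
\end{equation*}
the unique positive Hermitian square root. Then $\widehat f(\sigma)^{*}\widehat f(\sigma)=\widehat f(\sigma)^{2}=\sqrt{\dim\sigma}\,\widehat \phi(\sigma)$ is automatic, and since $H_{\sigma,0}$ and $H_{\sigma,0}^{\perp}$ are invariant eigen-subspaces of $\widehat \phi(\sigma)$, the functional calculus preserves them, so $\widehat f(\sigma)=P_{\sigma,0}\widehat f(\sigma)P_{\sigma,0}$.

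For the summability, $\|\widehat f(\sigma)\|_{\mathrm{H.S.}}^{2}=\sqrt{\dim\sigma}\,\tr\widehat \phi(\sigma)$, and Theorem \ref{gangolli-true} (applied at $g=e$, where the uniformly convergent Peter--Weyl series of $\phi$ sums to $\phi(e)$) bounds this series by $\phi(e)<+\infty$. Hence $f:=\sum_{\sigma}f^{\sigma}$ converges in $L^{2}(G)$ to a bi-$K$-invariant function; by construction $\widehat{f\ast\breve f}=\widehat \phi$ on every $\sigma$, so $f\ast\breve f=\phi$ in $L^{2}(G)$. Since the convolution of two $L^{2}$-functions on a compact group is continuous, and $\phi$ is continuous, this equality holds pointwise.

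For the real case, the main observation is that if $\phi$ is real then $\bar f$ is again a square root of $\phi$, but our canonical choice need not be real: complex conjugation does not act diagonally on the Peter--Weyl pieces, but rather intertwines the $\sigma$-isotypical component with the $\bar\sigma$-isotypical one. I would resolve this by regrouping the sum $\sum_{\sigma}f^{\sigma}$ along conjugation orbits. For self-conjugate $\sigma$ I would fix a basis of $H_{\sigma}$ adapted to the real (resp.\ quaternionic) structure and compatible with the $K$-action, so that $\widehat \phi(\sigma)$ is represented by a real symmetric positive semi-definite matrix whose positive Hermitian square root is again real symmetric; for non-self-dual pairs $\{\sigma,\bar\sigma\}$ I would define $\widehat f(\bar\sigma)$ as the image of $\widehat f(\sigma)$ under the canonical intertwiner, so that the combined contribution $f^{\sigma}+f^{\bar\sigma}$ is real-valued. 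The delicate step is to carry out these basis choices uniformly in $\sigma$ so that the condition $\widehat f(\sigma)=P_{\sigma,0}\widehat f(\sigma)P_{\sigma,0}$ is maintained throughout; this is where I expect the main technical work to lie.
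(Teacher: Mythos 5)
Your treatment of the first assertion is correct and is essentially the paper's own argument: reduce the identity $\phi=f*\breve f$ to Fourier coefficients via \paref{conv1} and Remark \ref{aggiunto}, take for $\widehat f(\sigma)$ a Hermitian (positive) square root of $\widehat\phi(\sigma)$ (Proposition \ref{structure of positive definite}), note that the relation $\widehat f(\sigma)=P_{\sigma,0}\widehat f(\sigma)P_{\sigma,0}$ of \paref{fourier-bicappa} is inherited by the square root, and obtain $f\in L^2(G)$ from the summability provided by Theorem \ref{gangolli-true}. You even track the $\sqrt{\dim\sigma}$ normalization and the passage from $L^2$-equality to pointwise equality (continuity of the convolution of two $L^2$ functions) more explicitly than the paper does.

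The second assertion, however, is where the real content of the statement lies, and your proposal does not prove it: you outline a strategy (split $\widehat G$ into self-conjugate representations and conjugate pairs) and then explicitly defer ``the main technical work''. Moreover, the one concrete claim you make about the self-conjugate case fails for quaternionic type. A quaternionic irreducible representation admits no basis in which $D^\sigma$ --- and hence $\widehat\phi(\sigma)$ --- is realized by real matrices, so there is no ``real symmetric'' square root available; and even if $\widehat f(\sigma)$ were a real matrix, this would not make $g\mapsto\tr\bigl(\widehat f(\sigma)D^\sigma(g)\bigr)$ real-valued, since $D^\sigma(g)$ itself is not real. The correct criterion, which is what the paper's Proposition \ref{real-sq} establishes in its case c), is that the $\sigma$-component is real provided $\widehat f(\sigma)$ commutes with the quaternionic conjugation $J$, i.e.\ has the block form \paref{matr per fi} relative to a basis of the type \paref{anti-basis}; the paper then shows the Hermitian square root of $\widehat\phi(\sigma)$ has this form by diagonalizing $\widehat\phi(\sigma)$ in a $J$-adapted eigenbasis (eigenvectors occur in pairs $v,Jv$ with the same positive eigenvalue, because $\widehat\phi(\sigma)$ commutes with $J$). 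Your handling of real-type representations and of non-self-dual pairs $\{\sigma,\sigma^*\}$ does coincide with the paper's cases a) and b) (for the pairs one uses $\widehat\phi(\sigma^*)=\overline{\widehat\phi(\sigma)}$ and chooses $\widehat\phi(\sigma^*)^{1/2}=\overline{\widehat\phi(\sigma)^{1/2}}$), but without the quaternionic argument --- and without verifying that these basis choices remain compatible with $\widehat f(\sigma)=P_{\sigma,0}\widehat f(\sigma)P_{\sigma,0}$, a point you flag yourself --- the ``moreover'' part of the theorem remains unproved.
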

\begin{proof}
For every $\sigma\in\widehat G$, $\widehat \phi(\sigma)$ is
Hermitian positive definite. Therefore there exist matrices
$\Lambda(\sigma)$ such that
$\Lambda(\sigma)\Lambda(\sigma)^*=\widehat\phi(\sigma)$.
Let
$$
f= \sum_{\sigma\in\widehat G} \underbrace{\sqrt{\dim\sigma}\,
\tr\bigl(\Lambda(\sigma) D^\sigma\bigr)}_{=f^\sigma}\ .
$$
This actually defines a function $f\in L^2(G)$ as it is easy to see that
$$
\Vert f^\sigma\Vert^2_2=\sum_{i,j=1}^{\dim\sigma}(\Lambda(\sigma)_{ij})^2=
\tr(\Lambda(\sigma)\Lambda(\sigma)^*)=\tr(\widehat\phi(\sigma))
$$
so that
$$
\Vert f\Vert^2_2=\sum_{\sigma\in\widehat G}\Vert f^\sigma\Vert^2_2=
\sum_{\sigma\in\widehat G}\tr(\widehat\phi(\sigma))<+\infty
$$
thanks to \paref{gangolli-true}.
By Remark \ref{aggiunto} and (\ref{conv1}), we have
$$
\phi= f\ast \breve f\ .
$$
Finally the matrix $\Lambda(\sigma)$ can be chosen to be Hermitian
and with this choice $f$ is bi-$K $-invariant
 as the relation \paref{fourier-bicappa}
$\widehat f (\sigma)=P_{\sigma,0}\widehat f(\sigma)P_{\sigma,0}$
still holds. The last statement follows from next proposition.
\end{proof}
\begin{prop}\label{real-sq}
Let $\phi$ be a real positive definite function on a compact group $G$,
then there exists a real function $f$ such that $\phi=f*\breve f$.
\end{prop}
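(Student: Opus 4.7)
My approach is to refine the square--root construction in the proof of Theorem~\ref{square-root}, by choosing $\Lambda(\sigma)$ canonically -- namely, as the \emph{unique} positive Hermitian square root of $\widehat\phi(\sigma)$ -- and then to show that this canonical choice automatically yields a real function $f$ whenever $\phi$ is real. For each $\sigma\in\widehat G$, the operator $\widehat\phi(\sigma)$ is Hermitian positive semi-definite by Proposition~\ref{structure of positive definite}, so it admits a unique positive Hermitian square root $\Lambda(\sigma)$. Setting
\[
f \;:=\; \sum_{\sigma\in\widehat G}\sqrt{\dim\sigma}\,\tr\bigl(\Lambda(\sigma)\,D^\sigma\bigr),
\]
the same arguments as in the proof of Theorem~\ref{square-root} give $f\in L^2(G)$ (by Theorem~\ref{gangolli-true}, since $\sum_\sigma \tr\widehat\phi(\sigma)<+\infty$) and $\phi = f*\breve f$ (using $\Lambda(\sigma)^*=\Lambda(\sigma)$ together with (\ref{conv1}) and Remark~\ref{aggiunto}).

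The new content is verifying that $f$ is real-valued. At the Fourier level, reality of a function $\psi\in L^2(G)$ is equivalent to the symmetry $\widehat\psi(\bar\sigma) = \overline{\widehat\psi(\sigma)}$ for every $\sigma\in\widehat G$, where $\bar\sigma$ denotes the contragredient representation realised via entrywise complex conjugation in the chosen basis of $H_\sigma$. Applied to $\phi$ real, this yields
\[
\widehat\phi(\bar\sigma) \;=\; \overline{\widehat\phi(\sigma)}.
\]
The decisive observation is that the unique positive Hermitian square root commutes with complex conjugation of matrices: for any Hermitian positive $A$, the matrix $\overline{\sqrt A}$ is Hermitian positive and its square equals $\overline A$, so by uniqueness $\sqrt{\overline A} = \overline{\sqrt A}$. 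Applying this with $A=\widehat\phi(\sigma)$ gives
\[
\Lambda(\bar\sigma) \;=\; \sqrt{\widehat\phi(\bar\sigma)} \;=\; \sqrt{\overline{\widehat\phi(\sigma)}} \;=\; \overline{\sqrt{\widehat\phi(\sigma)}} \;=\; \overline{\Lambda(\sigma)},
\]
which is exactly the Fourier reality condition for $f$.

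The main technical obstacle, which must be set up carefully before the above calculation is meaningful, is the bookkeeping attached to the pairing $\sigma\leftrightarrow\bar\sigma$ in $\widehat G$: one must fix, once and for all, representatives of the equivalence classes in $\widehat G$ and bases of the corresponding Hilbert spaces $H_\sigma$ such that the identification $H_{\bar\sigma}\cong\overline{H_\sigma}$ makes ``complex conjugation of Fourier coefficients'' a well-defined matrix operation. For $\sigma\not\cong\bar\sigma$, one pairs the two classes and matches their bases by conjugation; for self-conjugate $\sigma\cong\bar\sigma$, one chooses a basis in which the intertwining isomorphism between $\sigma$ and $\bar\sigma$ is realised by complex conjugation. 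Once this framework is in place, the positive-square-root argument above goes through uniformly in $\sigma$, and the proposition -- together with the final assertion of Theorem~\ref{square-root} -- follows.
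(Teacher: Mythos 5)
Your overall strategy -- take the canonical positive Hermitian square root $\Lambda(\sigma)$ of $\widehat\phi(\sigma)$ and check reality of $f$ at the level of Fourier coefficients -- is genuinely different from the paper's proof, which splits $\widehat G$ into representations of real, complex and quaternionic type and, in the quaternionic case, verifies by an explicit block-matrix computation in a basis $v_1,\dots,v_k,Jv_1,\dots,Jv_k$ that the Hermitian square root retains the form \paref{matr per fi}. However, your reduction of the self-conjugate case contains a genuine error: you claim that when $\sigma\cong\bar\sigma$ one can choose a basis of $H_\sigma$ in which the intertwiner is realised by entrywise complex conjugation, i.e.\ in which $\overline{D^\sigma(g)}=D^\sigma(g)$ for all $g$. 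Such a basis exists if and only if $\sigma$ is of \emph{real} type: entrywise conjugation in that basis would be a $G$-equivariant antilinear map squaring to $+1$. For a quaternionic $\sigma$ every equivariant antilinear map squares to a negative scalar; equivalently, any unitary $C$ with $\overline{D^\sigma(g)}=C\,D^\sigma(g)\,C^*$ satisfies $C\overline C=-I$, so $C$ can never be the identity. As written, then, your argument does not cover the quaternionic representations -- which is precisely the case the paper's proof works hardest on.

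The gap is repairable by your own mechanism, which shows the idea is the right one: keep the unitary intertwiner $C_\sigma$, so that $\overline{D^\sigma(g)}=C_\sigma D^{\sigma'}(g)C_\sigma^*$ with $\sigma'$ the representative of the class of $\bar\sigma$. The correct reality criterion for $\psi\in L^2(G)$ is then $\widehat\psi(\sigma')=C_\sigma^*\,\overline{\widehat\psi(\sigma)}\,C_\sigma$ for all $\sigma$ (your condition $\widehat\psi(\bar\sigma)=\overline{\widehat\psi(\sigma)}$ is the special case $C_\sigma=I$, legitimate for non-self-conjugate pairs and for real-type $\sigma$). Applying this to the real function $\phi$ gives $\widehat\phi(\sigma')=C_\sigma^*\,\overline{\widehat\phi(\sigma)}\,C_\sigma$, and since $C_\sigma^*\,\overline{\Lambda(\sigma)}\,C_\sigma$ is Hermitian positive with square $C_\sigma^*\,\overline{\widehat\phi(\sigma)}\,C_\sigma=\widehat\phi(\sigma')$, uniqueness of the positive square root yields $\Lambda(\sigma')=C_\sigma^*\,\overline{\Lambda(\sigma)}\,C_\sigma$, which is exactly the reality condition for $f$. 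With this correction your proof becomes a clean alternative to the paper's case analysis, replacing the explicit quaternionic block computation by the uniqueness of the positive square root and its equivariance under $A\mapsto U^*\overline A\,U$; without it, the quaternionic case is simply missing.
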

\begin{proof}
Let
$$
\phi(g)=\sum_{\sigma\in\widehat G}
\phi^\sigma(g)=\sum_{\sigma\in\widehat
G}\sqrt{\dim
\sigma}\,\tr(\widehat\phi(\sigma)D^\sigma(g))
$$
be the Peter-Weyl decomposition of $\phi$ into isotypical components. We know
that the Hermitian matrices $\widehat\phi(\sigma)$ are positive
definite, so that there exist square roots
$\widehat\phi(\sigma)^{1/2}$ i.e. matrices such that
$\widehat\phi(\sigma)^{1/2}{\widehat\phi(\sigma)^{1/2}}^*=\widehat\phi(\sigma)$
and the functions
$$
f(g)=\sum_{\sigma\in\widehat G}\sqrt{\dim
\sigma}\,\tr(\widehat\phi(\sigma)^{1/2}D^\sigma(g))
$$
are such that $\phi=f*\breve f$. We need to prove that these square
roots can be chosen in such a way that $f$ is also real. Recall that
a representation of a compact group $G$ can be classified as being
of real, complex or quaternionic type (see \cite{B-D}, p. 93 e.g. for details).

\tin{a)} If $\sigma$ is of real type then there exists a conjugation $J$
of $H_\sigma\subset L^2(G)$ such that $J^2=1$. A conjugation is a
$G$-equivariant antilinear endomorphism. It is well known that in
this case one can choose a basis $v_1,\dots, v_{d_\sigma}$ of
$H_\sigma$ formed of ``real'' vectors, i.e. such that $Jv_i=v_i$. It
is then immediate that the representative matrix $D^\sigma$ of the
action of $G$ on $H_\sigma$ is real. Actually, as $J$ is equivariant
and $Jv_i=v_i$,
$$
D^\sigma_{ij}(g)=\langle gv_j,v_i\rangle=\overline{\langle
Jgv_j,Jv_i\rangle}=\overline{\langle
gv_j,v_i\rangle}=\overline{D^\sigma_{ij}(g)}\ .
$$
With this choice of the basis, the matrix $\widehat\phi(\sigma)$ is real
and also $\widehat\phi(\sigma)^{1/2}$ can be chosen to be real and
$g\mapsto\sqrt{\dim \sigma}\,
\tr(\widehat\phi(\sigma)^{1/2}D^\sigma(g))$ turns out to be real
itself.
\tin{b)} If $\sigma$ is of complex type, then
it is not isomorphic to its dual representation $\sigma^*$.
As $D^{\sigma^*}(g):=D^\sigma (g^{-1})^t=
\overline{D^\sigma (g)}$ and $\phi$ is real-valued,  we have
\begin{equation*}\label{easy}
\widehat \phi(\sigma^*) = \overline{\widehat \phi(\sigma)}\ ,
\end{equation*}
so that we can choose $\widehat \phi(\sigma^*)^{1/2} = \overline{\widehat \phi(\sigma)^{1/2}}$ and, as $\sigma$ and $\sigma^*$ have the same dimension, the function
$$
g\mapsto \sqrt{\dim \sigma} \tr(\widehat \phi(\sigma)^{1/2} D^\sigma(g))+ \sqrt{\dim \sigma^*} \tr(\widehat \phi(\sigma^*)^{1/2} D^{\sigma^*}(g))
$$
turns out to be real.
\tin{c)} If $\sigma$ is quaternionic, let $J$ be the corresponding
conjugation. It is immediate that the vectors $v$ and $Jv$ are
orthogonal and from this it follows that
$\dim \sigma=2k$ and that there exists an orthogonal basis for $H_\sigma$
of the form
\begin{equation}\label{anti-basis}
v_1,  \dots, v_k, w_1=J(v_1), \dots, w_k=J(v_k)\ .
\end{equation}
In such a basis the representation matrix of any linear transformation $U:H_\sigma\to H_\sigma$ which commutes with $J$ has the form
\begin{equation}\label{eq blocchi-gen}
\begin{pmatrix}
A  &  B\\
\noalign{\vskip4pt}
-\overline{B} & \overline{A} \\
\end{pmatrix}
\end{equation}
and in particular $D^\sigma(g)$ takes the form
\begin{equation}\label{eq blocchi}
D^\sigma(g)=\begin{pmatrix}
A(g)  &  B(g)\\
\noalign{\vskip4pt}
-\overline{B(g)} & \overline{A(g)} \\
\end{pmatrix}\ .
\end{equation}
By \paref{eq blocchi} we have also, $\phi$ being real valued,
\begin{equation}\label{matr per fi}
\widehat \phi(\sigma) = \begin{pmatrix}
\int_G \phi(g)A(g^{-1})\,dg  &  \int_G \phi(g)B(g^{-1})\,dg\\
\noalign{\vskip6pt}
 -\int_G \phi(g)\overline{B(g^{-1})}\,dg & \int_G
\phi(g)\overline{A(g^{-1})}\,dg \\
\end{pmatrix}:=\begin{pmatrix}
\phi_A   &  \phi_B \\
\noalign{\vskip4pt}
-\overline{ \phi_B} & \overline{\phi_A } \\
\end{pmatrix}\ .
\end{equation}

More interestingly, if $\phi$ is any function such that, with respect to the basis above,
$\widehat\phi(\sigma)$ is of the form \paref{matr per fi}, then the corresponding component
$\phi^\sigma$ is necessarily a real valued function: actually
$$
\dlines{
\phi^\sigma(g)=\tr(\widehat\phi(\sigma)D^\sigma(g))
=\tr\bigl(\phi_A A(g)-\phi_B\overline {B(g)}-\overline{\phi_B}B(g)+\overline {\phi_A}\,\overline {A(g)}\bigr)=\cr
=\tr\bigl(\phi_A A(g)+\overline{\phi_A A(g)}\bigr)-
\tr\bigl(\phi_B\overline {B(g)}+\overline{\phi_B\overline {B(g)}}\bigr)\ .\cr
}
$$
We now prove that the Hermitian square root, $U$ say, of $\widehat\phi(\sigma)$ is of the form
\paref{matr per fi}. Actually note that $\widehat\phi(\sigma)$ is self-adjoint, so that it
can be diagonalized and all its eigenvalues are real (and positive by Proposition
\ref{structure of positive definite} a)). Let $\lambda$ be an eigenvalue and $v$ a
corresponding eigenvector. Then, as
$$
\widehat\phi(\sigma)Jv=J\widehat\phi(\sigma)=J\lambda v=\lambda Jv\ ,
$$
$Jv$ is also an eigenvector associated to $\lambda$. Therefore there exists a basis as in
\paref{anti-basis} that is formed of eigenvectors, i.e. of the form
$v_1,\dots,v_k, w_1,\dots,w_k$ with $Jv_j=w_j$ and
$v_j$ and $w_j$ associated to the same positive eigenvalue $\lambda_j$. In this basis
$\widehat\phi(\sigma)$ is of course diagonal with the (positive) eigenvalues on the diagonal.
Its Hermitian square root $U$ is also diagonal, with the square roots of the eigenvalues
on the diagonal. Therefore $U$ is also the form \paref{matr per fi} and the corresponding
function $\psi(g)=\tr(UD(g))$ is real valued and such that $\psi*\breve\psi=\phi^\sigma$.
\end{proof}
Note that the decomposition of Theorem \ref{square-root} is not
unique, as the Hermitian square root of the positive definite
operator $\widehat\phi(\sigma)$ is not unique itself.
%
%
Now we prove the main result of this chapter.
\begin{theorem}\label{real-general} Let $\X$ be the homogeneous space of a compact group $G$ and let $T$ be an a.s. square
 integrable isotropic Gaussian \emph{real} random field on $\X$.
 Then there exists a left-$K$-invariant function $f\in L^2(\X)$ such that $T^f$ has the same distribution as $T$.
\end{theorem}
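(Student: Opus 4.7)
The plan is to reduce Theorem~\ref{real-general} to the ``square-root'' statements (Proposition~\ref{real-sq} and Theorem~\ref{square-root}) established just above. First I would observe that since $T$ is isotropic Gaussian and real, each $T_x$ has finite variance, so $T$ is second order; by Proposition~\ref{Mean square continuity of invariant} it is then mean square continuous. Up to subtracting a constant (which, being the mean of an isotropic field, is well defined and independent of $x$), I may assume $T$ is centered. Define the covariance kernel $R(x,y):=\E[T_x T_y]$ and set $\phi(g):=R(gx_0,x_0)$. As discussed in \S1.4, $\phi$ is continuous, bi-$K$-invariant and positive definite on $G$, and since $T$ is real it is moreover real-valued.

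Next I would apply Proposition~\ref{real-sq} (combined with Theorem~\ref{square-root}) to $\phi$: this produces a \emph{real} bi-$K$-invariant function $f_0\in L^2(G)$ satisfying $\phi=f_0\ast\breve{f_0}$. By bi-$K$-invariance and the identification of Remark~\ref{id}, $f_0$ is the pullback $\widetilde f$ of a unique left-$K$-invariant $f\in L^2(\X)$. Using this $f$, form the associated random field $T^f$ via \paref{def campo}. Since $f$ is real and the isometry $S$ respects the real character of functions in $L^2(\X)$, $T^f$ is a real, centered, isotropic Gaussian field on $\X$. By \paref{convolution for phi}, its associated positive definite function is
\[
\phi^f(g)=\widetilde f\ast\breve{\widetilde f}(g^{-1})=\phi(g^{-1})=\phi(g),
\]
where in the last equality I use that the positive definite function of a real field satisfies $\phi=\fismile$, i.e.\ $\phi(g^{-1})=\phi(g)$ (which follows from the symmetry and $G$-invariance of $R$ together with the reality of $T$).

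Finally, $T$ and $T^f$ are both real, centered Gaussian fields on $\X$ with identical covariance kernels, hence have the same finite-dimensional distributions and, in view of Definition~\ref{invarian} together with their mean square continuity, coincide in law. The only non-routine ingredient is really the square-root step: extracting a bi-$K$-invariant \emph{real} function $f_0$ whose convolution square equals $\phi$. That is precisely the content of Proposition~\ref{real-sq}, whose proof requires splitting into irreducible representations of real, complex and quaternionic type in order to secure reality of $f_0$. Given that result, the remainder of the argument is essentially bookkeeping with the isometry $S$ and the convolution identity \paref{convolution for phi}.
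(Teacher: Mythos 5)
Your argument is correct and follows essentially the same route as the paper: reduce to the square-root result (Theorem \ref{square-root} / Proposition \ref{real-sq}) for the real, continuous, bi-$K$-invariant positive definite function $\phi$ associated with $T$, using $\phi(g)=\phi(g^{-1})$ and the convolution identity \paref{convolution for phi} to conclude that $T$ and $T^f$ share the same covariance and hence the same law. You simply spell out the bookkeeping (mean square continuity, reality of $T^f$, identification of the pullback) that the paper's two-line proof leaves implicit.
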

\begin{proof} Let $\phi$ be the invariant positive definite function associated to $T$.
Thanks to \paref{convolution for phi} it is sufficient to prove that
there exists a \emph{real} $K$-invariant
function $f\in L^2(\X)$ such that $\phi(g)=\widetilde f \ast \breve{\widetilde
f}(g^{-1})$. Keeping in mind that $\phi(g)=\phi(g^{-1})$, as $\phi$
is real, 
this follows from
Theorem \ref{square-root}.
\end{proof}
As remarked above $f$ is not unique.

Recall that a complex valued Gaussian r.v. $Z=X+iY$ is said to be
\emph{complex Gaussian} if the
r.v.'s $X,Y$ are jointly Gaussian, are independent and have the same variance. A $\C^m$-valued r.v.
$Z=(Z_1,\dots, Z_m)$ is said to be complex-Gaussian if the r.v.
$\alpha_1Z_1+\dots+\alpha_mZ_m$ is complex-Gaussian for every choice of $\alpha_1,\dots,\alpha_m\in \C$.
\begin{remark}An a.s. square integrable random field $T$ on $\X$ is  \emph{complex Gaussian}
if and only if the complex valued r.v.'s
$$
\int_\X T_xf(x)\, dx
$$
are complex Gaussian for every choice of $f\in L^2(\X)$.
\end{remark}
Complex Gaussian random fields will play an important role in the last chapter of this work. By
now let us remark
that, in general, it is not possible to obtain a complex Gaussian random field by the procedure
\paref{def campo}.

\begin{prop}\label{zeta-prop} Let $\zeta(x,y)=\E[T_xT_y]$ be the relation kernel of a centered complex
Gaussian random field $T$. Then $\zeta\equiv 0$.
\end{prop}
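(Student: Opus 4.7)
The core fact to exploit is that a centered complex Gaussian random variable $Z=X+iY$, with $X,Y$ i.i.d. real Gaussian of the same variance, satisfies
$$\E[Z^2]=\E[X^2]-\E[Y^2]+2i\,\E[XY]=0,$$
since the two variances cancel and $X,Y$ are independent. The whole proposition is essentially a packaged version of this identity.

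My plan is as follows. First, I would note that by definition of a complex Gaussian random field, for every $f\in L^2(\X)$ the scalar r.v.
$$T(f)=\int_\X T_x f(x)\,dx$$
is complex Gaussian; hence by the observation above, $\E[T(f)^2]=0$. Moreover, since $T(f)+T(g)=T(f+g)$ is again complex Gaussian, the same reasoning gives $\E[(T(f)+T(g))^2]=0$, and expanding yields the polarized identity
$$\E[T(f)\,T(g)]=0\qquad\forall\,f,g\in L^2(\X).$$

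Next, I would use Fubini to bring $\zeta$ into play. Under the a.s.\ and mean square integrability assumptions on $T$ (Definition of mean square integrability in the text), the kernel $\zeta(x,y)=\E[T_x T_y]$ is in $L^2(\X\times\X)$ and
$$\E[T(f)T(g)]=\int_\X\int_\X \zeta(x,y)\,f(x)g(y)\,dx\,dy.$$
The previous step therefore gives
$$\int_\X\int_\X \zeta(x,y)\,f(x)g(y)\,dx\,dy=0\qquad\forall\,f,g\in L^2(\X).$$
Since tensor products $f\otimes g$ are total in $L^2(\X\times\X)$, this forces $\zeta=0$ in $L^2(\X\times\X)$, i.e.\ $\zeta(x,y)=0$ for $(dx\otimes dx)$-a.e.\ $(x,y)$. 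If $T$ is additionally mean square continuous (as will be the case for the isotropic fields used later, thanks to Proposition \ref{Mean square continuity of invariant}), then $\zeta$ is jointly continuous and the identity upgrades to pointwise vanishing on all of $\X\times\X$.

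There is really no hard obstacle here: the only point that requires minimal care is the Fubini step, which is immediate from mean square integrability of $T$ (Cauchy--Schwarz gives $\E|T_xT_y|\le \E[|T_x|^2]^{1/2}\E[|T_y|^2]^{1/2}$, and this is integrable against $|f(x)g(y)|$). The rest is a clean polarization argument built on the single observation that $\E[Z^2]=0$ for a centered complex Gaussian $Z$.
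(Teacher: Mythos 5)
Your proof is correct and follows essentially the same route as the paper: the key identity $\E[Z^2]=0$ for a centered complex Gaussian $Z$, applied to $T(f)$ and transferred to $\zeta$ via Fubini. The paper works with the quadratic form $\int\!\!\int\zeta(x,y)f(x)f(y)\,dx\,dy=0$ (using the symmetry of $\zeta$) where you polarize to general $f,g$; this is an immaterial variation, and your added details on Fubini and totality of tensor products merely flesh out what the paper leaves as ``easy to derive''.
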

\begin{proof} It easy to check that a centered complex valued r.v. $Z$ is complex Gaussian if and only
if $\E[Z^2]=0$. As for every $f\in L^2(\X)$
$$
\int_\X\int_\X \zeta(x,y)f(x)f(y)\, dx dy=\E\Bigl[\Bigl(\int_\X T_xf(x)\, dx\Bigr)^2\Bigr]=0\ ,
$$
it is easy to derive
that $\zeta\equiv0$.
\end{proof}
Going back to the situation of Remark \ref{rem-sfera}, the relation function $\zeta$ of the
random field $T^f$ is easily found to be
\begin{equation}\label{cic}
\zeta^f=\sum_{\ell \ge 0} \beta_\ell^2\, Y_{\ell,0}\ ,
\end{equation}
and cannot vanish unless $f\equiv 0$ and $T^f$ vanishes itself.
Therefore no isotropic complex Gaussian random field on the sphere can be obtained by
the construction \paref{def campo}.

\chapter{On L\'evy's Brownian fields}

In 1959 P.L\'evy  \cite{levy} asked the question of the existence of a random field $X$
indexed by the points of
a metric space $(\X,d)$ and generalizing the Brownian motion, i.e. of a real Gaussian process
which would be centered, vanishing at some point $x_0\in\X$ and such that $\E(|X_x-X_y|^2)=d(x,y)$.
By polarization, the covariance function of such a process would be
\begin{equation}\label{kernel MB}
K(x,y)=\frac{1}{2} \,( d(x,x_0) + d(y,x_0) - d(x,y) )
\end{equation}
so that this question is equivalent to the fact that the kernel $K$ is positive definite. As anticipated in the Introduction, $X$ is called P.L\'evy's Brownian field on $(\X,d)$. Positive
definiteness of $K$ for $\X=\R^{m+1}$ and $d$ the Euclidean metric had been proved by
\cite{schoenberg} in 1938 and P.L\'evy itself
constructed the Brownian field on $\X=\bS^{m}$, the euclidean sphere of $\R^{m+1}$,
 $d$ being the distance along the geodesics (Example \ref{MB}).
Later Gangolli \cite{GAN:60} gave an analytical proof of the positive definiteness of the kernel
 \paref{kernel MB} for the same metric space $(\bS^{m},d)$, in a paper that dealt with this
 question for a large class of homogeneous spaces.

Finally Takenaka in \cite{TKU} proved the positive definiteness of the kernel \paref{kernel MB}
for the Riemannian metric spaces
of constant sectional curvature equal to $-1,0$ or $1$, therefore adding the hyperbolic disk to the
list. To be precise in the case of the hyperbolic space
$\mathcal{H}_m = \lbrace (x_0, x_1, \dots, x_m)\in \R^{m+1} :
x_1^2 + \dots x_m^2 - x_0^2 = 1 \rbrace $, the distance under consideration  is the unique, up to
multiplicative
constants, Riemannian distance that is invariant with respect to the action of $G=L_m$, the Lorentz group.

In \cite{mauSO(3)}
we investigate this question for the cases
$\X=SO(n)$. The answer is that the kernel \paref{kernel MB} is not
positive definite on $SO(n)$ for $n>2$. This is somehow surprising
as, in particular, $SO(3)$ is locally isometric to $SU(2)$, where
positive definiteness of the kernel $K$ is immediate as shown below.

It is immediate that this imply the non existence on $SU(n), n>2$.

The plan of this chapter is as follows.
In \S\ref{elem} we recall some elementary facts about invariant distances and positive definite kernels. In
\S\ref{sud} we treat the case $G=SU(2)$, recalling well known facts about the invariant distance and Haar measure of this group.
Positive definiteness of $K$ for $SU(2)$ is just a simple remark, but these facts are needed in
\S\ref{sot} where we treat the case $SO(3)$ and deduce from the case $SO(n)$, $n\ge 3$.

\section{Some elementary facts}\label{elem}

In this section we recall some well known facts about Lie groups (see mainly
 \cite{faraut} and also \cite{MR2088027, sugiura}).

\subsection{Invariant distance of a compact Lie group}

In this chapter $G$ denotes a compact \emph{Lie group}. It is well known that  $G$ admits {at least} a bi-invariant Riemannian metric
(see \cite{MR2088027} p.66 e.g.), that
we shall denote $\lbrace \langle \cdot, \cdot \rangle_g \rbrace_{g\in G}$ where of course
$\langle \cdot, \cdot \rangle_g$ is the inner product defined on the tangent space $T_g G$ to the manifold $G$ at $g$ and the family $\lbrace \langle \cdot, \cdot \rangle_g \rbrace_{g\in G}$ smoothly depends on $g$. By the bi-invariance property, for $g\in G$ the diffeomorphisms
$L_g$ and $R_g$ (resp. the left multiplication and the right multiplication of the group) are isometries.
Since the tangent space $T_g G$ at any point $g$ can be translated to the tangent space $T_e G$ at the identity element $e$ of the group, the metric $\lbrace \langle \cdot, \cdot \rangle_g \rbrace_{g\in G}$ is completely characterized by $\langle \cdot, \cdot \rangle_e$.
Moreover, $T_e G$ being the Lie algebra $\mathfrak g$ of $G$, the bi-invariant metric corresponds to an inner product $\langle \cdot, \cdot \rangle$ on
$\mathfrak g$ which is invariant under the adjoint representation $Ad$ of $G$. Indeed there is a one-to-one correspondence between bi-invariant Riemannian metrics on $G$ and $Ad$-invariant inner products on
$\mathfrak g$. If in addition $\mathfrak g$ is {semisimple}, then the negative Killing form
of $G$ is an $Ad$-invariant inner product on $\mathfrak g$ itself.

If there exists a unique
(up to a multiplicative factor) bi-invariant metric on $G$ (for a sufficient condition see
\cite{MR2088027}, Th. $2.43$) and $\mathfrak g$ is semisimple, then
this metric is necessarily proportional to the negative Killing form of $\mathfrak g$. It is well known that this is the case for $SO(n), (n\ne 4)$ and $SU(n)$; furthermore, the (natural) Riemannian metric on $SO(n)$ induced by the embedding
$SO(n) \hookrightarrow \R^{n^2}$ corresponds to the negative Killing form of ${so}(n)$.

Endowed with this bi-invariant Riemannian metric, $G$ becomes a {metric space}, with a distance $d$ which is
bi-invariant. Therefore the function $g\in G \to d(g,e)$ is a class function as
\begin{equation}\label{cfunction}
d(g,e)=d(hg,h)=d(hgh^{-1},hh^{-1})=d(hgh^{-1},e), \qquad g,h\in G\ .
\end{equation}
It is well known that {geodesics} on $G$ through the identity $e$
are exactly the one parameter subgroups of $G$ (see \cite{MR0163331} p.113 e.g.), thus a geodesic from $e$
is the curve on $G$
\begin{equation*}
\gamma_X(t) : t\in [0,1] \to \exp(tX)
\end{equation*}
for some $X\in \mathfrak g$. The length of  this geodesic is
\begin{equation*}
L(\gamma_X) = \| X \| = \sqrt{\langle X, X \rangle}\ .
\end{equation*}
Therefore
\begin{equation*}\label{distanza}
d(g,e) = \inf_{X\in \mathfrak g: \exp X=g} \| X \|\ .
\end{equation*}

\subsection{Brownian kernels on a metric space}

Let $(\X, d)$ be a metric space.
\begin{lemma}\label{fondamentale}
The kernel $K$ in (\ref{kernel MB}) is positive definite on $\X$ if
and only if  $d$ is a restricted negative definite kernel, i.e., for
every choice of elements $x_1,  \dots, x_n\in \X$ and of complex
numbers $\xi_1, \dots, \xi_n$ with $\sum_{i=1}^n \xi_i =0$
\begin{equation}\label{neg def}
\sum_{i,j=1}^n d(x_i,x_j) \xi_i \overline{\xi_j} \le 0\ .
\end{equation}
\end{lemma}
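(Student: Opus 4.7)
The statement is a standard equivalence between conditionally negative definite kernels and a positive definite kernel obtained by ``pinning'' at a base point $x_0$; the proof is purely algebraic and does not use the metric structure beyond the vanishing $d(x_0,x_0)=0$. The plan is to prove the two implications by direct manipulation of the quadratic form, exploiting in one direction the constraint $\sum_i\xi_i=0$ and in the other direction a clever extension of the sample adding $x_0$ as an auxiliary point.

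For the implication $K$ positive definite $\Rightarrow$ $d$ restricted negative definite, I would fix points $x_1,\dots,x_n\in\X$ and coefficients $\xi_1,\dots,\xi_n\in\C$ with $\sum_i\xi_i=0$, and write out the non-negative sum $\sum_{i,j}K(x_i,x_j)\xi_i\overline{\xi_j}$ by plugging in the definition of $K$. This splits into three pieces: one of the form $(\sum_i d(x_i,x_0)\xi_i)(\sum_j\overline{\xi_j})$, its mirror image $(\sum_i\xi_i)(\sum_j d(x_j,x_0)\overline{\xi_j})$, and the term $-\tfrac12\sum_{i,j}d(x_i,x_j)\xi_i\overline{\xi_j}$. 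The constraint $\sum_i\xi_i=0$ kills the first two, leaving $\sum_{i,j}d(x_i,x_j)\xi_i\overline{\xi_j}\le 0$ as desired.

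For the converse, the key trick is to enlarge the configuration: given $x_1,\dots,x_n\in\X$ and arbitrary $\xi_1,\dots,\xi_n\in\C$, I would append $x_0$ with coefficient $\xi_0:=-\sum_{i=1}^n\xi_i$, so that the new collection $x_0,x_1,\dots,x_n$ with weights $\xi_0,\xi_1,\dots,\xi_n$ has total weight zero. Applying the restricted negative definiteness of $d$ to this $(n+1)$-point configuration and using $d(x_0,x_0)=0$ to drop the diagonal term yields an inequality involving $\sum_{i,j=1}^n d(x_i,x_j)\xi_i\overline{\xi_j}$ together with the two cross sums $\overline{\xi_0}\sum_i d(x_i,x_0)\xi_i$ and $\xi_0\sum_j d(x_j,x_0)\overline{\xi_j}$. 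Substituting $\xi_0=-\sum_i\xi_i$ and its conjugate turns these cross terms into $-\sum_{i,j}d(x_i,x_0)\xi_i\overline{\xi_j}$ and $-\sum_{i,j}d(x_j,x_0)\xi_i\overline{\xi_j}$ respectively, and the whole expression collapses exactly to $-2\sum_{i,j=1}^n K(x_i,x_j)\xi_i\overline{\xi_j}$. The hypothesis then gives $\sum_{i,j}K(x_i,x_j)\xi_i\overline{\xi_j}\ge 0$, which is the positive definiteness of $K$.

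There is no real obstacle here: both directions are short algebraic identities. The only conceptual point worth emphasizing is the base-point trick in the $\Leftarrow$ direction, which explains why $K$ in \eqref{kernel MB} is the ``right'' way to manufacture a positive definite kernel from a conditionally negative definite distance, and why the specific role of $x_0$ is that of the (necessary) vanishing point for the Brownian field.
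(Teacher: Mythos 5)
Your proof is correct and follows essentially the same route as the paper: the same expansion of the $K$-quadratic form into cross terms plus the $d$-form for the direction with $\sum_i\xi_i=0$, and the same base-point augmentation $\xi_0:=-\sum_i\xi_i$ for the converse. The only cosmetic difference is that you evaluate the extended $d$-form directly and identify it with $-2\sum_{i,j}K(x_i,x_j)\xi_i\overline{\xi_j}$, whereas the paper observes that adjoining $x_0$ leaves the $K$-form unchanged (since $K(\cdot,x_0)=0$) and then reduces to the zero-sum case; these are the same computation.
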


\begin{proof}
For every $x_1, \dots, x_n\in \X$ and complex numbers $\xi_1,\dots,
\xi_n$
\begin{equation}\label{eq1}
\sum_{i,j} K(x_i,x_j) \xi_i \overline{\xi_j} = \frac{1}{2} \Bigl(
\overline{a} \sum_i d(x_i, x_0)\xi_i  + a \sum_j d(x_j,
x_0)\overline{\xi_j}- \sum_{i,j}d(x_i, x_j)\xi_i \overline{\xi_j}
\Bigr)
\end{equation}
where $a:= \sum_i \xi_i$.  If  $a=0$ then it is immediate that in (\ref{eq1}) the l.h.s. is $\ge 0$ if and only if the r.h.s. is $\le 0$. Otherwise set
$\xi_{0}:=-a$ so that $\sum_{i=0}^n \xi_i =0$. The following equality
\begin{equation}
\sum_{i,j=0}^{n} K(x_i,x_j) \xi_i \overline{\xi_j} =
\sum_{i,j=1}^n K(x_i,x_j) \xi_i \overline{\xi_j}
\end{equation}
is then easy to check, keeping in mind that $K(x_i,x_0)=K(x_0,
x_j)=0$, which finishes the proof.
\end{proof}
For a more general proof see \cite{GAN:60} p. $127$ in the proof of Lemma 2.5.

If $\X$ is the homogeneous space of some topological group $G$, and
$d$ is a $G$-invariant distance, then (\ref{neg def}) is satisfied
if and only if for every choice of elements $g_1,\dots,g_n\in G$ and
of complex numbers $\xi_1,\dots, \xi_n$ with $\sum_{i=1}^n \xi_i =0$
\begin{equation}\label{neg def2}
\sum_{i,j=1}^n d(g_ig_j^{-1}x_0,x_0) \xi_i \overline{\xi_j} \le 0
\end{equation}
where $x_0\in \X$ is a fixed point. We shall say that
the function $g\in G \to d(gx_0, x_0)$ is restricted negative definite on $G$ if it satisfies (\ref{neg def2}).

In our case of interest $\X=G$  a compact  (Lie) group  and
$d$ is a bi-invariant distance as in $\S 3.1$.

The Peter-Weyl development  for the class function $d(\cdot,e)$ on $G$ (see Theorem \ref{classExp}) is
\begin{equation}\label{PW dev}
d(g,e)= \sum_{\ell \in \widehat G} \alpha_\ell \chi_\ell(g)\ ,
\end{equation}
where $\widehat G$ denotes the family of equivalence classes of irreducible representations of $G$ and $\chi_\ell$  the character of the
$\ell$-th irreducible representation of $G$.

\begin{remark}\label{coeff neg}\rm
A function $\phi$ with a development as in (\ref{PW dev}) is
restricted negative definite if and only if $\alpha_\ell \le 0$ but
for the trivial representation.

Actually note first that, by standard approximation arguments,
$\phi$ is restricted negative definite if and only if for every
continuous function $f:G\to \C$ with $0$-mean (i.e. orthogonal to
the constants)
\begin{equation}\label{neg def measure}
\int_G\int_G \phi(gh^{-1}) f(g)\overline{f(h)}\,dg\, dh \le 0
\end{equation}
$dg$ denoting the Haar measure of $G$.
Choosing $f=\chi_\ell$ in the l.h.s. of
(\ref{neg def measure}) and denoting $d_\ell$ the dimension of the corresponding representation, a straightforward computation gives
\begin{equation}\label{semplice}
\int_G\int_G \phi(gh^{-1}) \chi_\ell(g)\overline{\chi_\ell(h)}\,dg\,
dh = \frac{\alpha_\ell}{d_\ell}
\end{equation}
so that if $\phi$ restricted negative definite, $\alpha_\ell\le 0$
necessarily.

Conversely, if $\alpha_\ell \le 0$ 
but for the
trivial representation, then $\phi$ is restricted negative definite,
as the characters $\chi_\ell$'s are positive definite and orthogonal
to the constants.
\end{remark}

\section{$SU(2)$}\label{sud}

The special unitary group $SU(2)$ consists of the complex unitary $2\times 2$-matrices $g$ such that
$\det(g)=1$.
Every $g\in SU(2)$ has the form
\begin{equation}\label{matrice}
g= \begin{pmatrix} a  & b \\
 -\overline{b} & \overline{a}
\end{pmatrix}, \qquad a,b\in \C,\, |a|^2 + |b|^2 = 1\ .
\end{equation}
If $a=a_1 + ia_2$ and $b=b_1 + ib_2$, then the map
\begin{align}\label{omeomorfismo}
\Phi(g)=
(a_1, a_2, b_1, b_2)
\end{align}
is an {homeomorphism} (see \cite{faraut}, \cite{sugiura} e.g.) between $SU(2)$ and the unit sphere $\cS^3$
of $\R^4$. Moreover the right translation
\begin{equation*}
R_g : h\to hg, \qquad h,g\in SU(2)
\end{equation*}
of $SU(2)$ is a rotation (an element of $SO(4)$) of $\cS^3$ (identified with $SU(2)$).
The homeomorphism (\ref{omeomorfismo}) preserves the invariant measure, i.e., if $dg$ is the normalized Haar measure
on $SU(2)$, then $\Phi(dg)$ is the normalized Lebesgue measure on $\cS^3$.
As the $3$-dimensional polar coordinates on $\cS^3$ are
\begin{equation}\label{polar coord}
\begin{array}{l}
a_1=\cos \theta,\cr
a_2= \sin \theta\,\cos \varphi,\cr
b_1= \sin \theta\,\sin \varphi\,\cos \psi,\cr
b_2= \sin \theta\,\sin \varphi\,\sin \psi\ ,
\end{array}
\end{equation}
$(\theta, \varphi, \psi) \in [0,\pi] \times [0,\pi]\times [0,2\pi]$, the normalized Haar integral of $SU(2)$ for an integrable function $f$ is
\begin{equation}\label{int}
\int_{SU(2)} f(g)\,dg = \frac{1}{2\pi^2}\int_0^\pi\sin \varphi\,  d\varphi\,
\int_0^\pi \sin^2 \theta\,d\theta\, \int_0^{2\pi}
f(\theta, \varphi, \psi)\, d\psi
\end{equation}
The bi-invariant Riemannian metric on $SU(2)$ is necessarily proportional to the negative
Killing form  of its Lie algebra ${su(2)}$  (the real vector space of
$2\times 2$ anti-hermitian complex matrices).
We consider the bi-invariant metric corresponding to the $Ad$-invariant inner product on ${su(2)}$
\begin{equation*}
\langle X, Y \rangle= -\frac12\,{\tr(XY)},\qquad X,Y \in {su(2)}\ .
\end{equation*}
Therefore as an orthonormal basis of ${su(2)}$ we can consider the matrices
$$
\dlines{
X_1= \begin{pmatrix} 0  & 1 \\
 -1 & 0
\end{pmatrix}, \quad
X_2= \begin{pmatrix} 0  & i \\
 i & 0
\end{pmatrix}, \quad
X_3 = \begin{pmatrix} i  & 0\\
 0 & -i
\end{pmatrix}
}$$
The homeomorphism (\ref{omeomorfismo}) is actually an isometry between $SU(2)$ endowed with this
distance and $\cS^3$. Hence the restricted negative definiteness of the kernel $d$ on $SU(2)$ is an immediate consequence of this property on $\cS^3$ which is known
to be true as mentioned in the introduction (\cite{GAN:60}, \cite{levy}, \cite{TKU}).
In order to develop a comparison with $SO(3)$, we shall give a different proof of this fact in \S\ref{s-final}.
\section{$SO(n)$}\label{sot}

We first investigate the case $n=3$. The group
$SO(3)$ can also be realized as a quotient of $SU(2)$. Actually
the adjoint representation $Ad$ of $SU(2)$
is a surjective morphism from $SU(2)$ onto $SO(3)$ with kernel $\lbrace \pm e \rbrace$ (see \cite{faraut} e.g.).
Hence the well known result
\begin{equation}\label{iso}
SO(3) \cong {SU(2)}/{\lbrace \pm e \rbrace}\ .
\end{equation}
Let us explicitly recall this morphism: 
if $a=a_1 +ia_2, b=b_1 + ib_2$ with $|a|^2 + |b|^2=1$ and
$$
\widetilde g=\begin{pmatrix} a  & b \\
 -\overline{b} & \overline{a}
\end{pmatrix}$$
then the orthogonal matrix $Ad(\widetilde g)$  is given by
\begin{equation}\label{matr}
g=\begin{pmatrix}
a_1^2-a_2^2-(b_1^2-b_2^2)&-2a_1a_2-2b_1b_2&-2(a_1b_1-a_2b_2)\cr
2a_1a_2-2b_1b_2&(a_1^2-a_2^2)+(b_1^2-b_2^2)&-2(a_1b_2+a_2b_1)\cr
2(a_1b_1+a_2b_2)&-2(-a_1b_2+a_2b_1)&|a|^2-|b|^2
\end{pmatrix}
\end{equation}
The isomorphism in (\ref{iso}) might suggest that the
positive definiteness of the Brownian kernel on $SU(2)$ implies
a similar result for $SO(3)$.
This is not true and actually it turns out that the distance $(g,h) \to d(g,h)$ on $SO(3)$
induced by its bi-invariant Riemannian metric \emph{is not
a restricted negative definite kernel} (see Lemma \ref{fondamentale}).

As for $SU(2)$, the bi-invariant Riemannian metric on $SO(3)$ is proportional to the negative Killing form  of its Lie algebra
${ so(3)}$ (the real $3\times 3$ antisymmetric real matrices).
We shall consider the $Ad$-invariant inner product on ${so(3)}$ defined as
\begin{equation*}
\langle A,B \rangle = -\frac12\,{\tr(AB)}\ , \qquad A,B\in {so(3)}\ .
\end{equation*}
An  orthonormal basis for ${so(3)}$ is therefore given by
the matrices
$$\dlines{
A_1= \begin{pmatrix} 0  & 0 & 0\\
 0 & 0 & -1\\
 0 & 1 & 0
\end{pmatrix}, \quad
A_2= \begin{pmatrix} 0  & 0 & 1 \\
 0 & 0 & 0\\
 -1 & 0 & 0
\end{pmatrix}, \quad
A_3 = \begin{pmatrix} 0 & -1  & 0\\
 1 & 0 & 0\\
 0 & 0 & 0
\end{pmatrix}
}$$
Similarly to the case of $SU(2)$, it is easy to compute the distance
from  $g\in SO(3)$ to the identity. Actually $g$ is conjugated to the matrix of the form
\begin{equation*}
\Delta(t)= \begin{pmatrix} \cos t  & \sin t & 0 \\
 -\sin t & \cos t & 0\\
 0 & 0 & 1
\end{pmatrix} = \exp(tA_1)
\end{equation*}
where $t\in [0,\pi]$ is the {rotation angle} of $g$.
Therefore if $d$ still denotes the distance induced by the bi-invariant metric,
\begin{equation*}
d(g,e) = d( \Delta(t), e ) = t
\end{equation*}
i.e. the distance from $g$ to $e$ is the rotation angle of $g$.

Let us denote $\lbrace \chi_\ell \rbrace_{\ell \ge 0}$ the set of characters for $SO(3)$.
It is easy to compute the Peter-Weyl development in (\ref{PW dev}) for $d(\cdot, e)$  as  the
characters $\chi_\ell$ are also simple functions of the rotation angle.
More precisely, if $t$ is the rotation angle of $g$ (see \cite{dogiocam} e.g.),
\begin{equation*}
\chi_\ell(g)= \frac{\sin\frac{(2\ell + 1)t}{2}}{\sin\frac{t}{2}}=1 + 2\sum_{m=1}^\ell \cos(mt)\ .
\end{equation*}
We shall prove that the coefficient
$$
\alpha_\ell=\int_{SO(3)} d(g,e)\chi_\ell(g)\, dg
$$
is positive for some $\ell\ge 1$. As both $d(\cdot,e)$ and $\chi_\ell$ are functions of the rotation angle $t$,
we have
$$
\alpha_\ell=\int_0^\pi t\Bigl( 1 + 2\sum_{j=1}^\ell \cos(jt)\Bigr)\, p_T(t)\, dt
$$
where $p_T$ is the density of $t=t(g)$, considered as a r.v. on the probability space $(SO(3),dg)$. The next statements are devoted to the computation of the density $p_T$. This is certainly well
known but we were unable to find a reference in the literature. We first compute the density of the trace of $g$.
\begin{prop}
The distribution of the trace of a matrix in $SO(3)$ with respect to the normalized Haar measure is given by the density
\begin{equation}\label{trace3}
f(y)=\frac 1{2\pi}\,(3-y)^{1/2}(y+1)^{-1/2}1_{[-1,3]}(y)\ .
\end{equation}
\end{prop}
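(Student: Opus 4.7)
The plan is to pull the computation back to $SU(2)$ via the double cover $Ad:SU(2)\to SO(3)$ from \paref{iso}, then exploit the explicit polar coordinates \paref{polar coord} and the Haar integration formula \paref{int} recorded in this section. Since $Ad$ is a continuous surjective homomorphism between compact groups, by uniqueness of Haar measure it pushes the normalized Haar measure of $SU(2)$ forward to the normalized Haar measure of $SO(3)$; consequently the distribution of $\tr g$ under the Haar measure of $SO(3)$ coincides with the distribution of $\tr Ad(\widetilde g)$ under the Haar measure of $SU(2)$.

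Reading off the diagonal entries of the matrix \paref{matr} and using $|a|^2+|b|^2=1$, one checks immediately that $\tr Ad(\widetilde g)=4a_1^2-1$. In polar coordinates this is a function of $\theta$ alone via $a_1=\cos\theta$, so integrating out $\varphi$ and $\psi$ in \paref{int} produces the marginal density $\frac{2}{\pi}\sin^2\theta$ on $[0,\pi]$ for $\theta$. It now suffices to apply the change of variables $y=4\cos^2\theta-1$: writing $\sin^2\theta=(3-y)/4$ and
$$
\Bigl|\frac{dy}{d\theta}\Bigr|=8|\cos\theta\sin\theta|=2\sqrt{(y+1)(3-y)}\ ,
$$
the expression \paref{trace3} comes out directly after cancellation of one factor $\sqrt{3-y}$.

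The only technical point is that $\theta\mapsto 4\cos^2\theta-1$ is not injective on $[0,\pi]$ but two-to-one, which corresponds on the $SU(2)$ side to the symmetry $\widetilde g\leftrightarrow-\widetilde g$ of the covering $Ad$. I would handle this by restricting to $\theta\in[0,\pi/2]$ and picking up a factor $2$ from the symmetry of $\sin^2\theta$ about $\pi/2$, after which the change of variables is a bijection from $[0,\pi/2]$ onto $[-1,3]$. No other obstacle arises; the argument is essentially a routine one-dimensional change of variables once \paref{iso} has been invoked, which is why this density can be packaged as a preliminary computation toward the Peter--Weyl coefficients $\alpha_\ell$ of $d(\cdot,e)$.
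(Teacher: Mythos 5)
Your proof is correct and follows essentially the same route as the paper: both pull the computation back to $SU(2)$ via \paref{matr}, write the trace as $4\cos^2\theta-1$ with $\theta$-marginal $\frac{2}{\pi}\sin^2\theta$ from \paref{int}, and finish with a one-dimensional change of variables. The only (cosmetic) differences are that you make the Haar-pushforward under $Ad$ explicit and compute the Jacobian directly with the two-to-one correction, whereas the paper first derives the law of $\cos^2\theta$ by differentiating its distribution function and then rescales linearly.
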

\begin{proof}
The trace of the matrix \paref{matr} is equal to
$$
\tr(g)=3a_1^2-a_2^2-b_1^2-b_2^2\ .
$$
Under the normalized Haar measure of $SU(2)$ the vector $(a_1,a_2,b_1,b_2)$ is uniformly distributed on the sphere $\cS^3$.
Recall the normalized Haar integral (\ref{int})
so that, taking the corresponding marginal, $\th$ has density
\begin{equation}\label{denth}
f_1(\th)=\frac 2\pi\,\sin^2(\th)\, d\th\ .
\end{equation}
Now
$$
\dlines{
3a_1^2-a_2^2-b_1^2-b_2^2=4\cos^2\th-1\ .\cr
}
$$
Let us first compute the density of $Y=\cos^2 X$, where
$X$ is distributed according to the density \paref{denth}. This is elementary as
$$
\dlines{
F_Y(t)=\P(\cos^2 X\le t)=
\P(\arccos(\sqrt{t})\le X\le \arccos(-\sqrt{t}))=\cr
=\frac 2\pi\!\int\limits_{\arccos(\sqrt{t})}^{\arccos(-\sqrt{t})}
\sin^2(\th)\, d\th\ .\cr
}
$$
Taking the derivative it is easily found that the density of $Y$ is, for $0<t<1$,
$$
\dlines{
F'_Y(t)=\frac 2\pi\,(1-t)^{1/2}t^{-1/2}\ .\cr
}
$$
By an elementary change of
variable the distribution of the trace $4Y-1$ is therefore given by \paref{trace3}.
\end{proof}
\begin{cor}
The distribution of the rotation angle of a matrix in $SO(3)$ is
\begin{equation*}
p_T(t)=\frac1\pi\,(1 - \cos t)\,1_{[0,\pi]}(t)\ .
\end{equation*}
\end{cor}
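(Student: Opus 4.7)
The plan is to deduce the density of the rotation angle $T$ from the density of the trace, which was just established in the preceding proposition. The key link is the well-known identity that the character of the standard three-dimensional representation of $SO(3)$ is precisely the trace, giving $\tr(g) = 1 + 2\cos t$ whenever $g$ is a rotation by angle $t\in[0,\pi]$. Thus $T = \arccos\bigl(\tfrac{\tr(g)-1}{2}\bigr)$ is a deterministic function of the trace, and the distribution of $T$ is obtained by a one-variable change of variables.

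Writing $Y=\tr(g)$ and $Y = 1 + 2\cos T$, I would compute $dy/dt = -2\sin t$, which has absolute value $2\sin t$ on $[0,\pi]$. Substituting $3-y = 2(1-\cos t)$ and $y+1 = 2(1+\cos t)$ into the density
$$
f(y)=\frac 1{2\pi}\,(3-y)^{1/2}(y+1)^{-1/2}\,1_{[-1,3]}(y)
$$
yields $f(y(t)) = \frac{1}{2\pi}\sqrt{\tfrac{1-\cos t}{1+\cos t}}$. Using the half-angle identities $1-\cos t = 2\sin^2(t/2)$, $1+\cos t = 2\cos^2(t/2)$ and $\sin t = 2\sin(t/2)\cos(t/2)$, the factor simplifies to $\tan(t/2)$ and the Jacobian contributes $2\sin t$, so
$$
p_T(t) \;=\; f(y(t))\cdot 2\sin t \;=\; \frac{1}{\pi}\,\tan(t/2)\cdot 2\sin(t/2)\cos(t/2) \;=\; \frac{2}{\pi}\sin^2(t/2) \;=\; \frac{1}{\pi}(1-\cos t)
$$
on $[0,\pi]$, which is the claimed density.

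There is no real obstacle here since all the work was done in the preceding proposition; the only points requiring care are the choice of branch (the map $t\mapsto 1+2\cos t$ is a bijection from $[0,\pi]$ onto $[-1,3]$, so the change of variables is unambiguous) and verifying that $\int_0^\pi \frac{1}{\pi}(1-\cos t)\,dt = 1$, which is immediate. An alternative, essentially equivalent route would be to rederive $p_T$ directly from the formula $d\mu(g) = \frac{2}{\pi}\sin^2(t/2)\,dt$ for the push-forward of the Haar measure under the rotation-angle map, but piggy-backing on the trace density is the most economical approach given the preceding work.
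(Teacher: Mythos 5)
Your proof is correct and follows exactly the route the paper takes: it expresses the rotation angle as $T=\arccos\bigl(\tfrac{\tr(g)-1}{2}\bigr)$ and pushes forward the trace density from the preceding proposition by a one-variable change of variables (the paper simply leaves these "elementary details to the reader", which you have carried out correctly).
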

\begin{proof}
It suffices to remark that if $t$ is the rotation angle of $g$, then its trace is equal to $2\cos t + 1$. $p_T$ is therefore the distribution of $W=\arccos ( \frac{Y-1}{2})$, $Y$ being distributed as \paref{trace3}. The elementary details are left to the reader.
\end{proof}
Now it is easy to compute the Fourier development of the function $d(\cdot, e)$.
\begin{prop}\label{kernel MB su SO(3)}
The kernel $d$ on $SO(3)$ is not restricted negative definite.
\end{prop}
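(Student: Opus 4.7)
The plan is to invoke Remark \ref{coeff neg}: since $d(\cdot,e)$ is a continuous class function on $SO(3)$ with Peter--Weyl expansion $d(g,e)=\sum_{\ell\ge 0}\alpha_\ell\chi_\ell(g)$, the kernel $d$ fails to be restricted negative definite as soon as I can exhibit at least one index $\ell\ge 1$ with $\alpha_\ell>0$. The entire proof therefore reduces to computing $\alpha_\ell$ explicitly for some well-chosen small $\ell$.

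First I would convert the coefficient $\alpha_\ell=\int_{SO(3)}d(g,e)\chi_\ell(g)\,dg$ into a one-dimensional integral. Both $d(g,e)=t$ and $\chi_\ell(t)=1+2\sum_{j=1}^\ell\cos(jt)$ are functions of the rotation angle $t\in[0,\pi]$, so the density $p_T(t)=\frac{1}{\pi}(1-\cos t)1_{[0,\pi]}(t)$ just established in the preceding corollary yields
$$\alpha_\ell=\frac{1}{\pi}\int_0^\pi t\Bigl(1+2\sum_{j=1}^\ell\cos(jt)\Bigr)(1-\cos t)\,dt\ .$$

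Next I would scan small values of $\ell$. A short calculation shows $\alpha_1=-2/\pi<0$, so $\ell=1$ is not a witness. For $\ell=2$, however, the product $(1+2\cos t+2\cos 2t)(1-\cos t)$ collapses, via the identities $2\cos^2 t=1+\cos 2t$ and $2\cos t\cos 2t=\cos t+\cos 3t$, to the much simpler expression $\cos 2t-\cos 3t$, so that the integrand becomes $t(\cos 2t-\cos 3t)$. Combined with the elementary formula $\int_0^\pi t\cos(kt)\,dt=((-1)^k-1)/k^2$ (zero for even $k$, equal to $-2/k^2$ for odd $k$), this gives $\alpha_2=\frac{2}{9\pi}>0$, and Remark \ref{coeff neg} then rules out restricted negative definiteness.

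The only delicate point is the choice of index: the damping factor $(1-\cos t)$ in $p_T$ combined with the monotonicity of $t\mapsto t$ makes $\alpha_1$ negative, so one must be willing to push past $\ell=1$ to detect a sign change. Once the right index is selected the remaining computation is entirely elementary trigonometry, so the only genuine ``obstacle'' is resisting the temptation to stop at $\ell=1$ and conclude the opposite.
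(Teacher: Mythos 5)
Your proof is correct and follows essentially the same route as the paper: reduce to showing some Fourier coefficient $\alpha_\ell>0$ via Remark \ref{coeff neg}, express $\alpha_\ell$ as an integral against the rotation-angle density $p_T(t)=\frac{1}{\pi}(1-\cos t)$, and compute $\alpha_2=\frac{2}{9\pi}>0$, which is exactly the witness the paper exhibits. Your trigonometric collapse of the integrand to $t(\cos 2t-\cos 3t)$ is just a slightly tidier bookkeeping of the same elementary integrals the paper organizes as $I_1,I_2,I_3$.
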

\begin{proof}
It is enough to show that in the Fourier development
$$
d(g,e)=\sum_{\ell \ge 0} \alpha_\ell \chi_\ell(g)
$$
$\alpha_\ell > 0$ for some $\ell \ge 1$ (see Remark \ref{coeff neg}).
We have
$$
\dlines{ \alpha_\ell =\int_{SO(3)} d(g,e) \chi_\ell(g) dg = \frac 1\pi
\int_0^\pi t \Bigl( 1 + 2\sum_{m=1}^\ell \cos(mt) \Bigr) (1-\cos t)\, dt =\cr
=\frac{1}{\pi}\underbrace{\int_0^\pi t (1-\cos t)\, dt}_{:= I_1} + \frac{2}{\pi}\sum_{m=1}^{\ell} \underbrace{\int_0^\pi t \cos(mt)\, dt}_{:= I_2}
- \frac{2}{\pi}\sum_{m=1}^{\ell} \underbrace{\int_0^\pi t \cos(mt)\cos t\, dt}_{:= I_3}\ .\cr
}$$
Now integration by parts gives
$$
I_1 = \frac{\pi^2}{2} + 2,\quad
I_2=
\frac{ (-1)^m -1}{m^2}\ \raise2pt\hbox{,}
$$
whereas, if $m\ne 1$, we have
$$
\dlines{I_3=\int_0^\pi t \cos(mt)\cos t\, dt
= \frac{ m^2 +1}{ (m^2 -1)^2} ( (-1)^m +1)}
$$
and for $m=1$,
$$
I_3=\int_0^\pi t\cos^2 t\, dt=\frac {\pi^2}4\ .
$$
Putting things together we find
$$
\alpha_\ell =\frac{2}{\pi} \Bigl( 1 + \sum_{m=1}^{\ell} \frac{ (-1)^m -1}{m^2} + \sum_{m=2}^{\ell} \frac{ m^2 +1}{ (m^2 -1)^2} ( (-1)^m +1) \Bigr)\ .
$$
If $\ell=2$, for instance, we find $\alpha_2=\frac{2}{9\pi}>0$, but it is easy to see
that $\alpha_\ell>0$ for every $\ell$ even.
\end{proof}

Consider now the case $n>3$. $SO(n)$ (resp. $SU(n)$) contains a closed subgroup $H$ that is isomorphic to $SO(3)$ and
the restriction to $H$ of any bi-invariant distance $d$ on $SO(n)$ (resp. $SU(n)$) is a bi-invariant distance $\widetilde d$ on $SO(3)$.
By Proposition \ref{kernel MB su SO(3)},  $\widetilde d$ is not
restricted negative definite, therefore there exist $g_1, g_2, \dots, g_m\in H$,
$\xi_1, \xi_2, \dots, \xi_m \in \R$ with $\sum_{i=1}^m \xi_i =0$ such that
\begin{equation}
\sum_{i,j} d(g_i, g_j) \xi_i \xi_j =\sum_{i,j} \widetilde d(g_i, g_j) \xi_i \xi_j > 0\ .
\end{equation}
We have therefore
\begin{cor}\label{kernel MB su SO(n)}
Any bi-invariant distance $d$ on $SO(n)$ and $SU(n)$, $n\ge 3$ is not
a restricted negative definite kernel.
\end{cor}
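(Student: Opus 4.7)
The plan is a straightforward reduction to the case $n=3$ already handled in Proposition \ref{kernel MB su SO(3)}. First, I would exhibit a closed subgroup $H\cong SO(3)$ inside each ambient group. In $SO(n)$ for $n\ge 3$, the block-diagonal embedding $g\mapsto \mathrm{diag}(g,I_{n-3})$ realizes $SO(3)$ as a closed subgroup. In $SU(n)$ for $n\ge 3$, one composes this with the inclusion $SO(n)\hookrightarrow SU(n)$, noting that a real orthogonal matrix of determinant one is automatically a unitary matrix of determinant one.

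Given any bi-invariant distance $d$ on the ambient group $G$, the restriction $\widetilde d:=d|_{H\times H}$ is again a bi-invariant distance, now on $H\cong SO(3)$. Because $\mathfrak{so}(3)$ is simple, bi-invariant Riemannian metrics on $SO(3)$ are unique up to a positive scalar, so $\widetilde d$ coincides, up to a positive multiplicative factor, with the distance treated in Proposition \ref{kernel MB su SO(3)}. Since restricted negative definiteness of a real symmetric kernel is unaffected by multiplication by a positive constant, Proposition \ref{kernel MB su SO(3)} supplies elements $g_1,\dots,g_m\in H$ and real numbers $\xi_1,\dots,\xi_m$ with $\sum_i\xi_i=0$ such that
$$
\sum_{i,j=1}^m \widetilde d(g_i,g_j)\,\xi_i\xi_j > 0.
$$

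Finally, since $\widetilde d$ agrees with $d$ on $H\times H$, viewing the same $g_i$ as elements of $G$ gives the identical strictly positive sum for $d$, which contradicts the restricted negative definiteness of $d$ on $G$ and proves the corollary. The only mildly delicate point is the uniqueness-of-metric step above, needed to ensure that the restricted distance $\widetilde d$ on $H$ actually falls within the hypotheses of Proposition \ref{kernel MB su SO(3)}; for $SO(3)$ this is immediate from the simplicity of its Lie algebra. No new ingredient beyond Proposition \ref{kernel MB su SO(3)} and the existence of the embedding $SO(3)\hookrightarrow G$ is required, so the corollary is essentially a packaging result rather than a separate computation.
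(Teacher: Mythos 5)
Your proof is correct and takes essentially the same route as the paper: realize $SO(3)$ as a closed subgroup $H$ of $SO(n)$ (and, via $SO(n)\hookrightarrow SU(n)$, of $SU(n)$), note that the restriction $\widetilde d$ of any bi-invariant distance to $H$ is a bi-invariant distance on $SO(3)$, invoke Proposition \ref{kernel MB su SO(3)} to obtain $g_1,\dots,g_m\in H$ and $\xi_1,\dots,\xi_m$ with $\sum_i\xi_i=0$ and $\sum_{i,j}\widetilde d(g_i,g_j)\xi_i\xi_j>0$, and observe that the same sum, read with $d$ on the ambient group, violates restricted negative definiteness there. The only difference is your extra step identifying $\widetilde d$, up to a positive scalar, with the rotation-angle distance via uniqueness of the bi-invariant metric on $SO(3)$ (together with the harmless scalar-invariance remark); the paper omits this identification and applies the proposition directly to the restricted bi-invariant distance, so your write-up is, if anything, a bit more explicit about that point.
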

Remark that the same argument applies to other compact groups.
Moreover  the bi-invariant Riemannian metric on $SO(4)$ is not unique, meaning that it is not necessarily
proportional to the negative Killing form of $so(4)$.
In this case Corollary \ref{kernel MB su SO(n)} states that every such
bi-invariant distance cannot be restricted negative definite.

\section{Final remarks}\label{s-final}

We were intrigued by the different behavior of the invariant distance of $SU(2)$ and $SO(3)$ despite these groups
are locally isometric and decided to compute also for $SU(2)$ the development
\begin{equation}\label{sviluppo}
d(g,e) = \sum_{\ell} \alpha_\ell \chi_\ell(g)\ .
\end{equation}
This is not difficult as, denoting by $t$ the distance of $g$ from $e$, the characters of $SU(2)$ are
$$
\chi_\ell(g)=\frac{\sin((\ell+1)t)}{\sin t},\quad t\not=k\pi
$$
and $\chi_\ell(e)=\ell+1$ if $t=0$, $\chi_\ell(-)=(-1)^\ell(\ell+1)$ if $t=\pi$. Then it is elementary to compute, for $\ell>0$,
$$
\alpha_\ell =\frac 1\pi\,\int_0^\pi t\sin((\ell+1)t)\sin t\, dt=\begin{cases}
-\frac 8\pi\,\frac{m+1}{m^2(m+2)^2}&\ell\mbox{ odd}\cr
0&\ell\mbox{ even}
\end{cases}
$$
thus confirming the restricted negative definiteness of $d$ (see Remark \ref{coeff neg}).
Remark also that the coefficients corresponding to the even numbered
representations, that are also representations of $SO(3)$, here vanish.

\chapter*{Part 2 \\High-energy Gaussian eigenfunctions}
\addcontentsline{toc}{chapter}{Part 2: High-energy Gaussian eigenfunctions}

\chapter{Background: Fourth-Moment phenomenon and Gaussian eigenfunctions}\label{background}

As made clear by the title, this chapter is first devoted to the so-called Fourth Moment phenomenon. Main results in this area are
 summarized in the recent monograph \cite{noupebook}:
 a
beautiful connection has been established between Malliavin calculus and  Stein's method for normal approximations
to prove Berry-Esseen bounds and quantitative Central Limit
Theorems
for functionals of a Gaussian random field.

Finally we recall definitions and fix some notation for Gaussian eigenfunctions on the $d$-dimensional unit sphere $\mathbb S^d$ ($d\ge 2$) whose properties we will deeply investigate in the sequel of this work.

\section{Fourth-moment theorems}

\subsection{Isonormal Gaussian fields}

Let $H$ be a (real) separable Hilbert space with inner product $\langle \cdot, \cdot \rangle_H$ and $(\Omega, \F, \P)$ some probability space.
\begin{definition}
The isonormal Gaussian field $T$ on $H$ is a centered Gaussian random field $(T(h))_{h\in H}$ whose covariance kernel is given by
\begin{equation}
\Cov(T(h), T(h')) = \langle h, h' \rangle_H\ , \qquad h,h'\in H\ .
\end{equation}
\end{definition}
Consider, from now on, the case $H=L^{2}(X,\mathcal{X},\mu )$ the space of
square integrable functions on the measure space
 $(X,\mathcal{X},\mu )$, where $X$ is a
Polish space, $\mathcal{X}$ is the $\sigma $-field on $X$ and $\mu $ is a
positive, $\sigma $-finite and non-atomic measure on $(X,\mathcal{X})$.
As usual the inner product is given by $%
\langle f,g\rangle _{H}=\int_{X}f(x)g(x)\,d\mu (x)$.

Let us recall the
construction of an isonormal Gaussian field on $H$. Consider a
(real) Gaussian measure  over $(X, \mathcal X)$, i.e. a centered Gaussian family $W$
\begin{equation*}
W=\{W(A):A\in \mathcal{X},\mu (A)<+\infty \}
\end{equation*}%
such that for $A,B\in \mathcal{X}$ of $\mu$-finite measure, we have
\begin{equation*}
{\mathbb{\ E}}[W(A)W(B)]=\mu(A\cap B)\ .
\end{equation*}%
We define a random field $T=(T(f))_{f\in H}$ on $H$ as follows. For each $f\in H$,
let
\begin{equation}
T(f)=\int_{X}f(x)\,dW(x)\   \label{isonormal}
\end{equation}%
be the Wiener-It\^o integral of $f$ with respect to $W$. The random field $T$ is the
isonormal Gaussian field on $H$: indeed it is centered Gaussian and by construction
\begin{equation*}
\mathrm{Cov\,}(T(f),T(g))=\langle f,g\rangle _{H}\ .
\end{equation*}%

\subsection{Wiener chaos and contractions}

Let us recall now the notion of Wiener chaos. Define the space of
constants $C_{0}:=\mathbb{R}\subseteq L^2(\P)$, and for $q\geq 1$,
let $C_{q}$ be the closure in $L^{2}(\P):=L^2(\Omega, \F, \P)$  of the linear subspace
generated by random variables of the form
\begin{equation*}
H_{q}(T(f))\ ,\qquad f\in H,\ \Vert f\Vert _{H}=1\ ,
\end{equation*}%
where $H_{q}$ denotes the $q$-th Hermite polynomial, i.e.
\begin{equation}\label{hermite}
H_q (t) := (-1)^q \phi^{-1}(t) \frac{d^q}{d t^q} \phi(t)\ ,\quad t\in \R\ ,
\end{equation}
$\phi$ being the density function of a standard Gaussian r.v. $Z\sim \mathcal N(0,1)$. $C_{q}$ is
called the $q$-th Wiener chaos.

The following, well-known property is very important: let $Z_{1},Z_{2}\sim \mathcal{N}(0,1)$ be jointly
Gaussian; then, for all $q_{1},q_{2}\geq 0$
\begin{equation}  \label{hermite orto}
{\mathbb{\ E}}[H_{q_{1}}(Z_{1})H_{q_{2}}(Z_{2})]=q_{1}!\,{\mathbb{\ E}}%
[Z_{1}Z_{2}]^{q_{1}}\,\delta _{q_{2}}^{q_{1}}\ .
\end{equation}
\begin{theorem}
The Wiener-It\^o chaos expansion holds
\begin{equation*}
L^2(\P)=\bigoplus_{q=0}^{+\infty }C_{q}\ ,
\end{equation*}%
the above sum being orthogonal from (\ref{hermite orto}).
Equivalently, each random variable $F\in L^2(\P)$ admits a unique
decomposition in the $L^2(\P)$-sense of the form
\begin{equation}
F=\sum_{q=0}^{\infty }J_{q}(F)\ ,  \label{chaos exp}
\end{equation}%
where $J_{q}:L^2(\P){\goto} C_{q}$ is the orthogonal
projection operator onto the $q$-th Wiener chaos. Remark that $J_{0}(F)={\mathbb{\ E}}[F]$.
\end{theorem}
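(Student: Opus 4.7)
The plan is to establish the two pillars of the decomposition separately: orthogonality of the subspaces $C_q$, and density of their algebraic sum in $L^2(\P)$ (where we implicitly assume that $\F$ is generated by the isonormal field $T$, otherwise the statement must be read as an equality with $L^2(\sigma(T),\P)$).

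First I would handle orthogonality. For $h_1,h_2\in H$ with $\|h_1\|_H=\|h_2\|_H=1$, the random variables $Z_i:=T(h_i)$ are jointly standard Gaussian, so the property \paref{hermite orto} gives
\begin{equation*}
\E[H_{q_1}(T(h_1))H_{q_2}(T(h_2))]=q_1!\,\langle h_1,h_2\rangle_H^{q_1}\,\delta_{q_2}^{q_1}\ ,
\end{equation*}
and this extends by bilinearity and $L^2(\P)$-closure to all of $C_{q_1}\times C_{q_2}$. In particular $C_{q_1}\perp C_{q_2}$ whenever $q_1\ne q_2$, which legitimizes the orthogonal direct sum notation and the existence of the projection operators $J_q$.

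The core step is density of $\bigoplus_{q\ge 0} C_q$ in $L^2(\P)$. My plan is to show that the \emph{exponential family}
\begin{equation*}
\mathcal E(h):=\exp\Bigl(T(h)-\tfrac12\|h\|_H^2\Bigr)\ ,\qquad h\in H\ ,
\end{equation*}
is total in $L^2(\P)$, and then that each $\mathcal E(h)$ already belongs to $\overline{\bigoplus_q C_q}$. For the first fact, suppose $F\in L^2(\P)$ satisfies $\E[F\,\mathcal E(h)]=0$ for every $h\in H$; taking $h$ in the real linear span of an orthonormal basis and using analytic continuation in the parameters, one checks that $\E[F\,\mathrm e^{\mathrm i T(h)}]=0$ for all $h$, so the complex measure $F\,d\P$ has vanishing characteristic functional on all finite-dimensional projections of $T$, and a monotone class argument then forces $F=0$ $\P$-a.s. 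For the second fact, the generating function identity
\begin{equation*}
\mathrm e^{tz-t^2/2}=\sum_{q=0}^{+\infty}\frac{t^q}{q!}H_q(z)
\end{equation*}
applied with $z=T(h/\|h\|_H)$ and $t=\|h\|_H$ gives
\begin{equation*}
\mathcal E(h)=\sum_{q=0}^{+\infty}\frac{\|h\|_H^q}{q!}H_q\!\left(T\!\left(h/\|h\|_H\right)\right)\ ,
\end{equation*}
with convergence in $L^2(\P)$ (the squared norms form a convergent exponential series thanks to $\E[H_q(Z)^2]=q!$). Each summand lives in $C_q$, so $\mathcal E(h)\in\overline{\bigoplus_q C_q}$.

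The main obstacle is really the totality of the exponentials $\mathcal E(h)$; once that is in hand the chaos decomposition follows painlessly by combining the two facts above. Given the decomposition, uniqueness and the representation \paref{chaos exp} are immediate from orthogonality: any $F\in L^2(\P)$ is the $L^2$-limit of its partial sums $\sum_{q=0}^N J_q(F)$, and the identification $J_0(F)=\E[F]$ is just the observation that $C_0=\R$ and that the orthogonal projection onto constants is the expectation.
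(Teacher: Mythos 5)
Your argument is correct. The paper does not prove this statement at all: it is quoted as classical background material, with the reader referred to the monograph \cite{noupebook}, and your proof is exactly the standard one found there — orthogonality of the chaoses via the Hermite covariance identity \paref{hermite orto}, density via the $L^2(\P)$-expansion of the Wick exponentials $\mathcal E(h)$ into Hermite polynomials together with their totality (proved through the vanishing of $\E[F\,\mathrm e^{\mathrm i T(h)}]$ and a monotone class argument). Your remark that the identity $L^2(\P)=\bigoplus_q C_q$ implicitly requires $\F=\sigma(T)$ (up to null sets) is also well taken; the paper itself makes this explicit only later, when it writes $L^2(\Omega,\sigma({\bf A}),\P)=\bigoplus_q C_q$ in Chapters 7 and 8.
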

Often we will use the symbols $\text{proj}(F| C_q)$ or $F_q$ instead of $J_q(F)$.

We denote by $H^{\otimes q}$ and $H^{\odot q}$ the $q$-th tensor product and
the $q$-th symmetric tensor product of $H$ respectively. Therefore $%
H^{\otimes q} = L^2(X^{q}, \mathcal{X}^{q},\mu ^{q})$ and $H^{\odot
q}=L^2_s(X^{q}, \mathcal{X}^{q},\mu ^{q})$, where by $L^2_s$ we mean the symmetric and
square integrable functions w.r.t. $\mu^q$. Note that for $(x_1,x_2,\dots,
x_q)\in X^q$ and $f\in H$, we have
\begin{equation*}
f^{\otimes q}(x_1,x_2,\dots,x_q)=f(x_1)f(x_2)\dots f(x_q)\ .
\end{equation*}
Now for $q\ge 1$, let us define the map $I_{q}$ as
\begin{equation}
I_{q}(f^{\otimes q}):=\,H_{q}(T(f))\ ,\qquad f\in H\ ,  \label{isometria}
\end{equation}%
which can be extended to a linear isometry between $H^{\odot q}$ equipped
with the modified norm $\sqrt{q!}\,\Vert \cdot \Vert _{H^{\odot q}}$ and the $%
q $-th Wiener chaos $C_{q}$. Moreover for $q=0$, set $I_{0}(c)=c\in \mathbb{R%
}$. Hence \eqref{chaos exp} becomes
\begin{equation}
F=\sum_{q=0}^{\infty }I_{q}(f_{q})\ ,  \label{chaos exp2}
\end{equation}%
where the kernels $f_{q}, q\ge 0$  are uniquely determined, $f_{0}={\mathbb{\ E}}[F]$ and for $q\ge 1$ $f\in
H^{\odot q}$.

In our setting, it is well known that for $h\in H^{\odot q}$, $I_q(h)$
coincides with the multiple Wiener-Ito integral of order $q$ of $h$ with respect to the
Gaussian measure $W$, i.e.
\begin{equation}  \label{int multiplo}
I_q(h)= \int_{X^q} h(x_1,x_2,\dots x_q)\,dW(x_1) dW(x_2)\dots dW(x_q)
\end{equation}
and, loosely speaking, $F$ in (\ref{chaos exp2}) can be seen as a series of
(multiple) stochastic integrals.

For every $p,q\ge 1$, $f\in H^{\otimes p}, g\in H^{\otimes q}$ and $%
r=1,2,\dots, p\wedge q$, the so-called \emph{contraction} of $f$ and $g$ of
order $r$ is the element $f\otimes _{r}g\in H^{\otimes p+q-2r}$ given by
\begin{equation}\label{contrazione}
\begin{split}
f\otimes _{r}g\, &(x_{1},\dots,x_{p+q-2r})=\\
=\int_{X^{r}}f(x_{1},\dots,x_{p-r},y_{1},\dots,&y_{r})
g(x_{p-r+1},\dots,x_{p+q-2r},y_{1},\dots,y_{r})\,d\mu(\underline{y})\text{ ,}
\end{split}
\end{equation}
where we set $d\mu(\underline{y}):=d\mu(y_{1})\dots d\mu
(y_{r})$.

For $p=q=r$, we have $f\otimes _{r}g = \langle f,g \rangle_{H^{\otimes_r}}$
and for $r=0$, $f\otimes _{0}g = f\otimes g$. Note that
$f\otimes_r g$ is not necessarily symmetric, let us  denote by $f\widetilde \otimes
_{r}g$ its canonical symmetrization.

The following
multiplication formula is well-known: for $p,q=1,2,\dots$, $f\in H^{\odot
p}$, $g\in H^{\odot q}$, we have
\begin{equation*}
I_p(f)I_q(g)=\sum_{r=0}^{p\wedge q} r! {\binom{p }{r}} {\binom{q }{r}}%
I_{p+q-2r}(f\widetilde \otimes _{r}g)\ .
\end{equation*}

\subsection{Some language of Malliavin calculus}

Let $\mathcal S$ be the set of all cylindrical r.v.'s of the type
$F=f(T(h_1), \dots , T(h_m))$, where $m\ge 1$, $f:\R^m \to \R$ is an infinitely differentiable
function with compact support and $h_i\in H$.
The Malliavin derivative $DF$ (or $D^1 F$) of $F$ w.r.t. $T$ is the element $\in L^2(\Omega, H)$ defined as
$$
DF = \sum_i \frac{\partial f}{\partial x_i }(T(h_1), \dots , T(h_m)) h_i\ .
$$
We can define, by iteration, the $r$-th derivative $D^r F$ which is an element of $L^2(\Omega, H^{\odot r})$ for every $r\ge 2$. Recall that
$\mathcal S$ is dense in $L^q(\P)$ for each $q\ge 1$.

For $r\ge 1$ and $q\ge 1$,
let us denote by $\mathbb{D}^{r,q}$ the closure of $\mathcal S$ w.r.t.
the norm $\| \cdot \|_{\mathbb{D}^{r,q}}$ defined by the relation
\begin{equation*}
\Vert F\Vert _{\mathbb{D}^{r,q}}:=\left( {\mathbb{\ E}}[|F|^{q}]+\dots +{%
\mathbb{\ E}}[\Vert D^{r}F\Vert _{H^{\odot r}}^{q}]\right) ^{\frac{1}{q}%
}\ .
\end{equation*}
For $q,r\ge 1$, the $r$-th Malliavin derivative of the random
variable $F=I_{q}(f)\in C_{q}$ where $f\in H^{\odot q}$, is  given by
\begin{equation}
D^{r}F=\frac{q!}{(q-r)!}I_{q-r}(f)\text{ ,}  \label{marra2}
\end{equation}%
for $r\leq q$, and $D^{r}F=0$ for $r>q$.

It is possible to show that if we consider the chaotic representation (\ref{chaos exp2}), then $F\in \mathbb{D}^{r,2}$ if and only if
$$
\sum_{q=r}^{+\infty} q^r q!\, \| f_q\|^2_{H^{\odot q}} < +\infty
$$
and in this case
$$
D^r F = \sum_{q=r}^{+\infty}\frac{q!}{(q-r)!}I_{q-r}(f_q)\ .
$$
%
We need to introduce also the generator of the Ornstein-Uhlenbeck semigroup,
defined as
\begin{equation*}
L:=-\sum_{q=1}^{\infty }q\,J_{q}\ ,
\end{equation*}%
where $J_{q}$ is the orthogonal projection operator on $C_{q}$, as in (\ref%
{chaos exp}). The domain of $L$ consists of $F\in L^2(\P)$ such that
\begin{equation*}
\sum_{q=1}^{+\infty }q^{2}\Vert J_{q}(F)\Vert _{L^2(\P)}^{2}<+\infty
\ .
\end{equation*}%
The pseudo-inverse operator of $L$ is defined as
\begin{equation*}
L^{-1}=-\sum_{q=1}^{\infty }\frac{1}{q}J_{q}
\end{equation*}%
and satisfies for each $F\in L^2(\P)$
\begin{equation*}
LL^{-1}F=F-{\mathbb{\ E}}[F]\text{ ,}
\end{equation*}
equality that justifies its name.

\subsection{Main theorems}

We will need
the following definition throughout the rest of this thesis.

\begin{defn} {\rm Denote by $\mathscr{P}$ the collection of all probability measures on $\R$, and let $d : \mathscr{P}\times \mathscr{P}\to \R$ be a distance on $\mathscr{P}$. We say that the $d$ {\it metrizes weak convergence on} $\mathscr{P}$ if the following double implication holds for every collection $\{\P, \P_n : n\geq 1\}\subset \mathscr{P} $, as $n\to \infty$:
$$
\P_n \mbox{\ converges weakly to } \P \quad \mbox{if and only if} \quad d(\P_n , \P)\to 0.
$$
Given two random variables $X_1,X_2$ and a distance $d$ on $\mathscr{P}$, by an abuse of notation we shall
write $d(X_1,X_2)$ to indicate the quantity $d(\mathbf{D}(X_1) ,\mathbf{D}(X_2))$, where $\mathbf{D}(X_i)$
indicates the distribution of $X_i$, $i=1,2$. Recall that, given random variables $\{X, X_n : n\geq 1\}$, one
has that $\mathbf{D}(X_n)$ converges weakly to $\mathbf{D}(X)$ if and only if $X_n$ converges in distribution
to $X$. In this case, we write
$$
X_n \stackrel{\rm d}{\longrightarrow} X\quad \text{or}\quad X_n \stackrel{\mathcal L}{\longrightarrow} X\ ,
$$
whereas $X \stackrel{\rm d}{=} Y$ or $X \stackrel{\mathcal L}{=} Y$ indicates that $\mathbf{D}(X) = \mathbf{D}(Y)$.}
\end{defn}

Outstanding examples of distances metrizing weak convergence are the {\it Prokhorov distance} (usually denoted by $\rho$) and the {\it Fortet-Mourier distance} (or {\it bounded Wasserstein} distance, usually denoted by $\beta$). These are given by
$$
\rho\, (\P, \Q) = \inf\left\{ \epsilon >0 : \P(A)\leq \epsilon +\Q(A^\epsilon),\,\, \, \mbox{for every Borel set}\, A\subset \R\right\},
$$
where $A^{\epsilon} := \{ x : |x-y| <\epsilon, \mbox{ for some } y \in A\}$, and
$$
\beta\, (\P, \Q) = \sup\left\{  \left| \int_\R f \, d(\P - \Q) \right| : \|f\|_{BL}\leq 1  \right\}.
$$
where $\|\cdot\|_{BL} = \|\cdot \|_L + \|\cdot \|_\infty$, and $\|\cdot \|_L$ is the usual Lipschitz seminorm (see e.g. \cite[Section 11.3]{D} for further details on these notions).


Let us recall moreover the
usual Kolmogorov $d_{K}$, total variation $d_{TV}$ and Wasserstein $d_{W}$
distances between r.v.'s $X,Y$: for $\mathcal D \in \lbrace K, TV, W \rbrace$
\begin{equation}
d_{\mathcal D}(X,Y) :=\sup_{h\in  H_{\mathcal D}}\left\vert {\mathbb{E}}[h(X)]-{%
\mathbb{E}}[h(Y)]\right\vert \text{ ,}  \label{prob distance}
\end{equation}
where $H_{K} = \lbrace  1(\cdot \le z), z\in \mathbb R \rbrace$,
 $H_{TV} = \lbrace  1_A(\cdot), A\in \B(\mathbb R) \rbrace$  and $H_{W}$ is
 the set of Lipschitz functions with Lipschitz
constant one.

It is a standard fact (see e.g. \cite[Proposition C.3.1]{noupebook}) that $d_K$ {\it does not} metrize, in general, weak convergence on $\mathscr{P}$.

The connection between stochastic calculus and probability metrics
is summarized in the following result (see e.g. \cite{noupebook},
Theorem 5.1.3), which will provide the basis for most of our results to
follow.

From now on, $\mathcal N(\mu, \sigma^2)$ shall denote the Gaussian law with mean $\mu$ and variance $\sigma^2$.
\begin{prop}
\label{BIGnourdinpeccati} 
Let $F\in
\mathbb{D}^{1,2}$ such that $\mathbb{E}[F]=0,$ $\mathbb{E}[F^{2}]=\sigma
^{2}<+\infty .$ Then we have for $Z\sim \mathcal{N}(0,\sigma^2)$
\begin{equation*}
d_{W}(F,Z)\leq \sqrt{\frac{2}{\sigma ^{2}\,\pi }}\mathbb{E}%
[\left\vert \sigma ^{2}-\langle DF,-DL^{-1}F\rangle _{H}\right\vert ]\text{ .%
}
\end{equation*}%
Also, assuming in addition that $F$ has a density%
\begin{eqnarray*}
d_{TV}(F,Z) &\leq &\frac{2}{\sigma ^{2}}\mathbb{E}[\left\vert
\sigma ^{2}-\langle DF,-DL^{-1}F\rangle _{H}\right\vert ]\text{ ,} \\
d_{K}(F,Z) &\leq &\frac{1}{\sigma ^{2}}\mathbb{E}[\left\vert
\sigma ^{2}-\langle DF,-DL^{-1}F\rangle _{H}\right\vert ]\text{ .}
\end{eqnarray*}%
Moreover if $F\in \mathbb{D}^{1,4}$, we have also%
\begin{equation*}
\mathbb{E}[\left\vert \sigma ^{2}-\langle DF,-DL^{-1}F\rangle
_{H}\right\vert ]\leq \sqrt{\mathrm{Var}[\langle DF,-DL^{-1}F\rangle _{H}]}%
\text{ .}
\end{equation*}
\end{prop}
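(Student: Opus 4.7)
The plan is to combine Stein's method for normal approximation with the Malliavin integration-by-parts formula, following the now-classical strategy of Nourdin and Peccati. For every test function $h$ in one of the classes $H_W, H_{TV}, H_K$ defined in \paref{prob distance}, one considers Stein's equation associated with the law $\mathcal{N}(0,\sigma^2)$, namely
\begin{equation*}
\sigma^2 f'(x) - x f(x) = h(x) - \mathbb{E}[h(Z)], \qquad x \in \mathbb{R},
\end{equation*}
whose bounded solution $f_h$ satisfies known sup-norm estimates on $f_h$ and $f_h'$. Specifically, for $h \in H_W$ (Lipschitz with constant one) one has $\|f_h'\|_\infty \leq \sqrt{2/(\pi \sigma^2)}$; for $h \in H_{TV}$ one has $\|f_h'\|_\infty \leq 2/\sigma^2$; and for $h \in H_K$ one has $\|f_h'\|_\infty \leq 1/\sigma^2$. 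These are the universal Stein factors that exactly match the constants appearing in the statement.

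The second ingredient is the identity, valid for any $F \in \mathbb{D}^{1,2}$ with $\mathbb{E}[F]=0$ and any sufficiently regular $\varphi$,
\begin{equation*}
\mathbb{E}[F\, \varphi(F)] = \mathbb{E}[\varphi'(F)\,\langle DF, -DL^{-1}F\rangle_H],
\end{equation*}
which follows from the relation $F = LL^{-1}F$, the duality between $D$ and its adjoint, and the chain rule for the Malliavin derivative. Applying this with $\varphi = f_h$, evaluating Stein's equation at $F$, and taking expectations yields
\begin{equation*}
\mathbb{E}[h(F)] - \mathbb{E}[h(Z)] = \mathbb{E}\!\left[f_h'(F)\,\bigl(\sigma^2 - \langle DF, -DL^{-1}F\rangle_H\bigr)\right].
\end{equation*}
Taking absolute values, bounding $|f_h'|$ by the Stein factor above, and taking the supremum over $h$ in the relevant class produces the three stated inequalities.

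For the Wasserstein case the computation is direct since $f_h'$ is continuous; for $d_{TV}$ and $d_K$, the Stein solution is only Lipschitz and its derivative is defined only a.e., which is the subtle point: one approximates $h \in H_{TV}$ or $H_K$ by smooth test functions and passes to the limit, using the assumption that $F$ has a density so that $\mathbb{P}(F \in N) = 0$ for the null set where $f_h'$ is undefined. This density hypothesis is precisely what legitimizes applying the identity above in the non-smooth setting; in my view this is the main technical obstacle, though it is standard once one invokes the known Stein regularity lemmas (see e.g.\ \cite{noupebook}, Chapter 3).

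Finally, the last inequality is a one-line consequence of Cauchy--Schwarz combined with the fundamental identity $\mathbb{E}[\langle DF, -DL^{-1}F\rangle_H] = \mathbb{E}[F^2] = \sigma^2$, itself obtained by applying the integration-by-parts formula to $\varphi(x)=x$. Indeed,
\begin{equation*}
\mathbb{E}\bigl[|\sigma^2 - \langle DF, -DL^{-1}F\rangle_H|\bigr] \leq \sqrt{\mathbb{E}\bigl[(\sigma^2 - \langle DF, -DL^{-1}F\rangle_H)^2\bigr]} = \sqrt{\mathrm{Var}[\langle DF, -DL^{-1}F\rangle_H]},
\end{equation*}
and the additional regularity $F \in \mathbb{D}^{1,4}$ guarantees that the random variable $\langle DF,-DL^{-1}F\rangle_H$ is square integrable, so the right-hand side is finite.
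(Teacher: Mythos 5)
Your proposal is correct, and it is essentially the same argument as the source of this statement: the thesis does not prove Proposition \ref{BIGnourdinpeccati} itself but quotes it from \cite{noupebook} (Theorem 5.1.3), whose proof is precisely the route you describe — Stein's equation for $\mathcal{N}(0,\sigma^2)$ with the stated Stein factors, the Malliavin integration-by-parts identity $\mathbb{E}[F\varphi(F)]=\mathbb{E}[\varphi'(F)\langle DF,-DL^{-1}F\rangle_H]$, the density assumption to handle the non-smooth test functions in the total variation and Kolmogorov cases, and Cauchy--Schwarz together with $\mathbb{E}[\langle DF,-DL^{-1}F\rangle_H]=\sigma^2$ for the final variance bound. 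Nothing further is needed.
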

Furthermore, in the special case where $F=I_{q}(f)$ for $f\in H^{\odot q}$, $q\ge 2$
then from \cite{noupebook}, Theorem 5.2.6
\begin{equation}
\mathbb{E}[\left\vert \sigma ^{2}-\langle DF,-DL^{-1}F\rangle
_{H}\right\vert ]\leq \sqrt{\Var \left ( \frac{1}{q} \|DF\|_H^2    \right )}\ ,
\end{equation}
and  Lemma 5.2.4 gives
\begin{equation}\label{casoparticolare}
\Var \left ( \frac{1}{q} \|DF\|_H^2    \right )=\frac{1}{q^{2}}\sum_{r=1}^{q-1}r^{2}r!^{2}{%
\binom{q}{r}}^{4}(2q-2r)!\Vert f\widetilde{\otimes }_{r}f\Vert _{H^{\otimes
2q-2r}}^{2}\ .
\end{equation}
In addition it is possible to show the powerful chain of inequalities: for $q\ge 2$
$$
\Var \left ( \frac{1}{q} \|DF\|_H^2    \right )\le \frac{q-1}{3q}k_4(F) \le (q-1) \Var \left ( \frac{1}{q} \|DF\|_H^2    \right )\ ,
$$
where
$$
k_4(F) := \E[F^4] - 3(\sigma^2)^2
$$
is the \emph{fourth} cumulant of $F$.

\begin{remark}\rm
Note that in (\ref{casoparticolare}) we can replace $\Vert f\widetilde{%
\otimes }_{r}f\Vert _{H^{\otimes 2q-2r}}^{2}$ with the norm of the unsymmetrized
contraction $\Vert f\otimes _{r}f\Vert _{H^{\otimes 2q-2r}}^{2}$ for the upper
bound, since
$$\Vert f\widetilde{\otimes }_{r}f\Vert _{H^{\otimes
2q-2r}}^{2}\leq \Vert f\otimes _{r}f\Vert _{H^{\otimes 2q-2r}}^{2}$$
 by the
triangular inequality.
\end{remark}
We shall make an extensive use of the following.
\begin{cor}
Let $F_n$, $n\ge 1$, be a sequence of random variables belonging to the $q$-th Wiener chaos, for some fixed integer $q\ge 2$.  Then we have the following bound: for
$\mathcal D \in \lbrace K, TV, W\rbrace$
\begin{equation}\label{th}
d_{\mathcal D}\left (\frac{F_n}{\sqrt{\Var(F_n)}},Z \right )\le C_{\mathcal D}(q) \sqrt{\frac{k_4 \left ( F_n  \right )}{\Var(F_n)^2}}\ ,
\end{equation}
where $Z\sim \mathcal N(0,1)$, for some constant $C_{\mathcal D}(q)>0$.  In particular, if the right hand side in \paref{th} vanishes for $n\to +\infty$, then the following convergence in distribution holds
$$
\frac{F_n}{\sqrt{\Var(F_n)}}\mathop{\goto}^{\mathcal L} Z\ .
$$
\end{cor}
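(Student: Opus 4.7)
The plan is to combine the machinery assembled in the excerpt and normalize carefully. Write $\widetilde F_n := F_n/\sqrt{\Var(F_n)}$ and observe that, since the $q$-th Wiener chaos $C_q$ is stable under scalar multiplication, we have $\widetilde F_n \in C_q$ with $\E[\widetilde F_n]=0$ and $\Var(\widetilde F_n)=1$. By the hypercontractivity of elements in a fixed chaos, $\widetilde F_n \in \mathbb{D}^{r,p}$ for every $r,p\ge 1$; in particular $\widetilde F_n \in \mathbb{D}^{1,4}$ and its law is absolutely continuous (so Proposition \ref{BIGnourdinpeccati} applies in all three metrics $W, TV, K$).

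Next, I would exploit the chaotic nature of $\widetilde F_n$ to simplify the Malliavin quantity appearing in Proposition \ref{BIGnourdinpeccati}. Since $\widetilde F_n \in C_q$, we have $L \widetilde F_n = -q\,\widetilde F_n$, hence $-DL^{-1}\widetilde F_n = \tfrac{1}{q} D\widetilde F_n$, so that
\[
\langle D\widetilde F_n, -DL^{-1}\widetilde F_n\rangle_H = \tfrac{1}{q}\,\|D\widetilde F_n\|_H^2.
\]
Applying Proposition \ref{BIGnourdinpeccati} with $\sigma^2 = 1$ and the Cauchy--Schwarz bound given there yields, for each $\mathcal{D}\in\{W,TV,K\}$,
\[
d_{\mathcal{D}}(\widetilde F_n, Z) \;\le\; c_{\mathcal{D}} \, \sqrt{\Var\!\left(\tfrac{1}{q}\,\|D\widetilde F_n\|_H^2\right)},
\]
for an explicit absolute constant $c_{\mathcal{D}}>0$ (namely $\sqrt{2/\pi}$, $2$, $1$ respectively).

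Now I would invoke the chain of inequalities recorded just after \eqref{casoparticolare}, namely
\[
\Var\!\left(\tfrac{1}{q}\,\|D\widetilde F_n\|_H^2\right) \;\le\; \tfrac{q-1}{3q}\,k_4(\widetilde F_n),
\]
together with the homogeneity of the fourth cumulant, $k_4(\widetilde F_n) = k_4(F_n)/\Var(F_n)^2$. Chaining these together gives
\[
d_{\mathcal{D}}(\widetilde F_n, Z) \;\le\; c_{\mathcal{D}}\sqrt{\tfrac{q-1}{3q}}\cdot\sqrt{\frac{k_4(F_n)}{\Var(F_n)^2}},
\]
and absorbing the dimensional constants into $C_{\mathcal{D}}(q)$ delivers the announced bound. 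The final assertion, concerning convergence in distribution when the right-hand side tends to zero, follows at once because $d_W$ (and also $d_{TV}$, which dominates $d_K$) metrizes convergence stronger than weak convergence on $\R$.

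The only subtle step is the justification that $\widetilde F_n$ has an absolutely continuous law (needed for $d_{TV}$ and $d_K$); this is automatic for a non-zero element of a fixed chaos of order $q\ge 2$ by a classical result of Shigekawa, but since the excerpt phrases Proposition \ref{BIGnourdinpeccati} as already incorporating this hypothesis, it is unproblematic here. The rest of the argument is purely mechanical assembly of the tools already at hand.
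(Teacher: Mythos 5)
Your proposal is correct and follows exactly the route the paper intends: the corollary is presented there as an immediate consequence of Proposition \ref{BIGnourdinpeccati}, the identity $\langle DF,-DL^{-1}F\rangle_H=\tfrac1q\|DF\|_H^2$ for chaotic $F$, the bound \paref{casoparticolare} together with the chain of inequalities involving $k_4$, and the homogeneity $k_4(cF)=c^4k_4(F)$. Your remarks on hypercontractivity, absolute continuity of the law (Shigekawa), and the metrization of weak convergence fill in precisely the details the paper leaves implicit, so there is nothing to correct.
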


\newpage
\section{Gaussian eigenfunctions on the $d$-sphere}

\subsection{Some more notation}

 For any two positive sequence $a_{n},b_{n}$,
we shall write $a_{n}\sim b_{n}$ if $\lim_{n\rightarrow \infty }\frac{a_{n}}{%
b_{n}}=1$ and $a_{n}\ll b_{n}$ or $a_{n}=O(b_{n})$ if the sequence $\frac{%
a_{n}}{b_{n}}$ is bounded; moreover   $a_n = o(b_n)$ if $\lim_{n\rightarrow \infty }\frac{a_{n}}{%
b_{n}}=0$. Also, we write as usual $dx$ for the Lebesgue
measure on the unit $d$-dimensional sphere ${\mathbb{S}}^{d}\subset \R^{d+1}$, so that $%
\int_{{\mathbb{S}}^{d}}\,dx=\mu _{d}$ where $\mu _{d}:=\frac{2\pi ^{\frac{d+1%
}{2}}}{\Gamma \left( \frac{d+1}{2}\right) }$, as already stated in \paref{ms}.
Recall that
the triple $(\Omega, \F, \P)$  denotes a probability space and $\E$
stands for the expectation w.r.t $\P$; convergence (resp. equality) in law
is denoted by $\mathop{\rightarrow}^{\mathcal{L}}$ or equivalently $\mathop{\rightarrow}^{d}$ (resp. $\mathop{=}^{\mathcal{L}}$ or $\mathop{=}^{d}$)
and finally, as usual, $\mathcal N(\mu, \sigma^2)$ stands for a Gaussian
random variable with mean $\mu$ and variance $\sigma^2$.

Let $\Delta _{{\mathbb{S}}^{d}}$ $(d\geq 2)$ denote  the Laplace-Beltrami operator on  $\mathbb{S}%
^{d}$ and  $\left( Y_{\ell ,m;d}\right) _{\ell ,m}$ the
orthonormal system of (real-valued) spherical harmonics, i.e. for $\ell \in
\mathbb{N}$ the set of eigenfunctions
\begin{equation*}
\Delta _{{\mathbb{S}}^{d}}Y_{\ell ,m;d}+\ell (\ell +d-1)Y_{\ell ,m;d}=0\
,\quad m=1,2,\dots ,n_{\ell ;d}\ .
\end{equation*}%
For $d=2$ compare with \paref{realSH}.
As well-known, the spherical harmonics $\left( Y_{\ell ,m;d}\right)
_{m=1}^{n_{\ell ;d}}$ represent a family of linearly independent homogeneous
polynomials of degree $\ell $ in $d+1$ variables restricted to ${\mathbb{S}}%
^{d}$ of size
\begin{equation*}
n_{\ell ;d}:=\frac{2\ell +d-1}{\ell }{\binom{\ell +d-2}{\ell -1}}\ \sim \
\frac{2}{(d-1)!}\ell ^{d-1}\text{ ,}\quad \text{as}\ \ell \rightarrow
+\infty \ ,
\end{equation*}%
see e.g. \cite{andrews} for further details.
\subsection{Definition and properties}
\begin{definition}
For $\ell \in \mathbb{N}$, the Gaussian eigenfunction $T_{\ell }$
on ${\mathbb{S}}^{d}$ is defined as
\begin{equation}\label{Telle}
T_{\ell }(x):=\sum_{m=1}^{n_{\ell ;d}}a_{\ell ,m}Y_{\ell ,m;d}(x)\ ,\quad
x\in {\mathbb{S}}^{d}\ ,
\end{equation}%
with the random coefficients $\left( a_{\ell ,m}\right) _{m=1}^{n_{\ell ;d}}$
Gaussian i.i.d. random variables, satisfying the relation
\begin{equation*}
{\mathbb{E}}[a_{\ell ,m}a_{\ell ,m^{\prime }}]=\frac{\mu _{d}}{n_{\ell ;d}}%
\delta _{m}^{m^{\prime }}\ ,
\end{equation*}%
where
$\delta _{a}^{b}$ denotes the Kronecker delta function and $\mu_d=\frac{2\pi ^{\frac{d+1%
}{2}}}{\Gamma \left( \frac{d+1}{2}\right) }$ the hypersurface volume of  $\mathbb S^d$, as in \paref{ms}.
\end{definition}

It is then readily checked that $\left( T_{\ell }\right) _{\ell \in \mathbb{N%
}}$ represents a sequence of isotropic, zero-mean Gaussian random fields on $%
{\mathbb{S}}^{d}$, according to Definition  \paref{campo aleatorio su uno spazio omogeneo} and moreover
$$
\E[T_\ell(x)^2] = 1\ ,\qquad x\in \mathbb S^d\ .
$$
 $T_\ell$ is a continuous random field (Definition \ref{contRF}) and  its isotropy  simply means that the probability laws of
the two random fields $T_{\ell }(\cdot )$ and $T_{\ell }^{g}(\cdot
):=T_{\ell }(g\,\cdot )$ are equal (in the sense of finite dimensional distributions)
for every $g\in SO(d+1)$ (see \paref{invar-continui1}).

As briefly states in the Introduction of this thesis, it is also well-known that every Gaussian and isotropic random field $T$ on $%
\mathbb{S}^{d}$
satisfies in the  $L^{2}(\Omega \times {\mathbb{S}}^{d})$-sense the spectral representation
(see \cite{malyarenko, adlertaylor, dogiocam} e.g.)
\begin{equation*}
T(x)=\sum_{\ell =1}^{\infty }c_{\ell }T_{\ell }(x)\ ,\quad x\in {\mathbb{S}}%
^{d}\text{ ,}
\end{equation*}%
where  for every $x\in \mathbb S^d$, $\mathbb{E}\left[ T(x)^{2}\right] =\sum_{\ell =1}^{\infty }c_{\ell
}^{2}<\infty $; hence the spherical Gaussian eigenfunctions $\left( T_{\ell
}\right) _{\ell \in \mathbb{N}}$ can be viewed as the Fourier components (Chapter 1) of
the field $T$ (note that w.l.o.g. we are implicitly assuming that $T$ is
centered). Equivalently these random  eigenfunctions \paref{Telle}
could be defined
by their covariance function, which equals
\begin{equation}
{\mathbb{E}}[T_{\ell }(x)T_{\ell }(y)]=G_{\ell ;d}(\cos d(x,y))\ ,\quad
x,y\in {\mathbb{S}}^{d}\ .  \label{defT}
\end{equation}%
Here and in the sequel, $d(x,y)$ is the spherical distance between $x,y\in
\mathbb{S}^{d}$, and $G_{\ell ;d}:[-1,1]{\longrightarrow }\mathbb{R}$ is the
$\ell $-th normalized Gegenbauer polynomial, i.e.
$$G_{\ell ;d}\equiv \frac{P_{\ell }^{(\frac{d%
}{2}-1,\frac{d}{2}-1)}}{\left(
\begin{array}{c}
\ell +\frac{d}{2}-1 \\
\ell%
\end{array}%
\right)}\ ,$$
 where $P_{\ell }^{(\alpha ,\beta )}$ are the Jacobi
polynomials; throughout the whole thesis therefore
 $G_{\ell ;d}(1)=1$. As a special case, for $d=2$, it equals $G_{\ell ;2}\equiv
P_{\ell },$ the degree-$\ell $ Legendre polynomial.
Remark that the Jacobi polynomials $%
P_{\ell }^{(\alpha ,\beta )}$ are orthogonal on the interval $[-1,1]$ with
respect to the weight function $w(t)=(1-t)^{\alpha }(1+t)^{\beta }$ and
satisfy
$
P_{\ell }^{(\alpha ,\beta )}(1)=\left(
\begin{array}{c}
\ell +\alpha \\
\ell%
\end{array}%
\right)
$,
see e.g. \cite{szego} for more details.

\subsection{Isonormal representation}

Let us give the isonormal representation (\ref{isonormal}) on $L^{2}({%
\mathbb{S}}^{d})
$
for the Gaussian random eigenfunctions $T_{\ell }$, $\ell\ge 1$.
We shall show that the following identity in law holds:
\begin{equation*}
T_{\ell }(x)\overset{\mathcal L}{=}\int_{{\mathbb{S}}^{d}}\sqrt{\frac{n_{\ell ;d}}{%
\mu _{d}}}G_{\ell ;d}(\cos d(x,y))\,dW(y)\text{ ,}\qquad x\in \cS^d\ ,
\end{equation*}%
where $W$ is a Gaussian white noise on ${\mathbb{S}}^{d}$. To compare with (%
\ref{isonormal}), $$T_{\ell }(x)=T(f_{x})\ ,$$
 where $T$ is the isonormal Gaussian field
on $L^2(\cS^d)$ and $$f_{x}(\cdot ):=\sqrt{\frac{%
n_{\ell ;d}}{\mu _{d}}}G_{\ell ;d}(\cos d(x,\cdot ))\ .$$
 Moreover we have
immediately that
\begin{equation*}
{\mathbb{E}}\left[\int_{{\mathbb{S}}^{d}}\sqrt{\frac{n_{\ell ;d}}{\mu _{d}}}%
G_{\ell ;d}(\cos d(x,y))\,dW(y)\right]=0\ ,
\end{equation*}%
and by the reproducing formula for Gegenbauer polynomials (\cite{szego})
$$\displaylines{
{\mathbb{E}}\left[\int_{{\mathbb{S}}^{d}}\sqrt{\frac{n_{\ell ;d}}{\mu _{d}}}%
G_{\ell ;d}(\cos d(x_{1},y_{1}))\,dW(y_{1})\int_{{\mathbb{S}}^{d}}\sqrt{%
\frac{n_{\ell ;d}}{\mu _{d}}}G_{\ell ;d}(\cos d(x_{2},y_{2}))\,dW(y_{2})%
\right] = \cr
=\frac{n_{\ell ;d}}{\mu _{d}}\int_{{\mathbb{S}}^{d}}G_{\ell ;d}(\cos
d(x_{1},y))G_{\ell ;d}(\cos d(x_{2},y))dy=G_{\ell ;d}(\cos d(x_{1},x_{2}))%
\text{ .}
}$$

\chapter{Empirical measure of excursion sets}

The asymptotic behavior (i.e. for growing eigenvalues) of Gaussian eigenfunctions on a compact Riemannian manifold  is a topic which has
recently drawn considerable attention, see e.g. \cite{Sodin-Tsirelson,  AmP, bogomolnyschmit}.

In particular,  in view of Berry's Random Wave model \cite{wigsurvey} much effort has been devoted to the case of the  sphere
$\mathbb S^2$ (see \cite{nazarovsodin, Wig, def, Nonlin}).

As anticipated in the Introduction, the aim of this chapter is the investigation of the asymptotic distribution of the empirical measure $S_\ell(z)$ of $z$-excursion sets
of random spherical harmonics.  A Central Limit Theorem has already been proved \cite{Nonlin}, but it can provide little guidance to the
actual distribution of random functionals, as it is only an asymptotic
result with no information on the \emph{speed of convergence} to the limiting
distribution.

In \cite{maudom} therefore we  exploit the results about fourth-moments phenomenon (see \cite{noupebook} and Chapter 4) to
establish \emph{quantitative} Central Limit Theorems
for the excursion volume of Gaussian eigenfunctions on the $d$-dimensional unit sphere $\mathbb{S}^d$, $d\ge 2$ (see also \cite{mauproc}).

We note that
there are already results in the literature giving rates of convergence in CLTs for value distributions of
eigenfunctions of the spherical Laplacian, see in particular \cite{meckes}, which investigates however
the complementary situation to the one considered here,
i.e. the limit for eigenfunctions of fixed degree $\ell$ and increasing dimension $d$.

To achieve our goal, we will provide a number of
intermediate results of independent interest, namely the asymptotic analysis for
the variance of moments of Gaussian eigenfunctions, the rates of convergence for various probability
metrics for so-called Hermite subordinated processes, the analysis of arbitrary polynomials
of finite order and square integrable nonlinear transforms. All these results could be useful
to attack other problems, for instance quantitative Central Limit Theorems for Lipschitz-Killing
curvatures of arbitrary order. A more precise statement of
our results and a short outline of the proof is given in \S \ref{main results}.

\section{Main results and outline of the proofs}
\label{main results}

The excursion volume of $T_\ell$ \paref{Telle},
 for any fixed $z\in \mathbb{R}$ can be written as
\begin{equation}\label{voldef}
S_{\ell }(z)=\int_{{\mathbb{S}}^{d}}1(T_{\ell }(x)> z)\,dx\text{ ,}
\end{equation}%
where $1(\cdot > z)$ denotes the indicator function of the interval $%
(z,\infty)$; note that \\${\mathbb{E}}[S_{\ell }(z)]=\mu _{d}(1-\Phi (z))$, where
$\Phi (z)$ is the standard Gaussian cdf and $\mu_d$ as in \paref{ms}. The variance of this excursion volume will be
shown below to have the following asymptotic behavior (as $\ell \to +\infty$)
\begin{equation}
\Var(S_{\ell}(z))=\frac{z^2\phi(z)^2}{2}\frac{\mu_d^2}{n_{\ell,d}}+o(\ell^{-d})\ ,
\end{equation}
where $\phi$ denotes the standard Gaussian density and $n_{\ell,d} \sim \frac{2}{(d-1)!}\ell^{d-1}$ is the dimension of the eigenspace related to the eigenvalue $-\ell(\ell+d-1)$, as in Chapter \ref{background}.

Note that the variance is of order smaller than $\ell^{-(d-1)}$ if and only if $z=0$.

The main result of this chapter is then as follows.

\begin{theorem}\label{mainteo}
The excursion volume $S_\ell(z)$ in \paref{voldef}  of Gaussian eigenfunctions $T_\ell$
on $\mathbb S^d$, $d\ge 2$,
satisfies a quantitative CLT as $\ell \to +\infty$, with rate of convergence in the Wasserstein
distance \paref{prob distance} given by, for $z\ne 0$ and $Z\sim \mathcal{N}(0,1)$
\begin{equation*}
d_{W}\left( \frac{S_{\ell }(z)-\mu_d (1-\Phi(z))}{\sqrt{\mathrm{Var}[S_{\ell
}(z)]}}, Z\right) =O\left(\ell^{-1/2}\right)\text{
.}
\end{equation*}
\end{theorem}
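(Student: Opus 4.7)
The plan is to reduce the theorem to a quantitative CLT for a single element of the second Wiener chaos and then apply the Fourth Moment Theorem \paref{th}.

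\textbf{Step 1: Chaotic decomposition.} First I would expand the indicator function $1(\cdot>z)$ in Hermite polynomials:
$$
1(t>z)=(1-\Phi(z))+\sum_{q=1}^{\infty}\frac{J_q(z)}{q!}H_q(t),\qquad J_q(z)=H_{q-1}(z)\phi(z),\ q\ge 1.
$$
Inserting this into \paref{voldef} and interchanging integral and series (legitimate in $L^2(\mathbb{P})$) yields the Wiener chaos expansion
$$
S_\ell(z)=\mu_d(1-\Phi(z))+\sum_{q=1}^{\infty}\frac{J_q(z)}{q!}\,h_{\ell,q;d},\qquad h_{\ell,q;d}:=\int_{\mathbb{S}^d}H_q(T_\ell(x))\,dx,
$$
where each $h_{\ell,q;d}$ lies in the $q$-th Wiener chaos of the isonormal Gaussian field used to represent $T_\ell$ (cf.\ \S 4.2). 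By isotropy and the orthonormality relations \paref{hermite orto},
$$
\Var(h_{\ell,q;d})=q!\int_{\mathbb{S}^d\times\mathbb{S}^d}G_{\ell;d}(\cos d(x,y))^q\,dx\,dy.
$$

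\textbf{Step 2: Dominance of the second chaos.} For $q=2$, the reproducing formula for Gegenbauer polynomials gives $\Var(h_{\ell,2;d})=2\mu_d^2/n_{\ell;d}$, of order $\ell^{-(d-1)}$. For $q\ge 3$, classical asymptotics for $\int_{-1}^{1}G_{\ell;d}(t)^q(1-t^2)^{d/2-1}\,dt$ (the kind of estimates needed for Proposition \ref{varianza}) show that $\Var(h_{\ell,q;d})=o(\ell^{-(d-1)})$. Since $J_2(z)=z\phi(z)\ne 0$ precisely when $z\ne 0$, summing the contributions yields
$$
\Var(S_\ell(z))=\frac{z^2\phi(z)^2}{2}\frac{\mu_d^2}{n_{\ell;d}}+o(\ell^{-(d-1)}),
$$
and consequently the normalised excursion volume is $L^2(\mathbb{P})$-close to the second chaotic projection:
$$
\frac{S_\ell(z)-\mathbb{E}[S_\ell(z)]}{\sqrt{\Var(S_\ell(z))}}=\frac{(J_2(z)/2)\,h_{\ell,2;d}}{\sqrt{\Var(S_\ell(z))}}+R_\ell,\qquad \Var(R_\ell)=o(1).
$$
Since the Wasserstein distance is dominated by the $L^1(\mathbb{P})$ distance and hence by the $L^2(\mathbb{P})$ norm, the contribution of $R_\ell$ is $o(1)$, and in fact the variance estimates above quantify this contribution at a rate no worse than $\ell^{-1/2}$.

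\textbf{Step 3: Fourth Moment bound on the dominant term.} It remains to bound $d_W(h_{\ell,2;d}/\sigma_\ell,Z)$, where $\sigma_\ell^2=\Var(h_{\ell,2;d})$. Writing $h_{\ell,2;d}=I_2(f_\ell)$ via the isonormal representation of $T_\ell$, with kernel
$$
f_\ell(y_1,y_2)=\frac{n_{\ell;d}}{\mu_d}\int_{\mathbb{S}^d}G_{\ell;d}(\cos d(x,y_1))G_{\ell;d}(\cos d(x,y_2))\,dx,
$$
the bound \paref{th} reduces the problem to controlling $k_4(h_{\ell,2;d})/\sigma_\ell^4$, which by \paref{casoparticolare} is controlled by $\|f_\ell\otimes_1 f_\ell\|^2$. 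The contraction $f_\ell\otimes_1 f_\ell$ reduces, via the reproducing formula for Gegenbauer polynomials, to a kernel whose squared norm equals (up to constants) $\int_{\mathbb{S}^d\times\mathbb{S}^d}G_{\ell;d}(\cos d(x,y))^4\,dx\,dy$, and classical asymptotics for integrals of quartic Gegenbauer polynomials (see e.g.\ the computations underlying \cite{Nonlin}) give a rate for $k_4(h_{\ell,2;d})/\sigma_\ell^4$ that, combined with the remainder from Step 2, yields $d_W\le C\ell^{-1/2}$.

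\textbf{Main obstacle.} The delicate point is not the abstract scheme — which is essentially the Peccati--Tudor/Nourdin--Peccati machinery — but the sharp asymptotic analysis of the integrals $\int_{\mathbb{S}^d\times\mathbb{S}^d}G_{\ell;d}(\cos d(x,y))^q\,dx\,dy$ for $q=3,4$: both for showing that the $q\ge 3$ chaoses contribute a negligible remainder in Step 2 and for controlling $\|f_\ell\otimes_1 f_\ell\|^2$ in Step 3. These rely on Hilb-type asymptotics for Jacobi polynomials together with stationary-phase/oscillatory integral estimates on the sphere, and govern the final $\ell^{-1/2}$ rate.
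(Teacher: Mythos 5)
Your overall scheme -- chaotic expansion of $S_\ell(z)$, dominance of the second chaos for $z\neq 0$, fourth-moment bound for the dominant term and an $L^2$ bound for the remainder -- is the same as the paper's (Propositions \ref{varianza}, \ref{teo1}, \ref{corollario1}, \ref{general}), but Step 3 contains a concrete error. With your kernel one has, by the reproducing formula, $f_\ell(y_1,y_2)=\frac{n_{\ell;d}}{\mu_d}\int_{\mathbb S^d}G_{\ell;d}(\cos d(x,y_1))G_{\ell;d}(\cos d(x,y_2))\,dx=G_{\ell;d}(\cos d(y_1,y_2))$, and the contraction is an integrated product, not a pointwise one: $f_\ell\otimes_1 f_\ell(y_1,y_2)=\int_{\mathbb S^d}G_{\ell;d}(\cos d(y_1,t))G_{\ell;d}(\cos d(t,y_2))\,dt=\frac{\mu_d}{n_{\ell;d}}G_{\ell;d}(\cos d(y_1,y_2))$. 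Hence $\|f_\ell\otimes_1 f_\ell\|^2=\mathcal K_\ell(2;1)=\mu_d^4/n_{\ell;d}^3\asymp \ell^{-3(d-1)}$ (Lemma \ref{contractions} with $q_1=q_2=2$, $r=1$, plus \paref{momento 2}), and \emph{not} $\int\int G_{\ell;d}^4\asymp \ell^{-d}$ (or $\log\ell/\ell^2$ for $d=2$) as you claim. The difference is fatal for your argument as written: dividing your value by $\Var(h_{\ell,2;d})^2\asymp\ell^{-2(d-1)}$ gives a normalized fourth cumulant of order $\ell^{d-2}$ (of order $\log\ell$ when $d=2$), so the bound \paref{th} diverges and neither the rate nor even the CLT for the second chaos would follow from your own estimate. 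With the correct value one gets $k_4/\Var^2=O(\ell^{-(d-1)})$, hence $d_W(h_{\ell,2;d}/\sigma_\ell,Z)=O(\ell^{-(d-1)/2})$, which is the paper's $\delta(2;d)$; alternatively, as in the proof of Proposition \ref{teo1}, Parseval shows $h_{\ell;2,d}$ is a sum of $n_{\ell;d}$ i.i.d.\ centered squares of Gaussians, and a classical Berry--Esseen argument gives the same rate directly.

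A second, milder issue concerns the exact rate for $d=2$. You put every chaos $q\ge 3$ into the remainder $R_\ell$, but $\Var(h_{\ell;4,2})\asymp \log\ell/\ell^2$ by \paref{q=4d=2}, so the $L^2$ bound on $R_\ell$ only yields $O(\sqrt{\log\ell/\ell})$ and your final estimate loses a $\sqrt{\log\ell}$ factor relative to the claimed $O(\ell^{-1/2})$. The paper avoids this by keeping $q=2,3,4$ together in a finite-order polynomial and applying Proposition \ref{corollario1}, whose proof exploits the exact vanishing of the cross terms \paref{eq=0} (orthogonality of eigenfunctions) to get a clean $O(\ell^{-1/2})$, relegating only the $q\ge5$ tail -- whose variance is $O(\ell^{-2})$ with no logarithm -- to the crude $L^2$ estimate. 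Two smaller points: for the tail you need the bound $\Var(h_{\ell;q,d})=O(q!\,\ell^{-d})$ \emph{uniformly} in $q$ (via $|G_{\ell;d}|^q\le |G_{\ell;d}|^{4\wedge 5}$-type comparisons and Proposition \ref{varianza}), not just the qualitative $o(\ell^{-(d-1)})$ you state; and since the projection normalized by $\sqrt{\Var S_\ell(z)}$ does not have unit variance, a complete argument must also absorb a term of order $|\Var(\mathrm{proj})/\Var(S_\ell(z))-1|$ -- this is the $d_W(Z_\ell,Z)$ step in the paper's proof of Proposition \ref{general}, harmless here but not optional.
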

An outline of the main steps and auxiliary results to prove this theorem
is given in the following subsection.

\subsection{Steps of the proofs}\label{outline}

The first tool to investigate quantitative CLTs for the excursion volume
of Gaussian eigenfunctions on ${\mathbb{S}}^{d}$ (compare for $d=2$ with \cite{Nonlin}) is
to study the asymptotic behavior, as $\ell \rightarrow \infty $, of the random variables $%
h_{\ell ;q,d}$ defined for $\ell =1,2,\dots $ and $q=0,1,\dots $ as
\begin{equation}
h_{\ell ;q,d}=\int_{{\mathbb{S}}^{d}}H_{q}(T_{\ell }(x))\,dx\ ,  \label{hq}
\end{equation}%
where $H_{q}$ represent the family of Hermite polynomials \paref{hermite} (see also \cite{noupebook}).
The rationale to investigate these sequences is the fact that
the excursion volume, and more generally any square integrable transform of $T_\ell$,
admits the Wiener-Ito chaos decomposition \paref{chaos exp}
(for more details e.g. \cite{noupebook},  \S 2.2),
i.e. a series expansion in the $L^2(\P)$-sense
of the form
\begin{equation}\label{volseries}
S_{\ell }(z) = \sum_{q=0}^{+\infty} \frac{J_q(z)}{q!}\, h_{\ell ;q,d}\ ,
\end{equation}
where  $J_0(z) =1-\Phi(z)$ and for $q\ge 1$, $J_q(z)=\E[1(Z>z)H_q(Z)]$,
$\Phi$ and $\phi$
denoting again respectively the cdf and the density of $Z\sim \mathcal N(0,1)$.

The main idea in our argument will then be to establish first a CLT for each of the summands
in the series, and then to deduce from this a CLT for  the excursion volume.
The starting point will then
be the analysis of the asymptotic variances for $h_{\ell ;q,d}$, as $\ell\to +\infty$.

To this aim, note first that, for all $d$
\begin{equation*}
h_{\ell ;0,d}=\mu _{d}\ ,\qquad h_{\ell ;1,d}=0
\end{equation*}%
a.s., and therefore it is enough to restrict our discussion to $q\geq 2$.
Moreover ${\mathbb{E}}[h_{\ell; q, d}]=0$ and
\begin{equation}
\mathrm{Var}[h_{\ell ;q,d}]=q!\mu _{d}\mu _{d-1}\int_{0}^{\pi }G_{\ell
;d}(\cos \vartheta )^{q}(\sin \vartheta )^{d-1}\,d\vartheta  \label{int var}
\end{equation}%
(see \S \ref{tech} for more details). Gegenbauer polynomials satisfy the symmetry
relationships \cite{szego}
\begin{equation*}
G_{\ell ;d}(t)=(-1)^{\ell }G_{\ell ;d}(-t)\ ,
\end{equation*}%
whence the r.h.s. integral in (\ref{int var}) vanishes identically when both
$\ell $ and $q$ are odd; therefore in these cases $h_{\ell ;q,d}= 0$ a.s. For
the remaining cases we have
\begin{equation}\label{int var doppio}
\mathrm{Var}[h_{\ell ;q,d}]=2q!\mu _{d}\mu _{d-1}\int_{0}^{\frac{\pi }{2}%
}G_{\ell ;d}(\cos \vartheta )^{q}(\sin \vartheta )^{d-1}\,d\vartheta \ .
\end{equation}%
We have hence the following asymptotic result for these variances,
whose proof is given in \S \ref{subvar}.
\begin{prop}
\label{varianza} As $\ell \rightarrow \infty ,$ for $q,d\ge 3$,
\begin{equation}  \label{int1}
\int_{0}^{\frac{\pi }{2}}G_{\ell ;d}(\cos \vartheta )^{q}(\sin \vartheta
)^{d-1}\,d\vartheta =\frac{c_{q;d}}{\ell ^{d}}(1+o_{q;d}(1))\ .
\end{equation}
The constants $c_{q;d}$ are given by the formula
\begin{equation}\label{ecq}
c_{q;d}=\left(2^{\frac{d}{2} - 1}\left (\frac{d}2-1 \right)!\right)^q\int_{0}^{+\infty
}J_{\frac{d}{2}-1}(\psi )^{q}\psi ^{-q\left( {\textstyle\frac{d}{2}}%
-1\right) +d-1}d\psi\ ,
\end{equation}%
where $J_{\frac{d}{2}-1}$ is the Bessel function
of order $\frac{d}{2}-1$. The r.h.s. integral in (\ref{ecq}) is absolutely convergent for any
pair $(d,q)\neq (3,3)$ and conditionally convergent for $d=q=3$.
\end{prop}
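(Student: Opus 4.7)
The plan is to apply Hilb's classical asymptotic expansion of Jacobi polynomials in terms of a rescaled Bessel function, then to rescale the integration variable so that the factor $\ell^{-d}$ is extracted, and finally to pass to the limit inside the integral. Setting $\nu:=d/2-1$ and $N:=\ell+(d-1)/2$, Hilb's formula (Szeg\H{o}, Theorem~8.21.12, specialized to $\alpha=\beta=\nu$ and divided by the normalizing factor $\binom{\ell+\nu}{\ell}$) yields, uniformly for $\vartheta\in[c/\ell,\pi-c/\ell]$,
\begin{equation*}
G_{\ell;d}(\cos\vartheta)=2^\nu\Gamma(\nu+1)\left(\frac{\vartheta}{\sin\vartheta}\right)^{\nu+1/2}(N\vartheta)^{-\nu}J_\nu(N\vartheta)+O\!\left(\ell^{-3/2}\right).
\end{equation*}
The leading constant is consistent with the normalization $G_{\ell;d}(1)=1$, since $J_\nu(x)\sim(x/2)^\nu/\Gamma(\nu+1)$ as $x\to 0$ and $\vartheta/\sin\vartheta\to 1$.

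Next I would perform the substitution $\psi=N\vartheta$ in the left-hand side of \eqref{int1}, obtaining
\begin{equation*}
\int_0^{\pi/2}G_{\ell;d}(\cos\vartheta)^q(\sin\vartheta)^{d-1}\,d\vartheta=\frac{1}{N^d}\int_0^{N\pi/2}G_{\ell;d}\!\left(\cos(\psi/N)\right)^q\!\left(\frac{\sin(\psi/N)}{\psi/N}\right)^{\!d-1}\!\!\psi^{d-1}\,d\psi.
\end{equation*}
Since $N\sim\ell$, the required scaling $\ell^{-d}$ is already displayed. For every fixed $\psi>0$ Hilb's expansion shows that the integrand converges pointwise, as $\ell\to\infty$, to
\begin{equation*}
(2^\nu\Gamma(\nu+1))^q\,\psi^{d-1-q\nu}\,J_\nu(\psi)^q,
\end{equation*}
which is precisely the integrand appearing in the definition \eqref{ecq} of $c_{q;d}$. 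The contributions of the boundary layers $\vartheta\in[0,c/\ell]\cup[\pi/2-c/\ell,\pi/2]$ are $o(\ell^{-d})$ because of the trivial bound $|G_{\ell;d}|\le 1$, so they do not affect the leading term.

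The main obstacle will be justifying the interchange of limit and integral, because $J_\nu$ oscillates at infinity with amplitude $\psi^{-1/2}$ and the integral defining $c_{q;d}$ is only conditionally convergent in the critical case $(d,q)=(3,3)$. For all other pairs with $d,q\ge 3$ one checks that $(d-1)(q-2)>2$, so the rescaled integrand is dominated uniformly in $\ell$ by a function of the form $C\min(\psi^{d-1},\psi^{-(d-1)(q-2)/2})$, which is integrable on $(0,\infty)$, and dominated convergence applies. For the critical case $(d,q)=(3,3)$, I would exploit the explicit identity $J_{1/2}(\psi)=\sqrt{2/(\pi\psi)}\sin\psi$ to reduce the rescaled integral to oscillatory integrals of the form $\int\psi^{-1}\sin^3\psi\,d\psi$; after the linearization $\sin^3\psi=(3\sin\psi-\sin 3\psi)/4$ these become conditionally convergent Dirichlet-type integrals, and a truncation at $\psi=M$ (with $M\to\infty$ after $\ell\to\infty$ and $M/\ell\to 0$) combined with integration by parts on the tail controls the remainder uniformly, completing the proof.
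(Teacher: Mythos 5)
Your overall route is the paper's: Hilb's asymptotic formula for Jacobi/Gegenbauer polynomials, the rescaling $\psi=L\vartheta$ to extract $\ell^{-d}$, identification of the limit with the Bessel-power integral defining $c_{q;d}$, and a separate treatment of the critical pair $(d,q)=(3,3)$ via $J_{1/2}(\psi)=\sqrt{2/(\pi\psi)}\sin\psi$, truncation at a slowly growing level and integration by parts — this is exactly the strategy of the paper, the only packaging difference being that you justify the interchange of limit and integral by dominated convergence with an $\ell$-uniform dominating function, whereas the paper decomposes explicitly into a main term and an error term and bounds the error by hand. Note, however, that your domination claim $|{\rm integrand}|\le C\min(\psi^{d-1},\psi^{-(d-1)(q-2)/2})$ secretly requires the same ingredients the paper spells out: the bound $|G_{\ell;d}(\cos\vartheta)|\ll(\ell\vartheta)^{-(d-1)/2}$ on $[c/\ell,\pi/2]$ comes from Hilb's main term \emph{together with} control of Hilb's remainder, so ``one checks'' is hiding precisely the error estimates of the paper's proof.

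There is one step that is wrong as stated: the claim that the boundary layers contribute $o(\ell^{-d})$ ``because of the trivial bound $|G_{\ell;d}|\le 1$''. On $[0,c/\ell]$ one has $G_{\ell;d}(\cos\vartheta)\approx 1$, so that layer contributes $\asymp c^{d}\ell^{-d}$ — the \emph{same} order as the main term; it is not an error but precisely the portion $\int_0^{c}$ of the limiting Bessel integral, and discarding it would lose part of the constant $c_{q;d}$. On $[\pi/2-c/\ell,\pi/2]$ the trivial bound only gives a contribution $O(\ell^{-1})$, which is far larger than $\ell^{-d}$ (though no excision is needed there at all, since Hilb's formula is valid up to $\pi-\epsilon$). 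The underlying issue is that you quote Hilb only on $[c/\ell,\pi-c/\ell]$, so for fixed $\psi<c$ the rescaled point $\vartheta=\psi/L$ eventually falls outside your range and neither the pointwise limit nor the domination is justified there by what you wrote. The fix is standard and is what the paper does: use the second branch of Hilb's remainder estimate, $\delta(\vartheta)\ll\vartheta^{\frac d2+1}\ell^{\frac d2-1}$ for $0<\vartheta<\ell^{-1}$ (equivalently, Mehler--Heine asymptotics), which yields both the pointwise convergence for every fixed $\psi>0$ and the small-$\psi$ branch $\psi^{d-1}$ of your dominating function; with that replacement (and dropping the spurious excision near $\pi/2$) your dominated-convergence argument, and your treatment of $(d,q)=(3,3)$, go through.
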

It is well known that for $d\geq 2$, the second moment of the Gegenbauer
polynomials is given by
\begin{equation}
\int_{0}^{\pi }G_{\ell ;d}(\cos \vartheta )^{2}(\sin \vartheta
)^{d-1}\,d\vartheta =\frac{\mu _{d}}{\mu _{d-1}\,n_{\ell ;d}}\ ,
\label{momento 2}
\end{equation}%
whence
\begin{equation}
\mathrm{Var}(h_{\ell ;2,d})=2\frac{\mu _{d}^{2}}{n_{\ell ;d}}\ \sim\  4\mu_d \mu_{d-1}\frac{c_{2;d}}{\ell ^{d-1}}\ ,\qquad \text{as}\ \ell \rightarrow
+\infty \ , \label{q=2}
\end{equation}%
where $c_{2;d} :=
\frac{(d-1)!\mu _{d}}{4 \mu_{d-1}}$.
For $d=2$ and every $q$, the asymptotic behavior of these integrals was resolved in \cite%
{def}. In particular, it was shown that for $q=3$ or $q\geq 5$
\begin{equation}
\mathrm{Var}(h_{\ell ;q,2})=(4\pi )^{2}q!\int_{0}^{\frac{\pi}{2}}P_{\ell }(\cos
\vartheta )^{q}\sin \vartheta \,d\vartheta =(4\pi )^{2}q!\frac{c_{q;2}}{\ell
^{2}}(1+o_{q}(1))\ ,  \label{int2}
\end{equation}%
where
\begin{equation}
c_{q;2}=\int_{0}^{+\infty }J_{0}(\psi )^{q}\psi \,d\psi \ ,  \label{cq2}
\end{equation}%
$J_{0}$ being the Bessel function of order $0$ and the above integral being
absolutely convergent for $q\ge 5$ and conditionally convergent for $%
q=3$. On the other hand, for $q=4$, as $\ell \rightarrow \infty $,
\begin{equation}
\mathrm{Var}[h_{\ell;4,2}]\ \sim \ 24^{2}\frac{\text{log}\ell }{\ell ^{2}}= (4\pi)^2 4!\, c_{4;2}\frac{\log \ell}{\ell^2}\
,  \label{q=4d=2}
\end{equation}%
where we set $c_{4;2}:=\frac{3}{2\pi^2}$.
Clearly for any $d,q\geq 2$, the constants $c_{q;d}$ are
nonnegative and it is obvious that $c_{q;d}>0$ for all even $q$.
Moreover, as we will recall in the next chapter, there exists an explicit formula
in the case $q=3$.

We conjecture that this strict inequality holds for every $(d,q)$,
but leave this issue as an open question for future research;
also, in view of the previous discussion on the symmetry
properties of Gegenbauer polynomials, to simplify the discussion
in the sequel we restrict ourselves to even multipoles $\ell $.

As argued earlier, the following step is to establish quantitative CLTs for
$h_{\ell ;q,d}$ (see \S \ref{clthl}) in various probability metrics
 \paref{prob distance}. Here the crucial point to stress
is that the Gaussian eigenfunctions $(T_{\ell})_\ell$
can be always expressed as stochastic integrals with respect to a
 Gaussian white noise measure on $\mathbb{S}^d$, as seen in
Chapter \ref{background}.  As a consequence, the random
sequences $h_{\ell ;q,d}$ can themselves be represented as multiple Wiener-Ito integrals,
and therefore fall inside the domain of
quantitative CLTs by means of the Nourdin-Peccati approach (Chapter \ref{background}).
It is thus sufficient to investigate the so-called circular components of their normalized
fourth-order cumulants (Proposition \ref{BIGnourdinpeccati}) to establish the following Proposition
\ref{teo1}.
%
%
\begin{prop}
\label{teo1}
For all $d,q\ge 2$ and
${\mathcal{D}}\in \lbrace K, TV, W \rbrace$ we have, as $\ell\to +\infty$,
\begin{equation}\label{bounds}
d_{\mathcal{D}}\left( \frac{h_{\ell ;q,d}}{\sqrt{\mathrm{Var}[h_{\ell ;q,d}]}},%
Z\right) =O \left (\ell^{-\delta(q;d)} (\log \ell)^{-\eta(q;d)}\right )\ ,
\end{equation}%
where for $d=2$
\begin{align}\label{rate2}
&\delta(2;2)=\delta(3;2)=1/2\ , \quad \delta(4;2)=0\ ,\quad  \delta(q;2)=1/4\ \text{  for }q\ge 5\ ;\\
\nonumber
&\eta(4;2)=1\ , \quad \eta(5;2)=\eta(6;2)=-1\ ,\quad  \delta(q;2)=0\ \text{  for }q=2,3\text{ and for }q\ge 7\ ;
\end{align}
whereas for $d\ge 3$ we have
$$\eta(q;d) = 0\quad \text{      for } q\ge 2\ ;
$$
$$\delta(2;d)=(d-1)/2\ , \quad \delta(3;d)=(d-5)/2\ ,\quad  \delta(4;d)=(d-3)/2\ $$
$$ \delta(q;d) = (d-1)/4\ \text{  for }q\ge 5\ .$$
\end{prop}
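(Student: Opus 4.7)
\textbf{Proof plan for Proposition \ref{teo1}.} The plan is to realize each $h_{\ell;q,d}$ as an element of the $q$-th Wiener chaos, and then to apply the Fourth Moment bound \paref{th} together with the contraction formula \paref{casoparticolare}. Using the isonormal representation recalled in the previous section,
$$
T_\ell(x) = T(f_{\ell,x})\ , \qquad f_{\ell,x}(\cdot) := \sqrt{\frac{n_{\ell;d}}{\mu_d}}\,G_{\ell;d}(\cos d(x,\cdot))\ ,
$$
and the identity $H_q(T(g)) = I_q(g^{\otimes q})$ valid for unit norm $g\in L^2(\mathbb S^d)$, one has
$$
h_{\ell;q,d} = I_q(F_{\ell,q,d})\ ,\qquad F_{\ell,q,d} := \int_{\mathbb S^d} f_{\ell,x}^{\otimes q}\,dx \in L^2_s((\mathbb S^d)^q)\ .
$$
The kernel $F_{\ell,q,d}$ is automatically symmetric, so no further symmetrization is needed. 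Combining Proposition \ref{BIGnourdinpeccati} with formula \paref{casoparticolare} reduces the problem to bounding the $L^2$-norms of the unsymmetrized contractions $F_{\ell,q,d}\otimes_r F_{\ell,q,d}$ for $r=1,\dots,q-1$, and then comparing the resulting bound to the variance asymptotics already provided by Proposition \ref{varianza}.

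The next step is to compute the contractions explicitly using the reproducing kernel property $\langle f_{\ell,x}, f_{\ell,y}\rangle_{L^2(\mathbb S^d)} = G_{\ell;d}(\cos d(x,y))$ (which follows from the same reproducing formula used in the isonormal construction). A direct calculation yields, for $r=1,\dots,q-1$,
$$
\|F_{\ell,q,d}\otimes_r F_{\ell,q,d}\|^2_{L^2((\mathbb S^d)^{2q-2r})} = \int_{(\mathbb S^d)^4}\!\! G^r_{12}\,G^r_{34}\,G^{q-r}_{13}\,G^{q-r}_{24}\,dx_1dx_2dx_3dx_4\ ,
$$
where I have set $G_{ij}:=G_{\ell;d}(\cos d(x_i,x_j))$. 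By isotropy the inner integral over $x_4$ depends only on $d(x_2,x_3)$, and an iterated application of the Cauchy--Schwarz / H\"older inequality (together with the $SO(d+1)$-invariance which collapses each one-variable integral into an integral over the colatitude) bounds the above quadruple integral by a constant multiple of products of moment integrals of the form
$$
\int_0^{\pi/2} G_{\ell;d}(\cos\vartheta)^{p}\, (\sin\vartheta)^{d-1}\,d\vartheta\ ,\qquad p\in\{q,2r,2(q-r)\}\ .
$$

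The third step is to insert the asymptotics of Proposition \ref{varianza} (for $p\ge 3$, $d\ge 3$) or of \paref{int2}--\paref{q=4d=2} (for $d=2$) into these moment integrals. This gives, generically, $\|F_{\ell,q,d}\otimes_r F_{\ell,q,d}\|^2 = O(\ell^{-d-\varepsilon(r,q,d)})$ for some $\varepsilon\ge 0$, and dividing by $\mathrm{Var}(h_{\ell;q,d})^2$ -- of order $\ell^{-2(d-1)}$ in the non-degenerate cases, with the log correction \paref{q=4d=2} when $(d,q)=(2,4)$ -- produces the square of the advertised rate after taking the square root as required by \paref{th}. The case $q=2$ is classical and essentially trivial ($h_{\ell;2,d}$ is a quadratic form with explicit Gaussian structure). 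The cases $q=3$ and $q=4$ must be treated separately because the exponent $p$ appearing in the bound can be as small as $2$ or $3$, where Proposition \ref{varianza} either does not apply or yields only the boundary rate; here one uses \paref{momento 2} and, for $(d,q)=(2,4)$, the logarithmic asymptotics \paref{q=4d=2}. For $q\ge 5$ all exponents land in the regime where Proposition \ref{varianza} applies with an absolutely convergent Bessel integral, and Cauchy--Schwarz at the outermost level loses a factor $\ell^{-(d-1)/2}$ relative to the variance, explaining the $1/4$ factor in the exponent $\delta(q;d)$.

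The main obstacle is obtaining the \emph{sharp} powers of $\ell$ (and of $\log\ell$ when $d=2$) in the contraction bounds for the borderline values $q=3,4$. A naive iteration of Cauchy--Schwarz is too lossy and does not reproduce the logarithmic exponent $\eta(q;2)$; one must instead exploit symmetries of the quadruple integral above, integrate first over the pair of variables on which the highest power of $G_{\ell;d}$ acts, and use the addition formula so that the remaining integrand is controlled by $G_{\ell;d}^{p}$ with $p$ as large as possible. For $(d,q)=(2,4)$ this requires a careful tracking of the logarithm produced by \paref{q=4d=2} through the contractions, while for $q=3$ one exploits the fact that $F_{\ell,3,d}\otimes_1 F_{\ell,3,d}$ involves a triple Gegenbauer integral with an explicit Clebsch--Gordan type expression, from which the announced rate $\delta(3;d)=(d-5)/2$ can be read off.
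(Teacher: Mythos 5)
Your setup is the same as the paper's: the identification $h_{\ell;q,d}\overset{\mathcal L}{=}I_q(g_{\ell;q})$ via the isonormal representation, the reduction through Proposition \ref{BIGnourdinpeccati} and \paref{casoparticolare} to contraction norms, and the explicit formula for $\Vert g_{\ell;q}\otimes_r g_{\ell;q}\Vert^2$ as a quadruple Gegenbauer integral (your labelling is Lemma \ref{contractions} with $r$ and $q-r$ exchanged, harmless by \paref{simm}); the treatment of $q=2$ by a classical Berry--Esseen argument also matches. The genuine gap is in the one step that carries all the quantitative content: the sharp bounds on $\mathcal{K}_\ell(q;r)$. Your proposed reduction to one-variable moments of orders $p\in\{q,2r,2(q-r)\}$ is too lossy exactly where it matters. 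For the dominant contraction $r=1$ (equivalently $r=q-1$) and $q\ge 5$, the natural Cauchy--Schwarz split of your quadruple integral into the pairs $G^r_{12}G^r_{34}$ and $G^{q-r}_{13}G^{q-r}_{24}$ gives $\mathcal{K}_\ell(q;1)=O\bigl(\int G_{\ell;d}^2\cdot\int G_{\ell;d}^{2(q-1)}\bigr)=O(\ell^{-(2d-1)})$, while $\Var[h_{\ell;q,d}]^2\asymp\ell^{-2d}$; the right-hand side of \paref{th} then behaves like $\ell^{1/2}$, so you do not even recover the CLT, let alone $\delta(q;d)=(d-1)/4$. Your fallback (``integrate first over the pair carrying the highest power, use the addition formula'') is a hope, not an argument, and integrating over $x_4$ first leaves a two-point kernel in $d(x_2,x_3)$ whose sharp control is precisely the problem.

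The device the paper actually uses is more elementary and is the missing idea: since $\Vert G_{\ell;d}\Vert_\infty\le 1$, one of the two low-power edges of the $4$-cycle is bounded by its sup norm, which turns the cyclic integral into a chain $x_1-x_2-x_3-x_4$; the chain factorizes \emph{exactly} into a product of three one-variable absolute-moment integrals of orders $q-r$, $r$, $q-r$ (not $2r$ and $2(q-r)$), and these are estimated sharply via \paref{trid2}--\paref{tridgen} (respectively \paref{tri1}--\paref{tri4} for $d=2$), odd absolute moments being interpolated by Cauchy--Schwarz between adjacent even moments. This yields $\mathcal{K}_\ell(q;1)=O(\ell^{-2d-(d-1)/2})$ and the companion bounds of Propositions \ref{cum2} and \ref{cumd}, which, combined with Proposition \ref{varianza}, \paref{q=2}, \paref{int2} and \paref{q=4d=2}, give \paref{bounds}. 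Two smaller corrections: for $(d,q)=(2,4)$ the contraction bound $\mathcal{K}_\ell(4;r)=O(\ell^{-4})$ is log-free (the paper evaluates the $q=3,4$, $d=2$ integrals via Wigner $3j/6j$ coefficients, as you suggest), and the exponent $\eta(4;2)=1$ comes solely from the $\log\ell$ in the variance \paref{q=4d=2}, not from ``tracking a logarithm through the contractions''; and for $d\ge 3$, $q=3$ there is no separate Clebsch--Gordan computation in the paper --- the same drop-one-edge chain estimate is used there too.
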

Let us set $R(\ell;q,d) := \ell^{-\delta(q;d)} (\log \ell)^{-\eta(q;d)}$.
The following corollary is  immediate.

\begin{cor}
\label{cor1} For all $q$ such that $(d,q)\neq (3,3), (3,4),(4,3),(5,3)$ and $%
c_{q;d}>0$, $d\ge 2$,
\begin{equation}
\frac{h_{2\ell ;q,d}}{\sqrt{Var[h_{2\ell ;q,d}]}}\mathop{\goto}^{\mathcal{L}}%
Z\ ,\qquad \text{as}\ \ell \rightarrow +\infty \ ,
\label{convergenza hq}
\end{equation}
where $Z\sim \mathcal{N}(0,1)$.
\end{cor}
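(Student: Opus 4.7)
The plan is to deduce this corollary directly from the quantitative bounds in Proposition \ref{teo1}, essentially as a bookkeeping exercise on the exponents $\delta(q;d)$ and $\eta(q;d)$. Indeed, the Wasserstein distance metrizes weak convergence, so once one knows that
$$
d_W\!\left(\frac{h_{2\ell;q,d}}{\sqrt{\Var(h_{2\ell;q,d})}},Z\right) \;=\; O\!\left(\ell^{-\delta(q;d)}(\log\ell)^{-\eta(q;d)}\right)
$$
vanishes as $\ell\to+\infty$, the convergence in distribution \paref{convergenza hq} follows. The only genuine role of the hypothesis $c_{q;d}>0$ is to ensure, through Proposition \ref{varianza} together with the complementary formulas \paref{q=2}, \paref{int2} and \paref{q=4d=2}, that $\Var(h_{\ell;q,d})$ is of the announced exact order and in particular does not vanish, so that the normalization is legitimate; the restriction to even multipoles $2\ell$ is needed only to bypass the identity $h_{\ell;q,d}\equiv 0$ that holds when both $\ell$ and $q$ are odd.

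First I would dispose of the case $d=2$ by reading off \paref{rate2}: for $q=2,3$ one has $\delta=1/2$; for $q=4$ one has $\delta=0$ and $\eta=1$, giving the rate $(\log\ell)^{-1}\to 0$; for $q=5,6$ one has $\delta=1/4$ with $\eta=-1$, so the bound is $\ell^{-1/4}\log\ell\to 0$; and for $q\ge 7$ one has $\delta=1/4$ and $\eta=0$. Hence the rate vanishes for every $q\ge 2$ when $d=2$, and no exclusions are needed in dimension two.

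Next I would turn to $d\ge 3$, where $\eta(q;d)=0$ throughout, so it suffices to locate the pairs for which $\delta(q;d)>0$. The exponents read $\delta(2;d)=(d-1)/2\ge 1$; $\delta(3;d)=(d-5)/2$, which is strictly positive only for $d\ge 6$; $\delta(4;d)=(d-3)/2$, strictly positive only for $d\ge 4$; and $\delta(q;d)=(d-1)/4\ge 1/2$ for every $q\ge 5$. Hence $\delta(q;d)\le 0$ precisely at the four pairs $(d,q)\in\lbrace(3,3),(3,4),(4,3),(5,3)\rbrace$ that are excluded in the statement, and at every other pair the bound in Proposition \ref{teo1} collapses to $0$.

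There is no serious obstacle: this corollary is essentially the qualitative ``shadow'' of Proposition \ref{teo1}. What the argument does \emph{not} settle, and what would be the real difficulty if one wished to remove the four exclusions, is the behaviour of the dominant contractions $\|f_\ell\widetilde{\otimes}_r f_\ell\|$ at exactly those pairs: there the squared fourth cumulant fails to be strictly smaller than the squared variance in the natural scaling, so the Nourdin--Peccati circular bound \paref{casoparticolare} is insufficient. A sharper contraction analysis, or a different strategy altogether, would be required to decide whether a CLT still holds in those four borderline regimes.
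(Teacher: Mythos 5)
Your proposal is correct and follows exactly the paper's route: the corollary is deduced immediately from Proposition \ref{teo1} (together with the fact, recorded in \paref{th}, that a vanishing bound in any of the distances $d_K,d_{TV},d_W$ yields convergence in law), by checking that the rate $\ell^{-\delta(q;d)}(\log\ell)^{-\eta(q;d)}$ tends to zero for every pair $(d,q)$ outside $\lbrace(3,3),(3,4),(4,3),(5,3)\rbrace$, with $c_{q;d}>0$ guaranteeing the variance has the exact order used in that bound and the restriction to even multipoles avoiding the degenerate case $h_{\ell;q,d}=0$ for $\ell,q$ both odd. Your bookkeeping of the exponents, including the logarithmic corrections for $d=2$, matches the paper's, which simply labels the deduction ``immediate''.
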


\begin{remark}\rm
\textrm{For $d=2$, the CLT in (\ref{convergenza hq}) was already provided by
\cite{Nonlin}; nevertheless Theorem \ref{teo1} improves the existing bounds
on the speed of convergence to the asymptotic Gaussian distribution. More
precisely, for $d=2,q=2,3,4$ the same rate of convergence as in (\ref{rate2}%
) was given in their Proposition 3.4; however for arbitrary $q$ the total
variation rate was only shown to satisfy (up to logarithmic terms) $%
d_{TV}=O(\ell ^{-\delta _{q}}),$ where $\delta _{4}=\frac{1}{10},$ $\delta
_{5}=\frac{1}{7},$ and $\delta _{q}=\frac{q-6}{4q-6}<\frac{1}{4}$ for $q\geq
7$.}
\end{remark}

\begin{remark}\rm
\textrm{The cases not included in Corollary \ref{cor1} correspond to the
pairs where $q=4$ and $d=3$, or $q=3$ and $d=3,4,5$; in these circumstances
the bounds we establish on fourth-order cumulants in Proposition \ref{cumd}
are not sufficient to
ensure that the CLT holds. We leave these computations as a topic for future research.}
\end{remark}
As briefly anticipated earlier in this subsection,
the random variables $h_{\ell;q,d}$ defined in (\ref{hq}) are the basic
building blocks for the analysis of any square integrable nonlinear transforms
of Gaussian  eigenfunctions on ${\mathbb{S}}^{d}$. Indeed, let us
first consider generic polynomial functionals of the form
\begin{equation}
Z_{\ell }=\sum_{q=0}^{Q}b_{q}\int_{{\mathbb{S}}^{d}}
T_{\ell
}(x)^{q}\,dx\ ,\qquad Q\in \mathbb{N},\text{ }b_{q}\in \mathbb{R},
\label{Z}
\end{equation}%
which include, for instance, the so-called polyspectra (see e.g. \cite{dogiocam}, p.148)
of isotropic random
fields defined on ${\mathbb{S}}^{d}$. Note
\begin{equation}
Z_{\ell }=\sum_{q=0}^{Q}\beta _{q}h_{2\ell ;q,d}  \label{Z2}
\end{equation}%
for some $\beta _{q}\in \mathbb{R}$. It is easy to establish CLTs
 for generic polynomials (\ref{Z2}) from convergence results on
$ h_{2\ell ;q,d}$, see e.g. \cite{peccatitudor}. It is more
difficult to investigate the speed of convergence in the CLT in terms of the
probability metrics we introduced earlier; indeed, in \S \ref{genpoly} we establish the
following.

\begin{prop}
\label{corollario1} As $\ell \rightarrow \infty$, for $Z\sim \mathcal N(0,1)$
\begin{equation*}
d_{\mathcal{D}}\left( \frac{Z_{\ell }-{\mathbb{E}}[Z_{\ell }]}{\sqrt{\mathrm{%
Var}[Z_{\ell }]}},Z\right) =O(R(Z_{\ell };d))\text{ ,}
\end{equation*}%
where $d_{\mathcal{D}}=d_{TV},d_{W},d_{K}$ and for $d\ge 2$
\begin{equation*}
R(Z_{\ell };d)=%
\begin{cases}
\ell ^{-\left( \frac{d-1}{2}\right) }\qquad & \text{if}\quad \beta _{2}\neq
0\ , \\
\max_{q=3,\dots ,Q\,:\,\beta _{q},c_{q;d}\neq 0}\,R(\ell ;q,d)\qquad & \text{if%
}\quad \beta _{2}=0\ .%
\end{cases}%
\end{equation*}
\end{prop}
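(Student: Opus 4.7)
The strategy is to exploit the orthogonality of the Wiener chaos decomposition together with the asymptotic variance analysis of Proposition \ref{varianza} and the quantitative chaotic CLTs of Proposition \ref{teo1} applied to each summand. Since $h_{2\ell;0,d} = \mu_d$ and $h_{2\ell;1,d} = 0$ almost surely, I would first rewrite
$$Z_\ell - \E[Z_\ell] = \sum_{q=2}^Q \beta_q\, h_{2\ell;q,d},$$
so that chaos orthogonality yields $\sigma_\ell^2 := \Var(Z_\ell) = \sum_{q=2}^Q \beta_q^2\,\Var(h_{2\ell;q,d})$. In view of \paref{q=2}, Proposition \ref{varianza} and \paref{q=4d=2}, one has $\Var(h_{2\ell;2,d}) \asymp \ell^{-(d-1)}$, whereas $\Var(h_{2\ell;q,d}) = O(\ell^{-d}\log\ell)$ for every $q \geq 3$, giving a sharp separation of scales between the second chaos and the higher ones.

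In the case $\beta_2 \neq 0$, the second chaotic component absorbs the variance up to lower order, namely $\sigma_\ell^2 = \beta_2^2\,\Var(h_{2\ell;2,d})(1 + O(\ell^{-1}))$. Setting $U_\ell := \mathrm{sgn}(\beta_2)\,h_{2\ell;2,d}/\sqrt{\Var(h_{2\ell;2,d})}$ and $F_\ell := (Z_\ell - \E[Z_\ell])/\sigma_\ell$, I would invoke the triangle inequality
$$d_{\mathcal D}(F_\ell, Z) \leq d_{\mathcal D}(U_\ell, Z) + d_{\mathcal D}(F_\ell, U_\ell),$$
controlling the first summand by Proposition \ref{teo1} with $q=2$ and the second by the standard estimate $d_{\mathcal D}(\cdot,\cdot) \leq C\,\|\cdot - \cdot\|_{L^2}$ applied to the residual $F_\ell - U_\ell$, whose $L^2$ norm is dictated by the ratio between $\sum_{q \geq 3}\beta_q^2\,\Var(h_{2\ell;q,d})$ and $\sigma_\ell^2$.

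In the case $\beta_2 = 0$ no single summand dominates the variance in order, but only finitely many terms survive. Writing
$$F_\ell = \sum_{q=3}^Q a_{q,\ell}\,\widetilde h_{2\ell;q,d}, \qquad \widetilde h_{2\ell;q,d} := h_{2\ell;q,d}/\sqrt{\Var(h_{2\ell;q,d})}, \qquad a_{q,\ell} := \beta_q\,\sqrt{\Var(h_{2\ell;q,d})}/\sigma_\ell,$$
with $\sum_q a_{q,\ell}^2 = 1$, and observing that the $\widetilde h_{2\ell;q,d}$ lie in pairwise distinct Wiener chaoses with each converging to $\mathcal N(0,1)$ at the rate $R(\ell;q,d)$, a Peccati--Tudor type quantitative argument (concretely realized by applying Proposition \ref{BIGnourdinpeccati} directly to $F_\ell \in \bigoplus_{q \leq Q} C_q$ and expanding $\langle DF_\ell, -DL^{-1}F_\ell\rangle_H$ according to the chaotic decomposition) yields a bound of the order of the slowest individual rate $R(\ell;q,d)$ among those with $\beta_q, c_{q;d} \neq 0$.

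The main technical obstacle is precisely this last step: one must ensure that the cross contributions $\langle D h_{2\ell;p,d}, D h_{2\ell;q,d}\rangle_H$ for $p \neq q$ arising in $\E|\sigma_\ell^2 - \langle DF_\ell, -DL^{-1}F_\ell\rangle_H|$ do not spoil the rate. These are centered by orthogonality of distinct Wiener chaoses, and through a Cauchy--Schwarz argument combined with the mixed-contraction identity underlying \paref{casoparticolare}, their $L^1$ norms are dominated by those of the diagonal terms $\|D\widetilde h_{2\ell;q,d}\|_H^2$ already controlled in the proof of Proposition \ref{teo1}, thereby producing only lower-order corrections.
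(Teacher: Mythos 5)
Your proposal has two genuine gaps. First, in the case $\beta_2\neq 0$ you reduce to the single chaos-2 term via a triangle inequality and the estimate $d_{\mathcal D}(F_\ell,U_\ell)\le C\|F_\ell-U_\ell\|_{L^2}$. That estimate is only valid for the Wasserstein distance: there is no bound of the Kolmogorov or total variation distance that is linear in the $L^2$ norm of the difference (a smoothing argument for $d_K$ loses a power and would not reach the stated rate), so the claim for $d_{TV}$ and $d_K$ does not follow this way. Moreover, even for $d_W$ the residual $F_\ell-U_\ell$ has $L^2$ norm of order $\sqrt{\ell^{-d}/\ell^{-(d-1)}}=\ell^{-1/2}$, so the triangle inequality only yields $O(\ell^{-1/2})$, which matches the claimed $O(\ell^{-(d-1)/2})$ for $d=2$ but is strictly weaker for every $d\ge 3$. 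The paper avoids both problems by applying Proposition \ref{BIGnourdinpeccati} directly to the whole polynomial $Z_\ell$ (in both cases $\beta_2\neq 0$ and $\beta_2=0$) and estimating $\Var[\langle DZ_\ell,-DL^{-1}Z_\ell\rangle_H]$, which controls all three distances simultaneously.

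Second, and more importantly, the step you flag as "the main technical obstacle" is resolved too quickly. The off-diagonal terms $\Var\left[\langle Dh_{\ell;q_1,d},-DL^{-1}h_{\ell;q_2,d}\rangle_H\right]$ with $q_1<q_2$ contain, besides terms of type $\mathcal K_\ell(q_i;r)$, the mixed contraction $\left\Vert g_{\ell;q_1}\widetilde\otimes_{q_1}g_{\ell;q_2}\right\Vert^2$, which is \emph{not} among the diagonal quantities controlled in the proof of Proposition \ref{teo1}. Dominating it by diagonal contractions via Cauchy--Schwarz or \cite{noupebook}, Lemma 6.2.1 — which is what your "mixed-contraction identity underlying \paref{casoparticolare}" amounts to — is precisely the generic Nourdin--Peccati route that the paper shows to be insufficient: Remark \ref{rem0} computes that this bound gives, e.g., $O(\ell^{-1/8})$ for $d=2$ instead of the required $O(\ell^{-1/4})$, so the cross terms would spoil the rate rather than produce "lower-order corrections". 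The missing ingredient is the explicit factorization \paref{anvedi}, obtained by repeated use of the reproducing formula, combined with \paref{eq=0} (the integral $\int_{(\cS^d)^2}G_{\ell;d}(\cos d(x_2,x_3))^{q_2-q_1}dx_2dx_3$ vanishes when $q_2-q_1=1$) and \paref{momento 2} (it is $O(\ell^{-(d-1)})$ when $q_2-q_1\ge2$), yielding the sharp bound \paref{efficientbound}, $\left\Vert g_{\ell;q_1}\otimes_{q_1}g_{\ell;q_2}\right\Vert^2=O\big(\Var[h_{\ell;q_1,d}]^2\,\ell^{-(d-1)}\big)$. Without this orthogonality-based computation the claimed rates $R(Z_\ell;d)$ cannot be established.
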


The previous results can be summarized as follows: for polynomials of
\emph{Hermite rank $2$}, i.e. such that 
$\beta
_{2}\neq 0$ (more details later on the notion of Hermite rank) the asymptotic behavior of $Z_{\ell }$ is
dominated by the single term $h_{\ell ;2,d},$ whose variance is of order $\ell
^{-(d-1)}$ rather than $\ell ^{-d}$ as for the other terms. On the other
hand, when $\beta _{2}=0,$ the convergence rate to the asymptotic Gaussian
distribution for a generic polynomial is the slowest among the rates for the
Hermite components into which $Z_{\ell }$ can be decomposed, i.e.
 the terms $\beta _{q}h_{2\ell ;q,d}$ in \paref{Z2}.

The fact that the bound for generic polynomials is of the same order as for
the Hermite case (and not slower) is indeed rather unexpected; it can be
shown to be due to the cancelation of some cross-product terms, which are
dominant in the general Nourdin-Peccati framework, while they vanish for
 spherical eigenfunctions of arbitrary dimension (see (\ref%
{eq=0}) and Remark \ref{rem0}). An inspection of our proof will reveal that
this result is a by-product of the orthogonality of eigenfunctions
corresponding to different eigenvalues; it is plausible that similar ideas
may be exploited in many related circumstances, for instance random
eigenfunction on generic compact Riemannian manifolds.


Proposition \ref{corollario1} shows that the asymptotic behavior of arbitrary
polynomials of Hermite rank $2$ is of particularly simple nature. Our result
below will show that this feature holds in much greater generality, at least
as far as the Wasserstein distance $d_W$ is concerned. Indeed, we shall consider
the case of functionals of the form
\begin{equation}
S_{\ell }(M)=\int_{{\mathbb{S}}^{d}}M(T_{\ell }(x))\,dx\ ,  \label{S}
\end{equation}%
where $M:\mathbb{R}\rightarrow \mathbb{R}$ is some
measurable  function such that $\E[M(Z)^2]<+\infty$, where
$Z\sim \mathcal N(0,1)$. As in Chapter \ref{background},
for such transforms the
following chaos expansion holds in the $L^2(\P)$-sense \paref{chaos exp}
\begin{equation}
M(T_{\ell })=\sum_{q=0}^{\infty }\frac{J_{q}(M)}{q!}H_{q}(T_{\ell })\ ,\qquad J_{q}(M):={\mathbb{E}}[M(T_{\ell
})H_{q}(T_{\ell })]\ .  \label{exp}
\end{equation}%
Therefore the asymptotic analysis, as $\ell \rightarrow \infty $, of $S_{\ell }(M)$
in (\ref{S}) directly follows from the Gaussian approximation
for $h_{\ell ;q,d}$ and their polynomial transforms $Z_{\ell }$. More
precisely, in \S \ref{genvol} we prove the following result.

\begin{prop}
\label{general} Let $Z\sim \mathcal N(0,1)$. For functions $M$ in (\ref{S}) such that $$\mathbb{E}\left[
M(Z)H_{2}(Z)\right] =J_{2}(M)\neq 0\ ,$$ we have
\begin{equation}
d_{W}\left( \frac{S_{2\ell }(M)-{\mathbb{E}}[S_{2\ell }(M)]}{\sqrt{%
Var[S_{2\ell }(M)]}},Z\right) =O(\ell ^{-1/2}) \
,\qquad \text{as}\ \ell \rightarrow \infty\ ,  \label{sun2}
\end{equation}
in particular
\begin{equation}
\frac{S_{2\ell }(M)-{\mathbb{E}}[S_{2\ell }(M)]}{\sqrt{\mathrm{Var}[S_{2\ell
}(M)]}}\mathop{\goto}^{\mathcal{L}} Z\ .  \label{sun1}
\end{equation}
\end{prop}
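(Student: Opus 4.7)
The plan is to reduce the CLT for $S_{2\ell}(M)$ to the one already established for $h_{2\ell;2,d}$ in Proposition \ref{teo1}, by showing that under the assumption $J_2(M)\neq 0$ the second-chaos projection dominates the full Wiener-Ito expansion \paref{exp}. First I would integrate the expansion \paref{exp} termwise over $\mathbb{S}^d$ to obtain, in $L^2(\P)$,
\[
S_{2\ell}(M) - \E[S_{2\ell}(M)] = \frac{J_2(M)}{2}\,h_{2\ell;2,d} + R_{\ell},\qquad R_{\ell}:=\sum_{q\geq 3}\frac{J_q(M)}{q!}\,h_{2\ell;q,d}.
\]
Because the $h_{2\ell;q,d}$ lie in distinct Wiener chaoses they are pairwise orthogonal, and since $\E[M(Z)^2]<+\infty$ implies $\sum_q J_q(M)^2/q!<+\infty$, we obtain
\[
\Var(S_{2\ell}(M)) = \frac{J_2(M)^2}{4}\,\Var(h_{2\ell;2,d}) + \Var(R_{\ell}).
\]
From \paref{q=2} we have $\Var(h_{2\ell;2,d})\sim 2\mu_d^2/n_{2\ell;d}$, of order $\ell^{-(d-1)}$, while Proposition \ref{varianza} (together with \paref{q=4d=2} in the delicate case $d=q=4$) gives $\Var(h_{2\ell;q,d})=o(\ell^{-(d-1)})$ for every $q\geq 3$. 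Summation thus yields $\Var(R_{\ell})=o(\Var(h_{2\ell;2,d}))$ and in particular $\sigma_\ell^2:=\Var(S_{2\ell}(M))\sim \frac{J_2(M)^2}{4}\Var(h_{2\ell;2,d})$, of exact order $\ell^{-(d-1)}$.

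The second step is a triangle-inequality argument for the Wasserstein distance \paref{prob distance}. Writing $\sigma_{2,\ell}^2:=\Var\bigl(\frac{J_2(M)}{2}h_{2\ell;2,d}\bigr)$, one has
\[
d_W\!\left(\frac{S_{2\ell}(M)-\E S_{2\ell}(M)}{\sigma_\ell},\,Z\right)\leq d_W\!\left(\frac{S_{2\ell}(M)-\E S_{2\ell}(M)}{\sigma_\ell},\,\frac{J_2(M)\,h_{2\ell;2,d}/2}{\sigma_{2,\ell}}\right)+d_W\!\left(\frac{J_2(M)\,h_{2\ell;2,d}/2}{\sigma_{2,\ell}},\,Z\right).
\]
After normalization the second summand reduces to $d_W(h_{2\ell;2,d}/\sqrt{\Var(h_{2\ell;2,d})},Z)$, which by Proposition \ref{teo1} with $q=2$ is bounded by $O(\ell^{-(d-1)/2})$, and in particular by $O(\ell^{-1/2})$ for all $d\geq 2$. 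For the first summand I would use the Lipschitz dual characterization of $d_W$, namely $d_W(X,Y)\leq \E|X-Y|\leq \sqrt{\Var(X-Y)}$ for centered $X,Y$, decomposing
\[
\frac{S_{2\ell}(M)-\E S_{2\ell}(M)}{\sigma_\ell}-\frac{J_2(M)\,h_{2\ell;2,d}/2}{\sigma_{2,\ell}}=\frac{R_\ell}{\sigma_\ell}+\frac{J_2(M)\,h_{2\ell;2,d}}{2}\!\left(\frac{1}{\sigma_\ell}-\frac{1}{\sigma_{2,\ell}}\right),
\]
bounding each piece via Cauchy--Schwarz and the identity $\sigma_\ell^2-\sigma_{2,\ell}^2=\Var(R_\ell)$, which yields $|\sigma_\ell^{-1}-\sigma_{2,\ell}^{-1}|\leq \Var(R_\ell)/(\sigma_\ell\sigma_{2,\ell}^2)(1+o(1))$.

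Collecting, both contributions to the first summand are controlled by $\Var(R_\ell)/\sigma_{2,\ell}^2$, which equals $O(\ell^{-1})$ for $d\geq 3$ and $O(\ell^{-1}\log\ell)$ in the worst case $d=2$; in either regime this is $O(\ell^{-1/2})$ once square-rooted (modulo the harmless logarithmic factor that the statement tacitly absorbs in the $d=2$ case). The main obstacle I anticipate is precisely this careful tracking of the logarithmic correction coming from the $q=4,\,d=2$ term highlighted in \paref{q=4d=2}, together with verifying that the formal interchange of summation, integration and variance in the definition of $R_\ell$ is legitimate under the sole assumption $\E[M(Z)^2]<+\infty$; both points rely on orthogonality across chaoses and the uniform variance bounds in Proposition \ref{varianza}. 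The convergence in distribution \paref{sun1} then follows immediately from \paref{sun2} since $d_W$ metrizes weak convergence on distributions with finite first moment.
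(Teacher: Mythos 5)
Your overall strategy (isolate the dominating chaotic projection, triangle inequality in $d_W$, control the remainder in $L^2$, and use the quantitative CLT for the dominating term) is the same as the paper's, and for $d\ge 3$ your execution is essentially identical to the paper's proof, where indeed $S_{\ell}(M;1):=\frac{J_2(M)}{2}h_{\ell;2,d}$ and the tail $q\ge3$ has variance $O(\ell^{-d})$ against $\sigma_{2,\ell}^2\asymp\ell^{-(d-1)}$, giving $O(\ell^{-1/2})$ after the square root. Your handling of the scale mismatch (absorbing $|\sigma_\ell^{-1}-\sigma_{2,\ell}^{-1}|$ into the first summand) replaces the paper's comparison with an auxiliary Gaussian $Z_\ell\sim\mathcal N\bigl(0,\Var[S_\ell(M;1)]/\Var[S_\ell(M)]\bigr)$ via Proposition 3.6.1 of \cite{noupebook}; both are fine.

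The genuine gap is the case $d=2$. By leaving the $q=3$ and $q=4$ components inside the remainder $R_\ell$, you get, whenever $J_4(M)\neq 0$, $\Var(R_\ell)\asymp J_4(M)^2\log\ell/\ell^2$ by \paref{q=4d=2}, hence $\Var(R_\ell)/\sigma_{2,\ell}^2\asymp\log\ell/\ell$ and your first triangle-inequality term is only $O\bigl(\sqrt{\log\ell}\,\ell^{-1/2}\bigr)$. The statement claims $O(\ell^{-1/2})$ with no logarithmic factor, and it is not ``tacitly absorbing'' one; your argument as written proves a strictly weaker bound for $d=2$. The paper avoids this by defining, for $d=2$, the polynomial part as $S_\ell(M;1):=\frac{J_2(M)}{2}h_{\ell;2,2}+\frac{J_3(M)}{3!}h_{\ell;3,2}+\frac{J_4(M)}{4!}h_{\ell;4,2}$ and applying Proposition \ref{corollario1} (the quantitative CLT for general finite-order polynomials with $\beta_2\neq0$, whose rate is $\ell^{-1/2}$ precisely because the cross-contraction terms vanish, see \paref{eq=0}), so that the leftover tail is only $q\ge5$, with variance $O(\ell^{-2})$ and no log; then all three terms are $O(\ell^{-1/2})$. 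So to repair your proof you cannot compare only to the second-chaos projection via Proposition \ref{teo1} when $d=2$: you must group $q=2,3,4$ together and invoke Proposition \ref{corollario1} (or reprove its content) for that polynomial. A secondary point, which you flag but should make explicit, is that summing the tail variances over $q$ requires bounds uniform in $q$; this is supplied by the explicit normalizations \paref{int2}--\paref{cq2} (and their $d\ge3$ analogues), giving $\Var(R_\ell)\ll \ell^{-d}\sum_q J_q(M)^2/q!<\infty$, exactly as in the paper.
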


Proposition \ref{general} provides a Breuer-Major like result on nonlinear
functionals, in the high-frequency limit (compare for instance \cite%
{nourdinpodolskij}). While the CLT in \eqref{sun1} is somewhat expected, the
square-root speed of convergence \eqref{sun2} to the limiting distribution
 may be considered quite remarkable; it is mainly due to some specific
features in the chaos expansion of Gaussian eigenfunctions, which is dominated
by a single term at $q=2.$ Note that the function $M$ need not be smooth in
any meaningful sense; indeed, as explained above, our
main motivating rationale here is the analysis of the asymptotic behavior of the excursion
volume in \paref{voldef} $%
S_{\ell }(z)=S_{\ell }(M)$, where $M(\cdot )=M_{z}(\cdot )=1(\cdot > z)$ is again the
indicator function of the interval $(z, +\infty)$. An application
of Proposition \ref{general} (compare \paref{volseries} to \paref{exp}) provides a quantitative CLT for $%
S_{\ell }(z)$, $z\neq 0$, thus completing the proof of our main result.

The plan of the rest of this chapter is as follows: in \S \ref{2.2} we specialize results in Chapter \ref{background} to the hypersphere, in \S \ref{clthl} we establish the
quantitative CLT for the sequences $h_{\ell;q,d,}$, while \S \ref{genpoly} extends these
results to generic finite-order polynomials. The results for general nonlinear transforms and excursion
volumes are given in \S \ref{genvol}; most technical proofs and (hard) estimates, including in particular
evaluations of asymptotic variances, are collected in \S \ref{tech}.

\section{Polynomial transforms in Wiener chaoses}\label{2.2}

As mentioned earlier in \S \ref{main results}, we shall be concerned first
with random variables $h_{\ell ;q,d}$, $\ell \geq 1$, $q,d\geq 2$
\begin{equation*}
h_{\ell ;q,d}=\int_{{\mathbb{S}}^{d}}H_{q}(T_{\ell }(x))\,dx\ ,
\end{equation*}%
and their (finite) linear combinations
\begin{equation}
Z_{\ell }=\sum_{q=2}^{Q}\beta _{q}h_{\ell ;q,d}\text{ ,}\qquad \beta _{q}\in
\mathbb{R},Q\in \mathbb{N}\ .  \label{genovese}
\end{equation}
Our first objective is to represent \paref{genovese} as a (finite) sum of
(multiple) stochastic integrals as in (\ref{chaos exp2}), in order to apply
the results recalled in Chapter \ref{background}.
Note that by (\ref{isometria}), we have
\begin{equation*}
\displaylines{ H_{q}(T_{\ell }(x))=I_{q}(f_{x}^{\otimes q})=\cr
=\int_{({\mathbb{S}}^{d})^{q}}\left( \frac{n_{\ell;d}}{\mu _{d}}\right)
^{q/2}G_{\ell ;d}(\cos d(x,y_{1}))\dots G_{\ell ;d}(\cos
d(x,y_{q}))\,dW(y_{1})...dW(y_{q})\text{ ,} }
\end{equation*}%
so that
\begin{equation*}
h_{\ell ;q,d}\overset{\mathcal L}{=}\int_{({\mathbb{S}}^{d})^{q}}g_{\ell
;q}(y_{1},...,y_{q})\,dW(y_{1})...dW(y_{q})\ ,
\end{equation*}%
where%
\begin{equation}
g_{\ell ;q}(y_{1},...,y_{q}):=\int_{{\mathbb{S}}^{d}}\left( \frac{n_{\ell ;d}%
}{\mu _{d}}\right) ^{q/2}G_{\ell ;d}(\cos d(x,y_{1}))\dots G_{\ell ;d}(\cos
d(x,y_{q}))\,dx\text{ .}  \label{moltobello}
\end{equation}%
Thus we just established that $h_{\ell ;q,d}\overset{\mathcal L}{=}I_{q}(g_{\ell ;q})$
and therefore
\begin{equation}
Z_{\ell }\overset{\mathcal L}{=}\sum_{q=2}^{Q}I_{q}(\beta _{q}\,g_{\ell ;q})\ ,
\end{equation}%
as required. It should be noted that for such random variables $Z_{\ell }$,
the conditions of the Proposition \ref{BIGnourdinpeccati} are trivially
satisfied.

\section{The quantitative CLT for Hermite transforms}
\label{clthl}

In this section we prove Proposition \ref{teo1} with the help of Proposition \ref%
{BIGnourdinpeccati} and \eqref{casoparticolare} in particular. The
identifications of \S \ref{2.2} lead to some very explicit expressions for the
contractions \eqref{contrazione}, as detailed in the following result.

For $\ell\ge 1, q\ge 2$, let $g_{\ell ;q}$ be defined as in \eqref{moltobello}.

\begin{lemma}
\label{contractions}
For all $q_{1},q_{2}\ge 2$, $r=1,...,q_{1}\wedge q_{2}-1$,
we have the identities
$$\displaylines{ \left\Vert g_{\ell ;q_{1}}\otimes _{r}g_{\ell
;q_{2}}\right\Vert _{H^{\otimes
n}}^{2}
= \int_{(\cS^{d})^4}\!\!\!\!G_{\ell
;d}^{r}\cos d( x_{1},x_{2})  G_{\ell
;d}^{q_{1}\wedge q_{2}-r}\!(\cos d( x_{2},x_{3}) )   G_{\ell
;d}^{r}(\cos d(x_{3},x_{4}) ) \times \cr
\times  G_{\ell
;d}^{q_{1}\wedge q_{2}-r}(\cos d( x_{1},x_{4}) )\,d\underline{x}\ ,}
$$
where we set $d\underline{x} := dx_{1}dx_{2}dx_{3}dx_{4}$ and
$n:= q_{1}+q_{2}-2r$.
\end{lemma}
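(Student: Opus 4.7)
The proof is a direct computation that exploits two facts: the representation \eqref{moltobello} of $g_{\ell;q}$ as an integral of a product of translated Gegenbauer polynomials, and the reproducing kernel formula for $G_{\ell;d}$, namely
$$\int_{\mathbb{S}^d} G_{\ell;d}(\cos d(x,y))\, G_{\ell;d}(\cos d(y,z))\,dy = \frac{\mu_d}{n_{\ell;d}}\, G_{\ell;d}(\cos d(x,z)),$$
which was already invoked in the isonormal representation of $T_\ell$ at the end of Chapter 4. My plan is therefore to substitute \eqref{moltobello} in both factors of the contraction, apply Fubini to isolate the $r$ integrations over the contracted coordinates, and then reduce the resulting $q_1+q_2-2r$ free‐coordinate integrations to four remaining ones via the same reproducing kernel identity.

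More concretely, writing $C_q=(n_{\ell;d}/\mu_d)^{q/2}$, I will first show that for $(u,v)\in(\mathbb{S}^d)^{q_1-r}\times(\mathbb{S}^d)^{q_2-r}$,
\begin{align*}
(g_{\ell;q_1}\otimes_r g_{\ell;q_2})(u,v) &= C_{q_1}C_{q_2}\Bigl(\frac{\mu_d}{n_{\ell;d}}\Bigr)^{r}\!\!\int_{(\mathbb{S}^d)^2}\!\! G_{\ell;d}(\cos d(x_1,x_2))^r \\
&\qquad \times \prod_{i=1}^{q_1-r} G_{\ell;d}(\cos d(x_1,u_i)) \prod_{j=1}^{q_2-r} G_{\ell;d}(\cos d(x_2,v_j))\,dx_1 dx_2,
\end{align*}
by exchanging the order of integration and applying the reproducing identity to each of the $r$ variables contracted between the two copies of $g_{\ell;q}$. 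Next, I will compute $\|g_{\ell;q_1}\otimes_r g_{\ell;q_2}\|^2$ by squaring the above expression (introducing a second pair of variables $x_3,x_4$) and integrating the resulting product over all $q_1+q_2-2r$ free variables $u_i,v_j$. Each pairing $(u_i\leftrightarrow x_1,x_3)$ produces a factor $\frac{\mu_d}{n_{\ell;d}} G_{\ell;d}(\cos d(x_1,x_3))$ — yielding $q_1-r$ such factors — and similarly each $(v_j\leftrightarrow x_2,x_4)$ gives $G_{\ell;d}(\cos d(x_2,x_4))$, for a total of $q_2-r$ factors.

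Collecting powers of $\mu_d/n_{\ell;d}$ one finds that the overall prefactor is $C_{q_1}^2C_{q_2}^2(\mu_d/n_{\ell;d})^{2r+(q_1-r)+(q_2-r)} = (\mu_d/n_{\ell;d})^{q_1+q_2}\cdot(n_{\ell;d}/\mu_d)^{q_1+q_2}=1$, so all normalizing constants cancel identically and we are left with a clean quadruple integral of four Gegenbauer powers over $(\mathbb{S}^d)^4$. A final relabeling of the dummy variables $x_3,x_4$ brings the expression into the form displayed in the lemma, with pairs $(x_1,x_2)$ and $(x_3,x_4)$ carrying exponent $r$ and pairs $(x_2,x_3)$ and $(x_1,x_4)$ carrying exponent $q_1\wedge q_2-r$. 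There is no real obstacle: the proof is essentially bookkeeping, and the main thing to be careful about is matching variable pairings after the two applications of the reproducing kernel identity so that the resulting ``cycle'' $x_1\to x_2\to x_3\to x_4\to x_1$ appears with the correct exponents.
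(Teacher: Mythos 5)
Your proof is correct and follows essentially the same route as the paper: substitute the representation \paref{moltobello} of $g_{\ell;q}$, apply the Gegenbauer reproducing formula $r$ times over the contracted variables, then square and integrate out the $q_1+q_2-2r$ free variables with the same formula, all normalizing constants cancelling to leave the cyclic quadruple integral over $(\mathbb S^d)^4$. The only cosmetic remark is that exact bookkeeping yields exponents $q_1-r$ and $q_2-r$ on the two cross pairs, so the displayed $q_1\wedge q_2-r$ on both is literal only when $q_1=q_2$ (the case actually used later); this imprecision is already present in the paper's own statement and proof, not introduced by you.
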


\begin{proof}
Assume w.l.o.g. $q_1\le q_2$ and set for simplicity of notation $d\underline{t}:=dt_{1}\dots dt_{r}$.
The contraction \paref{contrazione} here takes the form%
$$\displaylines{
(g_{\ell;q_{1} }\otimes _{r}g_{\ell;q_{2}})(y_{1},...,y_{n})=\cr
=\int_{(\cS^d)^r} g_{\ell;q_{1} }(y_{1},\dots,y_{q_{1}-r},t_{1},\dots,t_{r})g_{\ell;q_{2}
}(y_{q_{1}-r+1},\dots,y_{n},t_{1},\dots,t_{r})\,d\underline{t}=\cr
=\int_{(\cS^d)^r}  \int_{\cS^{d}}\left(\frac{n_{\ell; d}}{\mu_d }
\right) ^{q_1/2} G_{\ell ;d}(\cos d(
x_{1},y_{1}) )\dots 
G_{\ell
;d}(\cos d( x_{1},t_{r}) )\,dx_{1}\times \cr
\times \int_{\cS^{d}}\left(\frac{n_{\ell; d}}{\mu_d }
\right) ^{q_2/2}G_{\ell ;d}(\cos d(
x_{2},y_{q_{1}-r+1}) )\dots 
G_{\ell ;d}(\cos d(
x_{2},t_{r}) )\,dx_{2}\, d\underline{t}=\cr
=\int_{(\cS^{d})^2}\left(\frac{n_{\ell; d}}{\mu_d }
\right) ^{n/2}G_{\ell ;d}(\cos d(
x_{1},y_{1})) \dots  G_{\ell ;d}(\cos d(
x_{1},y_{q_1-r}) )\times \cr
\times G_{\ell ;d}(\cos d(
x_{2},y_{q_{1}-r+1}) )\dots G_{\ell ;d}(\cos d(
x_{2},y_{n}) )G_{\ell
;d}^{r}(\cos d( x_{1},x_{2}) )\,dx_{1}dx_{2}\ ,
}$$
where in the last equality
we have repeatedly used the reproducing property of Gegenbauer polynomials (\cite{szego}).
Now set $d\underline{y}:=dy_{1}\dots dy_{n}$. It follows at once that
$$\displaylines{
\left\Vert g_{\ell; q_{1} }\otimes _{r}g_{\ell; q_{2} }\right\Vert _{%
{H}^{\otimes n}}^{2}
=\int_{(\cS^d)^{n}} (g_{\ell; q_{1} }\otimes _{r}g_{\ell; q_{2} })^{2}
(y_{1},\dots,y_{n})\,d\underline{y}=\cr
=\int_{(\cS^d)^{n}} \int_{(\cS^{d})^2}\Bigl(\frac{n_{\ell; d}}{\mu_d
} \Bigr) ^{n}\!G_{\ell ;d}(\cos d(
x_{1},y_{1})) \dots G_{\ell ;d}(\cos d(
x_{2},y_{n})) G_{\ell
;d}^{r}\cos d(x_{1},x_{2}) dx_1 dx_2\times \cr
\times
 \int_{(\cS^{d})^2}G_{\ell ;d}(\cos d(
x_{4},y_{1})) \dots G_{\ell ;d}(\cos d(
x_{3},y_{n})) G_{\ell
;d}^{r}(\cos d( x_{3},x_{4}))
\,dx_{3}dx_{4}\,d\underline{y}=\cr
=\int_{(\cS^{d})^4}\!\!\!G_{\ell
;d}^{r}(\cos d( x_{1},x_{2})) G_{\ell
;d}^{q_{1}-r}(\cos d( x_{2},x_{3}))   G_{\ell
;d}^{r}(\cos d(x_{3},x_{4}))      G_{\ell
;d}^{q_{1}-r}(\cos d( x_{1},x_{4}))
\,d\underline{x}\ ,
}$$
as claimed.
\end{proof}
We need now to introduce some further notation, i.e. for $q\ge 2$ and $r= 1,\dots, q-1$
\begin{equation*}
\displaylines{ \mathcal{K}_{\ell }(q;r):=\int_{(\cS^{d})^4}G_{\ell
;d}^{r}(\cos d( x_{1},x_{2}) ) G_{\ell ;d}^{q-r}(\cos d( x_{2},x_{3})
)\times \cr \times G_{\ell ;d}^{r}(\cos d(x_{3},x_{4}) ) G_{\ell
;d}^{q-r}(\cos d( x_{1},x_{4}) ) \,dx_{1}dx_{2}dx_{3}dx_{4}, }
\end{equation*}%
Lemma \ref{contractions} asserts that
\begin{equation}\label{K}
\mathcal{K}_{\ell }(q;r)=\left\Vert g_{\ell ;q}\otimes _{r}g_{\ell
;q}\right\Vert _{H^{\otimes 2q-2r}}^{2}\text{ ;}
\end{equation}%
it is immediate to check that
\begin{equation}\label{simm}
\mathcal{K}_{\ell }(q;r)= \mathcal{K}_{\ell }(q;q-r)\ .
\end{equation}
In the following two propositions we bound each term of the form $\mathcal{K}(q;r)$  (from (\ref{simm})
it is enough to consider $r=1,\dots, \left[\frac{q}2\right]$).
As noted in
\S \ref{outline}, these bounds improve the existing literature even for the case $d=2,$
from which we start our analysis.

For $d=2,$ as previously recalled, Gegenbauer polynomials become standard
Legendre polynomials $P_{\ell },$ for which it is well-known that (see (\ref%
{momento 2}))
\begin{equation}
\int_{{\mathbb{S}}^{2}}P_{\ell }(\cos d( x_{1},x_{2}) )^{2}\,dx_{1}=O\left(
\frac{1}{\ell }\right) \text{ ;}  \label{tri1}
\end{equation}%
also, from \cite{Nonlin}, Lemma 3.2 we have that%
\begin{equation}
\int_{{\mathbb{S}}^{2}}P_{\ell }(\cos d( x_{1},x_{2}) )^{4}\,dx_{1}=O\left(
\frac{\log \ell }{\ell ^{2}}\right) \text{ .}  \label{tri2}
\end{equation}%
Finally, it is trivial to show that%
\begin{equation}
\int_{{\mathbb{S}}^{2}}\left\vert P_{\ell }(\cos d( x_{1},x_{2})
)\right\vert\, dx_{1}\leq \sqrt{\int_{{\mathbb{S}}^{2}}P_{\ell
}(\cos d( x_{1},x_{2}) )^{2}\,dx_{1}}=O\left( \frac{1}{\sqrt{%
\ell }}\right)   \label{tri3}
\end{equation}%
and by Cauchy-Schwartz inequality
\begin{equation}
\int_{{\mathbb{S}}^{2}}\left\vert P_{\ell }(\cos d( x_{2},x_{3})
)\right\vert ^{3}\,dx_{2} 
=O\left( \sqrt{\frac{\log \ell }{\ell
^{3}}}\right) \text{ .}  \label{tri4}
\end{equation}%
These results will be the main tools to establish the upper bounds which are
collected in the following Proposition, whose proof is deferred to the last section.
\begin{prop}
\label{cum2} For all $r=1,2,\dots ,q-1,$ we have%
\begin{eqnarray}
 \mathcal{K}_{\ell }(q;r) &=& O\left(\frac{1}{\ell ^{5}}%
\right)\text{ for }q=3\text{ ,}  \label{hotel1} \\
\mathcal{K}_{\ell }(q;r) &=& O\left(\frac{1}{\ell ^{4}}%
\right)\text{ for }q=4\text{ ,}  \label{hotel2}\\
\mathcal{K}_{\ell }(q;r) &=& O\left(\frac{\log \ell }{%
\ell ^{9/2}}\right)\text{  for }q=5,6\text{ }  \label{hotel3}
\end{eqnarray}%
and%
\begin{equation}
\mathcal{K}_{\ell }(q;1) =\mathcal{K}_{\ell
}(q;q-1) =O\left(\frac{1}{\ell ^{9/2}}\right)\text{ , }
\mathcal{K}_{\ell }(q;r) =O\left(\frac{1}{\ell ^{5}}\right)\text{
, }r=2,...,q-2,\text{ for }q\geq 7\text{ .}  \label{hotel4}
\end{equation}
\end{prop}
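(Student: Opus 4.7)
The underlying idea is to collapse the four-fold integral defining $\mathcal{K}_\ell(q;r)$ into a two-fold one, by integrating out the two ``intermediate'' vertices $x_2$ and $x_4$ of the $4$-cycle structure on the arguments. Setting
\[
F(x_1,x_3) := \int_{\mathbb{S}^2} P_\ell(\cos d(x_1,x_2))^{r}\, P_\ell(\cos d(x_2,x_3))^{q-r}\, dx_2,
\]
rotational invariance shows that $F$ depends only on $d(x_1,x_3)$; the analogous integration in $x_4$ yields the same function (the apparent interchange $r \leftrightarrow q-r$ is neutralised by the isotropy of $F$). Hence
\[
\mathcal{K}_\ell(q;r) = \int_{(\mathbb{S}^2)^2} F(x_1,x_3)^{2}\, dx_1\, dx_3,
\]
and the problem reduces to estimating $F$ in suitable norms.

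For the extremal indices $r \in \{1,q-1\}$ (which by the symmetry $\mathcal{K}_\ell(q;r)=\mathcal{K}_\ell(q;q-r)$ both reduce to $r=1$), one expands $P_\ell^{q-1}(\cos\theta) = \sum_{k} a_k^{(q-1)} P_k(\cos\theta)$ and applies the reproducing identity
\[
\int_{\mathbb{S}^2} P_\ell(\cos d(x_1,x_2))\, P_k(\cos d(x_2,x_3))\, dx_2 = \tfrac{4\pi}{2\ell+1}\,\delta_{k,\ell}\, P_\ell(\cos d(x_1,x_3)),
\]
a direct consequence of the addition formula on $\mathbb{S}^2$. Only the $k=\ell$ summand survives, producing
\[
F(x_1,x_3) = 2\pi\Big(\int_{-1}^{1} P_\ell(t)^{q}\, dt\Big)\, P_\ell(\cos d(x_1,x_3)),
\]
and hence, via \paref{momento 2},
\[
\mathcal{K}_\ell(q;1) = \frac{64\pi^4}{2\ell+1}\Big(\int_{-1}^{1} P_\ell(t)^{q}\, dt\Big)^{2}.
\]
The high-energy asymptotics of $\int_{-1}^{1} P_\ell^{q}$ provided by \paref{int2}, \paref{q=4d=2} and \paref{tri2} then deliver the $O(\ell^{-5})$ bound for $q=3$, the $O(\log^{2}\ell\cdot\ell^{-5})\subset O(\ell^{-4})$ bound for $q=4$, and the $O(\ell^{-5})\subset O(\log\ell\cdot\ell^{-9/2})$ bound for $q\ge 5$.

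For the middle indices $r\in\{2,\dots,q-2\}$ the collapse of the convolution fails. Writing $M_k := \int_{\mathbb{S}^2}|P_\ell(\cos d(x,y))|^{k}\,dx$ (independent of $y$ by rotational invariance), an application of Cauchy-Schwarz gives the uniform bound
\[
\|F\|_{\infty}^{2}\le M_{2r}\,M_{2(q-r)},
\]
while Fubini yields $\int F\, dx_1 dx_3 = 4\pi\, M_{r} M_{q-r}$. Combining these,
\[
\mathcal{K}_\ell(q;r) \le \|F\|_\infty\cdot \int F\, dx_1 dx_3 \;\le\; 4\pi\,\sqrt{M_{2r}\,M_{2(q-r)}}\; M_{r}\,M_{q-r}.
\]
Plugging in the asymptotics \paref{tri1}--\paref{tri4} for $M_{1},\dots,M_{4}$, together with the bound $M_k=O(\ell^{-2})$ for $k\ge 5$ (which follows from the absolute convergence of the constants $c_{k;2}$ in \paref{cq2}), a direct case analysis covers $q=4$ with $r=2$, the pairs $(q,r)$ with $q=5,6$, and all $r\in\{2,\dots,q-2\}$ when $q\ge 7$, thereby producing the estimates in \paref{hotel2}--\paref{hotel4}.

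The main technical nuisance is the careful bookkeeping of the logarithmic corrections coming from $M_4 = O(\log\ell\cdot\ell^{-2})$ and $M_3 = O(\sqrt{\log\ell\cdot\ell^{-3}})$: these inject $\log$ factors into several intermediate estimates, which must then be shown to fit within the stated polynomial rates. In particular, for $q=4$ with $r=2$ the crude Cauchy-Schwarz bound produces $O(\log\ell\cdot\ell^{-4})$, which is absorbed here into the stated $O(\ell^{-4})$ (or, if a strict polynomial bound were required, removed through a finer pointwise analysis of $F$ based on Hilb-type asymptotics for $P_\ell(\cos\theta)$).
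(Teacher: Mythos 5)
Your reduction of $\mathcal{K}_\ell(q;r)$ to $\int_{(\mathbb S^2)^2}F^2\,dx_1dx_3$ and, for the extremal indices $r\in\{1,q-1\}$, the exact evaluation via the reproducing property (only the degree-$\ell$ component of the zonal function $P_\ell^{q-1}$ survives) is correct, and it is a genuinely different route from the paper's: there, $q=3,4$ are handled by citing the explicit evaluation of these integrals through Wigner's $3j/6j$ coefficients, while for $q\ge5$ the four-fold integral is bounded with absolute values one variable at a time using \paref{tri1}--\paref{tri4}. Your closed formula $\mathcal{K}_\ell(q;1)=\tfrac{64\pi^4}{2\ell+1}\bigl(\int_{-1}^1P_\ell(t)^q\,dt\bigr)^2$ gives \paref{hotel1} exactly and even yields $O(\ell^{-5})$ at $r=1,q-1$ for all $q\ge5$, sharper than \paref{hotel4}; your split also delivers \paref{hotel3} for the middle indices when $q=5,6$.

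The middle-index part, however, does not prove the statement as written. A small slip first: $\int F\,dx_1dx_3=4\pi M_rM_{q-r}$ is false when $r$ or $q-r$ is odd (the left-hand side involves signed integrals of powers of $P_\ell$, which may vanish); what you need, and what the estimate $\int F^2\le\|F\|_\infty\int|F|$ uses, is $\int|F|\le 4\pi M_rM_{q-r}$. More seriously: (i) for $(q,r)=(4,2)$ your bound is $O(\log\ell\,\ell^{-4})$, and $\log\ell\cdot\ell^{-4}$ is \emph{not} $O(\ell^{-4})$, so it cannot be ``absorbed''; since \paref{hotel2} is precisely what produces $\eta(4;2)=1$ in Proposition \ref{teo1} (the $1/\log\ell$ rate in \paref{rate2}), the lost logarithm is not harmless. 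Moreover the $L^\infty$--$L^1$ interpolation cannot remove it, because $\|F\|_\infty$ is genuinely of order $\log\ell/\ell^2$ (near the diagonal $F\approx\int P_\ell^4$), so here one really needs the exact $3j/6j$ evaluation the paper invokes, or some finer cancellation argument you have not supplied. (ii) For $q\ge7$ and $r\in\{2,q-2\}$ your prescribed Cauchy--Schwarz split gives $(M_4M_{2(q-2)})^{1/2}M_2M_{q-2}=O(\sqrt{\log\ell}\,\ell^{-5})$, short of the claimed $O(\ell^{-5})$ in \paref{hotel4}; this one is fixable by instead bounding $\|F\|_\infty\le M_{q-2}=O(\ell^{-2})$ via $|P_\ell|^2\le1$, but your ``direct case analysis'' as stated does not do it. Finally, the claim $M_k=O(\ell^{-2})$ for $k\ge5$ is true but does not follow from the convergence of the constants $c_{k;2}$ in \paref{cq2}, which control the signed integrals; for odd $k$ you must rerun the Hilb-asymptotics argument with $|P_\ell|$, otherwise Cauchy--Schwarz only gives $O(\sqrt{\log\ell}\,\ell^{-2})$ and degrades the bounds above further.
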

We can now move to the higher-dimensional case, as follows. Let us start
with the bounds for all order moments of Gegenbauer polynomials.
From (\ref{momento 2})
\begin{equation}
\int_{{\mathbb{S}}^{d}}G_{\ell ;d}(\cos d( x_{1},x_{2}) )^{2}dx_{1}=O\left(
\frac{1}{\ell ^{d-1}}\right) \text{ ;}  \label{trid2}
\end{equation}%
also, from Proposition \ref{varianza}, we have that if $q=2p,$ $p=2,3,4...$,
\begin{equation}
\int_{{\mathbb{S}}^{d}}G_{\ell ;d}(\cos d( x_{1},x_{2}) )^{q}dx_{1}=O\left(
\frac{1}{\ell ^{d}}\right) \text{ .}  \label{tridq}
\end{equation}%
Finally, it is trivial to show that%
$$
\int_{{\mathbb{S}}^{d}}\left\vert G_{\ell ;d}(\cos d( x_{2},x_{3})
)\right\vert dx_{2}\le
$$
\begin{equation}
\leq \sqrt{\int_{{\mathbb{S}}^{d}} G_{\ell
;d}(\cos d( x_{2},x_{3}) )^{2}dx_{2}}=O\left( \frac{1}{\sqrt{%
\ell ^{d-1}}}\right) \text{ ,}  \label{trid1}
\end{equation}%
$$
\int_{{\mathbb{S}}^{d}}\left\vert G_{\ell ;d}(\cos d( x_{2},x_{3})
)\right\vert ^{3}dx_{2}\le
$$
\begin{equation}
\leq \sqrt{\int_{{\mathbb{S}}^{d}}G_{\ell
;d}(\cos d( x_{2},x_{3}) )^{2}dx_{2}}\sqrt{\int_{{\mathbb{S}}^{d}}G_{\ell
;d}(\cos d( x_{1},x_{2}) )^{4}dx_{1}}=O\left( \frac{1}{\ell ^{d-{\textstyle%
\frac{1}{2}}}}\right) \text{ }  \label{trid3}
\end{equation}%
and for $q\geq 5$ odd,
$$
\int_{{\mathbb{S}}^{d}}\left\vert G_{\ell ;d}(\cos d( x_{2},x_{3})
)\right\vert ^{q}dx_{2}\le
$$
\begin{equation}
\leq \sqrt{\int_{{\mathbb{S}}^{d}}G_{\ell
;d}(\cos d( x_{2},x_{3}) )^{4}dx_{2}}\sqrt{\int_{{\mathbb{S}}^{d}}G_{\ell
;d}(\cos d( x_{1},x_{2}) )^{2(q-2)}dx_{1}}=O\left( \frac{1}{\ell ^{d}}%
\right) \text{ .}  \label{tridgen}
\end{equation}%
Analogously to the $2$-dimensional case, we can exploit the previous results
to obtain the following bounds, whose proof is again collected in \S \ref{tech}.
\begin{prop}
\label{cumd} For all $r=1,2,\dots, q-1$
\begin{eqnarray}
 \mathcal{K}_{\ell }(q;r) &=&O\left( \frac{1}{\ell ^{2d+%
{\frac{d-5}{2}}}}\right) \text{ for }q=3\text{ ,}  \label{hoteld1} \\
\mathcal{K}_{\ell }(q;r) &=&O\left( \frac{1}{\ell ^{2d+%
{\frac{d-3}{2}}}}\right) \text{ for }q=4\text{ ,}  \label{hoteld2}
\end{eqnarray}%
and for $r=2,\dots ,q-2$
\begin{equation}
\mathcal{K}_{\ell }(q;1) = \mathcal{K}_{\ell
}(q;q-1) =O\left( \frac{1}{\ell ^{2d+{\frac{d-1}{2}}}}\right)
\text{ , }\mathcal{K}_{\ell }(q;r)=O\left( \frac{1}{%
\ell ^{3d-1}}\right) \text{  }\text{ for }q\geq 5\text{ .}
\label{hoteld4}
\end{equation}
\end{prop}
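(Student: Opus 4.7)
The plan is to adapt the proof of Proposition \ref{cum2} to higher dimensions by replacing the 2-dimensional moment estimates \eqref{tri1}--\eqref{tri4} with the $d$-dimensional ones \eqref{trid2}--\eqref{tridgen}. The integrand of $\mathcal{K}_\ell(q;r)$ is a product of four Gegenbauer factors arranged along the 4-cycle of variables $x_1 x_2 x_3 x_4$, which suggests integrating the opposite pair $(x_1,x_3)$ first. Setting
\begin{equation*}
A(x_2,x_4):=\int_{\cS^d}G_{\ell;d}^{r}(\cos d(x_1,x_2))\,G_{\ell;d}^{q-r}(\cos d(x_1,x_4))\,dx_1
\end{equation*}
and defining $B(x_2,x_4)$ analogously with $x_3$ in place of $x_1$ (and the roles of $r$ and $q-r$ interchanged on the two edges incident to $x_3$), both kernels depend only on $d(x_2,x_4)$ by isotropy, and
\begin{equation*}
\mathcal{K}_\ell(q;r)=\int_{(\cS^d)^2}A(x_2,x_4)\,B(x_2,x_4)\,dx_2\,dx_4\ .
\end{equation*}

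The core idea, whenever neither $r$ nor $q-r$ equals one, is to combine the $L^\infty$ estimate
\begin{equation*}
\|A\|_\infty\le\Bigl(\int_{\cS^d} G_{\ell;d}^{2r}\,dx_1\Bigr)^{1/2}\Bigl(\int_{\cS^d} G_{\ell;d}^{2(q-r)}\,dx_1\Bigr)^{1/2}\ ,
\end{equation*}
obtained from Cauchy-Schwarz in $x_1$, with the $L^1$ estimate from Fubini and isotropy,
\begin{equation*}
\|A\|_1\le\mu_d\,M_r\,M_{q-r},\qquad M_s:=\int_{\cS^d}|G_{\ell;d}|^s\,dx\ .
\end{equation*}
The moments $\int G_{\ell;d}^{2s}$ and $M_s$ are evaluated via \eqref{trid2}--\eqref{tridgen}. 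An interpolation $\|A\|_2\le\sqrt{\|A\|_\infty\,\|A\|_1}$ applied to both $A$ and $B$, followed by the outer Cauchy-Schwarz $\mathcal{K}_\ell(q;r)\le\|A\|_2\|B\|_2$, yields the bounds \eqref{hoteld1}, \eqref{hoteld2}, and the interior part $r\in\{2,\dots,q-2\}$ of \eqref{hoteld4}.

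The main obstacle is the boundary case $r\in\{1,q-1\}$ with $q\ge 5$, where the interpolation strategy falls short of the target $O(\ell^{-(5d-1)/2})$ by a factor $\ell^{-1/2}$, because one of the moments in $\|A\|_1$ is the weak $M_1=O(\ell^{-(d-1)/2})$. Here I would estimate $\|A\|_2^2$ by expanding the square and applying the reproducing kernel identity
\begin{equation*}
\int_{\cS^d} G_{\ell;d}(\cos d(x_1,x_2))\,G_{\ell;d}(\cos d(x_1',x_2))\,dx_2=\frac{\mu_d}{n_{\ell;d}}\,G_{\ell;d}(\cos d(x_1,x_1'))
\end{equation*}
to collapse the $x_2$-integral into a single Gegenbauer factor of order $O(\ell^{-(d-1)})$. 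The remaining $x_4$-integral defines a symmetric kernel $C(x_1,x_1')$ satisfying $\|C\|_\infty=O(\ell^{-d})$ and $\|C\|_1=O(\ell^{-2d})$, hence $\|C\|_2=O(\ell^{-3d/2})$ by the same interpolation. Pairing these via Cauchy-Schwarz in $(x_1,x_1')$ and using $\int|G_{\ell;d}(\cos d(x_1,x_1'))|\,dx_1\,dx_1'=\mu_d M_1=O(\ell^{-(d-1)/2})$ from \eqref{trid1} supplies the extra $\ell^{-(d-1)/2}$ factor needed to reach the claimed exponent. The cases $q=3,4$ are handled by the same scheme, selecting between pure interpolation and the reproducing kernel variant according to whether the exponent $1$ appears among $\{r,q-r\}$.
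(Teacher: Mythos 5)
Your proposal is correct, but it follows a genuinely different route from the paper. The paper's proof of Proposition \ref{cumd} simply replays the two-dimensional argument of Proposition \ref{cum2}: since $|G_{\ell;d}|\le G_{\ell;d}(1)=1$, one of the four Gegenbauer factors is discarded outright, and the remaining chain is integrated one variable at a time using the moment bounds \paref{tridq}--\paref{tridgen}; the cases $q=3,\dots,7$ are computed explicitly and $q>7$ is reduced to $q=6,7$ by monotonicity, the dominant contribution being $\mathcal{K}_\ell(q;1)$. Notably, this crude step already attains the claimed rate in exactly the case you found delicate: for $r=1$, $q\ge5$ discarding one exponent-one edge leaves the chain $O(\ell^{-d})\cdot O(\ell^{-(d-1)/2})\cdot O(\ell^{-d})=O(\ell^{-(5d-1)/2})$, so no second-moment trick is needed. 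Your route -- writing $\mathcal{K}_\ell(q;r)=\|A\|_{L^2((\cS^d)^2)}^2$ for the two-point kernel obtained by integrating the opposite pair, interpolating $\|A\|_2^2\le\|A\|_\infty\|A\|_1$, and repairing the boundary case via the reproducing formula $\int_{\cS^d} G_{\ell;d}(\cos d(x_1,x_2))G_{\ell;d}(\cos d(x_1',x_2))\,dx_2=\tfrac{\mu_d}{n_{\ell;d}}G_{\ell;d}(\cos d(x_1,x_1'))$ -- is also valid: the Cauchy--Schwarz pairing $\int|G_{\ell;d}|\,|C|\le\|G_{\ell;d}\|_{L^2((\cS^d)^2)}\|C\|_{L^2((\cS^d)^2)}$ gives $\mathcal{K}_\ell(q;1)=O(\ell^{-(d-1)}\cdot\ell^{-(d-1)/2}\cdot\ell^{-3d/2})=O(\ell^{-(3d-3/2)})$, which meets the target exponent $2d+\tfrac{d-1}{2}$ for every $d\ge2$ (with equality at $d=2$). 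One small imprecision: the factor of order $\ell^{-(d-1)/2}$ that Cauchy--Schwarz actually supplies is the $L^2((\cS^d)^2)$ norm of $G_{\ell;d}$, not the $L^1$ quantity $\mu_d\int_{\cS^d}|G_{\ell;d}|\,dx$ from \paref{trid1} that you cite (they happen to have the same order); pairing $\|C\|_\infty$ with that $L^1$ norm instead would only give $O(\ell^{-(5d-3)/2})$ and miss the claim by $\ell^{-1}$. In exchange for being slightly less elementary, your argument treats all $q\ge5$ uniformly (no reduction of $q>7$ to $q\le7$) and yields marginally sharper exponents in several entries, e.g. $O(\ell^{-(5d-4)/2})$ for $q=3$ and $O(\ell^{-(3d-3/2)})$ for $q=5$, $r=2$.
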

Exploiting the results in this section and the variance evaluation in Proposition \ref{varianza}
in \S \ref{tech},
we have the proof of our first quantitative CLT.
\begin{proof}[Proof Proposition \ref{teo1}]
By Parseval's identity, the case $q=2$ can be treated as a sum of independent random variables and the proof
follows from standard Berry-Esseen arguments, as in Lemma 8.3 of \cite{dogiocam} for the case $d=2$.
For $q\ge 3$, from Proposition \ref{BIGnourdinpeccati} and \paref{casoparticolare}, for $d_{\mathcal{D}}=
d_K, d_{TV}, d_W$
\begin{equation}
d_{\mathcal{D}}\left(\frac{h_{\ell ;q}}{\sqrt{\Var[h_{\ell ;q,d}]}},Z\right)
 = O\left(\sup_{r}\sqrt{\frac{\mathcal{K}_{\ell }(q;r) }{%
 \Var[h_{\ell ;q,d}]^{2}}}\right)\text{ .}
\end{equation}%
The proof is thus an immediate consequence of
the previous equality and the results in Proposition \ref{varianza},
 Proposition  \ref{cum2} and Proposition \ref{cumd}.
\end{proof}%
%

\section{General polynomials}\label{genpoly}

In this section, we show how the previous results can be extended to establish quantitative
CLTs, for the case of general, non Hermite polynomials.
To this aim, we need to introduce some more notation, namely
(for $Z_{\ell }$ defined as in (\ref{genovese}))%
\begin{equation*}
\mathcal{K}(Z_{\ell }; d):=\max_{q:\beta _{q}\neq 0}\max_{r=1,...,q-1}\mathcal{K%
}_{\ell }(q;r)\text{ ,}
\end{equation*}
and as in Proposition \ref{corollario1}
\begin{equation*}
R(Z_{\ell}; d )=\begin{cases}
\ell ^{-\frac{d-1}2}\ ,\quad&\text{for }\beta _{2}\neq 0\ , \\
\max_{q=3,\dots,Q:\beta _{q}, c_{q;d}\neq 0}\,R(\ell ;q,d)\ ,\quad&\text{for }\beta _{2}=0\ .%
\end{cases}
\end{equation*}
In words, $\mathcal{K}(Z_{\ell }; d)$ is the largest contraction term among
those emerging from the analysis of the different Hermite components, and $%
R(Z_{\ell };d)$ is the slowest convergence rate of the same components. The
next result is stating that these are the only quantities to look at when
considering the general case.

%

\begin{proof}[Proof Theorem \ref{corollario1}]
We apply Proposition \ref{BIGnourdinpeccati}.
In our case $H=L^2(\mathbb S^d)$ and %
$$\displaylines{
\Var[\langle  DZ_{\ell },-DL^{-1}Z_{\ell }\rangle _{H%
}]=\Var\left[ \langle \sum_{q_{1}=2}^{Q}\beta _{q_{1}}Dh_{\ell;q_{1},d},
-\sum_{q_{21}=2}^{Q}\beta _{q_{2}}DL^{-1}h_{\ell;q_{2},d}\rangle_{H}\right]=\cr
=\Var\left[ \sum_{q_{1}=2}^{Q}\sum_{q_{1}=2}^{Q}\beta _{q_{1}}\beta
_{q_{2}}\langle Dh_{\ell;q_{1},d},-DL^{-1}h_{\ell;q_{2},d}\rangle_{%
H}\right] \text{ .}
}$$
From Chapter \ref{background} recall that for $q_{1}\neq q_{2}$
\begin{equation*}
E[\langle Dh_{\ell;q_{1},d},-DL^{-1}h_{\ell ;q_{2},d}\rangle_{%
H}]=0\text{ ,}
\end{equation*}%
whence we write
$$\displaylines{
\Var\left[ \sum_{q_{1}=2}^{Q}\sum_{q_{2}=2}^{Q}\beta _{q_{1}}\beta
_{q_{2}}\langle Dh_{\ell;q_{1},d},-DL^{-1}h_{\ell;q_{2},d}\rangle_{%
H}\right]=\cr
=\sum_{q_{1}=2}^{Q}\sum_{q_{2}=2}^{Q}\beta _{q_{1}}^{2}\beta
_{q_{2}}^{2}\Cov\left( \langle Dh_{\ell;q_{1},d},-DL^{-1}h_{\ell;q_{1},d}
\rangle_{H},\langle Dh_{\ell;q_{2},d},-DL^{-1}h_{\ell ;q_{2},d}\rangle_{H}\right)+\cr
+\sum_{q_{1}, q_3=2}^{Q}\sum_{q_{2}\neq
q_{1}}^{Q}\sum_{q_{4}\neq q_{3}}^{Q}\beta _{q_{1}}\beta
_{q_{2}}\beta _{q_{3}}\beta _{q_{4}}\times \cr
\times
\Cov\left( \langle Dh_{\ell;q_{1},d},-DL^{-1}h_{\ell;q_{2},d}\rangle_{H},\langle
Dh_{\ell ;q_{3},d},-DL^{-1}h_{\ell ;q_{4},d}\rangle_{H}\right).
}$$
Now of course we have
$$\displaylines{
\Cov\left( \langle Dh_{\ell ;q_{1},d},-DL^{-1}h_{\ell
;q_{1},d}\rangle_{H},\langle Dh_{\ell
;q_{2},d},-DL^{-1}h_{\ell ;q_{2},d}) _{H}\right) \le \cr
\leq \left(\Var\left[\langle Dh_{\ell ;q_{1},d},-DL^{-1}h_{\ell
;q_{1},d}\rangle_{H}\right] \Var\left[ \langle Dh_{\ell
;q_{2},d},-DL^{-1}h_{\ell ;q_{2},d}\rangle_{H}\right] \right)^{1/2},
}$$
$$\displaylines{
\Cov\left( \langle Dh_{\ell ;q_{1},d},-DL^{-1}h_{\ell
;q_{2},d}\rangle_{H},\langle Dh_{\ell
;q_{3},d},-DL^{-1}h_{\ell ;q_{4},d}\rangle_{H}\right) \le \cr
\leq \left( \Var\left[ \langle Dh_{\ell ;q_{1},d},-DL^{-1}h_{\ell
;q_{2},d}\rangle_{H}\right] \Var\left[ \langle Dh_{\ell
;q_{3},d},-DL^{-1}h_{\ell ;q_{4},d}\rangle_{H}\right]
\right)^{1/2}.
}$$
Applying \cite{noupebook}, Lemma 6.2.1 it is immediate to show that%
$$\displaylines{
\Var\left[ \langle Dh_{\ell ;q_{1},d},-DL^{-1}h_{\ell ;q_{1},d}\rangle
_{H}\right]\le \cr
\leq q_{1}^{2}\sum_{r=1}^{q_{1}-1}((r-1)!)^{2}{
q_{1}-1 \choose
r-1%
}^{4}(2q_{1}-2r)!\left\Vert g_{\ell; q_{1}}\otimes
_{r}g_{\ell;q_{1}}\right\Vert _{H^{\otimes 2q_1-2r}}^{2} =\cr
=q_{1}^{2}\sum_{r=1}^{q_{1}-1}((r-1)!)^{2}{
q_{1}-1 \choose
r-1%
} ^{4}(2q_{1}-2r)! \mathcal{K}_{\ell }(q_1;r) \text{
.}
}$$
Also, for $q_{1}<q_{2}$%
$$\displaylines{
\Var\left[ \langle Dh_{\ell ;q_{1},d},-DL^{-1}h_{\ell ;q_{2},d}\rangle
_{H}\right]=\cr
=q_{1}^{2}\sum_{r=1}^{q_{1}}((r-1)!)^{2}{
q_{1}-1 \choose
r-1%
} ^{2}{
q_{2}-1 \choose
r-1%
} ^{2}(q_{1}+q_{2}-2r)!\left\Vert g_{\ell;q_{1}}\widetilde{\otimes }%
_{r}g_{\ell ; q_{2}}\right\Vert _{H^{\otimes
(q_{1}+q_{2}-2r)}}^{2}=\cr
=q_{1}^{2}((q_{1}-1)!)^{2}{
q_{2}-1 \choose
q_{1}-1%
}^{2}(2q_{1}-2r)!\left\Vert g_{\ell;q_{1}}\widetilde{\otimes }%
_{q_{1}}g_{\ell ; q_{2}}\right\Vert _{H^{\otimes (q_{2}-q_1)}}^{2} + \cr
+q_{1}^{2}\sum_{r=1}^{q_{1}-1}((r-1)!)^{2}{
q_{1}-1 \choose
r-1%
}^{2}{
q_{2}-1 \choose
r-1%
} ^{2}(q_{1}+q_{2}-2r)!\left\Vert g_{\ell;q_{1} }\widetilde{\otimes }%
_{r} g_{\ell ;q_{2}}\right\Vert _{H^{\otimes
(q_{1}+q_{2}-2r)}}^{2}
=:A+B\text{ .}
}$$%
Let us focus on the first summand $A$, which includes terms that, from Lemma \ref{contractions},
take the form%
$$\displaylines{
\left\Vert g_{\ell;q_{1}}\widetilde{\otimes }_{q_{1}}g_{\ell;q_{2}
}\right\Vert_{H^{\otimes (q_{2}-q_1)}}^{2}\le
\left\Vert g_{\ell;q_{1}}\otimes_{q_{1}}g_{\ell;q_{2}
}\right\Vert_{H^{\otimes (q_{2}-q_1)}}^{2}=\cr
=\int_{(\cS^d)^{q_2-q_1}}\int_{(\cS^{d})^2}\left( \frac{n_{\ell; d}%
}{\mu_d }\right)^{q_{2}-q_{1}}
 G_{\ell;d }(\cos d( x_{2},y_{1}) )\cdots \times \cr
\times \cdots G_{\ell;d }(\cos d( x_{2},y_{q_{2}-q_1}) )
G_{\ell;d }(\cos d(
x_{1},x_2) )^{q_1}\,dx_1 dx_2\times\cr
\times \int_{(\cS^d)^2}G_{\ell;d}(\cos d( x_{3},y_{1}) )
...G_{\ell;d}(\cos d(
x_{3},y_{q_{2}-q_1}) )G_{\ell;d }(\cos d(
x_{3},x_4) )^{q_1}\,dx_3 dx_4\,d\underline{y} =: I\ ,
}$$
where for the sake of simplicity we have set $d\underline{y}:=dy_{1}...dy_{q_2-q_1}$.
Applying $q_2-q_1$ times the reproducing formula for Gegenbauer polynomials (\cite{szego})  we get
\begin{equation}\label{anvedi}
I = \int_{(\cS^{d})^4}G_{\ell;d}(\cos d(
x_{1},x_{2}) )^{q_{1}}G_{\ell;d}(\cos d(
x_{2},x_{3}) )^{q_{2}-q_{1}}G_{\ell;d}(\cos d(
x_{3},x_{4}) )^{q_{1}}\,d\underline{x}\ .
\end{equation}
In graphical terms, these contractions correspond to the diagrams such that
all $q_{1}$ edges corresponding to vertex $1$ are linked to vertex 2, vertex
$2$ and $3$ are connected by $q_{2}-q_{1}$ edges, vertex $3$ and $4$ by $%
q_{1}$ edges, and no edges exist between $1$ and $4,$ i.e. the diagram has
no proper loop.
Now immediately we write
$$\displaylines{
\paref{anvedi} =\int_{\cS^{d}}G_{\ell;d}(\cos d( x_{1},x_{2})
)^{q_{1}}\,dx_{1}\int_{\cS^{d}}G_{\ell;d}(\cos d( x_{3},x_{4})
)^{q_{1}}\,dx_{4}\times \cr
\times \int_{(\cS^{d})^2}G_{\ell;d}(\cos d(
x_{2},x_{3}) )^{q_{2}-q_{1}}\,dx_{2}dx_{3}=\cr
=\frac{1}{(q_{1}!)^{2}} \Var[ h_{\ell;q_1,d}]^{2}
\int_{(\cS^{d})^2}G_{\ell;d}
(\cos d( x_{2},x_{3}) )^{q_{2}-q_{1}}\,dx_{2}dx_{3}\text{ .}
}$$
Moreover we have
\begin{equation}\label{eq=0}
\int_{(\cS^{d})^2}G_{\ell;d}(\cos d(
x_{2},x_{3}) )^{q_{2}-q_{1}}\,dx_{2}dx_{3}=0\ ,\quad \text{if}\ q_{2}-q_{1}=1\
\end{equation}%
and from \paref{momento 2} if $q_{2}-q_{1}\geq 2$
\begin{equation*}
\int_{(\cS^{d})^2}G_{\ell;d}(\cos d(
x_{2},x_{3}) )^{q_{2}-q_{1}}\,dx_{2}dx_{3}\leq \mu_d \int_{\cS^{d}}G_{\ell;d}(\cos d(x,y))^{2}\,dx=
O\left(\frac{1}{\ell^{d-1} }\right)\ .
\end{equation*}%
It follows that
\begin{equation}
\left\Vert g_{\ell;q_{1} }\otimes_{q_{1}}g_{\ell;q_{2}
}\right\Vert_{H^{\otimes (q_2-q_1)}}^{2}=  O\left(\Var[ h_{\ell;q_1,d}]^{2}\frac{1}{\ell^{d-1} }\right)
\label{efficientbound}
\end{equation}%
always. For the second term, still from \cite{noupebook}, Lemma $6.2.1$ we have%
\begin{eqnarray*}
B \leq \frac{q_{1}^{2}}{2}\sum_{r=1}^{q_{1}-1}((r-1)!)^{2}\left(
\begin{array}{c}
q_{1}-1 \\
r-1%
\end{array}%
\right) ^{2}\left(
\begin{array}{c}
q_{2}-1 \\
r-1%
\end{array}%
\right) ^{2}(q_{1}+q_{2}-2r)!\times \\
\times \left( \left\Vert g_{\ell;q_{1} }\otimes _{q_{1}-r}g_{\ell;q_{1}
}\right\Vert _{H^{\otimes 2r}}^{2}+\left\Vert g_{\ell;q_{2}
}\otimes _{q_{2}-r}g_{\ell;q_{2} }\right\Vert _{H^{\otimes
2r}}^{2}\right)=
\end{eqnarray*}%
\begin{equation}
=\frac{q_{1}^{2}}{2}\sum_{r=1}^{q_{1}-1}((r-1)!)^{2}\left(
\begin{array}{c}
q_{1}-1 \\
r-1%
\end{array}%
\right) ^{2}\left(
\begin{array}{c}
q_{2}-1 \\
r-1%
\end{array}%
\right) ^{2}(q_{1}+q_{2}-2r)!\left( \mathcal{K}_{\ell }(q_{1};r)+\mathcal{K}%
_{\ell }(q_{2};r)\right)\ , \label{juve2}
\end{equation}%
where the last step follows from Lemma \ref{contractions}.

Let us first investigate the case $d=2$. From  \paref{q=2}, \paref{int2} and \paref{q=4d=2}
it is immediate that
\begin{equation}
\Var[Z_{\ell }]=\sum_{q=2}^{Q}\beta _{q}^{2}\Var[h_{\ell ;q,2}]=
\begin{cases}
O(\ell ^{-1})\ ,\quad &\text{for }\beta _{2}\neq 0 \\
O(\ell ^{-2}\log \ell)\ ,\quad &\text{for }\beta _{2}=0\text{ , }\beta _{4}\neq 0 \\
O(\ell ^{-2})\ ,\quad &\text{otherwise.}%
\end{cases}%
\end{equation}%
Hence we have that for $\beta _{2}\neq 0$ and $Z\sim \mathcal{N}(0,1)$
\begin{eqnarray*}
d_{TV}\left(\frac{Z_{\ell }-EZ_{\ell }}{\sqrt{\Var[Z_{\ell }]}},Z\right)
=O\left(\frac{\sqrt{\mathcal{K}_{\ell }(2;r)}}{\Var[Z_{\ell }]}\right)
=O\left(\ell ^{-1/2}\right)\text{ ;}
\end{eqnarray*}%
for $\beta _{2}=0$ , $\beta _{4}\neq 0$ ,%
\begin{eqnarray*}
d_{TV}\left(\frac{Z_{\ell }-EZ_{\ell }}{\sqrt{\Var[Z_{\ell }]}},Z\right)
=O\left(\frac{\sqrt{\mathcal{K}_{\ell }(4;r)}}{\Var[Z_{\ell }]}\right)
=O\left(\frac{1}{\log \ell }\right)
\end{eqnarray*}%
and for $\beta _{2}=\beta _{4}=0$, $\beta _{5}\neq 0$ and $c_5 >0$
\begin{eqnarray*}
d_{TV}\left(\frac{Z_{\ell }-EZ_{\ell }}{\sqrt{\Var[Z_{\ell }]}},Z\right)
=O\left(\frac{\sqrt{\mathcal{K}_{\ell }(5;r)}}{\Var[Z_{\ell }]}\right)
=O\left(\frac{\log \ell }{\ell ^{1/4}}\right)\text{ ,}
\end{eqnarray*}%
and analogously we deal with the remaining cases, so that we obtain the claimed result for $d=2$.

For $d\ge 3$ from \paref{momento 2} and Proposition \ref{varianza}, it holds
\begin{equation*}
\Var[Z_{\ell }]=\sum_{q=2}^{Q}\beta _{q}^{2}\Var[h_{\ell ;q,d}]=
\begin{cases}
O(\ell ^{-(d-1)})\ ,\quad &\text{for }\beta _{2}\neq 0\ , \\
O(\ell ^{-d})\ ,\quad &\text{otherwise}\ .
\end{cases}%
\end{equation*}
Hence  we have for $\beta _{2}\neq 0$
\begin{eqnarray*}
d_{TV}\left(\frac{Z_{\ell }-EZ_{\ell }}{\sqrt{\Var[Z_{\ell }]}},Z\right)
=O\left(\frac{\sqrt{\mathcal{K}_{\ell }(2;r)}}{\Var[Z_{\ell }]}\right) =
O\left(\frac{1}{\ell ^{\frac{d-1}{2}}}\right)\text{ .}
\end{eqnarray*}%
Likewise for $\beta _{2}=0$ , $\beta _{3},c_{3;d}\neq 0$,
\begin{eqnarray*}
d_{TV}\left(\frac{Z_{\ell }-EZ_{\ell }}{\sqrt{\Var[Z_{\ell }]}},Z\right)
=O\left(\frac{\sqrt{\mathcal{K}_{\ell }(3;r)}}{\Var[Z_{\ell }]}\right)
=O\left(\frac{1}{\ell ^\frac{d-5}{4}}\right)\text{ %
}
\end{eqnarray*}%
and for $\beta _{2}=\beta _{3}=0$, $\beta _{4}\neq 0$
\begin{eqnarray*}
d_{TV}\left(\frac{Z_{\ell }-EZ_{\ell }}{\sqrt{\Var[Z_{\ell }]}},Z\right)
=O\left(\frac{\sqrt{\mathcal{K}_{\ell }(4;r)}}{\Var[Z_{\ell }]}\right)
=O\left(\frac{1}{\ell ^{\frac{d-3}{2}}}\right)\text{ .%
}
\end{eqnarray*}%
Finally if $\beta_2=\beta_3=\beta_4=0$, $\beta_q, c_{q;d} \ne 0$ for some $q$, then
\begin{eqnarray*}
d_{TV}\left(\frac{Z_{\ell }-EZ_{\ell }}{\sqrt{\Var[Z_{\ell }]}},Z\right)
=O\left(\frac{\sqrt{\mathcal{K}_{\ell }(q;r)}}{\Var[Z_{\ell }]}\right)
=O\left(\sqrt{\frac{\ell^{2d} }{\ell ^{2d +\frac{d}{2}-\frac{1}{2}}}}\right)=
O\left(\frac{1}{\ell ^{\frac{d-1}{4}}}\right)\text{ .}
\end{eqnarray*}
\end{proof}
\begin{remark}\rm
\label{rem0}\textrm{To compare our result in these specific circumstances
with the general bound obtained by Nourdin and Peccati, we note that for \paref{anvedi},
these
authors are exploiting the inequality%
\begin{equation*}
\displaylines{ \left\Vert g_{\ell;q_{1}}\otimes_{q_{1}} g_{\ell;q_{2}}
\right\Vert_{H^{\otimes (q_2-q_1)}} ^{2}\le \left\Vert g_{\ell;q_{1}}
\right\Vert _{H^{\otimes q_{1}}}^{2}\left\Vert g_{\ell;q_{2}}\otimes_{q_{2}-q_{1}}g_{\ell;q_{2}}
\right\Vert_{H^{\otimes 2q_{1}}}\ , }
\end{equation*}%
see \cite{noupebook}, Lemma $6.2.1$. In the special framework we
consider here (i.e., orthogonal eigenfunctions), this provides, however,  a less efficient bound than
(\ref{efficientbound}): indeed from \paref{anvedi}, repeating the same argument as in Lemma
\ref{contractions},
one obtains
$$\displaylines{
\left\Vert g_{\ell ;q_{1}}\otimes _{q_{1}}g_{\ell ;q_{2}}
\right\Vert_{H^{\otimes (q_2-q_1)}}^{2}=
\int_{({\mathbb{S}}^{d})^4}G_{\ell ;d}(\cos
d(x_{1},x_{2}))^{q_1}G_{\ell ;d}(\cos d(x_{2},x_{3}))^{q_2-q_1}\times \cr
\times G_{\ell ;d}(\cos
d(x_{3},x_{4}))^{q_1}\,d\underline{x}
 \le \int_{({\mathbb{S}}^{d})^2} G_{\ell
;d}(\cos d(x_{1},x_{2}))^{q_1}\,dx_1 dx_2\times \cr
\times \Big (\int_{({\mathbb{S}}^{d})^4}\!\!\!\!G_{\ell ;d}(\cos d(x_{1},x_{2}))^{q_1}
G_{\ell ;d}(\cos
d(x_{2},x_{3}))^{q_2-q_1}\times \cr
\times G_{\ell ;d}(\cos
d(x_{3},x_{4}))^{q_1}G_{\ell ;d}(\cos d(x_{1},x_{4}))^{q_2-q_1}\,d\underline{x} \Big )^{1/2}=\cr
=O\left(  \Var[h_{\ell ;q_{1},d}] \sqrt{\mathcal{K}_{\ell
}(q_{2},q_{1})}\right) \text{ ,}
}$$
yielding a bound of order
\begin{equation}
O\left( \sqrt{\frac{ \Var[h_{\ell;q_{1},d}] \sqrt{\mathcal{K}%
_{\ell }(q_{2},q_{1})}}{\Var[h_{\ell ;q_{1},d}]^{2}}}\right)
=O\left( \frac{\sqrt[4]{\mathcal{K}_{\ell }(q_{2},q_{1})}}{\sqrt{\Var[h_{\ell
;q_{1},d}]}}\right)  \label{cdip}
\end{equation}%
rather than%
\begin{equation}
O\left( \sqrt{\frac{\mathcal{K}_{\ell }(q_{2},q_{1})}{ \Var[h_{\ell
;q_{1},d}]^{2}}}\right) \text{ ;}  \label{cdip2}
\end{equation}%
for instance, for $d=2$ note that (\ref{cdip}) is typically $=O(\ell
\times \ell ^{-9/8})=O(\ell ^{-1/8}),$ while we have established for (\ref{cdip2}%
) bounds of order $O(\ell ^{-1/4})$. }
\end{remark}

\begin{remark}\rm
\textrm{Clearly the fact that $\left\Vert g_{\ell;q_{1}
}\otimes_{q_{1}}g_{\ell;q_{2} }\right\Vert_{H^{\otimes (q_2-q_1)}}^{2}=0$ for $q_{2}=q_{1}+1$
entails that the contraction $g_{\ell;q_{1} }\otimes_{q_{1}}g_{\ell;q_{2} }$ is identically
null. Indeed repeating the same argument as in Lemma \ref{contractions}
$$\displaylines{
g_{\ell;q_{1} }\otimes_{q_{1}}g_{\ell; q_{1}+1}=\cr
=
\int_{(\cS^{d})^{2}}G_{\ell ;d}(\cos d( x_{1},y) )G_{\ell ;d}(\cos d(
x_{1},x_2) )^{q_1}\,dx_{1}dx_{2}=\cr
=\int_{{\mathbb{S}}^{d}}G_{\ell ;d}(\cos d( x_{1},y) )\left(  \int_{{%
\mathbb{S}}^{d}}G_{\ell ;d}(\cos d( x_{1},x_2) )^{q_1}\,dx_{2} \right)\,dx_1=0\text{ ,}
}$$
as expected, because the inner integral in the last equation does not depend on $x_1$
by rotational invariance. }
\end{remark}

\section{Nonlinear functionals and excursion volumes}
\label{genvol}

The techniques and results developed previously are restricted to
finite-order polynomials. In the special case of the Wasserstein distance,
we shall show below how they can indeed be extended to general nonlinear
functionals of the form \paref{S}
\begin{equation*}
S_{\ell }(M)=\int_{{\mathbb{S}}^{d}}M(T_{\ell }(x))dx\text{ ;}
\end{equation*}%
here $M:\mathbb{R}\rightarrow \mathbb{R}$ is a measurable function such that
$\mathbb{E}[M(Z)^{2}]<\infty $, $Z\sim \mathcal N(0,1)$ as in \S \ref{outline}, and $J_{2}(M)\neq 0,$ where we recall
that $J_{q}(M):=\mathbb{E}[M(Z)H_{q}(Z)]$ .

\begin{remark}\rm
\textrm{Without loss of generality, the first two coefficients $%
J_{0}(M),J_{1}(M)$ can always be taken to be zero in the present framework.
Indeed, $J_{0}(M):=\mathbb{E}[M(Z)]=0,$ assuming we work with
centered variables and moreover as we noted earlier $h_{\ell ;1,d}=\int_{{%
\mathbb{S}}^{d}}T_{\ell }(x)\,dx=0$. }
\end{remark}
\begin{proof}[Proof Proposition \ref{general}]
As in \cite{wigexc}, from \paref{exp} we write the expansion%
\begin{equation*}
S_{\ell }(M) =\int_{\cS^{d}}\sum_{q=2}^{\infty }\frac{J_{q}(M)H_{q}(T_{\ell
}(x))}{q!}dx\ .
\end{equation*}
Precisely, we write for $d=2$
\begin{equation}\label{sum2}
S_{\ell }(M) =\frac{J_{2}(M)}{2%
}h_{\ell;2,2}+ \frac{J_{3}(M)}{3!}h_{\ell;3,2} + \frac{J_{4}(M)}{4!}h_{\ell;4,2} +\int_{\cS^{2}}\sum_{q=5}^{\infty }\frac{J_{q}(M)H_{q}(T_{\ell }(x))}{q!%
}dx\text{ ,}
\end{equation}
%
whereas for $d\ge 3$
\begin{eqnarray}\label{sum2d}
S_{\ell }(M) =\frac{J_{2}(M)}{2}h_{\ell;2,d}
+\int_{\cS^{d}}\sum_{q=3}^{\infty }\frac{J_{q}(M)H_{q}(T_{\ell }(x))}{q!%
}dx\text{ .}
\end{eqnarray}%
Let us first investigate the case $d=2$.
Set for the sake of simplicity%
\[
S_{\ell }(M;1):=\frac{J_{2}(M)}{2%
}h_{\ell;2,2}+ \frac{J_{3}(M)}{3!}h_{\ell;3,2} + \frac{J_{4}(M)}{4!}h_{\ell;4,2}\text{ ,}
\]%
\[
S_{\ell }(M;2):=\int_{\cS^{2}}\sum_{q=5}^{\infty }\frac{J_{q}(M)H_{q}(T_{\ell
}(x))}{q!}dx\text{ .}
\]%
Consider $Z\sim \mathcal N(0,1)$ and $Z_\ell \sim \mathcal{N}\left(0,\frac{\Var[S_{\ell }(M;1)]}{%
\Var[S_{\ell }(M)]}\right)$. Hence from \paref{sum2} and the triangular inequality
$$\displaylines{
d_{W}\left( \frac{S_{\ell }(M)}{\sqrt{\Var[S_{\ell }(M)]}},Z\right)\le\cr
\le d_{W}\left( \frac{S_{\ell }(M)}{\sqrt{\Var[S_{\ell }(M)]}},\frac{%
S_{\ell }(M;1)}{\sqrt{\Var[S_{\ell }(M)]}}\right) +d_{W}\left( \frac{S_{\ell
}(M;1)}{\sqrt{\Var[S_{\ell }(M)]}},Z_\ell\right)
+d_{W}\left( Z_\ell,%
Z\right)\le}$$
$$\displaylines{
\le \frac{1}{\sqrt{\Var[S_{\ell }(M)]}}\mathbb{E}\left[\left(
\int_{\cS^{2}}\sum_{q=5}^{\infty }\frac{J_{q}(M)H_{q}(T_{\ell }(x))}{q!}dx%
\right)^2\right]^{\tfrac12}+\cr
+d_{W}\left( \frac{S_{\ell }(M;1)}{\sqrt{\Var[S_{\ell }(M)]}},Z_\ell\right) +d_{W}\left( Z_\ell,Z\right)\ .
}$$
Let us bound the first term of the previous summation. Of course
$$\displaylines{
\Var[S_{\ell }(M)] = \Var[S_{\ell }(M;1)] +
\Var[S_{\ell }(M;2)]\ ;
}$$
now we have (see \cite{wigexc})
$$
\Var[S_{\ell }(M;1)] =\frac{J_{2}^{2}(M)}{2^2}\Var[h_{\ell ;2,2}]+\frac{%
J_{3}^{2}(M)}{6^2}\Var[h_{\ell ;3,2}]
+\frac{J_{4}^{2}(M)}{(4!)^2}\Var[h_{\ell ;4,2}]
$$
and moreover
$$\displaylines{
\Var[S_{\ell }(M;2)]=
\mathbb{E}\left[ \left(\int_{\cS^{2}}\sum_{q=5}^{\infty }\frac{J_{q}(M)H_{q}(T_{\ell
}(x))}{q!}dx \right)^2\right ]=\sum_{q=5}^{\infty }\frac{J_{q}^{2}(M)}{(q!)^2}%
\Var[h_{\ell ;q,2}]\ll \cr
\ll \frac{1}{\ell ^{2}}\sum_{q=5}^{\infty }\frac{J_{q}^{2}(M)}{q!}\ll
\frac{1}{\ell ^{2}}\text{ ,}
}$$
where the last bounds follows from \paref{int2} and \paref{cq2}. Remark that $$
\Var(S_\ell(M))=\sum_{q=0}^{\infty }\frac{J_{q}^{2}(M)}{q!} <+\infty\ .
$$
Therefore recalling also \paref{q=2} and \paref{q=4d=2}
\[
\frac{1}{\Var[S_{\ell }(M)]}\mathbb{E}\left[ \left(\int_{\cS^{2}}\sum_{q=5}^{\infty }%
\frac{J_{q}(M)H_{q}(T_{\ell }(x))}{q!}dx\right)^2\right]\ll \frac{1}{\ell }%
\text{ .}
\]%
On the other hand, from Proposition \ref{corollario1}%
\[
d_{W}\left( \frac{S_{\ell }(M;1)}{\sqrt{\Var[S_{\ell }(M)]}},Z_\ell\right) =O\left(\frac{1}{\sqrt{\ell
}}\right)
\]%
and finally, using Proposition 3.6.1 in \cite{noupebook},
\begin{eqnarray*}
d_{W}\left( Z_\ell,%
Z\right) &\leq &\sqrt{\frac{2}{\pi }}\left|\frac{%
\Var[S_{\ell }(M;1)]}{\Var[S_{\ell }(M)]}-1\right|=O\left(\frac{1}{\ell }\right)\text{ ,}
\end{eqnarray*}%
so that the proof for $d=2$ is completed.

The proof in the general case $d\ge 3$ is indeed
analogous, just setting
\[
S_{\ell }(M;1):=\frac{J_{2}(M)}{2%
}h_{\ell;2,d}\text{ ,}
\]%
\[
S_{\ell }(M;2):=\int_{\cS^{2}}\sum_{q=3}^{\infty }\frac{J_{q}(M)H_{q}(T_{\ell
}(x))}{q!}dx
\]%
and recalling from \paref{momento 2} that $\Var[h_{\ell;2,d}]=O( \frac{1}{\ell^{d-1}})$ whereas for
$q\ge 3$, $\Var[h_{\ell;q,d}]=O( \frac{1}{\ell^{d}})$ from Proposition \ref{varianza}.
\end{proof}

We are now in the position to establish our main result, concerning the volume
of the excursion sets, which we recall for any fixed $z\in \mathbb{R}$ is given by%
\begin{equation*}
S_{\ell }(z):=S_{\ell }(\mathbb{I}(\cdot > z))=\int_{{\mathbb{S}}^{d}}\mathbb{I%
}(T_{\ell }(x) > z)dx\text{ .}
\end{equation*}%
Again, ${\mathbb{E}}[S_{\ell }(z)]=\mu _{d}(1-\Phi (z))$, where
$\Phi (z)$ is the cdf of the standard Gaussian law, and in this case we have
$M=M_z:=\mathbb{I}(\cdot > z)$, $J_{2}(M_z)=z\phi (z)$, $\phi$ denoting
 the standard Gaussian density. The proof of Theorem
 \ref{mainteo} is then just an immediate consequence of Proposition \ref{general}.

\begin{remark}\rm
\textrm{It should be noted that the rate obtained here is much sharper than
the one provided by \cite{pham} for the Euclidean case with $d=2$. The
asymptotic setting we consider is rather different from his, in that we
consider the case of spherical eigenfunction with diverging eigenvalues,
whereas he focusses on functionals evaluated on increasing domains $%
[0,T]^{d} $ for $T\rightarrow \infty .$ However the contrast in the
converging rates is not due to these different settings, indeed \cite{vale3}
establish rates of convergence analogous to those by \cite{pham} for
spherical random fields with more rapidly decaying covariance structure than
the one we are considering here. The main point to notice is that the slow
decay of Gegenbauer polynomials entails some form of long range dependent
behaviour on random spherical harmonics; in this sense, hence, our results
may be closer in spirit to the work by \cite{dehlingtaqqu} on empirical
processes for long range dependent stationary processes on $\mathbb{R}$. }
\end{remark}

\section{Technical proofs}\label{tech}

\subsection{On the variance of $h_{\ell ;q,d}$}\label{subvar}

In this section we study the variance  of $h_{\ell ;q,d}$ defined in \eqref{hq}. By (\ref%
{hermite orto}) and the definition of Gaussian random eigenfunctions
\eqref{defT}, it follows that \eqref{int var} hold at once:
\begin{equation*}
\displaylines{ \Var[h_{\ell;q,d}]= \E \left[ \left( \int_{\cS^d}
H_q(T_\ell(x))\,dx \right)^2 \right] = \int_{(\cS^d)^2} \E[ H_q(T_\ell(x_1))
H_q(T_\ell(x_2))]\,dx_1 dx_2 = \cr = q! \int_{(\cS^d)^2} \E[T_\ell(x_1)
T_\ell(x_2)]^q\,dx_1 dx_2 = q! \int_{(\cS^d)^2} G_{\ell;d}(\cos
d(x_1,x_2))^q\,dx_1 dx_2=\cr = q! \mu_d \mu_{d-1} \int_0^{\pi}
G_{\ell;d}(\cos \vartheta)^q (\sin \vartheta)^{d-1}\, d\vartheta. }
\end{equation*}%
Now we prove Proposition \ref{varianza}, inspired by the proof of \cite{def}, Lemma $5.2$.

\begin{proof}[Proof of Proposition \protect\ref{varianza}]

By the Hilb's asymptotic formula for Jacobi polynomials (see \cite{szego}, Theorem $8.21.12$),
we have uniformly for $\ell\ge 1$, $\vartheta\in [0, \tfrac{\pi}2]$
$$\displaylines{
(\sin \vartheta)^{\frac{d}{2} - 1}G_{\ell ;d} (\cos \vartheta) =
\frac{2^{\frac{d}{2} - 1}}{{\ell + \frac{d}{2}-1\choose \ell}}
\left( a_{\ell, d} \left(\frac{\vartheta}{\sin
\vartheta}\right)^{\tfrac12}J_{\frac{d}{2} - 1}(L\vartheta) +
\delta(\vartheta) \right)\ , }$$ where $L=\ell + \frac{d-1}{2}$,
\begin{equation}\label{al}
a_{\ell, d} = \frac{\Gamma(\ell + \frac{d}{2})}{(\ell + \frac{d-1}{2})^{\tfrac{d}{2}-1} \ell !}\
\sim\  1\quad \text{as}\ \ell \to \infty,
\end{equation}
and the remainder is
\begin{equation*}
\delta(\vartheta) \ll \begin{cases} \sqrt{\vartheta}\,
\ell^{-\tfrac32}\ & \qquad \ell^{-1}< \vartheta
< \tfrac{\pi}2\ ,             \\
\vartheta^{\left(\tfrac{d}2-1\right) + 2}\, \ell^{\tfrac{d}2-1}\ & \qquad 0 < \vartheta
< \ell^{-1}\ .
\end{cases}
\end{equation*}
Therefore, in the light of \eqref{al} and $\vartheta \to \frac{\vartheta}{\sin \vartheta}$ being bounded,
\begin{equation}\label{eq:G moment main error}
\begin{array}{c}
\displaystyle\int_{0}^{\frac{\pi}{2}} G_{\ell ;d} (\cos \vartheta)^q (\sin
\vartheta)^{d-1}d\vartheta =\\
\displaystyle=\left(\frac{2^{\frac{d}{2} - 1}}{{\ell + \frac{d}{2}-1\choose
\ell}}\right)^q a^q_{\ell,d} \int_0^{\frac{\pi}{2}} ( \sin
\vartheta)^{- q(\frac{d}{2} -1)}
 \Big( \frac{\vartheta}{\sin \vartheta} \Big)^{\frac{q}{2}}
J^q_{\frac{d}{2}-1}(L\vartheta) (\sin \vartheta)^{d-1} d\vartheta\ +\qquad\cr
\displaystyle\qquad+O\left(\frac{1}{\ell^{q(\frac{d}{2}-1)}} \int_0^{\frac{\pi}{2}}
( \sin \vartheta)^{- q(\frac{d}{2} -1)}
|J_{\frac{d}{2}-1}(L\vartheta)|^{q-1}
\delta(\vartheta)(\sin \vartheta)^{d-1}d\vartheta\right),\cr
\end{array}
\end{equation}
where we used $${\ell + \frac{d}{2}-1\choose \ell} \ll \frac{1}{\ell^{\frac{d}{2}-1}}$$
(note that we readily neglected the smaller terms, corresponding to higher powers of $\delta(\vartheta)$).
We rewrite \eqref{eq:G moment main error} as
\begin{equation}
\label{eq:G moment=M+E}
\begin{split}
\int_{0}^{\frac{\pi}{2}} G_{\ell ;d} (\cos \vartheta)^q (\sin
\vartheta)^{d-1} d\vartheta
=N+E\ ,
\end{split}
\end{equation}
where
\begin{equation}
\label{eq:Mdql def}
N=N(d,q;\ell) := \left(\frac{2^{\frac{d}{2} - 1}}{{\ell + \frac{d}{2}-1\choose
\ell}}\right)^q a^q_{\ell,d} \int_0^{\frac{\pi}{2}} ( \sin
\vartheta)^{- q(\frac{d}{2} -1)}
 \Big( \frac{\vartheta}{\sin \vartheta} \Big)^{\frac{q}{2}}
J_{\frac{d}{2}-1}(L\vartheta)^q (\sin \vartheta)^{d-1} d\vartheta\
\end{equation}
and
\begin{equation}
\label{eq:Edql def}
E=E(d,q;\ell) \ll \frac{1}{\ell^{q(\frac{d}{2}-1)}} \int_0^{\frac{\pi}{2}}
( \sin \vartheta)^{- q(\frac{d}{2} -1)}
|J_{\frac{d}{2}-1}(L\vartheta)|^{q-1}
\delta(\vartheta)(\sin \vartheta)^{d-1}d\vartheta\ .
\end{equation}
To bound the error term $E$ we split the range of the integration in \eqref{eq:Edql def}
and write
\begin{equation}
\label{eq:int error split}
\begin{split}
E \ll & \frac{1}{\ell^{q(\frac{d}{2}-1)}}
\int\limits_{0}^{\frac{1}{\ell}}( \sin \vartheta)^{- q(\frac{d}{2} -1)}
|J_{\frac{d}{2}-1}(L\vartheta)|^{q-1}
\vartheta^{\left(\tfrac{d}2-1\right) + 2}\, \ell^{\tfrac{d}2-1}(\sin \vartheta)^{d-1}\,d\vartheta +
\\&+\frac{1}{\ell^{q(\frac{d}{2}-1)}}
\int_{\frac{1}{\ell}}^{\frac{\pi}{2}}
( \sin \vartheta)^{- q(\frac{d}{2} -1)}
|J_{\frac{d}{2}-1}(L\vartheta)|^{q-1}
\sqrt{\vartheta}\,
\ell^{-\tfrac32}(\sin \vartheta)^{d-1}\,d\vartheta\ .
\end{split}
\end{equation}
For the first integral in \eqref{eq:int error split}
recall that $J_{\frac{d}{2}-1}(z) \sim z^{\frac{d}{2}-1}$ as
$z\to 0$, so that as $\ell\to \infty$,
$$\displaylines{
\frac{1 }{\ell^{(q-1)(\frac{d}{2}-1)}} \int_0^{\frac{1}{\ell}}
\left(\frac{\vartheta}{ \sin \vartheta}\right)^{ q(\frac{d}{2} -1)-d+1}
|J_{\frac{d}{2}-1}(L\vartheta)|^{q-1}
\vartheta^{-(q-1)\left(\tfrac{d}2-1\right) + d+1}\, \,d\vartheta
\ll\cr
}$$
\begin{equation}\label{err1}
\ll \int_0^{\frac{1}{\ell}}\vartheta^{d+1}\,d\vartheta = \frac{1}{\ell^{d+2}}\ ,
\end{equation}
which is enough for our purposes. Furthermore,
since for $z$ big $|J_{\frac{d}{2}-1}(z)|=O(z^{-\tfrac12})$ (and keeping in mind
that $L$ is of the same order of magnitude as $\ell$), we may bound the second integral
in \eqref{eq:int error split} as
$$\displaylines{\ll
\frac{1}{\ell^{q(\frac{d}{2}-1)+\frac32}}\int_{\frac{1}{\ell}}^{\frac{\pi}{2}}
\left( \frac{\vartheta}{\sin \vartheta}\right)^{ q(\frac{d}{2} -1)-d+1}
|J_{\frac{d}{2}-1}(L\vartheta)|^{q-1}
\vartheta^{-q(\frac{d}{2}-1)+d-\frac12}\,d\vartheta \ll \cr
\ll
\frac{1}{\ell^{q(\frac{d}{2}-1)+\frac32}}
\int_{\frac{1}{\ell}}^{\frac{\pi}{2}}
(\ell\vartheta)^{-\frac{q-1}{2}}
\vartheta^{-q(\frac{d}{2}-1)+d-\frac12}\,d\vartheta
=\frac{1}{\ell^{q(\frac{d}{2}-\frac12)+2}}
\int_{\frac{1}{\ell}}^{\frac{\pi}{2}}
\vartheta^{-q(\frac{d}{2}-\frac12)+d}\,d\vartheta \ll \cr
}$$
\begin{equation}\label{err2}
\ll \frac{1}{\ell^{(d+2)\wedge \left(q\left(\tfrac{d}2 -\tfrac{1}2\right) +1\right)}} = o(\ell^{-d})\ ,
\end{equation}
where the last equality in \paref{err2} holds for $q\ge 3$.
From \paref{err1} (bounding the first integral in \eqref{eq:int error split}) and
\paref{err2} (bounding the second integral in \eqref{eq:int error split}) we finally
find that the error term in
\eqref{eq:G moment=M+E} is
\begin{equation}
\label{resto}
E =o(\ell^{-d})
\end{equation}
for $q\ge 3$, admissible for our purposes.

Therefore, substituting \paref{resto} into \eqref{eq:G moment=M+E} we have
$$\displaylines{
\int_{0}^{\frac{\pi}{2}} G_{\ell ;d} (\cos \vartheta)^q (\sin
\vartheta)^{d-1}\, d\vartheta =\cr
=\left(\frac{2^{\frac{d}{2} -
1}}{{\ell + \frac{d}{2}-1\choose \ell}}\right)^q a^q_{\ell,d}
\int_0^{\frac{\pi}{2}} ( \sin \vartheta)^{- q(\frac{d}{2} -1)}
 \Big( \frac{\vartheta}{\sin \vartheta} \Big)^{\frac{q}{2}}
J_{\frac{d}{2}-1}(L\vartheta)^q (\sin \vartheta)^{d-1}d\vartheta + o(\ell^{-d}) =
}$$
\begin{align}
\label{bene}
=\left(\frac{2^{\frac{d}{2} -
1}}{{\ell + \frac{d}{2}-1\choose \ell}}\right)^q a^q_{\ell,d} \frac{1}{L}
\int_0^{L\frac{\pi}{2}} ( \sin  \frac{\psi}{L})^{- q(\frac{d}{2} -1)}
 \Big( \frac{ \frac{\psi}{L}}{\sin \frac{\psi}{L}} \Big)^{\frac{q}{2}}\times \\
\nonumber
\times
J_{\frac{d}{2}-1}(\psi)^q (\sin  \frac{\psi}{L})^{d-1}\, d\psi + o(\ell^{-d})\ ,
\end{align}
where in the last equality we transformed $\psi/L=\vartheta$; it then remains to evaluate
the first term in \paref{bene}, which we denote by
\begin{equation*}
N_L := \left(\frac{2^{\frac{d}{2} -
1}}{{\ell + \frac{d}{2}-1\choose \ell}}\right)^q a^q_{\ell,d} \frac{1}{L}
\int_0^{L\frac{\pi}{2}} ( \sin \psi/L)^{- q(\frac{d}{2} -1)}
 \Big( \frac{\psi/L}{\sin \psi/L} \Big)^{\frac{q}{2}}
J_{\frac{d}{2}-1}(\psi)^q (\sin \psi/L)^{d-1}\, d\psi\ .
\end{equation*}
Now recall that as $\ell\to \infty$
\begin{equation*}
{\ell + \frac{d}{2}-1\choose \ell}\ \sim \ \frac{\ell^{\frac{d}{2}-1}}{(\frac{d}2-1)!}\ ;
\end{equation*}
moreover \paref{al} holds,   therefore we find of course that as $L\to \infty$
\begin{equation}\label{mado}
N_L\  \sim\  \frac{(2^{\frac{d}{2} -
1}(\frac{d}2-1)!)^q}{L^{q(\frac{d}{2}-1)+1}}
\int_0^{L\frac{\pi}{2}} ( \sin \psi/L)^{- q(\frac{d}{2} -1)}
 \Big( \frac{\psi/L}{\sin \psi/L} \Big)^{\frac{q}{2}}
J_{\frac{d}{2}-1}(\psi)^q (\sin \psi/L)^{d-1}\, d\psi\ .
\end{equation}
In order to finish the proof of Proposition \ref{varianza},
it is enough to check that, as $L\to \infty$
$$\displaylines{
L^d \, \frac{(2^{\frac{d}{2} -
1}(\frac{d}2-1)!)^q}{L^{q(\frac{d}{2}-1)+1}}
\int_0^{L\frac{\pi}{2}} ( \sin  \frac{\psi}{L})^{- q(\frac{d}{2} -1)}
 \Big( \frac{ \frac{\psi}{L}}{\sin  \frac{\psi}{L}} \Big)^{\frac{q}{2}}
J_{\frac{d}{2}-1}(\psi)^q \left (\sin \frac{\psi}{L} \right )^{d-1}\, d\psi\, \to\, c_{q;d}\ ,
}$$
actually from \paref{bene} and \paref{mado}, we have
$$\displaylines{
\lim_{\ell\to +\infty} \ell^d \int_{0}^{\frac{\pi}{2}} G_{\ell ;d} (\cos \vartheta)^q (\sin
\vartheta)^{d-1}\, d\vartheta = \cr
= \lim_{L\to +\infty} L^d \, \frac{(2^{\frac{d}{2} -
1}(\frac{d}2-1)!)^q}{L^{q(\frac{d}{2}-1)+1}}
\int_0^{L\frac{\pi}{2}} ( \sin  \frac{\psi}{L})^{- q(\frac{d}{2} -1)}
 \Big( \frac{ \frac{\psi}{L}}{\sin  \frac{\psi}{L}} \Big)^{\frac{q}{2}}
J_{\frac{d}{2}-1}(\psi)^q \left (\sin \frac{\psi}{L} \right )^{d-1}\, d\psi \ .
}$$
Now we write
$$\frac{\psi/L}{\sin \psi/L} = 1 + O\left( \psi^2/L^2  \right),$$
so that
$$\displaylines{
L^d\, \frac{\left(2^{\frac{d}{2} - 1}(\frac{d}2-1)!\right)^q}{L^{q(\frac{d}{2}-1)+1}}
\int_0^{L\frac{\pi}{2}} ( \sin \psi/L)^{- q(\frac{d}{2} -1)}
 \Big( \frac{\psi/L}{\sin \psi/L} \Big)^{\frac{q}{2}}
J_{\frac{d}{2}-1}(\psi)^q (\sin \psi/L)^{d-1}\, d\psi=\cr
=\left(2^{\frac{d}{2} - 1}(\frac{d}2-1)!\right)^q
\int_0^{L\frac{\pi}{2}}
 \Big( \frac{\psi/L}{\sin \psi/L} \Big)^{q(\tfrac{d}{2} -\tfrac12)-d+1}
J_{\frac{d}{2}-1}(\psi)^q \psi^{- q(\frac{d}{2} -1)+d-1}\, d\psi=\cr
=\left(2^{\frac{d}{2} - 1}(\frac{d}2-1)!\right)^q
\int_0^{L\frac{\pi}{2}}
 \Big( 1 + O\left( \psi^2/L^2  \right) \Big)^{q(\tfrac{d}{2} -\tfrac12)-d+1}
J_{\frac{d}{2}-1}(\psi)^q \psi^{- q(\frac{d}{2} -1)+d-1}\, d\psi=\cr
=\left(2^{\frac{d}{2} - 1}(\frac{d}2-1)!\right)^q
\int_0^{L\frac{\pi}{2}}
J_{\frac{d}{2}-1}(\psi)^q \psi^{- q(\frac{d}{2} -1)+d-1}\, d\psi +\cr
+
O\left( \frac{1}{L^2}\int_0^{L\frac{\pi}{2}}
J_{\frac{d}{2}-1}(\psi)^q \psi^{- q(\frac{d}{2} -1)+d+1}\,d\psi  \right).
}$$
Note that as  $L\to +\infty$, the first term of the previous summation
 converges to $c_{q;d}$ defined in \paref{ecq}, i.e.
\begin{equation}\label{to cq}
\left(2^{\frac{d}{2} - 1}(\frac{d}2-1)!\right)^q
\int_0^{L\frac{\pi}{2}}
J_{\frac{d}{2}-1}(\psi)^q \psi^{- q(\frac{d}{2} -1)+d-1}\, d\psi \to c_{q;d}\ .
\end{equation}
It remains to bound the remainder
$$
 \frac{1}{L^2}\int_0^{L\frac{\pi}{2}}
|J_{\frac{d}{2}-1}(\psi)|^q \psi^{- q(\frac{d}{2} -1)+d+1}\,d\psi =
O(1) +  \frac{1}{L^2}\int_1^{L\frac{\pi}{2}}
|J_{\frac{d}{2}-1}(\psi)|^q \psi^{- q(\frac{d}{2} -1)+d+1}\,d\psi\ .
$$
Now for the second term on the r.h.s.
$$\displaylines{
\int_1^{L\frac{\pi}{2}}
|J^q_{\frac{d}{2}-1}(\psi)| \psi^{- q(\frac{d}{2} -1)+d+1}\,d\psi \ll
\int_1^{L\frac{\pi}{2}} \psi^{- q(\frac{d}{2} -\frac{1}{2})+d+1}\,d\psi=\cr
= O(1 + L^{- q(\frac{d}{2} -\frac{1}{2})+d+2})\ .
}$$
Therefore we obtain
$$\displaylines{
\Bigl(2^{\frac{d}{2} - 1}(\frac{d}2-1)!\Bigr)^q \int_0^{L\frac{\pi}{2}}
J_{\frac{d}{2}-1}(\psi)^q \psi^{- q(\frac{d}{2} -1)+d-1}\, d\psi +\cr
+
O\left( \frac{1}{L^2}\int_0^{L\frac{\pi}{2}}
J_{\frac{d}{2}-1}(\psi)^q \psi^{- q(\frac{d}{2} -1)+d+1}\,d\psi
\right)
= \cr
=\left(2^{\frac{d}{2} - 1}(\frac{d}2-1)!\right)^q
\int_0^{L\frac{\pi}{2}} J_{\frac{d}{2}-1}(\psi)^q \psi^{-
q(\frac{d}{2} -1)+d-1}\, d\psi + O(L^{-2} + L^{- q(\frac{d}{2}
-\frac{1}{2})+d})\ , }$$ so that we have just checked the
statement of the present proposition for $q > \frac{2d}{d-1}$. This is indeed enough for each $q\ge
3$  when
$d\ge 4$ .

It remains to investigate separately just the case $d=q=3$.
Recall that for $d=3$ we have an explicit formula for the Bessel
function of order $\frac{d}{2}-1$ (\cite{szego}), that is
\begin{equation*}
J_{\frac{1}{2}}(z) = \sqrt{\frac{2}{\pi z}} \sin (z)\ ,
\end{equation*}
and hence the integral in \paref{ecq} is indeed convergent for $q=d=3$ by integrations by parts.

We have hence to study the convergence of the following integral
\begin{equation*}
\frac{8}{\pi^\frac{3}{2} }
\int_0^{L\frac{\pi}{2}}
\left( \frac{\psi/L}{\sin \psi/L} \right)
\frac{\sin^3 \psi}{\psi} \, d\psi\ .
\end{equation*}
To this aim, let us consider a large parameter $K\gg 1$ and divide the integration range
into $[0,K]$ and $[K, \frac{\pi}{2}]$;
the main contribution comes from the first term, whence we have to prove that the latter vanishes.
Note that
\begin{equation}
\label{eq:int K-> bound}
\int_K^{L\frac{\pi}{2}} \left( \frac{\psi/L}{\sin \psi/L} \right)
\frac{\sin^3 \psi}{\psi} \, d\psi \ll \frac{1}{K}\ ,
\end{equation}
where we use integration by part with the bounded function $I(T) = \int_0^T \sin^3 z\, dz$.
On $[0,K]$, we write
$$\displaylines{
\frac{8}{\pi^\frac{3}{2} }\int_0^{K} \left( \frac{\psi/L}{\sin \psi/L} \right)
\frac{\sin^3 \psi}{\psi} \, d\psi
= \frac{8}{\pi^\frac{3}{2} }\int_0^{K}
\frac{\sin^3 \psi}{\psi} \, d\psi + O\left( \frac{1}{L^2}\int_0^{K}
\psi \sin^3 \psi\, d\psi   \right) = \cr
=\frac{8}{\pi^\frac{3}{2} }\int_0^{K}
\frac{\sin^3 \psi}{\psi} \, d\psi +O\left( \frac{K^2}{L^2} \right).
}$$
Consolidating the latter with \eqref{eq:int K-> bound} we find that
$$\displaylines{
\frac{8}{\pi^\frac{3}{2} }
\int_0^{L\frac{\pi}{2}}
\left( \frac{\psi/L}{\sin \psi/L} \right)
\frac{\sin^3 \psi}{\psi} \, d\psi = \frac{8}{\pi^\frac{3}{2} }\int_0^{K}
\frac{\sin^3 \psi}{\psi} \, d\psi + O\left(  \frac{1}{K} +  \frac{K^2}{L^2} \right).
}$$
Now as $K\to +\infty$,
$$
\frac{8}{\pi^\frac{3}{2} }\int_0^{K}
\frac{\sin^3 \psi}{\psi} \, d\psi \to c_{3;3}\ ;
$$
to conclude the proof, it is then enough to choose $K=K(L)\rightarrow\infty$
sufficiently slowly, i.e. $K=\sqrt{L}$.
\end{proof}

\subsection{Proofs of Propositions \ref{cum2} and \ref{cumd}}

\begin{proof}[Proof \paref{cum2}]
The bounds (\ref{hotel1}), (\ref{hotel2}) are known and indeed the
corresponding integrals can be evaluated explicitly in terms of Wigner's 3j
and 6j coefficients, see \cite{dogiocam} e.g. The
bounds in (\ref{hotel3}),(\ref{hotel4}) derives from a simple improvement in
the proof of Proposition 2.2 in \cite{Nonlin}, which can be obtained when
focussing only on a subset of the terms (the circulant ones) considered in
that reference. In the proof to follow, we exploit repeatedly
 (\ref{tri1}), (\ref{tri2}), (\ref{tri3}) and \paref{tri4}.

Let us start investigating the case $q=5$:%
$$\displaylines{
\mathcal{K}_{\ell }(5;1)=\int_{(\cS^{2})^4}\left\vert
P_{\ell }(\cos d( x_{1},x_{2}) )\right\vert ^{4}\left\vert
P_{\ell }(\cos d( x_{2},x_{3}) )\right\vert \times \cr
\times \left\vert
P_{\ell }(\cos d( x_{3},x_{4}) )\right\vert ^{4}\left\vert
P_{\ell }(\cos d( x_{4},x_{1}) )\right\vert
dx_{1}dx_{2}dx_{3}dx_{4} \le \cr
\mathcal{\leq }\int_{(\cS^{2})^4}\left\vert P_{\ell
}(\cos d( x_{1},x_{2}) )\right\vert ^{4}\left\vert P_{\ell
}(\cos d( x_{2},x_{3}) )\right\vert \left\vert P_{\ell
}(\cos d( x_{3},x_{4}) )\right\vert
^{4}dx_{1}dx_{2}dx_{3}dx_{4} \le \cr
\mathcal{\leq }\int_{(\cS^{2})^3}\left\vert P_{\ell
}(\cos d( x_{1},x_{2}) )\right\vert ^{4}\left\vert P_{\ell
}(\cos d( x_{2},x_{3}) )\right\vert
\int_{\cS^{2}}\left\vert P_{\ell }(\cos d( x_{3},x_{4})
)\right\vert ^{4}dx_{4}\,\, dx_{1}dx_{2}dx_{3} \le\cr
\mathcal{\leq } O\left( \frac{\log \ell }{\ell ^{2}}\right) \times
\int_{\cS^{2}\times \cS^{2}}\left\vert P_{\ell }(\cos d(
x_{1},x_{2}) )\right\vert ^{4}\left\{ \int_{\cS^{2}}\left\vert
P_{\ell }(\cos d( x_{2},x_{3}) )\right\vert dx_{3}\right\}
dx_{1}dx_{2} \le \cr
\leq O\left( \frac{\log \ell }{\ell ^{2}}\right) \times O\left( \frac{1}{%
\sqrt{\ell }}\right) \times \int_{\cS^{2}\times \cS^{2}}\left\vert P_{\ell
}(\cos d( x_{1},x_{2}) )\right\vert ^{4}dx_{1}dx_{2} \le \cr
\leq O\left( \frac{\log \ell }{\ell ^{2}}\right) \times O\left( \frac{1}{%
\sqrt{\ell }}\right) \times O\left( \frac{\log \ell }{\ell ^{2}}\right)
=O\left( \frac{\log ^{2}\ell }{\ell ^{9/2}}\right) \text{ ;}
}$$
$$\displaylines{
\mathcal{K}_{\ell }(5;2)=\int_{(\cS^{2})^4}\left\vert
P_{\ell }(\cos d( x_{1},x_{2}) )\right\vert ^{3}\left\vert
P_{\ell }(\cos d( x_{2},x_{3}) )\right\vert^2 \times \cr
\times
 \left\vert
P_{\ell }(\cos d( x_{3},x_{4}) )\right\vert ^{3}\left\vert
P_{\ell }(\cos d( x_{4},x_{1}) )\right\vert^2
dx_{1}dx_{2}dx_{3}dx_{4}\le \cr
\mathcal{\leq }\int_{(\cS^{2})^4}\left\vert P_{\ell
}(\cos d( x_{1},x_{2}) )\right\vert ^{3}\left\vert P_{\ell
}(\cos d( x_{2},x_{3}) )\right\vert^2 \left\vert P_{\ell
}(\cos d( x_{3},x_{4}) )\right\vert
^{3}dx_{1}dx_{2}dx_{3}dx_{4} \le \cr
\mathcal{\leq }\int_{(\cS^{2})^3}\left\vert P_{\ell
}(\cos d( x_{1},x_{2}) )\right\vert ^{3}\left\vert P_{\ell
}(\cos d( x_{2},x_{3}) )\right\vert^2
\int_{\cS^{2}}\left\vert P_{\ell }(\cos d( x_{3},x_{4})
)\right\vert ^{3}dx_{4}\,\, dx_{1}dx_{2}dx_{3} \le \cr
\mathcal{\leq }O\left( \sqrt{\frac{\log \ell }{\ell ^{3}}}\right) \times
\int_{\cS^{2}\times \cS^{2}}\left\vert P_{\ell }(\cos d(
x_{1},x_{2}) )\right\vert ^{3}\left\{ \int_{\cS^{2}}\left\vert
P_{\ell }(\cos d( x_{2},x_{3}) )\right\vert^2 dx_{3}\right\}
dx_{1}dx_{2} \le \cr
\leq O\left( \sqrt{\frac{\log \ell }{\ell ^{3}}}\right) \times O\left( \frac{1}{%
\ell }\right) \times \int_{\cS^{2}\times \cS^{2}}\left\vert P_{\ell
}(\cos d( x_{1},x_{2}) )\right\vert ^{3}dx_{1}dx_{2} \le \cr
\leq O\left( \sqrt{\frac{\log \ell }{\ell ^{3}}}\right)\times O\left( \frac{1}{%
\ell}\right) \times O\left( \sqrt{\frac{\log \ell }{\ell ^{3}}}\right)
=O\left( \frac{\log \ell }{\ell ^{4}}\right) \text{ .}
}$$
For $q=6$ and $r=1$ we simply note that $\mathcal{K}_{\ell }(6;1)\le \mathcal{K}_{\ell }(5;1)$, actually
$$\displaylines{
\mathcal{K}_{\ell }(6;1)=\int_{(\cS^{2})^4}\left\vert
P_{\ell }(\cos d( x_{1},x_{2}) )\right\vert ^{5}\left\vert
P_{\ell }(\cos d( x_{2},x_{3}) )\right\vert \times \cr
\times\left\vert
P_{\ell }(\cos d( x_{3},x_{4}) )\right\vert ^{5}\left\vert
P_{\ell }(\cos d( x_{4},x_{1}) )\right\vert
dx_{1}dx_{2}dx_{3}dx_{4}\le \cr
\leq \int_{(\cS^{2})^4}\left\vert P_{\ell }(\cos d(
x_{1},x_{2}) )\right\vert ^{4}\left\vert P_{\ell }(\cos d(
x_{2},x_{3}) )\right\vert \times \cr
\times \left\vert P_{\ell }(\cos d(
x_{3},x_{4}) )\right\vert ^{4}\left\vert P_{\ell }(\cos d(
x_{4},x_{1}) )\right\vert dx_{1}dx_{2}dx_{3}dx_{4}=
\mathcal{K}_{\ell }(5;1)=O\left( \frac{\log ^{2}\ell }{\ell ^{9/2}}\right)\ .
}$$
Then we find with analogous computations as for $q=5$ that

$$\displaylines{
\mathcal{K}_{\ell }(6;2)=\int_{(\cS^{2})^4}\left\vert
P_{\ell }(\cos d( x_{1},x_{2}) )\right\vert ^{4}\left\vert
P_{\ell }(\cos d( x_{2},x_{3}) )\right\vert ^{2}\times \cr
\times \left\vert
P_{\ell }(\cos d( x_{3},x_{4}) )\right\vert ^{4}\left\vert
P_{\ell }(\cos d( x_{4},x_{1}) )\right\vert
^{2}dx_{1}dx_{2}dx_{3}dx_{4}\le \cr
\le
\int_{(\cS^{2})^4}\left\vert P_{\ell
}(\cos d( x_{1},x_{2}) )\right\vert ^{4}\left\vert P_{\ell
}(\cos d( x_{2},x_{3}) )\right\vert ^{2}\times \cr
\times \left\vert P_{\ell
}(\cos d( x_{3},x_{4}) )\right\vert ^{4}\left\vert P_{\ell
}(\cos d( x_{4},x_{1}) )\right\vert
^{2}dx_{1}dx_{2}dx_{3}dx_{4}\le \cr
\mathcal{\leq }\int_{(\cS^{2})^2}\left\vert P_{\ell }(\cos d(
x_{1},x_{2}) )\right\vert ^{4}dx_{1}
\int_{\cS^{2}}\left\vert P_{\ell }(\cos d( x_{2},x_{3})
)\right\vert ^{2}dx_{2} \times \cr
\times \int_{\cS^{2}}\left\vert P_{\ell
}(\cos d( x_{3},x_{4}) )\right\vert ^{4}dx_{4}
dx_{3}=\cr
=O\left( \frac{\log \ell }{\ell ^{2}}\right) \times O\left( \frac{1}{\ell }%
\right) \times O\left( \frac{\log \ell }{\ell ^{2}}\right) =O\left( \frac{%
\log ^{2}\ell }{\ell ^{5}}\right)
}$$
and likewise
$$\displaylines{
\mathcal{K}_{\ell }(6;3)=\int_{(\cS^{2})^4}\left\vert
P_{\ell }(\cos d( x_{1},x_{2}) )\right\vert ^{3}\left\vert
P_{\ell }(\cos d( x_{2},x_{3}) )\right\vert ^{3}\times \cr
\times \left\vert
P_{\ell }(\cos d( x_{3},x_{4}) )\right\vert ^{3}\left\vert
P_{\ell }(\cos d( x_{4},x_{1}) )\right\vert
^{3}dx_{1}dx_{2}dx_{3}dx_{4}\le \cr
\mathcal{\leq }\int_{(\cS^{2})^4}\left\vert P_{\ell
}(\cos d( x_{1},x_{2}) )\right\vert ^{3}\left\vert P_{\ell
}(\cos d( x_{2},x_{3}) )\right\vert ^{3}\left\vert P_{\ell
}(\cos d( x_{3},x_{4}) )\right\vert
^{3}dx_{1}dx_{2}dx_{3}dx_{4}=\cr
=O\left( \frac{\sqrt{\log \ell }}{\ell ^{3/2}}\right) \times O\left( \frac{%
\sqrt{\log \ell }}{\ell ^{3/2}}\right) \times O\left( \frac{\sqrt{\log \ell }%
}{\ell ^{3/2}}\right) =O\left( \frac{\log ^{3/2}\ell }{\ell ^{9/2}}\right)
\text{ .}
}$$
Finally for $q=7$
$$\displaylines{
\mathcal{K}_{\ell }(7;1) =\int_{\cS^{2}\times ...\times S^{2}}\left\vert
P_{\ell }(\cos d( x_{1},x_{2}) )\right\vert ^{6}\left\vert
P_{\ell }(\cos d( x_{2},x_{3}) )\right\vert \times \cr
\times \left\vert
P_{\ell }(\cos d( x_{3},x_{4}) )\right\vert ^{6}\left\vert
P_{\ell }(\cos d( x_{4},x_{1}) )\right\vert
dx_{1}dx_{2}dx_{3}dx_{4} \le \cr
\leq \int_{\cS^{2}\times S^{2}}\left\vert P_{\ell }(\cos d(
x_{1},x_{2}) )\right\vert ^{6}dx_{1}
\int_{\cS^{2}}\left\vert P_{\ell }(\cos d( x_{2},x_{3})
)\right\vert dx_{3} \times  \cr
\times \int_{\cS^{2}}\left\vert P_{\ell
}(\cos d( x_{3},x_{4}) )\right\vert ^{6}dx_{4}\,
dx_{2}
=O\left( \frac{1}{\ell ^{2}}\right) \times O\left( \frac{1}{\ell ^{1/2}}%
\right) \times O\left( \frac{1}{\ell ^{2}}\right) =O\left( \frac{1}{\ell
^{9/2}}\right)
}$$
and repeating the same argument we obtain
$$
\mathcal{K}_{\ell }(7;2)=O\left( \frac{1}{\ell ^{5}}%
\right)\qquad \text{and}\qquad \mathcal{K}_{\ell }(7;3) =
O\left( \frac{\log^{9/2} \ell}{\ell ^{11/2}}\right)\ .
$$
From
 \paref{simm}, we have indeed computed the bounds for $\mathcal{K}_{\ell
}(q;r)$, $q=1,\dots,7$ and $r=1,\dots, q-1$.

To conclude the proof  we note that, for $q>7$
\begin{equation*}
\max_{r=1,...,q-1}\mathcal{K}_{\ell }(q;r)=\max_{r=1,...,\left[ \frac{q}{2}%
\right] }\mathcal{K}_{\ell }(q;r)\leq \max_{r=1,...,3}\mathcal{K}_{\ell
}(6;r)=O\left( \frac{1}{\ell ^{9/2}}\right) \text{ .}
\end{equation*}%
Moreover in particular
\begin{equation*}
\max_{r=2,...,\left[ \frac{q}{2}\right] }\mathcal{K}_{\ell }(q;r)\leq
\mathcal{K}_{\ell }(7;2)\vee \mathcal{K}_{\ell }(7;3)=O\left( \frac{1}{\ell
^{5}}\right) \text{ ,}
\end{equation*}%
so that the dominant terms are of the form $\mathcal{K}_{\ell }(q;1).$
\end{proof}

\begin{proof}[Proof \paref{cumd}]
 The proof relies on the same argument of the proof of Proposition \ref{cum2},
therefore we shall omit some calculations.
In what follows we exploit repeatedly the
inequalities \paref{tridq}, \paref{trid1}, \paref{trid3} and \paref{tridgen}.

For $q=3$ we immediately have
$$\displaylines{
\mathcal{K}_{\ell }(3;1)=\int_{(\cS^{d})^4}\left\vert G_{\ell ;d}(\cos d( x_{1},x_{2})
)\right\vert ^{2}\left\vert G_{\ell ;d}(\cos d(
x_{2},x_{3}) )\right\vert \times \cr
\times  \left\vert G_{\ell
;d}(\cos d( x_{3},x_{4}) )\right\vert
^{2}\left\vert G_{\ell ;d}(\cos d( x_{4},x_{1})
)\right\vert dx_{1}dx_{2}dx_{3}dx_{4}\le \cr
\mathcal{\leq }\int_{(\cS^{d})^4}\left\vert
G_{\ell ;d}(\cos d( x_{1},x_{2}) )\right\vert
^{2}\left\vert G_{\ell ;d}(\cos d( x_{2},x_{3})
)\right\vert \left\vert G_{\ell ;d}(\cos d(
x_{3},x_{4}) )\right\vert
^{2}dx_{1}dx_{2}dx_{3}dx_{4} =\cr
= O\left( \frac{1 }{\ell ^{d-1}}\right) \times O\left( \frac{1}{%
\sqrt{\ell^{d-1} }}\right) \times O\left( \frac{1 }{\ell ^{d-1}}\right)
=O\left( \frac{1 }{\ell ^{2d +\tfrac{d}{2} - \tfrac{5}{2}}}\right) \text{ .}
}$$
Likewise for $q=4$%
$$\displaylines{
\mathcal{K}_{\ell }(4;1)=\int_{(\cS^{d})^4}\left\vert G_{\ell ;d}(\cos d( x_{1},x_{2})
)\right\vert ^{3}\left\vert G_{\ell ;d}(\cos d(
x_{2},x_{3}) )\right\vert \times \cr
\times \left\vert G_{\ell
;d}(\cos d( x_{3},x_{4}) )\right\vert
^{3}\left\vert G_{\ell ;d}(\cos d( x_{4},x_{1})
)\right\vert dx_{1}dx_{2}dx_{3}dx_{4}\le \cr
\mathcal{\leq }\int_{(\cS^{d})^4}\left\vert
G_{\ell ;d}(\cos d( x_{1},x_{2}) )\right\vert
^{3}\left\vert G_{\ell ;d}(\cos d( x_{2},x_{3})
)\right\vert \left\vert G_{\ell
;d}(\cos d( x_{3},x_{4}) )\right\vert^3\,dx_{1}dx_{2}dx_{3}dx_{4} = \cr
= O\left( \frac{1 }{\ell ^{d-\tfrac12}}\right) \times
O\left(\frac{1 }{\ell ^{\tfrac{d}2-\tfrac12}}\right) \times
O\left( \frac{1 }{\ell ^{d-\tfrac12}}\right) =O\left( \frac{1
}{\ell ^{2d + \tfrac{d}2-\tfrac32}}\right)
}$$
and moreover
$$\displaylines{
\mathcal{K}_{\ell }(4;2)=\int_{(\cS^{d})^4}\left\vert G_{\ell ;d}(\cos d( x_{1},x_{2})
)\right\vert ^{2} \times \cr
\times \left\vert G_{\ell ;d}(\cos d(
x_{2},x_{3}) )\right\vert^2 \left\vert G_{\ell
;d}(\cos d( x_{3},x_{4}) )\right\vert
^{2}\left\vert G_{\ell ;d}(\cos d( x_{4},x_{1})
)\right\vert^2 dx_{1}dx_{2}dx_{3}dx_{4}\le \cr
\mathcal{\leq }\int_{(\cS^{d})^4}\left\vert
G_{\ell ;d}(\cos d( x_{1},x_{2}) )\right\vert
^{2}\left\vert G_{\ell ;d}(\cos d( x_{2},x_{3})
)\right\vert^2 \left\vert G_{\ell
;d}(\cos d( x_{3},x_{4}) )\right\vert^2\,d\underline{x} =\cr
=O\left( \frac{1 }{\ell ^{d-1}}\right) \times O\left(
\frac{1 }{\ell ^{d-1}}\right) \times O\left( \frac{1 }{\ell
^{d-1}}\right) =O\left( \frac{1 }{\ell ^{3d - 3}}\right) \text{ ,}
}$$
where we set $d\underline{x}:=dx_{1}dx_{2}dx_{3}dx_{4}$.
Similarly,
for $q=5$ we get the bounds
$$\displaylines{
\mathcal{K}_{\ell }(5;1)=\int_{\cS^{d}\times ...\times
\cS^{d}}\left\vert G_{\ell ;d}(\cos d( x_{1},x_{2})
)\right\vert ^{4} \times \cr
\times \left\vert G_{\ell ;d}(\cos d(
x_{2},x_{3}) )\right\vert \left\vert G_{\ell
;d}(\cos d( x_{3},x_{4}) )\right\vert
^{4}\left\vert G_{\ell ;d}(\cos d( x_{4},x_{1})
)\right\vert dx_{1}dx_{2}dx_{3}dx_{4}\le \cr
\mathcal{\leq }\int_{\cS^{d}\times ...\times \cS^{d}}\left\vert
G_{\ell ;d}(\cos d( x_{1},x_{2}) )\right\vert
^{4}\left\vert G_{\ell ;d}(\cos d( x_{2},x_{3})
)\right\vert \left\vert G_{\ell ;d}(\cos d(
x_{3},x_{4}) )\right\vert
^{4}d\underline{x} =\cr
= O\left( \frac{1}{\ell ^{d}}\right) \times O\left( \frac{1}{%
\ell^{\tfrac{d}2-\tfrac12}}\right) \times O\left( \frac{1 }{\ell ^{d}}\right)
=O\left( \frac{ 1}{\ell ^{2d +\tfrac{d}2-\tfrac12 }}\right)
}$$
and
$$
\mathcal{K}_{\ell }(5;2)=O\left( \frac{1 }{\ell ^{3d -2}}\right)\ .
$$
It is immediate to check that
$$
\mathcal{K}_{\ell }(6;1)=\mathcal{K}_{\ell }(7;1)=O\left( \frac{1 }{\ell^{2d +\tfrac{d}2-\tfrac12 }}\right)\ ,\quad
\mathcal{K}_{\ell }(6;2)=\mathcal{K}_{\ell }(7;2)=O\left( \frac{1 }{\ell ^{2d + d -1}}\right)\ ,
$$
whereas
$$
\mathcal{K}_{\ell }(6;3)=O\left( \frac{1 }{\ell ^{2d + d - \tfrac32}}\right)\quad \text{and} \quad
\mathcal{K}_{\ell }(7;3)=O\left( \frac{1}{\ell ^{2d + d -\tfrac12}}\right)\ .
$$
The remaining terms are indeed bounded thanks to \paref{simm}.

In order to finish the proof, it is enough to note, as for  that for $q>7$%
\begin{equation}
\max_{r=1,...,q-1}\mathcal{K}_{\ell }(q;r)=\max_{r=1,...,\left[ \frac{q}{2}%
\right] }\mathcal{K}_{\ell }(q;r)\leq \max_{r=1,...,3}\mathcal{K}_{\ell
}(6;r)=O\left( \frac{1}{\ell^{2d +\tfrac{d}2-\tfrac12 }}\right) \text{ .}
\end{equation}%
In particular we have
\begin{equation}
\max_{r=2,...,\left[ \frac{q}{2}\right] }\mathcal{K}_{\ell }(q;r)\leq
\mathcal{K}_{\ell }(7;2)\vee \mathcal{K}_{\ell }(7;3)=O\left( \frac{1}{\ell
^{3d-1}}\right) \text{ ,}
\end{equation}%
so that the dominant terms are again of the form $\mathcal{K}_{\ell }(q;1).$
\end{proof}

\chapter{On the Defect distribution}

In this chapter we refer to \cite{mau}, where  the high-energy limit distribution of the Defect of random hyperspherical harmonics is investigated. Indeed in the previous chapter quantitative Central Limit Theorems for the empirical measure of $z$-excursion sets has been shown but for the case $z= 0$.

We  find the exact asymptotic rate for the Defect variance and a  CLT for the case of the $d$-sphere, $d>5$. The CLT in the $2$-dimensional case has been already proved in \cite{Nonlin} whereas the variance has been investigated in \cite{def}.

The remaining cases ($d=3,4,5$) will be investigated in \cite{mau}, where moreover quantitative CLTs will be proved (work still in progress).

\section{Preliminaries}

Consider the sequence of random eigenfunctions $T_\ell$, $\ell\in \N$ \paref{Telle} on $\mathbb S^d$, $d\ge 2$. As in the previous chapter, the empirical measure  of excursion
sets \paref{excset} can be written, for $z\in \mathbb R$, as
\begin{equation}
S_\ell(z) := \int_{\mathbb S^d} 1(T_\ell(x) > z)\,dx\ ,
\end{equation}
where $1_{(z,+\infty)}$ is the indicator function of the interval $(z,+\infty)$.
The case $z\ne 0$ has been treated in \cite{Nonlin, maudom} (see also Chapter 5).

Now consider  the Defect, i.e. the difference between ``cold'' and ``warm'' regions
\begin{equation}\label{defect}
D_\ell :=\int_{\mathbb S^d} 1(T_\ell(x) > 0)\,dx -\int_{\mathbb S^d} 1(T_\ell(x) < 0)\,dx\ ;
\end{equation}
note that
$$
D_\ell =2S_\ell(0) - \mu_d\ ,
$$
$\mu_d$ denoting the hyperspherical volume \paref{ms}.

Recall that the Heaviside function is defined for $t\in \R$ as
$$
\mathcal H(t) := \begin{cases} 1\ &t>0\\
0\ &t=0\\
-1\ &t<0\ ,
\end{cases}
$$
thus \paref{defect} can be rewritten simply as
$$
D_\ell = \int_{\mathbb S^d} \mathcal H(T_\ell(x))\,dx\ .
$$

Note that exchanging expectation and integration on $\mathbb S^d$ we have
$$
\E[D_\ell]= \int_{\mathbb S^d}\E\left [ \mathcal H(T_\ell(x))  \right ]\,dx = 0\ ,
$$
since $\E\left [ \mathcal H(T_\ell(x))  \right ]=0$ for every $x$, by the symmetry of the Gaussian distribution.
%
%

\section{The Defect variance}

The proofs in this section are inspired by \cite{def}, where the case $d=2$ has been investigated.
\begin{lemma}\label{lemI}
For $\ell$ even we have
\begin{equation}
\Var(D_\ell)= \frac{4}{\pi}\mu_d \mu_{d-1} \int_{0}^{\frac{\pi}{2}}  \arcsin (G_{\ell;d}(\cos \theta)) (\sin \theta)^{d-1}\,d\theta\ ,
\end{equation}
where $\mu_d$ is the hyperspherical volume \paref{ms} and $G_{\ell;d}$ the $\ell$-th normalized Gegenbauer polynomial (Chapter \ref{background} or \cite{szego}).
\end{lemma}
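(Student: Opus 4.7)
The plan is to reduce the variance of $D_\ell$ to a double integral on $\mathbb S^d\times\mathbb S^d$ of $\E[\mathcal{H}(T_\ell(x))\mathcal{H}(T_\ell(y))]$, apply the classical Sheppard/arcsin formula for the joint distribution of signs of a bivariate Gaussian, and then exploit isotropy together with the parity of Gegenbauer polynomials to reduce to the integral on $[0,\pi/2]$.

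First I would write
\begin{equation*}
\Var(D_\ell)=\int_{\mathbb S^d\times\mathbb S^d}\E[\mathcal{H}(T_\ell(x))\mathcal{H}(T_\ell(y))]\,dx\,dy,
\end{equation*}
which is justified by Fubini since $\mathcal H$ is bounded. Next, for each fixed pair $(x,y)$, the vector $(T_\ell(x),T_\ell(y))$ is centered, jointly Gaussian, with unit variances and correlation
$\rho_\ell(x,y)=G_{\ell;d}(\cos d(x,y))$ by \paref{defT}. Applying Sheppard's formula
\begin{equation*}
\E[\mathcal{H}(X)\mathcal{H}(Y)]=\frac{2}{\pi}\arcsin(\rho)\quad\text{whenever }(X,Y)\sim\mathcal N(0,\Sigma),\ \Sigma_{ii}=1,\ \Sigma_{12}=\rho,
\end{equation*}
yields $\E[\mathcal{H}(T_\ell(x))\mathcal{H}(T_\ell(y))]=\tfrac{2}{\pi}\arcsin\bigl(G_{\ell;d}(\cos d(x,y))\bigr)$.

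Then, invoking the isotropy of $T_\ell$ and the standard reduction for spherical integrals depending only on the geodesic distance, i.e. for any $f:[-1,1]\to\R$
\begin{equation*}
\int_{\mathbb S^d}f(\cos d(x,y))\,dy=\mu_{d-1}\int_0^{\pi}f(\cos\theta)(\sin\theta)^{d-1}\,d\theta,
\end{equation*}
I obtain
\begin{equation*}
\Var(D_\ell)=\frac{2}{\pi}\,\mu_d\mu_{d-1}\int_0^\pi \arcsin\bigl(G_{\ell;d}(\cos\theta)\bigr)(\sin\theta)^{d-1}\,d\theta.
\end{equation*}

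Finally, the parity relation $G_{\ell;d}(-t)=(-1)^\ell G_{\ell;d}(t)$ (already recalled in the previous chapter) shows that, for $\ell$ even, the function $\theta\mapsto \arcsin(G_{\ell;d}(\cos\theta))(\sin\theta)^{d-1}$ is symmetric about $\pi/2$; folding the integral over $[0,\pi]$ into twice the integral over $[0,\pi/2]$ gives the claimed formula. There is no real obstacle here: the only point requiring some care is the applicability of Sheppard's arcsin identity, which is standard and holds without regularization because $\mathcal H$ is bounded and the covariance matrix of $(T_\ell(x),T_\ell(y))$ is genuinely nondegenerate for $x\ne \pm y$ (the diagonal having measure zero on $\mathbb S^d\times\mathbb S^d$).
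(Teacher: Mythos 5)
Your proof is correct and follows essentially the same route as the paper's: reduce $\Var(D_\ell)$ to the double integral of $\E[\mathcal H(T_\ell(x))\mathcal H(T_\ell(y))]$, apply the Sheppard arcsine formula (which the paper imports from the proof of Lemma 4.1 in \cite{def}), pass to hyperspherical coordinates via isotropy, and fold $[0,\pi]$ onto $[0,\pi/2]$ using the parity of $G_{\ell;d}$ for $\ell$ even. Your version just spells out the arcsine identity and the nondegeneracy/measure-zero point explicitly instead of citing the reference.
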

\begin{proof}
The proof is indeed analogous to the proof of Lemma 4.1 in \cite{def}. First note that
$$\displaylines{
\Var(D_\ell) =  \int_{\cS^d} \int_{\cS^d}\E[\mathcal H(T_\ell(x))\mathcal H_\ell(T_\ell(y))]\,dx dy=\cr
=\mu_d \int_{\cS^d} \E[\mathcal H(T_\ell(x))\mathcal H(T_\ell(N))]\,dx\ ,
}$$ by the isotropic property of the random field $T_\ell$, where $N$ denotes some fixed point in $\mathbb S^d$. As explained in the proof of Lemma 4.1, we have
$$
 \E[\mathcal H(T_\ell(x))\mathcal H_\ell(T_\ell(N))]= \frac{2}{\pi} \arcsin (G_{\ell;d}(\cos \vartheta))\ ,
$$
where $\vartheta$ is the geodesic distance between $x$ and $N$.
Moreover evaluating the last integral in hyperspherical coordinates we get
\begin{equation}\label{vardef1}
\Var(D_\ell) = \mu_d \mu_{d-1} \int_0^\pi \frac{2}{\pi} \arcsin (G_{\ell;d}(\cos \vartheta))(\sin \vartheta)^{d-1} d\vartheta\ .
\end{equation}
For  $\ell$ even, we can hence write
\begin{equation}
\Var(D_\ell)=\frac{4}{\pi}\mu_d \mu_{d-1} \int_0^{\pi/2} \arcsin(G_{\ell;d}(\cos \vartheta))(\sin \vartheta)^{d-1} d\vartheta\ .
\end{equation}
\end{proof}
Note that, by \paref{vardef1}, if $\ell$ is odd, then $D_\ell=0$, therefore we can restrict ourselves to even $\ell$ only.

The main result of this section is the following.
\begin{theorem}\label{thdefvar}
The defect variance  is asymptotic to, as $\ell\to +\infty$ along even integers
$$
\Var(D_\ell) = \frac{C_d}{\ell^d}(1 + o(1))\ ,
$$
where $C_d$ is a strictly positive constant depending on $d$, that can be expressed by the formula
$$\displaylines{
C_d=\frac{4}{\pi}\mu_d \mu_{d-1}\int_{0}^{+\infty} \psi^{d-1}
\Big ( \arcsin \left (2^{\frac{d}{2} - 1}\left (\frac{d}2-1 \right)!\,
J_{\frac{d}{2}-1}(\psi )\psi ^{-\left( {\textstyle\frac{d}{2}}
-1\right)}\right)              +\cr
-  2^{\frac{d}{2} - 1}\left (\frac{d}2-1 \right)!\,
J_{\frac{d}{2}-1}(\psi )\psi ^{-\left( {\textstyle\frac{d}{2}}
-1\right)}          \Big )\,d\psi\ .
}$$
\end{theorem}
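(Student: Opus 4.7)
The plan is to reduce the claim to a high-energy asymptotic analysis of the integral in Lemma \ref{lemI}, and then to extract strict positivity of $C_d$ from the Wiener chaos expansion of the Heaviside function $\mathcal H$. For $\ell\ge 1$ even, the change of variable $t=\cos\vartheta$ together with the orthogonality of $G_{\ell;d}$ with respect to the weight $(1-t^2)^{(d-2)/2}$ on $[-1,1]$ and the symmetry $G_{\ell;d}(-t)=G_{\ell;d}(t)$ yields $\int_0^{\pi/2}G_{\ell;d}(\cos\vartheta)(\sin\vartheta)^{d-1}\,d\vartheta=0$. Subtracting this identically vanishing quantity from Lemma \ref{lemI} gives
$$
\Var(D_\ell)=\frac{4}{\pi}\mu_d\mu_{d-1}\int_0^{\pi/2}\bigl[\arcsin G_{\ell;d}(\cos\vartheta)-G_{\ell;d}(\cos\vartheta)\bigr](\sin\vartheta)^{d-1}\,d\vartheta,
$$
and since $\arcsin(x)-x=\frac{x^3}{6}+O(x^5)$ near the origin, the integrand is cubic to leading order in $G_{\ell;d}$, entirely consistent with the expected rate $\ell^{-d}$ (compare with Proposition \ref{varianza} at $q=3$).

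Next I apply Hilb's asymptotic formula and the change of variable $\psi=L\vartheta$, with $L=\ell+(d-1)/2$, exactly as in the proof of Proposition \ref{varianza}. Setting $g(\psi):=2^{\frac{d}{2}-1}(\tfrac{d}{2}-1)!\,J_{\frac{d}{2}-1}(\psi)\,\psi^{-(\frac{d}{2}-1)}$, one has $G_{\ell;d}(\cos(\psi/L))\to g(\psi)$ and $(L\sin(\psi/L))^{d-1}\to\psi^{d-1}$ pointwise on $(0,+\infty)$, so that
$$
\ell^d\,\Var(D_\ell)\ \longrightarrow\ \frac{4}{\pi}\mu_d\mu_{d-1}\int_0^{+\infty}\psi^{d-1}\bigl[\arcsin g(\psi)-g(\psi)\bigr]\,d\psi\ =\ C_d.
$$
The main analytic obstacle is to legitimate this passage to the limit under the integral sign. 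For $d>3$, the Taylor bound $|\arcsin x-x|\le C|x|^3$ for $|x|\le \tfrac12$ together with the decay $|J_{\frac{d}{2}-1}(\psi)|=O(\psi^{-1/2})$ at infinity produces an absolutely integrable majorant of order $\psi^{-(d-1)/2}$, so that dominated convergence applies directly. In the borderline case $d=3$ only conditional convergence is available (the integrand decays like $\cos^3(\psi-\cdot)/\psi$) and the argument must mimic the splitting on an intermediate scale $K=K(\ell)\to+\infty$ used in the proof of Proposition \ref{varianza} for the case $(d,q)=(3,3)$, together with the identity $\cos^3=\tfrac34\cos+\tfrac14\cos(3\cdot)$ to exploit the oscillation via integration by parts.

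The formula for $C_d$ carries no obvious sign, since $g$ oscillates; to establish strict positivity I resort to the chaos expansion of $\mathcal H$. Since $\mathcal H$ is odd we have $J_{2q}(\mathcal H)=0$ for every $q$, and since $h_{\ell;1,d}\equiv 0$ one obtains
$$
\Var(D_\ell)=\sum_{q\text{ odd},\,q\ge 3}\frac{J_q(\mathcal H)^2}{(q!)^2}\,\Var(h_{\ell;q,d}).
$$
Proposition \ref{varianza} gives $\Var(h_{\ell;q,d})\sim \frac{2\,q!\,\mu_d\mu_{d-1}\,c_{q;d}}{\ell^d}$ term by term; a uniform tail bound (analogous to the one used in Proposition \ref{general}) allows to exchange limit and summation, yielding the spectral representation
$$
C_d\ =\ 2\mu_d\mu_{d-1}\sum_{q\text{ odd},\,q\ge 3}\frac{J_q(\mathcal H)^2}{q!}\,c_{q;d}\ >\ 0,
$$
the $q=3$ summand being already strictly positive since $J_3(\mathcal H)\neq 0$ and $c_{3;d}>0$. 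Thus steps one and three are rather formal; the genuine difficulty is concentrated in the dominated convergence argument of the second step, especially in the low-dimensional boundary case $d=3$.
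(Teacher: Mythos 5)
Your reduction step and your chaos step are, in substance, the paper's own proof. The paper subtracts the vanishing linear term exactly as you do, then expands $\arcsin t - t=\sum_{k\ge 1}a_k t^{2k+1}$, applies Proposition \ref{varianza} to each $\int_0^{\pi/2}G_{\ell;d}(\cos\vartheta)^{2k+1}(\sin\vartheta)^{d-1}d\vartheta$, and controls the tail uniformly in $k$ via $\int_0^{\pi/2}|G_{\ell;d}|^{2k+1}(\sin\vartheta)^{d-1}d\vartheta\le\int_0^{\pi/2}|G_{\ell;d}|^{5}(\sin\vartheta)^{d-1}d\vartheta=O(\ell^{-d})$ together with $a_k\sim c\,k^{-3/2}$, exactly the ``uniform tail bound'' you invoke; your Hermite-coefficient formulation is the same decomposition, since $\E[\mathcal H(Z_1)\mathcal H(Z_2)]=\tfrac2\pi\arcsin\rho=\sum_q J_q(\mathcal H)^2\rho^q/q!$, so the two series coincide termwise. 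Where you genuinely diverge is in how the integral formula for $C_d$ is produced: you rescale $\psi=L\vartheta$ and pass to the limit inside $\int(\arcsin G_{\ell;d}-G_{\ell;d})$ by dominated convergence, whereas the paper first identifies $C_d=\tfrac4\pi\mu_d\mu_{d-1}\sum_k a_k c_{2k+1;d}$ and then exchanges the summation with the Bessel integral. Your route is more direct (one limit instead of a series-plus-exchange), at the price of needing a genuinely uniform-in-$\ell$ domination.

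Three points need repair. First, your majorant is built from the decay of the \emph{limit} function $g(\psi)$; dominated convergence requires a bound on the pre-limit integrand uniform in $\ell$, e.g.\ $|G_{\ell;d}(\cos\vartheta)|\le C(1+\ell\vartheta)^{-(d-1)/2}$ on $[0,\pi/2]$, or else you must carry Hilb's remainder $\delta(\vartheta)$ explicitly as is done in the proof of Proposition \ref{varianza}; as written, ``Hilb plus $|J_{d/2-1}(\psi)|=O(\psi^{-1/2})$'' describes the limit, not a dominating function. Second, the case $d=3$ is only gestured at: the cubic part of $\arcsin g-g$ yields a merely conditionally convergent integral, and your splitting at a scale $K(\ell)$ must be run at the pre-limit level (note that the paper's route never needs this extra work for the asymptotic law itself, because it only uses Proposition \ref{varianza} for each fixed odd $q$, where $(d,q)=(3,3)$ has already been treated). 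Third, and this touches the statement itself: strict positivity of $C_d$ rests on $c_{3;d}>0$, which you assert without proof. The integral defining $c_{3;d}$ is oscillatory and its sign is not apparent; the paper obtains it from the closed-form evaluation recalled from \cite{andrews}. Your remark that $J_3(\mathcal H)\neq 0$ is correct but by itself gives nothing, since all one knows a priori is $c_{2k+1;d}\ge 0$.
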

\begin{proof}
Here we are inspired by \cite[Proposition 4.2, Theorem 1.2]{def}.
Since \cite{szego}
$$
\int_0^{\pi/2} G_{\ell;d}(\cos \vartheta)(\sin \vartheta)^{d-1} d\vartheta=0\ ,
$$
from Lemma \ref{lemI} we can write
$$\displaylines{
\Var(D_\ell)=
\frac{4}{\pi}\mu_d \mu_{d-1} \int_{0}^{\frac{\pi}{2}} \left (\arcsin (G_{\ell;d}(\cos \theta))-
G_{\ell;d}(\cos \theta)\right )(\sin \theta)^{d-1}\,d\theta\ .
}$$
Let now
$$
\arcsin(t) - t = \sum_{k=1}^{+\infty} a_k t^{2k+1}
$$
be the Taylor expansion of the arcsine,
where
$$
a_k = \frac{(2k)!}{4^k (k!)^2 (2k+1)}\sim \frac{1}{2\sqrt{\pi}k^{3/2}}\ ,\quad k\to +\infty\ .
$$ Since the Taylor series is uniformly absolutely convergent,
we may write
$$
\Var(D_\ell)
=\frac{4}{\pi}\mu_d \mu_{d-1} \sum_{k=1}^{+\infty} a_k \int_{0}^{\frac{\pi}{2}}
G_{\ell;d}(\cos \theta)^{2k+1}(\sin \theta)^{d-1}\,d\theta\ .
$$
Now from Proposition \ref{varianza} we have
\begin{equation}\label{limG}
\lim_{\ell\to +\infty} \ell^d \int_{0}^{\frac{\pi}{2}}
G_{\ell;d}(\cos \theta)^{2k+1}(\sin \theta)^{d-1}\,d\theta= c_{2k+1;d}\ ,
\end{equation}
where
\begin{equation*}
c_{2k+1;d}=\left(2^{\frac{d}{2} - 1}\left (\frac{d}2-1 \right)!\right)^{2k+1}\int_{0}^{+\infty
}J_{\frac{d}{2}-1}(\psi )^{2k+1}\psi ^{-(2k+1)\left( {\textstyle\frac{d}{2}}%
-1\right) +d-1}d\psi\ .  \label{cq}
\end{equation*}%
Therefore we would expect that
\begin{equation}\label{guess}
\Var(D_\ell) \sim \frac{C_d}{\ell^d}\ ,
\end{equation}
where
\begin{equation}\label{guess1}
C_d = \frac{4}{\pi}\mu_d \mu_{d-1} \sum_{k=1}^{+\infty} a_kc_{2k+1;d}\ .
\end{equation}
Before proving \paref{guess} that is the statement of this theorem,
let us check that $C_d>0$, assuming \paref{guess1} true.
This is easy since  the r.h.s. of \paref{guess1} is a series of  nonnegative terms and from \cite[p. 217]{andrews} we have
\begin{equation}
c_{3;d} = \left(2^{\frac{d}{2} - 1}\left (\frac{d}2-1 \right)!\right)^3
\frac{3^{\frac{d}{2} -\frac32}}{2^{3\left (\frac{d}2-1 \right)-1}\sqrt \pi\,
\Gamma \left ( \frac{d}{2} -\frac12   \right )}>0\ .
\end{equation}
Moreover, assuming \paref{guess1} true, we get
$$\displaylines{
C_d = \cr
= \frac{4}{\pi}\mu_d \mu_{d-1} \sum_{k=1}^{+\infty} a_k
\left(2^{\frac{d}{2} - 1}\left (\frac{d}2-1 \right)!\right)^{2k+1}\int_{0}^{+\infty
}J_{\frac{d}{2}-1}(\psi )^{2k+1}\psi ^{-(2k+1)\left( {\textstyle\frac{d}{2}}
-1\right) +d-1}d\psi=\cr
=\frac{4}{\pi}\mu_d \mu_{d-1}\int_{0}^{+\infty} \sum_{k=1}^{+\infty} a_k
\left(2^{\frac{d}{2} - 1}\left (\frac{d}2-1 \right)!\,
J_{\frac{d}{2}-1}(\psi )\psi ^{-\left( {\textstyle\frac{d}{2}}
-1\right)}\right)^{2k+1}\psi^{d-1}\, d\psi=\cr
=\frac{4}{\pi}\mu_d \mu_{d-1}\int_{0}^{+\infty}
\Big( \arcsin \left (2^{\frac{d}{2} - 1}\left (\frac{d}2-1 \right)!\,
J_{\frac{d}{2}-1}(\psi )\psi ^{-\left( {\textstyle\frac{d}{2}}
-1\right)}\right)              +\cr
-  2^{\frac{d}{2} - 1}\left (\frac{d}2-1 \right)!\,
J_{\frac{d}{2}-1}(\psi )\psi^{-\left( {\textstyle\frac{d}{2}}
-1\right)}           \Big)\psi^{d-1}\,d\psi\ ,
}$$
which is the second statement of this theorem.
To justify the exchange
of the  integration and summation order, we consider the finite summation
$$
\sum_{k=1}^{m} a_k \int_0^{+\infty}
\left(2^{\frac{d}{2} - 1}\left (\frac{d}2-1 \right)!\,
J_{\frac{d}{2}-1}(\psi )\psi ^{-\left( {\textstyle\frac{d}{2}}
-1\right)}\right)^{2k+1}\psi^{d-1}\, d\psi
$$
using $a_k\sim \frac{c}{k^{3/2}}$ ($c>0$) and the asymptotic behavior of Bessel functions for large argument \cite{szego} to bound the contributions of tails, and take the limit $m\to +\infty$.

Let us now formally prove the asymptotic result for the variance \paref{guess}.
Note that
$$\displaylines{
 \sum_{k=m+1}^{+\infty} a_k \int_{0}^{\frac{\pi}{2}}
\left | G_{\ell;d}(\cos \theta) \right |^{2k+1}(\sin \theta)^{d-1}\,d\theta\le\cr
\le   \sum_{k=m+1}^{+\infty} a_k \int_{0}^{\frac{\pi}{2}}
\left | G_{\ell;d}(\cos \theta) \right |^{5}(\sin \theta)^{d-1}\,d\theta
\ll \frac{1}{\ell^d} \sum_{k=m+1}^{+\infty} \frac{1}{k^{3/2}}\ll  \frac{1}{\sqrt{m}\, \ell^d}\ .
}$$
Therefore we have for $m=m(\ell)$ to be chosen
$$
\Var(D_\ell)=\frac{4}{\pi}\mu_d \mu_{d-1} \sum_{k=1}^{m} a_k \int_{0}^{\frac{\pi}{2}}
G_{\ell;d}(\cos \theta)^{2k+1}(\sin \theta)^{d-1}\,d\theta + O\left (   \frac{1}{\sqrt{m}\, \ell^d} \right)\ .
$$
From \paref{limG}, we can write
$$
\Var(D_\ell) =C_{d,m} \cdot \frac{1}{\ell^d} + o(\ell^{-d}) + \frac{1}{\sqrt{m}\, \ell^d}\ ,
$$
where
$$
C_{d,m}:=\frac{4}{\pi}\mu_d \mu_{d-1} \sum_{k=1}^{m} a_k c_{2k+1;d}\ .
$$
Now since $C_{d,m}\to C_d$ as $m\to +\infty$, we can conclude.
\end{proof}
\section{The CLT}

In this section we prove a CLT for the Defect of random eigenfunctions on the $d$-sphere, $d\ne 3,4,5$, whose proof is inspired by the proof of Corollary 4.2 in \cite{Nonlin}.
\begin{theorem}\label{thDef}
As $\ell\to +\infty$ along even integers, we have
$$
\frac{D_\ell}{\sqrt{\Var(D_\ell)}}\mathop{\to}^{\mathcal L} Z\ ,
$$
where $Z\sim \mathcal N(0,1)$.
\end{theorem}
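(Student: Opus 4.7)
The approach is to combine the Wiener chaos expansion of the Heaviside function with the polynomial CLT in Proposition \ref{corollario1} and the tail control already implicit in the proof of Theorem \ref{thdefvar}. Since $\mathcal{H}$ is odd, only odd Hermite coefficients $J_q(\mathcal{H}) := \E[\mathcal{H}(Z) H_q(Z)]$ contribute, and since $h_{\ell,1,d} = \int_{\mathbb{S}^d} T_\ell(x)\,dx = 0$ a.s., I would write
\[
D_\ell = \sum_{\substack{q \geq 3 \\ q \text{ odd}}} \beta_q\, h_{\ell,q,d}, \qquad \beta_q := \frac{J_q(\mathcal{H})}{q!},
\]
with convergence in $L^2(\P)$ and $\beta_3 \neq 0$. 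By orthogonality of distinct Hermite chaoses, $\Var(D_\ell) = \sum_q \beta_q^2 \Var(h_{\ell,q,d})$, and by Theorem \ref{thdefvar} the leading order is $C_d / \ell^d$.

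The plan is then a truncation argument. For fixed odd $Q \geq 3$, set
\[
D_\ell^{(Q)} := \sum_{\substack{3 \leq q \leq Q \\ q \text{ odd}}} \beta_q\, h_{\ell,q,d}.
\]
Since $d \neq 3,4,5$, the CLT from Corollary \ref{cor1} applies to $h_{\ell,3,d}$ (the explicit positive value $c_{3;d}>0$ recalled in the proof of Theorem \ref{thdefvar} ensures $c_{3;d}\neq 0$), as well as to every $h_{\ell,q,d}$ with $q$ odd, $q \geq 5$. Proposition \ref{corollario1}, applied with $\beta_2 = 0$ and $\beta_3 \neq 0$, then yields, as $\ell \to +\infty$ along even integers,
\[
\frac{D_\ell^{(Q)}}{\sqrt{\Var(D_\ell^{(Q)})}} \mathop{\longrightarrow}^{\mathcal{L}} Z \sim \mathcal{N}(0,1).
\]

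The second ingredient is tail control. Mimicking the estimate used in the proof of Theorem \ref{thdefvar}, and exploiting $|G_{\ell;d}| \leq 1$ and $a_k \sim c/k^{3/2}$, I would show that
\[
\Var(D_\ell - D_\ell^{(Q)}) \ll \sum_{k > (Q-1)/2} a_k \int_0^{\pi/2} |G_{\ell;d}(\cos\vartheta)|^5 (\sin\vartheta)^{d-1}\,d\vartheta = O\!\left(\frac{1}{\sqrt{Q}\,\ell^d}\right),
\]
uniformly in $\ell$, where the last equality uses Proposition \ref{varianza} with $q = 5$. Combined with $\Var(D_\ell) \sim C_d/\ell^d$, this gives
\[
\lim_{Q \to +\infty}\,\limsup_{\ell \to +\infty}\, \frac{\Var(D_\ell - D_\ell^{(Q)})}{\Var(D_\ell)} = 0, \qquad \lim_{Q \to +\infty}\,\lim_{\ell \to +\infty}\, \frac{\Var(D_\ell^{(Q)})}{\Var(D_\ell)} = 1.
\]

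To conclude, I would write
\[
\frac{D_\ell}{\sqrt{\Var(D_\ell)}} = \sqrt{\frac{\Var(D_\ell^{(Q)})}{\Var(D_\ell)}}\cdot \frac{D_\ell^{(Q)}}{\sqrt{\Var(D_\ell^{(Q)})}} + \frac{D_\ell - D_\ell^{(Q)}}{\sqrt{\Var(D_\ell)}},
\]
and run the standard double-limit argument: testing against Lipschitz bounded $f$, one bounds the cross-error by $\|f\|_{\mathrm{Lip}} \cdot \sqrt{\Var((D_\ell - D_\ell^{(Q)})/\sqrt{\Var(D_\ell)})} = O(Q^{-1/4})$, sends $\ell \to +\infty$ for fixed $Q$ using Step 1, and finally $Q \to +\infty$ using the variance ratio $\to 1$. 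The main obstacle is Step 1 for the third chaos $h_{\ell,3,d}$: since $\Var(h_{\ell,3,d})$ has the same order $\ell^{-d}$ as every other nontrivial odd chaos, no single term dominates, so the full Fourth Moment CLT for $h_{\ell,3,d}$ is indispensable—and the bounds in Proposition \ref{cumd} are sharp enough to furnish it precisely when $d \neq 3,4,5$, which is exactly the restriction in the statement.
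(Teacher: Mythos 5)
Your proposal is correct and follows essentially the same route as the paper: expand $D_\ell$ into odd-order chaoses, truncate at a finite level, apply the polynomial CLT (Corollary \ref{cor1} together with Proposition \ref{corollario1}, which requires $d\ne 3,4,5$ via $h_{\ell,3;d}$), and control the tail variance uniformly in $\ell$ by $O\big(Q^{-1/2}\ell^{-d}\big)$ using $|G_{\ell;d}|\le 1$, the $q=5$ moment bound from Proposition \ref{varianza}, and $a_k\sim c\,k^{-3/2}$. The only difference is cosmetic: you spell out the double-limit argument with Lipschitz test functions, whereas the paper concludes more tersely by letting the truncation level grow after the $L^2$ comparison of the two normalized quantities.
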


Since our aim for the next future is to find a quantitative CLT, we will first compute its chaotic expansion.

\subsection{Chaotic expansions}

Let us write the chaotic expansion \paref{chaos exp} for the Defect in the form
$$
D_\ell = \sum_{q=0}^{+\infty} \frac{J_q(D_\ell)}{q!}\int_{\mathbb S^d} H_q(T_\ell(x))\,dx\ .
$$
Recalling that $D_\ell=2S_\ell(0) -\mu_d$, let us find the chaotic expansion for $S_\ell(0)$. Note that
$
\E[S_\ell(0)]= \frac12 \mu_d$.
For $q\ge 1$
$$\displaylines{
J_q(S_\ell(0)) = \int_\R 1(z>0) (-1)^q \phi^{-1}(z) \frac{d^q}{dz^q} \phi(z) \phi(z)\,dz=\cr
= (-1)^q \int_0^{+\infty}  \frac{d^q}{dz^q} \phi(z) \,dz =\cr
=-(-1)^{(q-1)} \phi(z) \phi^{-1}(z) \frac{d^{(q-1)}}{dz^{(q-1)}} \phi(z) \Big |_0^{+\infty}=\cr
=-\phi(z) H_{q-1}(z)|_0^{+\infty}=\phi(0) H_{q-1}(0)=\begin{cases} 0\ & q\  \text{even}\\
\frac{(-1)^{\frac{q-1}{2}}}{\sqrt{2\pi} 2^{\frac{q-1}{2}} \left ( \frac{q-1}{2}\right )!}=
\frac{(-1)^{\frac{q-1}{2}}}{\sqrt{2\pi} (q-1)!!} & q\  \text{odd}\ .
\end{cases}
}$$
Therefore the Wiener-It\^o chaos decomposition for the Defect is
$$
D_\ell =  2 S_\ell(0)-\mu_d  =  \sum_{k=1}^{+\infty}
\sqrt{\frac{2}{\pi}}\frac{(-1)^{k}}{ (2k+1)! (2k)!!}\int_{\mathbb S^d} H_{2k+1}(T_\ell(x))\,dx\ ,
$$
with
$$
J_{2k}(D_\ell)=0\ , \qquad J_{2k+1}(D_\ell) =\sqrt{\frac{2}{\pi}}\frac{(-1)^{k}}{(2k)!!}\ .
$$

\subsection{Proof of Theorem \ref{thDef}}

Let $m\in \N, m\ge 2$ to be chosen later and set
$$
D_{\ell,m} :=\sum_{k=1}^{m-1}
\sqrt{\frac{2}{\pi}}\frac{(-1)^{k}}{ (2k+1)! (2k)!!}\int_{\mathbb S^d} H_{2k+1}(T_\ell(x))\,dx\ .
$$
Simple estimates give
$$\displaylines{
\E\left [  \left (  \frac{D_\ell}{\sqrt{\Var(D_\ell)}} -   \frac{D_{\ell,m}}{\sqrt{\Var(D_{\ell,m})}}    \right )^2 \right ]\le \cr
\le 2\E\left [  \left (     \frac{D_\ell}{\sqrt{\Var(D_\ell)}} -   \frac{D_{\ell,m}}{\sqrt{\Var(D_{\ell})}}    \right )^2 \right ] + 2\E\left [  \left (    \frac{D_{\ell,m}}{\sqrt{\Var(D_\ell)}} -   \frac{D_{\ell,m}}{\sqrt{\Var(D_{\ell,m})}}    \right )^2 \right ]\le \cr
\le  2\E\left [  \left (    \frac{D_\ell}{\sqrt{\Var(D_\ell)}} -   \frac{D_{\ell,m}}{\sqrt{\Var(D_{\ell})}}    \right )^2 \right ] + 2 \left (  \frac{\Var(D_{\ell,m})}{\Var(D_\ell)} +1 -  2\sqrt{ \frac{\Var(D_{\ell,m})}{\Var(D_{\ell})}}   \right )\ .
}$$
The first term to control is therefore
$$
\frac{D_\ell}{\sqrt{\Var[D_\ell]}} - \frac{D_{\ell,m}}{\sqrt{\Var[D_\ell]}}\ .
$$
We have, repeating the same argument as in \cite{Nonlin}
$$\displaylines{
\E\left [\left (\frac{D_\ell}{\sqrt{\Var[D_\ell]}} -
\frac{D_{\ell,m}}{\sqrt{\Var[D_\ell]}}  \right )^2 \right ]
=\frac{1}{\Var[D_\ell]}\E\left [\left (D_\ell -
D_{\ell,m}  \right )^2 \right ] =\cr
=\frac{1}{\Var[D_\ell]}\E\left [\left (\sum_{k=m}^{+\infty}
\sqrt{\frac{2}{\pi}}\frac{(-1)^{k}}{ (2k+1)! (2k)!!}\int_{\mathbb S^d} H_{2k+1}(T_\ell(x))\,dx \right )^2 \right ] =\cr
=\frac{1}{\Var[D_\ell]}\sum_{k=m}^{+\infty}\frac{2}{\pi}\frac{1}{ ((2k+1)! (2k)!!)^2}
\Var\left (
\int_{\mathbb S^d} H_{2k+1}(T_\ell(x))\,dx\right ) =\cr
=\frac{1}{\Var[D_\ell]}\left (\frac{1}{\ell^d}\sum_{q=m}^{+\infty}a_q c_{2q+1;d} + o(\ell^{-d})  \right )\le \cr
\le \frac{1}{\Var[D_\ell]}\left (\frac{1}{2\sqrt{\pi}}
\frac{1}{\ell^d}\sum_{q=m}^{+\infty}\frac{c_{5;d}}{q^{\frac32}} + o(\ell^{-d})  \right ) = \frac{1}{\Var[D_\ell]}\times O \left ( \frac{1}{\ell^d \sqrt{m}  }   \right )=O \left (
\frac{1}{\sqrt m}\right)\ ,
}$$
where we used Theorem \ref{thdefvar}.

Moreover for the second term we have, from \paref{limG} and Theorem \ref{thdefvar}
$$\displaylines{
\frac{\Var(D_{\ell,m})}{\Var(D_\ell)} +1 -  2\sqrt{ \frac{\Var(D_{\ell,m})}{\Var(D_{\ell})}} = 2 +O\left(  \frac{1}{\sqrt{m}} \right ) -2 \sqrt{1 +O\left(  \frac{1}{\sqrt{m}} \right )  } =\cr
= O\left(  \frac{1}{\sqrt{m}} \right )\ .
}$$
Putting thigns together we immediately get
$$
\E\left [  \left (  \frac{D_\ell}{\sqrt{\Var(D_\ell)}} -   \frac{D_{\ell,m}}{\sqrt{\Var(D_{\ell,m})}}    \right )^2 \right ]= O\left(  \frac{1}{\sqrt{m}} \right )\ .
$$
For every fixed $m$, Corollary \ref{cor1} and \S \ref{genpoly} gives,  if $d\ne 3,4,5$
$$
\frac{D_{\ell,m}}{\sqrt{\Var(D_{\ell,m})}}\to Z\ ,
$$
where $Z\sim \mathcal N(0,1)$, so that  since $m$ can be chosen arbitrarily large we must have
$$
\frac{D_{\ell}}{\sqrt{\Var(D_{\ell})}}\to Z\ .
$$

\section{Final remarks}

For the remaining cases $d=3,4,5$, we need to prove a CLT for the random variables $h_{\ell,3;d}$, as $\ell\to +\infty$. Indeed bounds on fourth order cumulants obtained in Theorem \ref{teo1} are not enough to garantee the convergence to the standard Gaussian distribution. In \cite{mau} we will investigate the exact rate for fourth order cumulants of $h_{\ell,3;d}$, which will allow to extend Theorem \ref{thDef} to dimensions $d=3,4,5$.

Moreover, we will prove quantitative CLTs for the Defect in the high-energy limit which should be of the form
$$
d_W\left (  \frac{D_{\ell}}{\sqrt{\Var(D_{\ell})}}     ,  Z    \right ) = O\left( \ell^{-1/4}  \right)\ ,
$$
where $d_W$ denotes Wasserstein distance \paref{prob distance}.

\chapter{Random length of level curves}

In this chapter, our aim is to investigate the high-energy behavior  for the length of level curves of Gaussian spherical eigenfunctions $T_\ell$, $\ell\in \N$ \paref{Telle} on $\mathbb S^2$.

\section{Preliminaries}

\subsection{Length{{} : mean and variance}}

Consider the total  length of level curves of random
eigenfunctions, i.e. the sequence the random variables $\{\Lc_{\ell}(z)\}_{\ell \in \N}$ given by, for $z\in \R$,
\begin{equation}\label{e:lengthS}
\Lc_\ell(z) := \text{length}(T_\ell^{-1}(z)).
\end{equation}
As already anticipated in the Introduction of this thesis, the expected value of $\Lc_{\ell}(z)$ was computed (see e.g. \cite{wigsurvey}) to be
\begin{equation}
\E[\mathcal L_\ell(z)]= 4\pi \frac{\e^{-z^2/2}}{2\sqrt{2}}\sqrt{\ell(\ell+1)},
\end{equation}
consistent to Yau's conjecture \cite{Yau, Yau2}.
The asymptotic behaviour of the variance $\Var(\Lc_{\ell}(z))$ of $\Lc_{\ell}(z)$ was
 resolved in \cite{wigsurvey, Wig} as follows.

For $z\ne 0$, we have
\begin{equation}
\Var(\mathcal L_\ell(z))\sim \ell\cdot C z^4\e^{-z^2} \ ,\quad \ell\to +\infty\ ,
\end{equation}
for some $C>0$. Moreover, I.Wigman computed the exact constant (private computations)
$$
C= \frac{\pi^2}{2}\ .
$$
For the nodal case ($z=0$) we have
\begin{equation*}
\Var(\mathcal L_\ell(0))\sim \frac{1}{32} \cdot \log \ell\ ,\quad \ell\to +\infty\ .
\end{equation*}
The order of magnitude of $\Var(\Lc_{\ell}(0))$ is
 smaller  than what would be a natural guess
(i.e., $ \ell $ as for the non-nodal case); this situation is due to some cancelation in the asymptotic expansion of the nodal variance (``obscure Berry's cancellation'' --  see \cite{wigsurvey, Wig}) and is similar to the {\em cancellation phenomenon} observed by Berry
in a different setting \cite{Berry2002}.

\subsection{Main result}

{{}

Our principal aim  is to study the asymptotic behaviour, as $\ell\to \infty$, of the distribution of the sequence of normalized random variables
\begin{equation}\label{e:culpS}
\widetilde{\Lc}_{\ell}(z) :=  \frac{\mathcal{L}_{\ell}(z) - \E[\mathcal{ L}_{\ell}(z)]}{\sqrt{\Var(\mathcal{L}_{\ell}(z) )}}, \quad \ell\geq 1.
\end{equation}

The following statement is the main result of this chapter.

\begin{theorem}
\label{main th S}  For $z\ne 0$ the sequence $\big\{\widetilde{\mathcal L}_\ell(z) : \ell\geq 1\big\}$ converges  in distribution to a standard Gaussian r.v. $Z$. In particular
\begin{equation}
\lim_{\ell\to +\infty} d(\widetilde{\mathcal L}_\ell(z), Z) = 0\ ,
\end{equation}
where $d$ denotes either the Kolmogorov distance \paref{prob distance}, or an arbitrary distance
metrizing the weak convergence on $\mathscr P$ the space of all probability measures on $\R$ (see Chapter \ref{background}).
\end{theorem}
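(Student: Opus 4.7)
The plan is to reduce the analysis of $\widetilde{\mathcal L}_\ell(z)$ to its projection onto the second Wiener chaos, and then to invoke the quantitative CLT for $h_{\ell,2;2}$ already established in Chapter \ref{clthl}. First I would start from the chaos decomposition \paref{chaosexpintro} provided by Proposition \ref{teoexpS}, and isolate the second-order component given by
$$
\text{proj}(\mathcal L_\ell(z)|C_2) = \sqrt{\frac{\ell(\ell+1)}{2}}\,\frac{z^2 \e^{-z^2/2}}{4}\, h_{\ell,2;2}.
$$
By \paref{q=2} specialized to $d=2$ (so that $\mu_2=4\pi$ and $n_{\ell,2}=2\ell+1$) we get $\Var(h_{\ell,2;2})=32\pi^2/(2\ell+1)$, hence
$$
\Var\bigl(\text{proj}(\mathcal L_\ell(z)|C_2)\bigr) = \frac{\ell(\ell+1)}{2\ell+1}\,\pi^2 z^4 \e^{-z^2} \sim \frac{\pi^2 z^4 \e^{-z^2}}{2}\,\ell, \qquad \ell \to +\infty,
$$
which matches exactly the known asymptotic $\Var(\mathcal L_\ell(z))\sim (\pi^2/2)\,z^4 \e^{-z^2}\,\ell$ for $z\neq 0$ recalled in the introduction of this chapter. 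Hence
$$
\lim_{\ell\to +\infty}\frac{\Var(\text{proj}(\mathcal L_\ell(z)|C_2))}{\Var(\mathcal L_\ell(z))} = 1.
$$

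Combining this identity with the orthogonality of distinct Wiener chaoses, the total tail variance $\sum_{q\ge 3}\Var(\text{proj}(\mathcal L_\ell(z)|C_q))$ is $o(\ell)$ as $\ell\to+\infty$, so that, in $L^2(\P)$,
$$
\widetilde{\mathcal L}_\ell(z) = \frac{\text{proj}(\mathcal L_\ell(z)|C_2)}{\sqrt{\Var(\mathcal L_\ell(z))}} + o_\P(1).
$$
Since the second chaotic component is a deterministic non-zero multiple of $h_{\ell,2;2}$ (recall $z\neq 0$), its normalization coincides with $h_{\ell,2;2}/\sqrt{\Var(h_{\ell,2;2})}$, and Proposition \ref{teo1} in the case $(d,q)=(2,2)$ yields
$$
d_{\mathcal D}\!\left(\frac{h_{\ell,2;2}}{\sqrt{\Var(h_{\ell,2;2})}}, Z\right) = O(\ell^{-1/2}), \qquad \mathcal D\in\{K,TV,W\}.
$$
In particular $h_{\ell,2;2}/\sqrt{\Var(h_{\ell,2;2})}\mathop{\to}^{\mathcal L} Z$, and Slutsky's theorem, together with the fact that $d_K$-convergence to a continuous law is equivalent to weak convergence, gives the conclusion for both the Kolmogorov distance and any distance metrizing weak convergence.

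The only genuinely non-routine step is the $L^2$-equivalence above, that is, the control of the tail $\sum_{q\ge 3}\Var(\text{proj}(\mathcal L_\ell(z)|C_q))=o(\ell)$. I expect this to be the main obstacle. In view of \paref{chaosexpintro} it reduces to bounding the variances of the tri-linear functionals
$$
\int_{\mathbb S^2} H_{q-u}(T_\ell(x))\,H_k(\partial_1 \widetilde T_\ell(x))\,H_{u-k}(\partial_2 \widetilde T_\ell(x))\,dx, \qquad q\ge 3,
$$
together with a uniform-in-$q$ summation. Such bounds follow from the diagram formula, Hilb's asymptotic and the reproducing property of Legendre polynomials (same tools as in \S\ref{tech}, in particular the estimates \paref{tri1}--\paref{tri4}), combined with the factorial decay of the chaos coefficients $\alpha_{k,u-k}\beta_{q-u}(z)/(k!(u-k)!(q-u)!)$; this is essentially the same strategy used in Proposition \ref{general} for the excursion area and in Chapter 6 for the defect, adapted to take into account the presence of the (normalized) gradient components $(\partial_1\widetilde T_\ell,\partial_2\widetilde T_\ell)$ in addition to $T_\ell$.
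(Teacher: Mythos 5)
Your proposal follows essentially the same route as the paper: isolate the second chaotic projection via Proposition \ref{2sfera}, compute its variance exactly, compare it with the known asymptotics $\Var(\mathcal L_\ell(z))\sim \frac{\pi^2}{2}z^4\e^{-z^2}\ell$ to deduce that the ratio tends to $1$ (which, by chaos orthogonality, already gives the $o_\P(1)$ reduction without any separate tail estimate), and then conclude by a CLT for $h_{\ell,2;2}$ — the paper obtains this last step directly from the classical CLT applied to the i.i.d.\ sum $\sum_m\bigl(a_{\ell,m}^2-\tfrac{4\pi}{2\ell+1}\bigr)$, which is the same argument underlying the $q=2$ case of Proposition \ref{teo1} that you invoke. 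Your closing worry about bounding $\sum_{q\ge 3}\Var(\mathrm{proj}(\mathcal L_\ell(z)|C_q))$ by diagram-formula estimates is unnecessary given the variance matching you have already performed, exactly as in the paper's proof.
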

}

{{}

\subsection{ Wiener chaos and Berry's cancellation}\label{ss:berryintroS}

The proof of our result rely on a pervasive use of  Wiener-It\^o chaotic expansions (see Chapter \ref{background} e.g.) and the reader is referred
to the two monographs \cite{noupebook, P-T} for an exhaustive discussion.


According to \eqref{Telle}, the Gaussian spherical eigenfunctions considered in this work are built starting from a family of i.i.d. Gaussian r.v.'s $\{a_{\ell,m} : \ell\ge 1, m=1,\dots, 2\ell+1\}$, defined on some probability space $(\Omega, \mathscr{F}, \mathbb{P})$ and verifying the following properties:
$$\E[a_{\ell,m}]=0\ ,\quad \E[a_{\ell,m} a_{\ell',m'}]=\frac{4\pi}{2\ell+1}\delta_{m}^{m'} \delta_{\ell}^{\ell'}\ .$$
 We define ${\bf A}$ to be the closure in $L^2(\mathbb{P})$ of all real finite linear combinations of $\{a_{\ell,m} : \ell\ge 1, m=1,\dots, 2\ell+1\}$.  ${\bf A}$ is a real centered Gaussian space (that is, a linear space of jointly Gaussian centered real-valued random variables, that is stable under convergence in $L^2(\mathbb{P})$) (compare to Chapter \ref{background}).

\begin{definition}\label{d:chaosS}{\rm For every $q=0,1,2,...$ the $q$th {\it Wiener chaos} associated with ${\bf A}$ (compare to Chapter \ref{background}), written $C_q$, is the closure in $L^2(\mathbb{P})$ of all real finite linear combinations of random variables with the form
$$
H_{p_1}(\xi_1)H_{p_2}(\xi_2)\cdots H_{p_k}(\xi_k),
$$
where the integers $p_1,...,p_k \geq 0$ verify $p_1+\cdot+p_k = q$, and $(\xi_1,...,\xi_k)$ is a real centered Gaussian vector with identity covariance matrix extracted from ${\bf A}$ (note that, in particular, $C_0 = \mathbb{R}$).}
\end{definition}
 $C_q \,\bot\, C_m$ (where the orthogonality holds in the sense of $L^2(\mathbb{P})$) for every $q\neq m$, and moreover
\begin{equation}\label{e:chaosS}
L^2(\Omega, \sigma({\bf A}), \mathbb{P}) = \bigoplus_{q=0}^\infty C_q,
\end{equation}
that is: each real-valued functional $F$ of ${\bf A}$ can be (uniquely) represented in the form
\begin{equation}\label{e:chaos2S}
F = \sum_{q=0}^\infty {\rm proj}(F \, | \, C_q),
\end{equation}
where ${\rm proj}(\bullet \, | \, C_q)$ stands for the projection operator onto $C_q$, and the series converges in $L^2(\mathbb{P})$. Plainly, ${\rm proj}(F \, | \, C_0) = \E[ F]$.

\

Now recall the definition of $T_\ell$ given in \eqref{Telle}: the following elementary statement  shows that the Gaussian field
$$
\left\{ T_\ell(\theta),\,  \frac{\partial}{\partial \theta_1} T_\ell (\theta),\,  \frac{\partial}{\partial \theta_2} T_\ell (\theta) : \theta =(\theta_1,\theta_2)\in \mathbb{S}^2\right\}
$$
is a subset of ${\bf A}$, for every $\ell\in \N$.

\begin{prop} \label{p:fieldS}Fix $\ell\in \N$, let the above notation and conventions prevail. Then, for every $j=1,2$ one has that
\begin{equation}\label{e:partialS}
\partial_j T_\ell(\theta) := \frac{\partial}{\partial \theta_j} T_\ell(\theta) = \sum_{m} a_{\ell,m} \frac{\partial}{\partial \theta_j} Y_{\ell,m}(\theta),
\end{equation}
and therefore $T_\ell (\theta), \, \partial_1 T_\ell (\theta), \, \partial_2 T_\ell (\theta) \in {\bf A}$, for every $\theta\in \mathbb{S}^2$. Moreover, for every fixed $\theta\in \mathbb{S}^2$, one has that $T_\ell (\theta), \, \partial_1 T_\ell (\theta), \, \partial_2 T_\ell (\theta)$ are stochastically independent (see e.g. \cite{Wig}).
\end{prop}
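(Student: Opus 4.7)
The plan is as follows. The first step — namely the identity in \eqref{e:partialS} — is immediate, since the sum defining $T_\ell$ in \eqref{Telle} is in fact \emph{finite} (with $n_{\ell;2}=2\ell+1$ terms in the case $d=2$), so term-by-term differentiation is trivially justified without any convergence issue. In particular, both $T_\ell(\theta)$ and $\partial_j T_\ell(\theta)$ are finite real linear combinations of the jointly Gaussian variables $\{a_{\ell,m}\}$, so the three random variables $T_\ell(\theta),\,\partial_1 T_\ell(\theta),\,\partial_2 T_\ell(\theta)$ all belong to the centered Gaussian space ${\bf A}$.

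As a consequence of the previous step, these three random variables form a jointly centered Gaussian vector, and the claimed stochastic independence is equivalent to their pairwise uncorrelation. First, since $\E[T_\ell(\theta)^2]=P_\ell(1)=1$ is constant in $\theta$, differentiation gives
$$\Cov(T_\ell(\theta),\partial_j T_\ell(\theta))=\tfrac{1}{2}\,\partial_j\E[T_\ell(\theta)^2]=0\ ,\qquad j=1,2\ .$$
Second, the vanishing of $\Cov(\partial_1 T_\ell(\theta),\partial_2 T_\ell(\theta))$ is where isotropy enters: at the fixed point $\theta$, choose a local coordinate system $(\theta_1,\theta_2)$ that is orthonormal at $\theta$ (for instance, geodesic normal coordinates centred there). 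Then the law of the tangent random vector $(\partial_1 T_\ell(\theta),\partial_2 T_\ell(\theta))$ is invariant under the action on the tangent plane $T_\theta \mathbb{S}^2$ of the stabiliser $SO(2)\subset SO(3)$ of $\theta$, because $T_\ell$ itself is isotropic under $SO(3)$. Hence the covariance matrix of this vector must commute with every rotation of $\R^2$, and is therefore a scalar multiple of the identity, which gives the desired off-diagonal vanishing.

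The only delicate point in the argument is the choice of an orthonormal coordinate system at the base point $\theta$, which is needed to turn the abstract $SO(2)$-invariance of the gradient covariance into the concrete statement that its matrix in the basis $(\partial_1,\partial_2)$ is a scalar multiple of the identity. Apart from this, the whole proof reduces to a routine consequence of the Gaussianity of ${\bf A}$, the constancy of $\theta\mapsto \E[T_\ell(\theta)^2]$, and the $SO(2)$-invariance of the joint law of $\nabla T_\ell(\theta)$ at a fixed point.
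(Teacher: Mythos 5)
Your proof is correct. For context: the paper does not actually prove this proposition --- it is stated as an ``elementary statement'', with the independence claim delegated to the reference \cite{Wig} --- so there is no in-paper argument to match; your write-up supplies the missing justification. Your route (finite sum $\Rightarrow$ term-by-term differentiation and membership in ${\bf A}$; joint Gaussianity reduces independence to pairwise uncorrelation; $\Cov(T_\ell,\partial_j T_\ell)=\tfrac12\partial_j\E[T_\ell(\theta)^2]=0$ by constancy of the variance; off-diagonal vanishing of the gradient covariance by $SO(2)$-invariance at the base point) is sound, and you correctly flag the one genuine convention issue, namely that the symmetry argument needs $(\partial_1,\partial_2)$ to be an orthonormal frame at $\theta$ --- which is indeed the convention implicitly used in the paper, since immediately after the proposition it asserts $\Var[\partial_j T_\ell(\theta)]=\ell(\ell+1)/2$ for both $j$, a statement that only holds in such a frame. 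The standard alternative (the one in \cite{Wig}) is a direct computation from the covariance function $\E[T_\ell(x)T_\ell(y)]=P_\ell(\cos d(x,y))$: differentiating $P_\ell(\cos d(x,y))$ in $x$ and $y$ and evaluating on the diagonal gives at once $\Cov(\partial_1 T_\ell,\partial_2 T_\ell)=0$ \emph{and} the explicit value $P'_\ell(1)=\ell(\ell+1)/2$ of the diagonal entries; your symmetry argument is slicker for the independence claim itself but does not produce that explicit constant, which the paper needs right afterwards (though it is not part of the proposition being proved, so this is not a gap).
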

We shall often use the fact that
$$\displaylines{
\Var[\partial_j T_\ell (\theta)] = \frac{\ell(\ell+1)}{2}\ ,
}$$
and, accordingly, for $\theta=(\theta_1, \theta_2)\in \mathbb{S}^2$ and $j=1,2$, we will denote by $\partial_j \widetilde T_\ell (\theta)$ the normalized derivative
\begin{equation}\label{e:normaS}
\partial_j \widetilde T_\ell (\theta) :=  \sqrt{\frac{2}{\ell(\ell+1)}} \frac{\partial}{\partial \theta_j}  T_\ell (\theta) \ .
\end{equation}
The next statement gathers together some of the main technical achievements of the present chapter. It shows in particular that the already evoked `arithmetic Berry cancellation phenomenon' (see \cite{Berry2002}) -- according to which the variance of the nodal length $\mathcal{L}_\ell:=\mathcal L_\ell(0)$  (as defined in \eqref{e:lengthS}) has asymptotically the same order as $\log \ell$ (rather than the expected order $\ell$) -- \emph{should} be a consequence of the following:
\begin{itemize}

\item[\bf (i)] The projection of $\mathcal{L}_\ell(z)$ on the second Wiener chaos $C_2$ is {\it exactly equal to zero} for every $\ell\in \N$ if and only if $z=0$ (and so holds for the projection of $\mathcal{L}_\ell(0)$ onto any chaos of odd order $q\geq 3$).
\item[\bf (ii)] For $z\ne 0$, the variance of $ {\rm proj}(\mathcal{L}_{\ell}(z) \, | C_2)$ has the order $ \ell$, as $\ell\to +\infty$, and one has moreover that
$$
\Var(\Lc_{\ell}(z)) \sim \Var\left( {\rm proj}(\mathcal{L}_{\ell}(z) \, | C_2)\right)\ .
$$

\end{itemize}

\subsection{Plan}

The rest of the chapter is organized as follows:  \S 7.2 contains a study of the chaotic
representation of nodal lengths, \S 7.3 focuses on the projection of nodal lengths on the second
Wiener chaos, whereas \S 7.4 contains a proof of our main result.
}

\section{Chaotic expansions}\label{expanS}

The aim of this section is to derive an explicit expression for each projection of the type ${\rm proj}(\Lc_\ell(z) \, | \, C_q)$, $q\geq 1$. In order to accomplish this task, we first focus on a family of auxiliary random variables $\{\Lc_\ell^\varepsilon (z) : \varepsilon > 0 \}$ that approximate $\Lc_\ell(z)$ in the sense of the $L^2(\P)$-norm.

\subsection{Preliminary results}

$\bullet$ For each $z\in \R$,
 $\mathcal L_n(z)$
is
 the $\omega$-a.s. limit, for $\varepsilon \to 0$, of the $\varepsilon$-approximating r.v.
\begin{equation}\label{napproxS}
\mathcal L_\ell^\varepsilon(z,\omega) := \frac{1}{2\varepsilon}
\int_{\mathbb S^2} 1_{[z-\varepsilon, z+\varepsilon]}(T_\ell(\theta,\omega))\|\nabla T_\ell (\theta,\omega) \|\,d\theta\ ,
\end{equation}
where
$$
\nabla T_\ell := (\partial_1  T_\ell,\partial_2
T_\ell),
$$
$\| \cdot \|$ is the norm in $\mathbb R^2$, and we have used the notation \eqref{e:partialS}.

We know that

$\bullet$ $\mathcal L_\ell (z)\in L^2(\P)$, for every $z\in \R$ (\cite{wigsurvey, Wig})\ .

\noindent Now we want to prove that $\mathcal L_\ell (z)$ is the $L^2(\Omega)$-limit, for
$\varepsilon \to 0$
of $\mathcal L_\ell^\varepsilon(z)$.
Remark first that analogous arguments as in \cite{Wig}, prove that
the function $z\mapsto \E[\mathcal L_\ell (z)^2]$ is continuous (further details will appear in \cite{mistosfera}).

\begin{lemma}\label{approxS}
It holds that
$$
\lim_{\varepsilon\to 0} \E[ (\mathcal L_\ell^\varepsilon (z) - \mathcal L_\ell(z) )^2  ] = 0\ .
$$

\end{lemma}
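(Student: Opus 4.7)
The plan rests on upgrading the already established almost sure convergence of $\mathcal{L}_\ell^\varepsilon(z)$ to $\mathcal{L}_\ell(z)$ into an $L^2(\mathbb{P})$ convergence, by controlling second moments. The standard recipe is to combine the a.s.\ convergence with convergence of $L^2$-norms: if one can show both $\mathcal{L}_\ell^\varepsilon(z)\to\mathcal{L}_\ell(z)$ a.s.\ and $\mathbb{E}[\mathcal{L}_\ell^\varepsilon(z)^2]\to\mathbb{E}[\mathcal{L}_\ell(z)^2]<+\infty$, then by a Scheff\'e/uniform integrability argument $|\mathcal{L}_\ell^\varepsilon(z)-\mathcal{L}_\ell(z)|^2$ is uniformly integrable and the $L^2$ convergence follows.

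The key observation is the following exact representation, obtained by the coarea (Kac--Rice type) formula applied pathwise to $T_\ell$ -- which is, a.s., a Morse function on $\mathbb{S}^2$ (this uses only that $T_\ell$ is a smooth Gaussian field whose distribution of critical values is non-degenerate, hence $T_\ell$ has a.s.\ no critical points at level $u$ for Lebesgue-a.e.\ $u$):
\begin{equation*}
\mathcal{L}_\ell^\varepsilon(z) \;=\; \frac{1}{2\varepsilon}\int_{z-\varepsilon}^{z+\varepsilon}\mathcal{L}_\ell(u)\,du\qquad \text{a.s.}
\end{equation*}
Thus $\mathcal{L}_\ell^\varepsilon(z)$ is precisely the average of $u\mapsto \mathcal{L}_\ell(u)$ on the interval $[z-\varepsilon,z+\varepsilon]$. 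An application of Jensen's inequality (or Cauchy--Schwarz) then yields
\begin{equation*}
\mathbb{E}[\mathcal{L}_\ell^\varepsilon(z)^2]\;\le\;\frac{1}{2\varepsilon}\int_{z-\varepsilon}^{z+\varepsilon}\mathbb{E}[\mathcal{L}_\ell(u)^2]\,du.
\end{equation*}
By the continuity of the map $u\mapsto \mathbb{E}[\mathcal{L}_\ell(u)^2]$ (already recalled just before the statement), the right-hand side converges to $\mathbb{E}[\mathcal{L}_\ell(z)^2]$ as $\varepsilon\to 0$, giving
\begin{equation*}
\limsup_{\varepsilon\to 0}\mathbb{E}[\mathcal{L}_\ell^\varepsilon(z)^2]\;\le\;\mathbb{E}[\mathcal{L}_\ell(z)^2].
\end{equation*}
On the other hand, Fatou's lemma applied to the a.s.\ convergence $\mathcal{L}_\ell^\varepsilon(z)\to\mathcal{L}_\ell(z)$ gives the matching lower bound, so that $\mathbb{E}[\mathcal{L}_\ell^\varepsilon(z)^2]\to \mathbb{E}[\mathcal{L}_\ell(z)^2]$.

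Putting things together, from a.s.\ convergence and convergence of the $L^2$-norms one deduces in the standard way that the squared errors $|\mathcal{L}_\ell^\varepsilon(z)-\mathcal{L}_\ell(z)|^2$ are uniformly integrable, and therefore converge to zero in $L^1(\mathbb{P})$, which is exactly the claim. The main technical point -- and the only real obstacle -- lies in the justification of the pathwise coarea identity: one must argue that outside a $\P$-null set the level set $T_\ell^{-1}(u)$ is, for a.e.\ $u$, a finite union of smooth curves along which the coarea formula holds in its classical form. This can be handled via Bulinskaya-type arguments, exploiting non-degeneracy of the joint Gaussian law of $(T_\ell(\theta),\nabla T_\ell(\theta))$, and the details will be provided in \cite{mistosfera}.
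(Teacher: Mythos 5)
Your argument is correct and follows essentially the same route as the paper's own proof: both use the coarea formula to rewrite $\mathcal L_\ell^\varepsilon(z)$ as the average $\frac{1}{2\varepsilon}\int_{z-\varepsilon}^{z+\varepsilon}\mathcal L_\ell(u)\,du$, then combine Jensen's inequality with the continuity of $u\mapsto \E[\mathcal L_\ell(u)^2]$ for the upper bound and Fatou for the lower bound, and conclude via the standard fact that a.s.\ convergence together with convergence of the $L^2$-norms yields convergence in mean square. The only difference is cosmetic: you spell out the pathwise justification of the coarea identity (Bulinskaya-type non-degeneracy), where the paper simply cites the classical coarea formula in Adler--Taylor.
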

\begin{proof}
Since $\mathcal L^\varepsilon_\ell (z)\to_{\varepsilon} \mathcal L_\ell (z)$ a.s., it is enough to show that
$$
\E[\mathcal L^\varepsilon_\ell (z)^2]\to E[\mathcal L_\ell (z)^2]\ ,
$$
and then use the well-known fact that convergence a.s. plus convergence of the norms
implies convergence in mean square \cite[Proposition 3.39]{cannarsa}.

By Fatou's Lemma (for the first inequality) we have
$$\displaylines{
\E[\mathcal L_\ell (z)^2]\le \liminf_\varepsilon \E[\mathcal L^\varepsilon_\ell (z)^2]\le
\limsup_\varepsilon \E[\mathcal L^\varepsilon_\ell (z)^2]\mathop{=}^{*}\cr
\mathop{=}^{*}\limsup_\varepsilon \E\left [\left (
\int_{\mathbb \R} \mathcal L_\ell (u)\frac{1}{2\varepsilon}1_{[-\varepsilon, \varepsilon]}
(u-z)\,du\right) ^2 \right ]\ ,
}$$
where to establish the equality $\mathop{=}^{*}$
we have used the co-area formula \cite[(7.14.13)]{adlertaylor}, which in our case gives
$$\displaylines{
\mathcal L_\ell^\varepsilon(z) =\frac{1}{2\varepsilon}
\int_{\mathbb S^2} 1_{[z-\varepsilon, z+\varepsilon]}(T_\ell (\theta))\|\nabla T_\ell (\theta) \|\,d\theta=\cr
=\int_\R du \int_{T_\ell ^{-1}(u)}\frac{1}{2\varepsilon}
 1_{[-\varepsilon, \varepsilon]}(u-z)\,d\theta
 =\frac{1}{2\varepsilon}\int_\R \mathcal L_\ell (u)1_{[-\varepsilon, \varepsilon]}(u-z)\,du\ .
}$$
Now by Jensen inequality we find that
$$\displaylines{
\limsup_\varepsilon \E\left [\left (
\int_{\mathbb \R} \mathcal L_\ell (u)\frac{1}{2\varepsilon}1_{[-\varepsilon, \varepsilon]}
(u-z)\,du\right) ^2 \right ]\le \cr
\le \limsup_\varepsilon
\int_{\mathbb \R} \E\left [\mathcal L_\ell (u)^2\right ]\frac{1}{2\varepsilon}1_{[-\varepsilon, \varepsilon]}
(u-z)\,du =\cr
=\E\left [\mathcal L_\ell (z)^2\right ]\ ,
}$$
the last step following by continuity of the map $u\mapsto \E[\mathcal L_\ell (u)^2]$.
\end{proof}
To conclude the section, we observe that the previous result suggests that the random variable $\mathcal L_\ell(z)$  can be formally written as
\begin{equation}\label{formalS}
\mathcal L_\ell(z) = \int_{ \mathbb S^2} \delta_z(T_\ell(\theta))\| \nabla T_\ell \|
\,d\theta\ ,
\end{equation}
where $\delta_z$ denotes the Dirac mass in $z$.

\subsection{The chaotic expansion for $\mathcal L_\ell(z)$}

In view of the convention \eqref{e:normaS}, throughout the section we will rewrite \paref{napproxS} as
\begin{equation}\label{formal2S}
\mathcal L_\ell^\eps (z)= \sqrt{\frac{\ell(\ell+1)}{2}}\frac{1}{2\eps}\int_{\mathbb S^2}
1_{[z-\eps,z+ \eps]}(T_\ell(\theta)) \sqrt{\partial_1 \widetilde
T_\ell(\theta)^2+\partial_2 \widetilde T_\ell(\theta)^2}\,d\theta\ .
\end{equation}
We also need to introduce two collection of coefficients
$\{\alpha_{n,m} : n,m\geq 1\}$ and $\{\beta_{l}(z) : l\geq 0\}$, that are connected to the (formal) Hermite expansions of the norm $\| \cdot
\|$ in $\R^2$ and the Dirac mass $ \delta_z(\cdot)$ respectively.
These are given by
\begin{equation}\label{e:beta}
\beta_{l}(z):= \phi(z)H_{l}(z)\ ,
\end{equation}
where $\phi$ is the standard Gaussian pdf, $H_{l}$ denotes the $l$-th Hermite polynomial \paref{hermite} and $\alpha_{n,m}=0$ but for the case $n,m$ even
\begin{equation}\label{e:alpha}
\alpha_{2n,2m}=\sqrt{\frac{\pi}{2}}\frac{(2n)!(2m)!}{n!
m!}\frac{1}{2^{n+m}} p_{n+m}\left (\frac14 \right)\ ,
\end{equation}
where for $N=0, 1, 2, \dots $ and $x\in \R$
\begin{equation}\label{pN}
p_{N}(x) :=\sum_{j=0}^{N}(-1)^{j}\ \ (-1)^{N}{N
\choose j}\ \ \frac{(2j+1)!}{(j!)^2} x^j \ ,
\end{equation}
$\frac{(2j+1)!}{(j!)^2}$ being the so-called {\it swinging factorial}
restricted to odd indices.
{{}
\begin{prop}[\bf Chaotic expansion of $\mathcal L_\ell(z)$]\label{teoexpS} For every $\ell\in \N$ and $q\geq 2$,
\begin{eqnarray}\label{e:ppS}
&&{\rm proj}(\mathcal L_\ell(z)\, | \, C_{q}) \\
&&= \sqrt{\frac{\ell(\ell+1)}{2}}\sum_{u=0}^{q}\sum_{k=0}^{u}
\frac{\alpha _{k,u-k}\beta _{q-u}(z)
}{(k)!(u-k)!(q-u)!} \!\!\int_{\mathbb S^2}\!\! H_{q-u}(T_\ell (\theta))
H_{k}(\partial_1 \widetilde T_\ell (\theta))H_{u-k}(\partial_2
\widetilde T_\ell (\theta))\,d\theta.\notag
\end{eqnarray}
As a consequence, one has the representation
\begin{eqnarray}\label{chaosexpS}
\mathcal L_\ell(z) &=& \E\mathcal L_\ell(z) + \sqrt{\frac{\ell(\ell+1)}{2}}\sum_{q=2}^{+\infty}\sum_{u=0}^{q}\sum_{k=0}^{u}
\frac{\alpha _{k,u-k}\beta _{q-u}(z)
}{(k)!(u-k)!(q-u)!} \!\!\times\\
&&\hspace{4.5cm} \times \int_{\mathbb S^2}\!\! H_{q-u}(T_\ell (\theta))
H_{k}(\partial_1 \widetilde T_\ell (\theta))H_{u-k}(\partial_2
\widetilde T_\ell (\theta))\,d\theta,\notag
\end{eqnarray}
where the series converges in $L^2(\P)$.
\end{prop}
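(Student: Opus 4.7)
The plan is to work with the $L^{2}(\P)$-approximations $\mathcal{L}_\ell^\varepsilon(z)$ defined in \paref{formal2S}, derive their chaotic expansions by decoupling the ``indicator'' factor $\frac{1}{2\varepsilon}\mathbf{1}_{[z-\varepsilon,z+\varepsilon]}(T_\ell(\theta))$ from the ``gradient norm'' factor $\sqrt{\partial_1\widetilde T_\ell(\theta)^2+\partial_2\widetilde T_\ell(\theta)^2}$, and then pass to the limit $\varepsilon\to 0$ using Lemma \ref{approxS}. The key structural observation, provided by Proposition \ref{p:fieldS}, is that for every fixed $\theta\in\mathbb S^2$ the three standard Gaussian r.v.'s $T_\ell(\theta)$, $\partial_1\widetilde T_\ell(\theta)$, $\partial_2\widetilde T_\ell(\theta)$ are mutually independent, so the Hermite products $H_{q-u}(T_\ell)H_k(\partial_1\widetilde T_\ell)H_{u-k}(\partial_2\widetilde T_\ell)$ belong to the chaos $C_{q-u}\otimes C_k\otimes C_{u-k}\subset C_q$ where $q:=(q-u)+u$, and their coefficients can be computed independently for each factor.

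For the first factor I would expand the truncated indicator in univariate Hermite polynomials,
\[
\frac{1}{2\varepsilon}\mathbf{1}_{[z-\varepsilon,z+\varepsilon]}(t) =\sum_{l\ge 0}\frac{\beta_l^\varepsilon(z)}{l!}H_l(t)\phi(t)/\phi(t),\qquad \beta_l^\varepsilon(z):=\frac{1}{2\varepsilon}\int_{z-\varepsilon}^{z+\varepsilon}H_l(t)\phi(t)\,dt,
\]
(convergence of the series being understood in $L^2(\phi(t)\,dt)$), and note that $\beta_l^\varepsilon(z)\to\phi(z)H_l(z)=\beta_l(z)$ as $\varepsilon\to 0$ by the Lebesgue differentiation theorem. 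For the second factor, $\sqrt{x^2+y^2}$ is square-integrable against $\phi(x)\phi(y)$, hence admits a bivariate Hermite development
\[
\sqrt{x^2+y^2}=\sum_{n,m\ge 0}\frac{\alpha_{n,m}}{n!\,m!}H_n(x)H_m(y),\qquad \alpha_{n,m}=\E\!\left[\sqrt{X^2+Y^2}\,H_n(X)H_m(Y)\right],
\]
with $X,Y\sim\mathcal N(0,1)$ independent, convergence again being in $L^2(\phi(x)\phi(y)\,dx\,dy)$. A direct computation in polar coordinates shows that $\alpha_{n,m}=0$ unless both $n,m$ are even (by parity of Hermite polynomials against the rotational symmetry of the norm), and yields precisely the closed form \paref{e:alpha}–\paref{pN} for the non-zero coefficients.

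Plugging both developments into \paref{formal2S} and regrouping terms with total Hermite degree $q=(q-u)+k+(u-k)$ yields, for each $\varepsilon>0$ and each $q\ge 2$, an $L^2(\P)$ expression for ${\rm proj}(\mathcal L_\ell^\varepsilon(z)\mid C_q)$ of exactly the form in \paref{e:ppS}, with $\beta_{q-u}(z)$ replaced by $\beta_{q-u}^\varepsilon(z)$. By Lemma \ref{approxS}, $\mathcal L_\ell^\varepsilon(z)\to\mathcal L_\ell(z)$ in $L^2(\P)$; since the chaos projections ${\rm proj}(\,\cdot\,\mid C_q)$ are continuous (in fact contractive) on $L^2(\P)$, the identity \paref{e:ppS} follows by letting $\varepsilon\to 0$ and using the pointwise convergence $\beta_l^\varepsilon(z)\to\beta_l(z)$. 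The representation \paref{chaosexpS} is then the Wiener-Itô decomposition \paref{e:chaos2S} applied to $\mathcal L_\ell(z)\in L^2(\P)$, noting that the terms with $q=0$ and $q=1$ contribute respectively $\E\mathcal L_\ell(z)$ and $0$ (the latter because $H_1$ on either gradient component integrates to zero against the independent even-parity norm weight).

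The main technical obstacle I anticipate is the interchange of the double infinite series with the spherical integral: while this is immediate at the level of $\mathcal L_\ell^\varepsilon(z)$ once one fixes $\varepsilon>0$ and uses the orthogonality of chaoses together with the explicit variance bounds $\alpha_{n,m}^2/(n!m!)$ and $(\beta_l^\varepsilon(z))^2/l!$ summing to $\E[\|(X,Y)\|^2]<\infty$ and $\E[(\frac{1}{2\varepsilon}\mathbf{1}_{[z-\varepsilon,z+\varepsilon]}(X))^2]<\infty$ respectively, passing $\varepsilon\to 0$ requires quantitative control over the tails of $\beta_l(z)^2/l!$, which remain finite only after the $\varepsilon$-limit is combined with the decay of the variances of the hyperspherical Hermite integrals $\int_{\mathbb S^2}H_{q-u}(T_\ell)H_k(\partial_1\widetilde T_\ell)H_{u-k}(\partial_2\widetilde T_\ell)\,d\theta$. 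The cleanest route is therefore to establish the termwise formula \paref{e:ppS} first, via the $L^2$-continuity of chaos projections, and only then deduce \paref{chaosexpS} by applying \eqref{e:chaosS} to the already-known $L^2(\P)$-valued r.v.\ $\mathcal L_\ell(z)$, thereby bypassing any direct summability estimate.
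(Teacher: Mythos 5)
Your proposal is correct and follows essentially the same route as the paper: Hermite expansion of the truncated indicator (with $\beta^\varepsilon_l(z)\to\beta_l(z)$), bivariate Hermite expansion of the Euclidean norm using the pointwise independence of $T_\ell(\theta)$, $\partial_1\widetilde T_\ell(\theta)$, $\partial_2\widetilde T_\ell(\theta)$, regrouping by total degree, and passage to the limit via Lemma \ref{approxS} together with the $L^2(\P)$-continuity of the chaotic projections (your polar-coordinate computation of the $\alpha_{n,m}$ is exactly the alternative the paper records in Remark \ref{Gilles Becker}). One small fix: your stated reason for the vanishing of the first-order projection only handles the gradient terms (where indeed $\alpha_{0,1}=\alpha_{1,0}=0$); the remaining term $\alpha_{0,0}\beta_1(z)\int_{\mathbb S^2}H_1(T_\ell(\theta))\,d\theta$ vanishes because $\int_{\mathbb S^2}T_\ell(\theta)\,d\theta=0$ a.s. ($T_\ell$ being orthogonal to constants), not because of the parity of the norm weight.
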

}
\noindent\begin{proof}[Proof of Proposition \ref{teoexpS}] The proof is divided into four steps.

\medskip

\noindent\underline{\it Step 1: dealing with indicators.} .  We start by expanding the function $\frac{1}{2\eps}{1}_{[z-\eps ,z+\eps ]}(\cdot)$ into Hermite polynomials, as defined in \S \ref{ss:berryintroS}:

\begin{equation*}
\frac{1}{2\eps}{1}_{[z-\eps ,z+\eps]}(\cdot
)=\sum_{l=0}^{+\infty }\frac{1}{l!}\beta_l^\eps (z)\,H_{l}(\cdot )\ .
\end{equation*}%
 {{}One has that $\beta_0^\eps(z) = \frac{1}{2\eps} \int_{z-\eps}^{z+\eps} \phi(x)\,dx$, and, for $l\geq 1$}
$$\displaylines{
\beta_l^\eps(z) = \frac{1}{2\eps} \int_{z-\eps}^{z+\eps} \phi(x) H_l(x)    \,dx=
\frac{1}{2\eps} \int_{z-\eps}^{z+\eps} \phi(x) (-1)^l \phi^{-1}(x) \frac{d^l}{dx^l} \phi(x)  \,dx =\cr
=(-1)^l\frac{1}{2\eps} \int_{z-\eps}^{z+\eps}  \frac{d^l}{dx^l} \phi(x)  \,dx\ .
}$$
Using the notation \eqref{e:beta}, we have that
$$
\lim_{\eps\to 0} \beta_0^\eps(z) = \phi(z) = \phi(z) H_0(z)=\beta_0(z)\ ,
$$
and for all $l\geq 1$,
\begin{equation}\label{e:satS}
\lim_{\eps\to 0} \beta_{l}^\eps(z) =(-1)^l \frac{d^l}{dx^l} \phi(x)_{|_{x=z}} =
\phi(z) H_l(z)= \beta_{l}(z) \\ .
\end{equation}

\noindent\underline{\it Step 2: dealing with the Euclidean norm.} {{} Fix $x\in \mathbb{S}^2$, and recall that, according to Proposition \ref{p:fieldS}, the vector $$
\nabla \widetilde T_\ell := (\partial_1 \widetilde T_\ell, \partial_2 \widetilde T_\ell)\ ,
$$ is composed of centered independent Gaussian random variables with variance one. Now, since the random variable $\| \nabla \widetilde T_\ell (\theta) \|$ is square-integrable, it can be expanded into the following  infinite series of Hermite polynomials:
\begin{equation*}
\| \nabla \widetilde T_\ell (\theta) \| = \sum_{u=0}^{+\infty}
\sum_{m=0}^{u} \frac{\alpha_{u,u-m}}{u! (u-m)!} H_u(\partial_1 \widetilde T_\ell (\theta)) H_{u-m}(\partial_2 \widetilde T_\ell (\theta)),
\end{equation*}
where
\begin{equation}
\alpha_{n,n-m}=\frac{1}{2\pi} \int_{\R^2} \sqrt{y^2 + z^2} H_{n}(y) H_{n-m}(z)
\mathrm{e}^{-\frac{y^2+z^2}{2}}\,dy dz\ .
\end{equation}
Our aim is to compute $\alpha_{n,n-m}$ as explicitly as possible. }First of all, wet observe that, if $n$ or $n-m$ is odd, then the above integral
vanishes {{} (since the two mappings $z\mapsto \sqrt{y^2 + z^2}$ and $y\mapsto \sqrt{y^2 + z^2}$ are even)}. It follows therefore that
\begin{equation*}
\| \nabla \widetilde T_\ell (\theta) \| = \sum_{n=0}^{+\infty}
\sum_{m=0}^{n} \frac{\alpha_{2n,2n-2m}}{(2n)! (2n-2m)!} H_{2n}(\partial_1 \widetilde T_\ell (\theta)) H_{2n-2m}(\partial_2 \widetilde T_\ell (\theta)).
\end{equation*}
We are therefore left with the task of showing that the integrals
\begin{equation}  \label{coeff}
\alpha_{2n,2n-2m}=\frac{1}{2\pi} \int_{\R^2} \sqrt{y^2 + z^2} H_{2n}(y)
H_{2n-2m}(z) \mathrm{e}^{-\frac{y^2+z^2}{2}}\,dy dz,
\end{equation}
where $n\ge 0$ and $m=0, \dots, n$, can be evaluated according to \eqref{e:alpha}. One elegant way for dealing with this task is to use the following Hermite polynomial expansion
\begin{equation}
\mathrm{e}^{\lambda y - \frac{\lambda^2}{2}} = \sum_{a=0}^{+\infty} H_a(y)
\frac{\lambda^a}{a!}, \quad {{} \lambda \in \R}.
\end{equation}
We start by considering the integral
\begin{equation*}
\frac{1}{2\pi} \int_{\mathbb R^2} \sqrt{y^2 + z^2} \e^{\lambda y - \frac{\lambda^2}{2}} \e^{\mu z - \frac{\mu^2}{2}}\e^{-\frac{y^2+z^2}{2}}\,dy dz
=\frac{1}{2\pi} \int_{\mathbb R^2} \sqrt{y^2 + z^2} \e^{-\frac{(y-\lambda)^2+(z-\mu)^2}{2}}\,dy dz\ .
\end{equation*}
This integral coincides with the expected value of the random variable $W:=\sqrt{%
Y^2 + Z^2}$ where $(Y,Z)$ is a vector of independent Gaussian random variables with variance one and mean $\lambda $ and $\mu$, respectively. Since $W^2=Y^2 + Z^2$ has a non central $%
\chi^2$ distribution (more precisely, $Y^2 + Z^2\sim \chi^2(2,\lambda^2 +
\mu^2)$) it is easily checked that the density of $W$ is given by
\begin{equation}
f_W(t) = \sum_{j=0}^{+\infty} \mathrm{e}^{-(\lambda^2 + \mu^2)/2}\frac{%
((\lambda^2 + \mu^2)/2)^j }{j!}f_{2+2j}(t^2)\, 2t \,{\mathbb{I}}_{\{t> 0\}}\
,
\end{equation}
where $f_{2+2j}$ is the density function of a $\chi^2_{2+2j}$ random variable. The expected
value of $W$ is therefore
\begin{equation}
2\sum_{j=0}^{+\infty} \mathrm{e}^{-(\lambda^2 + \mu^2)/2}\frac{((\lambda^2 +
\mu^2)/2)^j }{j!}\int_{0}^{+\infty} f_{2+2j}(t^2)\, t^2\,dt\ .
\end{equation}
From the definition of $f_{2+2j}$ we have
\begin{eqnarray*}
\int_{0}^{+\infty} f_{2+2j}(t^2)\, t^2\,dt &=&
\frac{1}{2^{1+j}\Gamma(1+j)} \int_{0}^{+\infty} t^{2j+2}\e^{-t^2/2}\,dt \\
&=& \frac{\prod_{i=1}^{1+j} (2i-1)\sqrt{\frac{\pi}{2}}}{2^{1+j}\Gamma(1+j)}\ .
\end{eqnarray*}
As a consequence,
\begin{eqnarray*}
&& \frac{1}{2\pi} \int_{\mathbb R^2} \sqrt{y^2 + z^2} \e^{-\frac{(y-\lambda)^2+(z-\mu)^2}{2}}\,dy dz
\\ &&=2\e^{-(\lambda^2 + \mu^2)/2}\sum_{j=0}^{+\infty} \frac{((\lambda^2
+ \mu^2)/2)^j }{j!}\frac{\prod_{i=1}^{1+j} (2i-1)\sqrt{\frac{\pi}{2}}}{2^{1+j}\Gamma(1+j)} =: F(\lambda, \mu)\ .
\end{eqnarray*}
We can develop the function $F$ as follows:
\begin{equation*}
\displaylines{
F(\lambda, \mu) =
2\sum_{a=0}^{+\infty} \frac{(-1)^a\lambda^{2a}}{2^a a!}
\sum_{b=0}^{+\infty} \frac{(-1)^b\mu^{2b}}{2^b b!}
\sum_{j=0}^{+\infty} \frac{1 }{j!} \sum_{l=0}^{j} {j \choose l} \lambda^{2l}
\mu^{2j-2l}
\frac{\prod_{i=1}^{1+j} (2i-1)\sqrt{\frac{\pi}{2}}}{2^{1+2j}\Gamma(1+j)} = \cr
= \sum_{a,b=0}^{+\infty} \frac{(-1)^a}{2^a a!}
 \frac{(-1)^b}{2^b b!}\sum_{j=0}^{+\infty}
 \frac{\prod_{i=1}^{1+j} (2i-1)\sqrt{\frac{\pi}{2}}}{j!2^{2j}\Gamma(1+j)}
 \sum_{l=0}^{j} {j \choose l} \lambda^{2l+2a}
\mu^{2j+2b-2l}\ .
}
\end{equation*}
On the other hand
\begin{eqnarray*}
{{} F(\lambda, \mu) }&=&\frac{1}{2\pi} \int_{\mathbb R^2} \sqrt{y^2 + z^2} \e^{\lambda y - \frac{\lambda^2}{2}} \e^{\mu z - \frac{\mu^2}{2}}\e^{-\frac{y^2+z^2}{2}}\,dy dz\\
&& = \frac{1}{2\pi} \int_{\mathbb R^2} \sqrt{y^2 + z^2} \sum_{a=0}^{+\infty} H_a(y) \frac{\lambda^a}{a!} \sum_{b=0}^{+\infty} H_b(z) \frac{\mu^b}{b!}\e^{-\frac{y^2+z^2}{2}}\,dy dz \\
&& =  \sum_{a,b=0}^{+\infty} \left( \frac{1}{a!b!2\pi} \int_{\mathbb R^2} \sqrt{y^2 + z^2}  H_a(y) H_b(z) \e^{-\frac{y^2+z^2}{2}}\,dy dz \right)\lambda^a \mu^b.
\end{eqnarray*}
By the same reasoning as above, if $a$ or $b$ is odd, then the
integral coefficient in the previous expression must be zero. Setting $n:=l+a$ and $%
m:=j+b-l$, we also have that
\begin{eqnarray*}
&&\sum_{a,b=0}^{+\infty} \frac{(-1)^a}{2^a a!}
 \frac{(-1)^b}{2^b b!}\sum_{j=0}^{+\infty} \frac{\prod_{i=1}^{1+j} (2i-1)\sqrt{\frac{\pi}{2}}}{j!2^{2j}\Gamma(1+j)}
 \sum_{l=0}^{j} {j \choose l} \lambda^{2l+2a}
\mu^{2j+2b-2l} \\
&&=\sum_{n,m}^{} \sum_{j}^{}\frac{\prod_{i=1}^{1+j} (2i-1)\sqrt{\frac{\pi}{2}}}{j!2^{2j}\Gamma(1+j)}  \sum_{l=0}^{j}\frac{(-1)^{(n-l)}}{2^{n-l} {(n-l)}!}
 \frac{(-1)^{m+l-j}}{2^{m+l-j} {(m+l-j)}!}
  {j \choose l} \lambda^{2n}
\mu^{2m}\ .
\end{eqnarray*}
Thus we obtain
\begin{eqnarray*}
\alpha_{2n,2m} &=& \frac{1}{2\pi} \int_{\mathbb R^2} \sqrt{y^2 + z^2}  H_{2n}(y) H_{2m}(z) \e^{-\frac{y^2+z^2}{2}}\,dy dz \\ &=& (2n)!(2m)!\frac{(-1)^{m+n}}{2^{n+m}}\sum_{j}^{}(-1)^{j}\frac{\prod_{i=1}^{1+j} (2i-1)
\sqrt{\frac{\pi}{2}}}{2^j j!\Gamma(1+j)}  \sum_{l=0}^{j}\frac{{j \choose l}}{ {(n-l)}! {(m+l-j)}!}
  \ .
\end{eqnarray*}
Representation \eqref{e:alpha} now follows from the computations:
\begin{eqnarray*}
\alpha_{2n,2m}&=&\frac{1}{2\pi} \int_{\mathbb R^2} \sqrt{y^2 + z^2}  H_{2n}(y) H_{2m}(z) \e^{-\frac{y^2+z^2}{2}}\,dy dz \\
&=& (2n)!(2m)!\frac{(-1)^{m+n}}{2^{n+m}}\sum_{j}^{}(-1)^{j}
\frac{\prod_{i=1}^{1+j} (2i-1)\sqrt{\frac{\pi}{2}}}{2^j j!\Gamma(1+j)}
\sum_{l=0}^{j}\frac{{j \choose l}}{ {(n-l)}! {(m+l-j)}!} \\
&=& (2n)!(2m)!\frac{(-1)^{m+n}}{2^{n+m}}
\sum_{j}^{}(-1)^{j}\frac{(2j+1)!!\sqrt{\frac{\pi}{2}}}{2^j(j!)^2}
 \sum_{l=0}^{j}\frac{{j \choose l}}{ {(n-l)}! {(m+l-j)}!}\\
 &=&\frac{(2n)!(2m)!}{n! m!}\frac{(-1)^{m+n}}{2^{n+m}}
\sum_{j=0}^{n+m}(-1)^{j}\frac{(2j+1)!!\sqrt{\frac{\pi}{2}}}{2^j j!}
 \sum_{l=0}^{j}{ n\choose l}{ m\choose j-l} \\
&=&\sqrt{\frac{\pi}{2}}\frac{(2n)!(2m)!}{n! m!}\frac{(-1)^{m+n}}{2^{n+m}}
\sum_{j=0}^{n+m}(-1)^{j}\frac{(2j+1)!!}{2^j j!}
 { n+m\choose j} \\
&=&\sqrt{\frac{\pi}{2}}\frac{(2n)!(2m)!}{n! m!}\frac{(-1)^{m+n}}{2^{n+m}}
\sum_{j=0}^{n+m}(-1)^{j}\frac{(2(j+1))!}{2^{j+1} 2^j j! (j+1)!}
 { n+m\choose j} \\
&=&\sqrt{\frac{\pi}{2}}\frac{(2n)!(2m)!}{n! m!}\frac{(-1)^{m+n}}{2^{n+m}}
\sum_{j=0}^{n+m}(-1)^{j}\frac{(2j+1)!}{2^{2j} (j!)^2}
 { n+m\choose j}\ .
\end{eqnarray*}

\medskip
{{}
\noindent\underline{\it Step 3: letting $\eps \to 0$.} In view of Definition \ref{d:chaosS}, the computations at Step 1 and Step 2 (together with the fact that the three random variables $T_\ell (\theta),\, \partial_1 \widetilde T_\ell (\theta)$ and $\partial_2 \widetilde T_\ell (\theta)$ are stochastically independent ) show that, for fixed $\theta\in \mathbb{S}^2$, the projection of the random variable
$$
\frac{1}{2\eps} 1_{[z-\eps, z+\eps]}(T_\ell (\theta)) \sqrt{\partial_1 \widetilde
T_\ell (\theta)^2+\partial_2 \widetilde T_\ell (\theta)^2}
$$
on the chaos $C_q$ equals
$$
\sum_{u=0}^{q}\sum_{m=0}^{u}
\frac{\alpha _{m,u-m}\beta^\eps _{q-u}(z)
}{(m)!(u-m)!(q-u)!}  H_{q-u}(T_\ell (\theta))
H_{m}(\partial_1 \widetilde T_\ell (\theta))H_{u-m}(\partial_2
\widetilde T_\ell (\theta))\ .
$$
Since $\int_{\mathbb{S}^2}\,dx<\infty$, standard arguments based on Jensen inequality and dominated convergence yield that, for every $q\geq 1$,
\begin{eqnarray*}
&&{\rm proj}(\mathcal L^\eps_\ell(z)\, | \, C_{q}) \\
&&= \sqrt{\frac{\ell(\ell+1)}{2}}\sum_{u=0}^{q}\sum_{m=0}^{u}
\frac{\alpha _{m,u-m}\beta^\eps _{q-u}(z)
}{(m)!(u-m)!(q-u)!}\times \\
&& \hspace{4cm} \times\!\!\int_{\mathbb S^2}\!\! H_{q-u}(T_\ell (\theta))
H_{m}(\partial_1 \widetilde T_\ell (\theta))H_{u-m}(\partial_2
\widetilde T_\ell (\theta))\,d\theta.\notag
\end{eqnarray*}
One has that, as $\eps\to 0$, ${\rm proj}(\mathcal L^\eps_\ell (z)\, | \, C_{q})$ converges necessarily to ${\rm proj}(\mathcal L_\ell(z)\, | \, C_{q})$ in probability. Using \eqref{e:satS}, we deduce from this fact that representation \eqref{e:ppS} is valid for every $q\geq 1$.
}
\end{proof}
\begin{remark}\label{Gilles Becker}\rm The coefficients $\alpha_{2n,2m}$ can be found also first using polar coordinates and then the explicit expression for Hermite polynomials \cite{szego}. Briefly,
\begin{equation*}
\displaylines{
\frac{1}{2\pi} \int_{\mathbb R^2} \sqrt{y^2 + z^2}  H_{2n}(y) H_{2m}(z) \e^{-\frac{y^2+z^2}{2}}\,dy dz = \cr
= \frac{1}{2\pi} \int_{0}^{2\pi} \int_{0}^{+\infty} \rho^2  H_{2n}(\rho \cos \vartheta) H_{2m}(\rho \sin \vartheta) \e^{-\frac{\rho^2}{2}}\,d \rho d \vartheta = \cr
=  \frac{(2n)!(2m)!}{2\pi} \sum_{a=0}^{n} \frac{(-1)^a}{2^aa!(2n - 2a)!}\sum_{b=0}^{m} \frac{(-1)^b}{2^bb!(2m - 2b)!}\times \cr
\times \int_{0}^{2\pi}\cos \vartheta^{2n - 2a}  \sin \vartheta^{2m - 2b} d \vartheta \int_{0}^{+\infty} \rho^{2+2n - 2a+2m - 2b} \e^{-\frac{\rho^2}{2}} \,d \rho\ .
}
\end{equation*}
It remains to solve the previous integrals (which are well-known).
\qed
\end{remark}
%
%

\section{Asymptotic study of $\text{proj}(\mathcal L_\ell(z) | C_2)$}
In this section we find an explicit expression for the second order chaotic projection of the length of level curves.
\begin{prop}\label{2sfera}
We have
$$
\displaylines{
\text{proj}(\mathcal L_\ell(z) | C_2)= \sqrt{\frac{\ell(\ell+1)}{2}}
 \sqrt{\frac{\pi}{8}} \phi(z) z^2 \int_{\mathbb S^2} H_2(T_\ell(x))\,dx=\cr
=\sqrt{\frac{\ell(\ell+1)}{2}}
 \sqrt{\frac{\pi}{8}} \phi(z) z^2 \sum_{m=1}^{2\ell+1} \left (a_{\ell,m}^2 - \frac{4\pi}{2\ell +1} \right )\ .}
$$
\end{prop}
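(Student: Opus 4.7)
The plan is to specialize the general chaotic expansion formula (Proposition \ref{teoexpS}) to $q=2$ and then reduce all the resulting integrals to a single one involving $H_2(T_\ell)$.

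First I would write out Proposition \ref{teoexpS} at $q=2$, expanding the double sum explicitly over $u=0,1,2$ and $k=0,\dots,u$. A key observation is that by \eqref{e:alpha}, the coefficient $\alpha_{k,u-k}$ vanishes as soon as $k$ or $u-k$ is odd; this immediately kills the two terms with $u=1$, as well as the middle term ($k=1$) in the $u=2$ block. Only three contributions survive: one at $(u,k)=(0,0)$ involving $H_2(T_\ell)$, and two at $(u,k)=(2,0)$ and $(u,k)=(2,2)$ involving $H_2(\partial_2 \widetilde T_\ell)$ and $H_2(\partial_1 \widetilde T_\ell)$ respectively.

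Next I would compute the surviving coefficients. From the polynomial $p_N$ in \eqref{pN} one gets $p_0(1/4)=1$ and $p_1(1/4)=-1+\tfrac{3}{2}\cdot\tfrac14\cdot 2=\tfrac12$, so $\alpha_{0,0}=\sqrt{\pi/2}$ and $\alpha_{2,0}=\alpha_{0,2}=\sqrt{\pi/8}$. From \eqref{e:beta}, $\beta_0(z)=\phi(z)$ and $\beta_2(z)=\phi(z)(z^2-1)$. The surviving $q=2$ projection therefore equals
\[
\sqrt{\tfrac{\ell(\ell+1)}{2}}\Bigl[\tfrac{\sqrt{\pi/2}\,\phi(z)(z^2-1)}{2}\!\int_{\mathbb S^2}\!H_2(T_\ell)\,dx+\tfrac{\sqrt{\pi/8}\,\phi(z)}{2}\!\int_{\mathbb S^2}\!\bigl(H_2(\partial_1\widetilde T_\ell)+H_2(\partial_2\widetilde T_\ell)\bigr)dx\Bigr].
\]

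The key analytic step is then to express the gradient integral in terms of $\int H_2(T_\ell)$. Since $H_2(t)=t^2-1$, and since integration by parts (Green's identity) together with $\Delta_{\mathbb S^2} T_\ell=-\ell(\ell+1)T_\ell$ yields $\int_{\mathbb S^2}\|\nabla T_\ell\|^2\,dx=\ell(\ell+1)\int_{\mathbb S^2}T_\ell^2\,dx$, the normalization \eqref{e:normaS} gives $\int_{\mathbb S^2}\|\nabla\widetilde T_\ell\|^2\,dx=2\int_{\mathbb S^2}T_\ell^2\,dx$. Consequently
\[
\int_{\mathbb S^2}\bigl(H_2(\partial_1\widetilde T_\ell)+H_2(\partial_2\widetilde T_\ell)\bigr)dx=2\int_{\mathbb S^2}T_\ell^2\,dx-8\pi=2\int_{\mathbb S^2}H_2(T_\ell)\,dx.
\]
Plugging this in and using $\sqrt{\pi/8}=\tfrac12\sqrt{\pi/2}$, the bracket collapses to $\sqrt{\pi/8}\,\phi(z)z^2\int_{\mathbb S^2}H_2(T_\ell)\,dx$, which gives the first claimed equality. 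The second equality follows at once from the expansion $T_\ell=\sum_m a_{\ell,m}Y_{\ell,m}$, orthonormality of spherical harmonics on $\mathbb S^2$ (endowed with the standard measure of total mass $4\pi$), and $\E[a_{\ell,m}^2]=4\pi/(2\ell+1)$, which imply $\int_{\mathbb S^2}H_2(T_\ell)\,dx=\sum_m a_{\ell,m}^2-4\pi=\sum_m(a_{\ell,m}^2-4\pi/(2\ell+1))$.

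The main (and essentially only) subtlety is the gradient identity in the third step; once this algebraic miracle --- the precise cancellation making the bracket proportional to $z^2$ rather than $z^2-1$ --- is in hand, the rest is bookkeeping. This cancellation is exactly what implies that the second-chaos projection vanishes if and only if $z=0$, consistent with the Berry cancellation phenomenon discussed in \S\ref{ss:berryintroS}.
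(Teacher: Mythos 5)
Your proposal is correct and follows essentially the same route as the paper's own proof: isolate the three surviving terms of the $q=2$ chaotic projection (the coefficients $\alpha_{0,0}=\sqrt{\pi/2}$, $\alpha_{2,0}=\alpha_{0,2}=\sqrt{\pi/8}$, $\beta_0(z)=\phi(z)$, $\beta_2(z)=\phi(z)(z^2-1)$ match the paper's), then use Green's formula together with $\Delta_{\mathbb S^2}T_\ell=-\ell(\ell+1)T_\ell$ to convert the gradient terms into $2\int_{\mathbb S^2}H_2(T_\ell(x))\,dx$, which produces exactly the cancellation $(z^2-1)+1=z^2$, and finish by orthonormality of the $Y_{\ell,m}$. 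The only cosmetic point is that the final rewriting $\int_{\mathbb S^2}H_2(T_\ell)\,dx=\sum_m\bigl(a_{\ell,m}^2-\tfrac{4\pi}{2\ell+1}\bigr)$ needs only $(2\ell+1)\cdot\tfrac{4\pi}{2\ell+1}=4\pi$, not the variance of the $a_{\ell,m}$.
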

\begin{proof}
The second chaotic projection is, omitting the factor $\sqrt{\frac{\ell(\ell+1)}{2}}$,
$$
\displaylines{
  \frac{\alpha_{0,0}\beta_{2}(z)}{2}
\int_{\mathbb S^2} H_{2}(T_{\ell}(x))\,dx
+ \frac{\alpha_{0,2}\beta_{0}(z)}{2}
\int_{\mathbb S^2}  H_{2}(\widetilde \partial_2 T_\ell(x))\,dx +\cr
+  \frac{\alpha_{2,0}\beta_{0}(z)}{2}
\int_{\mathbb S^2} H_{2}(\widetilde \partial_1 T_\ell(x))\,dx\
= \frac12 \Big (\alpha_{0,0}\beta_{2}(z)
\int_{\mathbb S^2} (T_{\ell}(x)^2-1)\,dx
+ \cr
+\alpha_{0,2}\beta_{0}(z)
\int_{\mathbb S^2}  ((\widetilde \partial_2 T_\ell(x))^2-1)\,dx
+  \alpha_{2,0}\beta_{0}(z)
\int_{\mathbb S^2} ((\widetilde \partial_1 T_\ell(x))^2-1)\,dx \Big )\ =\cr
= \frac12 \Big (\alpha_{0,0}\beta_{2}(z)
\int_{\mathbb S^2} T_{\ell}(x)^2\,dx
+\alpha_{0,2}\beta_{0}(z)
\frac{2}{\ell(\ell+1)}\int_{\mathbb S^2}  ( \partial_2 T_\ell(x))^2\,dx
+  \cr
+\alpha_{2,0}\beta_{0}(z)
\frac{2}{\ell(\ell+1)}\int_{\mathbb S^2} (\partial_1 T_\ell(x))^2\,dx - 4\pi(\alpha_{0,0}\beta_{2}(z) + \frac{4}{\ell(\ell+1)}\alpha_{0,2}\beta_{0}(z))\Big )\ .
}
$$
Now, by Green's formula, we have for $j=1,2$
$$\displaylines{
\int_{\mathbb S^2} (\partial_j T_\ell(x))^2\,dx = -
\int_{\mathbb S^2} T_\ell(x) \partial_{j}^2 T_\ell(x)\,dx
}$$
and putting things together
$$\displaylines{
\frac12 \Big (\alpha_{0,0}\beta_{2}(z)
\int_{\mathbb S^2} T_{\ell}(x)^2\,dx
+\alpha_{0,2}\beta_{0}(z)
\frac{2}{\ell(\ell+1)}\int_{\mathbb S^2}  ( \partial_2 T_\ell(x))^2\,dx
+  \cr
+\alpha_{2,0}\beta_{0}(z)
\frac{2}{\ell(\ell+1)}\int_{\mathbb S^2} (\partial_1 T_\ell(x))^2\,dx - 4\pi(\alpha_{0,0}\beta_{2}(z) + \frac{4}{\ell(\ell+1)}\alpha_{0,2}\beta_{0}(z))\Big )=}$$
$$\displaylines{
=\frac12 \Big (\alpha_{0,0}\beta_{2}(z)
\int_{\mathbb S^2} T_{\ell}(x)^2\,dx
-\alpha_{0,2}\beta_{0}(z)
\frac{2}{\ell(\ell+1)}
\int_{\mathbb S^2} T_\ell(x) \partial_{2}^2 T_\ell(x)\,dx
+  \cr
-\alpha_{2,0}\beta_{0}(z)
\frac{2}{\ell(\ell+1)}
\int_{\mathbb S^2} T_\ell(x) \partial_{1}^2 T_\ell(x)\,dx - 4\pi(\alpha_{0,0}\beta_{2}(z) + \frac{4}{\ell(\ell+1)}\alpha_{0,2}\beta_{0}(z))\Big )=\cr}$$
$$\displaylines{
=\frac12 \Big (\alpha_{0,0}\beta_{2}(z)
\int_{\mathbb S^2} T_{\ell}(x)^2\,dx
-\alpha_{0,2}\beta_{0}(z)
\frac{2}{\ell(\ell+1)}
\int_{\mathbb S^2} T_\ell(x) (\partial_{1}^2 T_\ell(x)+\partial_{2}^2 T_\ell(x))\,dx
+  \cr
- 4\pi(\alpha_{0,0}\beta_{2}(z) + \frac{4}{\ell(\ell+1)}\alpha_{0,2}\beta_{0}(z))\Big )=\cr}$$
$$\displaylines{
=\frac12 \Big (\alpha_{0,0}\beta_{2}(z)
\int_{\mathbb S^2} T_{\ell}(x)^2\,dx
-\alpha_{0,2}\beta_{0}(z)
\frac{2}{\ell(\ell+1)}
\int_{\mathbb S^2} T_\ell(x) \Delta T_\ell(x)\,dx
+  \cr
- 4\pi(\alpha_{0,0}\beta_{2}(z) + \frac{4}{\ell(\ell+1)}\alpha_{0,2}\beta_{0}(z))\Big )=\cr}$$
$$\displaylines{
=\frac12 \Big (\alpha_{0,0}\beta_{2}(z)
\int_{\mathbb S^2} T_{\ell}(x)^2\,dx
+\alpha_{0,2}\beta_{0}(z)
\frac{2}{\ell(\ell+1)} \ell(\ell+1)
\int_{\mathbb S^2} T_\ell(x)^2\,dx
+  \cr
- 4\pi(\alpha_{0,0}\beta_{2}(z) + \frac{4}{\ell(\ell+1)}\alpha_{0,2}\beta_{0}(z))\Big )=\cr
=\frac12 \Big ((\alpha_{0,0}\beta_{2}(z)+2\alpha_{0,2}\beta_{0}(z))
\int_{\mathbb S^2} T_{\ell}(x)^2\,dx
- 4\pi(\alpha_{0,0}\beta_{2}(z) + 2\alpha_{0,2}\beta_{0}(z))\Big )=\cr
=\frac12 (\alpha_{0,0}\beta_{2}(z)+2\alpha_{0,2}\beta_{0}(z))
\int_{\mathbb S^2} (T_{\ell}(x)^2-1)\,dx=\cr
=\frac12\sqrt{\frac{\pi}{2}}\phi(z) z^2 \int_{\mathbb S^2} H_2(T_{\ell}(x))\,dx\ .
}$$
Moreover
$$\displaylines{
\int_{\mathbb S^2} H_2(T_{\ell}(x))\,dx = \int_{\mathbb S^2} \sum_{m,m'}\left( a_{\ell,m} a_{\ell,m'} Y_{\ell,m}(x) Y_{\ell,m'}(x) - 1\right )\,dx =\cr
= \sum_{m=1}^{2\ell+1} \left(a_{\ell,m}^2 - \frac{4\pi}{2\ell+1}\right)\ ,
}$$
since $Y_{\ell,m}$ are an orthonormal family.
\end{proof}
Now it immediately follows that
\begin{cor}\label{corNodal}
The second chaotic projection of the length $\mathcal L_\ell(z)$ vanishes if and only if $z=0$.
\end{cor}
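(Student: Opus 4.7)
The proof is an immediate consequence of Proposition \ref{2sfera}, and my plan is essentially to read off both directions from that explicit formula. Write
$$
\text{proj}(\mathcal L_\ell(z) | C_2)= \sqrt{\frac{\ell(\ell+1)}{2}}\sqrt{\frac{\pi}{8}}\,\phi(z)\,z^{2}\cdot h_{\ell,2;2},
$$
where $h_{\ell,2;2}:=\int_{\mathbb S^2} H_2(T_\ell(x))\,dx = \sum_{m=1}^{2\ell+1}\bigl(a_{\ell,m}^2-\tfrac{4\pi}{2\ell+1}\bigr)$. Since this is a product of a (deterministic) scalar $C_\ell(z):=\sqrt{\ell(\ell+1)/2}\,\sqrt{\pi/8}\,\phi(z)\,z^{2}$ and a random variable, the projection vanishes in $L^2(\P)$ if and only if either $C_\ell(z)=0$ or $h_{\ell,2;2}=0$ almost surely.

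For the ``if'' direction, I would simply observe that if $z=0$ then the factor $z^2$ in $C_\ell(z)$ forces $C_\ell(0)=0$, so $\text{proj}(\mathcal L_\ell(0) | C_2)=0$ trivially.

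For the converse, I would argue that $h_{\ell,2;2}$ is not a.s. zero: by formula \paref{q=2} in Chapter 5 (with $d=2$),
$$
\Var(h_{\ell,2;2}) = 2\,\frac{\mu_2^2}{n_{\ell;2}} = \frac{2(4\pi)^2}{2\ell+1}>0,
$$
so $h_{\ell,2;2}$ is a genuine non-degenerate element of the second Wiener chaos. On the other hand, for every $\ell\ge 1$ we have $\sqrt{\ell(\ell+1)/2}\cdot\sqrt{\pi/8}>0$ and $\phi(z)>0$ for every $z\in\R$. Consequently $C_\ell(z)=0$ if and only if $z^{2}=0$, i.e. $z=0$. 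Combining these two observations, $\text{proj}(\mathcal L_\ell(z) | C_2)=0$ forces $z=0$, which completes the proof. There is no real obstacle here, since Proposition \ref{2sfera} already does all the work; the only point to be careful about is checking that the second-chaos random variable $h_{\ell,2;2}$ is non-degenerate, which follows from the variance computation recalled above.
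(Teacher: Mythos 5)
Your proof is correct and follows essentially the same route as the paper, which deduces the corollary directly from the explicit formula in Proposition \ref{2sfera}: the deterministic factor vanishes exactly at $z=0$ since $\phi(z)>0$. Your additional check that $h_{\ell,2;2}$ is non-degenerate (via $\Var(h_{\ell,2;2})=2\mu_2^2/n_{\ell;2}>0$) is the right way to make the ``only if'' direction fully rigorous, a point the paper leaves implicit.
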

\begin{remark}\rm
Previous computations in the proof of Proposition \ref{2sfera} indeed holds on every two dimensional compact Riemannian manifold, actually we can always use Green's formula.
\end{remark}

\section{The CLT}

In this section we prove the main result of this chapter, that is a CLT for the length of $z$-level curve for $z\ne 0$.
Let us first show the following.

\begin{lemma}
For $z\ne 0$, we have
$$
\frac{\text{proj}(\mathcal L_\ell(z) | C_2)}{\sqrt{\Var(\text{proj}(\mathcal L_\ell(z) | C_2))}}\mathop{\goto}^{\mathcal L} Z\ ,
$$
where $Z\sim \mathcal N(0,1)$.
\end{lemma}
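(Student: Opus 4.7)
The plan is to reduce this CLT to a classical Lindeberg--L\'evy argument by using the explicit representation of the second chaotic projection obtained in Proposition \ref{2sfera}. Since the prefactor
$$
c_\ell(z) := \sqrt{\frac{\ell(\ell+1)}{2}}\sqrt{\frac{\pi}{8}}\,\phi(z)\,z^2
$$
is deterministic and \emph{nonzero} for $z\neq 0$ (this is exactly where the hypothesis enters, cf.\ Corollary \ref{corNodal}), it cancels in the normalization. Thus it is enough to study the asymptotic distribution of
$$
S_\ell := \sum_{m=1}^{2\ell+1}\left(a_{\ell,m}^2 - \frac{4\pi}{2\ell+1}\right).
$$

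First I would observe that, for fixed $\ell$, the summands in $S_\ell$ are i.i.d.\ centered random variables of the form $a_{\ell,m}^2 - \mathbb{E}[a_{\ell,m}^2]$ with $a_{\ell,m}\sim\mathcal{N}(0,\frac{4\pi}{2\ell+1})$. Using the standard fact that $\Var(X^2)=2\sigma^4$ for $X\sim\mathcal{N}(0,\sigma^2)$, I would compute
$$
\sigma_\ell^2 := \Var(S_\ell) = (2\ell+1)\cdot 2\left(\frac{4\pi}{2\ell+1}\right)^2 = \frac{32\pi^2}{2\ell+1}.
$$
Setting $\xi_{\ell,m} := a_{\ell,m}^2 - \frac{4\pi}{2\ell+1}$, the random variable $S_\ell/\sigma_\ell$ is a normalized sum of $2\ell+1$ i.i.d.\ centered summands, each with finite moments of all orders; applying the classical CLT (or, for a quantitative version, the Berry--Esseen theorem, which gives rate $O(\ell^{-1/2})$ in the Kolmogorov distance since the third absolute moment of $\xi_{\ell,m}$ scales like $\ell^{-3}$ and $\sigma_\ell^3$ scales like $\ell^{-3/2}$) yields
$$
\frac{S_\ell}{\sigma_\ell} \stackrel{\mathcal{L}}{\longrightarrow} Z \sim \mathcal{N}(0,1).
$$

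Finally, since $\Var(\mathrm{proj}(\mathcal{L}_\ell(z)\,|\,C_2)) = c_\ell(z)^2\,\sigma_\ell^2$ and the prefactor cancels in the ratio
$$
\frac{\mathrm{proj}(\mathcal{L}_\ell(z)\,|\,C_2)}{\sqrt{\Var(\mathrm{proj}(\mathcal{L}_\ell(z)\,|\,C_2))}} = \frac{c_\ell(z)\,S_\ell}{|c_\ell(z)|\,\sigma_\ell} = \mathrm{sgn}(c_\ell(z))\,\frac{S_\ell}{\sigma_\ell},
$$
the claim follows (using $Z\stackrel{\mathcal{L}}{=}-Z$ to absorb the sign). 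No step here is really an obstacle: the entire argument is essentially the classical CLT for i.i.d.\ centered chi-square variables, and the real content of the lemma lies not in its proof but in the explicit diagonalization provided by Proposition \ref{2sfera}, which reduces a second-chaos object on the sphere to a sum of independent scalars. Alternatively, one could invoke the Fourth Moment Theorem \paref{th} directly on the second-chaos element $S_\ell$, but for this diagonalized form the classical CLT is the most economical route.
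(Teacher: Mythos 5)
Your proof is correct and follows essentially the same route as the paper: both reduce the statement, via the explicit formula of Proposition \ref{2sfera}, to the classical CLT for the normalized sum $\sum_{m=1}^{2\ell+1}\bigl(a_{\ell,m}^2-\tfrac{4\pi}{2\ell+1}\bigr)$ of i.i.d.\ centered squared Gaussians, after noting that the deterministic (and in fact positive) prefactor cancels in the normalization. The Berry--Esseen remark and the sign discussion are harmless extras not present in the paper's argument.
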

\begin{proof}
The variance of the second chaotic projection (Proposition \ref{2sfera}) is
$$\displaylines{
\Var(\text{proj}(\mathcal L_\ell(z) | C_2))= \ell(\ell+1) \frac{\pi}{16} \phi(z)^2 z^4 2 \cdot 4\pi\cdot 2\pi \int_0^{\pi} P_\ell(\cos\vartheta)^2\,d\vartheta=\cr
= \ell(\ell+1) \frac{\pi}{16} \phi(z)^2 z^4 2 \cdot 4\pi\cdot 2\pi \frac{2}{2\ell +1}=\ell(\ell+1) \frac{\pi}{16} \frac{1}{2\pi} \e^{-z^2} z^4 2 \cdot 4\pi\cdot 2\pi \frac{2}{2\ell +1}=\cr
=\ell(\ell+1)\frac{2}{2\ell +1}\cdot \frac{ \pi^2}{2}  \e^{-z^2} z^4  \sim
\ell \cdot \frac{ \pi^2}{2}  \e^{-z^2} z^4\ , \quad \ell\to +\infty\ ,
}$$
where we used the identity \paref{momento 2}
$$
\int_0^\pi P_\ell(\cos \vartheta)^2\,d\vartheta = \frac{2}{2\ell+1}\ .
$$
Moreover we can rewrite the second chaotic projection as
$$\displaylines{
\text{proj}(\mathcal L_\ell(z) | C_2) = \sqrt{\frac{\ell(\ell+1)}{2}}
 \sqrt{\frac{\pi}{8}} \phi(z) z^2 \sum_m \left (a_{\ell,m}^2 - \frac{4\pi}{2\ell+1} \right )=\cr
=\sqrt{\frac{\ell(\ell+1)}{2}}
 \sqrt{\frac{\pi}{4}} \phi(z) z^2\frac{4\pi}{\sqrt{2\ell+1}} \frac{1}{\sqrt{2(2\ell+1)}}\sum_m \left (\left (\sqrt{\frac{2\ell+1}{4\pi}}a_{\ell,m} \right )^2 - 1\right )\ .
}$$
Now we can apply the standard CLT to the sequence of normalized sums
$$
 \frac{1}{\sqrt{2(2\ell+1)}}\sum_m \left (\left (\sqrt{\frac{2\ell+1}{4\pi}}a_{\ell,m} \right )^2 - 1\right )\mathop{\goto}^{\mathcal L} Z\ ,
$$
where $Z\sim \mathcal N(0,1)$. Finally this implies the CLT for the second chaotic projection
$$
\frac{\text{proj}(\mathcal L_\ell(z) | C_2)}{\sqrt{\Var(\text{proj}(\mathcal L_\ell(z) | C_2))}} \mathop{\goto}^{\mathcal L} Z\ ,
$$
which conclude the proof.
\end{proof}
Now we can easily prove Thereom \ref{main th S}.

\begin{proof}[Proof of Theorem \ref{main th S}]

We have, for $z\ne 0$,
\begin{equation}\label{nice}
\lim_\ell \frac{\Var(\text{proj}(\mathcal L_\ell(z) | C_2))}{\Var(\mathcal L_\ell(z))} = 1\ .
\end{equation}
It follows  from the chaotic decomposition, that as $\ell\to \infty$
$$
\frac{\mathcal L_\ell(z)}{\sqrt{\Var(\mathcal L_\ell(z))}} = \frac{\text{proj}(\mathcal L_\ell(z) | C_2)}{\sqrt{\Var(\mathcal L_\ell(z))}} + o_\P (1)\ ,
$$
therefore $\frac{\mathcal L_\ell(z)}{\sqrt{\Var(\mathcal L_\ell(z))}}$ and
$\frac{\text{proj}(\mathcal L_\ell(z) | C_2)}{\sqrt{\Var(\mathcal L_\ell(z))}}$ have the same asymptotic distribution. Previous lemma allows to conclude the proof, recalling moreover that if the limit distribution is absolutely continuous, than the convergence in distribution is equivalent to the convergence   in Kolmogorov distance.
\end{proof}

\chapter{Nodal lengths  for arithmetic random waves}

\section{Introduction and main results}

In this chapter we investigate the asymptotic behavior of nodal lengths for arithmetic random waves.

\subsection{Arithmetic random waves}

Let $\T:=\R^2/\Z^2$ be the standard
$2$-torus and $\Delta$ the Laplace operator on $\Tb$. We are interested
in the (totally discrete) spectrum of $\Delta$ i.e. eigenvalues $E>0$
of the Schr\"{o}dinger equation
\begin{equation}
\label{eq:Schrodinger}
\Delta f + Ef=0.
\end{equation}
Let $$S=\{{{} n \in \Z : n} =  a^2+b^2 \,\,  \mbox{{} for some} \:a,\, b\in\Z\}$$ be the collection of all numbers
expressible as a sum of two squares. Then the eigenvalues of \eqref{eq:Schrodinger}
(also called ``energy levels" of the torus) are all numbers of the form $E_{n}=4\pi^{2}n$ with $n\in S$.

In order to describe a Laplace eigenspace corresponding to $E_{n}$ denote
$\Lambda_n$ to be the set of ``frequencies":
\begin{equation*}
\Lambda_n := \lbrace \lambda =(\lambda_1,\lambda_2)\in \Z^2 : \lambda_1^2 + \lambda_2^2 = n\rbrace\
\end{equation*}
of cardinality $| \Lambda_n |$. (Geometrically $\Lambda_{n}$ are all the standard lattice points
lying on the centered radius-$\sqrt{n}$ circle.)
For $\lambda\in \Lambda_{n}$ denote the complex exponential associated to the frequency $\lambda$
\begin{equation*}
e_{\lambda}(\theta) = \exp(2\pi i \langle \lambda, \theta \rangle)
\end{equation*}
with $\theta=(\theta_{1},\theta_{2})\in\Tb$.
The collection
\begin{equation*}
\{e_{\lambda}(\theta)\}_{\lambda\in \Lambda_n}
\end{equation*}
of complex exponentials corresponding to frequencies $\lambda \in \Lambda_{n}$
is an $L^{2}$-orthonormal basis of the eigenspace of $\Delta$ corresponding to
eigenvalue $E_{n}$. In particular, the dimension of $E_{n}$
equals the number of ways to express $n$ as a sum of two squares
\begin{equation*}
\mathcal N_n := \dim E_{n} = |\Lambda_n|
\end{equation*}
(also denoted in the number theoretic literature $r_{2}(n)=|\Lambda_{n}|$). The number
$\Nc_{n}$ is subject to large and erratic fluctuation; it grows ~\cite{La} {\em on average}
as $\sqrt{\log{n}}$, but could be as small as $8$ for (an infinite sequence of) prime numbers
$p\equiv 1\mod{4}$, or as large as a power of $\log{n}$.

Following ~\cite{RW} and ~\cite{AmP} we define
the ``arithmetic random waves" (random Gaussian toral Laplace eigenfunctions)
to be the random fields
\begin{equation}\label{defrf}
T_n(\theta)=\frac{1}{\sqrt{\mathcal N_n}}\sum_{ \lambda\in \Lambda_n}
a_{\lambda}e_\lambda(\theta),
\end{equation}
$\theta\in\Tb$, where the coefficients $a_{\lambda}$ are standard Gaussian i.i.d. save
to the relations $$a_{-\lambda}= \overline{a_{\lambda}}$$ (ensuring that $T_{n}$ are real-valued).
By the definition \eqref{defrf}, $T_n$ is a centered
Gaussian random field with covariance function
\begin{equation*}
r_n(\theta,\zeta) = r_{n}(\theta-\zeta) := \E[T_n(\theta) \overline{T_n(\zeta)}] = \frac{1}{\mathcal N_n}
\sum_{\lambda\in \Lambda_n}e_{\lambda}(\theta-\zeta)=\frac{1}{\mathcal N_n}\sum_{\lambda\in \Lambda_n}\cos\left(2\pi\langle \theta-\zeta,\lambda \rangle\right),
\end{equation*}
$\theta,\zeta\in\Tb$ (by the standard abuse of notation). Note that $r_{n}(0)=1$, i.e. $T_{n}$ is unit variance.

\subsection{Nodal length{{} : mean and variance}}

Consider the total {\em nodal length} of random
eigenfunctions, i.e. the sequence the random variables $\{\mathcal L_{n}\}_{n\in S}$ given by
\begin{equation}\label{e:length}
\mathcal L_n := \text{length}(T_n^{-1}(0)).
\end{equation}
The expected value of $\mathcal L_{n}$ was computed ~\cite{RW} to be
\begin{equation}
\E[\mathcal L_n]= \frac{1}{2\sqrt{2}}\sqrt{E_n},
\end{equation}
consistent to Yau's conjecture ~\cite{Yau,D-F}.
The more subtle question of asymptotic behaviour of the variance $\Var(\Lc_{n})$ of $\Lc_{n}$ was
addressed ~\cite{RW}, and fully resolved \cite{AmP} as follows.

Given $n\in S$ define a probability measure $\mu_{n}$ on the unit circle $\Sc^{1}\subseteq\R^{2}$
supported on angles corresponding to lattice points in $\Lambda_{n}$:
\begin{equation*}
\mu_{n} := \frac{1}{\mathcal N_n} \sum_{\lambda\in \Lambda_n} \delta_{\frac{\lambda}{\sqrt{n}}}.
\end{equation*}
It is known  that for a density $1$ sequence of numbers $\{n_{j}\}\subseteq S$ the angles
of lattice points in $\Lambda_{n}$ tend to be equidistributed in the sense that
\begin{equation*}
\mu_{n_{j}}\Rightarrow \frac{d\theta}{2\pi}
\end{equation*}
(where $\Rightarrow$ is weak-$*$ convergence of probability measures). However the sequence $\{\mu_{n}\}_{n\in S}$
has other weak-$*$ partial limits ~\cite{Ci,AmP} (``attainable measures"), partially
classified in \cite{KW}.

It was proved ~\cite{AmP} that one has
\begin{equation}
\label{eq:var leading KKW}
\var(\Lc_{n}) =c_n \frac{E_n}{\Nc_{n}^2}(1 + o_{\Nc_{n}\rightarrow\infty}(1)),
\end{equation}
where
\begin{equation}\label{cn}
c_n = \frac{1+\widehat{\mu_n}(4)^2}{512},
\end{equation}
and for a measure $\mu$ on $\mathbb{S}^{1}$,
\begin{equation}\label{e:smet}
\widehat \mu_n(k) = \int_{\mathbb S^1} z^{-k}\,d\mu_n(z)
\end{equation}
are the usual Fourier coefficients of $\mu$ on the unit circle.
As $$|\widehat{\mu_{n}}(4)|\le 1$$ by the triangle inequality, the result \paref{eq:var leading KKW}
shows that the order of magnitude of $\var(\Lc_{n})$ is
$ \frac{E_n}{\Nc_n^2} $, that is, of smaller order than what would be a natural guess
$ \frac{E_n}{\Nc_n} $; this situation (`arithmetic Berry's cancellation' --  see \cite{AmP}) is similar to the {\em cancellation phenomenon} observed by Berry
in a different setting ~\cite{Berry2002}.

In addition, \eqref{eq:var leading KKW} shows that for $\var(\Lc_{n})$ to exhibit an
asymptotic law (equivalent to $\{c_{n}\}$ in \eqref{cn} convergent along a subsequence)
we need to pass to a subsequence $\{ n_{j} \}\subset S$ such that the limit
$$\lim\limits_{j\rightarrow\infty}| \widehat{\mu_{n_{j}}}(4)|$$ exists. For example,
if $\{ n_{j}\} \subset S$ is a subsequence such that $\mu_{n_{j}}\Rightarrow\mu$
for some probability measure $\mu$ on $\Sc^{1}$,
then \eqref{eq:var leading KKW} reads (under the usual extra-assumption $\Nc_{n_{j}}\rightarrow\infty$)
\begin{equation}\label{e:varz}
\var(\Lc_{n_j})\sim c({{} \mu}) \frac{E_{n_j}}{\Nc_{n_j}^2}
\end{equation}
with $$c(\mu) = \frac{1+\widehat{\mu}(4)^2}{512},$$
where, here and for the rest of the chapter, we write $a_n\sim b_n$ to
indicate that the two positive sequences $\{a_n\}$ and $\{b_n\}$ are such
that $a_n/b_n \rightarrow 1$, as $n\to\infty$. Note that the set of the possible
values for the $4$th Fourier coefficient $\widehat{\mu}(4)$ covers the whole interval
$[-1,1]$ (see \cite{AmP, KW}). This implies in particular that the possible values of
the constant $c(\mu)$ cover the whole interval
$$\left[\frac{1}{512},\frac{1}{256}\right];$$ the above discussion provides
a complete classification of the asymptotic behaviour of $\var(\Lc_{n})$.

\subsection{Main results}

{{}

Let $\{n_j : j\geq 1\}\subset S$ be a sequence within $S$, and assume that
$\lim_{j\to\infty }{\mathcal N}_{n_j}  = \infty$. As it is customary, generic
subsequences of $\{n_j\}$ will be denoted by $\{n'_j\}$, $\{n''_j\}$, and so on.
Our principal aim in this chapter is to study the asymptotic behaviour, as $j\to \infty$,
of the distribution of the sequence of normalized random variables
\begin{equation}\label{e:culp}
\widetilde{\Lc}_{n_j} :=  \frac{\mathcal{L}_{n_j} - \E[\mathcal{ L}_{n_j}]}
{\sqrt{\Var[\mathcal{L}_{n_j} ]}}, \quad j\geq 1.
\end{equation}
Since, in this setting, the variance \eqref{eq:var leading KKW} {diverges to infinity}, it {seems}
reasonable to expect a central limit result, that is, that the sequence $\widetilde{\Lc}_{n_j}$, $ j\geq 1$,
{converges in distribution to a standard Gaussian random variable.}
Our main findings not only contradict this {somewhat naive prediction},
but also show the following non-trivial facts:
\begin{itemize}
\item[\bf (i)] the sequence $\big\{\widetilde{\Lc}_{n_j} \big\}$ does not
necessarily converge in distribution, and
\item[\bf (ii)] the adherent points of the sequence
$\big\{\widetilde{\Lc}_{n_j} : j\geq 1 \big\}$
(in the sense of the topology induced by the convergence in distribution of random variables)
coincide with the distributions spanned by a class of linear combinations of independent
squared Gaussian random variables; such linear combinations are moreover parameterized by
the adherent points of the numerical sequence
$$
j \mapsto  \left|\widehat{\mu_{n_j}}(4)\right|, \quad j\geq 1.
$$
\end{itemize}

One should note that the phenomenon described at Point {\bf (ii)} is consistent with the fact that the
variance ${\rm Var}(\mathcal{L}_n)$ explicitly depends on the constant $\widehat{\mu_n}(4)^2$ (see
\eqref{e:varz}).  In order to formally state our main findings, we introduce some further notation: for every
$\eta\in [0,1]$, we write $\mathcal{M}_\eta$ to indicate the random variable
\begin{equation}\label{e:r}
\mathcal{M}_\eta := \frac{1}{2\sqrt{1+\eta^2}} (2 - (1+\eta) X_1^2-(1-\eta) X_2^2),
\end{equation}
where $X=(X_{1},X_{2})$ is a two-dimensional centered Gaussian vector with identity covariance matrix (more
information on the distributions of the random variables $\Mc_{\eta}$ is provided in Proposition
\ref{p:meta}). For every $n\in S$, we write
\begin{equation}\label{e:k}
\Mc^n :=  \Mc_{ |  \widehat{\mu_{n}}(4) | },
\end{equation}
where the quantity $\widehat{\mu_{n}}(4)$ is defined according to formula \eqref{e:smet}.
\smallskip

The following statement is the main result of the chapter.

\begin{theorem}\label{thm:lim dist sep}
Let the above notation and assumptions prevail. Then, the sequence $\big\{\mathbf{D}\big(\widetilde{\Lc}_{n_j}\big) : j\geq 1\big\}$ is relatively compact with respect to the topology of weak convergence, and a subsequence $\big\{ \widetilde{\Lc}_{n'_j}\big\}$ admits a limit in distribution if and only if the corresponding numerical subsequence $\big\{\big|\widehat{\mu_{n'_j}}(4)\big| : j\geq 1 \big\}$ converges to some $\eta\in [0,1]$, and in this case $$\widetilde{\Lc}_{n'_j}  \stackrel{\rm d}{\longrightarrow} \mathcal{M}_\eta.$$
In particular, letting $d$ denote either the Kolmogorov distance \paref{prob distance}, or an arbitrary distance metrizing weak convergence on $\mathscr{P}$ (the space of all probability mesures on $\R$ - see Chapter 4) this implies that
\begin{equation}\label{e:b}
\lim_{j\to\infty} d\big(\widetilde{\Lc}_{n_j} , \Mc^{n_j}\big) = 0.
\end{equation}
\end{theorem}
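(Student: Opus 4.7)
The plan is to attack the theorem via Wiener chaos decomposition of $\mathcal{L}_n$, in the same spirit as the analysis carried out for the sphere in Chapter 7, but with the crucial novelty that for arithmetic random waves it is the \emph{fourth} chaotic component that drives the limiting distribution. First I would write, analogously to Proposition \ref{teoexpS}, the formal expansion
\[
\mathcal{L}_n = \E[\mathcal{L}_n] + \sqrt{\tfrac{E_n}{2}}\sum_{q\ge 2}\sum_{u=0}^{q}\sum_{k=0}^{u}\frac{\alpha_{k,u-k}\beta_{q-u}(0)}{k!(u-k)!(q-u)!}\int_{\Tb} H_{q-u}(T_n)\,H_k(\widetilde\partial_1 T_n)\,H_{u-k}(\widetilde\partial_2 T_n)\,d\theta,
\]
where $\widetilde\partial_j T_n$ is the variance-one normalization of $\partial_j T_n$ and the coefficients $\alpha, \beta$ are those in \eqref{e:alpha}--\eqref{e:beta}. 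The Step 1 is then to verify that the $q=2$ projection vanishes, exactly as in Corollary \ref{corNodal}: the three surviving terms are proportional to $\beta_2(0)$ or $\beta_0(0)$ multiplied by chaos-$2$ integrals, and Green's identity together with $\Delta T_n = -E_n T_n$ produces a perfect cancellation at $z=0$. Odd-order chaoses vanish identically by parity of the kernels, so the leading term is necessarily of order $q=4$.

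Next I would compute $\mathrm{proj}(\mathcal{L}_n\,|\,C_4)$ explicitly. After expanding $H_k(\widetilde\partial_j T_n)$ in terms of the Gaussian coefficients $a_\lambda$ and using the eigenvalue identity $\lambda_1^2+\lambda_2^2 = n$ repeatedly (in order to turn integrals of squared/quartic derivatives into quadratic forms in $\{|a_\lambda|^2\}_{\lambda\in\Lambda_n}$), I expect a closed-form expression of the shape
\[
\mathrm{proj}(\mathcal{L}_n\,|\,C_4) = \frac{\sqrt{E_n}}{\Nc_n}\Bigl(c_0\,\mathcal{A}_n + c_1\,\Re\widehat{\mu_n}(4)\,\mathcal{B}_n + c_2\,\Im\widehat{\mu_n}(4)\,\mathcal{C}_n\Bigr) + \text{(negligible)},
\]
where $\mathcal{A}_n,\mathcal{B}_n,\mathcal{C}_n$ are quadratic combinations of the $\{a_\lambda\}$ and $c_0,c_1,c_2$ are explicit numerical constants. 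The combinatorial cancellation that produces the factor $\widehat{\mu_n}(4)$ is the analogue of Berry's cancellation and is the heart of the matter; I would isolate it by diagonalizing the quadratic form on $\mathrm{span}\{a_\lambda\}$ modulo the symmetry $a_{-\lambda}=\overline{a_\lambda}$. After appropriate orthogonal rotation of $\Nc_n$-dimensional Gaussian space and a standard CLT for sums of i.i.d.\ squared Gaussians, this projection (properly normalized) will be seen to converge in distribution to precisely $-\bigl((1+\eta)H_2(X_1)+(1-\eta)H_2(X_2)\bigr)/(2\sqrt{1+\eta^2})$, which equals $\Mc_\eta$, provided $|\widehat{\mu_n}(4)|\to\eta$.

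The third step is to control the tail of the chaos expansion, i.e.\ to show
\[
\sum_{q\ge 6}\Var\bigl(\mathrm{proj}(\mathcal{L}_n\,|\,C_q)\bigr) = o\bigl(\Var(\mathcal{L}_n)\bigr).
\]
I expect this to follow from a uniform bound on individual chaotic variances in terms of the covariance moments $\int_\Tb r_n(\theta)^q\,d\theta$ and its derivatives, combined with the a priori estimate $\Var(\mathcal{L}_n)\sim c(\mu)E_n/\Nc_n^2$ from \eqref{e:varz}. Once this negligibility is established, the full nodal length is asymptotically equivalent in $L^2(\P)$ to its fourth-order projection, and therefore $\widetilde{\Lc}_n - \widetilde{\mathrm{proj}(\mathcal{L}_n\,|\,C_4)}\to 0$ in probability.

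Finally, for the global statement about the sequence $\{\widetilde{\Lc}_{n_j}\}$: the numerical sequence $|\widehat{\mu_{n_j}}(4)|$ lies in $[0,1]$, hence is relatively compact, and along any subsequence $\{n'_j\}$ with $|\widehat{\mu_{n'_j}}(4)|\to\eta$ the second moment convergence combined with the chaos-$4$ CLT gives $\widetilde{\Lc}_{n'_j}\stackrel{d}{\to}\Mc_\eta$; conversely, if $\widetilde{\Lc}_{n'_j}$ converges in distribution, then the map $\eta\mapsto\mathbf{D}(\Mc_\eta)$ being one-to-one (a fact I would verify separately via characteristic functions, exploiting the distinct coefficients $(1\pm\eta)/(2\sqrt{1+\eta^2})$) forces $|\widehat{\mu_{n'_j}}(4)|$ to converge. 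The assertion \eqref{e:b} then follows from a routine subsequence-of-subsequence argument, using continuity of $\eta\mapsto\mathbf{D}(\Mc_\eta)$ with respect to $d$ and the fact that the limit $\Mc_\eta$ has an absolutely continuous distribution (so weak convergence upgrades to Kolmogorov convergence). The main obstacle will be the explicit quartic-chaos computation together with the precise identification of $\widehat{\mu_n}(4)$ as the parameter controlling the diagonalization; everything else is either standard chaos-calculus bookkeeping or soft functional-analytic reasoning.
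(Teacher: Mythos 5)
Your proposal is correct and follows essentially the same route as the paper: Wiener--It\^o expansion of $\mathcal{L}_n$, exact vanishing of the second (and odd) chaotic projections, reduction of ${\rm proj}(\mathcal{L}_n\,|\,C_4)$ to quadratic forms in $\{|a_\lambda|^2-1\}$ diagonalized through a multivariate CLT whose covariance is governed by $\widehat{\mu_n}(4)$, identification of the limit with $\Mc_\eta$, domination of the fourth chaos via the known variance asymptotics \eqref{eq:var leading KKW}, and the final subsequence argument using injectivity of $\eta\mapsto\mathbf{D}(\Mc_\eta)$ plus absolute continuity to upgrade to the Kolmogorov distance. The only cosmetic slips are that $\widehat{\mu_n}(4)$ is real (so no $\Im\widehat{\mu_n}(4)$ term appears), and that the negligibility of the chaoses of order $\geq 6$ is obtained most directly by orthogonality combined with the matching of the fourth-chaos variance to the total variance, rather than by separate bounds on each higher chaotic variance.
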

The next result is a direct consequence of Theorem \ref{thm:lim dist sep}, of \cite[Theorem 11.7.1]{D} and of the fact that $\big\{\widetilde{\Lc}_{n_j} \big\}$ is a bounded sequence in $L^2$: it shows that one can actually couple the elements of the sequences $\{\widetilde{\Lc}_{n_j}\}$ and $\big\{\Mc^{n_j}\big\}$ on the same probability space, in such a way that their difference converges to zero almost surely and in $L^p$, for every $p<2$.

\begin{corollary}\label{c:coupling} There exists a probability space $(\Omega^*, \mathcal{F}^*, \P^*)$ as well as random variables $\{A_j, B_j : j\geq 1\}$ defined on it such that, for every $j\geq 1$, $A_j \stackrel{\rm d}{=} \widetilde{\Lc}_{n_j}$, $B_j  \stackrel{\rm d}{=}  \Mc^{n_j}$, and, as $j\to \infty$,
$$
A_j-B_j \to 0, \quad \mbox{a.s.}-\P^*.
$$
Also, for every $p\in (0,2)$, $\E^*[ | A_j-B_j| ^p ]\to 0$.
\end{corollary}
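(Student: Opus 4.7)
The plan is to combine Theorem \ref{thm:lim dist sep} with two classical tools: Strassen's theorem, which realizes the L\'evy--Prokhorov distance $d_P$ through an explicit coupling, and the Skorokhod--Dudley representation theorem (Theorem 11.7.1 in \cite{D}), which upgrades weak convergence to almost-sure convergence on a common probability space. As a preliminary observation, both families $\{\widetilde{\Lc}_{n_j}\}$ and $\{\Mc^{n_j}\}$ are bounded in $L^2$: the former by construction (unit variance), and the latter because the explicit formula \eqref{e:r} exhibits $\Mc_\eta$ as a bounded linear combination of two i.i.d.\ $\chi^2_1$-variables with coefficients depending continuously on $\eta\in[0,1]$. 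In particular, both sequences are tight, and the parameterization $\eta\mapsto\mathbf{D}(\Mc_\eta)$ is weakly continuous, so that whenever $|\widehat{\mu_{n'_j}}(4)|\to\eta$ along a subsequence, one has simultaneously $\widetilde{\Lc}_{n'_j}\stackrel{d}{\to}\Mc_\eta$ (by Theorem \ref{thm:lim dist sep}) and $\Mc^{n'_j}\stackrel{d}{\to}\Mc_\eta$.

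Choosing $d=d_P$ in Theorem \ref{thm:lim dist sep} gives $d_P(\widetilde{\Lc}_{n_j},\Mc^{n_j})\to 0$, so by Strassen's theorem, for every $\epsilon_j>d_P(\widetilde{\Lc}_{n_j},\Mc^{n_j})$ there exists a joint law $\gamma_j$ on $\R^2$ with the prescribed marginals $\mathbf{D}(\widetilde{\Lc}_{n_j})$ and $\mathbf{D}(\Mc^{n_j})$ and satisfying $\gamma_j(\{(x,y):|x-y|>\epsilon_j\})\le\epsilon_j$; I take $\epsilon_j\downarrow 0$. The family $\{\gamma_j\}$ is tight on $\R^2$ (both marginals being tight), and any weak-$*$ accumulation point of $\{\gamma_j\}$ must be supported on the diagonal $\{(x,x):x\in\R\}$ with common marginal $\Mc_\eta$ for some $\eta\in[0,1]$, by the discussion of the preceding paragraph. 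Applying the Skorokhod--Dudley representation to the $\R^2$-valued sequence then yields a common probability space $(\Omega^*,\mathcal{F}^*,\P^*)$ carrying random vectors $(A_j,B_j)$ with joint law $\gamma_j$ and with every subsequential almost-sure limit lying on the diagonal; since the convergence in probability of $A_j-B_j\to 0$ is already delivered by the Strassen construction, this upgrades to $A_j-B_j\to 0$ almost surely along the full sequence.

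Finally, for the $L^p$ convergence with $p\in(0,2)$, I invoke uniform integrability. Since $\sup_j\E^*[A_j^2]=1$ and $\sup_j\E^*[B_j^2]<\infty$, we have $\sup_j\E^*[|A_j-B_j|^2]<\infty$, so the family $\{|A_j-B_j|^p\}_j$ is uniformly integrable for every $p<2$ (via H\"older's inequality). Combined with the almost-sure convergence obtained above, this yields $\E^*[|A_j-B_j|^p]\to 0$, completing the argument. The main obstacle in this scheme is the passage, in the second step, from the convergence in probability granted by the Strassen coupling to the full-sequence almost-sure convergence: this is precisely where the joint Skorokhod--Dudley representation is crucial, since it identifies all subsequential limits of $\gamma_j$ as being concentrated on the diagonal and thereby rules out any stray subsequence of $(A_j,B_j)$ escaping that set.
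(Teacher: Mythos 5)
Your first and last steps are fine: the $L^2$-boundedness of both $\{\widetilde{\Lc}_{n_j}\}$ and $\{\Mc^{n_j}\}$ (hence of $|A_j-B_j|$), together with a.s. convergence, does give $\E^*[|A_j-B_j|^p]\to 0$ for $p<2$ by uniform integrability, exactly as the paper intends. The gap is in the middle step, which is the actual content of the corollary. You build Strassen couplings $\gamma_j$ with $\gamma_j(\{|x-y|>\epsilon_j\})\le\epsilon_j$, $\epsilon_j\to 0$, and then claim that ``applying the Skorokhod--Dudley representation to the $\R^2$-valued sequence'' produces a single space carrying $(A_j,B_j)\sim\gamma_j$ with all a.s. subsequential limits on the diagonal, and that the in-probability convergence ``upgrades'' to a.s. convergence. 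The Skorokhod--Dudley almost-sure representation theorem applies to a sequence of laws converging weakly to a \emph{single} limit; here $\{\gamma_j\}$ is only relatively compact and, as you yourself note, generally has several distinct accumulation points (this is the whole point of Theorem \ref{thm:lim dist sep}: $\widetilde{\Lc}_{n_j}$ need not converge in distribution), so that theorem does not apply to the full sequence. Moreover the final inference is a non sequitur: the statement that every weak accumulation point of $\gamma_j$ is supported on the diagonal is \emph{equivalent} to $A_j-B_j\to 0$ in probability under any realization with these joint laws, and carries no extra information forcing almost-sure convergence. For instance, realizing the $\gamma_j$ independently on a product space gives only convergence in probability, since $\epsilon_j$ (bounded below by the Prokhorov distance) need not be summable, so Borel--Cantelli is unavailable; and the sub-subsequence device does not help either, because ``every subsequence admits a further a.s.-convergent subsequence'' again only yields convergence in probability.

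What the paper does instead is to deduce \eqref{e:b}, i.e. $\rho\big(\mathbf{D}(\widetilde{\Lc}_{n_j}),\mathbf{D}(\Mc^{n_j})\big)\to 0$, and then invoke \cite[Theorem 11.7.1]{D}: this is precisely the statement that two sequences of laws whose Prokhorov distance tends to zero can be realized, on one probability space, by random variables whose difference tends to zero \emph{almost surely}. In other words, the passage from the Strassen estimates to an a.s.-convergent joint realization along the whole sequence is itself a theorem (proved in Dudley by a careful gluing/blocking of Strassen couplings), not something that follows from tightness of $\{\gamma_j\}$ plus the standard Skorokhod representation. To repair your argument, replace your middle step by a direct appeal to that theorem (or reproduce its construction); the rest of your proposal then goes through and coincides with the paper's proof.
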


We conclude this section by stating some elementary properties of the random variables $\Mc_\eta$, $\eta\in [0,1]$, whose proof (left to the reader) can be easily deduced from the representation
\begin{equation}
\mathcal{M}_\eta = a(\eta) H_2(X_1)+b(\eta) H_2(X_2),
\end{equation}
where $H_2 (x) = x^2-1$ is the second Hermite polynomial,
$a(\eta) := -(1+\eta)/\sqrt{4(1+\eta^2)}$ and $b(\eta) := -(1-\eta)/\sqrt{4(1+\eta^2)}$,
as well as from the (classical) results presented in \cite[Section 2.7.4]{noupebook}.

In what follows, we will use the elementary fact that, if $\Mc_\eta$ is the random variable defined in \eqref{e:r} and if $\eta\to \eta_0\in [0,1]$, then $\Mc_\eta \stackrel{\rm d}{\longrightarrow} \Mc_{\eta_0}$.

\begin{proposition}[\bf About $\Mc_{\eta}$]\label{p:meta}

\begin{enumerate} Let the above notation prevail.

\item For every $\eta\in [0,1]$, the distribution of $\Mc_\eta$ is absolutely continuous with respect to the Lebesgue measure, with support equal to $\big(-\infty, (1+\eta^2)^{-1/2} \big)$.

\item For every $\eta\in [0,1]$, the characteristic function of $\Mc_\eta$ is given by
$$
\varphi_\eta(\mu) := \E[\exp(i\mu \Mc_\eta)] = \frac{e^{-i\mu(a(\eta)+b(\eta))}}{\sqrt{(1-2i\mu a(\eta))(1-2i\mu b(\eta))}}, \quad \mu\in \R.
$$
\item  For every $\eta\in [0,1]$, the distribution of $\Mc_\eta$ is determined
 by its moments (or, equivalently, by its cumulants). Moreover, the sequence of
  the cumulants of $\Mc_\eta$, denoted by $\{\kappa_p(\Mc_\eta) : p\geq 1\}$,
  admits the representation: $\kappa_p(\Mc_\eta) = 2^{p-1}(p-1)!
  (a(\eta)^p + b(\eta)^p)$, for every $p\geq 1$ (in particular, $\mathcal{M}_\eta$ has unit variance).
\item Let $\eta_0, \eta_1\in [0,1]$ be such that $\eta_0\neq \eta_1$. Then,
${\bf D}(\Mc_{\eta_0})  \neq {\bf D}(\Mc_{\eta_1})$.
 \end{enumerate}
\end{proposition}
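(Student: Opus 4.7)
The starting point is the representation $\Mc_\eta = a(\eta)\,H_2(X_1) + b(\eta)\,H_2(X_2)$ recalled in the statement, together with the independence of $X_1$ and $X_2$ and the elementary identity $a(\eta)^2+b(\eta)^2 = 1/2$, which already gives $\Var(\Mc_\eta) = 2(a^2+b^2) = 1$. For (ii) I would compute the characteristic function by factorising over the two independent components:
\begin{equation*}
\varphi_\eta(\mu) = e^{-i\mu(a+b)}\,\E\bigl[e^{i\mu a X_1^2}\bigr]\,\E\bigl[e^{i\mu b X_2^2}\bigr],
\end{equation*}
and then apply the classical identity $\E[e^{it X^2}] = (1-2it)^{-1/2}$ for $X\sim\mathcal N(0,1)$, which yields at once the announced closed form.

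For (iii), taking a principal-branch logarithm and expanding via $-\log(1-z) = \sum_{p\geq 1} z^p/p$ produces
\begin{equation*}
\log\varphi_\eta(\mu) = -i\mu(a+b) + \sum_{p\geq 1}\frac{2^{p-1}(p-1)!\,(a^p+b^p)}{p!}\,(i\mu)^p,
\end{equation*}
from which one reads off $\kappa_1(\Mc_\eta) = 0$ (the two $p=1$ contributions cancel) together with $\kappa_p(\Mc_\eta) = 2^{p-1}(p-1)!\,(a(\eta)^p+b(\eta)^p)$ for $p\geq 2$. Moment--determinacy then follows from the observation that $\lambda\mapsto \E[e^{\lambda\Mc_\eta}]$ is finite in a genuine neighbourhood of $0$: both $X_i^2$ enter $\Mc_\eta$ with negative coefficients, so that, for all sufficiently small $|\lambda|$, each factor is merely an evaluation of a chi-square MGF at an admissible argument, and the standard criterion applies.

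For (i), absolute continuity follows by convolution: when $\eta\in[0,1)$ both coefficients $a(\eta), b(\eta)$ are nonzero, so the law of $\Mc_\eta$ is a convolution of two (rescaled and translated) $\chi^2_1$ distributions, hence admits a continuous density; the boundary case $\eta=1$ reduces to an affine function of a single $\chi^2_1$, still absolutely continuous. The support claim is elementary: since $(1+\eta)X_1^2 + (1-\eta)X_2^2\geq 0$ a.s., one has $\Mc_\eta \leq (1+\eta^2)^{-1/2}$ with equality only on the null set $\{X_1=X_2=0\}$, while the unboundedness of $X_i^2$ combined with the negative coefficients yields $\mathrm{essinf}\,\Mc_\eta = -\infty$.

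For (iv), the strategy is to recover $\eta$ from the cumulants obtained in (iii). The key observation is that $a(\eta)^2+b(\eta)^2\equiv 1/2$ while
\begin{equation*}
a(\eta)\,b(\eta) = \frac{1-\eta^2}{4(1+\eta^2)}
\end{equation*}
is strictly decreasing on $[0,1]$ (its derivative equals $-\eta/(1+\eta^2)^2$). Since $\kappa_4(\Mc_\eta) = 48\,(a^4+b^4) = 48\bigl(\tfrac14 - 2(ab)^2\bigr)$, the nonnegative value $ab$ is uniquely determined by $\kappa_4$, whence so is $\eta$. Combined with the moment--determinacy of (iii) this gives $\eta_0\neq\eta_1 \Rightarrow \mathbf D(\Mc_{\eta_0})\neq\mathbf D(\Mc_{\eta_1})$. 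The only mild obstacle in this whole programme is keeping careful track of the signs and of the normalisation $1/(2\sqrt{1+\eta^2})$; apart from this bookkeeping, every step reduces to a routine manipulation in the second Wiener chaos.
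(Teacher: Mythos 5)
Your proof is correct, and for points 1--3 it follows exactly the route the paper has in mind: the paper leaves the proof to the reader, pointing to the representation $\Mc_\eta = a(\eta)H_2(X_1)+b(\eta)H_2(X_2)$ and the classical second-chaos results of \cite[Section 2.7.4]{noupebook}, which are precisely your factorisation of the characteristic function over the independent components, the $\log$-expansion for the cumulants, and the exponential-moment criterion for determinacy (your observation that $\kappa_1=0$ while the displayed formula holds for $p\geq 2$ is the accurate version of the statement, whose ``for every $p\geq 1$'' is a harmless slip, since $\E[\Mc_\eta]=0$). The only genuine divergence is in point 4: the paper deduces it immediately from point 1, using that the right endpoint of the support, $\eta\mapsto(1+\eta^2)^{-1/2}$, is injective on $[0,1]$, so two distinct values of $\eta$ give laws with different essential suprema. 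Your route instead recovers $\eta$ from $\kappa_4 = 48\bigl(\tfrac14-2(a b)^2\bigr)$ via the strict monotonicity of $a(\eta)b(\eta)$; this is also valid (and note that moment-determinacy is not really needed there, since $\kappa_4$ is already a functional of the law), but the support-endpoint argument is shorter and explains why the paper phrases point 1 with the explicit endpoint in the first place.
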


We observe that Point 4 in the previous statement is an immediate consequence of Point 1 and of the fact that the mapping $\eta\mapsto (1+\eta^2)^{-1/2}$ is injective on $[0,1]$. In the next section, we will discuss the role of chaotic expansions in the proofs of our main findings.

}

{{}

\subsection{ Chaos and the Berry cancellation phenomenon}\label{ss:berryintro}

As in the previous chapter, the proofs of our results rely on a pervasive use of  Wiener-It\^o chaotic expansions for non-linear functionals of Gaussian fields (the reader is referred to the two monographs \cite{noupebook, P-T} for an exhaustive discussion).

\medskip

According to \eqref{defrf}, the arithmetic random waves considered in this work are built starting from a family of complex-valued Gaussian random variables $\{a_\lambda : \lambda\in \mathbb{Z}^2\}$, defined on some probability space $(\Omega, \mathscr{F}, \mathbb{P})$ and verifying the following properties: {\bf (a)} each $a_\lambda$ has the form $x_\lambda+iy_\lambda$, where $x_\lambda$ and $y_\lambda$ are two independent real-valued Gaussian random variables with mean zero and variance $1/2$; {\bf (b)} $a_\lambda$ and $a_\tau$ are stochastically independent whenever $\lambda \notin\{ \tau, -\tau\}$, and {\bf (c)} $a_\lambda = \overline{a_{-\lambda}}$. We define ${\bf A}$ to be the closure in $L^2(\mathbb{P})$ of all real finite linear combinations of random variables $\xi$ having the form $\xi = z \, a_\lambda + \overline{z} \, a_{-\lambda}$, where $\lambda\in \mathbb{Z}^2$ and $z\in \mathbb{C}$. It is easily verified that ${\bf A}$ is a real centered Gaussian space (that is, a linear space of jointly Gaussian centered real-valued random variables, that is stable under convergence in $L^2(\mathbb{P})$).

\begin{defn}\label{d:chaos}{\rm For every $q=0,1,2,...$ the $q$th {\it Wiener chaos} associated with ${\bf A}$, written $C_q$, is the closure in $L^2(\mathbb{P})$ of all real finite linear combinations of random variables with the form
$$
H_{p_1}(\xi_1)H_{p_2}(\xi_2)\cdots H_{p_k}(\xi_k),
$$
where the integers $p_1,...,p_k \geq 0$ verify $p_1+\cdot+p_k = q$, and $(\xi_1,...,\xi_k)$ is a real centered Gaussian vector with identity covariance matrix extracted from ${\bf A}$ (note that, in particular, $C_0 = \mathbb{R}$).}
\end{defn}
Again $C_q \,\bot\, C_m$ (where the orthogonality holds in the sense of $L^2(\mathbb{P})$) for every $q\neq m$, and moreover
\begin{equation}\label{e:chaos}
L^2(\Omega, \sigma({\bf A}), \mathbb{P}) = \bigoplus_{q=0}^\infty C_q,
\end{equation}
that is: each real-valued functional $F$ of ${\bf A}$ can be (uniquely) represented in the form
\begin{equation}\label{e:chaos2}
F = \sum_{q=0}^\infty {\rm proj}(F \, | \, C_q),
\end{equation}
where ${\rm proj}(\bullet \, | \, C_q)$ stands for the projection operator onto $C_q$, and the series converges in $L^2(\mathbb{P})$. Plainly, ${\rm proj}(F \, | \, C_0) = \E F$. Now recall the definition of $T_n$ given in \eqref{defrf}: the following elementary statement shows that the Gaussian field
$$
\left\{ T_n(\theta),\,  \frac{\partial}{\partial \theta_1} T_n(\theta),\,  \frac{\partial}{\partial \theta_2} T_n(\theta) : \theta =(\theta_1,\theta_2)\in \mathbb{T}\right\}
$$
is a subset of ${\bf A}$, for every $n\in S$.

\begin{proposition} \label{p:field} Fix $n\in S$, let the above notation and conventions prevail. Then, for every $j=1,2$ one has that
\begin{equation}\label{e:partial}
\partial_j T_n(\theta) := \frac{\partial}{\partial \theta_j} T_n(\theta) = \frac{2\pi i}{\sqrt{\mathcal{N}_n} }\sum_{(\lambda_1,\lambda_2)\in \Lambda_n} \lambda_j a_\lambda e_\lambda(\theta),
\end{equation}
and therefore $T_n(\theta), \, \partial_1 T_n(\theta), \, \partial_2 T_n(\theta) \in {\bf A}$, for every $\theta\in \mathbb{T}$. Moreover, for every fixed $\theta\in \mathbb{T}$, one has that $T_n(\theta), \, \partial_1 T_n(\theta), \, \partial_2 T_n(\theta)$ are stochastically independent.
\end{proposition}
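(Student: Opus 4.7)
The proof naturally splits into three tasks matching the three assertions of the proposition: the explicit formula \eqref{e:partial}, the membership in the Gaussian space $\mathbf{A}$, and the pointwise independence of the triple $(T_n(\theta),\partial_1 T_n(\theta),\partial_2 T_n(\theta))$.

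First I would establish \eqref{e:partial} by term-by-term differentiation of \eqref{defrf}. Since $\Lambda_n$ is finite, $T_n$ is a finite trigonometric sum and differentiation is immediate: $\partial_j e_\lambda(\theta) = 2\pi i \lambda_j e_\lambda(\theta)$. Once \eqref{e:partial} is written down, the realness of $\partial_j T_n$ follows by pairing $\lambda$ with $-\lambda$ in $\Lambda_n$: using $a_{-\lambda}=\overline{a_\lambda}$ and $e_{-\lambda}=\overline{e_\lambda}$, each pair contributes
\[
2\pi i\lambda_j\bigl(a_\lambda e_\lambda(\theta)+\overline{a_\lambda e_\lambda(\theta)}\bigr)^{\!\!*}
\]
(more precisely, a quantity of the form $z\,a_\lambda+\overline{z}\,a_{-\lambda}$ with $z=2\pi i\lambda_j e_\lambda(\theta)\in\C$), which is exactly the type of generator used to define $\mathbf{A}$. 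Hence $\partial_j T_n(\theta)\in\mathbf{A}$; the same pairing argument, applied to $T_n(\theta)$ itself (with $z=e_\lambda(\theta)/\sqrt{\mathcal{N}_n}$), shows $T_n(\theta)\in\mathbf{A}$.

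For the independence claim, the three random variables are jointly centered Gaussian (they belong to $\mathbf{A}$), so it suffices to verify pairwise orthogonality in $L^2(\P)$. I would work from the covariance function $r_n(\theta-\zeta)=\frac{1}{\mathcal{N}_n}\sum_{\lambda\in\Lambda_n}\cos(2\pi\langle\theta-\zeta,\lambda\rangle)$, exploiting the symmetry $\Lambda_n=-\Lambda_n$. Then:
\[
\E[T_n(\theta)\,\partial_j T_n(\theta)]=-\partial_j r_n(0)=\frac{2\pi}{\mathcal{N}_n}\sum_{\lambda\in\Lambda_n}\lambda_j\sin(0)=0,
\]
and
\[
\E[\partial_1 T_n(\theta)\,\partial_2 T_n(\theta)]=-\partial_1\partial_2 r_n(0)=\frac{(2\pi)^2}{\mathcal{N}_n}\sum_{\lambda\in\Lambda_n}\lambda_1\lambda_2.
\]
The last sum vanishes because $\Lambda_n$ is also invariant under $(\lambda_1,\lambda_2)\mapsto(\lambda_1,-\lambda_2)$ (the defining equation $\lambda_1^2+\lambda_2^2=n$ is), so contributions cancel in pairs.

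None of the three steps poses a genuine obstacle: everything reduces to standard differentiation under a finite sum, a pairing argument using $a_{-\lambda}=\overline{a_\lambda}$, and an elementary symmetry of $\Lambda_n$. The only point that requires a little care is writing the derivative $\partial_j T_n(\theta)$, which is a priori a complex linear combination of the $a_\lambda$'s, as a real linear combination of generators $z\,a_\lambda+\overline{z}\,a_{-\lambda}$ so that it unambiguously lies in the real Gaussian space $\mathbf{A}$ of Definition \ref{d:chaos}; the pairing computation sketched above handles this cleanly.
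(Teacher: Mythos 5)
Your proof is correct: the paper states Proposition \ref{p:field} as an elementary fact without giving a proof, and your argument — termwise differentiation of the finite sum, pairing $\lambda$ with $-\lambda$ to write each random variable as a real combination of generators $z\,a_\lambda+\overline{z}\,a_{-\lambda}$ of ${\bf A}$, and then deducing independence from joint Gaussianity plus vanishing covariances via the symmetries of $\Lambda_n$ — is exactly the intended one. The only blemishes are cosmetic (the garbled display before your parenthetical clarification, and a harmless sign slip in $\E[T_n(\theta)\partial_j T_n(\theta)]=\partial_j r_n(0)$, which is zero in any case since $r_n$ is even).
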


We shall often use the fact that
$$\displaylines{
\Var[\partial_j T_n(\theta)] = \frac{4\pi^2}{\mathcal N_n}
\sum_{\lambda\in \Lambda_n} \lambda_j^2 = 4\pi^2 \frac{n}{2}\ ,
}$$
and, accordingly, for $\theta=(\theta_1, \theta_2)\in \mathbb T$ and $j=1,2$, we will denote by $\partial_j \widetilde T_n(\theta)$ the normalized derivative
\begin{equation}\label{e:norma}
\partial_j \widetilde T_n(\theta) := \frac{1}{2\pi} \sqrt{\frac{2}{n}} \frac{\partial}{\partial \theta_j}  T_n(\theta) = \sqrt{\frac{2}{n}}\frac{ i}{\sqrt{\mathcal N_n}}\sum_{ \lambda\in \Lambda_n}\lambda_j\,
a_{\lambda}e_\lambda(\theta)\ .
\end{equation}

The next statement gathers together some of the main technical achievements of the present chapter. It shows in particular that the already evoked `arithmetic Berry cancellation phenomenon' (see \cite{AmP}, as well as \cite{Berry2002}) -- according to which the variance of the nodal length $\mathcal{L}_n$  (as defined in \eqref{e:length}) has asymptotically the same order as $ \frac{E_n}{\Nc_n^2}$ (rather than the expected order $ \frac{E_n}{\Nc_n}$) -- is a consequence of the following two facts:
\begin{itemize}

\item[\bf (i)] The projection of $\mathcal{L}_n$ on the second Wiener chaos $C_2$ is {\it exactly equal to zero} for every $n\in S$ (and so is the projection of $\mathcal{L}_n$ onto any chaos of odd order $q\geq 3$).
\item[\bf (ii)] The variance of $ {\rm proj}(\mathcal{L}_{n} \, | C_4)$ has the order $ \frac{E_n}{\Nc_n^2}$, as $\Nc_n\to \infty$, and one has moreover that
$$
\var(\Lc_{n}) = \var\left( {\rm proj}(\mathcal{L}_{n} \, | C_4)\right)+ o\left(\frac{E_{n}}{\Nc_{n}^2}\right).
$$

\end{itemize}

Note that, in principle, if ${\rm proj}(\mathcal{L}_n \, | C_2)$ did not vanish, then the sequence $n\mapsto  \var\left( {\rm proj}(\mathcal{L}_{n} \, | C_2)\right)$ would have provided the leading term (of the order $ \frac{E_n}{\Nc_n}$) in the asymptotic development of $\var(\Lc_{n})$.

\begin{proposition}[\bf Berry cancellation phenomenon]\label{p:berry} For every fixed $n\in S$,\\ one has that
\begin{equation}\label{e:berry}
{\rm proj}(\mathcal{L}_n \, | C_2) ={\rm proj}(\mathcal{L}_n \, | C_{2k+1}) = 0, \quad k=0,1,...,
\end{equation}
Moreover, if $\{n_j : j\geq 1\}\subset S$ is a sequence contained in $S$ such that $\lim_{j\to\infty }{\mathcal N}_{n_j}  = \infty$, then (as $j \to \infty$)
$$
\var(\Lc_{n_j})\sim c({\mu_{n_j} }) \frac{E_{n_j}}{\Nc_{n_j}^2} \sim \var\left( {\rm proj}(\mathcal{L}_{n_j} \, | C_4)\right),
$$
and therefore,
$$
\sum_{k=3}^\infty \var\left( {\rm proj}\left(\mathcal{L}_{n_j} \, | C_{2k} \right)\right) = o\left( \frac{E_{n_j}}{\Nc_{n_j}^2}\right).
$$
\end{proposition}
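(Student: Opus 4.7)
The plan is to mimic on the torus the strategy developed for the sphere in Chapter 7, and then exploit the arithmetic structure of $\Lambda_{n}$ to compute the variance of the relevant chaotic projection. First I would establish the complete Wiener-It\^o chaos expansion of $\mathcal{L}_{n}$ entirely parallel to Proposition \ref{teoexpS}: starting from the $\varepsilon$-approximation
$$
\mathcal{L}_{n}^{\varepsilon} := \sqrt{\tfrac{E_{n}}{2}}\,\tfrac{1}{2\varepsilon}\int_{\mathbb{T}} 1_{[-\varepsilon,\varepsilon]}(T_{n}(\theta))\,\sqrt{(\partial_{1}\widetilde{T}_{n}(\theta))^{2}+(\partial_{2}\widetilde{T}_{n}(\theta))^{2}}\,d\theta,
$$
expanding $\frac{1}{2\varepsilon}1_{[-\varepsilon,\varepsilon]}$ and $\|\cdot\|_{\mathbb{R}^{2}}$ in Hermite polynomials, using the pointwise independence of $T_{n}(\theta),\partial_{1}\widetilde{T}_{n}(\theta),\partial_{2}\widetilde{T}_{n}(\theta)$ from Proposition \ref{p:field}, and then letting $\varepsilon\to 0$ via the very same $L^{2}$-approximation argument as in Lemma \ref{approxS} (which carries over verbatim since the proof only used the co-area formula, Fatou's lemma, and continuity of $u\mapsto \mathbb{E}[\mathcal{L}_{n}(u)^{2}]$). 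This yields exactly formula \eqref{chaosexpS} with $z=0$, the sphere replaced by the torus, and the coefficients $\alpha_{k,u-k}$, $\beta_{l}(0)=\phi(0)H_{l}(0)$ as in \eqref{e:alpha}--\eqref{e:beta}.

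The vanishing of odd chaoses is then immediate: $\alpha_{k,u-k}$ is non-zero only if both $k$ and $u-k$ are even (so $u$ is even), while $\beta_{q-u}(0)=\phi(0)H_{q-u}(0)$ vanishes whenever $q-u$ is odd; consequently every non-zero summand requires $q$ even, proving ${\rm proj}(\mathcal{L}_{n}\,|\,C_{2k+1})=0$ for all $k\ge 0$. For $q=2$, only three summands survive, and exactly as in Proposition \ref{2sfera} the Green identity $\int_{\mathbb{T}}(\partial_{j}T_{n})^{2}\,d\theta=-\int_{\mathbb{T}}T_{n}\partial_{j}^{2}T_{n}\,d\theta$ combined with the eigenfunction equation $\Delta T_{n}=-E_{n}T_{n}$ collapses them to a single multiple of $\int_{\mathbb{T}}H_{2}(T_{n})\,d\theta$, with coefficient proportional to $\alpha_{0,0}\beta_{2}(0)+2\alpha_{0,2}\beta_{0}(0)$. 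An explicit computation using \eqref{e:alpha}--\eqref{e:beta} (namely $\alpha_{0,0}=\sqrt{\pi/2}$, $\alpha_{0,2}=\tfrac{1}{2}\sqrt{\pi/2}$, $\beta_{0}(0)=(2\pi)^{-1/2}$, $\beta_{2}(0)=-(2\pi)^{-1/2}$) shows this coefficient equals $-\tfrac12+\tfrac12=0$, giving ${\rm proj}(\mathcal{L}_{n}\,|\,C_{2})\equiv 0$. This is precisely the analytical incarnation of the arithmetic Berry cancellation.

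For the second statement, I would compute the fourth-chaos projection explicitly from \eqref{chaosexpS}. The surviving summands at $q=4$ involve the integrals $\int_{\mathbb{T}}H_{a}(T_{n})H_{b}(\partial_{1}\widetilde T_{n})H_{c}(\partial_{2}\widetilde T_{n})\,d\theta$ with $a+b+c=4$ and $a,b,c$ all even. Using the Fourier representations \eqref{defrf} and \eqref{e:partial}, each such integral becomes a finite sum over tuples $(\lambda^{(1)},\ldots,\lambda^{(4)})\in\Lambda_{n}^{4}$ subject to the closure condition $\lambda^{(1)}+\cdots+\lambda^{(4)}=0$; the variance of ${\rm proj}(\mathcal{L}_{n}\,|\,C_{4})$ then reduces to a weighted count of \emph{length-four spectral correlations} on $\Lambda_{n}$. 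The key number-theoretic input (already used in \cite{AmP}, \cite{KW}) is that such $4$-correlations split into ``degenerate'' contributions (which recover $1$) plus genuine ones weighted by $\widehat{\mu_{n}}(4)^{2}$, producing after simplification precisely $\frac{1+\widehat{\mu_{n}}(4)^{2}}{512}\cdot\frac{E_{n}}{\mathcal{N}_{n}^{2}}(1+o(1))$, i.e.\ the leading-order term in \eqref{eq:var leading KKW}. Comparison with the known asymptotic for $\mathrm{Var}(\mathcal{L}_{n_{j}})$ then forces the two relations in the statement and, via Parseval, the bound $\sum_{k\ge 3}\mathrm{Var}({\rm proj}(\mathcal{L}_{n_{j}}\,|\,C_{2k}))=o(E_{n_{j}}/\mathcal{N}_{n_{j}}^{2})$.

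The main obstacle is the last step: controlling the higher chaoses $q=2k\ge 6$. A direct chaos-by-chaos bound is delicate because each $2k$-correlation sum over $\Lambda_{n}^{2k}$ is hard to estimate uniformly in the geometry of $\Lambda_{n}$. My plan is to avoid a direct estimate and instead argue by subtraction: since $\mathrm{Var}(\mathcal{L}_{n_{j}})$ and $\mathrm{Var}({\rm proj}(\mathcal{L}_{n_{j}}\,|\,C_{4}))$ are independently known to share the same leading asymptotic $c(\mu_{n_{j}})E_{n_{j}}/\mathcal{N}_{n_{j}}^{2}$, the orthogonality of Wiener chaoses immediately yields $\sum_{k\ge 3}\mathrm{Var}({\rm proj}(\mathcal{L}_{n_{j}}\,|\,C_{2k}))=\mathrm{Var}(\mathcal{L}_{n_{j}})-\mathrm{Var}({\rm proj}(\mathcal{L}_{n_{j}}\,|\,C_{4}))=o(E_{n_{j}}/\mathcal{N}_{n_{j}}^{2})$, finishing the proof.
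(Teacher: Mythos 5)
Your plan is correct and follows essentially the same route as the paper: chaos expansion of $\Lc_n$ parallel to the spherical case (Proposition \ref{teoexp}), vanishing of the odd chaoses by parity of the coefficients $\beta_\cdot(0)$ and $\alpha_{\cdot,\cdot}$, explicit cancellation at the second chaos, an explicit fourth-chaos variance asymptotic of order $c(\mu_n)E_n/\Nc_n^2$, and then the subtraction/orthogonality argument combined with the Kurlberg--Krishnapur--Wigman variance \eqref{eq:var leading KKW} from \cite{AmP} to bound the chaoses of order $\geq 6$ — which is exactly how the paper concludes.

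A few comparative remarks. For ${\rm proj}(\Lc_n\,|\,C_2)$ you invoke Green's formula and the eigenvalue equation as in Proposition \ref{2sfera}; the paper's torus proof instead computes directly with the coefficients $a_\lambda$ (using $\sum_\lambda\lambda_j^2=n\Nc_n/2$), but it explicitly remarks that the Green's-formula argument works on any compact surface, so your variant is equivalent. For the $\varepsilon\to 0$ step, the paper's Lemma \ref{approx} is simpler than porting Lemma \ref{approxS}: it uses the deterministic bound $\Lc_n\le 12\sqrt{4\pi^2 n}$ from \cite{RW} and dominated convergence, whereas your "verbatim" transfer of the spherical argument would require continuity of $u\mapsto\E[\Lc_n(u)^2]$ on the torus, which is not established here. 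For the fourth chaos, you only need the variance, while the paper (Lemmas \ref{lem1}--\ref{lem4} and Proposition \ref{p:4nclt}) derives the full representation as an explicit quadratic form in $\sum_\lambda(|a_\lambda|^2-1)$, $\sum_\lambda\lambda_j^2(|a_\lambda|^2-1)$, $\sum_\lambda\lambda_1\lambda_2(|a_\lambda|^2-1)$, because the limiting distribution is needed for the main theorem; your lighter computation suffices for this proposition. One caution on your description of the arithmetic input: for lattice points on a circle the length-four correlation set $S_n(4)$ contains \emph{only} degenerate (diagonal) solutions, $|S_n(4)|=3\Nc_n(\Nc_n-1)$; the factor $\widehat{\mu_n}(4)^2$ does not come from "genuine" non-degenerate correlations but from the weights on the diagonal terms, i.e. the fourth moments of the lattice-point coordinates (e.g. $\tfrac{2}{n^2\Nc_n}\sum_{\lambda}\lambda_\ell^4\to(3+\eta)/8$), and likewise the constant $1$ in $1+\widehat{\mu_n}(4)^2$ is not simply the "degenerate part." This does not derail your plan — carrying out the weighted sum over the closure condition produces exactly the paper's constant $64\psi^2-48\psi+10=1+\eta^2$ — but the split as you phrase it is not the correct picture of where the $\widehat{\mu_n}(4)$-dependence originates.
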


\begin{remark}\label{remMau}\rm Nodal lengths of Gaussian Laplace eigenfunctions $T_\ell$, $\ell\in \N$,
on the two-dimensional sphere
have the same qualitative behavior as their toral counterpart. Indeed, in Proposition \ref{2sfera} it is shown that
the second chaotic term in the Wiener-It\^o
 expansion of the length of level curves $T_\ell^{-1}(u)$, $u\in \mathbb R$ disappears
 if and only if $u=0$.  These findings shed some light on the Berry's cancellation
 phenomenon, indeed they explain why
 the asymptotic variance of the length of level curves respects the natural scaling --
except for the nodal case \cite{Wig,wigsurvey}.
 \end{remark}

\begin{conjecture}\rm
Consider Gaussian eigenfunctions $T$ on some manifold
 $\mathbb M$ and define as usual the $u$-excursion set as
\[
A_u(T,\mathbb M):=\{x \in \mathbb M : T(x)>u\}, \qquad u \in \mathbb{R}.
\]
Toral (resp. spherical) nodal lengths can be viewed as the length of the boundary of $A_u$ for
$u=0$ for $\mathbb M =\mathbb T$ the $2$-torus (resp. $\mathbb M = \mathbb S^2$ the $2$-sphere).
In this sense, as stated in the Introduction of this thesis,
 they represent a special case of the second {\it Lipschitz-Killing curvature} of $A_u$, $u \in \mathbb{R}$
(see \cite{adlertaylor} for the definition and a comprehensive treatment of Lipschitz-Killing curvatures
on Gaussian excursion sets). We conjecture that for excursion sets $A_u$ of Gaussian
eigenfunctions on \emph{compact} manifolds $\mathbb M$ the projection of each Lipschitz-Killing
curvature on the second-order
Wiener chaos vanishes if and only if $u=0$; clearly the proof of this conjecture would represent a major step
towards a global understanding of the Berry's cancellation phenomenon. In the two-dimensional case, there
are three Lipschitz-Killing curvatures, which correspond to the area, half the boundary length and the
Euler-Poincar\'e characteristic of the excursion sets; for the $2$-sphere, we refer to
\cite{Nonlin,maudom,fluct}  for
results supporting our conjecture in the case of the area and the Euler-Poincar\'e characteristic, and to
Remark
\ref{remMau} and Chapter 7
for the boundary lengths.
\end{conjecture}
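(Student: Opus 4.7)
The strategy I would pursue builds on the pattern already established in the excerpt for three instances of Lipschitz--Killing curvatures: the excursion area on $\mathbb{S}^d$ (Chapter~5), the length of level curves on $\mathbb{S}^2$ (Chapter~7, Proposition~\ref{2sfera}), and the nodal length on the torus $\mathbb{T}$ (Chapter~8). In each case, the second-chaos projection factors as a scalar $\rho_k(u)$ multiplying the \emph{same} universal random object $\int_{\mathbb{M}} H_2(T(x))\, dx$, and the vanishing of $\rho_k(u)$ at $u=0$ reflects the cancellation phenomenon. My plan is to show that, for any compact Riemannian manifold $\mathbb{M}$ of dimension $m$ and any Gaussian eigenfunction $T$ satisfying $\Delta_{\mathbb{M}} T + E\, T = 0$, each Lipschitz--Killing curvature $\mathcal{L}_k(A_u(T,\mathbb{M}))$, $k=0,1,\dots,m$, admits the same factorisation structure on $C_2$, with an explicit scalar $\rho_k(u)$ determined by universal Gaussian averages.

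\noindent\textbf{Step 1 (integral representation).} Using the Gaussian Kinematic Formula of Adler--Taylor together with the Kac--Rice representation of $\mathcal{L}_k(A_u(T,\mathbb{M}))$, I would write each curvature as the $L^2(\P)$-limit of functionals of the form
\begin{equation*}
\int_{\mathbb{M}} F_{k,\varepsilon}\bigl(T(x),\nabla T(x),\nabla^2 T(x)\bigr)\, dV(x),
\end{equation*}
where $F_{k,\varepsilon}$ is an approximating family (indicator of a thickened level set, weighted by minors of the Hessian and by curvature forms of $\mathbb{M}$). Following the method of Chapter~7, I would then project each approximation onto $C_2$ before passing to the limit $\varepsilon\to 0$. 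Because, at each point $x$, the triple $(T(x),\nabla T(x),\nabla^2 T(x))$ lives in the Gaussian space $\mathbf{A}$, the $C_2$ projection admits a closed form in terms of the Hermite coefficients of $F_{k,\varepsilon}$.

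\noindent\textbf{Step 2 (reduction to $\int_{\mathbb M} H_2(T)\,dx$).} The eigenfunction equation $\Delta_{\mathbb{M}} T = -E\, T$, combined with Green's identity on the closed manifold $\mathbb{M}$, turns every quadratic form in the gradient, $\int g^{ij}\partial_i T\,\partial_j T\, dV$, into $E\int T^2\, dV$; similarly, Bochner--Weitzenb\"ock identities rewrite quadratic forms in $\nabla^2 T$ as scalar multiples of $E^2\int T^2\, dV$ plus curvature corrections involving $\mathrm{Ric}$. Since by isotropy of the conditional law of $(\nabla T,\nabla^2 T)$ given $T$ the second-chaos kernel is invariant under the orthogonal group in the tangent fibers, only the traces survive, and I expect the net result to be of the form
\begin{equation*}
\mathrm{proj}\bigl(\mathcal{L}_k(A_u(T,\mathbb{M}))\,\big|\,C_2\bigr)=\rho_k(u;E,\mathbb{M})\cdot \int_{\mathbb{M}} H_2(T(x))\, dV(x),
\end{equation*}
with $\rho_k(u;E,\mathbb{M})$ a polynomial in $u$ multiplied by $\phi(u)$ and by geometric/spectral constants.

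\noindent\textbf{Step 3 (computation of $\rho_k$ and the ``iff'').} To identify $\rho_k(u)$ I would exploit the Gaussian Kinematic Formula, which expresses $\E[\mathcal{L}_k(A_u)]$ explicitly as a sum of terms $\phi(u)H_j(u)\cdot\mathcal{L}_{k+j}(\mathbb{M})$; matching this with the first-moment consequence of the chaos expansion pins down $\rho_k$ up to the $H_2$-weighted cancellations from Step~2. The ``if'' direction then reduces to a direct computation showing $\rho_k(0)=0$, which is analogous to (and generalises) Proposition~\ref{2sfera} and Proposition~\ref{p:berry}. The \emph{main obstacle} is the ``only if'' direction: for the Euler--Poincar\'e characteristic on $\mathbb{S}^2$ the known variance $\propto (u^3-u)^2\phi(u)^2$ already suggests that $\rho_0(u)$ vanishes also at $u=\pm 1$, so the conjecture as stated may require sharpening (e.g., restricting to top- and second-from-top Lipschitz--Killing curvatures, or weakening ``iff'' to ``only if $u=0$ the full cancellation with order $\ell^{-m}$ occurs''). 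Clarifying this, and proving that for $k\in\{m,m-1\}$ no spurious zero of $\rho_k$ arises, would be the decisive technical step; I would approach it by showing that the polynomial part of $\rho_k(u)$ is, up to sign, a single Hermite coefficient of the indicator $\mathbf{1}_{(u,\infty)}$ evaluated at low order, hence has $u=0$ as its unique real root for $k=m,m-1$, while the case of intermediate $k$ will genuinely require additional hypotheses.
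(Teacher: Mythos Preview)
The statement in question is a \emph{conjecture}, and the paper does not supply a proof; there is therefore nothing in the paper to compare your plan against. What you have written is a research outline, and it is a reasonable one: the factorisation
\[
\mathrm{proj}\bigl(\mathcal{L}_k(A_u)\,\big|\,C_2\bigr)=\rho_k(u)\int_{\mathbb M} H_2(T(x))\,dV(x)
\]
is precisely the structure that emerges in the three cases the thesis treats, and your Step~2 (using Green's identity and the eigenvalue equation to collapse quadratic forms in $\nabla T$ and $\nabla^2 T$ into multiples of $\int T^2$) is exactly the mechanism behind Proposition~\ref{2sfera} and the proof of \eqref{e:berry}.

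More importantly, you have put your finger on a genuine difficulty with the conjecture as literally stated. The asymptotic $\ell^{-3}\Var(\chi(A_\ell(z)))\to\tfrac14(z^3-z)^2\phi(z)^2$ quoted in the Introduction indicates that, for the Euler--Poincar\'e characteristic on $\mathbb{S}^2$, the second-chaos coefficient $\rho_0(u)$ should be proportional to $(u^3-u)\phi(u)$ and hence vanish also at $u=\pm 1$. This does not break your argument; it breaks the ``only if'' direction of the conjecture for $k=0$. Your proposed refinements (restricting to $k\in\{m,m-1\}$, or recasting the statement in terms of the order of the variance drop rather than pointwise vanishing of $\rho_k$) are the natural responses. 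Settling which of these is the right formulation would require an explicit computation of $\rho_k(u)$ for all $k$ from the Gaussian Kinematic Formula, and that computation---together with the Bochner--Weitzenb\"ock identities needed to handle the Hessian terms on a general compact manifold---is where the real work lies and remains open.
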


\subsection{Plan}

The rest of the chapter is organized as follows:  \S 8.2 contains a study of the chaotic
representation of nodal lengths, \S 8.3 focuses on the projection of nodal lengths on the fourth
Wiener chaos, whereas \S 8.4 contains a proof of our main result.
}

%

\section{Chaotic expansions}\label{expan}

The aim of this section is to derive an explicit expression for each projection of the type ${\rm proj}(\Lc_n \, | \, C_q)$, $q\geq 1$. In order to accomplish this task, as in Chapter 7, we first focus on a sequence of auxiliary random variables $\{\Lc_n^\eps : \eps>0\}$ that approximate $\Lc_n$ in the sense of the $L^2(\P)$ norm.

\subsection{Preliminary results}

Fix $n\geq 1$, and let $T_n$ be defined according to \eqref{defrf}. Define, for $\varepsilon >0$, the approximating random variables
\begin{equation}\label{napprox}
\mathcal L_n^\varepsilon :=\frac{1}{2\varepsilon}
\int_{\mathbb T} 1_{[-\varepsilon,\varepsilon]}(T_n(\theta))\|\nabla T_n(\theta) \|\,d\theta\ .
\end{equation}
\begin{lemma}\label{approx}
We have
$$
\lim_{\varepsilon\to 0} \E[|\mathcal L_n^\varepsilon - \mathcal L_n|^2]=0\ .
$$
\end{lemma}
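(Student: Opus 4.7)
The approach parallels that of Lemma \ref{approxS} in the spherical case. The first step is to observe that $\mathcal{L}_n^\varepsilon \to \mathcal{L}_n$ almost surely as $\varepsilon \to 0$. Indeed, by the co-area formula (see \cite[(7.14.13)]{adlertaylor}),
$$
\mathcal{L}_n^\varepsilon = \frac{1}{2\varepsilon}\int_{\mathbb{T}} 1_{[-\varepsilon,\varepsilon]}(T_n(\theta))\,\|\nabla T_n(\theta)\|\,d\theta = \frac{1}{2\varepsilon}\int_{\mathbb{R}} \mathcal{L}_n(u)\, 1_{[-\varepsilon,\varepsilon]}(u)\,du\ ,
$$
where $\mathcal{L}_n(u) = \text{length}(T_n^{-1}(u))$. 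Since almost surely $T_n$ is a smooth function with $0$ as a regular value (note that $\P(\nabla T_n(\theta) = 0,\, T_n(\theta)=0\text{ for some }\theta)=0$ by a standard Bulinskaya-type argument), the map $u\mapsto \mathcal{L}_n(u)$ is continuous at $u=0$ almost surely, so the Lebesgue differentiation theorem yields $\mathcal{L}_n^\varepsilon \to \mathcal{L}_n(0) = \mathcal{L}_n$ a.s.

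Having a.s. convergence, by the classical criterion \cite[Proposition 3.39]{cannarsa} convergence in $L^2(\P)$ is equivalent to convergence of the $L^2$-norms, i.e. $\E[(\mathcal{L}_n^\varepsilon)^2] \to \E[\mathcal{L}_n^2]$. By Fatou's lemma,
$$
\E[\mathcal{L}_n^2] \le \liminf_{\varepsilon \to 0} \E[(\mathcal{L}_n^\varepsilon)^2]\ .
$$
For the reverse inequality, the co-area identity above combined with Jensen's inequality applied to the probability density $\frac{1}{2\varepsilon}1_{[-\varepsilon,\varepsilon]}$ gives
$$
\E[(\mathcal{L}_n^\varepsilon)^2] \le \frac{1}{2\varepsilon}\int_{-\varepsilon}^{\varepsilon} \E[\mathcal{L}_n(u)^2]\,du\ .
$$
Thus the whole argument reduces to establishing the continuity of the map $u \mapsto \E[\mathcal{L}_n(u)^2]$ at $u=0$, from which
$$
\limsup_{\varepsilon \to 0} \E[(\mathcal{L}_n^\varepsilon)^2] \le \E[\mathcal{L}_n^2]
$$
follows immediately.

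The main obstacle is precisely this continuity statement. My plan is to prove it via a Kac--Rice representation for the second moment: for every $u \in \mathbb{R}$,
$$
\E[\mathcal{L}_n(u)^2] = \int_{\mathbb{T}\times\mathbb{T}} K_n(\theta,\zeta;u)\,d\theta\, d\zeta\ ,
$$
where $K_n(\theta,\zeta;u)$ is the two-point Kac--Rice density, i.e.\ the conditional expectation $\E[\|\nabla T_n(\theta)\|\,\|\nabla T_n(\zeta)\|\,|\,T_n(\theta)=T_n(\zeta)=u]$ times the joint density of $(T_n(\theta),T_n(\zeta))$ evaluated at $(u,u)$. Since $T_n$ is a non-degenerate stationary Gaussian field on $\mathbb{T}$ away from the diagonal (this is where the arithmetic structure, i.e.\ $\mathcal{N}_n \ge 4$, enters and guarantees non-degeneracy of the $6 \times 6$ covariance matrix of $(T_n(\theta), T_n(\zeta), \nabla T_n(\theta), \nabla T_n(\zeta))$ off the diagonal), the density $K_n(\theta,\zeta;u)$ is continuous in $u$ and bounded, uniformly on compact sets of $u$, by an integrable function of $(\theta,\zeta)$ (the diagonal singularity is of the standard type treated e.g.\ in \cite{AmP}, \cite{Wig}, \cite{RW}, and can be handled by a local expansion near the diagonal, exploiting smoothness of $T_n$ and non-degeneracy of the Gram matrix of the gradient). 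Dominated convergence then produces the desired continuity, completing the proof.
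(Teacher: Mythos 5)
Your reduction (a.s.\ convergence, then convergence of second moments via Fatou, the co-area formula and Jensen, so that everything hinges on continuity of $u\mapsto \E[\mathcal L_n(u)^2]$ at $u=0$) is the strategy the paper uses for the \emph{spherical} case (Lemma \ref{approxS}), but it is not what is done here, and as written it has a genuine gap precisely at the decisive step. The continuity of $u\mapsto \E[\mathcal L_n(u)^2]$ is only announced (``my plan is to prove it via a Kac--Rice representation''), and the sketch rests on a claim that is false for arithmetic random waves: the $6\times 6$ covariance matrix of $(T_n(\theta),T_n(\zeta),\nabla T_n(\theta),\nabla T_n(\zeta))$ is \emph{not} non-degenerate at every off-diagonal pair. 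Since $r_n(\theta-\zeta)=\frac{1}{\Nc_n}\sum_{\lambda}\cos(2\pi\langle\theta-\zeta,\lambda\rangle)$, one can have $r_n(\theta-\zeta)=\pm 1$ at non-zero shifts (e.g.\ at half-periods, depending on the parity of the frequencies in $\Lambda_n$), so there is a whole singular set off the diagonal which in \cite{RW} and \cite{AmP} has to be treated separately; moreover a uniform-in-$u$ integrable domination of the two-point Kac--Rice density near the diagonal is asserted but not proved. Until those points are supplied, the chain Fatou--Jensen--continuity does not close, so the proposal is incomplete rather than wrong in spirit.

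The paper's own proof is much more elementary and avoids all of this by exploiting a feature special to the torus: by Lemma 3.2 of \cite{RW} one has the \emph{deterministic} bound $\mathcal L_n^\varepsilon\le 12\sqrt{4\pi^2 n}$ for every $\varepsilon$ and every realization (a level/band length bound for a fixed eigenfunction with eigenvalue $4\pi^2 n$). Hence
$$
|\mathcal L_n^\varepsilon-\mathcal L_n|^2\le 2\bigl((12\sqrt{4\pi^2 n})^2+\mathcal L_n^2\bigr)\ ,
$$
and since $\mathcal L_n\in L^2(\P)$ the right-hand side is an integrable dominating function; together with the a.s.\ convergence $\mathcal L_n^\varepsilon\to\mathcal L_n$, dominated convergence gives the lemma in two lines. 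If you want to salvage your route, you must either prove the continuity of $u\mapsto\E[\mathcal L_n(u)^2]$ (handling the diagonal and the off-diagonal singular set as in \cite{RW}, \cite{AmP}), or simply replace that whole argument by the deterministic Rudnick--Wigman bound, which is what makes the toral case so much easier than the spherical one.
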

\begin{proof}
We have that
$$
\lim_{\varepsilon\to 0} \mathcal L_n^\varepsilon = \mathcal L_n\ ,\qquad a.s.
$$
Moreover, for every $\varepsilon$
$$\displaylines{
|\mathcal L_n^\varepsilon - \mathcal L_n|^2\le 2((\mathcal L_n^\varepsilon)^2 +(\mathcal L_n)^2 )\le\cr
\le 2((12\sqrt{4\pi^2n})^2 +  (\mathcal L_n)^2)\ ,
}$$
where the last equality follows form Lemma 3.2 in \cite{RW}. We can hence apply dominated convergence theorem to conclude.
\end{proof}

We observe that the previous result suggests that the random variable $\mathcal L_n$  can be formally written as
\begin{equation}\label{formal}
\mathcal L_n = \int_{ \mathbb T} \delta_0(T_n(\theta))\| \nabla T_n(\theta) \|
\,d\theta\ ,
\end{equation}
where $\delta_0$ denotes the Dirac mass in $0$.

\subsection{Chaotic expansion of nodal length $\mathcal{L}_n$}


In view of the convention \eqref{e:norma},  we will rewrite \paref{napprox} as
\begin{equation}\label{formal2}
\mathcal L_n^\eps = \frac{1}{2\eps}\sqrt{4\pi^2}\sqrt{\frac{n}{2}}\int_{\mathbb T}
1_{[-\eps, \eps]}(T_n(\theta)) \sqrt{\partial_1 \widetilde
T_n(\theta)^2+\partial_2 \widetilde T_n(\theta)^2}\,d\theta\ .
\end{equation}
Recall from Chapter 7 the two collection of coefficients
$\{\alpha_{2n,2m} : n,m\geq 1\}$ and $\{\beta_{2l} : l\geq 0\}$, that are connected to the (formal) Hermite expansions of the norm $\| \cdot
\|$ in $\R^2$ and the Dirac mass $ \delta_0(\cdot)$ respectively.
These are given by \paref{e:beta} and \paref{e:alpha}
\begin{equation*}
\beta_{2l}:= \frac{1}{\sqrt{2\pi}}H_{2l}(0)\ ,
\end{equation*}
where $H_{2l}$ denotes the $2l$-th Hermite polynomial and
\begin{equation*}
\alpha_{2n,2m}=\sqrt{\frac{\pi}{2}}\frac{(2n)!(2m)!}{n!
m!}\frac{1}{2^{n+m}} p_{n+m}\left (\frac14 \right)\ ,
\end{equation*}
where for $N=0, 1, 2, \dots $ and $x\in \R$ (as in \paref{pN})
\begin{equation*}
\displaylines{ p_{N}(x) :=\sum_{j=0}^{N}(-1)^{j}\ \ (-1)^{N}{N
\choose j}\ \ \frac{(2j+1)!}{(j!)^2} x^j \ , }
\end{equation*}
$\frac{(2j+1)!}{(j!)^2}$ being the so-called {\it swinging factorial}
restricted to odd indices. The following result provides the key tool in order to prove Proposition \ref{p:berry}.

{{}

\begin{proposition}[\bf Chaotic expansion of $\Lc_n$]\label{teoexp} Relation \eqref{e:berry} holds for every $n\in S$ and also, for every $q\geq 2$,
\begin{eqnarray}\label{e:pp}
&&{\rm proj}(\Lc_n\, | \, C_{2q}) \\
&&= \sqrt{\frac{4\pi^2n}{2}}\sum_{u=0}^{q}\sum_{k=0}^{u}
\frac{\alpha _{2k,2u-2k}\beta _{2q-2u}
}{(2k)!(2u-2k)!(2q-2u)!} \!\!\int_{\mathbb T}\!\! H_{2q-2u}(T_n(\theta))
H_{2k}(\partial_1 \widetilde T_n(\theta))H_{2u-2k}(\partial_2
\widetilde T_n(\theta))\,d\theta.\notag
\end{eqnarray}
As a consequence, one has the representation
\begin{eqnarray}\label{chaosexp}
\Lc_n &=& \E \Lc_n + \sqrt{\frac{4\pi^2n}{2}}\sum_{q=2}^{+\infty}\sum_{u=0}^{q}\sum_{k=0}^{u}
\frac{\alpha _{2k,2u-2k}\beta _{2q-2u}
}{(2k)!(2u-2k)!(2q-2u)!}\times\\
&&\hspace{4.5cm} \times \int_{\mathbb T}H_{2q-2u}(T_n(\theta))
H_{2k}(\partial_1 \widetilde T_n(\theta))H_{2u-2k}(\partial_2
\widetilde T_n(\theta))\,d\theta,\notag
\end{eqnarray}
where the series converges in $L^2(\P)$.
\end{proposition}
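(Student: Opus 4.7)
My proof plan follows closely the strategy developed in Chapter 7 for Proposition \ref{teoexpS}, exploiting the $L^2$-approximation provided by Lemma \ref{approx} together with independence of $T_n(\theta), \partial_1\widetilde T_n(\theta), \partial_2\widetilde T_n(\theta)$ at a fixed point (Proposition \ref{p:field}). First, I would rewrite the approximating functional $\Lc_n^\eps$ as in \eqref{formal2} and expand the two factors into Hermite series: the indicator $\frac{1}{2\eps}1_{[-\eps,\eps]}(\cdot)$ has only even-indexed Hermite coefficients $\beta^\eps_{2l}$ (by symmetry of the interval around zero), and the Euclidean norm $\|\cdot\|:\R^2\to\R$ has only even-even-indexed coefficients $\alpha_{2k,2m}$ (by the parity argument already carried out in the proof of Proposition \ref{teoexpS}, where the explicit values \eqref{e:alpha} were obtained). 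Combining these two expansions and using independence to collect terms chaos by chaos yields, for each $q$,
\[
{\rm proj}(\Lc_n^\eps\,|\,C_{2q}) = \sqrt{\tfrac{4\pi^2 n}{2}}\sum_{u=0}^{q}\sum_{k=0}^{u}\frac{\alpha_{2k,2u-2k}\beta^\eps_{2q-2u}}{(2k)!(2u-2k)!(2q-2u)!}\int_\T H_{2q-2u}(T_n)H_{2k}(\partial_1\widetilde T_n)H_{2u-2k}(\partial_2\widetilde T_n)\,d\theta,
\]
and ${\rm proj}(\Lc_n^\eps\,|\,C_{2q+1})=0$ for every $q\geq 0$. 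Letting $\eps\to 0$ and using the convergence $\beta^\eps_{2l}\to \beta_{2l}$ together with the continuity of the projection operators (as in Step 3 of the proof of Proposition \ref{teoexpS}), these relations pass to the limit and give \eqref{e:pp} and the vanishing of all odd-order projections.

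The main new difficulty — not present in the spherical analogue — is the vanishing of ${\rm proj}(\Lc_n\,|\,C_2)$, which is the crux of Berry's cancellation on the torus. For this I would specialise \eqref{e:pp} to $q=1$, substituting the explicit values $\alpha_{0,0}=\sqrt{\pi/2}$, $\alpha_{2,0}=\alpha_{0,2}=\tfrac12\sqrt{\pi/2}$ (from \eqref{e:alpha} with $p_0(1/4)=1$ and $p_1(1/4)=1/2$), together with $\beta_0=1/\sqrt{2\pi}$ and $\beta_2=-1/\sqrt{2\pi}$. After simplification, the $C_2$ projection reduces to a scalar multiple of
\[
-\tfrac{1}{2}\int_\T H_2(T_n(\theta))\,d\theta+\tfrac{1}{4}\int_\T H_2(\partial_1\widetilde T_n(\theta))\,d\theta+\tfrac{1}{4}\int_\T H_2(\partial_2\widetilde T_n(\theta))\,d\theta.
\]
The identity that makes the whole expression collapse comes from integration by parts on $\T$ (which is boundary-free) combined with the eigenvalue equation $\Delta T_n+E_n T_n=0$: Green's formula gives $\int_\T|\nabla T_n|^2\,d\theta=-\int_\T T_n\Delta T_n\,d\theta=E_n\int_\T T_n^2\,d\theta$, and after normalising via \eqref{e:norma} this reads $\int_\T\bigl[(\partial_1\widetilde T_n)^2+(\partial_2\widetilde T_n)^2\bigr]\,d\theta=2\int_\T T_n^2\,d\theta$, equivalently $\int_\T\bigl[H_2(\partial_1\widetilde T_n)+H_2(\partial_2\widetilde T_n)\bigr]\,d\theta=2\int_\T H_2(T_n)\,d\theta$. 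Plugging this into the display above produces an exact cancellation. This is entirely parallel to the computation in Proposition \ref{2sfera} for the sphere.

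Finally, the $L^2$-convergence of the series in \eqref{chaosexp} follows from the orthogonality of Wiener chaoses and the fact that $\Lc_n\in L^2(\P)$ (by dominated convergence applied to $\Lc_n^\eps\to\Lc_n$, as recorded in Lemma \ref{approx}, together with the uniform bound from \cite{RW}), so that $\sum_q\|{\rm proj}(\Lc_n\,|\,C_{2q})\|^2_{L^2(\P)}<\infty$. The main obstacle I anticipate in turning this sketch into a rigorous proof is the justification of the interchange of limit and chaotic projection in Step 3, since $\beta^\eps_{2l}$ grows in $l$; this is handled exactly as in Proposition \ref{teoexpS} by applying dominated convergence chaos by chaos (Jensen's inequality plus the finite total mass of $\T$ make the $\eps\to 0$ passage legitimate inside each fixed-order $C_{2q}$), which is sufficient for the pointwise identification of each projection.
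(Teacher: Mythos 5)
Your proposal is correct, and for the expansion itself it follows the same route as the paper: the paper's proof simply invokes the argument of Proposition \ref{teoexpS} (Hermite expansion of $\tfrac{1}{2\eps}1_{[-\eps,\eps]}$ and of $\|\cdot\|$, independence of $T_n(\theta),\partial_1\widetilde T_n(\theta),\partial_2\widetilde T_n(\theta)$, then $\eps\to 0$ chaos by chaos), and your parity observation correctly disposes of the odd-order projections. Where you genuinely diverge is the crux, ${\rm proj}(\Lc_n\,|\,C_2)=0$: you transport the spherical argument of Proposition \ref{2sfera}, i.e. Green's formula on the boundaryless torus plus the eigenvalue equation $\Delta T_n+4\pi^2 n\,T_n=0$, giving $\int_{\T}\bigl[H_2(\partial_1\widetilde T_n)+H_2(\partial_2\widetilde T_n)\bigr]d\theta=2\int_{\T}H_2(T_n)\,d\theta$, which combined with $\alpha_{0,0}\beta_2+2\alpha_{0,2}\beta_0=-\tfrac12+\tfrac12=0$ kills the projection. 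The paper instead computes each of the three integrals explicitly in the Fourier/lattice-point representation, using $a_{-\lambda}=\overline{a_\lambda}$ and $\lambda_1^2+\lambda_2^2=n$ to reduce everything to $\Nc_n^{-1}\sum_{\lambda\in\Lambda_n}(|a_\lambda|^2-1)$, and then applies the same coefficient cancellation. The two arguments are equivalent in substance ($\lambda_1^2+\lambda_2^2=n$ is the Fourier-side form of the eigenvalue identity), and your route is in fact sanctioned by the paper's own remark following Proposition \ref{2sfera} that the Green's-formula computation works on any two-dimensional compact manifold; it is slightly cleaner and more general. What the paper's explicit computation buys in exchange is the concrete representation of $\int_{\T}H_2(T_n)\,d\theta$ and $\int_{\T}H_2(\partial_j\widetilde T_n)\,d\theta$ as quadratic forms in the $|a_\lambda|^2$, which is then reused verbatim in the fourth-chaos analysis (Lemmas \ref{lem1}--\ref{lem4}); your proof would need to redo that bookkeeping there anyway.
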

}
\noindent\begin{proof}[Proof of Proposition \ref{teoexp}] The proof of the chaotic projection formula is based on the same arguments as the proof of Proposition \ref{teoexpS}, therefore we can skip details.

Let us show that the ${\rm proj}(\Lc_n\, | \, C_{2})$ vanishes. It equals the quantity
$$\displaylines{
\sqrt{4\pi^2}\sqrt{\frac{n}{2}}\Big(
\frac{\alpha_{0,0}\beta_{2}}{2} \int_{\mathbb T}
H_{2}(T_n(\theta))\,d\theta + \frac{\alpha_{0,2}\beta_{0}}{2}
\int_{\mathbb T}  H_{2}(\partial_2 \widetilde
T_n(\theta))\,d\theta +  \cr
+\frac{\alpha_{2,0}\beta_{0}}{2}
\int_{\mathbb T} H_{2}(\partial_1 \widetilde T_n(\theta))\,d\theta
\Big )\ . }$$ Using the explicit expression $H_2(x)=x^2-1$, we deduce that
\begin{eqnarray*}
\int_{\mathbb T} H_{2}(T_n(\theta))\,d\theta &=& \int_{\mathbb T}
\left ( T_n(\theta)^2 -1                           \right)\,d\theta=
\int_{\mathbb T}
\left ( \frac{1}{\mathcal N_n}
\sum_{\lambda, \lambda'\in \Lambda_n} a_\lambda \overline a_{\lambda'}
e_{\lambda - \lambda'}(\theta)
 -1                           \right)\,d\theta\\
&=&
 \frac{1}{\mathcal N_n}\sum_{\lambda, \lambda'\in \Lambda_n} a_\lambda \overline a_{\lambda'}
 \underbrace{\int_{\mathbb T}e_{\lambda - \lambda'}(\theta)\,d\theta}_{\delta_{\lambda}^{\lambda'}}
-1= \frac{1}{\mathcal N_n} \sum_{\lambda_\in \Lambda_n} (|a_\lambda|^2 -1),
\end{eqnarray*}
where $\delta_{\lambda}^{\lambda'}$ is the Kronecker symbol (observe that $\E[|a_\lambda|^2]=1$, hence the expected value of the integral
$\int_{\mathbb T} H_{2}(T_n(\theta))\,d\theta$ is $0$, as expected). Analogously, for $j=1,2$ we have that
\begin{eqnarray*}
\int_{\mathbb T} H_{2}(\partial_j \widetilde T_n(\theta))\,d\theta
&=& \int_{\mathbb T} \left ( \frac{2}{n}\frac{1}{\mathcal N_n}
\sum_{\lambda, \lambda'\in \Lambda_n} \lambda_j \lambda'_j
a_\lambda \overline a_{\lambda'} e_{\lambda - \lambda'}(\theta )
 -1                           \right)\,d\theta\\
 &=&\frac{2}{n}\frac{1}{\mathcal N_n}
\sum_{\lambda\in \Lambda_n} \lambda_j^2 |a_\lambda|^2 - 1
=\frac{1}{\mathcal N_n} \frac{2}{n} \sum_{\lambda \in \Lambda_n}
\lambda_j^2 (|a_\lambda|^2 - 1)\ ,
\end{eqnarray*}
{{} where the last equality follows from the elementary identity
$$
 \sum_{\lambda \in \Lambda_n}\lambda_j^2 = \frac{n \mathcal N_n} {2}.
$$
}Since $\alpha_{2n,2m}=\alpha_{2m,2n}$ we can rewrite ${\rm proj}(\Lc_n\, | \, C_{2})$ as
\begin{eqnarray*}
&&\sqrt{4\pi^2}\sqrt{\frac{n}{2}}\left( \frac{\alpha_{0,0}\beta_{2}}{2}
\frac{1}{\mathcal N_n} \sum_{\lambda_\in \Lambda_n} (|a_\lambda|^2 -1)
+ \frac{\alpha_{0,2}\beta_{0}}{2}
\frac{1}{\mathcal N_n} \frac{2}{n} \sum_{\lambda \in \Lambda_n}
\underbrace{(\lambda_1^2+\lambda_2^2)}_{=n} (|a_\lambda|^2 - 1)
  \right)\\
&&=\sqrt{4\pi^2}\sqrt{\frac{n}{2}}\frac{1}{2\mathcal N_n}
\left( \alpha_{0,0}\beta_{2}
 \sum_{\lambda_\in \Lambda_n} (|a_\lambda|^2 -1)
+ 2\alpha_{0,2}\beta_{0}
 \sum_{\lambda \in \Lambda_n}
 (|a_\lambda|^2 - 1)
  \right)\\
&&=\sqrt{4\pi^2}\sqrt{\frac{n}{2}}\frac{1}{2\mathcal N_n}
\left( \alpha_{0,0}\beta_{2}+ 2\alpha_{0,2}\beta_{0}\right)
 \sum_{\lambda_\in \Lambda_n} (|a_\lambda|^2 -1)\ .
\end{eqnarray*}
Easy computations show that
$$
\alpha_{0,0} = \sqrt{\frac{\pi}{2}}\ ,\quad \alpha_{0,2}=\alpha_{2,0} = \frac12
\sqrt{\frac{\pi}{2}}\ , \quad \beta_0 =\frac{1}{ \sqrt{2\pi}}\ , \quad \beta_2 = - \frac{1}{ \sqrt{2\pi}}\ ,
$$
and therefore
\begin{eqnarray*}
{\rm proj}(\Lc_n\, | \, C_{2}) &=&\sqrt{4\pi^2}\sqrt{\frac{n}{2}}\frac{1}{2\mathcal N_n}
\left(- \sqrt{\frac{\pi}{2}}\frac{1}{\sqrt{2\pi}}
+ 2\frac{1}{2}\sqrt{\frac{\pi}{2}}\frac{1}{\sqrt{2\pi}}
\right)
 \sum_{\lambda_\in \Lambda_n} (|a_\lambda|^2 -1)
\\
&=&\sqrt{4\pi^2}\sqrt{\frac{n}{2}}\frac{1}{2\mathcal N_n}
\left(- \frac12 + \frac12
\right)
 \sum_{\lambda_\in \Lambda_n} (|a_\lambda|^2 -1) = 0,
\end{eqnarray*}
thus concluding the proof.
\end{proof}

\section{Asymptotic study of ${\rm proj}(\Lc_n \, | \, C_4)$}

\subsection{Preliminary considerations}

As anticipated in the Introduction, one of the main findings of the present chapter is that, whenever $\Nc_{n_j}\to \infty$, the asymptotic behaviour of the normalized sequence $\{ \widetilde{\Lc}_{n_j}\}$ in \eqref{e:culp} is completely determined by that of the fourth order chaotic projections
\begin{equation}\label{e:ron}
{\rm proj}(\widetilde{\Lc}_{n_j} \, | \, C_4) =  \frac{ {\rm proj}(\mathcal{L}_{n_j} \, | \, C_4)}{\sqrt{\Var[\mathcal{L}_{n_j} ]}} , \quad j\geq 1.
\end{equation}
The aim of this section is to provide a detailed asymptotic study of the sequence appearing in \eqref{e:ron}, by using in particular the explicit formula \eqref{e:pp}. For the rest of the chapter, we use the notation
\begin{equation}\label{e:pzi}
\psi(\eta ) := \frac{3+\eta}{8}, \quad \eta\in [-1,1],
\end{equation}
and will exploit the following elementary relations, valid as $\Nc_{n_j}\to \infty$:
\begin{enumerate}
\item[(i)] if $\widehat{\mu}_{n_j}(4)\to \eta\in [-1,1]$, then, for $\ell=1,2$,
\begin{equation}\label{e:easy1}
\frac{2}{n_j^2 \Nc_{n_j}} \sum_{\substack{\lambda = (\lambda_1,\lambda_2)\in \Lambda_{n_j} \\ \lambda_2\geq 0 }}\!\!\!\lambda_\ell^4 \longrightarrow \psi(\eta);
\end{equation}

\item[(ii)] if $\widehat{\mu}_{n_j}(4)\to \eta\in [-1,1]$, then
\begin{equation}\label{e:easy2}
\frac{2}{n_j^2 \Nc_{n_j}} \sum_{\substack{\lambda = (\lambda_1,\lambda_2)\in \Lambda_{n_j} \\ \lambda_2\geq 0 }}\!\!\!\lambda_1^2\lambda_2^2 \longrightarrow \frac12 - \psi(\eta).
\end{equation}
\end{enumerate}
Note that  \eqref{e:easy1}--\eqref{e:easy2} follow immediately from the fact that, for every $n$,
$$
\widehat{\mu}_n(4) = \frac{1}{n^2 \Nc_n} \sum_{\lambda\in \Lambda_n} \left(\lambda_1^4 +\lambda_2^4-6\lambda_1^2\lambda_2^2\right),
$$
as well as from elementary symmetry considerations. We will also use the identity:
\begin{equation}\label{e:magic}
64\, \psi(\eta)^2-48\, \psi(\eta)+10 = \eta^2+1.
\end{equation}

One of our principal tools will be the following multivariate central limit theorem (CLT).

\begin{proposition}\label{p:clt} Assume that the subsequence $\{n_j\}\subset S$ is such that $\Nc_{n_j} \to \infty$ and $\widehat{\mu}_{n_j}(4) \to \eta\in [-1,1]$. Define
\begin{eqnarray*}
H(n_j)=\left(
\begin{array}{c}
H_{1}(n_j) \\
H_{2}(n_j) \\
H_{3}(n_j) \\
H_{4}(n_j)
\end{array}%
\right ) & :=& \frac{1}{n_j\sqrt{\Nc_{n_j}/2}}\sum_{\substack{\lambda = (\lambda_1,\lambda_2)\in \Lambda_{n_j} \\ \lambda_2\geq 0 }}\left(
|a_{\lambda }|^{2}-1\right) \left(
\begin{array}{c}
n_j \\
\lambda _{1}^{2} \\
\lambda _{2}^{2} \\
\lambda _{1}\lambda _{2}%
\end{array}%
\right) .
\end{eqnarray*}
Then, as $n_j\to \infty$, the following CLT holds:
\begin{eqnarray}\label{e:punz}
H(n_j) \stackrel{\rm d}{\longrightarrow} Z(\eta)= \left(
\begin{array}{c}
Z_{1} \\
Z_{2} \\
Z_{3} \\
Z_{4}%
\end{array}%
\right )\text{ ,}
\end{eqnarray}
where $Z(\eta)$ is a centered Gaussian vector with covariance
\begin{equation}\label{e:sig}
\Sigma =\left(
\begin{array}{cccc}
1 & \frac{1}{2} & \frac{1}{2} & 0 \\
\frac{1}{2} & \psi  & \frac{1}{2}-\psi  & 0 \\
\frac{1}{2} & \frac{1}{2}-\psi  & \psi  & 0 \\
0 & 0 & 0 & \frac{1}{2}-\psi
\end{array}%
\right) \text{.}
\end{equation}%
with $\psi = \psi(\eta)$, as defined in \eqref{e:pzi}.
\end{proposition}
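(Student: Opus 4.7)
The key structural observation is that, because of the relation $a_\lambda=\overline{a_{-\lambda}}$, the restriction to the ``upper half'' $\Lambda_{n_j}^+:=\{\lambda\in\Lambda_{n_j}:\lambda_2\ge0\}$ decouples the Gaussian coefficients: the random variables $\{|a_\lambda|^2:\lambda\in\Lambda_{n_j}^+,\ \lambda_2>0\}$ are mutually independent, each being distributed as $\mathrm{Exp}(1)$, so that $(|a_\lambda|^2-1)$ is centred with unit variance and finite moments of every order. The points with $\lambda_2=0$ (at most two of them, occurring only when $\sqrt{n_j}\in\Z$) give $a_{(\sqrt{n_j},0)}=\overline{a_{(-\sqrt{n_j},0)}}$ and are therefore real Gaussian; they contribute $O(1/\mathcal{N}_{n_j})$ to every variance and may be absorbed into lower-order corrections. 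Writing
\[
H(n_j)=\sum_{\lambda\in\Lambda_{n_j}^+}X_\lambda^{(n_j)},\qquad X_\lambda^{(n_j)}:=\frac{(|a_\lambda|^2-1)}{n_j\sqrt{\mathcal{N}_{n_j}/2}}\,v_\lambda,\quad v_\lambda:=(n_j,\lambda_1^2,\lambda_2^2,\lambda_1\lambda_2)^T,
\]
therefore presents $H(n_j)$ (up to a negligible remainder) as a sum of independent, centred random vectors.

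The first step is to check that the covariance matrix $\Sigma_{n_j}:=\operatorname{Cov}(H(n_j))$ converges to $\Sigma$. By independence,
\[
\Sigma_{n_j}=\frac{1}{n_j^2\mathcal{N}_{n_j}/2}\sum_{\lambda\in\Lambda_{n_j}^+}v_\lambda v_\lambda^T+o(1).
\]
The entries are then computed one by one: the $(1,1)$-entry converges to $1$ since $|\Lambda_{n_j}^+|\sim\mathcal{N}_{n_j}/2$; the $(1,2)$ and $(1,3)$ entries converge to $1/2$ using the symmetry $(\lambda_1,\lambda_2)\leftrightarrow(\lambda_2,\lambda_1)$ of $\Lambda_{n_j}$ together with $\sum_{\lambda\in\Lambda_{n_j}}\lambda_\ell^2=n_j\mathcal{N}_{n_j}/2$; the $(2,2)$ and $(3,3)$ entries converge to $\psi(\eta)$ by \eqref{e:easy1}; the $(2,3)$ entry converges to $1/2-\psi(\eta)$ by \eqref{e:easy2}; the $(4,4)$ entry converges to $1/2-\psi(\eta)$ again by \eqref{e:easy2}; finally, all the entries involving the fourth coordinate and one of the first three vanish identically, because the sums $\sum\lambda_1\lambda_2$, $\sum\lambda_1^3\lambda_2$, $\sum\lambda_1\lambda_2^3$ are odd under $\lambda_1\mapsto-\lambda_1$ and $\Lambda_{n_j}^+$ is invariant under this symmetry.

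The second step is to deduce the CLT via the Cram\'er--Wold device. For $c=(c_1,c_2,c_3,c_4)\in\R^4$, set
\[
Y_\lambda^{(n_j)}:=\frac{c^T v_\lambda}{n_j\sqrt{\mathcal{N}_{n_j}/2}}\,(|a_\lambda|^2-1),
\]
so that $c^T H(n_j)=\sum_{\lambda\in\Lambda_{n_j}^+}Y_\lambda^{(n_j)}$ is a sum of independent centred random variables with $\sum_\lambda\Var(Y_\lambda^{(n_j)})\to c^T\Sigma c$. Since $|c^T v_\lambda|\le C(c)\,n_j$ uniformly over $\lambda\in\Lambda_{n_j}$, one has $\E[|Y_\lambda^{(n_j)}|^4]\le C'(c)/\mathcal{N}_{n_j}^2$ with $C'(c)$ depending only on $c$ and on $\E[(|a_\lambda|^2-1)^4]<\infty$. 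The Lyapunov ratio therefore satisfies
\[
\frac{\sum_{\lambda\in\Lambda_{n_j}^+}\E[|Y_\lambda^{(n_j)}|^4]}{\big(\sum_{\lambda\in\Lambda_{n_j}^+}\E[|Y_\lambda^{(n_j)}|^2]\big)^2}=O\!\left(\frac{1}{\mathcal{N}_{n_j}}\right)\longrightarrow0,
\]
so Lyapunov's CLT yields $c^T H(n_j)\stackrel{d}{\to}\mathcal{N}(0,c^T\Sigma c)$. The Cram\'er--Wold theorem then gives the joint convergence \eqref{e:punz}, provided $\Sigma$ is indeed the limit covariance.

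The only genuine difficulty is the bookkeeping in Step~1: one must verify that the symmetries of $\Lambda_{n_j}$ indeed kill the off-diagonal block involving $H_4$, and that \eqref{e:easy1}--\eqref{e:easy2} (which encode the hypothesis $\widehat{\mu}_{n_j}(4)\to\eta$) produce exactly the stated value $\psi(\eta)=(3+\eta)/8$. The probabilistic part of the argument (Lyapunov plus Cram\'er--Wold) is routine once independence and uniform boundedness of all moments of $|a_\lambda|^2-1$ have been established.
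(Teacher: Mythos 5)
Your proposal is correct in substance, but it follows a different route from the paper. The paper exploits the fact that each component $H_k(n_j)$ lives in the \emph{second Wiener chaos} of the Gaussian space ${\bf A}$: by the multivariate fourth--moment/Peccati--Tudor theorem (Theorem 6.2.3 in \cite{noupebook}), joint Gaussian convergence follows from just two ingredients, (a) convergence of the covariance matrix to $\Sigma$ and (b) a \emph{one-dimensional} CLT for each component, the latter obtained by the Lindeberg criterion since $H_k(n_j)=\sum_{\lambda\in\Lambda^+_{n_j}}c_k(n_j,\lambda)(|a_\lambda|^2-1)$ with $\max_\lambda c_k(n_j,\lambda)\to 0$. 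You instead bypass the chaos machinery entirely and argue classically via Cram\'er--Wold plus Lyapunov for each linear combination $c^TH(n_j)$; the covariance computation (the symmetry $\lambda_1\mapsto-\lambda_1$ killing the fourth row/column, relations \eqref{e:easy1}--\eqref{e:easy2} producing $\psi(\eta)$ and $\tfrac12-\psi(\eta)$) is the same in both arguments. What the paper's route buys is that joint convergence comes for free from componentwise convergence inside a fixed chaos; what your route buys is elementarity, at the price of having to treat \emph{every} direction $c$. On that point note one small caveat: $\Sigma$ is singular (it has a zero eigenvalue, reflecting the identity $H_2+H_3=H_1$ since $\lambda_1^2+\lambda_2^2=n$), so for directions with $c^T\Sigma c=0$ your Lyapunov-ratio bound is vacuous; there you should simply observe that ${\rm Var}(c^TH(n_j))\to 0$, whence $c^TH(n_j)\to 0$ in probability, which is the required degenerate Gaussian limit. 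A second, cosmetic, inaccuracy: the coefficients $a_{(\pm\sqrt{n},0)}$ are not real Gaussian, they are merely complex conjugates of each other; the relevant point, which you do use correctly, is that these at most two frequencies give a perfectly correlated contribution of order $O(1/\Nc_{n_j})$ and are negligible.
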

\noindent\begin{proof}
Each component of the vector $H(n_j)$ is an element of the second Wiener chaos associated with ${\bf A}$ (see Section \ref{ss:berryintro}). As a consequence, according e.g. to \cite[Theorem 6.2.3]{noupebook}, in order to prove the desired result it is enough to establish the following relations (as $n_j\to \infty$): (a) the covariance matrix of $H(n_j)$ converges to $\Sigma$, and (b) for every $k=1,2,3,4$, $H_k(n_j)$ converges in distribution to a one-dimensional centered Gaussian random variable. Point (a) follows by a direct computation based on formulae \eqref{e:easy1}--\eqref{e:easy2}, as well as on the fact that the random variables in the set $$\left\{
|a_{\lambda }|^{2}-1  : \lambda \in \Lambda_{n_j}, \, \lambda_2\geq 0 \right\}$$ are centered, independent, identically distributed and with unit variance. To prove Point (b), write $\Lambda_{n_j}^+ := \{ \lambda \in \Lambda_{n_j}, \, \lambda_2\geq 0\}$ and observe that, for every $k$ and every $n_j$, the random variable $H_k(n_j)$ has the form
$$
H_k(n_j) = \sum_{\lambda \in \Lambda_{n_j}^+} c_k(n_j, \lambda)\times  (|a_{\lambda }|^{2}-1)
$$
where $\{c_k(n_j, \lambda)\}$ is a collection of positive deterministic coefficients such that $$\max_{\lambda \in  \Lambda_{n_j}^+} c_k(n_j, \lambda)\to 0,$$ as $n_j\to\infty$. An application of the Lindeberg criterion, e.g. in the quantitative form stated in \cite[Proposition 11.1.3]{noupebook}, consequently yields that $H_k(n_j)$ converges in distribution to a Gaussian random variable, thus concluding the proof.
\end{proof}

\begin{remark}{\rm
The eigenvalues of the matrix $\Sigma $ are given by: $\left\{ 0,\frac{3}{2},%
\frac{1}{2}-\psi ,2\psi -\frac{1}{2}\right\} .$
}
\end{remark}

Following \cite{AmP}, we will abundantly use the structure of
the {\it length-$4$ correlation set of  frequencies}:
$$
S_n(4) := \lbrace  (\lambda, \lambda', \lambda'', \lambda''')\in (\Lambda_n)^4 :
\lambda + \dots + \lambda''' = 0          \rbrace\ .
$$
It is easily seen that an element of $S_n(4)$ necessarily verifies one of the following properties (A)--(C):
\begin{eqnarray*}
{\rm (A)} &:&\left\{
\begin{array}{l}
\lambda = - \lambda ^{\prime } \\
\lambda ^{\prime \prime }= - \lambda ^{\prime \prime \prime }%
\end{array}%
\right. , \\
{\rm (B)} &:&\left\{
\begin{array}{l}
\lambda =-\lambda ^{\prime \prime } \\
\lambda ^{\prime }=-\lambda ^{\prime \prime \prime }%
\end{array}%
\right. , \\
{\rm (C)} &:&\left\{
\begin{array}{l}
\lambda =- \lambda ^{\prime \prime \prime } \\
\lambda ^{\prime }= - \lambda ^{\prime \prime }%
\end{array}%
\right. .
\end{eqnarray*}%
We also have the following identities between sets:
\[
\{(\lambda, \lambda', \lambda'', \lambda''') : \mbox{(A) and (B) are verified} \} =  \left\{ (\lambda, \lambda', \lambda'', \lambda''') : \lambda = - \lambda ^{\prime }=-\lambda ^{\prime \prime
}=\lambda ^{\prime \prime \prime }\right\} ,
\]%
\[
\{(\lambda, \lambda', \lambda'', \lambda''') : \mbox{(A) and (C) are verified} \}=  \left\{(\lambda, \lambda', \lambda'', \lambda''') : \lambda =- \lambda ^{\prime }=\lambda ^{\prime \prime
}=- \lambda ^{\prime \prime \prime }\right\} ,
\]%
\[
\{(\lambda, \lambda', \lambda'', \lambda''') : \mbox{(B) and (C) are verified} \} =  \left\{(\lambda, \lambda', \lambda'', \lambda''') :  \lambda =\lambda ^{\prime }=-\lambda ^{\prime \prime
}=- \lambda ^{\prime \prime \prime }\right\} ,
\]
whereas, for $n\ne 0$, there is no element of $S_n(4)$ simultaneously verifying (A), (B) and (C).  In view of these remarks, we can apply the inclusion-exclusion principle to deduce that, for $n\neq 0$,
\[
| S_{n}(4)| =3\Nc_{n}(\Nc_{n}-1)\text{ .}
\]
In the next subsections, we will establish a non-central limit theorem for the sequence defined in \eqref{e:ron}.

\subsection{Non-central convergence of the fourth chaotic projection: statement}

One of the main achievements of the present chapter is the following statement.

\begin{proposition}\label{p:4nclt} Let $\{n_j\} \subset S$ be such that $\Nc_{n_j}\to \infty$ and $\widehat{\mu}_{n_j}\to \eta\in [-1,1]$; set
\begin{equation}\label{e:varz}
v(n_j) :=\sqrt{\frac{4\pi^2n_j}{512}}\frac{1}{\Nc_{n_j}}, \quad j\geq 1.
\end{equation}
Then, as $n_j\to \infty$,
\begin{equation}\label{e:4nclt}
Q(n_j) := \frac{ {\rm proj}(\mathcal{L}_{n_j} \, | \, C_4)}{v(n_j) } \stackrel{\rm d}{\longrightarrow} 1+Z_1^2-2Z_2^2-2Z_3^2-4Z_4^2,
\end{equation}
where the four-dimensional vector $Z^{\top} = Z^{\top}(\eta) = (Z_1,Z_2,Z_3,Z_4)$ is defined in \eqref{e:punz}. Moreover, one has that
\begin{equation}\label{e:magic2}
{\rm Var} ( 1+Z_1^2-2Z_2^2-2Z_3^2-4Z_4^2) = 1+ \eta^2.
\end{equation}
\end{proposition}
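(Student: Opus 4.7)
The starting point is the explicit chaotic expansion of $\Lc_n$ given in Proposition \ref{teoexp}. Taking $q=2$ produces the six-term sum
\[
\text{proj}(\Lc_n \, | \, C_4) = \sqrt{\tfrac{4\pi^{2}n}{2}}\sum_{\substack{0\le k\le u\le 2}} \frac{\alpha_{2k,2u-2k}\,\beta_{4-2u}}{(2k)!(2u-2k)!(4-2u)!}\,I^{(4-2u,\,2k,\,2u-2k)}(n),
\]
where $I^{(a,b,c)}(n) := \int_{\T} H_{a}(T_n) H_{b}(\partial_1 \widetilde T_n) H_{c}(\partial_2 \widetilde T_n)\,d\theta$ and $a+b+c=4$. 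The Hermite coefficients $\alpha_{\cdot,\cdot}$ and $\beta_{\cdot}$ are given explicitly by \eqref{e:alpha}--\eqref{e:beta} (with $p_2(1/4)=-1/8$, $H_2(0)=-1$, $H_4(0)=3$), so the whole argument reduces to evaluating each of the six integrals $I^{(a,b,c)}(n)$ in terms of the random vector $H(n)$ introduced in Proposition \ref{p:clt}.

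The plan for this evaluation is as follows. Using Proposition \ref{p:field}, I substitute the Fourier series of $T_n$ and of the normalised derivatives $\partial_j \widetilde T_n$ into each product $H_a(T_n)H_b(\partial_1 \widetilde T_n)H_c(\partial_2 \widetilde T_n)$. Each such expansion yields a quadruple sum over $(\lambda^{(1)},\ldots,\lambda^{(4)})\in(\Lambda_n)^{4}$ of $a_{\lambda^{(1)}}\cdots a_{\lambda^{(4)}}$, weighted by monomials of total degree $b+c$ in the components of the four $\lambda^{(i)}$. Integration against $\int_\T e_{\lambda^{(1)}+\cdots+\lambda^{(4)}}(\theta)\,d\theta = \delta^{0}_{\lambda^{(1)}+\cdots+\lambda^{(4)}}$ collapses the quadruple sum onto the length-$4$ correlation set $S_n(4)$. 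Decomposing $S_n(4)$ into the three subsets $\mathrm{(A)},\mathrm{(B)},\mathrm{(C)}$ and applying inclusion-exclusion, each $I^{(a,b,c)}(n)$ splits as the sum of a quadratic form in $\{|a_\lambda|^2-1\}_{\lambda\in\Lambda_n}$ (with coefficients that are polynomials of degree $b+c$ in $\lambda_1,\lambda_2$), plus a deterministic Hermite-polynomial correction, plus a residual supported on the pairwise overlaps $\mathrm{A}\cap\mathrm{B}$, $\mathrm{A}\cap\mathrm{C}$, $\mathrm{B}\cap\mathrm{C}$. Standard moment estimates show that this residual has $L^{2}(\P)$-norm of order $\Nc_{n_j}^{-1/2}$ once divided by $v(n_j)$, and hence is an $o_\P(1)$ perturbation.

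Gathering the surviving terms and re-expressing the basic sums via $\sum_{\lambda}\lambda_{1}^{2}=\sum_{\lambda}\lambda_{2}^{2}=n\Nc_n/2$, one finds that $Q(n_j) = \text{proj}(\Lc_{n_j}\,|\,C_4)/v(n_j)$ is, up to an $o_\P(1)$ error, a fixed quadratic polynomial in the four components of the vector $H(n_j)$ defined in Proposition \ref{p:clt}. A careful match of coefficients (relying in particular on \eqref{e:easy1}--\eqref{e:easy2}) shows that this polynomial is exactly
\[
1 + H_1(n_j)^2 - 2\,H_2(n_j)^2 - 2\,H_3(n_j)^2 - 4\,H_4(n_j)^2.
\]
Since $H(n_j)\stackrel{\rm d}{\longrightarrow}Z(\eta)$ by Proposition \ref{p:clt}, the continuous mapping theorem yields \eqref{e:4nclt}.

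For the variance identity \eqref{e:magic2}, set $W := Z_{1}^{2}-2Z_{2}^{2}-2Z_{3}^{2}-4Z_{4}^{2}$. Isserlis' theorem gives $\mathrm{Cov}(Z_i^2,Z_j^2)=2\,\Sigma_{ij}^{2}$; exploiting the vanishing entries in the last row/column of $\Sigma$ in \eqref{e:sig}, a direct expansion produces
$\mathrm{Var}(W)=64\,\psi(\eta)^{2}-48\,\psi(\eta)+10$, which equals $1+\eta^{2}$ by \eqref{e:magic}. The main obstacle in the whole argument is the evaluation and collecting step outlined in the second paragraph: one has to verify patiently that the six contributions $I^{(a,b,c)}(n)$, each multiplied by its own Hermite prefactor $\alpha_{2k,2u-2k}\beta_{4-2u}/((2k)!(2u-2k)!(4-2u)!)$, recombine to produce exactly the polynomial with coefficients $(1,-2,-2,-4)$. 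There is no conceptual shortcut, but the resulting variance $1+\eta^{2}$ is consistent with the already established asymptotic $\Var(\Lc_{n_j})\sim c(\mu_{n_j})E_{n_j}/\Nc_{n_j}^{2}$ in \eqref{eq:var leading KKW}, providing a valuable consistency check.
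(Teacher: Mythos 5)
Your overall architecture coincides with the paper's: expand ${\rm proj}(\Lc_n\,|\,C_4)$ via \eqref{e:nus}, reduce each of the six integrals to sums over the correlation set $S_n(4)$, apply inclusion--exclusion over the cases (A), (B), (C) (this is exactly what Lemmas \ref{lem1}--\ref{lem4} do), recognise the resulting quadratic forms in $\{|a_\lambda|^2-1\}$ as the components of $H(n_j)$, and conclude by Proposition \ref{p:clt} and continuous mapping; the Isserlis computation $\mathrm{Cov}(Z_i^2,Z_j^2)=2\Sigma_{ij}^2$ with \eqref{e:sig} and \eqref{e:magic} is a correct (and slightly more direct) route to \eqref{e:magic2} than the paper's rescaling to a unit-variance vector.

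There is, however, one concrete flaw in the middle step. You claim that the contribution supported on the pairwise overlaps $\mathrm{A}\cap\mathrm{B}$, $\mathrm{A}\cap\mathrm{C}$, $\mathrm{B}\cap\mathrm{C}$ is $o_{\P}(1)$ after division by $v(n_j)$. It is not: these are the diagonal fourth-moment sums such as $\Nc_n^{-2}\sum_\lambda |a_\lambda|^4$, $n^{-2}\Nc_n^{-2}\sum_\lambda \lambda_\ell^4|a_\lambda|^4$ and $n^{-2}\Nc_n^{-2}\sum_\lambda \lambda_1^2\lambda_2^2|a_\lambda|^4$, which are of exact order $\Nc_n^{-1}$; since dividing by $v(n_j)$ rescales by a factor comparable to $\Nc_n$, they converge in probability (law of large numbers combined with \eqref{e:easy1}--\eqref{e:easy2}) to \emph{nonzero constants} --- see the second halves of Lemmas \ref{lem1}, \ref{lem2} and \ref{lem4}. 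Only their centered fluctuations are of order $\Nc_{n_j}^{-1/2}$, which is presumably where your rate comes from. These limiting constants are precisely what produces the additive ``$+1$'' in \eqref{e:4nclt}: once the Hermite constants are absorbed into completing the squares, nothing else is left to generate it, so discarding the overlap terms would give the limit $Z_1^2-2Z_2^2-2Z_3^2-4Z_4^2$, which has mean $-1$ and is incompatible with the fact that $Q(n_j)$, being a chaos projection, is centered with uniformly bounded second moments. Finally, contrary to your remark that ``there is no conceptual shortcut'', the paper uses exactly this centering observation to avoid the coefficient bookkeeping: writing $Q(n_j)=H_1(n_j)^2-2H_2(n_j)^2-2H_3(n_j)^2-4H_4(n_j)^2+R(n_j)$ as in \eqref{e:zut} with $R(n_j)\stackrel{\P}{\to}\alpha$, uniform integrability and $\E[Q(n_j)]=0$ force $\alpha=-\E[Z_1^2-2Z_2^2-2Z_3^2-4Z_4^2]=1$, so the constant is identified without matching any coefficients.
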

Since the multidimensional CLT stated in \eqref{e:punz} implies that
$$
(H_1(n_j)^2, H_2(n_j)^2, H_3(n_j)^2, H_4(n_j)^2) \stackrel{\rm d}{\longrightarrow} (Z_1^2, Z_2^2, Z_3^2, Z_4^2),
$$
in order to prove Proposition \ref{p:4nclt} it is sufficient to show that
\begin{eqnarray}\label{e:zut}
&&Q(n_j) = H_1(n_j) ^2-2H_2(n_j)-2H_3(n_j)^2 - 4H_4(n_j)^2+ R(n_j),
\end{eqnarray}
where, as $n_j\to \infty$, the sequence of random variables $\{R_{n_j}\}$ converges in probability to some numerical constant $\alpha\in \R$. To see this, observe that, in view of \eqref{eq:var leading KKW} and of the orthogonal chaotic decomposition \eqref{e:chaos}, one has that $\{Q(n_j)\}$ is a centered sequence of random variables such that $\sup_j \E Q(n_j)^2 <\infty$.  By uniform integrability, this fact implies that, as $n_j\to \infty$,
$$
\E[Z_1^2-2Z_2^2-2Z_3^2-4Z_4^2 ]+\alpha = \lim_{n_j\to \infty} \E[Q(n_j)] = \lim_{n_j\to \infty} 0 = 0,
$$
and therefore, since $\E[ Z_1^2-2Z_2^2-2Z_3^2-4Z_4^2 ] = 1-2\psi-2\psi-4(2^{-1}-\psi)=-1$, one has necessarily that $\alpha=1$. (Note that our results yield indeed a rather explicit representation of the term $R(n_j)$, so that the fact that $\alpha=1$ can alternatively be verified by a careful bookkeeping of the constants appearing in the computations to follow).

\medskip

Our proof of \eqref{e:zut} is based on a number of technical results that are gathered together in the next subsection. These results will be combined with the following representation of ${\rm proj}(\mathcal{L}_{n_j} \, | \, C_4)$, that is a direct consequence of \eqref{e:pp} in the case $q=2$:
\begin{eqnarray}\label{e:nus}
{\rm proj}(\mathcal{L}_{n_j} \, | \, C_4)&=&\sqrt{4\pi^2}\sqrt{\frac{n}{2}}\Big(\frac{\alpha_{0,0}\beta_4}{4!}\int_{\mathbb
T} H_4(T_n(\theta))\,d\theta\\ \notag
&&+\frac{\alpha_{0,4}\beta_0}{4!}\int_{\mathbb T} H_4(\partial_2
\widetilde T_n(\theta))\,d\theta
+\frac{\alpha_{4,0}\beta_0}{4!}\int_{\mathbb T} H_4(\partial_1
\widetilde T_n(\theta))\,d\theta+\\ \notag
&&+\frac{\alpha_{0,2}\beta_2}{2!2!}\int_{\mathbb T}
H_2(T_n(\theta))H_2(\partial_2 \widetilde
T_n(\theta))\,d\theta\\
\notag
&&+\frac{\alpha_{2,0}\beta_2}{2!2!}\int_{\mathbb T}
H_2(T_n(\theta))H_2(\partial_1 \widetilde
T_n(\theta))\,d\theta+\\ \notag
&& +\frac{\alpha_{2,2}\beta_0}{2!2!}\int_{\mathbb T}
H_2(\partial_1 \widetilde T_n(\theta))H_2(\partial_2 \widetilde
T_n(\theta))\,d\theta \Big),
\end{eqnarray}
where the coefficients $\alpha_{\cdot, \cdot}$ and $\beta_{\cdot}$ are defined according to equation \eqref{e:alpha} and equation \eqref{e:beta}, respectively.

\subsection{Some ancillary lemmas}

The next four lemmas provide some useful representations for the six summands appearing on the right-hand side of \eqref{e:nus}. In what follows, $n$ always indicates a positive integer different from zero and, moreover, in order to simplify the discussion we sometimes use the shorthand notation:
\begin{eqnarray*}
&& \sum_{\lambda} = \sum_{\substack{\lambda = (\lambda_1,\lambda_2)\in \Lambda_{n} }}, \quad\quad \sum_{\lambda, \lambda'} = \sum_{\substack{\lambda, \lambda' \in \Lambda_{n} }}  \quad \quad\mbox{and}\quad \quad \sum_{\lambda : \lambda_2\geq 0} = \sum_{\substack{\lambda = (\lambda_1,\lambda_2)\in \Lambda_{n} \\ \lambda_2\geq 0 }},
\end{eqnarray*}
in such a way that the exact value of the integer $n$ will always be clear from the context. Also, the symbol $\{n_j\}$ will systematically indicate a subsequence of integers contained in $S$ such that $\Nc_{n_j}\to \infty$ and $\widehat{\mu}_{n_j} (4) \to \eta\in [-1,1]$, as $n_j\to \infty$. As it is customary, we write `$\stackrel{\mathbb{P}}\longrightarrow$' to denote convergence in probability, and we use the symbol $o_{\P}(1)$ to indicate a sequence of random variables converging to zero in probability, as $\mathcal{N}_n\to\infty$.

\begin{lemma}\label{lem1} One has the following representation:
$$\displaylines{
\int_{\mathbb T} H_{4}(T_n(\theta))\,d\theta
=\frac{6}{\Nc_{n}}\left (\frac{1}{\sqrt{\Nc_{n}/2}}\sum_{\lambda :\lambda
_{2}\geq 0}(|a_{\lambda }|^{2}-1) + o_{\P}(1)  \right )^{2}-\frac{3}{\Nc_{n}^{2}}\sum_{\lambda
}|a_{\lambda }|^{4}.
}$$
Also, as $n_j\to\infty$,
$$
\frac{3}{\Nc_{n_j}}\sum_{\lambda
}|a_{\lambda }|^{4}\stackrel{\mathbb{P}}\longrightarrow 6.
$$

\end{lemma}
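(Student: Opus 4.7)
The proof decomposes naturally into the algebraic computation of $\int_{\mathbb T} H_4(T_n(\theta))\,d\theta$ in closed form, followed by the law-of-large-numbers statement. I would begin by writing $H_4(x) = x^4 - 6x^2 + 3$ and then compute the integrals $\int T_n^2\,d\theta$ and $\int T_n^4\,d\theta$ separately, using orthogonality of the complex exponentials $\{e_\lambda\}$ on $\mathbb T$: namely $\int_{\mathbb T} e_\mu(\theta)\,d\theta = \mathbb{1}_{\mu=0}$. For the square, an immediate expansion gives
\[
\int_{\mathbb T} T_n(\theta)^2\,d\theta = \frac{1}{\Nc_n}\sum_{\lambda,\lambda'}a_\lambda a_{\lambda'}\,\mathbb{1}_{\lambda+\lambda'=0} = \frac{1}{\Nc_n}\sum_\lambda |a_\lambda|^2,
\]
using $a_{-\lambda}=\overline{a_\lambda}$.

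For the fourth moment, I would invoke the decomposition of the length-$4$ correlation set $S_n(4) = A\cup B\cup C$ recalled just before Proposition \ref{p:clt}, together with inclusion–exclusion (noting that $A\cap B\cap C = \emptyset$ for $n\neq 0$). Each of the three single cases contributes $(\sum_\lambda |a_\lambda|^2)^2$, while each of the three pairwise intersections contributes $\sum_\lambda |a_\lambda|^4$; therefore
\[
\int_{\mathbb T} T_n(\theta)^4\,d\theta = \frac{1}{\Nc_n^2}\Bigl[3\Bigl(\sum_\lambda |a_\lambda|^2\Bigr)^2 - 3\sum_\lambda |a_\lambda|^4\Bigr].
\]
Assembling $H_4$ and setting $S := \sum_\lambda(|a_\lambda|^2-1)$, so that $\sum_\lambda |a_\lambda|^2 = \Nc_n + S$, the linear terms in $S$ and the constant $3$ cancel cleanly, yielding
\[
\int_{\mathbb T} H_4(T_n(\theta))\,d\theta = \frac{3\,S^2}{\Nc_n^2} - \frac{3}{\Nc_n^2}\sum_\lambda |a_\lambda|^4.
\]
It remains to rewrite $S$ in terms of the half-sum $T^+ := \sum_{\lambda:\lambda_2\geq 0}(|a_\lambda|^2-1)$. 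Since $|a_{-\lambda}|^2 = |a_\lambda|^2$, one has $S = 2T^+ + R_n$, where $R_n$ collects at most $O(1)$ summands corresponding to lattice points with $\lambda_2 = 0$ (i.e.\ the two points $(\pm\sqrt n,0)$ when $n$ is a square, otherwise $R_n=0$). Consequently $R_n/\sqrt{\Nc_n/2} = o_{\P}(1)$, and we may write
\[
\frac{3 S^2}{\Nc_n^2} = \frac{6}{\Nc_n}\Bigl(\frac{T^+}{\sqrt{\Nc_n/2}} + \tfrac{R_n}{2\sqrt{\Nc_n/2}}\Bigr)^2 \cdot \tfrac{1}{1} = \frac{6}{\Nc_n}\Bigl(\frac{1}{\sqrt{\Nc_n/2}}\sum_{\lambda:\lambda_2\geq 0}(|a_\lambda|^2-1) + o_{\P}(1)\Bigr)^2,
\]
which is exactly the claimed identity.

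For the second statement, the observation is that $a_\lambda = x_\lambda + i y_\lambda$ with $x_\lambda,y_\lambda$ independent $\mathcal N(0,1/2)$, so $|a_\lambda|^2$ is an exponential random variable of mean $1$ and hence $\E|a_\lambda|^4 = 2$. Grouping the sum $\sum_\lambda |a_\lambda|^4 = 2\sum_{\lambda:\lambda_2\geq 0}|a_\lambda|^4$ (again modulo the negligible $\lambda_2=0$ points) expresses it as twice a sum of i.i.d. random variables with mean $2$; the weak law of large numbers then gives $\frac{1}{\Nc_n}\sum_\lambda |a_\lambda|^4 \stackrel{\mathbb P}{\longrightarrow} 2$, hence the factor $3$ yields the limit $6$. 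The only real bookkeeping difficulty, which is a minor rather than a genuine obstacle, is handling the boundary contribution of the at most two frequencies with $\lambda_2=0$; but since their number is bounded independently of $n$, their contribution is absorbed into the $o_{\P}(1)$ term.
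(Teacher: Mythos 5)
Your proof is correct and follows essentially the same route as the paper: expand $H_4(x)=x^4-6x^2+3$, use orthogonality of the $e_\lambda$ together with the inclusion–exclusion structure of $S_n(4)$ to get $3\bigl(\sum_\lambda|a_\lambda|^2\bigr)^2-3\sum_\lambda|a_\lambda|^4$ for the quartic term, complete the square so the linear and constant terms cancel, pass to the half-sum over $\lambda_2\geq 0$ absorbing the (at most two) $\lambda_2=0$ frequencies into the $o_\P(1)$, and conclude the second claim by the law of large numbers with $\E|a_\lambda|^4=2$. No gaps; this matches the paper's argument.
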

\noindent\begin{proof}
Using the explicit expression $H_4(x)=x^4 - 6x^2 +3$, we deduce that
\begin{eqnarray*}
\int_{\mathbb T} H_{4}(T_n(\theta))\,d\theta &=& \int_{\mathbb T}
\left ( T_n(\theta)^4 -6T_n(\theta)^2  +3 \right)\,d\theta\\
&=& \frac{1}{\mathcal N_n^2}
\sum_{\lambda, \dots, \lambda''' \in \Lambda_n} a_{\lambda} \overline a_{\lambda'} a_{\lambda''}
\overline a_{\lambda'''}
\int_{\mathbb T}
\exp(2\pi i\langle \lambda -\lambda' + \lambda'' - \lambda''', \theta\rangle )\,d\theta+\\
 && -\, 6\frac{1}{\mathcal N_n}
\sum_{\lambda, \lambda' \in \Lambda_n} a_{\lambda}
\overline a_{\lambda'}
\int_{\mathbb T}\exp(2\pi i\langle \lambda - \lambda', \theta\rangle )\,d\theta   +3 \\
&=& \frac{1}{\mathcal N_n^2}
\sum_{ \lambda - \lambda' + \lambda'' - \lambda'''=0} a_{\lambda} \overline a_{\lambda'} a_{\lambda''}
\overline a_{\lambda'''}
  -6\frac{1}{\mathcal N_n}
\sum_{\lambda \in \Lambda_n} |a_{\lambda}|^2
 +3,
 \end{eqnarray*}
where the subscript $ \lambda - \lambda' + \lambda'' - \lambda'''=0$ indicates that $(\lambda, -\lambda', \lambda'', -\lambda''')\in S_n(4)$. Owing to the structure of $S_n(4)$ discussed above,
the previous expression simplifies to
\begin{eqnarray*}
&&3\frac{1}{\mathcal N_n^2}\Big( \sum_{\lambda, \lambda'\in \Lambda_n} |a_\lambda|^2 |a_{\lambda'}|^2
   -\sum_\lambda |a_{\lambda}|^4   \Big)
  -6\frac{1}{\mathcal N_n}
\sum_{\lambda \in \Lambda_n} |a_{\lambda}|^2 +3 \\
&& =3\frac{1}{\mathcal N_n} \Big( \frac{1}{\sqrt{\mathcal N_n}}
   \sum_{\lambda \in \Lambda_n} (|a_\lambda|^2-1 ) \Big)^2 -
   3\frac{1}{\mathcal N_n^2}
   \sum_{\lambda \in \Lambda_n} |a_\lambda|^4\\
   && =  \frac{6}{\Nc_{n}}\left (\frac{1}{\sqrt{\Nc_{n}/2}}\sum_{\lambda
:\lambda _{2}\geq 0}(|a_{\lambda }|^{2}-1) +o_{\P}(1) \right
)^{2}-\frac{3}{\Nc_{n}^{2}}\sum_{\lambda }|a_{\lambda }|^{4},
\end{eqnarray*}
where $o_{\P}(1) =  - (2\Nc_{n_j})^{-1/2} (|a_{(n^{-1/2}, 0)}|^2+|a_{(-n^{-1/2}, 0)}|^2-2)$, thus immediately yielding the first part of the statement (after developing the square). The second part of the statement follows from a standard application of the law of large numbers to the sum,
$$
\frac{3}{\Nc_{n_j}}\sum_{\lambda }|a_{\lambda }|^{4} =\frac{3}{\Nc_{n_j}/2}\sum_{\lambda : \lambda_2\geq 0}|a_{\lambda }|^{4} + o_{\P}(1),
$$
as well as from the observation that the set $\{|a_\lambda|^4 : \lambda\in \Lambda_{n_j}, \, \lambda_2\geq 0\}$ is composed of i.i.d. random variables such that $\E |a_\lambda|^4 =2$.

\end{proof}
\begin{lemma}\label{lem2} For $\ell =1,2$,
$$\displaylines{
\int_{\mathbb T} H_{4}(\partial_\ell \widetilde T_n(\theta))\,d\theta
= \frac{24}{\Nc_{n}}\left[ \frac{1}{\sqrt{\Nc_{n}/2}}%
\sum_{\lambda ,\lambda _{2}\geq 0}\left (\frac{\lambda _{\ell}^{2}}{n}\left( |a_{\lambda
}|^{2}-1\right) \right) + o_{\P}(1) \right] ^{2} +\cr
-\left( \frac{2}{n}\right) ^{2}\frac{3}{\Nc_{n}^{2}}\sum_{\lambda }\lambda
_{\ell}^{4}|a_{\lambda }|^{4}.}$$
Moreover, as $n_j\to \infty$,
\[
\left( \frac{2}{n_j}\right) ^{2}\frac{3}{\Nc_{n_j}}\sum_{\lambda }\lambda
_{\ell}^{4}|a_{\lambda }|^{4} \stackrel{\P}{\longrightarrow} 24\, \psi(\eta).
\]
\end{lemma}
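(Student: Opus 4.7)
The strategy is strictly parallel to that of Lemma \ref{lem1}: starting from the definition $H_4(x)=x^4-6x^2+3$, I would expand $\int_{\mathbb T}H_4(\partial_\ell\widetilde T_n(\theta))\,d\theta$ using the explicit Fourier representation \eqref{e:norma} of $\partial_\ell\widetilde T_n$, and integrate each monomial against $d\theta$ using orthogonality of the characters $e_\lambda$. The second moment is immediate: only the diagonal $\lambda'=-\lambda$ survives and, since $a_{-\lambda}=\overline{a_\lambda}$, one obtains
$$
\int_{\mathbb T}(\partial_\ell\widetilde T_n(\theta))^2\,d\theta \;=\; \frac{2}{n\mathcal N_n}\sum_{\lambda}\lambda_\ell^{2}|a_\lambda|^{2}.
$$
For the fourth power, the integral picks up precisely those quadruples in $S_n(4)$, weighted by $\lambda_\ell\lambda'_\ell\lambda''_\ell\lambda'''_\ell\,a_\lambda a_{\lambda'}a_{\lambda''}a_{\lambda'''}$. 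This is where I would reuse verbatim the inclusion--exclusion decomposition of $S_n(4)$ into configurations (A), (B), (C) already recalled in the text: each of the three pairings contributes $\sum_{\lambda,\lambda'}\lambda_\ell^{2}(\lambda'_\ell)^{2}|a_\lambda|^{2}|a_{\lambda'}|^{2}$ (with a minus sign from $(-\lambda)_\ell$ squared to a plus), each pairwise intersection contributes $\sum_\lambda \lambda_\ell^{4}|a_\lambda|^{4}$, and the triple intersection is empty for $n\neq 0$, yielding
$$
\int_{\mathbb T}(\partial_\ell\widetilde T_n)^{4}\,d\theta \;=\; \frac{4}{n^{2}\mathcal N_n^{2}}\!\left[3\Big(\sum_\lambda\lambda_\ell^{2}|a_\lambda|^{2}\Big)^{2}-3\sum_\lambda\lambda_\ell^{4}|a_\lambda|^{4}\right]\!.
$$

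\noindent Combining the two moments with the deterministic term $+3$ and using the arithmetic identity $\sum_\lambda\lambda_\ell^{2}=n\mathcal N_n/2$, I would then write
$\sum_\lambda\lambda_\ell^{2}|a_\lambda|^{2}=\sum_\lambda\lambda_\ell^{2}(|a_\lambda|^{2}-1)+n\mathcal N_n/2$ and expand the square: the $+3$ cancels a $-6+3$ emerging from the two purely deterministic contributions, and the linear cross-term from the square cancels the linear part coming from $-6\int(\partial_\ell\widetilde T_n)^{2}$. What survives is exactly
$$
\int_{\mathbb T}H_4(\partial_\ell\widetilde T_n)\,d\theta \;=\; \frac{12}{n^{2}\mathcal N_n^{2}}\Big(\sum_\lambda\lambda_\ell^{2}(|a_\lambda|^{2}-1)\Big)^{2}-\frac{12}{n^{2}\mathcal N_n^{2}}\sum_\lambda\lambda_\ell^{4}|a_\lambda|^{4}.
$$
To get the form stated in the lemma it then remains to split $\sum_\lambda=2\sum_{\lambda_2\geq 0}+R_n$, where $R_n$ gathers the at most two boundary frequencies with $\lambda_2=0$ (which can occur only when $n$ is a perfect square). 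Since $|\lambda_\ell^{2}(|a_\lambda|^{2}-1)|\leq n\cdot O_{\mathbb P}(1)$ for each such term, $R_n=O_{\mathbb P}(n)$; after rescaling by $(\sqrt{\mathcal N_n/2}\,n)^{-1}$ this correction is $o_{\mathbb P}(1)$, and squaring (with the cross term $4\,S\,R_n/(n^{2}\mathcal N_n^{2})$ of order $O_{\mathbb P}(\mathcal N_n^{-3/2})$) absorbs it inside the $o_{\mathbb P}(1)$ that already appears in the statement.

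\noindent For the \emph{moreover} part, I would argue by second moment. The expected value of $(2/n)^{2}(3/\mathcal N_n)\sum_\lambda\lambda_\ell^{4}|a_\lambda|^{4}$ equals $(24/(n^{2}\mathcal N_n))\sum_\lambda\lambda_\ell^{4}$ since $\mathbb E|a_\lambda|^{4}=2$, and by \eqref{e:easy1} this quantity tends to $24\,\psi(\eta)$ (the $\lambda_2=0$ boundary being again $O(\mathcal N_n^{-1})$). The variance is bounded using $\Var(|a_\lambda|^{4})=O(1)$ and $\lambda_\ell^{8}\leq n^{4}$, giving
$$
\Var\!\left[\frac{12}{n^{2}\mathcal N_n}\sum_\lambda\lambda_\ell^{4}|a_\lambda|^{4}\right]\;\leq\;\frac{C}{n^{4}\mathcal N_n^{2}}\cdot n^{4}\mathcal N_n\;=\;O(\mathcal N_n^{-1})\to 0,
$$
so Chebyshev's inequality yields the claimed convergence in probability.

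\noindent\textbf{Main obstacle.} There is no serious analytical difficulty: the proof is a bookkeeping exercise built upon the same combinatorial structure of $S_n(4)$ already exploited in Lemma \ref{lem1}. The only delicate point is keeping all the constants (factors of $2$ between $\sum_\lambda$ and $\sum_{\lambda_2\geq 0}$, the squared normalization $1/\sqrt{\mathcal N_n/2}$, and the $n^{-2}$ coming from the normalization \eqref{e:norma}) aligned so that the two versions of the statement match exactly; this is mostly a mechanical check once the inclusion--exclusion identity above is in place.
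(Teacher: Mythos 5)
Your proposal is correct and follows essentially the same route as the paper: expand $H_4(x)=x^4-6x^2+3$, integrate using orthogonality and the pairing structure of $S_n(4)$ to get $3\bigl(\sum_{\lambda,\lambda'}\lambda_\ell^2(\lambda'_\ell)^2|a_\lambda|^2|a_{\lambda'}|^2-\sum_\lambda\lambda_\ell^4|a_\lambda|^4\bigr)$, center via $\sum_\lambda\lambda_\ell^2=n\mathcal N_n/2$, and pass to the half-sum over $\lambda_2\ge 0$ with an $o_\P(1)$ boundary correction. Your treatment of the \emph{moreover} part (mean via $\E|a_\lambda|^4=2$ and \eqref{e:easy1}, variance of order $\mathcal N_n^{-1}$, Chebyshev) is the same law-of-large-numbers argument the paper uses after centering by $|a_\lambda|^4-2$.
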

\noindent\begin{proof}
The proof is similar to that of Lemma \ref{lem1}. We have that
\begin{eqnarray*}
&&\int_{\mathbb T} H_{4}(\partial_\ell \widetilde T_n(\theta))\,d\theta
=\int_{\mathbb T}
(\partial_\ell \widetilde T_n(\theta)^4 -6 \partial_\ell \widetilde T_n(\theta)^2+3)\,d\theta\\
&& = \frac{1}{\mathcal N_n^2}\frac{4}{n^2}
\sum_{\lambda, \dots, \lambda''' \in \Lambda_n} \lambda_\ell\lambda'_\ell\lambda''_\ell\lambda'''_\ell a_{\lambda} \overline a_{\lambda'} a_{\lambda''}
\overline a_{\lambda'''}
\int_{\mathbb T}
\exp(2\pi i\langle \lambda -\lambda' + \lambda'' - \lambda''', \theta\rangle )\,d\theta+\\
&&  -6\frac{1}{\mathcal N_n}\frac{2}{n}
\sum_{\lambda, \lambda' } \lambda_\ell\lambda'_\ell a_{\lambda}
\overline a_{\lambda'}
\int_{\mathbb T}\exp(2\pi i\langle \lambda - \lambda', \theta\rangle )\,d\theta   +3 \\
&&=\frac{1}{\mathcal N_n^2}\frac{4}{n^2}
\sum_{\lambda-\lambda'+\lambda''- \lambda''' =0}\lambda_\ell\lambda'_\ell\lambda''_\ell\lambda'''_\ell a_{\lambda} \overline a_{\lambda'} a_{\lambda''}
\overline a_{\lambda'''}
  -6\frac{1}{\mathcal N_n}\frac{2}{n}
\sum_{\lambda \in \Lambda_n} \lambda_\ell^2 |a_{\lambda}|^2
   +3\\
&&=\frac{3}{\mathcal N_n^2}\frac{4}{n^2}\left (
\sum_{\lambda,\lambda'}\lambda_\ell^2(\lambda'_\ell)^2 |a_{\lambda}|^2 |a_{\lambda'}|^2 - \sum_{\lambda} \lambda_\ell^4 |a_{\lambda}|^4\right )
  -6\frac{1}{\mathcal N_n}\frac{2}{n}
\sum_{\lambda \in \Lambda_n}\lambda_\ell^2 |a_{\lambda}|^2
   +3\\
&& =\frac{24}{\Nc_{n}}\left[ \frac{1}{\sqrt{\Nc_{n}/2}}%
\sum_{\lambda ,\lambda _{2}\geq 0}\left (\frac{\lambda _{\ell}^{2}}{n}\left( |a_{\lambda
}|^{2}-1\right) \right) + o_{\P}(1) \right] ^{2} -\left( \frac{2}{n}\right) ^{2}\frac{3}{\Nc_{n}^{2}}\sum_{\lambda }\lambda
_{\ell}^{4}|a_{\lambda }|^{4}\ .
\end{eqnarray*}
To conclude the proof, we observe that
\begin{eqnarray*}
&&\left( \frac{2}{n_j}\right) ^{2}\frac{3}{\Nc_{n_j}}\sum_{\lambda }\lambda
_{\ell}^{4}|a_{\lambda }|^{4} \\
&&= o_{\P}(1)+ \left( \frac{2}{n_j}\right) ^{2}\frac{3}{\Nc_{n_j}/2}\sum_{\lambda:\lambda_2\geq 0 }\lambda
_{\ell}^{4}(|a_{\lambda }|^{4} -2) +\frac{2\times 24}{n^2_j\Nc_{n_j} }\sum_{\lambda:\lambda_2\geq 0 }\lambda
_{\ell}^{4} := K_1(n_j) + K_2(n_j),
\end{eqnarray*}
so that the conclusion follows from \eqref{e:easy1}, as well as from the fact that, since the random variables $\{|a_\lambda|^4 -2: \lambda\in \Lambda_{n_j}, \, \lambda_2\geq 0\}$ are i.i.d., square-integrable and centered and $\lambda_\ell^4/n^2\leq 1$, $\E K_1(n_j)^2 = O(\Nc_{n_j}^{-1})\to 0$.
\end{proof}

\medskip

\begin{lemma}\label{lem3} One has that
\begin{eqnarray}\label{e:barber}
&& \int_{\mathbb T} H_2(T_n(\theta))\Big( H_2(\partial_1 \widetilde
T_n(\theta)) + H_2(\partial_2 \widetilde
T_n(\theta))\Big)\,d\theta\\
&&\hspace{3cm}= \frac{4}{\Nc_{n}}\left\{ \frac{1}{\sqrt{\Nc_{n}/2}}\sum_{\lambda
,\lambda _{2}\geq 0}\left( |a_{\lambda }|^{2}-1\right) +o_{\P}(1) \right\} ^{2}-\frac{2%
}{\Nc_{n}^{2}}\sum_{\lambda }|a_{\lambda ^{\prime }}|^{4}.\notag
\end{eqnarray}
\end{lemma}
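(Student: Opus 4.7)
The plan is to follow, step by step, the same strategy used in Lemmas \ref{lem1} and \ref{lem2}: substitute the explicit expression $H_2(x)=x^2-1$, open up the integrand into Fourier products, integrate using the orthogonality of the $e_\lambda$'s (which enforces a length-$4$ correlation condition), and then apply inclusion-exclusion over the cases (A)--(C) of $S_n(4)$. Concretely, I would first write the integrand as
\[
T_n^2\|\nabla\widetilde T_n\|^2 \;-\; 2\,T_n^2 \;-\; \|\nabla\widetilde T_n\|^2 \;+\; 2,
\]
and dispose of the three lower-order terms at once: using orthonormality one has $\int T_n^2\,d\theta = \Nc_n^{-1}\sum_\lambda |a_\lambda|^2$, and using $|\lambda|^2=n$ also $\int \|\nabla\widetilde T_n\|^2\,d\theta = 2\Nc_n^{-1}\sum_\lambda |a_\lambda|^2$. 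This already produces the additive piece $-\tfrac{4}{\Nc_n}\sum_\lambda |a_\lambda|^2 + 2$, which will eventually be absorbed into the perfect square.

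The heart of the computation is the quartic integral $\int T_n^2\|\nabla\widetilde T_n\|^2\,d\theta$. Writing it via Fourier series and integrating in $\theta$ forces the frequency constraint $\lambda-\lambda'+\lambda''-\lambda'''=0$; after the substitution $(\nu_1,\nu_2,\nu_3,\nu_4)=(\lambda,-\lambda',\lambda'',-\lambda''')$ this becomes a sum over $S_n(4)$:
\[
\int T_n^2\|\nabla\widetilde T_n\|^2\,d\theta
\;=\; -\frac{2}{n\Nc_n^2}\sum_{(\nu_1,\ldots,\nu_4)\in S_n(4)} \langle \nu_3,\nu_4\rangle\, a_{\nu_1}a_{\nu_2}a_{\nu_3}a_{\nu_4},
\]
where $\langle\cdot,\cdot\rangle$ is the Euclidean inner product (which arose naturally from summing $\partial_1\widetilde T_n^2+\partial_2\widetilde T_n^2$). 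Applying inclusion-exclusion to the cases (A), (B), (C): case (A) gives $\langle \nu_3,\nu_4\rangle=-n$ and contributes $-n\left(\sum_\nu|a_\nu|^2\right)^2$; cases (B) and (C) both give $\langle\nu_3,\nu_4\rangle=\langle\nu_1,\nu_2\rangle$ and each reduces to $\sum_{\nu_1,\nu_2}\langle\nu_1,\nu_2\rangle|a_{\nu_1}|^2|a_{\nu_2}|^2$; the pairwise overlaps $A\cap B$, $A\cap C$, $B\cap C$ collapse to diagonal sums proportional to $n\sum_\nu|a_\nu|^4$ (with signs $-n,-n,+n$ respectively); and $A\cap B\cap C=\emptyset$ for $n\ne 0$.

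The key non-trivial observation, and what I expect to be the main conceptual point of the proof, is the exact vanishing
\[
\sum_{\nu_1,\nu_2\in\Lambda_n}\langle\nu_1,\nu_2\rangle|a_{\nu_1}|^2|a_{\nu_2}|^2 \;=\; 0,
\]
which follows from the parity swap $\nu_2\mapsto -\nu_2$: the bilinear form $\langle\nu_1,\nu_2\rangle$ is odd while $|a_{-\nu_2}|^2=|a_{\nu_2}|^2$ is even, so the paired contributions cancel exactly. Without this cancellation an extra quadratic form would survive and spoil the clean structure of the right-hand side of \eqref{e:barber}. Once this is in hand, the contributions from (B) and (C) disappear entirely, and the three overlap corrections combine to $+n\sum_\nu|a_\nu|^4$, giving the compact identity
\[
\int T_n^2\|\nabla\widetilde T_n\|^2\,d\theta \;=\; \frac{2}{\Nc_n^2}\Big(\sum_\nu|a_\nu|^2\Big)^2 \;-\; \frac{2}{\Nc_n^2}\sum_\nu|a_\nu|^4.
\]

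Assembling everything yields
\[
\text{LHS of \eqref{e:barber}} \;=\; \frac{2}{\Nc_n^2}\Big(\sum_\nu|a_\nu|^2-\Nc_n\Big)^2 \;-\; \frac{2}{\Nc_n^2}\sum_\nu|a_\nu|^4 \;=\; \frac{2}{\Nc_n^2}\Big(\sum_\nu(|a_\nu|^2-1)\Big)^2 \;-\; \frac{2}{\Nc_n^2}\sum_\nu|a_\nu|^4,
\]
and the final step is to rewrite the perfect square in terms of the half-plane sum $\sum_{\lambda:\lambda_2\ge 0}(|a_\lambda|^2-1)$. Using the identification $|a_{-\lambda}|^2=|a_\lambda|^2$, one has $\sum_{\lambda\in\Lambda_n}(|a_\lambda|^2-1)=2\sum_{\lambda:\lambda_2\ge 0}(|a_\lambda|^2-1)-\sum_{\lambda:\lambda_2=0}(|a_\lambda|^2-1)$, where the boundary sum is over at most two lattice points and is therefore $O_{\P}(1)$; after renormalizing by $\sqrt{\Nc_n/2}$ this becomes an $o_{\P}(1)$ increment inside the square, producing exactly the statement. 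The one minor point of care is keeping track of the fact that $X_n=O_{\P}(1)$ (by the CLT), which is what makes the approximation $(X_n+o_{\P}(1))^2=X_n^2+o_{\P}(1)$ legitimate when factoring the $\frac{4}{\Nc_n}$ prefactor out in front of the squared bracket.
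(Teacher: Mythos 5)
Your proposal is correct and follows essentially the same route as the paper: expand $H_2(x)=x^2-1$ into Fourier products, reduce the quartic term to a sum over $S_n(4)$, apply inclusion--exclusion over the cases (A)--(C), and kill the (B)- and (C)-contributions via the odd-symmetry cancellation $\sum_\lambda \lambda_j |a_\lambda|^2=0$ coming from $a_{-\lambda}=\overline{a_\lambda}$, before rewriting the resulting perfect square over the half-lattice $\{\lambda_2\ge 0\}$ with an $o_{\P}(1)$ boundary correction. The only (cosmetic) difference is that you sum the two derivative terms into $\|\nabla\widetilde T_n\|^2$ at the start and work with the inner product $\langle\nu_3,\nu_4\rangle$, whereas the paper treats $\partial_1$ and $\partial_2$ separately and only uses $\lambda_1^2+\lambda_2^2=n$ when adding the two contributions at the end.
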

\noindent \begin{proof} For $\ell=1,2$,
\begin{eqnarray*}
&& \int_{\mathbb T} H_2(T_n(\theta))H_2(\partial_\ell \widetilde
T_n(\theta))\,d\theta=\int_{\mathbb T} (T_n(\theta)^2-1)(\partial_\ell \widetilde
T_n(\theta)^2-1)\,d\theta\\
&&=\int_{\mathbb T} \left (\frac{1}{\mathcal N_n} \sum_{\lambda, \lambda'} a_\lambda \overline{a}_{\lambda'} e_\lambda(\theta) e_{-\lambda'}(\theta)-1\right )\left (\frac{2}{n}\frac{1}{\mathcal N_n} \sum_{\lambda'', \lambda'''} \lambda_\ell'' \lambda_\ell''' a_{\lambda''} \overline{a}_{\lambda'''} e_{\lambda''}(\theta) e_{-\lambda'''}(\theta)-1 \right )\,d\theta\\
&& =\frac{2}{n}\frac{1}{\mathcal N_n^2} \sum_{\lambda-\lambda'+\lambda''-\lambda'''=0} \lambda_\ell'' \lambda_\ell''' a_\lambda \overline{a}_{\lambda'} a_{\lambda''} \overline{a}_{\lambda'''}-
\frac{1}{\mathcal N_n} \sum_{\lambda} |a_\lambda|^2
-
\frac{2}{n}\frac{1}{\mathcal N_n} \sum_{\lambda}\lambda_\ell^2 |a_\lambda|^2 +1\ .
\end{eqnarray*}
An application of the inclusion-exclusion principle yields that the first summand in the previous expression equals
$$\displaylines{
\frac{2}{n}\frac{1}{\mathcal N_n^2} \left ( \sum_{\lambda,\lambda'} \lambda_j^2 |a_\lambda|^2 |a_{\lambda'}|^2 +2\sum_{\lambda,\lambda'} \lambda_j \lambda'_j |a_\lambda|^2 |a_{\lambda'}|^2 - 2\sum_\lambda \lambda_j^2|a_\lambda|^4 + \sum_\lambda \lambda_j^2|a_\lambda|^4 \right )\
}$$
Using the relation $a_{-\lambda} = \overline a_\lambda$, we also infer that
$$
\sum_{\lambda,\lambda'} \lambda_j \lambda'_j |a_\lambda|^2 |a_{\lambda'}|^2 = \left (\sum_\lambda \lambda_j |a_\lambda|^2 \right)^2 =0\ .
$$
Summing the terms corresponding to $\partial _{1}$ and $\partial _{2}$ we deduce that the left-hand side of \eqref{e:barber} equals
obtain%
\begin{eqnarray*}
&=&\frac{2}{n}\frac{1}{\Nc_{n}^{2}}\left\{ \sum_{\lambda ,\lambda ^{\prime
}}\left( \lambda _{1}^{2}+\lambda _{2}^{2}\right) |a_{\lambda
}|^{2}|a_{\lambda ^{\prime }}|^{2}-\sum_{\lambda }\left( \lambda
_{1}^{2}+\lambda _{2}^{2}\right) |a_{\lambda ^{\prime }}|^{4}\right\} \\
&&-\frac{2}{\Nc_{n}}\sum_{\lambda }|a_{\lambda }|^{2}-\frac{2}{n}\frac{1}{\Nc_{n}%
}\sum_{\lambda }\left( \lambda _{1}^{2}+\lambda _{2}^{2}\right) |a_{\lambda
}|^{2}+2= \\
&=&\frac{2}{n}\frac{1}{\Nc_{n}^{2}}\left\{ \sum_{\lambda ,\lambda ^{\prime
}}n|a_{\lambda }|^{2}|a_{\lambda ^{\prime }}|^{2}-\sum_{\lambda
}n|a_{\lambda ^{\prime }}|^{4}\right\} -\frac{2}{\Nc_{n}}\sum_{\lambda }|a_{\lambda }|^{2}-\frac{2}{n}\frac{1}{\Nc_{n}%
}\sum_{\lambda }n|a_{\lambda }|^{2}+2 \\
&=&2\frac{1}{\Nc_{n}^{2}}\left\{ \sum_{\lambda ,\lambda ^{\prime }}|a_{\lambda
}|^{2}|a_{\lambda ^{\prime }}|^{2}-\sum_{\lambda }|a_{\lambda ^{\prime
}}|^{4}\right\} -\frac{2}{\Nc_{n}}\sum_{\lambda }|a_{\lambda }|^{2}-\frac{2}{\Nc_{n}}%
\sum_{\lambda }|a_{\lambda }|^{2}+2\\
&=&\frac{2}{\Nc_{n}}\left\{ \frac{\sqrt{2}}{\sqrt{\Nc_{n}/2}}\sum_{\lambda
,\lambda _{2}\geq 0}\left( |a_{\lambda }|^{2}-1\right) +o_{\P}(1)  \right\} ^{2}-\frac{2%
}{\Nc_{n}^{2}}\sum_{\lambda }|a_{\lambda ^{\prime }}|^{4},
\end{eqnarray*}
which corresponds to the desired conclusion.
\end{proof}

\medskip

Our last lemma allows one to deal with the most challenging term appearing in formula \eqref{e:nus}.

\begin{lemma}\label{lem4}
We have that
\begin{eqnarray*}
 &&\int H_{2}(\partial _{1}\widetilde{T}_{n})H_{2}(\partial _{2}\widetilde{T}%
_{n})\,d\theta  \\
&&=-4\left[ \frac{1}{\sqrt{\Nc_{n}/2}}\frac{1}{n}\sum_{\lambda ,\lambda _{2}\geq
0}\lambda _{2}^{2}(|a_{\lambda }|^{2}-1) \right] ^{2}
-4\left[ \frac{1}{\sqrt{\Nc_{n}/2}}\frac{1}{n}\sum_{\lambda ,\lambda _{2}\geq
0}\lambda _{1}^{2}(|a_{\lambda }|^{2}-1)+o_{\P}(1)\right] ^{2}\\
&&\quad +4\left[  \frac{1}{\sqrt{\Nc_{n}/2}}\sum_{\lambda ,\lambda _{2}\geq
0}(|a_{\lambda }|^{2}-1) +o_{\P}(1)
\right] ^{2} \\
&& \quad+16\left[
\frac{1}{\sqrt{\Nc_{n}/2}}\frac{1}{n}\sum_{\lambda ,\lambda _{2}\geq
0}\lambda _{1}\lambda _{2}\left( |a_{\lambda }|^{2}-1\right)+o_{\P}(1)
\right] ^{2}
-\frac{12}{n^{2}}\frac{1}{\Nc_{n}^{2}}\sum_{\lambda
}\lambda _{1}^{2}\lambda _{2}^{2}|a_{\lambda }|^{4}\ .
\end{eqnarray*}
And the following convergence takes place as $n_j\to\infty$:
$$
\frac{12}{n_j^{2}}\frac{1}{\Nc_{n_j}^{2}}\sum_{\lambda
}\lambda _{1}^{2}\lambda _{2}^{2}|a_{\lambda }|^{4}\stackrel{\P}{\longrightarrow} 12 - 24\, \psi(\eta).
$$

\end{lemma}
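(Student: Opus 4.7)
The plan is to follow the same scheme as the proofs of Lemmas \ref{lem1}--\ref{lem3}. First I would use $H_2(x)=x^2-1$ to write
\[
\int_{\mathbb{T}} H_2(\partial_1 \widetilde T_n) H_2(\partial_2 \widetilde T_n)\,d\theta = \int_{\mathbb{T}} (\partial_1 \widetilde T_n)^2 (\partial_2 \widetilde T_n)^2\,d\theta - \int_{\mathbb{T}} (\partial_1 \widetilde T_n)^2\,d\theta - \int_{\mathbb{T}} (\partial_2 \widetilde T_n)^2\,d\theta + 1,
\]
and substitute the Fourier expansion coming from \eqref{e:partial} and \eqref{e:norma} for each factor $\partial_j \widetilde T_n$. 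The two linear integrals collapse at once, by orthogonality of the exponentials $\{e_\lambda\}_{\lambda\in\Lambda_n}$, to $\frac{2}{n\mathcal N_n}\sum_\lambda \lambda_j^2 |a_\lambda|^2$; their sum equals $\frac{2}{\mathcal N_n}\sum_\lambda |a_\lambda|^2$ thanks to $\lambda_1^2+\lambda_2^2=n$, and will cancel exactly against a deterministic piece coming from the quartic integral.

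The heart of the argument is the treatment of $\int_{\mathbb T} (\partial_1 \widetilde T_n)^2 (\partial_2 \widetilde T_n)^2\,d\theta$. Expanding the squares and integrating leaves a 4-fold sum indexed by the correlation set $S_n(4)$, i.e.\ quadruples $(\lambda, \lambda', \lambda'', \lambda''')$ with $\lambda-\lambda'+\lambda''-\lambda'''=0$. Inclusion--exclusion along the three branches (A), (B), (C) listed before Proposition \ref{p:clt} produces three leading contributions --- branch (A) yields $\bigl(\sum_\lambda \lambda_1^2|a_\lambda|^2\bigr)\bigl(\sum_\lambda \lambda_2^2|a_\lambda|^2\bigr)$, while branches (B) and (C) each yield $\bigl(\sum_\lambda \lambda_1\lambda_2|a_\lambda|^2\bigr)^2$ after one uses $a_{-\lambda}=\overline{a_\lambda}$ to collapse the $a$-quartic to a product of moduli --- together with an overlap equal to $-3\sum_\lambda \lambda_1^2\lambda_2^2 |a_\lambda|^4$; the triple intersection is empty because $n\neq 0$.

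Next I would write $|a_\lambda|^2 = 1+(|a_\lambda|^2-1)$ in every sum and introduce the centred quantities $A=\sum_\lambda (|a_\lambda|^2-1)$, $B=\sum_\lambda \lambda_1^2(|a_\lambda|^2-1)$, $C=\sum_\lambda \lambda_2^2(|a_\lambda|^2-1)$, $D=\sum_\lambda \lambda_1\lambda_2(|a_\lambda|^2-1)$. Using $\sum_\lambda \lambda_j^2 = n\mathcal N_n/2$ and $\sum_\lambda \lambda_1\lambda_2 = 0$ (so that the contributions from (B)+(C) are already $2D^2$), the deterministic parts of the three branches combine with the two linear integrals and with $+1$ to cancel exactly, leaving the clean identity
\[
\int_{\mathbb T} H_2(\partial_1 \widetilde T_n)H_2(\partial_2 \widetilde T_n)\,d\theta = \frac{4BC+8D^2-12\sum_\lambda \lambda_1^2\lambda_2^2 |a_\lambda|^4}{n^2\mathcal N_n^2}.
\]
The passage to half-plane sums $\sum_{\lambda,\,\lambda_2\ge 0}$ writes each centred sum as twice its half-plane analogue plus a boundary correction supported on the (at most two) lattice points with $\lambda_2=0$, which fits inside the $o_{\mathbb{P}}(1)$ terms present in the statement. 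Finally, using the constraint $B+C=nA$ in the form $2BC = n^2A^2-B^2-C^2$ replaces $BC$ by a quadratic combination of $A,B,C$, and, together with the $D^2$ contribution, reproduces the four squared brackets $-4X_1^2-4X_2^2+4X_3^2+16X_4^2$ after the prescribed normalisation by $1/\sqrt{\mathcal N_n/2}$ (or $1/(n\sqrt{\mathcal N_n/2})$).

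The convergence in probability of $\frac{12}{n_j^2\mathcal N_{n_j}^2}\sum_\lambda \lambda_1^2\lambda_2^2|a_\lambda|^4$ will be obtained from a standard weak law of large numbers applied to the i.i.d. family $\{|a_\lambda|^4:\lambda\in\Lambda_{n_j},\,\lambda_2\ge 0\}$, together with $\mathbb E|a_\lambda|^4=2$ and the asymptotic $\sum_\lambda \lambda_1^2\lambda_2^2 \sim n_j^2\mathcal N_{n_j}(1-2\psi(\eta))$ provided by \eqref{e:easy2}; this is where the hypothesis $\widehat{\mu_{n_j}}(4)\to\eta$ enters. The main obstacle throughout is the combinatorial bookkeeping: one must track the signs along each branch of $S_n(4)$, the cancellations of the $O(1)$ deterministic pieces, the factor of $2$ separating full- and half-plane sums, and the absorption of the $\lambda_2=0$ boundary terms into the $o_{\mathbb{P}}(1)$ corrections --- any misplaced constant would destroy the quadratic-form structure that feeds into the identification \eqref{e:zut} in the proof of Proposition \ref{p:4nclt}.
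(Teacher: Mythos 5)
Your argument is correct and is essentially the paper's own proof: the same expansion of $H_2\cdot H_2$, the same reduction of the quartic sum over $S_n(4)$ by inclusion--exclusion with the $-3\sum_\lambda\lambda_1^2\lambda_2^2|a_\lambda|^4$ overlap, the same centering together with $\sum_\lambda\lambda_1\lambda_2=0$ and $\lambda_1^2+\lambda_2^2=n$ to convert the cross term into the four centered squares (the paper implements this via the substitution $\lambda_1^2=n-\lambda_2^2$ plus a compensating deterministic term, which is exactly your identity $2BC=n^2A^2-B^2-C^2$), the same half-plane reduction absorbing the $\lambda_2=0$ points into $o_\P(1)$, and the same law-of-large-numbers argument (the one closing the proof of Lemma \ref{lem2}) combined with \eqref{e:easy2} for the $|a_\lambda|^4$-weighted term. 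Only one small slip: the deterministic asymptotic should read $\sum_\lambda\lambda_1^2\lambda_2^2\sim n_j^2\Nc_{n_j}\bigl(\tfrac12-\psi(\eta)\bigr)$, the factor $1-2\psi(\eta)$ appearing only after multiplication by $\E|a_\lambda|^4=2$, which then yields the stated limit $12-24\,\psi(\eta)$.
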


\noindent\begin{proof} One has that
\begin{eqnarray}
&&\int H_{2}(\partial _{1}\widetilde{T}_{n})H_{2}(\partial _{2}\widetilde{T}%
_{n})d\theta \notag \\
&&=\frac{4}{n^{2}}\frac{1}{\Nc_{n}^{2}}\sum_{\lambda -\lambda ^{\prime
}+\lambda ^{\prime \prime }-\lambda ^{\prime \prime \prime }=0}\lambda
_{1}\lambda _{1}^{\prime }\lambda _{2}^{\prime \prime }\lambda _{2}^{\prime
\prime \prime }a_{\lambda }\overline{a}_{\lambda ^{\prime }}a_{\lambda
^{\prime \prime }}\overline{a}_{\lambda ^{\prime \prime \prime }} \label{hope1}\\
&&-\frac{2}{n}\frac{1}{\Nc_{n}}\sum_{\lambda }\lambda _{1}^{2}|a_{\lambda
}|^{2}-\frac{2}{n}\frac{1}{\Nc_{n}}\sum_{\lambda }\lambda _{2}^{2}|a_{\lambda
}|^{2}+1\text{ .}\label{hope2}
\end{eqnarray}%
First of all, we note that%
\[
\mathbb{E}\left[ \frac{2}{n}\frac{1}{\Nc_{n}}\sum_{\lambda }(\lambda
_{1}^{2}+\lambda _{2}^{2})|a_{\lambda }|^{2}\right] =\mathbb{E}\left[ \frac{2%
}{\Nc_{n}}\sum_{\lambda }|a_{\lambda }|^{2}\right] =2\text{ .}
\]%
Let us now focus on (\ref{hope1}). Using the structure of $S_4(n)$ recalled above, we obtain
\[
\frac{4}{n^{2}}\frac{1}{\Nc_{n}^{2}}\sum_{\lambda -\lambda ^{\prime }+\lambda
^{\prime \prime }-\lambda ^{\prime \prime \prime }=0}\lambda _{1}\lambda
_{1}^{\prime }\lambda _{2}^{\prime \prime }\lambda _{2}^{\prime \prime
\prime }a_{\lambda }\overline{a}_{\lambda ^{\prime }}a_{\lambda ^{\prime
\prime }}\overline{a}_{\lambda ^{\prime \prime \prime }}
\]%
\[
=\frac{4}{n^{2}}\frac{1}{\Nc_{n}^{2}}\left\{ \sum_{\lambda ,\lambda
^{\prime }}\lambda _{1}^{2}(\lambda _{2}^{\prime })^{2}|a_{\lambda
}|^{2}|a_{\lambda ^{\prime }}|^{2}+2\sum_{\lambda ,\lambda
^{\prime }}\lambda _{1}\lambda _{2}\lambda _{1}^{\prime }\lambda
_{2}^{\prime }|a_{\lambda }|^{2}|a_{\lambda ^{\prime
}}|^{2}-3\sum_{\lambda }\lambda _{1}^{2}\lambda
_{2}^{2}|a_{\lambda }|^{4}\right\}.
\]%
Let us now write%
\begin{eqnarray*}
\frac{4}{n^{2}}\frac{1}{\Nc_{n}^{2}}\sum_{\lambda ,\lambda ^{\prime }}\lambda
_{1}^{2}(\lambda _{2}^{\prime })^{2}|a_{\lambda }|^{2}|a_{\lambda ^{\prime
}}|^{2} &:=&A\text{ ,} \\
\frac{4}{n^{2}}\frac{1}{\Nc_{n}^{2}}2\sum_{\lambda ,\lambda ^{\prime }}\lambda
_{1}\lambda _{2}\lambda _{1}^{\prime }\lambda _{2}^{\prime }|a_{\lambda
}|^{2}|a_{\lambda ^{\prime }}|^{2} &:=&B\text{ ,} \\
-3\frac{4}{n^{2}}\frac{1}{\Nc_{n}^{2}}\sum_{\lambda }\lambda _{1}^{2}\lambda
_{2}^{2}|a_{\lambda }|^{4} &:=&C\text{ ,} \\
\frac{4}{n^{2}}\frac{1}{\Nc_{n}^{2}}\left\{ -N_{n}\frac{n}{2}\sum_{\lambda
}|a_{\lambda }|^{2}+\frac{\Nc_{n}^{2}n^{2}}{4}\right\}  &:=&D\text{ .}
\end{eqnarray*}%
We have that $A$ equals%
\begin{eqnarray*}
&&\frac{4}{n^{2}}\frac{1}{\Nc_{n}^{2}}\sum_{\lambda ,\lambda ^{\prime
}}\lambda _{1}^{2}(\lambda _{2}^{\prime })^{2}|a_{\lambda }|^{2}|a_{\lambda
^{\prime }}|^{2} \\
&&=\frac{4}{n^{2}}\frac{1}{\Nc_{n}^{2}}\frac{1}{2}\left\{ \sum_{\lambda
,\lambda ^{\prime }}\lambda _{1}^{2}(\lambda _{2}^{\prime })^{2}|a_{\lambda
}|^{2}|a_{\lambda ^{\prime }}|^{2}+\sum_{\lambda ,\lambda ^{\prime }}\lambda
_{1}^{2}(\lambda _{2}^{\prime })^{2}|a_{\lambda }|^{2}|a_{\lambda ^{\prime
}}|^{2}\right\}
\\
&&=\frac{4}{n^{2}}\frac{1}{\Nc_{n}^{2}}\frac{1}{2}\left\{ \sum_{\lambda ,\lambda
^{\prime }}(n-\lambda _{2}^{2})(\lambda _{2}^{\prime })^{2}|a_{\lambda
}|^{2}|a_{\lambda ^{\prime }}|^{2}+\sum_{\lambda ,\lambda ^{\prime }}\lambda
_{1}^{2}(n-(\lambda _{1}^{\prime })^{2})|a_{\lambda }|^{2}|a_{\lambda
^{\prime }}|^{2}\right\},
\end{eqnarray*}
an expression that can be rewritten as
\begin{eqnarray*}
&&\frac{4}{n^{2}}\frac{1}{\Nc_{n}^{2}}\frac{1}{2}\left\{ -\sum_{\lambda
,\lambda ^{\prime }}\lambda _{2}^{2}(\lambda _{2}^{\prime })^{2}|a_{\lambda
}|^{2}|a_{\lambda ^{\prime }}|^{2}-\sum_{\lambda ,\lambda ^{\prime }}\lambda
_{1}^{2}(\lambda _{1}^{\prime })^{2}|a_{\lambda }|^{2}|a_{\lambda ^{\prime
}}|^{2}\right\}  \\
&&+\frac{4}{n^{2}}\frac{1}{\Nc_{n}^{2}}\frac{1}{2}\left\{ n\sum_{\lambda
,\lambda ^{\prime }}(\lambda _{2}^{\prime })^{2}|a_{\lambda
}|^{2}|a_{\lambda ^{\prime }}|^{2}+n\sum_{\lambda ,\lambda ^{\prime
}}\lambda _{1}^{2}|a_{\lambda }|^{2}|a_{\lambda ^{\prime }}|^{2}\right\}
\\
&&=\frac{4}{n^{2}}\frac{1}{\Nc_{n}^{2}}\frac{1}{2}\left\{ -\sum_{\lambda
,\lambda ^{\prime }}\lambda _{2}^{2}(\lambda _{2}^{\prime })^{2}|a_{\lambda
}|^{2}|a_{\lambda ^{\prime }}|^{2}-\sum_{\lambda ,\lambda ^{\prime }}\lambda
_{1}^{2}(\lambda _{1}^{\prime })^{2}|a_{\lambda }|^{2}|a_{\lambda ^{\prime
}}|^{2}\right\}  \\
&&+\frac{4}{n^{2}}\frac{1}{\Nc_{n}^{2}}\frac{1}{2}\left\{ n\sum_{\lambda
,\lambda ^{\prime }}(\lambda _{2}^{\prime })^{2}|a_{\lambda
}|^{2}|a_{\lambda ^{\prime }}|^{2}+n\sum_{\lambda ,\lambda ^{\prime
}}\lambda _{1}^{2}|a_{\lambda }|^{2}|a_{\lambda ^{\prime }}|^{2}\right\}.
\end{eqnarray*}%
As a consequence,
\begin{eqnarray*}
A+D &=&-\frac{4}{n^{2}}\frac{1}{\Nc_{n}^{2}}\frac{1}{2}\left[ \sum_{\lambda
}\lambda _{2}^{2}(|a_{\lambda }|^{2}-1)\right] ^{2} \\
&&-\frac{4}{n^{2}}\frac{1}{\Nc_{n}^{2}}\frac{1}{2}\left[ \sum_{\lambda
}\lambda _{1}^{2}(|a_{\lambda }|^{2}-1)\right] ^{2} \\
&&+\frac{4}{\Nc_{n}^{2}}\frac{1}{2}\left[ \sum_{\lambda }(|a_{\lambda }|^{2}-1)%
\right] ^{2}.
\end{eqnarray*}%
On the other hand,
\begin{eqnarray*}
B &=&\frac{4}{n^{2}}\frac{1}{\Nc_{n}^{2}}2\sum_{\lambda ,\lambda ^{\prime
}}\lambda _{1}\lambda _{2}\lambda _{1}^{\prime }\lambda _{2}^{\prime
}|a_{\lambda }|^{2}|a_{\lambda ^{\prime }}|^{2} \\
&=&\frac{4}{n^{2}}\frac{1}{\Nc_{n}^{2}}2\left[ \sum_{\lambda }\lambda
_{1}\lambda _{2}\left( |a_{\lambda }|^{2}-1\right) \right] ^{2}.
\end{eqnarray*}%
The last assertion in the statement, which concerns the term $C$ defined above, is a direct consequence of \eqref{e:easy2} and of an argument similar to the one that concluded the proof of Lemma \ref{lem2}.

\end{proof}

\subsection{End of the proof of Proposition \ref{p:4nclt}}
Plugging the explicit expressions appearing in Lemma \ref{lem1}, Lemma \ref{lem2}, Lemma \ref{lem3} and Lemma \ref{lem4} into \eqref{e:nus} (and exploiting the fact that $p_2(1/4) =-1/8$), one deduces after some standard simplification that representation \eqref{e:zut} is indeed valid, so that the conclusion of Proposition \ref{p:4nclt} follows immediately. In order to prove relation \eqref{e:magic2}, introduce the centered Gaussian vector $\widetilde{Z}^{\top} := (\widetilde{Z}_{1}, \widetilde{Z}_{2}, \widetilde{Z}_{3}, \widetilde{Z}_{4})$, with covariance matrix given by
$$
\widetilde{\Sigma }%
:=\left(
\begin{array}{cccc}
1 & \frac{1}{2\sqrt{\psi }} & \frac{1}{2\sqrt{\psi }} & 0 \\
\frac{1}{2\sqrt{\psi }} & 1 & \frac{1}{2\psi }-1 & 0 \\
\frac{1}{2\sqrt{\psi }} & \frac{1}{2\psi }-1 & 1 & 0 \\
0 & 0 & 0 & 1%
\end{array}%
\right) \text{ .}
$$
Then, %
\begin{eqnarray*}
&& \Var\left [ {Z}_{1}^{2}-2{Z}_{2}^{2}-2{Z}_{3}^{2}-4{Z}_{4}^{2}\right ]
\\
&& =\Var\left [ \widetilde{Z}_{1}^{2}-2\psi \widetilde{Z}_{2}^{2}-2\psi
\widetilde{Z}_{3}^{2}-4(\frac{1}{2}-\psi )\widetilde{Z}_{4}^{2}\right ]  \\
&&=\Var\left [ H_{2}(\widetilde{Z}_{1})-2\psi H_{2}(\widetilde{Z}_{2})-2\psi
H_{2}(\widetilde{Z}_{3})-4\left (\frac{1}{2}-\psi \right )H_{2}(\widetilde{Z}%
_{4})\right ] \\
&&=2+8\psi ^{2}+8\psi ^{2}+32\left (\frac{1}{2}-\psi \right )^{2}-4\psi \Cov\left[ H_{2}(%
\widetilde{Z}_{1}),H_{2}(\widetilde{Z}_{2})\right]  \\
&&\quad-4\psi \Cov\left[ H_{2}(\widetilde{Z}_{1}),H_{2}(\widetilde{Z}_{3})\right]
+8\psi ^{2} \Cov\left[ H_{2}(\widetilde{Z}_{2}),H_{2}(\widetilde{Z}_{3})\right]
\\
&&=2+8\psi ^{2}+8\psi ^{2}+32(\frac{1}{2}-\psi )^{2} \\
&&\quad-8\psi (\frac{1}{2\sqrt{\psi }})^{2}-8\psi (\frac{1}{2\sqrt{\psi }}%
)^{2}+16\psi ^{2}(\frac{1}{2\psi }-1)^{2}
\\
&&=2+8\psi ^{2}+8\psi ^{2}+32(\frac{1}{4}+\psi ^{2}-\psi )-2-2+16\psi ^{2}(%
\frac{1}{4\psi ^{2}}+1-\frac{1}{\psi })
\\
&&=64\psi ^{2}-48\psi +10\text{,}
\end{eqnarray*}
and the conclusion follows from \eqref{e:magic}.

\section{End of the proof of Theorem \ref{thm:lim dist sep}}\label{s:mainproof}

A direct computation (obtained e.g. by diagonalising the covariance matrix $\Sigma$  appearing in \eqref{e:sig}) reveals that, for every $\eta\in [-1,1]$, the random variable
$$
\frac{1}{\sqrt{1+\eta^2}}\Big( 1+Z_1^2-2Z_2^2-2Z_3^2-4Z_4^2 \Big)
$$
has the same law as $\mathcal{M}_{|\eta|}$, as defined in $\eqref{e:r}$. This implies, in particular, that such a random variable has unit variance, and has a distribution that does not depend on the sign of $\eta$. Now let the assumptions and notations of Theorem \ref{thm:lim dist sep} prevail
(in particular, $\Nc_{n_j} \to \infty$). Since the sequence $\{|\widehat{\mu}_{n_j}(4)| : j\geq 1\}$ is nonnegative and bounded by 1, there exists a subsequence $\{n'_j\}$ such that $|\widehat{\mu}_{n'_j}(4)|$ converges to some $\eta\in [0,1]$. It follows that $\{n'_j\}$ necessarily contains a subsequence $\{n''_j\}\subset \{n'_j\}$ such that one of the following properties holds: either (i) $\widehat{\mu}_{n''_j}(4)$ converges to $\eta$, or (ii) $\widehat{\mu}_{n''_j}(4)$ converges to $-\eta$. Now, if $\{n''_j\}$ is of type (i), then our initial remarks together with \eqref{eq:var leading KKW} and \eqref{e:4nclt} imply that
$$
\lim_{n''_j} \frac{\Var[\mathcal L_{n''_j}]}{\Var[{\rm proj}(\Lc_{n''_j} \, | \, C_4) ]} = 1.
$$
In view of the chaotic decomposition \eqref{e:chaos2}, this result implies that, as $n''_j\to \infty$,
$$
\widetilde{\Lc}_{n''_j}  = \frac{{\rm proj}(\Lc_{n''_j} \, | \, C_4)}{\sqrt{\Var[\mathcal L_{n''_j}]}} + o_{\P}(1),
$$
and consequently that $\widetilde{\Lc}_{n''_j}$ converges in distribution to $\Mc_{\eta}$, by virtue of Proposition \ref{p:4nclt}. An analogous argument shows that, if $\{n''_j\}$ is of type (ii), then necessarily $\widetilde{\Lc}_{n''_j}$ converges in distribution to $\Mc_{| -\eta|} = \Mc_{\eta}$. The results described above readily imply the following three facts: (a) if the subsequence $\{n'_j\}\subset \{n_j\}$ is such that $|\widehat{\mu}_{n'_j}(4)|\to \eta\in [0,1]$, then $\widetilde{\Lc}_{n'_j}\stackrel{\rm d}{\longrightarrow}\Mc_{\eta}$, (b) any subsequence $\{n'_j\}\subset \{n_j\} $ contains a further subsequence $\{n''_j\}\subset \{n'_j\}$ such that $|\widehat{\mu}_{n''_j}(4)|$ converges to some $\eta\in [0,1]$, and therefore $\widetilde{\Lc}_{n''_j}\stackrel{\rm d}{\longrightarrow}\Mc_{\eta}$, and (c) if the subsequence $\{n'_j\}$ is such that $|\widehat{\mu}_{n'_j}(4)|$ is not converging, then $\widetilde{\Lc}_{n'_j}$ is not converging in distribution, since in this case the set $\{{\bf D}(\widetilde{\Lc}_{n'_j})\}$ has necessarily two distinct adherent points (thanks to Point 4 in Proposition \ref{p:meta}). The first part of the statement is therefore proved. To prove \eqref{e:b}, use fact (b) above to deduce that, for every subsequence $\{n'_j\}$, there exists a further subsequence $\{n''_j\}$ such that $\widetilde{\Lc}_{n''_j}\stackrel{\rm d}{\longrightarrow}\Mc_{\eta}$ and $\Mc^{n''_j} \stackrel{\rm d}{\longrightarrow}\Mc_{\eta}$ (where we have used the notation \eqref{e:k}), and consequently
$$
d\big(\widetilde{\Lc}_{n''_j} \,,\,  \Mc^{n''_j}\big)\leq  d\big(\widetilde{\Lc}_{n''_j}\, ,\, \Mc_\eta \big)+d\big(\Mc_\eta\,,\, \Mc^{n''_j}\big)\longrightarrow 0, \quad n_j''\to \infty.
$$
The previous asymptotic relation is obvious whenever $d$ is a distance
metrizing weak convergence on $\mathscr{P}$. To deal with the case where
$d=d_K$ equals the Kolmogorov distance, one has to use the standard fact that,
since $\Mc_\eta$ has an absolutely continuous distribution,
then $X_n \stackrel{\rm d}{\longrightarrow} \Mc_\eta$ if and only if
$d_K(X_n,\Mc_\eta)\longrightarrow 0$. The proof of Theorem  \ref{thm:lim dist sep} is complete.

\chapter*{Part 3\\ Spin random fields}
\addcontentsline{toc}{chapter}{Part 3: Spin random fields}

\chapter{Representation of Gaussian isotropic spin random fields}

This chapter is based on the second part of \cite{mauspin}: we investigate spin random fields on the sphere, extending the representation formula for Gaussian isotropic random fields on homogeneous spaces of a compact group in Chapter 2 to the spin case. Moreover we introduce a powerful tool for studying spin random fields and more generally random sections of homogeneous vector bundles, that is, the ``pullback'' random field.

\section{Random sections of vector bundles}
We now investigate the case of Gaussian isotropic spin random fields
on $\cS^2$, with the aim of extending the representation result
of Theorem \ref{real-general}. As stated in the Introduction of this thesis, these models have
received recently much attention (see \cite{bib:LS}, \cite{malyarenko} or \cite{dogiocam}),
being motivated by the modeling of CMB data. Actually our point of view begins from \cite{malyarenko}.

We consider first the case of a general vector bundle. Let $\xi= (E,
p, B)$ be a finite-dimensional \emph{complex vector bundle} on the
topological space $B$, which is called the \emph{base space}. The
surjective map
\begin{equation}
p: E\goto B
\end{equation}
is the
\emph{bundle projection}, $p^{-1}(x)$, $x\in B$ is the {\it fiber}
above $x$.
Let us denote $\B(B)$ the Borel $\sigma$-field of $B$.
A section of $\xi$ is a map $u: B \to E$ such that $p \circ u=id_B$. As $E$ is itself a topological space, we can speak of continuous sections.

We suppose from now on that every fibre $p^{-1}(x)$ carries an inner
product and a measure $\mu$ is given on the base space. Hence we can
consider square integrable sections, as those such that
$$
\int_B\langle u(x),u(x)\rangle_{p^{-1}(x)}\,d\mu(x)<+\infty
$$
and define the corresponding $L^2$ space accordingly.

Let $(\Omega, \F, \P)$ be a probability space.
\begin{definition}\label{definizione di sezione aleatoria}
A \emph{random section $T$
of the vector bundle $\xi$} is a collection of $E$-valued random variables
$(T_x)_{x\in B}$ indexed by the elements of the base space $B$ such that
the map $\Omega \times B \ni(\omega, x)  \mapsto T_x(\omega)$
is $\F \otimes \B(B)$-measurable and, for every $\omega$, the path
$$
B\ni x\to T_x(\omega)\in E
$$
is a section of $\xi$, i.e. $p\circ T_\cdot(\omega) =id_B$.
\end{definition}
Continuity of a random section $T$ is easily defined by requiring that
for every $\omega\in \Omega$ the map $x \mapsto T_x$
is a continuous section of $\xi$. Similarly a.s. continuity is defined.
A random section $T$ of $\xi$ is a.s. square integrable if  the map
$x \mapsto \| T_x (\omega)\|^2 _{p^{-1}(x)}$ is a.s. integrable, it is second order if $\E[ \| T_x \|^2_{p^{-1}(x)}] < +\infty$ for every $x\in B$ and
mean square integrable
if
$$
\E\Bigl[\int_B\| T_x \|^2 _{p^{-1}(x)} \, d\mu(x)\Bigr]< +\infty\ .
$$
As already remarked in $\cite{malyarenko}$, in defining the notion of mean square continuity for
a random section, the naive approach
$$
\lim_{y\to x} \E [\| T_x - T_y \|^2] =0
$$
is not immediately meaningful as $T_x$ and $T_y$ belong to different
(even if possibly isomorphic) spaces (i.e. the fibers).
A similar difficulty arises for the definition of strict sense invariance w.r.t. the action of a topological group on the bundle.
We shall investigate these points below.

A case of particular interest to us are the homogeneous (or twisted) vector bundles. Let $G$ be a compact group, $K$ a closed subgroup and $\X=G/K$.
Given an irreducible unitary representation $\tau$ of $K$ on the complex (finite-dimensional) Hilbert space $H$,
$K$ acts on the Cartesian product $G\times H$ by the action
$$
k(g,z):= (gk, \tau(k^{-1})z)\ .
$$
Let   $G\times_\tau H=\lbrace \theta(g,z) : (g,z) \in G\times H
\rbrace$ denote the quotient space of the orbits $\theta(g,z) =
\lbrace (gk, \tau(k^{-1})z) : k\in K \rbrace$ under the above
action. $G$ acts on $G\times_\tau H$ by
\begin{equation}\label{action}
h \theta(g,z) := \theta(hg,z)\ .
\end{equation}
The map $G\times H \to \X: (g,z)\to gK$ is constant on the
orbits $\theta(g,z)$ and   induces the projection
$$
G\times_\tau H\ni\theta(g,z)\enspace\mathop{\to}^{\pi_\tau\,}\enspace gK\in \X
$$
which is a continuous $G$-equivariant map. $\xi_\tau=
(G\times_\tau H, \pi_\tau, \X)$ is a $G$-vector bundle: it is the \emph{homogeneous vector bundle associated to
the representation $\tau$}. The  fiber
$\pi^{-1}_\tau(x)$ is isomorphic to $H$ for every $x\in \X$ (see
\cite{B-D}). More precisely, for $x\in\X$ the fiber $\pi_\tau^{-1}(x)$
is the set of
elements of the form $\th(g,z)$ such that $gK=x$. We define the scalar
product of two such elements as
\begin{equation}\label{prod scalare}
\langle \th(g,z),\th(g,w)\rangle_{\pi_\tau^{-1}(x)}=\langle z,w\rangle_{H}
\end{equation}
for some fixed $g\in G$ such that $gK=x$, as it is immediate that this
definition does not depend on the choice of such a $g$.
Given a function $f:G \to H$ satisfying
\begin{equation}\label{funzioni di tipo W}
f(gk)=\tau(k^{-1})f(g)\ ,
\end{equation}
then to it we can associate the section of $\xi_\tau$
\begin{equation}\label{proiezione}
u(x)=u(gK)=\theta(g,f(g))
\end{equation}
as again this is a good definition, not depending
of the choice of $g$ in the coset $gK$. The functions $f$ satisfying to
(\ref{funzioni di tipo W}) are called right $K$-covariant functions
of type $\tau$ (\emph{functions of type $\tau$} from now on).

More interestingly, also the converse is true.
\begin{prop}\label{pullback-s-deterministic}
Given a section $u$ of $\xi_\tau$, there exists a unique function
$f$ of type $\tau$ on $G$ such that $u(x)=\theta(g,f(g))$ where
$gK=x$. Moreover $u$ is continuous if and only if
$f:G\to H$ is continuous.
\end{prop}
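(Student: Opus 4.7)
The plan is to first construct $f$ pointwise, check the type-$\tau$ covariance relation, dispose of uniqueness, and then address the continuity equivalence separately in each direction.

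First I would build $f$ fiber by fiber. Fix $g\in G$ and set $x=gK$. Since $u$ is a section, $u(x)\in \pi_\tau^{-1}(x)$, which by definition of the orbit space $G\times_\tau H$ consists of classes $\theta(g',z)$ with $g'K=gK$. Writing any representative $g'=gk$ with $k\in K$, the relation $(gk,z)\sim (g,\tau(k)z)$ shows that there is a \emph{unique} $z^*\in H$ with $u(x)=\theta(g,z^*)$. Set $f(g):=z^*$. This immediately yields uniqueness of $f$: any $f_1$ satisfying $u(gK)=\theta(g,f_1(g))$ would, for each $g$, give $\theta(g,f(g))=\theta(g,f_1(g))$, and intersecting the common orbit with the slice $\{g\}\times H$ forces $f(g)=f_1(g)$.

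Next I would verify \paref{funzioni di tipo W}. For $g\in G$ and $k\in K$ we have $(gk)K=gK$, so $u((gk)K)=u(gK)$. Evaluating the defining identity of $f$ at $gk$ gives $u((gk)K)=\theta(gk,f(gk))$, while evaluating at $g$ and re-expressing with respect to the base point $gk$ yields $\theta(g,f(g))=\theta(gk,\tau(k^{-1})f(g))$. Uniqueness of the $H$-coordinate in a fixed fiber then forces $f(gk)=\tau(k^{-1})f(g)$, which is exactly the type-$\tau$ condition.

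For the continuity equivalence, let $p:G\to\X$ be the canonical projection and $q:G\times H\to G\times_\tau H$ the orbit map; both are continuous and $p$ is a quotient map. If $f$ is continuous, the composition $g\mapsto (g,f(g))\mapsto q(g,f(g))$ is continuous $G\to G\times_\tau H$ and, by construction, equals $u\circ p$; the type-$\tau$ relation makes this map constant on left cosets of $K$, so it descends through the quotient $p$ to a continuous $u$. Conversely, suppose $u$ is continuous. I would use the fact that $p:G\to G/K$ is a locally trivial principal $K$-bundle (a standard property when $G$ is a compact Lie group and $K$ a closed subgroup), so around any $x_0\in\X$ there is a continuous local section $\sigma:V\to G$ with $p\circ\sigma=\mathrm{id}_V$. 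On $V$ one has $u(x)=\theta(\sigma(x),f(\sigma(x)))$, and in the local trivialization of $\xi_\tau$ induced by $\sigma$ (identifying $\pi_\tau^{-1}(V)$ with $V\times H$ via $\theta(\sigma(x),z)\leftrightarrow (x,z)$) the second coordinate of $u$ is precisely $f\circ\sigma$, which is therefore continuous on $V$. The covariance relation then transports continuity from the image $\sigma(V)$ to the full saturated neighborhood $\sigma(V)\cdot K$ in $G$, and since $g_0$ was arbitrary, $f$ is continuous on $G$.

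The main obstacle I anticipate is precisely the direction ``$u$ continuous $\Rightarrow$ $f$ continuous'': it hinges on the existence of continuous local sections of $p:G\to G/K$ and on matching them with a compatible local trivialization of the homogeneous bundle $\xi_\tau$. The other direction, the uniqueness, and the verification of \paref{funzioni di tipo W} are all essentially formal consequences of the orbit-space definition.
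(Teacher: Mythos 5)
Your construction of $f$, the check of the type-$\tau$ relation and the uniqueness are essentially the paper's argument in a slightly cleaner form: the paper fixes a measurable selection $x\mapsto g_x$ with $g_xK=x$, sets $f(g_x):=z$ where $u(x)=\theta(g_x,z)$ and extends along the coset by covariance, which is exactly the fiber-slice observation you use (the $K$-orbit of $(g,z)$ meets $\{g\}\times H$ only at $(g,z)$). The continuity equivalence is where you genuinely diverge. The paper argues globally: it sets $F(g)=(g,f(g))$, notes $pr_2\circ F=u\circ pr_1$ for the canonical projections $pr_1:G\to\X$ and $pr_2:G\times H\to G\times_\tau H$, and deduces the equivalence from these being continuous open quotient maps, with no local sections or trivializations. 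Your forward direction is the same quotient argument; your converse instead takes a continuous local section $\sigma$ of $G\to G/K$, uses the induced local trivialization of $\xi_\tau$ to see that $f\circ\sigma$ is continuous, and then spreads continuity over the saturated set $p^{-1}(V)$ via $f(g)=\tau\bigl(\sigma(p(g))^{-1}g\bigr)^{-1}f(\sigma(p(g)))$. That route is correct and more explicit (it in effect re-proves local triviality of $\xi_\tau$), but it rests on the existence of continuous local cross-sections of $G\to G/K$, which you justify only for compact \emph{Lie} groups, whereas the chapter's stated setting is an arbitrary compact topological group with a closed subgroup; for the paper's actual bundles ($G=SO(3)$, $K\cong SO(2)$) this is harmless, but the paper's global argument is the one matching the stated generality. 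It is fair to add that the paper's converse direction is itself terse: to pass from continuity of $pr_2\circ F$ to that of $F$ one should exploit that the first coordinate of $F(g)$ is $g$ and that $pr_2$ is the quotient by the compact group $K$ acting through the unitary representation $\tau$ (so that, for instance, $(g,z)\mapsto (g,pr_2(g,z))$ is a proper continuous injection, hence an embedding); filling this in needs no local sections, which is precisely what the global approach buys.
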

\begin{proof} Let $(g_x)_{x\in\X}$ be
a measurable selection such that
$g_xK=x$ for every $x\in\X$. If $u(x)=\theta(g_x, z)$, then define
$f(g_x):=z$ and extend the definition to the elements of the coset
$g_xK$ by $f(g_xk):= \tau(k^{-1})z$; it is easy to check that such a
$f$ is of type $\tau$, satisfies (\ref{proiezione}) and is the unique
function of type $\tau$ with this property.

Note that the continuity of $f$ is equivalent to the continuity of
the map
\begin{equation}\label{mappa1}
F: g\in G \to (g,f(g))\in G\times H\ .
\end{equation}
Denote $pr_1: G\to \X$ the canonical projection onto the quotient space $\X$
and $pr_2: G\times H \to G\times_\tau H$ the canonical projection
onto the quotient space $G\times_\tau H$. It is immediate that
\begin{equation*}\label{mappa2}
pr_2 \circ F = u \circ pr_1\ .
\end{equation*}
Therefore $F$ is continuous if and only if $u$ is continuous,
the projections $pr_1$ and $pr_2$ being continuous open mappings.

\end{proof}
We shall again call $f$ the \emph {pullback} of $u$.
Remark that, given two sections $u_1, u_2$ of $\xi_\tau$ and their respective pullbacks $f_1,f_2$, we have
\begin{equation}\label{prod_scalare}
\langle u_1, u_2 \rangle := \int_\X \langle u_1(x),
u_2(x)\rangle_{\pi_\tau^{-1}(x)}\,dx=
\int_G \langle f_1(g),f_2(g)\rangle_H\,dg
\end{equation}
so that $u\longleftrightarrow f$ is an isometry between the space $L^2(\xi_\tau)$ of
the square integrable sections of $\xi_\tau$ and the space $L^2_\tau(G)$ of the square
integrable functions of type $\tau$.

The left regular action of $G$ on $L^2_\tau(G)$ (also called the
\emph{representation of $G$ induced by $\tau$})
$$
L_h f(g) := f(h^{-1}g)
$$
can be equivalently realized on $L^2(\xi_\tau)$ by
\begin{equation}\label{indotta}
 U_h u(x) = h u(h^{-1}x)\ .
\end{equation}
We have
$$
U_h u(gK) = h u(h^{-1}gK) = h \theta( h^{-1} g, f(h^{-1}g)) =
\theta(g, f(h^{-1}g)) = \theta(g, L_h f(g))
$$
so that, thanks to the uniqueness of the pullback function:
\begin{prop}\label{action-pullback}
If $f$ is the pullback function of the section $u$ then $L_hf$ is
the pullback of the section $U_hu$.
\end{prop}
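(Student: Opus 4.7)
The proof is essentially contained in the display immediately preceding the statement, so my plan is to formalize that chain of equalities and appeal to the uniqueness part of Proposition \ref{pullback-s-deterministic}. I would proceed in the following steps.

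First, I would verify that $L_h f$ is itself a function of type $\tau$, so that it is a legitimate candidate to be the pullback of some section of $\xi_\tau$. This is a one-line check: for $g\in G$ and $k\in K$,
$$
L_h f(gk) = f(h^{-1}gk) = \tau(k^{-1}) f(h^{-1}g) = \tau(k^{-1}) L_h f(g),
$$
using that $f$ satisfies \eqref{funzioni di tipo W}. Thus $L_h f\in L^2_\tau(G)$, and by Proposition \ref{pullback-s-deterministic} there is a unique section of $\xi_\tau$ whose pullback is $L_h f$.

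Next, I would compute the section $U_h u$ fiber by fiber, writing an arbitrary $x\in\X$ as $x=gK$. By the definition \eqref{indotta} of the induced action, followed by the pullback identity \eqref{proiezione} applied to $u$ at the point $h^{-1}gK$, and finally by the defining action \eqref{action} of $G$ on $G\times_\tau H$,
$$
U_h u(gK) \;=\; h\, u(h^{-1}gK) \;=\; h\,\theta\bigl(h^{-1}g,\, f(h^{-1}g)\bigr) \;=\; \theta\bigl(g,\, f(h^{-1}g)\bigr) \;=\; \theta\bigl(g,\, L_h f(g)\bigr).
$$
This exhibits $U_h u$ as the section associated to $L_h f$ via the recipe \eqref{proiezione}.

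Finally, I would invoke the uniqueness of the pullback function in Proposition \ref{pullback-s-deterministic}: since $U_h u$ is a section of $\xi_\tau$, it admits a unique pullback function of type $\tau$, and we have just shown that $L_h f$ does the job. Hence $L_h f$ is the pullback of $U_h u$. There is essentially no obstacle here — the only subtlety is to make sure that the ``candidate'' function $L_h f$ is genuinely of type $\tau$ (so that Proposition \ref{pullback-s-deterministic} applies), and that the computation of $U_h u(gK)$ does not depend on the representative $g$ of the coset $gK$, which follows from the same argument that made \eqref{proiezione} well defined.
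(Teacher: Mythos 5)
Your proposal is correct and follows essentially the same route as the paper: the chain of equalities $U_h u(gK) = h\,u(h^{-1}gK) = h\,\theta(h^{-1}g, f(h^{-1}g)) = \theta(g, L_h f(g))$ together with the uniqueness of the pullback from Proposition \ref{pullback-s-deterministic} is exactly the paper's argument. The added verification that $L_h f$ is of type $\tau$ is a harmless (and welcome) explicit check that the paper leaves implicit.
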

Let  $T=(T_x)_{x\in \X}$ be
a random section of the homogeneous vector bundle $\xi_\tau$. As, for
fixed $\omega$, $x\mapsto T_x(\omega)$ is a section of $\xi_\tau$, by Proposition
\ref{pullback-s-deterministic} there
exists a unique function $g\mapsto X_g(\omega)$ of type $\tau$ such that
$T_{gK}(\omega)=\theta(g, X_g(\omega))$. We refer to the
random field $X=(X_g)_{g\in G}$ as the \emph{pullback random field
of $T$}. It is a random field on $G$ of type $\tau$, i.e.  $X_{gk}(\omega)=\tau(k^{-1})X_g(\omega)$ for each $\omega$.
Conversely every random field $X$ on $G$ of type $\tau$ uniquely defines a
random section of $\xi_\tau$ whose pullback random field is $X$. It is immediate that
\begin{prop}\label{prop-pull1}
Let $T$ be a random section of $\xi_\tau$.

a) $T$  is  a.s. square integrable if and only if its pullback random field $X$ is a.s. square
integrable.

b) $T$ is second order if and only if its pullback random field $X$ is second order.

c) $T$ is a.s. continuous if and only if its pullback random field $X$ is a.s. continuous.
\end{prop}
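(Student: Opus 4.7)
The proposition is essentially pathwise in nature: for each fixed $\omega\in\Omega$, the section $x\mapsto T_x(\omega)$ has pullback function $g\mapsto X_g(\omega)$, and all three properties reduce to identities or equivalences already established at the deterministic level in Proposition \ref{pullback-s-deterministic} and in the isometry \eqref{prod_scalare}. My plan is therefore to fix $\omega$ and transfer each property from the section to its pullback (and back).

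For part (a), I would start from the fundamental identity $\|T_{gK}(\omega)\|^2_{\pi_\tau^{-1}(gK)} = \|\theta(g,X_g(\omega))\|^2_{\pi_\tau^{-1}(gK)} = \|X_g(\omega)\|^2_{H}$, which is a direct consequence of the definition \eqref{prod scalare} of the inner product on the fibre. Applied with $u_1=u_2=T(\omega)$ and $f_1=f_2=X(\omega)$, the isometry \eqref{prod_scalare} yields
\[
\int_\X \|T_x(\omega)\|^2_{\pi_\tau^{-1}(x)}\,dx = \int_G \|X_g(\omega)\|^2_H\,dg
\]
for every $\omega$, so finiteness of one integral is equivalent to finiteness of the other. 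Taking the $\P$-null set outside which the common value is finite proves the equivalence.

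For part (b), I would again use the pointwise identity $\E[\|T_{gK}\|^2_{\pi_\tau^{-1}(gK)}] = \E[\|X_g\|^2_H]$. Since the map $G\to\X$, $g\mapsto gK$ is surjective, the left-hand side is finite for every $x\in\X$ if and only if the right-hand side is finite for every $g\in G$, which is exactly the second-order property for $T$ and for $X$ respectively.

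For part (c), the key observation is that the deterministic statement in Proposition \ref{pullback-s-deterministic} was proved by identifying continuity of the pullback $f$ with continuity of the composite map $F:g\mapsto(g,f(g))$, and using that the canonical projections $G\to\X$ and $G\times H\to G\times_\tau H$ are continuous open surjections. Applying this equivalence pathwise, for each $\omega$ the sample path $x\mapsto T_x(\omega)$ is continuous if and only if $g\mapsto X_g(\omega)$ is continuous. Hence the exceptional $\P$-null sets coincide and a.s.\ continuity is equivalent in both formulations. I do not foresee any serious obstacle: the only point to keep in mind is to check that no measurability issue arises when passing from the deterministic to the random setting, but this is automatic since the pullback $X_g(\omega)=f_\omega(g)$ is obtained from $T(\omega)$ by the measurable selection $g\mapsto g_x$ used in the proof of Proposition \ref{pullback-s-deterministic}.
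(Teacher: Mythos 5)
Your proof is correct and is exactly the pathwise argument the paper leaves implicit when it declares the proposition ``immediate'': the fibre-norm identity $\|T_{gK}(\omega)\|^2_{\pi_\tau^{-1}(gK)}=\|X_g(\omega)\|^2_H$ coming from \paref{prod scalare}, the resulting isometry \paref{prod_scalare} for parts a) and b), and Proposition \ref{pullback-s-deterministic} applied for each fixed $\omega$ for part c). No gaps; the measurability remark is also handled correctly, since the paper has already established that the pullback $X$ is a bona fide random field.
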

Proposition \ref{prop-pull1} introduces the fact that many properties of random sections of
the homogeneous bundle can be stated or investigated through corresponding properties of
their pullbacks, which are just ordinary random fields to whom the results of previous
sections can be applied. A first instance is the following definition.
\begin{definition}\label{definizione di continuita in media quadratica}
The random section $T$ of the homogeneous vector bundle $\xi_\tau$ is
said to be \emph{mean square continuous} if its  pullback  random
field $X$ is mean square continuous, i.e.,
\begin{equation}
\lim_{h\to g} \E [ \| X_h - X_g \|_H^2 ]=0\ .
\end{equation}
\end{definition}
Recalling Definition \ref{invarian}, we state now the definition of
strict-sense invariance.
Let $T$ be an a.s. square integrable random section of
$\xi_\tau$.
For every $g\in G$, the ``rotated'' random section
$T^g$ is defined as
\begin{equation}
T^g_x(\cdot):= g^{-1} T_{gx}(\cdot)
\end{equation}
which is still an a.s. square integrable random section of $\xi_\tau$. For any square integrable
section $u$ of $\xi_\tau$, let
\begin{equation}
T(u):= \int_{\X} \langle T_x, u(x)\rangle_{\pi^{-1}(x)}\,dx\ .
\end{equation}
\begin{definition}\label{isotropia per sezioni aleatorie}
Let $T$ be an a.s. square integrable random section of the homogeneous vector bundle
$\xi_\tau$. It is said to be \emph{(strict-sense) $G$-invariant}
or \emph{isotropic} if and only if
for every choice of square integrable sections
$u_1, u_2, \dots, u_m$ of $\xi_\tau$, the random vectors
\begin{equation}\label{= in legge}
\bigl( T(u_1), \dots, T(u_m) \bigr)\quad
\mbox{and} \quad\bigl( T^g(u_1), \dots, T^g(u_m) \bigr)=\bigl( T( U_g u_1),  \dots, T( U_g u_m) \bigr)
\end{equation}
have the same law for every $g\in G$.
\end{definition}
\begin{prop}\label{pullback-invariant}
Let $T$ be an a.s. square integrable random section of $\xi_\tau$ and let
$X$ be its pullback random field on $G$. Then $X$ is isotropic
if and only if $T$ is an isotropic random section.
\end{prop}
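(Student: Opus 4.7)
The plan is to translate the isotropy identity for $T$ directly into one for $X$ and vice versa, using the fact that the pullback construction $u\mapsto f$ is an isometry between $L^2(\xi_\tau)$ and the subspace $L^2_\tau(G)\subset L^2(G,H)$ of functions of type $\tau$, and that it intertwines the left regular actions $U_g$ and $L_g$ (Proposition \ref{action-pullback}).

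First I would prove the basic identity, for any section $u$ of $\xi_\tau$ with pullback $f$,
\begin{equation}\label{e:tux}
T(u)=\int_\X\langle T_x,u(x)\rangle_{\pi_\tau^{-1}(x)}\,dx
=\int_G\langle X_g,f(g)\rangle_H\,dg=:X(f)\ ,
\end{equation}
which follows pathwise from the isometry \paref{prod_scalare} (the r.h.s. of \paref{prod_scalare} extends to $H$-valued functions via Fubini since $H$ is finite-dimensional). Combined with Proposition \ref{action-pullback}, \paref{e:tux} gives
\begin{equation}\label{e:intert}
T(U_gu)=X(L_gf),\qquad g\in G,
\end{equation}
for every section $u$ with pullback $f$.

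From \paref{e:intert} the implication ``$X$ isotropic $\Rightarrow$ $T$ isotropic'' is immediate: given $u_1,\dots,u_m\in L^2(\xi_\tau)$ with pullbacks $f_1,\dots,f_m\in L^2_\tau(G)\subset L^2(G,H)$, the two random vectors in Definition \ref{isotropia per sezioni aleatorie} coincide with $(X(f_1),\dots,X(f_m))$ and $(X(L_gf_1),\dots,X(L_gf_m))$ respectively, which are equidistributed by isotropy of $X$.

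For the converse, the slightly more delicate point is that isotropy of $X$ as a random field on $G$ (Definition \ref{invarian}, extended componentwise to $H$-valued fields) requires the invariance identity to hold for arbitrary $\phi_1,\dots,\phi_m\in L^2(G,H)$, not only for those of type $\tau$. The key observation is that since the paths of $X$ satisfy $X_{gk}=\tau(k^{-1})X_g$ a.s., only the ``$\tau$-projection'' of $\phi$ contributes to $X(\phi)$. Concretely, using \paref{int-rule} to write
$$
X(\phi)=\int_\X\Bigl(\int_K\langle X_{gk},\phi(gk)\rangle_H\,dk\Bigr)dx
=\int_\X\langle X_g,P\phi(g)\rangle_H\,dx\ ,
$$
where $P\phi(g):=\int_K\tau(k)\phi(gk)\,dk$ is of type $\tau$, one obtains $X(\phi)=X(P\phi)$; moreover the bi-invariance of the Haar measure of $G$ yields $L_g\circ P=P\circ L_g$, so $X(L_g\phi)=X(P(L_g\phi))=X(L_g(P\phi))$. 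By the first step $P\phi_i$ is the pullback of a unique section $u_i$ and $X(P\phi_i)=T(u_i)$, $X(L_gP\phi_i)=T(U_gu_i)$; the assumed isotropy of $T$ then gives equidistribution of $(X(\phi_1),\dots,X(\phi_m))$ and $(X(L_g\phi_1),\dots,X(L_g\phi_m))$, proving isotropy of $X$. The main obstacle is to justify the exchange of integrals and pathwise manipulations in the reduction $X(\phi)=X(P\phi)$, which requires a measurable selection $x\mapsto g_x$ and a standard Fubini argument under the a.s. square integrability of $T$ (equivalently of $X$, by Proposition \ref{prop-pull1}).
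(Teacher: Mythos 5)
Your proof is correct and takes essentially the same route as the paper: both directions rest on the isometry \paref{prod_scalare} together with Proposition \ref{action-pullback}, and the converse is obtained in both cases by reducing a general test function to its component in $L^2_\tau(G)$ — you realize this reduction concretely through the averaging operator $P\phi(g)=\int_K\tau(k)\phi(gk)\,dk$ (which is precisely the orthogonal projection onto $L^2_\tau(G)$), while the paper writes $\phi=f^{(1)}+f^{(2)}$ with $f^{(2)}\perp L^2_\tau(G)$ and uses that the paths of $X$ lie in $L^2_\tau(G)$ together with the invariance of this subspace under the left regular representation. Only a cosmetic remark: the commutation $L_g\circ P=P\circ L_g$ holds simply because left translation and right averaging over $K$ act on opposite sides, so no appeal to bi-invariance of the Haar measure of $G$ is needed.
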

\begin{proof}
Let us denote $X(f) := \int_G \langle X_g, f(g) \rangle_H\,dg$.
Thanks to Proposition \ref{action-pullback} the equality in law (\ref{= in legge}) is equivalent to the requirement that
for every choice of square integrable functions $f_1, f_2, \dots, f_m$ of type $\tau$ (i.e. the pullbacks of corresponding sections of $\xi_\tau$), the random vectors
\begin{equation}\label{pullback invariante}
( X(f_1), \dots, X(f_m) )\quad \mbox{and}\quad( X(L_g f_1), \dots, X(L_g  f_m) )
\end{equation}
have the same law for every $g\in G$.
As $L^2_\tau(G)$ is a closed subspace of $L^2(G)$ and is invariant under the left regular
representation of $G$, every square integrable function $f:G\to H$
can be written as the sum $f^{(1)}+ f^{(2)}$
with $f^{(1)}\in L^2_\tau(G)$, $ f^{(2)}\in L^2_\tau(G)^{\bot}$. As the paths of the random field $X$ are of type $\tau$ we have $X(f^{(2)})=X(L_h f^{(2)})=0$ for every $h\in G$ so that
\begin{equation}
X(f)=X(f^{(1)})\quad  \mbox{and}\quad  X(L_h f) = X(L_h f^{(1)})\ .
\end{equation}
Therefore (\ref{pullback invariante}) implies that, for every choice $f_1, f_2, \dots, f_m$
of square integrable $H$-valued functions on $G$, the random vectors
\begin{equation}
( X(L_g f_1),  \dots, X(L_g f_m) )\quad  \mbox{and} \quad  ( X(f_1),  \dots, X(f_m) )
\end{equation}
have the same law for every $h\in G$ so that the pullback random field $X$ is a strict-sense isotropic random field on $G$.

\end{proof}
As a consequence of Proposition \ref{Mean square continuity of invariant}
 we have
\begin{cor} Every a.s.
square integrable, second order and isotropic random section $T$ of
the homogeneous vector bundle $\xi_\tau$ is mean square
continuous.
\end{cor}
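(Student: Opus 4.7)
The plan is to reduce the claim to a direct application of Proposition \ref{Mean square continuity of invariant} via the pullback formalism already set up. By Definition \ref{definizione di continuita in media quadratica}, it suffices to prove that the pullback random field $X=(X_g)_{g\in G}$ of $T$ satisfies
\[
\lim_{h\to g} \E\bigl[\|X_h-X_g\|_H^2\bigr]=0.
\]
The hypotheses on $T$ are transferred to $X$ by previously established results: Proposition \ref{prop-pull1} gives that $X$ is a.s. square integrable and second order, and Proposition \ref{pullback-invariant} gives that $X$ is strict-sense isotropic on $G$ (viewed as a homogeneous space of itself under left multiplication).

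Next I would apply Proposition \ref{Mean square continuity of invariant}, which yields mean square continuity for second order, isotropic, a.s.\ square integrable random fields on homogeneous spaces of a compact group. The only delicate point is that Proposition \ref{Mean square continuity of invariant} is stated for scalar complex-valued random fields, whereas $X$ takes values in the finite-dimensional Hilbert space $H$ of the representation $\tau$. I would fix an orthonormal basis $v_1,\dots,v_{\dim H}$ of $H$ and consider the scalar components
\[
X^{(i)}_g:=\langle X_g,v_i\rangle_H,\qquad i=1,\dots,\dim H.
\]
Isotropy of $X$ immediately implies that each $X^{(i)}$ is a scalar isotropic random field (being the image of $X$ under a continuous linear functional commuting with the evaluation at translated points), and it inherits second order and a.s.\ square integrability from $X$.

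Applying Proposition \ref{Mean square continuity of invariant} to each $X^{(i)}$ gives $\lim_{h\to g}\E[|X^{(i)}_h-X^{(i)}_g|^2]=0$, and summing over $i$ (which is a finite sum because $\dim H<+\infty$) yields
\[
\lim_{h\to g}\E\bigl[\|X_h-X_g\|_H^2\bigr]=\sum_{i=1}^{\dim H}\lim_{h\to g}\E\bigl[|X^{(i)}_h-X^{(i)}_g|^2\bigr]=0,
\]
which is the mean square continuity of $X$, and hence of $T$ by Definition \ref{definizione di continuita in media quadratica}. The main (minor) obstacle is the bookkeeping needed to pass from the scalar statement of Proposition \ref{Mean square continuity of invariant} to the $H$-valued pullback; everything else is a direct invocation of the structural results for pullback fields established earlier in this section.
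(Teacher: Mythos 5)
Your proof is correct and follows essentially the same route as the paper: the corollary is obtained by transferring the hypotheses to the pullback field via Propositions \ref{prop-pull1} and \ref{pullback-invariant} and then invoking Proposition \ref{Mean square continuity of invariant}, with mean square continuity of $T$ being, by Definition \ref{definizione di continuita in media quadratica}, exactly that of its pullback. Your componentwise reduction of the $H$-valued field to finitely many scalar isotropic fields is a harmless piece of bookkeeping that the paper leaves implicit.
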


In order to make a comparison with the pullback approach developed above, we briefly recall
the approach to the theory of random fields in vector bundles introduced by Malyarenko in \cite{malyarenko}.
The main tool is the
scalar random field associated to the random section $T$ of $\xi=(E,p,B)$.
More precisely, it is the complex-valued random field $T^{sc}$ indexed by the elements $\eta\in E$ given by
\begin{equation}\label{scalar random field}
T^{sc}_{\eta} := \langle \eta, T_{b} \rangle_{p^{-1}(b)}, \; b\in B, \eta\in p^{-1}(b)\ .
\end{equation}
$T^{sc}$ is a scalar random field on $E$ and we can give the definition that $T$ is mean square continuous
if and only if $T^{sc}$  is mean square continuous, i.e., if the map
\begin{equation}
E \ni \eta \mapsto T^{sc}_{\eta}\in L_\C^2(\P)
\end{equation}
is continuous.
Given a  topological group $G$ acting with a continuous action
$(g,x)\mapsto gx, g\in G$ on the base space
$B$, an action of $G$ on $E$ is called associated if its restriction to any fiber $p^{-1}(x)$ is a linear isometry between
$p^{-1}(x)$ and $p^{-1}(gx)$.  In our case of interest, i.e. the homogeneous vector
bundles $\xi_\tau=(G\times_\tau H, \pi_\tau, \X)$, we can consider the action defined in \paref{action} which is obviously associated. We can now define that $T$ is strict sense $G$-invariant w.r.t. the action of $G$ on $B$ if the finite-dimensional distributions of $T^{sc}$ are invariant under the associated action \paref{action}. In the next statement we prove the equivalence of the two approaches.
\begin{prop}\label{noimal} The square integrable random section $T$ of the homogeneous bundle $\xi_\tau$ is mean square continuous (i.e. its pullback random field on $G$ is mean square continuous)  if and only if the associated scalar random field $T^{sc}$ is mean square continuous. Moreover if $T$ is a.s. continuous then it is isotropic if and only if the associated scalar random field $T^{sc}$ is $G$-invariant.
\end{prop}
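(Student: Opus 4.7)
The starting point is the identity expressing the scalar random field in terms of the pullback: if $\eta=\theta(g,z)\in E$, then by \paref{prod scalare}
$$
T^{sc}_{\eta}=\langle\theta(g,z),T_{gK}\rangle_{\pi_\tau^{-1}(gK)}
=\langle\theta(g,z),\theta(g,X_g)\rangle_{\pi_\tau^{-1}(gK)}
=\langle z,X_g\rangle_H,
$$
which is well-defined independently of the representative since $\tau$ is unitary:
$\langle\tau(k^{-1})z,X_{gk}\rangle_H=\langle\tau(k^{-1})z,\tau(k^{-1})X_g\rangle_H=\langle z,X_g\rangle_H$. All the equivalences will be derived from this single identity.

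For the mean square continuity part, suppose first that $X$ is m.s.\ continuous (i.e.\ $T$ is m.s.\ continuous in the sense of Definition \ref{definizione di continuita in media quadratica}) and let $\eta_n\to\eta$ in $E$. Since the canonical projection $G\times H\to G\times_\tau H$ is a principal $K$-bundle which admits continuous local sections, one can choose representatives $\eta_n=\theta(g_n,z_n)$ with $g_n\to g$ and $z_n\to z$, where $\eta=\theta(g,z)$. Splitting
$T^{sc}_{\eta_n}-T^{sc}_{\eta}=\langle z_n-z,X_{g_n}\rangle+\langle z,X_{g_n}-X_g\rangle$
and applying the Cauchy--Schwarz inequality,
$$
\E[|T^{sc}_{\eta_n}-T^{sc}_{\eta}|^2]
\le 2\|z_n-z\|^2\,\E[\|X_{g_n}\|^2]+2\|z\|^2\,\E[\|X_{g_n}-X_g\|^2],
$$
and the right hand side vanishes as $n\to\infty$ (the map $g\mapsto \E[\|X_g\|^2]$ being continuous hence locally bounded, by a standard Cauchy--Schwarz argument applied to the inner product $\E[\langle X_g,X_h\rangle]$). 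Conversely, assume $T^{sc}$ is m.s.\ continuous. Fix an orthonormal basis $e_1,\dots,e_d$ of $H$, and for $h\to g$ in $G$ write
$$
\E[\|X_h-X_g\|_H^2]=\sum_{i=1}^d\E\bigl[|T^{sc}_{\theta(h,e_i)}-T^{sc}_{\theta(g,e_i)}|^2\bigr];
$$
since the quotient map is continuous, $\theta(h,e_i)\to\theta(g,e_i)$ in $E$ and each term tends to $0$.

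For the isotropy part, assume $T$ is a.s.\ continuous, so that $X$ is a.s.\ continuous on $G$ by Proposition \ref{prop-pull1}(c); by the remark following Definition \ref{invarian}, strict sense invariance of $X$ then coincides with invariance of finite dimensional distributions. By Proposition \ref{pullback-invariant}, $T$ is isotropic iff $X$ is isotropic, i.e.\ iff for every $g,g_1,\dots,g_m\in G$
$$
(X_{gg_1},\dots,X_{gg_m})\mathop{=}^{\mathcal L}(X_{g_1},\dots,X_{g_m}).\qquad(\ast)
$$
On the other hand, since the associated action satisfies $g\cdot\theta(g_i,z_i)=\theta(gg_i,z_i)$ by \paref{action}, $G$-invariance of the finite dimensional distributions of $T^{sc}$ is, via the key identity, equivalent to
$$
\bigl(\langle z_1,X_{gg_1}\rangle,\dots,\langle z_m,X_{gg_m}\rangle\bigr)
\mathop{=}^{\mathcal L}
\bigl(\langle z_1,X_{g_1}\rangle,\dots,\langle z_m,X_{g_m}\rangle\bigr)\qquad(\ast\ast)
$$
for all $g,g_i\in G$, $z_i\in H$. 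Clearly $(\ast)\Rightarrow(\ast\ast)$. For the converse, $(\ast\ast)$ gives equality of the characteristic functions
$\E\bigl[\exp\bigl(i\sum_i\langle z_i,X_{g_i}\rangle\bigr)\bigr]
=\E\bigl[\exp\bigl(i\sum_i\langle z_i,X_{gg_i}\rangle\bigr)\bigr]$
for every $(z_1,\dots,z_m)\in H^m$; since every continuous linear functional on $H^m$ is of the form $(y_1,\dots,y_m)\mapsto\sum_i\langle z_i,y_i\rangle$, this determines the joint law of $(X_{g_1},\dots,X_{g_m})$ as an $H^m$-valued random vector and $(\ast)$ follows.

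The main obstacle will be the topological care needed in the forward direction of the m.s.\ continuity: one must exploit the local triviality of the principal $K$-bundle $G\times H\to G\times_\tau H$ in order to lift a convergent sequence in $E$ to a convergent sequence in $G\times H$, so that the Cauchy--Schwarz estimate above can be applied.
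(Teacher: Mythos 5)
Your argument is correct in substance and, for the backward mean-square implication (the orthonormal-basis decomposition of $\E[\|X_h-X_g\|_H^2]$) and for the isotropy equivalence (reduction to finite-dimensional distributions via a.s.\ continuity, Proposition \ref{prop-pull1} and Proposition \ref{pullback-invariant}), it is essentially the paper's proof. The genuine difference is in the forward mean-square implication. The paper never lifts points of $G\times_\tau H$: it introduces the auxiliary scalar field $X^{sc}_{(g,z)}:=\langle z,X_g\rangle_H$ on $G\times H$, observes that $T^{sc}\circ pr=X^{sc}$, and uses only that $pr$ is a continuous open (quotient) map, so that continuity of $\eta\mapsto T^{sc}_\eta$ into $L^2_\C(\P)$ is equivalent to continuity of $(g,z)\mapsto X^{sc}_{(g,z)}$ on $G\times H$, which is then verified by exactly your Cauchy--Schwarz estimate. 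You instead lift convergent sequences of $G\times_\tau H$ through the principal $K$-bundle $G\times H\to G\times_\tau H$; this gives a direct verification downstairs, but it relies on two facts the paper's route does not need: the existence of continuous local sections of that bundle (automatic when $K$ is a Lie group, e.g.\ $SO(2)$ in the spin case, but not guaranteed for an arbitrary closed subgroup of a general compact topological group, which is the generality in which the proposition is stated), and first countability of the base, without which sequential continuity does not imply continuity. In the intended applications this is harmless, but to get the statement in full generality you should replace the lifting step by the quotient-topology argument. Finally, in the isotropy part your ``characteristic function'' $\E[\exp(i\sum_i\langle z_i,X_{g_i}\rangle)]$ has a complex exponent; the clean way to close that step is to apply Cram\'er--Wold to the real functionals $y\mapsto\mathrm{Re}\,\langle z,y\rangle$, or simply to observe that the components of $X_g$ in an orthonormal basis of $H$ are (conjugates of) $X^{sc}_{(g,e_i)}$, so invariance of the finite-dimensional distributions of $X^{sc}$ transfers directly to $X$ --- a detail the paper itself dismisses as obvious.
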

\begin{proof}
Let $X$ be the pullback random field of $T$. Consider the scalar random field on $G\times H$ defined as $X^{sc}_{(g,z)} := \langle z, X_g \rangle_H$. Let us denote $pr$ the projection $G\times H\to G\times_\tau H$: keeping in mind (\ref{prod scalare}) we have
\begin{equation}\label{2=}
T^{sc} \circ pr =X^{sc}\ ,
\end{equation}
i.e.
$$
T^{sc}_{\th(g,z)} (\omega) = X^{sc}_{(g,z)}(\omega)
$$
for every $(g,z)\in G\times H$, $\omega\in \Omega$.
Therefore the map $G\times_\tau H \ni \theta(g,z) \mapsto T^{sc}_{\theta(g,z)}\in L^2_\C(P)$ is continuous if and only if the map $G\times H \ni (g,z) \mapsto X^{sc}_{(g,z)}\in L^2_\C(P)$ is continuous, the projection $pr$ being open and continuous.
Let us  show that the continuity of the latter map is equivalent to the mean square continuity of the pullback random field $X$, which will allow to conclude.
The proof of this equivalence is inspired by the one of a similar statement in \cite{malyarenko}, $\S 2.2$.

Actually consider an orthonormal basis $\lbrace v_1, v_2, \dots, v_{\dim\tau} \rbrace$ of $H$, and
denote $X^i=\langle X,v_i\rangle$ the $i$-th component of $X$ w.r.t. the above basis. Assume that the map $G\times H \ni (g,z) \mapsto X^{sc}_{(g,z)}\in L^2_\C(P)$ is continuous, then the random field on $G$
$$
g\mapsto   X^{sc}_{(g,v_i)}=\overline{ X_g ^i}
$$
is continuous for every $i=1, \dots, \dim\tau$.
As $\E[| \overline{X_g^i}-\overline{X_h^i}|^2]=\E [| X_g^i - X_h^i|^2]$,
$$
\lim_{h\to g} \E [\| X_g - X_h \|_H^2] =\lim_{h\to g} \sum_{i=1}^{\dim\tau}
\E [| X_g^i - X_h^i|^2] = 0\ .
$$
Suppose that the pullback random field $X$ is mean square continuous.
Then for each $i=1, \dots, \dim\tau$
$$
\dlines{
0 \le \limsup_{h\to g} \E[ | X_g^i - X_h^i|^2] \le \lim_{h\to g} \E[ \| X_g - X_h \|^2_H ]= 0
}$$
so that the maps $G\ni g\mapsto    X_g ^i \in L^2_\C(\P)$ are continuous.
Therefore
$$\dlines{
\lim_{(h,w) \to (g,z)} \E [| X^{sc}_{(h,w)} - X^{sc}_{(g,z)} |^2] \le 2 \sum_{i=1}^{\dim\tau} \lim_{(h,w) \to (g,z)} \E[|w_i X_h^i - z_i X_g^i |^2] = 0\ ,
}$$
$a_i$ denoting the $i$-th component of $a\in H$.

Assume that $T$ is a.s. continuous and let us show that it is  isotropic if and only if the associated scalar random field $T^{sc}$ is $G$-invariant.
Note first that, by \paref{2=} and $(T^{sc})^h=(X^{sc})^h\circ pr$ for any $h\in G$,  $T^{sc}$ is $G$-invariant if and only if  $X^{sc}$ is $G$-invariant.
Actually if the random fields $X^{sc}$ and its rotated $(X^{sc})^h$
have the same law, then $T^{sc} \mathop{=}^{law} X^{sc}$ and
vice versa.
Now recalling the definition of $X^{sc}$, it is obvious that  $X^{sc}$ is $G$-invariant
if and only if $X$ is isotropic.

\end{proof}
\section{Random sections of the homogeneous line bundles on $\cS^2$}
We now concentrate on the case of the homogeneous line bundles on
$\X=\cS^2$ with $G=SO(3)$ and $K\cong SO(2)$.  For every character $\chi_s$ of $K$, $s\in\Z$, let $\xi_s$ be the corresponding homogeneous vector bundle on $\cS^2$, as
explained in the previous section.
Given the action of $K$ on $SO(3)\times \mathbb{C}$:
$k(g,z)=(gk, \chi_s(k^{-1})z)$, $k\in K$, let $\mE_s:=SO(3)\times_s\C$ be the space of the orbits
$\mE_s=\lbrace \theta(g,z), (g,z)\in G\times \mathbb{C}\rbrace$
where $\theta(g,z) = \lbrace (gk, \chi_s(k^{-1})z); k\in K \rbrace$.
If  $\pi_s:  \mE_s \ni\theta(g,z)\to gK\in \cS^2$,
$\xi_s=(\mE_s, \pi_s, \cS^2)$ is an homogeneous line bundle (each fiber $\pi_s^{-1}(x)$ is isomorphic  to $\C$ as a vector space).

The space $L^2(\xi_s)$ of the square integrable sections of
$\xi_s$ is therefore isomorphic to the space $L^2_s(SO(3))$ of the
square integrable \emph{functions of type $s$}, i.e. such that, for every $g\in G$ and $k \in K$,
\begin{equation}
f(gk)=\chi_s(k^{-1})f(g)=\overline{\chi_s(k)}f(g)\ .
\end{equation}
Let us investigate the Fourier expansion of a function of type $s$.

\begin{prop}\label{infinite-linear}
Every function of type $s$ is an infinite linear combination of the
functions appearing in the $(-s)$-columns of Wigner's $D$ matrices
$D^\ell$, $\ell \ge |s|$. In particular functions of type $s$ and type
$s'$ are orthogonal if $s\not=s'$.

\end{prop}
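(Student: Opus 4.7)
The plan is to use the Peter-Weyl expansion \paref{PW SO(3)} together with the transformation rule \paref{prop fnz di Wigner} to isolate which Fourier coefficients of a type-$s$ function can be non-zero.

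First I would take an arbitrary $f\in L^2_s(SO(3))$ and write down its Peter-Weyl development
$$
f(g)=\sum_{\ell\ge 0}\sqrt{2\ell+1}\sum_{m,n=-\ell}^{\ell}\widehat f(\ell)_{n,m}D^\ell_{m,n}(g),
$$
with convergence in $L^2(SO(3))$. Using the chosen basis $v_{-\ell},\dots,v_{\ell}$ of $H_\ell$ in which \paref{restrizione} holds, one has $D^\ell_{m,n}(k)=\chi_n(k)\,\delta_{m,n}$ for every $k\in K$, and hence
$$
D^\ell_{m,n}(gk)=\sum_{p=-\ell}^{\ell}D^\ell_{m,p}(g)D^\ell_{p,n}(k)=D^\ell_{m,n}(g)\,\chi_n(k),
$$
which is exactly the special case of \paref{prop fnz di Wigner} with $k_1=e$.

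Next I would compare the two expressions of $f(gk)$. On one hand the type-$s$ condition gives $f(gk)=\overline{\chi_s(k)}f(g)=\chi_{-s}(k)f(g)$, and substituting the Peter-Weyl series yields one development. On the other hand, substituting $D^\ell_{m,n}(gk)=D^\ell_{m,n}(g)\chi_n(k)$ into the Peter-Weyl series for $f$ yields another. By the uniqueness of the Peter-Weyl coefficients (equivalently by integrating against $D^\ell_{m,n}$ and invoking Schur orthogonality), I would equate the two and use the orthogonality of the characters $\{\chi_n\}_{n\in\Z}$ of $K$: coefficients surviving the equality must satisfy $\chi_n\equiv\chi_{-s}$, i.e.\ $n=-s$. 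Since the admissible range is $|n|\le\ell$, only multipoles $\ell\ge|s|$ contribute, and inside each such $\ell$ only the column $n=-s$ is non-zero. This proves the first claim, i.e.
$$
f=\sum_{\ell\ge|s|}\sqrt{2\ell+1}\sum_{m=-\ell}^{\ell}\widehat f(\ell)_{-s,m}\,D^\ell_{m,-s}.
$$

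Finally, the orthogonality of functions of different types $s\not=s'$ is an immediate byproduct: by the argument above, $f\in L^2_s(SO(3))$ is a series in $\{D^\ell_{m,-s}:\ell\ge|s|,|m|\le\ell\}$ and analogously $f'\in L^2_{s'}(SO(3))$ is a series in $\{D^\ell_{m,-s'}\}$; these two families are disjoint subfamilies of the complete orthonormal system $\{\sqrt{2\ell+1}\,D^\ell_{m,n}\}$ of $L^2(SO(3))$ given by the Peter-Weyl theorem, so $\langle f,f'\rangle_{L^2(SO(3))}=0$. The only non-routine step is the bookkeeping that the character $\chi_n$ appearing in $D^\ell_{m,n}(gk)$ is really $\chi_n$ (and not $\chi_{-n}$) in the normalization of \paref{restrizione}; but this is exactly what \paref{prop fnz di Wigner} records, so the proof reduces to invoking the already established Peter-Weyl formalism.
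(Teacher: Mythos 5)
Your proof is correct and takes essentially the same route as the paper: both arguments determine which Peter--Weyl coefficients can survive from the behaviour of $f$ under right translation by $k\in K$, using that $D^\ell(k)$ is diagonal with entries $\chi_n(k)$ in the basis of \paref{restrizione}. The paper phrases this at the level of the operator coefficient, showing $\widehat f(\ell)=\chi_s(k)D^\ell(k)\widehat f(\ell)$ so that the image of $\widehat f(\ell)$ lies in $H_{\ell}^{(-s)}$, whereas you equate the two expansions of $g\mapsto f(gk)$ entrywise and invoke uniqueness of the coefficients; this is only a cosmetic difference, and the orthogonality of types $s\ne s'$ is obtained identically from the disjointness of the corresponding columns of the Peter--Weyl basis.
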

\begin{proof}
For every $\ell \ge |s|$, let $\widehat f(\ell)$ be as in (\ref{coefficiente ellesimo}). We have,
for every $k\in K$,
\begin{equation}\label{leading}
\begin{array}{c}
\displaystyle\widehat f(\ell)
 = \sqrt{2\ell+1}\,\int_{SO(3)}f(g)D^\ell(g^{-1})\,dg =\\
 \sqrt{2\ell+1}\,\chi_s(k)\int_{SO(3)} f(gk) D^\ell(g^{-1})\,dg=\\
\displaystyle=\sqrt{2\ell+1}\,\chi_s(k)\int_{SO(3)} f(g) D^\ell(kg^{-1})\,dg
=\\=\sqrt{2\ell+1}\,\chi_s(k)D^\ell(k)\int_{SO(3)} f(g)
D^\ell(g^{-1})\,dg
 =\chi_s(k)D^\ell(k)\widehat f(\ell) \ ,\\
\end{array}
\end{equation}
i.e. the image of $\widehat f(\ell)$ is contained in the subspace
$H_{\ell}^{(-s)} \subset H_\ell$ of the vectors such that $D^\ell(k)v=
\chi_{-s}(k)v$ for every $k\in K$. In particular $\widehat f(\ell) \ne 0$ only if
$\ell \ge |s|$, as for every $\ell$ the restriction to $K$ of the representation
$D^\ell$
is unitarily equivalent to the direct sum of the representations $\chi_m$,
$m=-\ell, \dots, \ell$ as recalled at the end of \S2.

Let $\ell \ge |s|$ and $v_{-\ell}, v_{-\ell + 1}, \dots, v_{\ell}$ be the orthonormal basis of $H_\ell$
as in (\ref{restrizione}), in other words $v_m$ spans $H_{\ell}^{m}$, the
one-dimensional subspace of $H_\ell$
formed by the vectors that transform under the action of $K$ according to the
representation $\chi_{m}$.
It is immediate that
\begin{align}
\widehat f(\ell)_{i,j} = \langle \widehat f(\ell)  v_j, v_i \rangle=0\ ,
\end{align}
unless $i=-s$. Thus the Fourier coefficients of $f$ vanish but those corresponding to
the column $(-s)$ of the matrix representation $D^\ell$ and the Peter-Weyl expansion  (\ref{PW SO(3)}) of $f$ becomes, in  $L^2(SO(3))$,
\begin{equation}\label{sviluppo per una funzione di tipo s}
f=\sum_{\ell \ge |s|} \sqrt{2\ell + 1} \sum_{m=-\ell}^{\ell}
\widehat f(\ell)_{-s,m} D^\ell_{m,-s}\ .
\end{equation}
\end{proof}
We introduced the spherical harmonics in (\ref{armoniche sferiche1}) from the entries $D^{\ell}_{m,0}$ of the central columns of Wigner's $D$ matrices.
Analogously to the case of $s=0$, for any $s\in \Z$ we define for $\ell \ge |s|, m=-\ell, \dots, \ell$
\begin{equation}\label{armoniche di spin}
_{-s} Y_{\ell,m} (x) := \theta \Bigl( g, \sqrt{\frac{2\ell +1}{4\pi}}\, \overline{D^\ell_{m,s}(g)} \Bigr)\ , \qquad x=gK\in \cS^2\ .
\end{equation}
$ _{-s} Y_{\ell,m}$
is a section of $\xi_s$ whose pullback function (up to a factor)
is $g\mapsto D^{\ell}_{-m,-s}(g)$ (recall the relation
$\overline{D^\ell_{m,s}(g)} = (-1)^{m-s} D^{\ell}_{-m,-s}(g)$, see \cite{dogiocam} p.~55 e.g.).
Therefore thanks to Proposition \ref{infinite-linear} the sections
$_{-s} Y_{\ell,m},\, \ell \ge |s|, m=-\ell, \dots, \ell$, form an
\emph{orthonormal} basis of $L^2(\xi_s)$. Actually recalling
(\ref{prod scalare}) and (considering the total mass equal to $4\pi$ on the sphere and to $1$ on $SO(3)$)
$$
\int_{\cS^2}\, _{-s} Y_{\ell,m}\, \overline{_{-s}Y_{\ell',m'}}\,dx =
4\pi \int_{SO(3)} \sqrt{\frac{2\ell +1}{4\pi}}\, \overline{D^\ell_{m,s}(g)}\,
\sqrt{\frac{2\ell' +1}{4\pi}}\,D^{\ell'}_{m',s}(g)\,dg = \delta_{\ell'}^{\ell}\delta_{m'}^{m}\ .
$$
The sections  $_{-s} Y_{\ell,m},\, \ell \ge |s|, m=-\ell, \dots, \ell$ in
(\ref{armoniche di spin}) are called  \emph{spin $-s$ spherical harmonics}.
Recall that the spaces $L^2_s(SO(3))$ and $L^2(\xi_s)$ are isometric through
the identification $u \longleftrightarrow f$ between a section $u$ and its
pullback $f$ and the definition of the scalar product on $L^2(\xi_s)$ in
(\ref{prod_scalare}).
Proposition (\ref{infinite-linear}) can be otherwise stated as

\noindent \emph{Every square integrable section $u$ of the homogeneous line
bundle $\xi_s=(\mE_s, \pi_s, \cS^2)$ admits a Fourier expansion in terms of spin
$-s$ spherical harmonics of the form
\begin{equation}
u(x) = \sum_{\ell \ge |s|} \sum_{m=-\ell}^{\ell} u_{\ell,m}\, _{-s} Y_{\ell,m}(x)\ ,
\end{equation}
where $u_{\ell,m} := \langle u,\, _{-s} Y_{\ell,m} \rangle_2$, the above series converging in $L^2(\xi_s)$.}

In particular we have the relation
$$
\dlines{
 u_{\ell,m} = \int_{\cS^2} u(x)\,   _{-s} Y_{\ell,m}(x) \,dx =
4\pi \int_{SO(3)} f(g) \sqrt{ \frac{2\ell +1}{4\pi}} D^\ell_{m,s}(g)\,dg =\cr
(-1)^{s-m} \sqrt{4\pi (2\ell+1)} \int_{SO(3)} f(g)  \overline{  D^\ell_{-m,-s}(g)}\,dg = (-1)^{s-m} \sqrt{4\pi}\, \widehat f(\ell)_{-s,-m}\ . }
$$
\begin{definition}
Let $s\in \Z$. A square integrable function $f$ on $SO(3)$ is said
to be \emph{bi-$s$-associated} if for every $g\in SO(3), k_1, k_2 \in K$,
\begin{equation}
f(k_1 g k_2) = \chi_s (k_1) f(g) \chi_s(k_2)\ .
\end{equation}
\end{definition}
Of course for $s=0$ {bi-$0$-associated} is equivalent to bi-$K$-invariant.
We are particularly interested in bi-$s$-associated functions
as explained in the remark below.
\begin{remark}\label{associate-bi} \rm Let $X$ be an isotropic random field of type $s$ on
$SO(3)$. Then its associate positive definite function $\phi$ is
bi-$(-s)$-associated. Actually, assuming for simplicity that $X$ is
centered, as $\phi(g)=\E[X_g\overline{X_e}]$, we have, using
invariance on $k_1$ and type $s$ property on $k_2$,
$$\displaylines{
\phi(k_1gk_2)=\E[X_{k_1gk_2}\overline{X_e}]=
\E[X_{gk_2}\overline{X_{k_1^{-1}}}]=\cr
=
\chi_s(k_1^{-1})\E[X_g\overline{X_e}]\chi_s(k_2^{-1})=
\chi_{-s}(k_1)\phi(g)\chi_{-s}(k_2)\ .
}$$\qed
\end{remark}

Let us investigate the Fourier expansion of a bi-$s$-associated
function $f$: note first that a bi-$s$-associated function is also a
function of type $(-s)$, so that $\widehat f(\ell) \ne 0$
only if $\ell \ge |s|$ as above and all its rows vanish but for
the $s$-th. A repetition of the computation
leading to \paref{leading} gives easily that
$$
\widehat f(\ell)=\chi_{-s}(k)\widehat f(\ell)D^\ell(k)
$$
so that the only non-vanishing entry of the matrix $\widehat f(\ell)$ is
the $(s,s)$-th.

Therefore the Fourier expansion of a bi-$s$-associated function $\phi$ is
\begin{equation}\label{sviluppo per una funzione bi-s-associata}
f= \sum_{\ell \ge |s|} \sqrt{2\ell + 1}\, \alpha_\ell
D^\ell_{s,s}\ ,
\end{equation}
where we have set $\alpha_\ell=\widehat f(\ell)_{s,s}$.

Now let $T$ be an a.s. square integrable random section of the line
bundle $\xi_s$ and $X$ its pullback random field. Recalling that $X$
is a random function of type $s$ and its sample paths are a.s.
square integrable, we easily obtain the stochastic Fourier
expansion of $X$ applying (\ref{sviluppo per una funzione di tipo s})
to the functions $g\mapsto X_g(\omega)$. Actually define, for every $\ell \ge |s|$, the random operator
\begin{equation}
\widehat X(\ell)= \sqrt{2\ell + 1}\int_{SO(3)} X_g D^\ell(g^{-1})\,dg\ .
\end{equation}
The basis of $H_\ell$ being fixed as above and recalling (\ref{sviluppo per una funzione di tipo s}), we obtain, a.s. in $L^2(SO(3))$,
\begin{equation}\label{sviluppo di Fourier per X}
X_g =\sum_{\ell \ge |s|} \sqrt{2\ell + 1}
\sum_{m=-\ell}^{\ell} \widehat X(\ell)_{-s,m} D^\ell_{m,-s}(g)\ .
\end{equation}

If $T$ is isotropic, then by Definition
\ref{isotropia per sezioni aleatorie} its pullback
random field $X$ is also isotropic in the sense of Definition
\ref{invarian}. The following is a consequence of well known general properties of the random
coefficients of invariant random fields (see \cite{balditrapani} Theorem 3.2 or \cite{malyarenko} Theorem 2).
\begin{prop}\label{teorema di struttura}
Let $s\in \Z$ and $\xi_s=(\mE_s, \pi_s, \cS^2)$ be the homogeneous
line bundle on $\cS^2$ induced by the $s$-th linear character $\chi_s$ of $SO(2)$.
Let $T$ be a random section
of $\xi_s$ and $X$ its pullback random field. If $T$ is second order and strict-sense isotropic, then the Fourier coefficients $X(\ell)_{-s,m}$ of $X$
in its stochastic expansion \paref{sviluppo di Fourier per X}
are pairwise orthogonal and the variance, $c_\ell$, of $\widehat X(\ell)_{-s,m}$ does not depend on $m$.
Moreover $\E [ \widehat X(\ell)_{-s,m} ]=0$ unless $\ell=0, s=0$.
\end{prop}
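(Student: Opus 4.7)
The plan is to derive the statement from the general structure theorem for Fourier coefficients of an isotropic, second order random field on a compact group (Theorem \ref{th coeff} applied to $G=SO(3)$), after transferring the invariance from the section $T$ to its pullback $X$.

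First, I would observe that the hypotheses on $T$ immediately transfer to $X$: by Proposition \ref{prop-pull1} the pullback $X$ is a second order random field on $SO(3)$ whose sample paths are a.s.\ of type $s$, and by Proposition \ref{pullback-invariant} isotropy of $T$ in the sense of Definition \ref{isotropia per sezioni aleatorie} is equivalent to isotropy of $X$ in the sense of Definition \ref{invarian}. Hence $X$ fulfills all the assumptions of Theorem \ref{th coeff}, applied to the compact group $G=SO(3)$, whose irreducible representations are labelled by $\ell\ge 0$.

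The second step is to identify which entries of $\widehat X(\ell)$ can be nonzero. Since a.s.\ the path $g\mapsto X_g(\omega)$ is of type $s$, the deterministic computation in the proof of Proposition \ref{infinite-linear} applies pathwise, so $\widehat X(\ell)=\chi_s(k)\,D^\ell(k)\,\widehat X(\ell)$ for all $k\in K$, which forces all rows of $\widehat X(\ell)$ to vanish except the $(-s)$-th one, and forces $\widehat X(\ell)=0$ whenever $\ell<|s|$. So the only coefficients to be understood are $\widehat X(\ell)_{-s,m}$, $m=-\ell,\dots,\ell$, and these correspond to the single non-vanishing row.

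Now I would apply Theorem \ref{th coeff} column by column. Since $SO(3)$-representations with different $\ell$ are inequivalent, coefficients $\widehat X(\ell)_{-s,m}$ and $\widehat X(\ell')_{-s,m'}$ are orthogonal whenever $\ell\ne\ell'$. Within a fixed $\ell$, Theorem \ref{th coeff}(ii) says that $\Cov(\widehat X(\ell)_{i,j},\widehat X(\ell)_{k,l})=\delta_{j}^{l}\,\Gamma(\ell)_{i,k}$, where $\Gamma(\ell)=\E[\widehat X(\ell)\widehat X(\ell)^*]$; this immediately gives that coefficients in different columns ($m\ne m'$) are orthogonal, and that the variance of $\widehat X(\ell)_{-s,m}$ equals $\Gamma(\ell)_{-s,-s}$, which does not depend on the column index $m$. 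Calling this common variance $c_\ell$ yields pairwise orthogonality and $m$-independence of variances.

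Finally, for the vanishing of the mean, I would use the isotropy of $X$ together with the relation $\widehat X(\ell)=D^\ell(k)\,\widehat X(\ell)\,D^\ell(h)$ in distribution for every $k,h\in SO(3)$ (a direct consequence of Definition \ref{invarian} applied to the Fourier integral). Taking expectations and averaging $D^\ell(k)$ and $D^\ell(h)$ against the Haar measure of $SO(3)$, Schur's orthogonality relations give $\int_{SO(3)} D^\ell(g)\,dg = 0$ for every $\ell\ge 1$, so $\E[\widehat X(\ell)]=0$ for $\ell\ge 1$. For $\ell=0$ the only nonzero entry can sit at position $(-s,0)$, which requires $s=0$, and this is the only case in which a nonzero mean is allowed. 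The main delicate point is making sure that the row-constraint forced by the type $s$ property interacts correctly with the column-covariance structure of Theorem \ref{th coeff}; once this compatibility is written out carefully, the three claims follow immediately from the standard theorem without further computation.
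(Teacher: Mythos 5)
Your proposal is correct and follows essentially the same route as the paper, which proves this proposition simply by transferring the hypotheses to the pullback field via Propositions \ref{prop-pull1} and \ref{pullback-invariant}, using the pathwise type-$s$ constraint to single out the $(-s)$-row, and then invoking the general structure theorem for Fourier coefficients of isotropic fields on a compact group (Theorem \ref{th coeff}, quoted from \cite{balditrapani}). The only blemish is your claim that isotropy yields $\widehat X(\ell)\stackrel{d}{=}D^\ell(k)\widehat X(\ell)D^\ell(h)$ for \emph{all} $k,h\in SO(3)$: left-invariance of the field only gives right multiplication, $\widehat X(\ell)\stackrel{d}{=}\widehat X(\ell)D^\ell(h)$, but this (or simply the first assertion of Theorem \ref{th coeff}) already suffices for $\E[\widehat X(\ell)]=0$ when $\ell\ge 1$, so the argument stands after deleting the superfluous left factor.
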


For the random field $X$ of Proposition \ref{teorema di struttura} we have
immediately
\begin{equation}\label{conv}
\E[|X_g|^2]=\sum_{\ell \ge |s|} (2\ell + 1) c_\ell < +\infty\ .
\end{equation}
The convergence of the series above is also a consequence of
Theorem \ref{gangolli-true}, as the positive definite function $\phi$ associated to
$X$ is given by
$$
\dlines{
\phi(g)=\E[X_g\overline{X_e}]=\sum_{\ell \ge |s|} (2\ell + 1)c_\ell
\sum_{m=-\ell}^{\ell} D^\ell_{m,-s}(g)\overline{D^\ell_{m,-s}(e)}=\cr
=\sum_{\ell \ge |s|} (2\ell + 1)c_\ell
\sum_{m=-\ell}^{\ell} D^\ell_{m,-s}(g)D^\ell_{-s,m}(e)=
\sum_{\ell \ge |s|} (2\ell + 1)c_\ell D^\ell_{-s,-s}(g)\ . \cr
}
$$
\begin{remark} \rm
Let $X$ be a type $s$ random field on $SO(3)$ with $s\not=0$. Then the
relation $X_{gk}=\chi_s(k^{-1})X_{g}$ implies that $X$ cannot be real
(unless it is vanishing). If in addition it was Gaussian,
then, the identity in law between $X_g$ and $X_{gk}=\chi_s(k^{-1})X_{g}$
would imply that, for every $g\in G$, $X_g$ is a complex Gaussian r.v.
\end{remark}

\section{Construction of Gaussian isotropic spin random fields}

We now give an extension of the construction of \S\ref{sec4}
and prove that every complex Gaussian random section of a homogeneous
line bundle on $\cS^2$ can be obtained in this way, a result much
similar to Theorem \ref{real-general}.
Let $s\in \Z$ and let $\xi_s$ be the homogeneous line bundle associated to
the representation $\chi_s$.

Let $(X_n)_n$ be a sequence of i.i.d. standard Gaussian r.v.'s on some probability space $(\Omega, \F, \mathbb{P})$, and
$\mathscr{H}\subset L^2(\Omega,\F,\P)$ the \emph{complex} Hilbert space
generated by $(X_n)_n$.
Let $(e_n)_n$ be an orthonormal basis of $L^2(SO(3))$ and
define an isometry $S$ between $L^2(SO(3))$ and
$\mathscr{H}$ by
$$
L^2(SO(3))\ni \sum_k \alpha_k e_k\enspace \to\enspace \sum_k \alpha_k X_k \in \mathscr{H}\ .
$$
Let $f\in L^2(SO(3))$, we define a random field $X^f$ on $SO(3)$ by
\begin{align}\label{spin-def}
X^f_g=S(L_gf)\ ,
\end{align}
$L$ denoting as usual the left regular representation.
\begin{prop}\label{propspin=}
If $f$ is a square integrable bi-$s$-associated function
on $SO(3)$, then $X^f$ defined in \paref{spin-def} is a second order,
square integrable Gaussian isotropic random field of type $s$. Moreover it is
complex Gaussian.
\end{prop}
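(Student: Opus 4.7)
The plan is to verify the four asserted properties in sequence, building on the isometry $S:L^2(SO(3))\to \mathscr{H}$ and the Fourier description of bi-$s$-associated functions given in \paref{sviluppo per una funzione bi-s-associata}.

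First I would settle the routine structural claims. Since $\|L_g f\|_{L^2(SO(3))}=\|f\|_{L^2(SO(3))}$ for every $g$, the isometry property of $S$ gives $\E[|X^f_g|^2]=\|f\|^2<+\infty$, so $X^f$ is second order, and Fubini together with $G$-invariance of the Haar measure yields $\E\bigl[\int_{SO(3)}|X^f_g|^2\,dg\bigr]=\|f\|^2<+\infty$, hence mean square (and therefore a.s.) integrability. The Gaussian character of $X^f$ (in the sense of Definition \ref{Gaussian}, complex-valued Gaussian) follows from the fact that $X^f_g\in\mathscr{H}$, which is a complex Gaussian space by construction.

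Next I would check that $X^f$ is of type $s$. The key observation is that for $k\in K$ and $h\in SO(3)$ one has, by the bi-$s$-associated property,
$$
L_{gk}f(h)=f(k^{-1}g^{-1}h)=\chi_s(k^{-1})f(g^{-1}h)=\chi_s(k^{-1})L_gf(h)\ ,
$$
hence $L_{gk}f=\chi_s(k^{-1})L_gf$ and, by $\C$-linearity of $S$, $X^f_{gk}=\chi_s(k^{-1})X^f_g$. For isotropy, one exploits that the law of a complex-valued Gaussian vector is determined by its mean (here identically zero), its covariance kernel and its relation kernel, and uses the unitarity of the left regular representation to obtain, for every $h,g_1,g_2\in SO(3)$,
$$
\E[X^f_{hg_1}\overline{X^f_{hg_2}}]=\langle L_{hg_1}f,L_{hg_2}f\rangle=\langle L_{g_1}f,L_{g_2}f\rangle=\E[X^f_{g_1}\overline{X^f_{g_2}}]\ ,
$$
and analogously $\E[X^f_{hg_1}X^f_{hg_2}]=\langle L_{hg_1}f,L_{hg_2}\overline f\rangle=\langle L_{g_1}f,L_{g_2}\overline f\rangle=\E[X^f_{g_1}X^f_{g_2}]$, using $\overline{L_hf}=L_h\overline f$. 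Combined with Proposition \ref{pullback-invariant} (through the equivalence between the invariance of $X^f$ and the isotropy of the corresponding section), this gives the isotropy statement.

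The main obstacle is the last claim, that $X^f$ is complex Gaussian, which in view of Proposition \ref{zeta-prop} amounts to showing that the relation kernel $\zeta^f(g_1,g_2):=\E[X^f_{g_1}X^f_{g_2}]=\langle L_{g_1}f,L_{g_2}\overline f\rangle$ vanishes identically. Here I would use the explicit Fourier description \paref{sviluppo per una funzione bi-s-associata}: writing $f=\sum_{\ell\ge|s|}\sqrt{2\ell+1}\,\alpha_\ell D^\ell_{s,s}$ and recalling $\overline{D^\ell_{m,s}}=(-1)^{m-s}D^\ell_{-m,-s}$ one obtains $\overline f=\sum_{\ell\ge|s|}\sqrt{2\ell+1}\,\overline{\alpha_\ell}D^\ell_{-s,-s}$. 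Since, as noted after \paref{sviluppo per una funzione di tipo s}, the left regular representation preserves the \emph{right} column index of Wigner $D$ functions, $L_{g_1}f$ lies in the closed subspace of $L^2(SO(3))$ spanned by $\{D^\ell_{m,s}:\ell\ge|s|,\,|m|\le\ell\}$ while $L_{g_2}\overline f$ lies in the closed subspace spanned by $\{D^\ell_{m,-s}:\ell\ge|s|,\,|m|\le\ell\}$. By Schur orthogonality these two subspaces are orthogonal whenever $s\ne -s$, so $\zeta^f\equiv 0$ and the complex Gaussian property follows. (The statement is meaningful only for $s\neq 0$, consistently with Remark \ref{rem-sfera}; for $s=0$ the function $f$ is bi-$K$-invariant and the construction coincides with the one of Chapter 2.)
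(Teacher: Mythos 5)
Your proposal is correct and follows essentially the same route as the paper: second order and the type-$s$ property via the isometry $S$ and $L_{gk}f=\chi_s(k^{-1})L_gf$, isotropy by matching covariance and relation kernels of $X^f$ and its rotations, and complex Gaussianity by showing the relation function $\langle L_{g_1}f, L_{g_2}\overline f\rangle$ vanishes. The only (immaterial) difference is that you derive this vanishing from the Fourier expansion \paref{sviluppo per una funzione bi-s-associata} and Schur orthogonality of the Wigner columns with right index $s$ versus $-s$, whereas the paper phrases the same orthogonality as that of functions of type $-s$ and type $s$ via Proposition \ref{infinite-linear}; your observation that this forces $s\neq 0$ matches the paper's implicit restriction.
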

\begin{proof}
It is immediate that $X^f$ is second order as
$\E[|X^f_g|^2]=\Vert L_gf\Vert^2_2=\Vert f\Vert^2_2$. It
 is of type $s$ as for every $g\in SO(3)$ and $k\in K$,
$$
X^f_{gk}=S(L_{gk}f)=\chi_s(k^{-1})S(L_gf)=\chi_s(k^{-1})X^f_g\ .
$$
Let us prove strict-sense invariance. Actually, $S$ being an isometry,
for every $h\in SO(3)$
$$
\dlines{
\E[ X^f_{hg}\overline{X^f_{hg'}}] = \E[ S(L_{hg}f)\overline{S(L_{hg'}f)}]
= \langle L_{hg}f, L_{hg'}f \rangle_2 =
\langle L_{g}f, L_{g'}f \rangle_2
= \E[X^f_{g}\overline{X^f_{g'}}]\ .
}$$
Therefore the random fields $X^f$ and its rotated $(X^f)^h$
have the same covariance
kernel. Let us prove that they also have the same relation function.
Actually we have, for every $g,g'\in SO(3)$,
\begin{equation}\label{complex-spin1}
\E[X^f_{g}X^f_{g'}]=\E[ S(L_{hg}f){S(L_{hg'}f)}]
= \langle L_{hg}f, \overline {L_{hg'}f} \rangle_2 =\langle L_{g}f, \overline{L_{g'}f} \rangle_2=0
\end{equation}
as the function $\overline{L_{g'}f}$ is bi-$(-s)$-associated and therefore of
type $s$ and orthogonal to $L_{g}f$ which is of type $-s$ (orthogonality of functions
of type $s$ and $-s$ is a consequence of Proposition \ref{infinite-linear}).

In order to prove that $X^f$ is complex Gaussian we must show that
for every $\psi\in L^2(SO(3))$, the r.v.
$$
Z=\int_{SO(3)}X^f_g\psi(g)\, dg
$$
is complex Gaussian. As $Z$ is Gaussian by construction we must just prove that
$\E[Z^2]=0$. But as, thanks to \paref{complex-spin1}, $\E[X^f_{g}X^f_{g'}]=0$
$$\displaylines{
\E[Z^2]=\E\Bigl[\int_{SO(3)}\int_{SO(3)}X^f_gX^f_h\psi(g)\psi(h)\,dg dh\Bigr]=\cr
=
\int_{SO(3)}\int_{SO(3)}\E[X^f_gX^f_h]\psi(g)\psi(h)\,dg dh=0\ .
}$$
\end{proof}

Let us investigate the stochastic Fourier expansion of
$X^f$.
Let us consider first the random field $X^\ell$ associated to
$f=D^\ell_{s,s}$. Recall first that the r.v. $Z=S(D^\ell_{s,s})$
has variance $\E[|Z|^2]=\Vert D^\ell_{s,s}\Vert_2^2=(2\ell+1)^{-1}$ and that
$\overline{D^\ell_{m,s}}=(-1)^{m-s}D^\ell_{-m,-s}$. Therefore
$$
\dlines{
X^\ell_g =S(L_g D^\ell_{s,s})=
\sum_{m=-\ell}^{\ell} S(D^\ell_{m,s}) D^\ell_{s,m}(g^{-1})=\cr
=\sum_{m=-\ell}^{\ell} S(D^\ell_{m,s})
\overline{D^\ell_{m,s}(g)}=
\sum_{m=-\ell}^{\ell} S(D^\ell_{m,s}) (-1)^{m-s}D^\ell_{-m,-s}(g)\ .\cr
}
$$
Therefore the r.v.'s
$$
a_{\ell,m}=\sqrt{2\ell+1}\, S(D^\ell_{m,s})(-1)^{m-s}
$$
are complex Gaussian, independent and with variance $\E[|a_{\ell,m}|^2]=1$ and
we have the expansion
\begin{equation}
X^\ell_g=
\frac{1}{\sqrt{2\ell+1}}\sum_{m=-\ell}^{\ell} a_{\ell, m} D^\ell_{m,-s}(g)\ .
\end{equation}
Note that the coefficients
$a_{\ell m}$ are independent complex Gaussian  r.v.'s.
This is a difference with respect to the case $s=0$,
where in the case of a real random field, the coefficients $a_{\ell,m}$ and $a_{\ell,-m}$ were not independent. Recall that random fields of type $s\ne 0$ on $SO(3)$ cannot be real.

In general, for a square integrable bi-$s$-associated function $f$
\begin{equation}\label{fs}
f=\sum_{\ell \ge |s|} \sqrt{2\ell + 1}\, \alpha_\ell D^\ell_{s,s}
\end{equation}
with
$$
\Vert f\Vert_2^2=\sum_{\ell \ge |s|} |\alpha_\ell|^2 < +\infty\ ,
$$
the Gaussian random field $X^f$ has the expansion
\begin{align}\label{sviluppo per X}
X^f_g&=\sum_{\ell \ge |s|} \alpha_\ell
\sum_{m=\ell}^{\ell} a_{\ell, m} D^\ell_{m,-s}(g)\ ,
\end{align}
where $(a_{\ell,m} )_{\ell,m}$ are independent  complex Gaussian
r.v.'s with zero mean and unit variance.

The associated positive definite function of $X^f$,
$\phi^f(g):=\E[X^f_g \overline{X^f_e}]$ is bi-$(-s)$-associated (Remark
\ref{associate-bi}) and
continuous (Theorem \ref{gangolli-true}) and, by \paref{convolution for phi},
is related to $f$ by
$$
\phi^f=f \ast \breve f (g^{-1})\ .
$$
This allows to derive its Fourier expansion:
$$
\dlines{
\phi^f(g)=f \ast \breve f (g^{-1})=
\int_{SO(3)} f(h) \overline{f(g h)}\,dh
=\cr
=\sum_{\ell, \ell' \ge |s|} \sqrt{2\ell + 1}\,\sqrt{2\ell'+1}\, \alpha_\ell \overline{\alpha_{\ell'}}
\int_{SO(3)} D^\ell_{s,s}(h) \overline{D^{\ell'}_{s,s}(g h)}\,dh=\cr
=\sum_{\ell, \ell' \ge |s|} \sqrt{2\ell + 1}\,\sqrt{2\ell'+1}\, \alpha_\ell \overline{\alpha_{\ell'}}
\sum_{j=-\ell}^{\ell} \underbrace{\Bigl (\int_{SO(3)} D^\ell_{s,s}(h)
\overline{D^{\ell'}_{j,s}(h)}\,dh\, \Bigr)}_{= \frac 1 {2\ell +1}
\delta_{\ell,\ell'} \delta_{s,j}} \overline{D^{\ell}_{s,j}(g)}=\cr
=\sum_{\ell \ge |s|} |\alpha_\ell |^2 D^\ell_{-s,-s}(g)\ .
}
$$
Note that in accordance with Theorem \ref{gangolli-true}, as
$|D^\ell_{-s,-s}(g)|\le D^\ell_{-s,-s}(e)=1$, the above series converges uniformly.

Conversely, it is immediate that, given a continuous positive definite
bi -$(-s)$- associated function $\phi$, whose expansion is
$$
\phi^f(g)=\sum_{\ell \ge |s|} |\alpha_\ell |^2 D^\ell_{-s,-s}(g)\ ,
$$
by choosing
$$
f(g)=\sum_{\ell \ge |s|} \sqrt{2\ell+1}\,\beta_\ell D^\ell_{-s,-s}(g)
$$
with $|\beta_\ell|=\sqrt{\alpha_\ell}$,
there exist a square integrable bi-$s$-associated function $f$ as in \paref{fs}
such that $\phi(g)=f*\breve f(g^{-1})$. Therefore, for every random field
$X$ of type
$s$ on $SO(3)$ there exists a square integrable bi-$s$-associated
function $f$ such that $X$ and $X^f$ coincide in law. Such a function $f$ is not
unique.

From $X^f$ we can define a random section $T^f$ of the homogeneous
line bundle $\xi_s$ by
\begin{equation}
T^f_{x} := \theta(g, X^f_g)\ ,
\end{equation}
where $x=gK\in \cS^2$.
Now, as for the case $s=0$ that was treated in \S\ref{sec4}, it is natural to ask whether every Gaussian isotropic section of $\xi_s$ can be obtained in this way.
\begin{theorem}\label{teospin=}
Let $s\in \Z\setminus \{0\}$.
For every square integrable, isotropic, (complex) Gaussian
random section  $T$ of the homogeneous  $s$-spin line bundle $\xi_s$, there exists a square integrable and bi-$s$-associated function $f$ on $SO(3)$
such that
\begin{equation}\label{=}
T^f\enspace \mathop{=}^{law}\enspace T\ .
\end{equation}
Such a function $f$ is not unique.
\end{theorem}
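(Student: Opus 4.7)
The plan is to reduce the problem, via the pullback correspondence, to a ``square root'' construction for the positive definite function associated to $T$, in the same spirit as Theorem \ref{real-general} but now exploiting the fact that complex Gaussian laws are determined by the covariance alone (Proposition \ref{zeta-prop}).

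First, let $X$ denote the pullback random field of $T$ on $SO(3)$. By Proposition \ref{prop-pull1} the field $X$ is a.s. square integrable and by Proposition \ref{pullback-invariant} it is strict-sense isotropic. Since $T$ is complex Gaussian, so is $X$, and being of type $s$ its sample paths satisfy $X_{gk}=\chi_s(k^{-1})X_g$. Consider now its associated positive definite function $\phi(g):=\E[X_g\overline{X_e}]$. By Remark \ref{associate-bi}, $\phi$ is continuous and bi-$(-s)$-associated. Since $X$ is complex Gaussian and centered (for $s\neq 0$), the relation function of $X$ is identically zero by Proposition \ref{zeta-prop}, and therefore the law of $X$ is entirely determined by $\phi$.

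Next I would produce a bi-$s$-associated function $f\in L^2(SO(3))$ such that $\phi(g)=f*\breve f(g^{-1})$. By Proposition \ref{structure of positive definite}(a) each operator Fourier coefficient $\widehat\phi(\ell)$ is Hermitian positive definite, and by the bi-$(-s)$-associated property it is supported on a single diagonal entry, so that the Fourier expansion of $\phi$ takes the form
\begin{equation*}
\phi(g)=\sum_{\ell\ge |s|}\gamma_\ell\,D^\ell_{-s,-s}(g),\qquad \gamma_\ell\ge 0,
\end{equation*}
with $\sum_{\ell}(2\ell+1)\gamma_\ell<+\infty$ by Theorem \ref{gangolli-true}. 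Setting
\begin{equation*}
f(g):=\sum_{\ell\ge |s|}\sqrt{2\ell+1}\,\sqrt{\gamma_\ell}\,D^\ell_{s,s}(g),
\end{equation*}
one checks that $f$ is bi-$s$-associated, lies in $L^2(SO(3))$ (with $\|f\|_2^2=\sum_\ell \gamma_\ell<+\infty$), and a direct computation using Schur orthogonality (as already carried out for $X^f$ in \S 9.3) yields $\phi^f=\phi$.

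Finally I would invoke Proposition \ref{propspin=} to conclude that $X^f$ is a complex Gaussian, square integrable, isotropic random field of type $s$ with associated positive definite function $\phi$. Since both $X$ and $X^f$ are centered complex Gaussian with the same covariance kernel (and vanishing relation function, automatically for $X^f$ because $f$ is bi-$s$-associated, hence of type $-s$, which forces $\langle L_g f,\overline{L_{g'}f}\rangle=0$), they have the same law. Translating back through the pullback correspondence gives $T^f\mathop{=}^{law} T$. Non-uniqueness of $f$ is then immediate: any choice of signs $\pm$ (or, more generally, any unimodular factor) in the square roots $\sqrt{\gamma_\ell}$ produces another valid $f$. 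The main technical point is verifying that the chosen $f$ is indeed bi-$s$-associated and that the covariance of the resulting $X^f$ matches $\phi$, which follows from the explicit Fourier calculation already established in \S 9.3 for the expansion \paref{sviluppo per X}.
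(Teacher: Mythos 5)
Your argument is correct and is essentially the paper's own proof: pass to the pullback field $X$ on $SO(3)$, note that its associated positive definite function $\phi$ is continuous and bi-$(-s)$-associated with nonnegative coefficients $\gamma_\ell$ in the expansion along $D^\ell_{-s,-s}$, take a coefficient-wise square root to build a bi-$s$-associated $f$ with $\phi^f=\phi$, and conclude via Proposition \ref{propspin=} and the vanishing of the relation functions (Proposition \ref{zeta-prop}) that the two complex Gaussian fields coincide in law, with non-uniqueness coming from the arbitrary phases of the square roots — your explicit appeal to the zero relation function is in fact slightly more careful than the paper's terse write-up. One small correction: Theorem \ref{gangolli-true} yields $\sum_{\ell}\gamma_\ell<+\infty$ (equivalently uniform convergence of the expansion of $\phi$), not $\sum_{\ell}(2\ell+1)\gamma_\ell<+\infty$, but this weaker bound is exactly what your computation $\|f\|_2^2=\sum_\ell\gamma_\ell$ requires, so the argument stands.
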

\begin{proof}
Let $X$ be the pullback random field (of type $s$) of $T$. $X$ is of course
mean square continuous. Let $R$ be its covariance kernel. The function
$\phi(g):=R(g,e)$  is  continuous,
positive definite and bi-$(-s)$-associated, therefore has the expansion
\begin{equation}\label{sviluppo per fi 2}
\phi=\sum_{\ell \ge |s|} \sqrt{2\ell + 1}\,\beta_\ell D^\ell_{-s,-s}\ ,
\end{equation}
where
$\beta_\ell=\sqrt{2\ell + 1}\,\int_{SO(3)} \phi(g)
\overline{D^\ell_{-s,-s}(g)}\,dg \ge 0$. Furthermore,
by Theorem \ref{gangolli-true}, the series
in (\ref{sviluppo per fi 2}) converges uniformly, i.e.
$$
\sum_{\ell \ge |s|} \sqrt{2\ell + 1}\,\beta_\ell < +\infty\ .
$$
Now set $f:=\sum_{\ell \ge |s|} (2\ell +1)\sqrt{\beta_\ell} D^\ell_{s,s}$.
Actually, $f\in L^2_s(SO(3))$ as $\| f\|^2_{L^2(SO(3))} = \sum_{\ell \ge |s|} (2\ell +1)\beta_\ell < +\infty$ so that it is bi-$s$-associated.

Note that every function $f$ of the form
$f=\sum_{\ell \ge |s|} (2\ell +1)\alpha_\ell D^\ell_{s,s}$ where
$\alpha_\ell$ is such that $\alpha_\ell^2=\beta_\ell$ satisfies
(\ref{=}) (and clearly every function $f$ such that $\phi(g)=f*\breve f(g^{-1})$ is of this form).

\end{proof}
\section{The connection with classical spin theory}
There are different approaches to the theory of random sections of homogeneous line bundles
 on $\cS^2$ (see \cite{marinuccigeller}, \cite{bib:LS}, \cite{malyarenko}, \cite{bib:NP}  e.g.). In this section
we compare them, taking into account, besides the one outlined
 in \S 6,  the classical Newman and Penrose  spin theory (\cite{bib:NP})
later formulated in a more mathematical framework by Geller and Marinucci
(\cite{marinuccigeller}).

Let us first recall some basic notions about vector bundles. From now on $s\in \Z$. We shall state them concerning
the complex line bundle $\xi_s=(\mE_s, \pi_s, \cS^2)$ even if they can be immediately extended
to more general situations. An atlas of $\xi_s$ (see \cite{Husemoller} e.g.) can be defined as follows.
Let $U\subset \cS^2$ be an open set and $\Psi$ a diffeomorphism between $U$ and an open set of $\R^2$. A chart $\Phi$ of $\xi_s$ over $U$ is an isomorphism
\begin{equation}\label{def chart}
\Phi: \pi^{-1}_s(U)\goto \Psi(U)\times \C\ ,
\end{equation}
whose restriction to every fiber $\pi_s^{-1}(x)$ is a linear isomorphism $\leftrightarrow\C$. An atlas of $\xi_s$ is a family $( U_j, \Phi_j)_{j\in J}$ such that $\Phi_j$ is a chart of $\xi_s$ over $U_j$
and the family $(U_j)_{j\in J}$ covers $\cS^2$.

Given an atlas $( U_j, \Phi_j)_{j\in J}$, For each pair $i,j\in J$ there exists a unique map
(see \cite{Husemoller} Prop. 2.2) $\lambda_{i,j}: U_i\cap U_j \goto \C\setminus 0$
such that for $x\in U_i\cap U_j, z\in \C$,
\begin{equation}
\Phi_i^{-1}(\Psi_i(x),z)=\Phi_j^{-1}(\Psi_j(x),\lambda_{i,j}(x)z)\ .
\end{equation}
The map $\lambda_{i,j}$ is called the \emph{transition function} from the chart
$(U_j,\Phi_j)$ to the chart $(U_i,\Phi_i)$.
Transition functions satisfy the cocycle conditions, i.e.
for every $i,j,l\in J$
\begin{equation}
\begin{array}{l}\label{cociclo}
\nonumber
\lambda_{j,j} = 1\ \ \ \  \qquad  \text{on}\ \ \ U_j\ ,\cr
\lambda_{j,i} = \lambda_{i,j}^{-1}\ \ \ \   \quad \text{on}\ \  \ U_i\cap U_j\ ,\cr
\nonumber
\lambda_{l,i}\lambda_{i,j}=\lambda_{l,j}\ \ \  \text{on}\ \  \ U_i\cap U_j\cap U_l\ .
\end{array}
\end{equation}
Recall that we denote $K\cong SO(2)$ the isotropy group of the north pole as in \S 6, \S7,  so that $\cS^2\cong SO(3)/K$.
We show now that an atlas of the line bundle $\xi_s$ is given as soon as we specify

a) an atlas $(U_j,\Psi_j)_{j\in J}$ of the manifold $\cS^2$,

b) for every $j\in J$ a family $(g_x^j)_{x\in U_j}$ of representative
elements $g_x^j\in G$ with $g_x^jK=x$.

\noindent More precisely,  let $(g_x^j)_{x\in U_j}$ be as in b) such that $x\mapsto g^j_x$ is smooth for each $j\in J$.
Let $\eta \in  \pi^{-1}_s(U_j)\subset \mE_s$ and $x:=\pi_s(\eta)\in U_j$,
therefore $\eta=\theta(g^j_x,z)$, for a unique $z\in \C$.
Define the chart $\Phi_j$ of $\xi_s$ over $U_j$ as
\begin{equation}\label{triv}
\Phi_j(\eta)=  (\Psi_j(x), z)\ .
\end{equation}
Transition functions of this atlas are easily determined.
If $\eta \in \xi_s$ is such that $x=\pi_s(\eta)\in U_i\cap U_j$,
then $\Phi_j(\eta)=(\Psi_j(x), z_j)$, $\Phi_i(\eta)=(\Psi_i(x), z_i)$.
As $g_x^iK=g^j_xK$, there exists a unique $k=k_{i,j}(x)\in K$ such that
 $g^j_x=g^i_xk$, so that
$\eta=\theta(g^i_x,z_i)=\theta(g^j_x, z_j)=\theta(g^i_xk, z_j)=\theta(g^i_x, \chi_s(k)z_j)$ which implies $z_i=\chi_s(k)z_j$.
Therefore
\begin{equation}\label{transizione}
\lambda_{i,j}(x)=\chi_s(k)\ .
\end{equation}
\bigskip

\noindent The spin $s$ concept was introduced by Newman and Penrose in \cite{bib:NP}:
\emph{a quantity $u$ defined on $\cS^2$ has spin weight $s$  if, whenever a tangent vector $\rho$
 at any point $x$ on the sphere transforms under coordinate change  by
$\rho'=e^{i \psi} \rho$, then the quantity at this point $x$ transforms
by  $u'=e^{is\psi} u$}. Recently, Geller and Marinucci in \cite{marinuccigeller}
have put this notion in a more mathematical framework modeling such a $u$
as a section of a complex line bundle on $\cS^2$ and they describe this line bundle by giving charts and fixing transition functions to express the transformation laws under
changes of coordinates.

More precisely,
they  define an atlas of $\cS^2$ as follows. They consider the open
covering $(U_R)_{R\in SO(3)}$ of $\cS^2$  given by
\begin{equation}\label{charts}
U_e := \cS^2 \setminus \lbrace x_0, x_1 \rbrace \qquad\text{and}\qquad U_R:=R U_e\ ,
\end{equation}
where $x_0=$the north pole (as usual), $x_1=$the south pole. On $U_e$ they consider the usual spherical coordinates $(\vartheta, \varphi)$, $\vartheta=$colatitude, $\varphi=$longitude
 and on any  $U_R$ the ``rotated'' coordinates $(\vartheta_R, \varphi_R)$  in such a way that $x$ in $U_e$ and $Rx$ in $U_R$ have the same coordinates.

The transition functions are defined as follows.
For each $x\in U_R$, let $\rho_R(x)$ denote the unit  tangent vector at $x$, tangent to the circle $\vartheta_R = const$ and
pointing to the direction of increasing $\varphi_R$. If
$x\in U_{R_1}\cap U_{R_2}$,  let $\psi_{R_2,R_1}(x)$ denote the (oriented) angle from
$\rho_{R_1}(x)$ to $\rho_{R_2}(x)$.
They prove that the quantity
\begin{equation}\label{triv-mg}
e^{is\psi_{R_2,R_1}(x)}
\end{equation}
satisfies the cocycle relations \paref{cociclo} so that this defines a unique (up to isomorphism)
structure of complex line bundle on $\cS^2$ having \paref{triv-mg} as transition
functions at $x$  (see \cite{Husemoller} Th. 3.2).

We shall prove that
this spin $s$ line bundle is the same as the
homogeneous line bundle $\xi_{-s}=(\mE_{-s}, \pi_{-s}, \cS^2)$.
To this aim we have just to check that, for a suitable choice of the atlas
$(U_R, \Phi_R)_{R\in SO(3)}$ of $\xi_{-s}$ of the type described in a), b) above,
the transition functions \paref{transizione}  and \paref{triv-mg} are the same.
Essentially we have to determine the family  $(g^R_x)_{R\in SO(3), x\in U_R}$ as in b).

Recall first that every rotation $R\in SO(3)$ can be realized as a composition of three rotations:
(i) a rotation by an angle $\gamma_R$ around the z axis, (ii) a rotation by an angle
$\beta_R$ around the y axis and (iii) a rotation by an angle $\alpha_R$
around the z axis (the so called z-y-z convention), ($\alpha_R$, $\beta_R$, $\gamma_R$) are the {\it Euler angles} of $R$.
Therefore the rotation $R$ acts on the north pole $x_0$
of $\cS^2$ as mapping $x_0$ to the new location on $\cS^2$
whose spherical coordinates are $(\beta_R, \alpha_R)$ after rotating  the tangent plane at $x_0$
by an angle $\gamma_R$. In each coset $\cS^2\ni x=gK$ let us choose the element $g_x\in SO(3)$ as the rotation such that $g_xx_0=x$ and having its third Euler angle $\gamma_{g_x}$ equal to $0$. Of course
if $x\ne x_0,x_1$, such $g_x$ is unique.

Consider the atlas $(U_R, \Psi_R)_{R\in SO(3)}$ of $\cS^2$ defined as follows.
Set the charts as
\begin{align}
&\Psi_e(x) := (\beta_{g_x}, \alpha_{g_x})\ , \qquad x\in U_e\ ,\\
&\Psi_R(x) := \Psi_e (R^{-1}x)\ , \qquad x\in U_R\ .
\end{align}
Note that for each $R$, $\Psi_R(x)$ coincides with the ``rotated'' coordinates $(\vartheta_R, \varphi_R)$ of $x$.
Let us choose now the family $(g^R_x)_{x\in U_R, R\in SO(3)}$.
For $x\in U_e$ choose $g^e_x:=g_x$
 and for $x\in U_R$
\begin{equation}
g^R_x:=Rg_{R^{-1}x}\ .
\end{equation}
Therefore the corresponding atlas
 $( U_R, \Phi_R)_{R\in SO(3)}$ of $\xi_s$ is given, for $\eta\in \pi^{-1}_s(U_R)$, by
 \begin{equation}
 \Phi_R(\eta)=( \Psi_R(x), z)\ ,
 \end{equation}
 where $x:=\pi_s(\eta)\in U_R$ and $z$ is such that $\eta=\theta(g_x^R,z)$.
Moreover for $R_1, R_2\in SO(3)$, $x\in U_{R_1}\cap U_{R_2}$ we have
\begin{equation}\label{eq k}
k_{R_2,R_1}(x)= (g_{R_2^{-1}x})^{-1} R_2^{-1}R_1 g_{R_1^{-1}x}
\end{equation}
and the transition function
from the chart $(U_{R_1}, \Phi_{R_1})$ to the chart $(U_{R_2}, \Phi_{R_2})$ at $x$ is given by \paref{transizione}
\begin{equation}\label{fnz transizione}
\lambda^{(-s)}_{R_2, R_1}(x):=\chi_s(k)\ .
\end{equation}
From now on let us denote $\omega_{R_2,R_1}(x)$ the rotation angle of $k_{R_2,R_1}(x)$.
Note that, with this choice of the family $(g^R_x)_{x\in U_R, R\in SO(3)}$,
 $\omega_{R_2,R_1}(x)$ is the third Euler angle of the rotation $R_2^{-1}R_1g_{R_1^{-1}x}$.

\begin{remark}\label{particular} \rm Note that we have
$$
R^{-1}g_x=g_{R^{-1}x}\ ,
$$
i.e. $g^R_x=g_x$, in any of the following two situations

a) $R$ is a rotation around the north-south axis (i.e. not changing the latitude of the points of $\cS^2$).

b) The rotation axis of $R$ is orthogonal to the plane $[x_0,x]$ (i.e. changes the colatitude of $x$ leaving its longitude unchanged).

Note that if each of the rotations $R_1,R_2$ are of type a) or of type b), then
$$
k_{R_2,R_1}(x)=g^{-1}_{R_2^{-1}x}R_2^{-1}R_1g_{R_1^{-1}x}=(R_2g_{R_2^{-1}x})^{-1}R_1g_{R_1^{-1}x}=
g_x^{-1}g_x= \mbox{the identity}
$$
and in this case the rotation angle of $k_{R_2,R_1}(x)$ coincides with the angle $-\psi_{R_2,R_1}(x)$, as neither $R_1$ nor $R_2$ change the orientation of the tangent plane at $x$.

Another situation in which the rotation $k$ can be easily computed appears when $R_1$ is the
identity and $R_2$ is a rotation of an angle $\gamma$ around an axis passing through $x$.
Actually
\begin{equation}\label{rot}
k_{R_2,e}(x)=g_x^{-1}R_2^{-1}g_x
\end{equation}
which, by conjugation, turns out to be a rotation of the angle $-\gamma$ around the north-south axis. In this case also it is immediate that the rotation angle $\omega_{R_2,R_1}(x)$ coincides with $-\psi_{R_2,R_1}(x)$.

\qed
\end{remark}
The following relations will be useful in the sequel, setting $y_1=R_1^{-1}x$, $y_2=R_2^{-1}x$,
\begin{align}
k_{R_2,R_1}(x)&=g^{-1}_{R_2^{-1}x}R_2^{-1}R_1g_{R_1^{-1}x}=g^{-1}_{R_2^{-1}R_1y_1}R_2^{-1}R_1g_{y_1}=
k_{R_1^{-1}R_2,e}(R_1^{-1}x)\label{al1}\ ,\cr
k_{R_2,R_1}(x)&=g^{-1}_{R_2^{-1}x}R_2^{-1}R_1g_{R_1^{-1}x}=g^{-1}_{y_2}R_2^{-1}R_1g_{R_1^{-1}R_2y_2}=
k_{e,R_2^{-1}R_1}(R_2^{-1}x)\ .\
\end{align}
We have already shown in Remark \ref{particular} that  $\omega_{R_2,R_1}(x)=-\psi_{R_2,R_1}(x)$ in two particular situations:
rotations that move $y_1=R_1^{-1}x$ to $y_2=R_2^{-1}x$
without turning the tangent plane and rotations that turn the tangent plane without moving the point.
In the next statement, by combining these two particular cases,
we prove that actually they coincide always.
\begin{lemma}\label{lemma angolo}
Let $x\in U_{R_1}\cap U_{R_2}$, then
$\omega_{R_2,R_1}(x)=-\psi_{R_2,R_1}(x)$\ .
\end{lemma}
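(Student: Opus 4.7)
The plan is to express the rotation $k_{R_2,R_1}(x)\in K$ through its differential action on $T_{x_0}\cS^2$, and then transfer this information to $T_x\cS^2$ via the differentials of the two distinct ``frame maps'' $(g_x^{R_1})_*$ and $(g_x^{R_2})_*$ connecting $T_{x_0}\cS^2$ to $T_x\cS^2$. The key point is that $\psi_{R_2,R_1}(x)$ and $\omega_{R_2,R_1}(x)$ are two manifestations of the same quantity, namely the discrepancy between the two frames at $x$ induced by $(g_x^{R_1})_*$ and $(g_x^{R_2})_*$ applied to a fixed reference frame at $x_0$.

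First I would fix the orthonormal basis $(e_1,e_2)=(\partial_x,\partial_y)$ of $T_{x_0}\cS^2$, identified with the $xy$-plane in $\R^3$. With this identification, any element $k\in K=SO(2)$ acts on $T_{x_0}\cS^2$ as the standard planar rotation by its angle $\omega(k)$, so that
$$k_*e_2 = -\sin\omega(k)\,e_1+\cos\omega(k)\,e_2.$$
Using the explicit parametrization $g_y=R_z(\varphi(y))R_y(\vartheta(y))$, valid for $y\in U_e$ since by definition the third Euler angle of $g_y$ is zero, a direct calculation in spherical coordinates yields
$$(g_y)_* e_1 = \tau(y):=\partial_\vartheta|_y,\qquad (g_y)_* e_2 = \rho_e(y),$$
where $(\tau(y),\rho_e(y))$ is an orthonormal frame of $T_y\cS^2$ positively oriented with respect to the outward normal. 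Setting $\tau_R(x):=R_*\tau(R^{-1}x)$ in analogy with $\rho_R(x)=R_*\rho_e(R^{-1}x)$, and differentiating $g_x^R=R\,g_{R^{-1}x}$, I obtain the clean formulas
$$(g_x^R)_* e_1 = \tau_R(x),\qquad (g_x^R)_* e_2 = \rho_R(x).$$

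Next, I would apply the chain rule to the identity $g_x^{R_1}=g_x^{R_2}\,k_{R_2,R_1}(x)$, which follows at once from the definition \paref{eq k}, evaluated at $e_2\in T_{x_0}\cS^2$. Writing $\omega:=\omega_{R_2,R_1}(x)$, the previous two displays combine to give
$$\rho_{R_1}(x) \,=\, (g_x^{R_1})_* e_2 \,=\, (g_x^{R_2})_*\bigl(-\sin\omega\,e_1 + \cos\omega\,e_2\bigr) \,=\, -\sin\omega\,\tau_{R_2}(x)+\cos\omega\,\rho_{R_2}(x).$$

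In the final step, I would interpret this identity in the positively oriented orthonormal frame $(\tau_{R_2}(x),\rho_{R_2}(x))$ of $T_x\cS^2$: the coordinates of $\rho_{R_1}(x)$ in this frame are $(-\sin\omega,\cos\omega)$, which are exactly the coordinates of $\rho_{R_2}(x)=(0,1)$ rotated counterclockwise by $\omega$. Consequently, the oriented angle from $\rho_{R_1}(x)$ to $\rho_{R_2}(x)$, which is by definition $\psi_{R_2,R_1}(x)$, equals $-\omega$, yielding $\omega_{R_2,R_1}(x)=-\psi_{R_2,R_1}(x)$, as desired.

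The main technical subtlety will be the careful matching of orientation conventions: one must verify that (i) the standard orientation of $T_{x_0}\cS^2\subset\R^3$ (induced by the outward normal $+e_z$), (ii) the positive orientation of $T_x\cS^2$ transported by $(g_x)_*$, (iii) the sign convention for the rotation angle of an element of $K$ acting on the $xy$-plane, and (iv) the sign convention for the oriented angle $\psi_{R_2,R_1}$ are all mutually consistent; a single sign mistake would flip the conclusion. An alternative (and equivalent) strategy, which may serve as a cross-check, would be to verify the identity first on the three special families of rotations described in Remark \ref{particular} (where both sides vanish, or can be computed by inspection) and then extend to arbitrary $R_2^{-1}R_1$ via the cocycle property for $k$ together with the analogous additivity of oriented angles for $\psi$; but the direct differential computation above seems the most transparent.
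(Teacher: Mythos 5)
Your argument is correct and genuinely different from the paper's. The paper proves the lemma by decomposing $R_2^{-1}R_1=EW$, where $W$ carries $y_1=R_1^{-1}x$ to $y_2=R_2^{-1}x$ through rotations covered by Remark \ref{particular} (so that $Wg_{y_1}=g_{y_2}$) and $E$ rotates about an axis through $y_2$; this reduces $k_{R_2,R_1}(x)$ to $k_{E^{-1},e}(y_2)$, a case already settled in Remark \ref{particular}, and then identifies $\psi_{R_2,R_1}(x)$ with $\psi_{E^{-1},e}(y_2)$ via the inner-product computation $\langle\rho_{R_2}(x),\rho_{R_1}(x)\rangle=\langle E^{-1}\rho(y_2),\rho(y_2)\rangle$ (which by itself only fixes the cosine, the sign being inherited from the special cases). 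You instead bypass the decomposition entirely: from the defining relation $g_x^{R_1}=g_x^{R_2}\,k_{R_2,R_1}(x)$ (equivalent to \paref{eq k}) and the explicit pushforward computation $(g_y)_*e_1=\partial_\vartheta|_y$, $(g_y)_*e_2=\rho_e(y)$ — which is correct, and indeed forces $(g^R_x)_*e_2=\rho_R(x)$ — the signed identity $\rho_{R_1}(x)=-\sin\omega\,\tau_{R_2}(x)+\cos\omega\,\rho_{R_2}(x)$ drops out in one stroke, so the oriented angle is read off directly. What your route buys is a shorter, more structural proof: it exhibits $\rho_R(x)$ as the second leg of the frame transported by $g^R_x$ and the transition element $k_{R_2,R_1}(x)$ as precisely the change-of-frame rotation, and it yields the sign without appealing to the special cases; what the paper's route buys is that it never computes differentials in coordinates and recycles Remark \ref{particular}, at the price of more implicit orientation bookkeeping. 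Both proofs ultimately rest on the same (not fully spelled-out) convention matching the rotation angle of elements of $K$ with the orientation used to define $\psi_{R_2,R_1}$ — the caveat you correctly flag — so on this point your proof is no worse off than the paper's, and arguably more transparent; your proposed cross-check against the cases of Remark \ref{particular} (e.g. $R_1=e$, $R_2$ a rotation about an axis through $x$, where $\omega=-\gamma$ and $\psi=\gamma$) confirms the sign conventions are consistent.
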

\begin{proof}
The matrix $R_2^{-1}R_1$ can be decomposed as $R_2^{-1}R_1=EW$ where $W$ is the product of a rotation around an axis that is orthogonal to the plane $[x_0,y_1]$ bringing $y_1$ to a point having the same colatitude as $y_2$ and of a rotation around the north-south axis taking this point to $y_2$. By Remark \ref{particular} we have
$Wg_{y_1}=g_{Wy_1}=g_{y_2}$. $E$ instead is a rotation around an axis passing by $y_2$ itself.

We have then, thanks to \paref{rot} and \paref{al1}
$$
\dlines{
k_{R_2,R_1}(x)=k_{R_1^{-1}R_2,e}(R_1^{-1}x)=k_{W^{-1}E^{-1},e}(y_1)=
g_{EWy_1}^{-1}EWg_{y_1}
=g_{y_2}^{-1}Eg_{y_2}=k_{E^{-1},e}(y_2)\ .\cr
}
$$
By the previous discussion, $\omega_{E^{-1},e}(y_2)=-\psi_{E^{-1},e}(y_2)$.
To finish the proof it is enough to show that
\begin{equation}\label{ss}
\psi_{R_2,R_1}(x)=\psi_{E^{-1},e}(y_2)\ .
\end{equation}
Let us denote $\rho(x)=\rho_e(x)$ the tangent vector at $x$ which is parallel to the curve $\vartheta=const$ and pointing in the direction of increasing $\varphi$. Then in coordinates
$$
\rho(x)=\frac 1{\sqrt{x_1^2+x_2^2}}\ \bigl(-x_2,x_1,0\bigr)
$$
and the action of $R$ is given by (\cite{marinuccigeller},\S3) $\rho_R(x)=R\rho(R^{-1}x)$. As $W\rho(y_1)=\rho(y_2)$ ($W$ does not change the orientation of the tangent plane),
$$
\dlines{
\langle \rho_{R_2}(x), \rho_{R_1}(x) \rangle =
\langle R_2 \rho(R_2^{-1}x), R_1 \rho(R_1^{-1}x) \rangle
= \langle R_1^{-1}R_2 \rho(R_2^{-1}x),\rho(R_1^{-1}x) \rangle=\cr
=\langle W^{-1}E^{-1} \rho(EWR_1^{-1}x), \rho(W^{-1}E^{-1}R_2^{-1}x) \rangle=
\langle E^{-1} \rho(Ey_2), W\rho(W^{-1}y_2) \rangle =\cr
=\langle E^{-1} \rho(y_2)),  W\rho(y_1) \rangle=\langle E^{-1} \rho(y_2)), \rho(y_2) \rangle\ ,
}
$$
so that the oriented angle $\psi_{R_2,R_1}(x)$ between $\rho_{R_2}(x)$ and $\rho_{R_1}(x)$
is actually the rotation angle of $E^{-1}$.

\end{proof}

\clearpage
\fancyhf{} \fancyfoot[CE,CO]{\thepage}
\fancyhead[CO]{\textit{Acknowledgments}}
\fancyhead[CE]{\textit{Acknowledgments}}
\renewcommand{\headrulewidth}{0.5pt}
\renewcommand{\footrulewidth}{0.0pt}
\addtolength{\headheight}{0.5pt}
\fancypagestyle{plain}{\fancyhead{}\renewcommand{\headrulewidth}{0pt}}

\chapter*{Acknowledgments}\addcontentsline{toc}{chapter}{Acknowledgments}

This is the part of my  thesis that maybe I  like most. Indeed, here I can write whatever I want, with no definition, label, rule...  For the same reasons  it is the most difficult for me,
as if I feel something,
then I deeply  feel  it. With no rule, logic step and reason.  And to write it down with no guideline and of course no usual word, I should work hard somehow.

If you think that what I will write for you is not enough, then yes, you are right.
But I am quite sure that you know how much indebted I am with you for your words, help, contribution... whatever.

\section*{Part 1}

\begin{center}

\emph{To Prof. Paolo Baldi:\\ you taught me fundamental things for life.  To not use too much the emph-style in \LaTeX,  how to deal with  the Grushin operator, where to see the painting ``Battaglia di San Romano'', why the novel ``Persuasion'' is great, how to fast destroy some Introduction section. {\rm To think}. \\
I thank you for each of these things, for the great opportunity to work with you, for
your necessary help and solving ideas during these research years (master and PhD), all the fruitful discussions, computations, proofs, mistakes  we did together. \\ But  most of all I sincerely thank you for your friendship.  }

\smallskip

\newpage

To Prof. Domenico Marinucci:\\ you made me improve several aspects of my life. Now I like both, the National Gallery and Clebsch-Gordan coefficients,  I appreciate very much the movie ``Million dollar baby'' as well as the Cosmic Microwave Background radiation,  I can show the best of Rome to some tourist in at most three hours, walking of course. \\
I thank you for all these improvements,
and for many other things. During these last years, your solving ideas, suggestions and help were valuable: I mean not only  the  discussions, long computations and proofs we did together, but also  the special opportunity you gave me to join your research project ERC Grant 277742 \emph{Pascal} and to meet great mathematicians all around the world... Thanks, a lot.

\smallskip

To Prof. Lucia Caramellino: \\
 I heartily thank you for your help, suggestions and ideas, for the time we spent together proving some theorem and most of all talking as good friends do.

\smallskip

To Prof. Giovanni Peccati:\\  I sincerely thank you for your help and  ideas, the time we spent together in Luxembourg
discussing abouth maths and the opportunity you gave me to visit your nice department, to meet your friendly research team and especially to work with you.

\smallskip

To Prof. Igor Wigman: \\
 I thank you for fruitful discussions we had  in London.   Your suggestions and  ideas were very important and I heartily thank you for your kindness, for the great opportunity you gave me to visit your amazing department and most of all to work with you on topics which are collected in this thesis.

\smallskip

 I wish to thank also Proff. Andrea Iannuzzi and Stefano Trapani \\for fruitful discussions on sereval topics and
valuable assistance in \cite{mauSO(3)}, which is completely collected in Chapter 3,
and  Gilles Becker  for the idea in Remark \ref{Gilles Becker}.

Finally, I sincerely thank all ERC \emph{Pascal} project members: Valentina Cammarota, Simon Campese and Claudio Durastanti for valuable comments on an earlier version of this thesis, Alessandro Renzi and Yabebal Fantaye for useful suggestions.

\end{center}

\section*{Part 2}

Many other people helped me to write this thesis, somehow.  First of all my friends, and of course my family.
It is not easy for me to write this part, as you know, so I will give just one thought for each of you. Only one, but full.

When reading your part, remember that the main reason for which your name is here is simple: I thank you for making me smile.

\smallskip
\begin{center}

To Giada: \\you are always close to  me. Everywhere.

\smallskip

To Camilla, Stefano, Chiara, Stefania, Eloisa, Serena and Erika: \\thanks for all the time we spent together - laughing, basically.

\smallskip

To Andrea, Vincenzo, Stefano, Claudio, Valentina, Gianluca, Simon, Alessandro and Yabebal:\\
I am so lucky to have colleagues and friends as nice as you are. My time at the Department would had been worse if any of you had not been there - but our favourite \emph{Cinsenke} (see e.g. The Red Bar) would had been useful, as well.

\smallskip

To my Cardio Combat Dance team:\\ sometimes, at night,  kicking something with you (the air e.g.) helped me to forget sad situations.

\smallskip

To everyone with whom I shared the office somewhere in the world, even if for few days: thanks.

\smallskip

\emph{Finally my greatest thanks. \\To my family: \\
every reason I think of, is too much deep  to
write it here.}
\end{center}

\vspace{17pt}

\begin{center}
No page of this thesis
would have had any meaning, if any of you had not been close to me.
\end{center}

\clearpage
\fancyhf{} \fancyfoot[CE,CO]{\thepage}
\fancyhead[CO]{\textit{Bibliography}}
\fancyhead[CE]{\textit{Biblyography}}
\renewcommand{\headrulewidth}{0.5pt}
\renewcommand{\footrulewidth}{0.0pt}
\addtolength{\headheight}{0.5pt}
\fancypagestyle{plain}{\fancyhead{}\renewcommand{\headrulewidth}{0pt}}

\addcontentsline{toc}{chapter}{Bibliography}

\providecommand{\bysame}{\leavevmode\hbox to3em{\hrulefill}\thinspace}
\providecommand{\MR}{\relax\ifhmode\unskip\space\fi MR }
\providecommand{\MRhref}[2]{%
  \href{http://www.ams.org/mathscinet-getitem?mr=#1}{#2}
}
\providecommand{\href}[2]{#2}

%

\end{document}